\documentclass[12pt,twoside,reqno]{amsart}

\usepackage{amsmath, amssymb, amsthm, enumitem, esint}
\usepackage[hyperindex]{hyperref}

\setcounter{secnumdepth}{2}
\setcounter{tocdepth}{1}
\usepackage{hyperref}
\hypersetup{bookmarksdepth=3}

\newcommand{\HKVarDef}{Definition~3.1}
\newcommand{\HKPropVarTriangle}{Lemma~3.2}
\newcommand{\HKDefConjHKmeasure}{Definition~2.4}
\newcommand{\HKSubsecHOandHK}{Subsection~2.3}
\newcommand{\HKThmGradientPhiEstimate}{Theorem~4.1}
\newcommand{\HKHnConcentration}{Corollary~3.8}
\newcommand{\HKCenterConstantRmBound}{Proposition~9.5}
\newcommand{\HKVarMonotonicityCHF}{Corollary~3.7}
\newcommand{\HKHCenterDef}{Definition~3.10}
\newcommand{\HKHCenterPropExist}{Proposition~3.12}

\newcommand{\HKBoundBallHCenter}{Proposition~3.13}
\newcommand{\HKSecPstarParabolic}{Section~9}
\newcommand{\HKPropPstarBasic}{Proposition~9.4}
\newcommand{\HKLemWoneMonotone}{Lemma~2.7}
\newcommand{\HKThmGaussianintegral}{Theorem~3.14}
\newcommand{\HKLemHEsubsolutiononParNbhd}{Lemma~9.13}

\topmargin        -1  cm
\oddsidemargin   -0.2  cm
\evensidemargin  -0.2  cm
\textwidth      17.5  cm
\textheight      24   cm

\newcommand{\la}{\lambda}

\newcommand{\de}{\delta}
\newcommand{\tf}{\mathfrak{t}}
\newcommand{\lb}{\linebreak[1]}
\newcommand{\CF}{\mathfrak{C}}
\newcommand{\IR}{\mathbb{R}}

\newcommand{\IN}{\mathbb{N}}

\newcommand{\IF}{\mathbb{F}}

\newcommand{\PP}{\mathcal{P}}
\newcommand{\RR}{\mathcal{R}}
\newcommand{\CC}{\mathcal{C}}
\newcommand{\MM}{\mathcal{M}}
\newcommand{\II}{\mathcal{I}}
\newcommand{\XX}{\mathcal{X}}
\renewcommand{\t}{\mathfrak{t}}
\newcommand{\eps}{\varepsilon}
\newcommand{\ov}[1]{\overline{#1}}
\newcommand{\td}[1]{\widetilde{#1}}

\DeclareMathOperator*{\length}{length}
\DeclareMathOperator*{\Varalone}{Var}
\DeclareMathOperator*{\loc}{loc}
\newcommand{\Var}{{\Varalone}}
\DeclareMathOperator*{\osc}{osc}

\DeclareMathOperator{\Ric}{Ric}
\DeclareMathOperator{\Rm}{Rm}
\DeclareMathOperator{\tr}{tr}
\DeclareMathOperator{\id}{id}

\DeclareMathOperator{\eucl}{eucl}
\DeclareMathOperator{\vol}{vol}

\DeclareMathOperator{\supp}{supp}

\DeclareMathOperator{\proj}{proj}

\newcommand{\EMPTY}[1]{}

\newtheorem{Theorem}[equation]{Theorem}
\newtheorem{Lemma}[equation]{Lemma}
\newtheorem{Corollary}[equation]{Corollary}
\newtheorem{Proposition}[equation]{Proposition}
\newtheorem{Claim}[equation]{Claim}

\theoremstyle{definition}
\newtheorem{Definition}[equation]{Definition}
\theoremstyle{remark}
\newtheorem{Remark}[equation]{Remark}
\newtheorem{Example}[equation]{Example}

\numberwithin{equation}{section}

\title{Compactness theory of the space of Super Ricci flows}
\author{Richard H Bamler}
\address{Department of Mathematics, UC Berkeley, CA 94720, USA}
\email{rbamler@berkeley.edu}
\thanks{This work was supported by NSF grant DMS-1906500.}
\date{\today}

\begin{document}

\begin{abstract}
We develop a compactness theory for super Ricci flows, which lays the foundations for the partial regularity theory in \cite{Bamler_HK_RF_partial_regularity}.
Our results imply that any sequence of super Ricci flows of the same dimension that is pointed in an appropriate sense subsequentially converges to a certain type of synthetic flow, called a metric flow.
We will study the geometric and analytic properties of this limiting flow, as well as the convergence in detail.
We will also see that, under appropriate local curvature bounds, a limit of Ricci flows can be decomposed into a regular and singular part.
The regular part can be endowed with a canonical structure of a Ricci flow spacetime and we have smooth convergence on a certain subset of the regular part.
\end{abstract}

\maketitle

\tableofcontents

\section{Introduction}
\subsection{Introduction}
A super Ricci flow is given by a smooth family of Riemannian metrics $(g_t)_{t \in I}$ on a manifold $M$ that satisfies the inequality
\[ \partial_t g_t \geq - 2 \Ric_{g_t}, \]
meaning that $\partial_t g_t + 2 \Ric_{g_t}$ is non-negative definite.
Super Ricci flows were initially studied by McCann and Topping  \cite{Topping-McCann} and are natural generalizations of Ricci flows.
We will show that the space of super Ricci flows, pointed in an appropriate sense, is compact in a certain topology.
While the main motivation of our theory is to obtain a compactness theory of Ricci flows, most of our results also apply to other geometric settings, such as metrics with lower Ricci curvature bounds in the Bakry-\'Emery sense or Ricci solitons.

In this paper we will introduce and study several new notions, guided by some results in \cite{Bamler_HK_entropy_estimates}.
We will see that super Ricci flows possess similar compactness properties as spaces with lower Ricci curvature bounds.
More specifically, we will:
\begin{enumerate}[label=\arabic*.]
\item Introduce a notion of synthetic flows, called \emph{($H$-concentrated) metric flows,} of which super Ricci flows are a subset.
A metric flow can be regarded as a parabolic analogue of a metric (measure) space.
\item Analyze the geometric properties that follow from the axioms of the definition of a metric flow.
\item Define a distance function on the space metric flow pairs (a.k.a. ``pointed metric flows''), which can be viewed as a parabolic analogue of the Gromov-Hausdorff distance.
\item Show that certain subsets of metric flow pairs, which contain the class of super Ricci flows, are compact with respect to this new distance function.
So any sequence of pointed metric flow pairs taken out of these subsets subsequentially converges to another metric flow pair.
\item Analyze the convergence behavior of metric flow pairs with respect to this distance function and show that certain important properties survive the limit.
\item Devise a notion of smooth convergence (akin to smooth Cheeger-Gromov convergence) in the case in which the original metric flow pairs are locally given by smooth Ricci flows with bounded curvature.
\end{enumerate}

In subsequent work \cite{Bamler_HK_RF_partial_regularity}, we will further analyze limits of Ricci flows under a non-collapsing condition and derive several structural results.

\subsection{History}
Before describing the new notions and results of this paper in more detail, let us make some historical remarks.
Besides Ricci flows, the class of super Ricci flows also contains other flows, which arise from certain interesting classes of metrics via standard constructions.
The most important of these are probably the classes of Riemannian metrics with lower Ricci curvature bounds ($\Ric_g \geq \lambda g$) and Einstein metrics ($\Ric _g= \lambda g$).
More generally, we can also consider the class of metrics whose Ricci curvature is bounded from below in the Bakry-\'Emery sense ($\Ric_{g}^f = \Ric_g + \nabla^2 f \geq \lambda g$) and the class of gradient Ricci solitons ($\Ric_{g}^f = \Ric_g + \nabla^2 f = \lambda g$).\footnote{In fact, any metric satisfying $\Ric_g + \mathcal{L}_X g \geq \lambda g$ for some vector field $X$ can be turned into a super Ricci flow.
In the case of equality, these metrics are called Ricci solitons.}
The theory developed in this paper will imply a compactness theory for all these classes of metrics.

We will now give an overview over existing compactness theories for these and other specific classes of super Ricci flows.

Let us first consider metrics with lower Ricci curvature bounds and Einstein metrics.
By the work of Gromov \cite{Gromov_metric_struc}, any sequence of manifolds with a uniform lower bound on the Ricci curvature and an upper bound on the dimension converges subsequentially to a metric (measure) space in the Gromov-Hausdorff sense.
In fact, the space of isometry classes of metric spaces that satisfy certain weak regularity conditions can be equipped with a distance function --- the Gromov-Hausdorff distance --- and the subset of spaces corresponding to metrics with lower Ricci curvature bounds is precompact with respect to this distance function.
In the non-collapsed case, the Gromov-Hausdorff limits of sequences of spaces with lower Ricci curvature bound can be characterized further by the work of Anderson, Cheeger, Colding, Tian, Naber \cite{MR999661, Anderson_1990, Cheeger-Colding-Cone, Cheeger_Colding_struc_Ric_below_I,Cheeger-Colding-structure-II,Cheeger_Colding_struc_Ric_below_III, Cheeger_Colding_Tian_2002, Cheeger-Naber-quantitative, Cheeger-Naber-Codim4}.
In this case the limiting space is regular (in a certain sense) on the complement of a singular set of codimension 2 (for spaces with lower Ricci curvature bounds) or codimension 4 (for Einstein metrics).
Another avenue of analyzing the limiting space, which also works in the collapsed case, is due to Lott, Villani and Sturm \cite{Lott_Villani_2009, Sturm_convex_functionals, Sturm-2006-I, Sturm-2006-II}, who introduced a synthetic lower Ricci curvature bound, using optimal transport.
This synthetic bound is preserved under measured Gromov-Hausdorff convergence and can therefore be used to characterize the limiting space.
This has led to the notion of $\text{RCD}(K,N)$-spaces, which have been subject of intensive research.

In the setting of lower Ricci curvature bounds in the  Bakry-\'Emery sense, a compactness theory under an additional bound on the potential function $f$, follows from the work of Wei-Wylie \cite{Wei-Wylie-2009}. 
By an observation of Lott \cite{Lott-Bakry-Emery-2003}, such metrics arise as collapsed limits of metrics with lower Ricci curvature bounds, which reduces the compactness theory to that of spaces with lower Ricci curvature bounds.
See also \cite{wang2013structure} for a further structure theory under additional geometric assumptions.

The case of gradient Ricci solitons was analyzed by Cao-Sesum, X.~Zhang, Z.~Zhang, Weber, Haslhofer, M\"uller, Y.~Zhang, H.~Li, Y.~Li, B.~Wang and S.~Huang, Y.~Li, B.~Wang \cite{X_Zhang_compactness_soliton, Cao_Sesum_compactness_KSol,  Z_Zhang_2010, Weber_conv_Ric_sol, Haslhofer_Mueller_compactness_soliton, Halshofer_Mueller_not_4d_RS_cpt, Y_Zhang_compactness_4D_R_sol, Li_Li_Wang_21, Huang_Li_Wang_21}.
This work essentially shows that any sequence of gradient (shrinking) solitons, pointed at the minimum of the potential, converges to a metric space that is smooth on the complement of a subset of codimension $\geq 4$ --- thus mirroring the results for Einstein metrics mentioned before.

In the setting of Ricci flows, or general super Ricci flow, compactness theorems are only known under very restrictive conditions.
Hamilton's original compactness theorem for Ricci flows \cite{Hamilton_RF_compactness} holds under global curvature and injectivity radius bounds, which guarantee that the limit is another Ricci flow.
In dimension $3$, Perelman's work \cite{Perelman1, Perelman2} implies a satisfactory compactness theory in the non-collapsed case; limits are again smooth Ricci flows.
In \cite{Bamler-Zhang-1, Bamler-Zhang-2, Bamler_struc_theory_sing_spaces, Bamler_conv_RF_bounded_scal, Chen-Wang-I, Chen_Wang_space_RF_II_A, Chen-Wang-II}, Zhang, the author, Chen and Wang devised a compactness theory for the space of Ricci flows and K\"ahler-Ricci flows under a pointwise bound on the scalar curvature.
In this theory limits are regular away from a singular set of codimension at least 4; therefore their behavior is similar to that of Einstein metrics.
In \cite{Sturm_super_RF} Sturm introduced a new notion of super Ricci flows for time-dependent metric measure spaces and proved a compactness theorem for these flows assuming a pointwise lower bound on the Ricci curvature.
This theory resembles the approach of Lott-Villani-Sturm in the stationary case.
Unfortunately, the lower Ricci curvature bound is very restrictive; for example it precludes isolated degenerate or non-degenerate neckpinches, which are quite common in dimension 3 \cite{Angenent_Knopf_precise_asymptotics}.
Further related work can be found in \cite{Lu_2001, Topping_L_optimal_trans,Lott_optimal_transport_reduced_vol, Topping_Cabezas_Rivas_canonical_soliton,  Topping_foundations_RF_opt_trans,  Munn_metric_RF, Haslhofer_Naber_2018}.

\subsection{Overview}
We will now provide an overview of the theory developed in this paper.
We will state our main results in a rather vague way, but refer to the corresponding precise statements in the body of the paper.

We will first introduce the notion of a \emph{metric flow} $\XX$ over an interval $I \subset \IR$ in Section~\ref{sec_met_flows} (see Definition~\ref{Def_metric_flow}).
Roughly speaking, a metric flow consists of a collection of metric spaces $(\XX_t, d_t)$, which are viewed as \emph{time-slices} and a collection of probability measures $\nu_{x;t}$ on each time-slice $(\XX_t, d_t)$, where $x \in \XX_{t'}$, $t < t'$.
These measures should be thought of as \emph{conjugate heat kernels based at $x$ (at time $t'$)} on a super Ricci flow background.
The metrics and the probability measures are required to satisfy certain compatibility conditions, which always hold on a super Ricci flow and are independent of its dimension.
Among other things, these compatibility conditions are the standard reproduction formula and a gradient bound for induced heat flows, which is established in \cite[\HKThmGradientPhiEstimate]{Bamler_HK_entropy_estimates}.
The collection of conjugate heat kernels $\nu_{x;s}$ that make up a metric flow $\XX$ allow us to define solutions to the (forward) heat equation and (backward) conjugate heat equation on $\XX$.
In addition, we will often require a metric flow to be \emph{$H$-concentrated} for some $H < \infty$ (see Definition~\ref{Def_H_Concentration}), which means that the conjugate heat kernels satisfy a certain $L^2$-bound.
This bound was established for super Ricci flows in \cite{Bamler_HK_entropy_estimates} and is the only bound in this paper that depends on the dimension.

Given a super Ricci flow $(g_t)_{t \in I}$ on a manifold $M$ we can construct the associated metric flow as follows.
Set $\XX := M \times I$, where the time-slices are of the form $\XX_t := M \times \{ t \}$, let $d_t := d_{g_t}$ be the length metric at any time $t$ and denote by $d\nu_{(x,t);s}  := K(x,t; \cdot, s) dg_s$ the measure corresponding to the conjugate heat kernel based at $(x,t)$.
Then we have:

\begin{Theorem}[Theorem~\ref{Thm_superRF_metric_flow}]
$\XX$ is an $H_n :=( \frac{(n-1)\pi^2}2 + 4)$-concentrated metric flow.
\end{Theorem}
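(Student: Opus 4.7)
The plan is to run through each axiom in Definition~\ref{Def_metric_flow} for the data $(\XX, (d_t), (\nu_{(x,t);s}))$ associated to the super Ricci flow and verify it directly, then separately verify $H_n$-concentration. The strategy is essentially bookkeeping: almost every clause reduces to a classical property of the conjugate heat kernel on a smooth evolving manifold, except for the gradient bound and the $L^2$-bound, which are the nontrivial analytic inputs that are imported verbatim from \cite{Bamler_HK_entropy_estimates}.

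First I would deal with the purely geometric structure. Each time-slice $(\XX_t, d_t)=(M, d_{g_t})$ is a smooth Riemannian manifold with its length metric, hence a complete, separable, locally compact length space, and the disjoint-union $\XX = M \times I$ carries a natural Borel structure for which $t : \XX \to I$ is continuous. Next I would check the kernel axioms for the measures $d\nu_{(x,t);s} = K(x,t;\cdot,s)\,dg_s$ with $s<t$. Smoothness and positivity of the heat kernel give that these are bona fide Borel probability measures (total mass $1$ follows from the conjugate heat equation and the standard maximum-principle argument in Section~\ref{sec_met_flows}), and smoothness of $K$ in the first pair of variables gives the required joint measurability and continuity in $(x,t)$. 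The reproduction (Chapman--Kolmogorov) formula
\[
  \nu_{(x,t);s} \;=\; \int_M \nu_{(y,r);s}\, d\nu_{(x,t);r}(y), \textQq{for} s<r<t,
\]
is just the semigroup identity $K(x,t;z,s) = \int_M K(x,t;y,r)\, K(y,r;z,s)\, dg_r(y)$, which follows from uniqueness of bounded solutions of the conjugate heat equation with prescribed data at time $s$.

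The remaining axiom concerns heat flows on $\XX$: any function of the form $u_s(x) := \int f\, d\nu_{(x,t);s}$ with $f$ Lipschitz is the solution at time $s$ of the (forward) heat equation on the evolving manifold with final data $f$ at time $t$, and the metric flow axioms demand a sharp Lipschitz-in-$x$ estimate of this solution at each earlier time. This is exactly the content of \HKThmGradientPhiEstimate in \cite{Bamler_HK_entropy_estimates}, applied to the super Ricci flow background; its statement is precisely formulated to feed into this axiom, so the verification is a direct citation. This is the one place where ``super Ricci flow'' (as opposed to a mere smooth family of metrics) enters in an essential way, and it is the hardest step conceptually, although the work has already been done in the companion paper.

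Finally, for the $H_n$-concentration property one must show that for any two basepoints $x_1,x_2$ at a common time $t$ and any earlier time $s$, the $W_1$-distance (or the prescribed variance quantity from Definition~\ref{Def_H_Concentration}) between $\nu_{(x_1,t);s}$ and $\nu_{(x_2,t);s}$ is controlled by $d_t(x_1,x_2)^2 + H_n\,(t-s)$. This estimate, with the sharp constant $H_n = \tfrac{(n-1)\pi^2}{2} + 4$, is exactly the main conjugate-heat-kernel concentration bound proved in \cite{Bamler_HK_entropy_estimates}, and the dimension dependence of $H_n$ reflects the dimension dependence of the Ricci term. So the proof reduces to: (i) check the elementary/structural axioms by inspection; (ii) invoke the gradient estimate from the prior paper; (iii) invoke the $L^2$-concentration estimate from the prior paper with constant $H_n$. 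The real obstacle lies entirely in (ii)--(iii) and has been absorbed into the references.
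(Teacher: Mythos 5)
Your proposal is correct and follows exactly the paper's approach: the structural axioms of Definition~\ref{Def_metric_flow} are checked by inspection, the reproduction formula is the Chapman--Kolmogorov/semigroup identity for the heat kernel, the gradient axiom (Property~\ref{Def_metric_flow_6}) is imported from \cite[\HKThmGradientPhiEstimate]{Bamler_HK_entropy_estimates}, and the $H_n$-concentration is imported from \cite[\HKHnConcentration]{Bamler_HK_entropy_estimates}. The only slip is notational: in your discussion of the heat-flow axiom, the function of interest is $u_t(x) := \int_{\XX_s} u_s\, d\nu_{(x,t);s}$ at the \emph{later} time $t>s$, with initial data $u_s$ at time $s$ (the axiom controls the Lipschitz constant of $\Phi^{-1}\circ u_t$); you have the roles of $s$ and $t$ transposed, but this does not affect the substance of the argument.
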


Note that, while a Ricci flow is given by a family of metrics on a fixed manifold, the topology of time-slices $\XX_t$ of a metric flow may be non-constant in $t$.
In addition, when passing from a super Ricci flow to a metric flow, we have dispensed of one essential piece of structure, namely the concept of world lines.
So for example, the fact that $(x,s), (x,t) \in M \times I$ correspond to the same point at two different times $s,t$ gets lost when we consider the associated metric flow.
However, any metric flow has a natural topology (see Subsection~\ref{subsec_natural_topology}), which agrees with the topology on $M \times I$ if $I$ is left-open (see Theorem~\ref{Thm_superRF_metric_flow}).
In addition, the $H$-concentration property allows us to relate points between two time-slices $\XX_s, \XX_t$, $s \leq t$, of a metric flow up to an error of $\sim \sqrt{|t-s|}$.
More specifically, we can view $y \in \XX_s$ to be ``close'' to $x \in \XX_t$ if it lies ``near the concentration center'' of the conjugate heat kernel $\nu_{x;s}$.
A lack of world lines may seem unintuitive at first; it is however natural in the study of most Ricci flow problems.

In Section~\ref{sec_geom_cont_time_slices} we will analyze the dependence of the time-slices $\XX_t$ of an $H$-concentrated metric flow $\XX$ on time.
Among other things, we will see that, away from a countable set of times, this dependence is continuous in the Gromov-$W_1$-topology (see Theorem~\ref{Thm_future_cont_equiv} and Corollary~\ref{Cor_met_flow_cont_ae}).
We will call $\XX$ \emph{future continuous} (see Definition~\ref{Def_flow_continuous}) if this convergence $\XX_t \to \XX_{t_0}$ holds whenever $t \searrow t_0$.
It will turn out that a future continuous flow is uniquely determined by its behavior over a dense set of times (see Theorems~\ref{Thm_existenc_fut_compl}, \ref{Thm_extend_isometry}); the entire flow can be recovered by passing to the so-called \emph{future completion,} which is similar to a completion of metric spaces.

Our goal in Section~\ref{sec_space_met_flow_pairs} is to study the space of metric flows and define what it means that a sequence of metric flows converges to another metric flow.
In order to do this, we need to consider \emph{metric flow pairs} $(\XX, (\mu_t)_{t \in I})$ (see Definition~\ref{Def_metric_flow_pair}), which are metric flows equipped with a conjugate heat flow $(\mu_t)_{t \in I}$.
Here, the conjugate heat flow $(\mu_t)_{t \in I}$ serves as some kind of basepoint that indicates the ``center'' of the flow; this is similar to choosing basepoints when defining Gromov-Hausdorff convergence for unbounded metric spaces.
The conjugate heat flow $(\mu_t)_{t \in I}$ will often be taken to be a conjugate heat kernel based at some point in the final time-slice of $\XX$, however, our theory is not limited to this case.
We consider two metric flow pairs to be the same if they agree at almost every time and denote the space of such equivalence classes by $\IF_I$ (see Definition~\ref{Def_IF}).
By passing to the future completion, we can represent each such equivalence class by a unique flow pair whose metric flow is future continuous.
We will then define a metric $d_{\IF}$ on $\IF_I$ and show that $(\IF_I, d_{\IF})$ is a complete metric space.
A sequence of metric flow pairs is then said to \emph{$\IF$-converge} to another metric flow pair if it converges in $(\IF_I, d_{\IF})$.
Roughly speaking, $\IF$-convergence implies convergence of almost every time-slice\footnote{in most cases even all, but a \emph{countable} set of time-slices} in the Gromov-$W_1$-topology along with almost every conjugate heat kernel measure.

In Section~\ref{Sec_conv_within} we analyze the notion of $\IF$-convergence further and we relate objects on the sequence of metric flow pairs with objects on the limit.
To do this, we first embed an $\IF$-convergent sequence of metric flow pair into a common ``correspondence'' and then study the convergence behavior of points and conjugate heat flows within this correspondence.

In Section~\ref{sec_compact_subsets_IF} we will show that certain subsets of $\IF_I$ are in fact compact.
These subsets include metric flow pairs corresponding to super Ricci flows that are equipped with a conjugate heat kernel.
So as a result, we will obtain:

\begin{Theorem}[Corollary~\ref{Cor_super_RF_compactness}]
Consider a sequence of pointed super Ricci flows $(M_i, (g_{i,t})_{t \in (-T,0]},x_i)$ of the same dimension.
Then, after passing to a subsequence, the corresponding sequence of metric flows $\XX^i$ equipped with the conjugate heat kernels $(\mu^i_t)_{t \in (-T,0)} := (\nu_{(x_i,0); t})_{t \in (-T,0)}$, based at $(x_i,0)$, $\IF$-converge to a metric flow pair of the form $(\XX^\infty,(\mu^\infty_t)_{t \in (-T,0)})$.
\end{Theorem}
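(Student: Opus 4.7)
The plan is to obtain this corollary as a direct consequence of the general compactness result for precompact subsets of $\IF_I$ proved in Section~\ref{sec_compact_subsets_IF}; the real work here is to translate the hypotheses, not to produce a new argument.

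First, I would convert each super Ricci flow $(M_i, (g_{i,t})_{t \in (-T,0]}, x_i)$ into a metric flow pair in the canonical way. By Theorem~\ref{Thm_superRF_metric_flow}, the associated space $\XX^i := M_i \times (-T, 0]$, equipped with the length metrics $d_{g_{i,t}}$ on each time-slice and with the conjugate heat kernel measures $d\nu_{(x,t); s} := K(x,t; \cdot, s)\, dg_s$, is an $H_n$-concentrated metric flow, with the same constant $H_n = \frac{(n-1)\pi^2}{2} + 4$ for every $i$ (since the dimension is fixed). Setting $\mu^i_t := \nu_{(x_i, 0); t}$ for $t \in (-T, 0)$ yields a conjugate heat flow on $\XX^i$: the standard reproduction formula $\nu_{(x_i,0); s} = \int \nu_{y; s}\, d\nu_{(x_i,0); t}(y)$ for $s < t$ is precisely what is required in the axiomatic definition. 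Thus each $(\XX^i, (\mu^i_t)_{t \in (-T, 0)})$ is a metric flow pair representing a class in $\IF_{(-T,0)}$.

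Next, I would verify that this sequence lies in one of the precompact subsets of $\IF_{(-T,0)}$ identified in Section~\ref{sec_compact_subsets_IF}. The uniform hypotheses there are (a) a common $H$-concentration constant and (b) a uniform control of the spread of the basepoint conjugate heat flow at each time. Condition (a) holds with $H = H_n$ by the previous paragraph. Condition (b) follows from the variance estimate \HKHnConcentration{} of \cite{Bamler_HK_entropy_estimates}: applied to $\mu^i_t = \nu_{(x_i,0); t}$, it yields $\Var(\mu^i_t) \leq H_n|t|$, uniformly in $i$. Applying the compactness theorem then extracts a subsequence that $\IF$-converges to a limit metric flow pair $(\XX^\infty, (\mu^\infty_t)_{t \in (-T,0)})$.

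The main obstacle in this plan lies outside the corollary itself; it is the construction of the metric $d_{\IF}$ and the verification of its completeness (Section~\ref{sec_space_met_flow_pairs}), and especially the general compactness theorem in Section~\ref{sec_compact_subsets_IF}, which must extract a limit object that is still a bona fide metric flow pair (satisfying all reproduction, gluing, and gradient-type axioms). Once those substantive results are in hand, the present statement is just a matter of checking that the canonical embedding of pointed super Ricci flows into $\IF_{(-T,0)}$ lands in a precompact subset, which is the content of the two paragraphs above.
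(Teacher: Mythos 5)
Your proposal is correct and follows the same approach as the paper: translate each super Ricci flow into a metric flow pair via Theorem~\ref{Thm_superRF_metric_flow}, verify that the pairs lie in a uniform $\IF_I^J(H_n, V, b, r)$-type subset (here $\IF^*_I(H_n)$ with $V=0$, since $\Var(\nu_{(x_i,0);t}) \leq H_n|t|$ follows directly from $H_n$-concentration), and then invoke the compactness theorem (Theorem~\ref{Thm_F_compact}). The paper treats this exact chain of reasoning via Lemma~\ref{Lem_superRF_FHVBr} followed by Theorem~\ref{Thm_F_compact}, so your argument recovers it faithfully; the one caveat worth flagging is that if $T = \infty$ one would need the on-compact-time-intervals variants (Theorems~\ref{Thm_compactness_CF_conv_fut_cont}, \ref{Thm_compactness_CF_conv_timewise}) plus a diagonal argument, but the precise Corollary~\ref{Cor_super_RF_compactness} is stated for finite $I$.
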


In the case in which these super Ricci flows smoothly converge to some limiting super Ricci flow (such as in \cite{Hamilton_RF_compactness} in the case of Ricci flows), the limiting metric flow $\XX^\infty$ is simply the flow associated to the limiting super Ricci flow.
So smooth convergence implies $\IF$-convergence.
The following example shows, however, that, in general, the limit $\XX^\infty$ may be different from the Gromov-Hausdorff limit if smooth convergence does not hold.

\begin{Example}
Consider the Bryant soliton $(M^n, (g_t)_{t \in \IR})$, $n \geq 3$, which is a rotationally symmetric steady gradient soliton on $\IR^n$ with one end \cite{Bryant2005}.
Denote by $x \in M$ its center of rotation.
If we consider the parabolic blow-downs $(M, (g^\lambda_t:= \lambda^2 g_{\lambda^{-2} t})_{t \in \IR})$, which are Ricci flows, then the Gromov-Hausdorff limit
\begin{equation} \label{eq_ex_GH_limit}
 (M, g^\lambda_0,x) \xrightarrow[\lambda \to 0]{\quad GH \quad} ([0, \infty),0) 
\end{equation}
is a Euclidean ray, while we have the following convergence on compact time-intervals:
\begin{equation} \label{eq_ex_IF_limit}
 \big( M^n, (g^\lambda_t)_{t < 0},(\nu_{(x,0);t})_{t < 0} \big) \xrightarrow[\lambda \to 0]{\quad d_{\IF} \quad} \big(S^{n-1} \times \IR, (g_t = -2(n-1) t g_{S^{n-1}}  + g_{\IR})_{t < 0}, (\mu^\infty_t)_{t < 0} \big). 
\end{equation}
Here the limit is a round shrinking cylinder, which is the asymptotic shrinking soliton of the Bryant soliton in the sense of Perelman \cite{Perelman1}.
So while the limit in (\ref{eq_ex_IF_limit}) does not agree with (\ref{eq_ex_GH_limit}), it still captures an important asymptotic behavior of the Bryant soliton.
\end{Example}

In Section~\ref{sec_intrinsic} we show that the condition that almost all time-slices of a metric flow are intrinsic (i.e. length spaces) survives $\IF$-limits.
Moreover, if we choose the limit to be future continuous, which we can always do, then \emph{all} time-slices are intrinsic.

Lastly, in Section~\ref{sec_reg_pts} we analyze the case in which a certain subset of a metric flow is locally isometric to a smooth Ricci flow on some \emph{regular subset} $\RR \subset \XX$.
The regular subset will carry the canonical structure of a Ricci flow spacetime.
If $\XX$ is constructed from a smooth Ricci flow, then we have $\RR = \XX$.
Next, we consider a sequence of metric flow pairs $(\XX^i, (\mu^i_t)_{t \in I^i})$ that $\IF$-converges to a metric flow pair $(\XX^\infty, (\mu^\infty_t)_{t \in I^\infty})$.
We will show that, under local uniform curvature bounds on the regular parts $\RR^i$ of $\XX^i$, the $\IF$-convergence can be upgraded to smooth convergence over a certain subset $\RR^* \subset \RR^\infty$.

\subsection{Acknowledgements}
I thank Gang Tian, Guofang Wei and Bennett Chow for some inspiring conversations.
I also thank Sigurd Angenent, Dan Knopf and John Lott for useful advice on their work \cite{Angenent_Knopf_precise_asymptotics} and \cite{Lott-Bakry-Emery-2003,Lott_optimal_transport_reduced_vol,Lott_Villani_2009}, respectively. 
I also thank Christina Sormani for pointing out some additional related references.
I am grateful to Bennett Chow and Wangjian Jian for pointing out several typos in an early version of the manuscript.

\section{Preliminaries}
\subsection{Probability measures on metric spaces and the Wasserstein distance}
Let $(X, d)$ be a complete, separable metric space and denote by $\mathcal{B}(X)$ the Borel algebra generated by the open subsets of $X$.
A {\bf probability measure on $X$} is a measure $\mu$ on $\mathcal{B}(X)$ of total mass $\mu(X) = 1$.
The set of probability measures on $X$ is denoted by $\mathcal{P} (X)$.
For any $x \in X$ we denote by $\delta_x  \in \mathcal{P}(X)$ the point mass at $x$.

We recall:

\begin{Lemma} \label{Lem_basic_measure}
Let $(X,d)$ be complete and separable and let $\mu \in \mathcal{P}(X)$.
Then the following holds:
\begin{enumerate}[label=(\alph*)]
\item \label{Lem_basic_measure_a} $\mu$ is regular, i.e. for any $A \in \mathcal{B} (X)$, $\eps > 0$ there are compact and open subsets $K \subset A \subset O \subset X$ such that $\mu ( O \setminus K ) \leq \eps$.
\item \label{Lem_basic_measure_b} The set of bounded Lipschitz functions $X \to \IR$ is dense in $L^p (X, \mu)$ for all $p < \infty$.
\item \label{Lem_basic_measure_c} The support
\[ \supp \mu = \{ x \in X \;\; : \;\; \mu( B(x,r) ) > 0 \quad \text{for all} \quad r > 0 \}. \]
is closed and satisfies $\mu ( X \setminus \supp \mu ) = 0$.
\item \label{Lem_basic_measure_d} For any tight sequence $\mu_i \in \mathcal{P} (X)$ (i.e. for any $\eps > 0$ there is a compact subset $K_\eps$ such that $\mu_i (X \setminus K_\eps) \leq \eps$ for all $i$) there is a subsequence such that we have weak convergence $\mu_i \to \mu_\infty \in \mathcal{P} (X)$ (i.e. $\int_X f \, d\mu_i \to \int_X f \, d\mu_\infty$ for all bounded, continuous functions $f : X \to \IR$, or equivalently, for all bounded, Lipschitz functions $f: X \to \IR$).
\item  \label{Lem_basic_measure_e} A sequence $\mu_i \in \mathcal{P} (X)$ is tight if and only if for any $\eps > 0$ there is a compact subset $K'_\eps \subset X$ such that $\mu_i (X \setminus B(K'_\eps, \eps)) \leq \eps$ for large $i$.
\end{enumerate}
\end{Lemma}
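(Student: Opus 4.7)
The plan is to treat the five items in a dependent order: parts~(a) and~(c) are pure measure theory on a Polish space, part~(b) reduces to~(a), and parts~(d) and~(e) form a Prokhorov-type package. Fix a countable dense subset $\{z_j\}_{j \in \IN}$ of $X$ throughout.

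For part~(a), I would run the classical Dynkin-system argument. Let $\mathcal{D} \subset \mathcal{B}(X)$ be the collection of sets approximable from inside by closed sets and from outside by open sets up to $\mu$-error $\eps$. Using de Morgan for complements and telescoping for countable unions, one checks that $\mathcal{D}$ is closed under these operations, and that $\mathcal{D}$ contains every closed $F$ (because $F = \bigcap_k B(F,1/k)$, and open sets in a metric space are $F_\sigma$ via $\{x : \dist(x, X \setminus O) \geq 1/k\}$). Hence $\mathcal{D} = \mathcal{B}(X)$. To upgrade the inner closed approximation $C \subset A$ to a compact one, I use separability: for each $k$, a finite subfamily of $\{B(z_j,1/k)\}_j$ carries $\mu$-mass at least $1 - \eps 2^{-k}$, and the intersection over $k$ of the closures of these sub-covers is closed and totally bounded, hence compact in the complete space $X$; intersecting with $C$ yields the required compact $K \subset A$.

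Part~(c) is then immediate from separability: $\supp\mu$ is closed by definition, and its complement is a union of open null balls, which reduces to a countable union by picking rational-radius balls centered at the $z_j$. For~(b), it suffices to approximate any indicator $\mathbf{1}_A$ in $L^p$, and given~(a) one uses a standard Urysohn-type cutoff, for instance
\[ f_\eps(x) := \max\bigl\{0,\, 1 - \eps^{-1} \dist(x, K)\bigr\} \cdot \min\bigl\{1,\, \eps^{-1} \dist(x, X \setminus O)\bigr\}, \]
which is bounded Lipschitz, equals $1$ on $K$, vanishes off $O$, and satisfies $\|\mathbf{1}_A - f_\eps\|_{L^p}^p \leq \mu(O \setminus K) \leq \eps$.

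For~(e), the forward direction is trivial; conversely, apply the hypothesis with $\eps_k := \eps \, 2^{-k}$ to produce compact $K'_k$ and indices $N_k$ such that $\mu_i(X \setminus B(K'_k, \eps_k)) \leq \eps_k$ for all $i \geq N_k$, and set $K_\eps := \bigcap_k \overline{B(K'_k, \eps_k)}$: it is closed and totally bounded (each factor admits a finite $2\eps_k$-net inherited from the compact $K'_k$), hence compact, and after absorbing the finitely many exceptional $\mu_i$ individually it carries mass at least $1 - \eps$ for every $i$. Finally~(d) is Prokhorov's theorem: enumerate a countable dense family $(\varphi_k)$ of bounded Lipschitz test functions, diagonally extract a subsequence along which every $\int \varphi_k \, d\mu_i$ converges, and extend the resulting linear functional to $C_b(X)$ using tightness. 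The main obstacle of the lemma lies in this final step, where one must show the candidate limit functional actually arises from a genuine probability measure (rather than a defective sub-probability) and that weak convergence holds for \emph{all} bounded continuous $f$, not only those in the chosen countable family. Both issues are resolved by invoking the Riesz representation theorem on the compact exhausting family supplied by tightness together with~(e).
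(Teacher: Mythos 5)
Most of this is sound and in several places more self-contained than the paper, which simply cites Billingsley for (a) and (d). Your argument for (c) via a countable cover of the complement by $\mu$-null balls $B(z_j,q)$, $q\in\mathbb{Q}$, is in fact cleaner than the paper's contradiction argument (which extracts a compact set and covers it by finitely many null balls). For (d), note that the lemma also asserts the \emph{equivalence} of testing against bounded continuous versus bounded Lipschitz functions; your diagonal extraction produces convergence only for a countable Lipschitz family, and the passage to all of $C_b(X)$ (which is not a separable space in the sup norm when $X$ is non-compact) is exactly where the real work lies — the paper handles it via the inf-convolution $f_L(x):=\inf_y\{Ld(x,y)+f(y)\}$ plus the Portmanteau theorem. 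Finally, the displayed cutoff in (b) is slightly wrong as written: the second factor $\min\{1,\eps^{-1}\dist(\cdot,X\setminus O)\}$ need not equal $1$ on $K$, since $\dist(K,X\setminus O)$, while positive, may be smaller than $\eps$; consequently $\mathbf{1}_A-f_\eps$ is not supported in $O\setminus K$ and the bound $\|\mathbf{1}_A-f_\eps\|_{L^p}^p\leq\mu(O\setminus K)$ fails. The fix is trivial (use the single factor $\min\{1,L\dist(\cdot,X\setminus O)\}$ with $L\geq\dist(K,X\setminus O)^{-1}$, as in the paper), but as stated it is incorrect.

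The more substantive gap is in (e). You form $K_\eps:=\bigcap_k\overline{B(K'_k,\eps_k)}$ and then propose to repair matters by "absorbing the finitely many exceptional $\mu_i$." But an index $i$ is exceptional as soon as $i<N_k$ for \emph{some} $k$, and if $N_k\to\infty$ this is every $i\in\IN$ — so there is no finite list of bad measures to absorb after the intersection has been taken. The absorption must happen scale by scale, \emph{before} intersecting: for each $k$, use part (a) to pick, for each of the finitely many $i<N_k$, a compact $L_{k,i}$ with $\mu_i(X\setminus L_{k,i})\leq\eps_k$, and replace $K'_k$ by the compact set $K'_k\cup\bigcup_{i<N_k}L_{k,i}$, so that $\mu_i(X\setminus\overline{B(K'_k,\eps_k)})\leq\eps_k$ now holds for \emph{all} $i$; only then does $\bigcap_k\overline{B(K'_k,\eps_k)}$ satisfy $\mu_i(X\setminus K_\eps)\leq\sum_k\eps_k=\eps$ uniformly in $i$. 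This is precisely the order of operations in the paper's proof.
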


\begin{proof}
For Assertion~\ref{Lem_basic_measure_a} see \cite[Theorem~1.3]{Billingsley_convergenc_prob_meas}.
For any subsets $K \subset A \subset O$ as in Assertion~\ref{Lem_basic_measure_a} the function $f_{K, L} := \min \{ L d(\cdot, X \setminus O), 1 \}$ is Lipschitz and satisfies $f_{K, L} \equiv 1$ on $K$ for large $L$.
This shows that $\chi_A$ can be approximated by Lipschitz functions in $L^p(X, \mu)$.
Since characteristic functions span a dense subspace in $L^p(X, \mu)$, this shows Assertion~\ref{Lem_basic_measure_b}.
For Assertion~\ref{Lem_basic_measure_c} suppose by contradiction that $\mu(X \setminus \supp \mu) > 0$ and choose a compact subset $K \subset X \setminus \supp \mu$ with $\mu(K) > 0$.
However, $K$ can be covered by finitely many balls of mass zero.
For Assertion~\ref{Lem_basic_measure_d} see \cite[Theorem~5.1]{Billingsley_convergenc_prob_meas}.
The equivalence statement concerning weak convergence, consider a bounded, uniformly continuous function $f : X \to \IR$ and let $f_L(x) := \inf_{y \in X} \{ L d(x,y) + f(y) \}$.
Then $f_L$ is bounded, Lipschitz and we have $f_L \to f$ uniformly as $L \to \infty$.
The equivalence statement now follows from \cite[Theorem~2.1]{Billingsley_convergenc_prob_meas}.
Lastly, suppose that the sequence $\mu_i$ has the property described in  Assertion~\ref{Lem_basic_measure_e}.
By Assertion~\ref{Lem_basic_measure_a} it follows that for any $\eps > 0$ there is a compact subset $K'_\eps \subset X$ such that $\mu_i (X \setminus \overline{B(K'_\eps, \eps)}) \leq \eps$ for \emph{all} $i$.
Now fix some $\eps > 0$ and let 
\[ K_\eps := \bigcap_{j = 1}^\infty \ov{B(K'_{\eps 2^{-j}}, \eps 2^{-j})}. \]
Then $K_\eps$ is compact and we have for any $i$
\[ \mu_i ( X \setminus K_\eps) \leq \sum_{j=1}^\infty \mu_i ( X \setminus \ov{B(K'_{\eps 2^{-j}}, \eps 2^{-j})} ) \leq \eps. \qedhere \]
\end{proof}
\medskip

If $(X_i, d_i)$, $i = 1,2$, are two complete, separable metric spaces and $\mu_i \in \mathcal{P}(X_i)$, then a {\bf coupling} between $\mu_1, \mu_2$ is a probability measure $q \in \mathcal{P} (X_1 \times X_2)$ with marginals $\mu_1, \mu_2$, i.e. $\mu_1 (S) = q(S \times X_2)$ and $\mu_2 (S) = q(X_1 \times S)$ for all $S \in \mathcal{B} (X_1), \mathcal{B} (X_2)$, respectively.
Note that $q = \mu_1 \otimes \mu_2$ is a coupling between $\mu_1, \mu_2$.
The following lemma allows us to combine two couplings.

\begin{Lemma} \label{Lem_gluing}
Consider three complete, separable metric spaces $(X_i, d_i)$, $i = 1,2,3$, and probability measures $\mu_i \in \mathcal{P} (X_i)$.
Let $q_{12}$ and $q_{23}$ be couplings between $\mu_1, \mu_2$ and $\mu_2, \mu_3$, respectively.
Then there is a probability measure $q_{123} \in \mathcal{P} (X_1 \times X_2 \times X_3)$ whose marginals onto the first and last two factors equal $q_{12}$ and $q_{23}$, respectively.
In particular, the marginal of $q_{123}$ onto the first and last factor is a coupling between $\mu_1, \mu_3$.
\end{Lemma}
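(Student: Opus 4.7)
The plan is to construct $q_{123}$ by the standard gluing construction: disintegrate both $q_{12}$ and $q_{23}$ along the common factor $X_2$, and then glue the resulting Markov kernels over $\mu_2$. This is the parabolic analogue of the usual composition of transport plans, and the completeness and separability of the $X_i$ will provide exactly the measurability needed for the disintegration to exist.

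First, since $(X_2, d_2)$ is a complete separable metric space, the Borel space $(X_2, \mathcal{B}(X_2))$ is standard, and likewise for $X_1 \times X_2$ and $X_2 \times X_3$. I would invoke the disintegration theorem (as stated, e.g., in Kallenberg or Villani's book on optimal transport) to write
\[
 q_{12}(A_1 \times A_2) = \int_{A_2} K_{12}(x_2, A_1) \, d\mu_2(x_2), \qquad q_{23}(A_2 \times A_3) = \int_{A_2} K_{23}(x_2, A_3) \, d\mu_2(x_2),
\]
where $K_{12} : X_2 \times \mathcal{B}(X_1) \to [0,1]$ and $K_{23} : X_2 \times \mathcal{B}(X_3) \to [0,1]$ are Markov kernels (i.e.\ $K_{1j}(x_2, \cdot)$ is a probability measure for $\mu_2$-a.e.\ $x_2$, and $K_{1j}(\cdot, A)$ is Borel measurable for every Borel $A$).

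Next, I would define $q_{123} \in \PP(X_1 \times X_2 \times X_3)$ by
\[
 q_{123}(A_1 \times A_2 \times A_3) := \int_{A_2} K_{12}(x_2, A_1) \, K_{23}(x_2, A_3) \, d\mu_2(x_2),
\]
and extend to $\mathcal{B}(X_1 \times X_2 \times X_3)$ via Caratheodory (the integrand is Borel in $x_2$, and the product kernel $K_{12}(x_2,\cdot) \otimes K_{23}(x_2,\cdot)$ is a probability measure on $X_1 \times X_3$, so $q_{123}$ is a well-defined probability measure; it also arises as the pushforward of the obvious kernel integrated against $\mu_2$). Taking $A_3 = X_3$ and using $K_{23}(x_2, X_3) = 1$ for $\mu_2$-a.e.\ $x_2$ recovers $q_{12}$ as the marginal on $X_1 \times X_2$; symmetrically one gets $q_{23}$ on $X_2 \times X_3$. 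The marginal of $q_{123}$ on $X_1 \times X_3$ is then automatically a coupling of $\mu_1$ and $\mu_3$, since its further marginals on $X_1$ and $X_3$ are $\mu_1$ and $\mu_3$.

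The only genuine technical point is the existence and joint Borel measurability of the disintegration kernels $K_{12}, K_{23}$; everything else is a routine Fubini-type computation. This point, however, is standard for Polish spaces, so no additional work is required beyond citing the disintegration theorem.
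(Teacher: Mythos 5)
Your proof is correct and is precisely the standard gluing argument via disintegration along the common marginal; the paper does not spell this out but simply cites Villani's Lemma~7.6 (and Chodosh), which is exactly this construction. The only cosmetic slip is the subscript $K_{1j}$ where you mean $K_{12}$ and $K_{23}$.
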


\begin{proof}
See \cite[Lemma~7.6]{Villani_topics_in_OT} or \cite[Lemma 3.3]{Chodosh-2012}.
\end{proof}

Fix some complete, separable metric space $(X,d)$.
We recall the definition of the {\bf $W_p$-Wasserstein distance} for $p \geq 1$ between two probability measures $\mu_1, \mu_2 \in \mathcal{P} (X)$:
\[ d_{W_p} (\mu_1, \mu_2) := \inf_{q} \bigg( \int_{X \times X} d^p(x_1, x_2) dq(x_1, x_2) \bigg)^{1/p}, \]
where the infimum is taken over all couplings $q \in \mathcal{P} (X \times X)$ between $\mu_1, \mu_2$.
We have

\begin{Proposition}
$d_{W_p}$ defines a complete metric on $\mathcal{P} (X)$ if we allow it to attain the value $\infty$.
\end{Proposition}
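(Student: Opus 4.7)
The plan is to first verify the metric axioms (allowing the value $+\infty$) and then establish completeness within each finite-distance class. Non-negativity is immediate, and symmetry follows by pushing any coupling $q$ of $(\mu_1,\mu_2)$ forward under the coordinate-swap map, which preserves the $p$-cost. That $d_{W_p}(\mu,\mu) = 0$ follows from the diagonal coupling $(\id \times \id)_* \mu$. For the converse, if $d_{W_p}(\mu_1,\mu_2) = 0$, choose couplings $q_n$ with $\int d^p \, dq_n \to 0$; the sequence $(q_n)$ is tight since its marginals $\mu_1, \mu_2$ are tight by Lemma~\ref{Lem_basic_measure}\ref{Lem_basic_measure_a}, so Lemma~\ref{Lem_basic_measure}\ref{Lem_basic_measure_d} yields a weak subsequential limit $q_\infty$ which is still a coupling of $(\mu_1,\mu_2)$. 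Lower semi-continuity of $\int d^p \, dq$ under weak convergence (Fatou applied to the continuous nonnegative truncations $\min\{d^p, R\}$) forces $q_\infty$ to be concentrated on the diagonal, so $\mu_1 = \mu_2$.

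For the triangle inequality, fix $\mu_1, \mu_2, \mu_3 \in \mathcal{P}(X)$ with finite pairwise distances (otherwise there is nothing to prove), let $\eps > 0$, and choose couplings $q_{12}, q_{23}$ whose $p$-costs are within $\eps$ of the respective Wasserstein distances. Lemma~\ref{Lem_gluing} produces $q_{123} \in \mathcal{P}(X \times X \times X)$ whose $(1,2)$- and $(2,3)$-marginals are $q_{12}, q_{23}$; its $(1,3)$-marginal is then a coupling of $(\mu_1,\mu_3)$. Combining the triangle inequality for $d$ with Minkowski's inequality in $L^p(q_{123})$ yields
\[ d_{W_p}(\mu_1,\mu_3) \leq \bigg( \int d^p(x_1,x_3) \, dq_{123} \bigg)^{1/p} \leq d_{W_p}(\mu_1,\mu_2) + d_{W_p}(\mu_2,\mu_3) + 2\eps, \]
and $\eps \to 0$ finishes this step.

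For completeness, fix a $d_{W_p}$-Cauchy sequence $(\mu_n)$ and, after passing to a subsequence, assume $d_{W_p}(\mu_n,\mu_{n+1}) \leq 2^{-n}$; in particular $C := \sup_n d_{W_p}(\mu_1,\mu_n) < \infty$. I first show $(\mu_n)$ is tight: given $\eps > 0$, Lemma~\ref{Lem_basic_measure}\ref{Lem_basic_measure_a} yields a compact $K \subset X$ with $\mu_1(X \setminus K) < \eps/2$, and for each $n$ pick a coupling $q_{1,n}$ of $(\mu_1,\mu_n)$ of $p$-cost at most $(C+1)^p$. Markov's inequality gives $q_{1,n}(\{d(x,y) > r\}) \leq (C+1)^p / r^p$, hence $\mu_n(X \setminus B(K,r)) \leq \eps/2 + (C+1)^p/r^p < \eps$ for $r$ large. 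Lemma~\ref{Lem_basic_measure}\ref{Lem_basic_measure_e} then provides tightness, and Lemma~\ref{Lem_basic_measure}\ref{Lem_basic_measure_d} yields a weak limit $\mu_\infty \in \mathcal{P}(X)$. Finally, for fixed $n$, take near-optimal couplings $q_{n,m}$ of $(\mu_n,\mu_m)$; the sequence $(q_{n,m})_m$ is tight (its marginals converge weakly, hence are tight as a pair), so a weak subsequential limit $q_{n,\infty}$ exists and is a coupling of $(\mu_n,\mu_\infty)$. Lower semi-continuity of $q \mapsto \int d^p \, dq$ gives
\[ d_{W_p}(\mu_n,\mu_\infty) \leq \liminf_{m \to \infty} d_{W_p}(\mu_n,\mu_m), \]
which tends to zero by the Cauchy condition.

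The main obstacle lies in the completeness step, where two technical points require care. First, tightness of a $W_p$-Cauchy sequence is not automatic; it requires the coupling-plus-Markov argument above, which quantitatively transports tightness from $\mu_1$ to all the $\mu_n$ using a uniform bound on $d_{W_p}(\mu_1, \mu_n)$. Second, the lower semi-continuity of $d_{W_p}$ under weak convergence involves the unbounded cost $d^p$, so one must truncate to $\min\{d^p, R\}$, apply the Portmanteau/Fatou step, and then let $R \to \infty$ by monotone convergence. Both ingredients rely essentially on the completeness and separability of $(X,d)$ — the same hypothesis that makes each $\mu \in \mathcal{P}(X)$ tight to begin with.
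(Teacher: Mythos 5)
Your proof takes a fundamentally different route from the paper: the paper simply cites \cite[Theorem~7.3]{Villani_topics_in_OT} for the bounded-diameter case and reduces the general case via the truncated metrics $d_A = \min\{d, A\}$. Your self-contained argument is a reasonable plan, and the metric-axiom portion (symmetry via the flip map, definiteness via tightness of near-optimal couplings plus Fatou/Portmanteau on the truncated cost, triangle inequality via Lemma~\ref{Lem_gluing} and Minkowski in $L^p(q_{123})$) is correct.

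The completeness step, however, has a genuine gap in the tightness argument. You fix $\mu_1$ as a reference measure, use only the \emph{uniform bound} $C := \sup_n d_{W_p}(\mu_1, \mu_n) < \infty$, and derive $\mu_n(X \setminus B(K, r)) < \eps$ for $r$ \emph{large depending on $\eps$}. This is not the hypothesis of Lemma~\ref{Lem_basic_measure}\ref{Lem_basic_measure_e}, which requires a compact $K'_\eps$ with $\mu_n(X \setminus B(K'_\eps, \eps)) \leq \eps$ --- i.e.\ the neighborhood radius must \emph{equal} the mass defect $\eps$, not grow like $C/\eps^{1/p}$ as $\eps \to 0$. The two conditions are not equivalent: a uniformly $W_p$-bounded family need not be tight (for instance $\mu_n = \delta_{e_n}$ in $\ell^2$, where all pairwise $W_p$-distances equal $\sqrt 2$ yet the family escapes every compact set). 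So a uniform distance bound alone cannot prove tightness; the Cauchy property must enter quantitatively. The repair is to let the reference measure vary with $\eps$: choose $N = N(\eps)$ so that $d_{W_p}(\mu_N, \mu_n) \lesssim \eps^{1+1/p}$ for $n \geq N$, pick compact $K$ with $\mu_N(X \setminus K) < \eps/2$, and then Markov on a near-optimal coupling of $(\mu_N, \mu_n)$ gives $q(\{d > \eps\}) \leq \eps^{-p} \cdot O(\eps^{p+1}) \lesssim \eps$, hence $\mu_n(X \setminus B(K, \eps)) \leq \eps$ for $n \geq N$; the finitely many earlier $\mu_n$ are handled by enlarging $K$ with compacts from their individual tightness. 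This puts you genuinely in the position to invoke Lemma~\ref{Lem_basic_measure}\ref{Lem_basic_measure_e}. The remainder of your completeness step (extracting a weak limit, lower semi-continuity of the $p$-cost along near-optimal couplings, and passing to the limit in $m$) is sound.
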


\begin{proof}
See  \cite[Theorem~7.3]{Villani_topics_in_OT}.
If $(X,d)$ is unbounded, then apply this theorem to the metric $d_A := \min \{ d, A \}$ and let $A \to \infty$.
\end{proof}

Let $\PP^p (X) \subset \PP (X)$ be the set of probability measures $\mu \in \PP(X)$ such that $d_{W_p} (\mu, \delta_x) = (\int_X d^p (x, \cdot) \, d\mu)^{1/p} < \infty$ for one (and thus for any) $x \in X$.
Then $(\PP^p (X), d_{W_p} |_{\PP^p (X)})$ is a complete metric space and we have:

\begin{Lemma} \label{Lem_W_p_separable}
$(\PP^p (X), d_{W_p} |_{\PP^p (X)})$ is separable. 
Moreover, for any dense subset $S \subset X$ the set of measures $\mu \in \PP^p(X)$ of finite support and with the property that $\supp \mu \subset X$ and that $\mu(\{x \}) \in \mathbb{Q}$ for all $x \in X$ is dense in $(\PP^p (X), d_{W_p} |_{\PP^p (X)})$.
\end{Lemma}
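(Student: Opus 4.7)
The plan is to prove the second (and stronger) assertion, from which the first follows by taking $S$ countable---available since $X$ is separable---so that the resulting family of finitely-supported rational-mass measures on $S$ is countable. Fix $\mu \in \PP^p(X)$, $\eps > 0$, and a basepoint $s_0 \in S$ (so that $\int d^p(s_0,\cdot)\,d\mu < \infty$), and aim to produce $\mu' = \sum_{j=0}^N q_j \delta_{s_j}$ with $s_j \in S$, $q_j \in \mathbb{Q}_{\geq 0}$, $\sum_j q_j = 1$, and $d_{W_p}(\mu,\mu') < \eps$.

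The construction proceeds in three stages, each bounded via an explicit coupling. First (\emph{truncation}): since $d^p(s_0,\cdot) \in L^1(\mu)$, regularity of $\mu$ (Lemma~\ref{Lem_basic_measure}\ref{Lem_basic_measure_a}) yields a compact $K \subset X$ with $\int_{X \setminus K} d^p(s_0,\cdot)\,d\mu$ as small as desired. Let $\mu_1 := \mu|_K + \mu(X \setminus K)\delta_{s_0}$ and bound $d_{W_p}^p(\mu,\mu_1) \leq \int_{X \setminus K} d^p(\cdot,s_0)\,d\mu$ using the coupling induced by the map that is the identity on $K$ and $\equiv s_0$ on $X \setminus K$. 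Second (\emph{spatial discretization}): $K$ is totally bounded, so for any $\delta > 0$ density of $S$ in $X$ yields finitely many $s_1,\ldots,s_N \in S$ with $K \subset \bigcup_{i=1}^N B(s_i,\delta)$; picking a disjoint Borel decomposition $K = A_1 \sqcup\cdots\sqcup A_N$ with $A_i \subset B(s_i,\delta)$ and setting
\[
   \mu_2 := \mu(X \setminus K)\delta_{s_0} + \sum_{i=1}^N \mu(A_i)\delta_{s_i},
\]
the coupling sending mass on each $A_i$ to $s_i$ gives $d_{W_p}(\mu_1,\mu_2) \leq \delta$. Third (\emph{rationalization}): write $\mu_2 = \sum_{j=0}^N p_j \delta_{s_j}$ and approximate $p_j$ by nonnegative rationals $q_j$ with $\sum_j q_j = 1$ and $\sum_j |p_j - q_j|$ arbitrarily small (take rational under-approximations and add the deficit to one coordinate); the resulting $\mu' := \sum_j q_j \delta_{s_j}$ satisfies $d_{W_p}^p(\mu_2,\mu') \leq D^p \sum_j |p_j - q_j|$, where $D := \max_{i,j} d(s_i,s_j) < \infty$. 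Tuning the three parameters so that each stage contributes at most $\eps/3$ and applying the triangle inequality yields $d_{W_p}(\mu,\mu') < \eps$.

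The only nonroutine step is the truncation in Stage~1: the tail of $\mu$ is generally unbounded in $X$, so a naive ``mass moved $\times$ diameter'' estimate fails. The moment hypothesis $\mu \in \PP^p(X)$ is precisely what makes $\int_{X \setminus K} d^p(s_0,\cdot)\,d\mu$ small for large compact $K$, and this is the one place where the specific structure of $\PP^p$ enters the argument, as opposed to bare $\PP$. Stages 2 and 3 are then entirely routine discretization arguments.
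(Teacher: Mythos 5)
Your proof is correct and follows essentially the same approach as the paper: the key step is the truncation to compact support via the moment hypothesis, which is exactly the paper's argument. You additionally spell out the spatial discretization and rationalization of the compactly supported measure, steps the paper dismisses with a one-line ``observe that if $\supp\mu$ is compact, then it can be approximated by the desired measures.''
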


\begin{proof}
The first part of the lemma follows from the second part.
For the second part observe that if $\supp \mu$ is compact, then it can be approximated by the desired measures.
So it remains to show that any $\mu \in \PP^p(X)$ is the limit of $\mu \in \PP^p(X)$ with compact support.
For this purpose, fix some $x \in X$ and observe that by Lemma~\ref{Lem_basic_measure}\ref{Lem_basic_measure_a} there is an increasing sequence of compact subsets $K_i \subset X$ with $\bigcup_{i=1}^\infty K_i = \supp \mu$.
Let $\mu_i := \mu |_{K_i} + \mu ( X \setminus K_i) \delta_x$.
Then $\supp \mu_i$ is compact and $q_i :=  \mu |_{K_i} \otimes  \mu |_{K_i} +  \mu |_{X \setminus K_i} \otimes \delta_x$ is a coupling between $\mu, \mu_i$.
Therefore, by dominated convergence
\[ d_{W_p}^p (\mu, \mu_i) = \int_{X \setminus K_i} d^p(\cdot, x) d\mu 
= \int_X d^p(\cdot, x) \chi_{X \setminus K_i} d\mu  \to 0, \]
which finishes the proof.
\end{proof}

We will mainly be concerned with the $W_1$-Wasserstein distance and we will frequently use the following equivalent characterization of $d_{W_1}$ (due to the Kantorovich-Rubinstein Theorem \cite[Theorem~1.14]{Villani_topics_in_OT}):

\begin{Proposition} \label{Prop_Kant_Rub}
We have
\[ d_{W_1} (\mu_1, \mu_2) = \sup_{f} \int_X f \, d(\mu_1 - \mu_2), \]
where the supremum is taken over all bounded $1$-Lipschitz functions $f : X \to \IR$.
\end{Proposition}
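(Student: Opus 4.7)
The plan is to establish the two inequalities separately. The easy direction $\sup_f \int f\, d(\mu_1-\mu_2) \le d_{W_1}(\mu_1,\mu_2)$ is immediate: for any coupling $q$ between $\mu_1, \mu_2$ and any bounded $1$-Lipschitz $f$,
\[ \int_X f\, d(\mu_1-\mu_2) = \int_{X\times X} \bigl(f(x_1)-f(x_2)\bigr)\, dq(x_1,x_2) \le \int_{X\times X} d(x_1,x_2)\, dq, \]
and taking the infimum over $q$ followed by the supremum over $f$ gives the claim. (If $d_{W_1}=\infty$, there is nothing to check on this side.) This does not yet use anything deep about the structure of optimal transport.

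For the reverse inequality I would invoke the general Kantorovich duality, which I may cite from Villani: for any lower semicontinuous cost $c:X\times X \to [0,\infty]$,
\[ \inf_q \int c\, dq \;=\; \sup_{\varphi(x)+\psi(y)\le c(x,y)} \Bigl(\int \varphi\, d\mu_1 + \int \psi\, d\mu_2\Bigr), \]
where the supremum is over pairs of bounded continuous (or, after a standard approximation, integrable) functions. Specialize to $c=d$. The key observation is that for $c=d$ the optimal $\psi$ can be chosen as the $c$-transform $\psi(y)=\inf_x(d(x,y)-\varphi(x))$, and the corresponding optimal $\varphi$ (a $d$-concave function) is automatically $1$-Lipschitz and satisfies $\psi=-\varphi$. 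Hence the dual problem collapses to
\[ \sup_{\varphi\ \text{$1$-Lipschitz}} \int \varphi\, d(\mu_1-\mu_2), \]
which is precisely the right-hand side of the proposition (after an additional truncation step to reduce to \emph{bounded} $1$-Lipschitz $\varphi$).

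A small technicality is the reduction from arbitrary $1$-Lipschitz to bounded $1$-Lipschitz functions. If $\mu_1,\mu_2 \in \PP^1(X)$, then any $1$-Lipschitz $\varphi$ is integrable against $\mu_1-\mu_2$, and one replaces $\varphi$ by $\varphi_R := \max\{-R,\min\{R,\varphi-\varphi(x_0)\}\}$ for some fixed $x_0\in X$; these are bounded $1$-Lipschitz, $\int \varphi\, d(\mu_1-\mu_2)=\int (\varphi-\varphi(x_0))\, d(\mu_1-\mu_2)$ since $\mu_1-\mu_2$ has zero total mass, and by dominated convergence $\int \varphi_R\, d(\mu_1-\mu_2)\to \int(\varphi-\varphi(x_0))\, d(\mu_1-\mu_2)$ as $R\to\infty$. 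If $d_{W_1}(\mu_1,\mu_2)=\infty$, the same truncation together with Lemma~\ref{Lem_basic_measure}\ref{Lem_basic_measure_a} shows that the right-hand side is also $\infty$ by testing against suitable distance-type functions.

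The main obstacle is of course the reverse inequality, which genuinely requires the existence of optimizers on the dual side (or equivalently a Hahn-Banach separation argument in the primal LP). I would not reprove this from scratch but cite Villani's Theorem~1.14 verbatim as the paper already does; the role of my argument above is then just to reconcile the ``bounded $1$-Lipschitz'' form of the statement with the general Kantorovich duality, and to handle the case when $d_{W_1}$ is infinite.
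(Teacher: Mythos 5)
Your proposal is correct and takes essentially the same route as the paper: the paper simply cites the Kantorovich--Rubinstein Theorem from \cite[Theorem~1.14]{Villani_topics_in_OT} with no further argument, and your write-up is just a fleshed-out version of that citation, with the easy inequality, the $c$-transform reduction, the truncation to bounded Lipschitz test functions, and the infinite-distance case spelled out. Nothing to flag.
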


\subsection{Variances of measures}
We recall the notion of variance from \cite[\HKVarDef]{Bamler_HK_entropy_estimates}, which can be generalized easily to the setting of metric measure spaces.

\begin{Definition}[Variance] \label{Def_Variance}
The {\bf variance} between two probability measures $\mu_1, \mu_2 \in \mathcal{P}(X)$ on a metric space $(X,d)$ is defined as
\[ \Var (\mu_1, \mu_2) := \int_X \int_X d^2 (x_1, x_2) d\mu_1 (x_1) d\mu_2 (x_2). \]
In the case $\mu_1 = \mu_2 = \mu$, we also write
\[ \Var (\mu) = \Var (\mu, \mu) = \int_X \int_X d^2 (x_1, x_2) d\mu (x_1) d\mu (x_2). \]
\end{Definition}

\begin{Remark}
This notion is similar, but slightly different from \cite[(3.1)]{Sturm-2006-I}.
If we write $\Var^{\text{Sturm}}$ for the notion in \cite[(3.1)]{Sturm-2006-I}, then
\[ \Var^{\text{Sturm}} (\mu) = \inf_{x \in X} \Var (\delta_x, \mu). \]
It follows from Lemma~\ref{Lem_Var_triangle_inequ} below that both notions are comparable:
\[ \Var^{\text{Sturm}} (\mu) \leq \Var (\mu) \leq 4 \Var^{\text{Sturm}} (\mu). \]
\end{Remark}
\medskip

We record the following linearity properties.
If $\mu_i, \mu'_j \in \mathcal{P}(X)$, $i = 1, \ldots, n$, $j = 1, \ldots, n'$, are probability measures and $a_i, a'_j \geq 0$ with $\sum_{i=1}^n a_i = \sum_{j=1}^{n'} a'_j = 1$, then
\[ \Var \Big( \sum_{i=1}^n a_i \mu_i, \sum_{j=1}^{n'} a'_j \mu'_j \Big) = \sum_{i=1}^n \sum_{j=1}^{n'} a_i a'_j \Var (\mu_i, \mu_j ). \]
Moreover, if $(Y, \nu), (Y', \nu')$ are probability spaces and $(\mu_s \in \mathcal{P}(X))_{s \in Y}$, $(\mu'_{s'} \in \mathcal{P}(X'))_{s' \in Y'}$ are integrable families of probability measures, then
\[  \Var \bigg( \int_Y \mu_s d\nu(s),  \int_{Y'} \mu'_{s'} d\nu'(s') \bigg) = \int_Y \int_{Y'} \Var ( \mu_s, \mu'_{s'} ) d\nu(s)  d\nu'(s'). \]

We also mention that for any $x \in X$ we have
\[ \Var (\delta_x ,\mu) = \int_X d^2 (x, y) d\mu (y) \]
and for any $x, y \in X$ we have
\[ \Var (\delta_x, \delta_y) = d^2 (x, y). \]

We will frequently use the following triangle inequality and bound relating $\Var(\mu_1, \mu_2)$ with $d_{W_1} (\mu_1, \mu_2)$:

\begin{Lemma} \label{Lem_W_1_vs_Var}  \label{Lem_Var_triangle_inequ}
If $\mu_1, \mu_2, \mu_3 \in \mathcal{P}(X)$, then
\[ \sqrt{ \Var (\mu_1, \mu_3) } \leq \sqrt{ \Var (\mu_1, \mu_2) } + \sqrt{ \Var (\mu_2, \mu_3) }, \]
\begin{equation} \label{eq_dW1_Var}
 d_{W_1} (\mu_1, \mu_2) \leq \sqrt{\Var(\mu_1, \mu_2)} 
\leq d_{W_1} (\mu_1, \mu_2) + \sqrt{\Var(\mu_1)} + \sqrt{\Var(\mu_2)}. 
\end{equation}
\end{Lemma}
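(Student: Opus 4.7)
The plan is to derive all three claimed inequalities from Minkowski's inequality in $L^2$ of an appropriate product measure, combined with the pointwise metric triangle inequality on $X$. The triangle inequality for $\sqrt{\Var}$ and the lower bound $d_{W_1} \leq \sqrt{\Var}$ both follow directly from this template; the upper bound is where the main difficulty lies.

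For the triangle inequality, I would pass to $(X^3, \mu_1 \otimes \mu_2 \otimes \mu_3)$, on which the coordinate projections $X_i$ are independent samples from $\mu_i$. Since $X_i$ and $X_j$ are independent and the remaining factor integrates to $1$, one has $\|d(X_i, X_j)\|_{L^2(\mu_1 \otimes \mu_2 \otimes \mu_3)}^2 = \Var(\mu_i, \mu_j)$. Combining the pointwise inequality $d(X_1, X_3) \leq d(X_1, X_2) + d(X_2, X_3)$ with $L^2$-Minkowski then yields the claim.

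The lower bound $d_{W_1}(\mu_1, \mu_2) \leq \sqrt{\Var(\mu_1, \mu_2)}$ is immediate: since $\mu_1 \otimes \mu_2$ is itself a coupling, Jensen's inequality applied to the concave square root gives
\[
d_{W_1}(\mu_1, \mu_2) \;\leq\; \int_{X^2} d \, d(\mu_1 \otimes \mu_2) \;\leq\; \Big( \int_{X^2} d^2 \, d(\mu_1 \otimes \mu_2) \Big)^{1/2} \;=\; \sqrt{\Var(\mu_1, \mu_2)}.
\]

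For the upper bound I would fix a $W_1$-optimal coupling $q$ of $\mu_1, \mu_2$ (or a near-optimal one if no optimum exists) and enlarge the probability space to $X^4$ carrying independent blocks $(X_1, X_2) \sim \mu_1 \otimes \mu_2$ and $(Y_1, Y_2) \sim q$. Writing $d(X_1, X_2) \leq d(X_1, Y_1) + d(Y_1, Y_2) + d(Y_2, X_2)$ and applying $L^2$-Minkowski to the joint law, using that $X_i$ is independent of $(Y_1, Y_2)$ and that the marginals of $q$ are $\mu_1, \mu_2$, produces
\[
\sqrt{\Var(\mu_1, \mu_2)} \;\leq\; \sqrt{\Var(\mu_1)} + \Big( \int d^2 \, dq \Big)^{1/2} + \sqrt{\Var(\mu_2)}.
\]
The main obstacle is to extract $d_{W_1}$ from the middle term rather than the weaker $d_{W_2}$ bound that a direct infimum over $q$ would produce. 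I would exploit the Kantorovich dual: for a $W_1$-optimal $q$ there is a $1$-Lipschitz potential $f^*$ satisfying $d(Y_1, Y_2) = f^*(Y_1) - f^*(Y_2)$ on $\supp q$, so $\int d^2 \, dq = d_{W_1}(\mu_1, \mu_2)^2 + \mathrm{Var}_q\bigl(f^*(Y_1) - f^*(Y_2)\bigr)$; the residual one-dimensional variance is then controlled by the $1$-Lipschitz variance estimate on each marginal and absorbed into the $\sqrt{\Var(\mu_i)}$ terms.
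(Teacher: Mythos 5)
Your handling of the triangle inequality and of the lower bound $d_{W_1}(\mu_1,\mu_2)\leq\sqrt{\Var(\mu_1,\mu_2)}$ is correct; the lower bound argument is exactly the one in the paper.

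The upper bound, however, does not close as written. After the $L^2$--Minkowski step on $X^4$ you have
\[
\sqrt{\Var(\mu_1,\mu_2)} \;\leq\; \sqrt{\Var(\mu_1)} \;+\; \Big(\int d^2\, dq\Big)^{1/2} \;+\; \sqrt{\Var(\mu_2)},
\]
and the problem is to replace $(\int d^2\, dq)^{1/2}$ by $\int d\, dq = d_{W_1}(\mu_1,\mu_2)$. The Kantorovich decomposition $\int d^2\, dq = d_{W_1}^2 + \mathrm{Var}_q(Z)$ with $Z := f^*(Y_1)-f^*(Y_2)$ is valid, but the residual variance cannot be ``absorbed into the $\sqrt{\Var(\mu_i)}$ terms'' --- those terms have already been spent in the Minkowski step. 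Carrying out the bookkeeping: since $f^*$ is $1$-Lipschitz and $Y_i$ has law $\mu_i$, one has $\mathrm{Var}_{\mu_i}(f^*)\leq\tfrac12\Var(\mu_i)$, hence
\[
\sqrt{\mathrm{Var}_q(Z)} \;\leq\; \sqrt{\mathrm{Var}_{\mu_1}(f^*)}+\sqrt{\mathrm{Var}_{\mu_2}(f^*)}
\;\leq\; \tfrac{1}{\sqrt 2}\big(\sqrt{\Var(\mu_1)}+\sqrt{\Var(\mu_2)}\big),
\]
and therefore $(\int d^2 dq)^{1/2}\leq d_{W_1}+\tfrac1{\sqrt2}(\sqrt{\Var(\mu_1)}+\sqrt{\Var(\mu_2)})$. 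Substituting back, the best your scheme delivers is
\[
\sqrt{\Var(\mu_1,\mu_2)} \;\leq\; d_{W_1}(\mu_1,\mu_2) + \big(1+\tfrac1{\sqrt 2}\big)\big(\sqrt{\Var(\mu_1)}+\sqrt{\Var(\mu_2)}\big),
\]
which is strictly weaker than the stated inequality. There is no rearrangement of this strategy that recovers coefficient $1$, because the coupling $d(Y_1,Y_2)$ always enters through an $L^2(q)$-norm and the $L^2$--$L^1$ gap is precisely what produces the overshoot. (There is also a secondary issue that a $W_1$-optimal coupling need not exist if the marginals lack finite first moments, though that could be handled by approximation.)

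The paper's proof avoids the issue by never applying Minkowski to a sum containing $d(Y_1,Y_2)$. Instead it applies the \emph{already-proved} $\sqrt{\Var}$ triangle inequality pointwise along the chain $\mu_1,\ \delta_{x_1},\ \delta_{x_2},\ \mu_2$, giving the scalar inequality
\[
d(x_1,x_2) = \sqrt{\Var(\delta_{x_1},\delta_{x_2})}
\;\geq\; \sqrt{\Var(\mu_1,\mu_2)} - \sqrt{\Var(\delta_{x_1},\mu_1)} - \sqrt{\Var(\delta_{x_2},\mu_2)}
\]
for every $(x_1,x_2)$. Integrating this against an arbitrary coupling $q$: the first term on the right is constant, and the other two depend only on $x_1$ or $x_2$ separately, so they reduce to integrals against the marginals $\mu_1,\mu_2$ and are bounded by $\sqrt{\Var(\mu_1)},\sqrt{\Var(\mu_2)}$ via Cauchy--Schwarz. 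The left side integrates to $\int d\, dq$, and the infimum over $q$ gives exactly the claimed bound. The key difference from your approach is that the coupling term stays linear in $d$ throughout and Cauchy--Schwarz is applied only to the outer two terms.
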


\begin{proof}
The lemma follows along the lines of \cite[\HKPropVarTriangle]{Bamler_HK_entropy_estimates}.
We give another proof of (\ref{eq_dW1_Var}) using couplings.
For the first bound, observe that $\mu_1 \otimes \mu_2$ is a coupling between $\mu_1, \mu_2$, so
\begin{multline*}
 d_{W_1} (\mu_1, \mu_2) \leq \int_{X} \int_{X} d(x_1, x_2) \, d\mu_1 (x_1) d\mu_2 (x_2)
\leq \bigg( \int_{X} \int_{X} d^2(x_1, x_2) \, d\mu_1 (x_1) d\mu_2 (x_2) \bigg)^{1/2} \\
= \sqrt{\Var(\mu_1, \mu_2)} . 
\end{multline*}
For the second bound, we use Lemma~\ref{Lem_Var_triangle_inequ} to deduce for any $x_1, x_2 \in X$
\[ d(x_1,x_2) = \sqrt{\Var (\delta_{x_1}, \delta_{x_2})} 
\geq \sqrt{\Var(\mu_1, \mu_2)} - \sqrt{\Var (\delta_{x_1}, \mu_1)} - \sqrt{\Var (\delta_{x_2}, \mu_2)}. \]
So for any coupling $q$ between $\mu_1, \mu_2$ we have
\begin{align*}
 \int_{X \times X} & d(x_1, x_2) \, dq(x_1, x_2) \\
&\geq \sqrt{\Var(\mu_1, \mu_2)} - \int_X \sqrt{\Var (\delta_{x_1}, \mu_1)} \, d\mu_1 (x_1) 
- \int_X \sqrt{\Var (\delta_{x_2}, \mu_2)} \, d\mu_2 (x_2) \\
&\geq  \sqrt{\Var(\mu_1, \mu_2)} - \bigg( \int_X \Var (\delta_{x_1}, \mu_1) \, d\mu_1 (x_1) \bigg)^{1/2} 
-  \bigg( \int_X \Var (\delta_{x_2}, \mu_2) \, d\mu_2 (x_2) \bigg)^{1/2} \\
&=  \sqrt{\Var(\mu_1, \mu_2)}- \sqrt{\Var(\mu_1)} - \sqrt{\Var(\mu_2)}. 
\end{align*}
This finishes the proof.
\end{proof}

We will also need:

\begin{Lemma} \label{Lem_weak_conv_2_W1_conv}
Suppose that $(X,d)$ is complete and separable and consider a sequence $\mu_i \in \PP(X)$ that weakly converges to some $\mu_\infty \in \PP(X)$ and satisfies $\Var (\mu_i) \leq C < \infty$.
Then $\mu_i \to \mu_\infty$ in $d_{W_1}$ and $\Var(\mu_\infty) \leq \liminf_{i \to \infty} \Var (\mu_i)$.
\end{Lemma}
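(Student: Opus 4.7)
The plan is to establish the two conclusions separately, starting with the variance estimate since it will also feed into the $W_1$ convergence.

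\textbf{Step 1: Lower semicontinuity of variance.} Since $\mu_i \to \mu_\infty$ weakly on $X$, the product measures satisfy $\mu_i \otimes \mu_i \to \mu_\infty \otimes \mu_\infty$ weakly on $X \times X$. For any $M < \infty$ the function $(x,y) \mapsto \min\{d^2(x,y), M\}$ is bounded and continuous, so
\[
\int_{X \times X} \min\{d^2, M\} \, d(\mu_\infty \otimes \mu_\infty) = \lim_{i \to \infty} \int_{X \times X} \min\{d^2, M\} \, d(\mu_i \otimes \mu_i) \leq \liminf_{i \to \infty} \Var(\mu_i).
\]
Letting $M \to \infty$ and using monotone convergence yields $\Var(\mu_\infty) \leq \liminf_{i \to \infty} \Var(\mu_i)$.

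\textbf{Step 2: Uniform second moment bound about a fixed point.} I will pick a reference point $x_0 \in \supp \mu_\infty$ and show that $\int d^2(x_0, \cdot) \, d\mu_i$ is bounded uniformly in large $i$. Choose $r_0 > 0$ with $\mu_\infty(\partial B(x_0, r_0)) = 0$; since $x_0 \in \supp \mu_\infty$, we may arrange $c := \mu_\infty(B(x_0, r_0)) > 0$, and by the portmanteau theorem $\mu_i(B(x_0, r_0)) \geq c/2$ for large $i$. For any $y \in B(x_0, r_0)$ and any $x \in X$, $d^2(x_0, x) \leq 2 d^2(y, x) + 2 r_0^2$. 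Integrating $y$ against $\mu_i|_{B(x_0, r_0)}$ and then $x$ against $\mu_i$ gives
\[
\tfrac{c}{2} \int_X d^2(x_0, x) \, d\mu_i(x) \leq 2 \Var(\mu_i) + 2 r_0^2 \leq 2 C + 2 r_0^2,
\]
so $\int d^2(x_0, \cdot) \, d\mu_i \leq C'$ for some $C' < \infty$ and all large $i$. By Step 1 applied to the pair $\mu_i$ and $\delta_{x_0}$ (or directly by Fatou), the same bound holds for $\mu_\infty$.

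\textbf{Step 3: From weak convergence to $W_1$ convergence.} By Markov's inequality and Step 2, for any $R > 0$
\[
\int_{d(x_0, \cdot) > R} d(x_0, \cdot) \, d\mu_i \leq \tfrac{1}{R} \int d^2(x_0, \cdot) \, d\mu_i \leq C'/R,
\]
and the same for $\mu_\infty$. Now use the Kantorovich--Rubinstein representation (Proposition~\ref{Prop_Kant_Rub}); for any bounded $1$-Lipschitz $f : X \to \IR$ we may normalize $f(x_0) = 0$, so $|f| \leq d(x_0, \cdot)$. Fix a bounded $1$-Lipschitz cutoff $\eta_R$ with $\eta_R \equiv 1$ on $B(x_0, R)$ and $\eta_R \equiv 0$ outside $B(x_0, 2R)$. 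Then $f \eta_R$ is bounded and Lipschitz, so $\int f\eta_R \, d(\mu_i - \mu_\infty) \to 0$ by weak convergence, while the remainder satisfies
\[
\Big|\int f(1 - \eta_R) \, d(\mu_i - \mu_\infty)\Big| \leq \int_{d(x_0, \cdot) > R} d(x_0, \cdot) \, d\mu_i + \int_{d(x_0, \cdot) > R} d(x_0, \cdot) \, d\mu_\infty \leq 2 C'/R,
\]
uniformly in $i$. Taking $\limsup_{i \to \infty}$ and then $R \to \infty$ yields $d_{W_1}(\mu_i, \mu_\infty) \to 0$.

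The main obstacle is Step~2: weak convergence plus a bound on $\Var(\mu_i)$ does not immediately furnish a uniform bound on moments about a \emph{single} reference point, because the natural Sturm-type variance centers $x_i$ depend on $i$. The device of choosing $x_0$ in the support of the limit and exploiting $\mu_i(B(x_0, r_0)) \gtrsim c > 0$ to import the self-variance bound into a bound about $x_0$ is the key quantitative step; once we have it, uniform integrability of the distance and hence $W_1$-convergence follow by standard truncation.
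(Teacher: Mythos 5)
The paper settles this lemma with a one-line citation to Villani's Theorem~7.12 (the equivalence of $W_p$-convergence with weak convergence plus uniform integrability of $d^p(x_0,\cdot)$), so your self-contained argument is a genuinely different route, and the crucial insight in Step~2 is exactly the right one: the hypothesis $\Var(\mu_i)\leq C$ controls self-variance, not moments about a fixed point, and your device of selecting $x_0\in\supp\mu_\infty$, observing $\mu_i(B(x_0,r_0))\geq c/2$ eventually via Portmanteau, and then averaging the elementary bound $d^2(x_0,x)\leq 2d^2(y,x)+2r_0^2$ over $y\in B(x_0,r_0)$ is precisely what converts this into a uniform second-moment bound $\int d^2(x_0,\cdot)\,d\mu_i\leq C'$. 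Step~1 (lower semicontinuity of $\Var$ via truncation and monotone convergence) is also correct.

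However, the final limit-taking in Step~3 has a gap. For each fixed $f$, the claim that $\int f\eta_R\,d(\mu_i-\mu_\infty)\to 0$ by weak convergence is true, and the tail estimate via Markov is uniform in $f$ and $i$; but to conclude $\limsup_i d_{W_1}(\mu_i,\mu_\infty)\leq 2C'/R$ you need
\[
\limsup_{i\to\infty}\ \sup_{f}\Big|\int f\eta_R\,d(\mu_i-\mu_\infty)\Big|=0,
\]
i.e.\ the weak convergence must be \emph{uniform} over the family $\{f\eta_R: f\ \text{bounded }1\text{-Lipschitz},\ f(x_0)=0\}$, and this is not automatic from the pointwise statement. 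It does hold --- the family is uniformly bounded by $2R$ and uniformly Lipschitz (note $\eta_R$ has Lipschitz constant $1/R$, not $1$, so $\mathrm{Lip}(f\eta_R)\leq 3$), and weak convergence of probability measures on a Polish space is metrized by the bounded-Lipschitz (Dudley) metric; equivalently one can combine tightness of $\{\mu_i\}$ (Prokhorov) with Arzel\`a--Ascoli on a compact set carrying most of the mass. But this uniformity is the entire content of the direction of Villani's theorem you are re-proving, so as written the ``standard truncation'' phrase is doing more work than the displayed steps justify. Either make the uniformity explicit (cite the Dudley-metric fact or carry out the tightness + equicontinuity argument), or observe that your Step~2 establishes exactly the uniform-integrability condition in Villani's Theorem~7.12 and invoke that theorem directly, as the paper does.
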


\begin{proof}
See \cite[Theorem~7.12]{Villani_topics_in_OT}.
\end{proof}

\subsection{Metric measure spaces}
A triple $(X,d, \mu)$, consisting of a complete and separable metric space $(X,d)$ and a probability measure $\mu \in \mathcal{P} (X)$, is called a {\bf (normalized) metric measure space}.
If $\supp \mu = X$, then $(X,d, \mu)$ is said to have {\bf full support}.
If $\mu$ is only a measure on $X$, then $(X,d, \mu)$ is often called an (un-normalized) metric measure space.
In this paper we will only be interested in normalized metric measure spaces, and we will often drop the adjective ``normalized''.

A map $\phi : X_1 \to X_2$ between two metric measure spaces $(X_i,d_i, \mu_i)$, $i= 1,2$, is called an {\bf isometry (between metric measure spaces)} if it is a metric isometry between $(X_1, d_1)$ and $(X_2, d_2)$ and $\phi_* \mu_1 = \mu_2$.
If such an isometry exists, then  $(X_i,d_i, \mu_i)$, $i= 1,2$, are called {\bf isometric}.
We say that $(X_i,d_i, \mu_i)$, $i= 1,2$, have {\bf isometric support}, if the spaces restricted to $\supp \mu_i$, $i=1,2$ are isometric to each other as metric measure spaces.

\subsection{Distances between metric measure spaces}
We will frequently use the following distance notion between metric measure spaces.

\begin{Definition}
Consider two metric measure spaces $(X_1, \lb d_1, \lb \mu_1)$, $(X_2, d_2, \mu_2)$.
We define the {\bf Gromov-$W_p$-Wasserstein distance} for any $p \geq 1$ as
\[ d_{GW_p} \big( (X_1, d_1, \mu_1), (X_2, d_2, \mu_2) \big) := \inf d^{Z}_{W_p} ( (\varphi_1)_* \mu_1, ( \varphi_2)_* \mu_2 ), \]
where the infimum is taken over all isometric embeddings $\varphi_i : (X_i,d_i) \to (Z, d_Z)$ into some common metric space $(Z, d_Z)$.
\end{Definition}

This is a natural generalization the $\mathbf{D}$-distance from \cite{Sturm-2006-I}, to all exponents $p$.
See also \cite{Gigli-Mondino-Savare, Greven-Pfaffelhuber-Winter} for similar constructions.
In this paper we will mainly work with the Gromov-$W_1$-Wasserstein distance, as it is best suited for Ricci flows.

\begin{Proposition} \label{Prop_d_GW_p_pseudometric}
$d_{GW_p}$ satisfies all properties of a pseudometric that is allowed to attain the value $\infty$.
Moreover, $d_{GW_p}  ( (X_1, \lb d_1, \lb \mu_1),\lb (X_2, \lb d_2, \lb \mu_2) )=0$ if and only if $(X_1, \lb d_1, \lb \mu_1), (X_2, \lb d_2, \lb \mu_2)$ have isometric support.
\end{Proposition}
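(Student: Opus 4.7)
The non-negativity and symmetry of $d_{GW_p}$ are immediate from the definition, so only the triangle inequality is substantive among the pseudometric axioms. The plan is the standard ``gluing along a common factor'' construction: given three metric measure spaces and $\eps>0$, pick isometric embeddings $\varphi_1^{12},\varphi_2^{12}$ into some $(Z_{12},d_{Z_{12}})$ and $\varphi_2^{23},\varphi_3^{23}$ into some $(Z_{23},d_{Z_{23}})$ realizing $d_{GW_p}(1,2)$ and $d_{GW_p}(2,3)$ up to error $\eps$, then place on $Z_{12}\sqcup Z_{23}$ the largest pseudometric which restricts to $d_{Z_{12}},d_{Z_{23}}$ and identifies $\varphi_2^{12}(x)$ with $\varphi_2^{23}(x)$ for each $x\in X_2$, and quotient to obtain a genuine metric space $(Z,d_Z)$. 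A routine shortest-path check shows each $\varphi_i^{ij}$ descends to an isometric embedding $X_i\hookrightarrow Z$, so the triangle inequality for $d_{W_p}^Z$ on $\PP(Z)$ yields $d_{GW_p}(1,3)\le d_{GW_p}(1,2)+d_{GW_p}(2,3)+2\eps$; letting $\eps\to 0$ completes the step.

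The easy half of the second claim uses the same recipe: if $\phi :\supp\mu_1\to\supp\mu_2$ is an isometry with $\phi_*(\mu_1|_{\supp\mu_1})=\mu_2|_{\supp\mu_2}$, glue $X_1\sqcup X_2$ along the identification $y\sim\phi(y)$ for $y\in\supp\mu_1$. In the resulting metric space $Z$ the canonical maps $X_i\to Z$ are isometric embeddings pushing $\mu_1,\mu_2$ to a common probability measure, using Lemma~\ref{Lem_basic_measure}\ref{Lem_basic_measure_c} to concentrate each $\mu_i$ on $\supp\mu_i$; hence $d_{W_p}^Z=0$.

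For the converse, assume $d_{GW_p}=0$ and choose isometric embeddings $\varphi_i^n : X_i\to(Z^n,d_{Z^n})$ together with couplings $q^n$ on $Z^n\times Z^n$ between $(\varphi_1^n)_*\mu_1$ and $(\varphi_2^n)_*\mu_2$ satisfying $\int d_{Z^n}^p\,dq^n\to 0$. Pulling $q^n$ back via $(\varphi_1^n,\varphi_2^n)$ produces a coupling $\pi^n\in\PP(X_1\times X_2)$ of $\mu_1,\mu_2$ together with the measurable discrepancy $D^n(x_1,x_2):=d_{Z^n}(\varphi_1^n(x_1),\varphi_2^n(x_2))$ for which $\int(D^n)^p\,d\pi^n\to 0$. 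Since $\mu_1,\mu_2$ are tight (Lemma~\ref{Lem_basic_measure}\ref{Lem_basic_measure_a}), so is $\{\pi^n\}$, and a weak subsequential limit $\pi\in\PP(X_1\times X_2)$ exists and is again a coupling of $\mu_1,\mu_2$.

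The hard part is combining the $L^p$-smallness of $D^n$ with this weak limit in the face of varying ambient spaces $Z^n$. The key estimate, obtained from the triangle inequality inside $Z^n$ applied to the four points $\varphi_1^n(x_1),\varphi_2^n(x_2),\varphi_2^n(x_2'),\varphi_1^n(x_1')$, is
\[ \big|d_1(x_1,x_1')-d_2(x_2,x_2')\big|\leq D^n(x_1,x_2)+D^n(x_1',x_2') \]
for all $(x_1,x_2),(x_1',x_2')\in X_1\times X_2$. Raising to the $p$-th power, integrating against $\pi^n\otimes\pi^n$, and using $(a+b)^p\le 2^{p-1}(a^p+b^p)$ yields
\[ \int\big|d_1(x_1,x_1')-d_2(x_2,x_2')\big|^p\,d(\pi^n\otimes\pi^n)\le 2^p\int(D^n)^p\,d\pi^n\to 0. \]
Since the integrand is continuous and non-negative on $(X_1\times X_2)^2$, the lower-semicontinuity half of Portmanteau applied to $\pi^n\otimes\pi^n\rightharpoonup\pi\otimes\pi$ gives $d_1(x_1,x_1')=d_2(x_2,x_2')$ for $(\pi\otimes\pi)$-a.e.\ quadruple. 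A standard argument then shows that $\pi$ is supported on the graph of a distance-preserving map $\phi :\supp\mu_1\to\supp\mu_2$ with $\phi_*(\mu_1|_{\supp\mu_1})=\mu_2|_{\supp\mu_2}$, which extends by continuity to the desired isometry of supports.
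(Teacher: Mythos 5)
Your proof is correct and its overall architecture matches the paper's: the same gluing/amalgamation construction gives the triangle inequality (this is Lemma~\ref{Lem_combining_embeddings} in the paper, applied in exactly the way you describe), and the ``easy'' direction of the second claim is handled identically. Where you diverge slightly is in the converse direction. The paper's proof invokes Lemma~\ref{Lem_couplings_converge_isometry}, which treats the ambient distances $f_k(x_1,x_2):=d_{Z_k}(\varphi^k_1(x_1),\varphi^k_2(x_2))$ as a sequence of uniformly $2$-Lipschitz functions on $X_1\times X_2$, extracts a pointwise subsequential limit $f_\infty$ via Arzel\`a--Ascoli, and argues through $\supp q_\infty\subset\{f_\infty=0\}$ together with the inequality $|d_1(x_1,x'_1)-d_2(x_2,x'_2)|\le f_\infty(x_1,x_2)+f_\infty(x'_1,x'_2)$. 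You instead bypass the auxiliary limit function: you work directly with the distortion functional $|d_1-d_2|^p$ on $(X_1\times X_2)^2$, push the $L^p$-smallness of the discrepancy through the same triangle-inequality estimate, and then use lower semicontinuity of the integral of a non-negative continuous function under weak convergence of $\pi^n\otimes\pi^n$. This is a bit leaner for proving the proposition as stated. The paper's approach has the advantage of yielding the extra conclusion $d_{Z_k}(\varphi^k_1(x),\varphi^k_2(\phi(x)))\to 0$ pointwise, which is exploited elsewhere (e.g.\ in Theorem~\ref{Thm_extend_isometry}) but is not needed here. Your closing ``standard argument'' is genuinely standard but does hide a non-trivial step: one must check that the distance-preserving map read off from $\supp\pi$ (defined a priori only on the dense set $\mathrm{pr}_1(\supp\pi)$) extends continuously to $\supp\mu_1$, that $\pi$ is concentrated on the graph of the extension, and that the image is all of $\supp\mu_2$; this is precisely the content the paper spells out at the end of the proof of Lemma~\ref{Lem_couplings_converge_isometry}.
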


For the proof of Proposition~\ref{Prop_d_GW_p_pseudometric}, we will need the following lemma, which will also be useful throughout this paper.

\begin{Lemma}[Combining isometric embeddings] \label{Lem_combining_embeddings}
Let $2 \leq N \leq \infty$ and consider (possibly finite) sequences of metric spaces $(X_i, d_i)$ for $0 < i < N$, and $(Z_{i, i+1}, d_{i, i+1})$ for $0 \leq i < N$, as well as isometric embeddings $\varphi_{i,-} : X_{i} \to Z_{i,i+1}$ and $\varphi_{i, +} : X_{i} \to Z_{i-1,i}$ for $0 < i < N$.
Then there is a complete metric space $(Z, d_{Z})$ and sequences of isometric embeddings $\td\varphi_i : X_i \to Z$ for $0 < i < N$, $\psi_{i,i+1} : Z_{i,i+1} \to Z$ for $0 \leq i < N$, such that $\td\varphi_{i} = \psi_{i,i+1} \circ \varphi_{i+1,-}$ and $\td\varphi_{i+1} = \psi_{i,i+1} \circ \varphi_{i+1,+}$ for all $0 \leq i < N$.
\end{Lemma}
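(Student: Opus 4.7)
The plan is to construct $Z$ by ``chain gluing'': form the disjoint union of the intermediate spaces $Z_{i,i+1}$ and quotient by the identifications forced by the pairs of isometric embeddings of each $X_i$ into adjacent intermediate spaces, then take the metric completion. The construction reduces to iterating a standard gluing construction for two metric spaces along a common isometric subspace.

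First I would formulate the base gluing step. Given complete metric spaces $(W_1, d_{W_1})$ and $(W_2, d_{W_2})$ together with a third metric space $(Y, d_Y)$ and isometric embeddings $f_j : Y \to W_j$ for $j = 1,2$, form the pushout $W := (W_1 \sqcup W_2)/\!\!\sim$ where $f_1(y) \sim f_2(y)$ for all $y \in Y$, and equip it with
\[
d(p,q) = \begin{cases} d_{W_j}(p,q), & p, q \in W_j, \\ \inf_{y \in Y} \bigl[ d_{W_1}(p, f_1(y)) + d_{W_2}(f_2(y), q) \bigr], & p \in W_1,\; q \in W_2. \end{cases}
\]
This is a classical construction; the key point is that $d$ really is a metric in which the natural maps $W_j \hookrightarrow W$ are isometric embeddings, and that $W$ is complete after the obvious metric completion.

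For finite $N$, I would then apply this gluing inductively. At stage $i$, starting from the already-constructed space containing isometric images of $Z_{0,1}, \ldots, Z_{i-1,i}$ and of $X_1, \ldots, X_{i-1}$, glue in $Z_{i,i+1}$ along $X_i$ using the base step with $W_1$ equal to the current space, $W_2 = Z_{i,i+1}$, $Y = X_i$, $f_1 = $ the composition of $\varphi_{i,+}$ with the already-fixed isometric embedding $Z_{i-1,i} \hookrightarrow W_1$, and $f_2 = \varphi_{i,-}$. All previously constructed isometric embeddings remain isometric embeddings after each gluing, since the gluing metric restricts to the original metric on each side. Define $\psi_{i,i+1}$ to be the resulting isometric embedding $Z_{i,i+1} \hookrightarrow Z$, and define $\tilde\varphi_i$ as the common value $\psi_{i-1,i} \circ \varphi_{i,+} = \psi_{i,i+1} \circ \varphi_{i,-}$, which is consistent exactly because of how the gluing identifies these two embeddings of $X_i$. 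For $N = \infty$, I would take the directed colimit of the finite stages (which is a metric space in a natural way, since each inclusion is isometric) and then pass to the metric completion to ensure completeness.

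The main obstacle is verifying, at each gluing step, that the infimum formula really restricts to the original metric on each of $W_1, W_2$; equivalently, that a detour from two points of $W_j$ through the other side cannot shorten their distance. This is routine but nontrivial and is where the isometric hypothesis on $f_1, f_2$ is essential: any candidate detour $p \to f_j(y) \sim f_{3-j}(y) \to f_{3-j}(y') \sim f_j(y') \to q$ has total length at least $d_{W_j}(p,f_j(y)) + d_Y(y,y') + d_{W_j}(f_j(y'),q) \geq d_{W_j}(p,q)$ by the triangle inequality in $W_j$, using that $d_Y(y,y') = d_{W_{3-j}}(f_{3-j}(y), f_{3-j}(y')) = d_{W_j}(f_j(y), f_j(y'))$. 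Once this single lemma is established, the inductive assembly is formal and yields the stated $Z$, $\tilde\varphi_i$, and $\psi_{i,i+1}$.
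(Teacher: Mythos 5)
Your proposal is correct and follows essentially the same route as the paper: reduce to gluing two spaces along a common isometrically embedded subspace via the infimum distance formula, iterate for finite $N$, and pass to a direct limit and completion for $N = \infty$. One small imprecision: the glued distance $d$ is a priori only a pseudometric (distinct points of $W_1$ and $W_2$ can have distance zero when the image of $Y$ is not closed), so one should quotient by zero-distance pairs before or during completion, as the paper does explicitly --- but this does not affect the isometric-embedding property of the inclusions $W_j \hookrightarrow Z$ and your completion step handles it.
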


\begin{proof}
Suppose first that the lemma is true for $N = 2$.
Then by successive application of the lemma for $N=2$, we can construct metric spaces and isometric embeddings $Z_2 \to Z_3 \to \ldots$ such that for any $2 \leq N' < N+1$ the space $Z_{N'}$ allows isometric embeddings of the spaces $(X_1, d_1), \ldots, (X_{N'-1}, d_{N'-1})$ as described in the assertion of the lemma.
So the lemma holds for $2 \leq N < \infty$ and by taking a direct limit of the spaces $Z_{N'}$ it also holds for $N = \infty$.

It remains to verify the lemma for $N=2$.
Let
\[ Z' := (Z_{0,1} \sqcup Z_{1,2})/ \sim, \]
where we identify $\varphi_{1,+} (x) \sim \varphi_{1,-} (x)$ for all $x \in X_1$.
Let $\psi'_{0,1} : Z_{0,1} \to Z'$, $\psi'_{1,2} : Z_{1,2} \to Z'$ be the natural embeddings, which are injective, and set 
\[\td\varphi'_1 := \psi'_{0,1} \circ \varphi_{1,+} = \psi'_{1,2} \circ \varphi_{1,-}, . \]
Define $d_{Z'} : Z' \times Z' \to [0, \infty)$ such $\psi'_{0,1}, \psi'_{1,2}$ are isometric embeddings and such that for any $z_{0,1} \in Z_{0,1}$, $z_{1,2} \in Z_{1,2}$ we have
\[ d_{Z'} \big( \psi'_{0,1} (z_{0,1}), \psi'_{1,2} (z_{1,2} ) \big) := d_{Z'} \big(  \psi'_{1,2} (z_{1,2} ), \psi'_{0,1} (z_{0,1}) \big) := \inf_{x \in X_1} \big( d_{0,1} (z_{0,1}, \varphi_{1,+} (x)) + d_{1,2} (z_{1,2}, \varphi_{1,-} (x)) \big). \]
It can be checked easily that this definition is consistent and that $d_{Z'}$ is a pseudometric.
Let $(Z, d_{ Z})$ be the completion of the metric space that arises by identifying points of distance zero and let $\pi : Z' \to  Z$ be map induced by the natural projection.
Then $\td\varphi_1 := \pi \circ \td\varphi'_1$, $\psi_{0,1} := \pi \circ \psi'_{0,1}$, $\psi_{1,2} := \pi \circ \psi'_{1,2}$ satisfy the desired properties.
\end{proof}

\begin{proof}[Proof of Proposition~\ref{Prop_d_GW_p_pseudometric}.]
The proof is similar to \cite[Lemma~3.3, Theorem~3.6]{Sturm-2006-I}.
In order to verify the triangle inequality, consider three metric measure spaces $(X_i, d_i, \mu_i)$, $i =1,2,3$, and let $\eps > 0$.
Choose isometric embeddings $\varphi_{1} : X_1 \to Z_{1,2}$, $\varphi_{2,+} : X_2 \to Z_{1,2}$ into a metric space $(Z_{1,2}, d_{1,2})$, as well as isometric embeddings $\varphi_{2,-} : X_2 \to Z_{2,3}$, $\varphi_{3} : X_3 \to Z_{2,3}$ into a metric space $(Z_{2,3}, d_{1,2})$ such that
\begin{align*}
 d_{GW_p} \big( (X_1, d_1, \mu_1), (X_2, d_2, \mu_2) \big) &\geq d^{Z_{1,2}}_{W_p} ( (\varphi_1)_* \mu_1, ( \varphi_{2,+})_* \mu_2 ) - \eps, \\
 d_{GW_p} \big( (X_2, d_2, \mu_2), (X_3, d_3, \mu_3) \big) &\geq d^{Z_{2,3}}_{W_p} ( (\varphi_{2,-})_* \mu_2, ( \varphi_{3})_* \mu_3 ) - \eps, 
\end{align*}
By Lemma~\ref{Lem_combining_embeddings} we may assume that $Z_{1,2} = Z_{2,3} =: Z$ and $\varphi_{2,+} = \varphi_{2,-} =: \varphi_2$.
Then
\begin{align*}
  d_{GW_p} \big( (X_1, d_1, \mu_1), & (X_3, d_3, \mu_3) \big)
\leq  d^{Z}_{W_p} ( (\varphi_1)_* \mu_1, ( \varphi_{3})_* \mu_3 ) \\
&\leq d^{Z}_{W_p} ( (\varphi_1)_* \mu_1, ( \varphi_{2})_* \mu_2 ) + d^{Z}_{W_p} ( (\varphi_2)_* \mu_2, ( \varphi_{3})_* \mu_3 ) \\
&\leq  d_{GW_p} \big( (X_1, d_1, \mu_1), (X_2, d_2, \mu_2) \big)  + d_{GW_p} \big( (X_2, d_2, \mu_2), (X_3, d_3, \mu_3) \big)  + 2 \eps. 
\end{align*}
This shows that $d_{GW_p}$ is a pseudometric.

For the second statement, consider first a metric measure space $(X, d, \mu)$ and let $X' := \supp \mu$.
Taking $Z := X$ and considering the natural injections $X, X' \to Z$ allows us to conclude that 
\[ d_{GW_p} \big( (X, d, \mu), (\supp \mu, d|_{\supp \mu}, \mu |_{\supp \mu}) \big) = 0. \]
This proves one direction of the second statement.
The other direction is a consequence of the following lemma.
\end{proof}

\begin{Lemma} \label{Lem_couplings_converge_isometry}
Let $(X_i, d_i, \mu_i)$, $i=1,2$, be two metric measure spaces of full support and consider sequences of embeddings $\varphi^k_i : (X_i,d_i) \to (Z_k, d_{Z_k})$, $i=1,2$, $k = 1, 2, \ldots$, into metric spaces $(Z_k, d_{Z_k})$ and couplings $q_k$ between $\mu_1, \mu_2$ such that
\[ \int_{X_1 \times X_2} d^p_{Z_k} (\varphi^k_1(x_1), \varphi^k_2(x_2)) \, dq_k (x_1, x_2) \to 0. \]
Then, after passing to a subsequence, the couplings $q_k$ weakly converge to a coupling $q_\infty$ between $\mu_1, \mu_2$ of the form $q_\infty = (\id_{X_1}, \phi)_* \mu_1$, where $\phi : (X_1, d_1, \mu_1) \to (X_2, d_2, \mu_2)$ is an isometry.
Moreover, for any $x \in X_1$ we have $d_{Z_k} ( \varphi_1^k (x) , \varphi_2^k (\phi(x))) \to 0$.
\end{Lemma}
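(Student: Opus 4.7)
The plan is to extract a weakly convergent subsequence $q_k \to q_\infty$, show that $q_\infty$ is concentrated on the graph of an isometric map $\phi : X_1 \to X_2$, and then exploit concentration of $q_\infty$ near $(x,\phi(x))$ to obtain pointwise closeness of the embeddings. Tightness of $(q_k)$ is immediate from tightness of the marginals $\mu_1, \mu_2$: by Lemma~\ref{Lem_basic_measure}\ref{Lem_basic_measure_a}, for each $\eps > 0$ there exist compact $K_i \subset X_i$ with $\mu_i(X_i \setminus K_i) \leq \eps/2$, so $q_k(K_1 \times K_2) \geq 1 - \eps$ for every $k$, and Lemma~\ref{Lem_basic_measure}\ref{Lem_basic_measure_d} yields a subsequential weak limit $q_\infty \in \PP(X_1 \times X_2)$ whose marginals are still $\mu_1, \mu_2$.

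The key step is the estimate
\[ \bigl| d_1(x_1,x_1') - d_2(x_2,x_2') \bigr| \leq d_{Z_k}(\varphi_1^k(x_1),\varphi_2^k(x_2)) + d_{Z_k}(\varphi_1^k(x_1'),\varphi_2^k(x_2')), \]
coming from the triangle inequality in $Z_k$ together with the fact that the $\varphi_i^k$ preserve distances. Setting $F((x_1,x_2),(x_1',x_2')) := |d_1(x_1,x_1') - d_2(x_2,x_2')|$, integration against $q_k \otimes q_k$ combined with Jensen's inequality (valid since $q_k$ is a probability measure) yields $\int F \, d(q_k \otimes q_k) \leq 2 \bigl( \int d_{Z_k}^p \, dq_k \bigr)^{1/p} \to 0$. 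Since $F$ is continuous and $q_k \otimes q_k \to q_\infty \otimes q_\infty$ weakly, truncating to $\min(F,R)$ and passing to the limit forces $F = 0$ $(q_\infty \otimes q_\infty)$-a.e., and continuity of $F$ upgrades this to $F \equiv 0$ on $\supp(q_\infty) \times \supp(q_\infty)$. Taking $x_1 = x_1'$ shows $x_2 = x_2'$, so $\supp(q_\infty)$ is the graph of an isometric map $\phi : D \to X_2$ on $D := \pi_1(\supp(q_\infty))$. Since $(\pi_1)_* q_\infty = \mu_1$ has full support, $D$ is dense in $X_1$, so by completeness $\phi$ extends uniquely to an isometric embedding $\phi : X_1 \to X_2$. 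Disintegrating gives $q_\infty = (\id_{X_1}, \phi)_* \mu_1$, hence $\phi_* \mu_1 = (\pi_2)_* q_\infty = \mu_2$; since $\phi(X_1)$ is complete, therefore closed in $X_2$, and contains $\supp(\mu_2) = X_2$, the map $\phi$ is surjective and thus an isometry of metric measure spaces.

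For the last assertion I would fix $x \in X_1$ and $\delta > 0$ and consider the open set $A_\delta := B(x,\delta) \times B(\phi(x),\delta)$, which contains $(x,\phi(x)) \in \supp(q_\infty)$ (either because $x \in D$ or by approximating $x$ from $D$ and using closedness of the support together with continuity of $\phi$); hence $q_\infty(A_\delta) > 0$. Weak convergence and openness of $A_\delta$ give $\liminf_k q_k(A_\delta) > 0$, while $\int_{A_\delta} d_{Z_k}^p \, dq_k \to 0$, so for large $k$ I can select $(x_1^k, x_2^k) \in A_\delta$ with $d_{Z_k}(\varphi_1^k(x_1^k), \varphi_2^k(x_2^k)) \to 0$; the triangle inequality then bounds $d_{Z_k}(\varphi_1^k(x), \varphi_2^k(\phi(x)))$ by $d_1(x,x_1^k) + d_{Z_k}(\varphi_1^k(x_1^k),\varphi_2^k(x_2^k)) + d_2(x_2^k, \phi(x)) \leq 2\delta + o(1)$, and sending $\delta \to 0$ concludes. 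The main obstacle I anticipate is the passage from the integrated bound $\int F \, d(q_k \otimes q_k) \to 0$ to the pointwise statement $F \equiv 0$ on $\supp(q_\infty) \times \supp(q_\infty)$, which hinges on both continuity of $F$ and the compatibility of weak convergence with product measures.
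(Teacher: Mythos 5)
Your proposal is correct, and it takes a genuinely different route from the paper in two places. The paper defines the sequence of extrinsic distance functions $f_k(x_1,x_2) := d_{Z_k}(\varphi_1^k(x_1),\varphi_2^k(x_2))$, observes that these are $2$-Lipschitz, and applies Arzel\`a--Ascoli to extract (after a further subsequence) a pointwise limit $f_\infty$ with $\supp q_\infty \subset \{f_\infty = 0\}$; you instead work with the intrinsic quantity $F = |d_1 - d_2|$ on $(X_1\times X_2)^2$, which is independent of $k$ and therefore needs no Arzel\`a--Ascoli step. The price is paid at the end: the paper reads off $d_{Z_k}(\varphi_1^k(x),\varphi_2^k(\phi(x))) = f_k(x,\phi(x)) \to f_\infty(x,\phi(x)) = 0$ directly, whereas you have to run a concentration argument using that $q_\infty$ assigns positive mass to every neighborhood of $(x,\phi(x))$, the portmanteau theorem to transfer this to $q_k$, and the vanishing integral of $d_{Z_k}^p$ to locate a nearby pair with small extrinsic distance; this works but is longer. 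The other divergence is in identifying $q_\infty = (\id_{X_1},\phi)_*\mu_1$: the paper computes $\int h\,dq_\infty$ for bounded Lipschitz $h$ and invokes Lemma~\ref{Lem_basic_measure}\ref{Lem_basic_measure_b}, while you use that $\supp q_\infty$ is a closed subset of the graph $\Gamma_\phi$ of the continuous extension $\phi$, so pushing $q_\infty$ forward under the homeomorphism $\pi_1|_{\Gamma_\phi}$ and back gives the desired identity. Your ``disintegrating'' is slightly elliptic at that point — it's worth spelling out that since $\supp q_\infty \subset \Gamma_\phi$ and $(\pi_1)_*q_\infty = \mu_1$, composing $\pi_1|_{\Gamma_\phi}$ with its inverse $(\id,\phi)$ forces $q_\infty = (\id,\phi)_*\mu_1$ — but the reasoning is sound. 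Both approaches are valid; yours trades one compactness argument for a slightly heavier final step.
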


\begin{proof}
We first show that the sequence $q_k$ is tight.
Let $\eps > 0$ and choose compact subsets $K_{i,\eps} \subset X_i$ such that $\mu_i (X_i \setminus K_{i, \eps}) \leq \eps/2$, see Lemma~\ref{Lem_basic_measure}\ref{Lem_basic_measure_a}.
Then
\begin{multline*}
 q_k \big( X_1 \times X_2 \setminus K_{1,\eps} \times K_{2, \eps} \big)
\leq q_k \big( (X_1 \setminus K_{1, \eps}) \times X_2 \big) + q_k \big( X_1 \times (X_2 \setminus K_{2, \eps})  \big) \\
= \mu_1 (X_1 \setminus K_{1, \eps}) + \mu_2 (X_2 \setminus K_{2, \eps}) \leq \eps. 
\end{multline*}
So by Lemma~\ref{Lem_basic_measure}\ref{Lem_basic_measure_d} we may pass to a subsequence such that $q_k \to q_\infty \in \mathcal{P}(X_1 \times X_2)$ weakly, where $q_\infty$ is also a coupling between $\mu_1, \mu_2$.

Next, consider the sequence of functions
\[ f_k : X_1 \times X_2 \to [0, \infty), \qquad f_k(x_1, x_2) := d_{Z_k} (\varphi^k_1(x_1), \varphi^k_2(x_2)). \]
It follows from the assumption of the lemma, using H\"older's inequality, that $\int_{X_1 \times X_2} f_k \, dq_k \to 0$.
Since the functions $f_k$ are $2$-Lipschitz and since $q_k \to q_\infty$ weakly, we may apply Arzela-Ascoli, and conclude that, after passing to a subsequence, we have $f_k \to f_\infty$ pointwise, where $f_\infty : X_1 \times X_2 \to [0, \infty)$ is still $2$-Lipschitz.
Moreover, it follows that
\[ \int_{X_1 \times X_2} f_\infty \, dq_\infty = 0, \]
which implies that $\supp q_\infty \subset \{ f_\infty = 0 \}$.
By the triangle inequality and the definition of the functions $f_k$, we have for any $x_i, x'_i \in X_i$, $i = 1,2$,
\begin{equation} \label{eq_f_infty_triangle}
 | d_1 (x_1, x'_1) - d_2 (x_2, x'_2 ) | \leq f_\infty (x_1, x_2) + f_\infty (x'_1, x'_2). 
\end{equation}

It follows from (\ref{eq_f_infty_triangle}) that for any $x_1 \in X_1$ there is at most one $x_2 \in X_2$ with $f_\infty (x_1, x_2) = 0$.
Let $S \subset X_1$ be the set of points $x_1 \in X_1$ for which there is such an $x_2$ and define $\phi' : S \to X_2$ such that $f_\infty (x_1, \phi'(x_1)) = 0$.
Since $\supp \mu_1 = X_1$ and $\mu_1$ is the marginal of $q_\infty$, the set $S$ must be dense in $X_1$.
Due to (\ref{eq_f_infty_triangle}) $\phi'$ is an isometric embedding and therefore it admits a unique extension $\phi : X_1 \to X_2$, which is an isometric embedding.
For any $x_1 \in S$ we have
\[ d_{Z_k} ( \varphi_1^k (x_1) , \varphi_2^k (\phi(x_1))) = f_k (x_1, x_2) \to 0, \]
and since $\phi$ is an isometry, the same holds for all $x_1 \in X_1$.

It remains to show $q_\infty = (\id_{X_1}, \phi)_* \mu_1$, which will also imply that $\phi_* \mu_1 = \mu_2$ and that $\phi$ is surjective.
For this purpose choose a bounded $L$-Lipschitz function $h : X_1 \times X_2 \to \IR$, $|h| \leq A$,
and observe that by (\ref{eq_f_infty_triangle})
\[ L^{-1} \big| h(x_1, x_2) - h (x_1, \phi(x_1)) \big| \leq d_2 (x_2, \phi(x_1))
\leq f_k (x_1, x_2) + f_k (x_1, \phi(x_1)). \]
Therefore,
\begin{align*}
 \limsup_{k \to \infty} \int_{X_1 \times X_2}& \big| h(x_1, x_2) - h (x_1, \phi(x_1)) \big| dq_k (x_1,x_2) \\
&\leq L \limsup_{k \to \infty} \int_{X_1 \times X_2} \min \big\{ f_k (x_1, x_2) + f_k (x_1, \phi(x_1)), 2A \big\} dq_k (x_1, x_2) \\
&\leq L \limsup_{k \to \infty} \int_{X_1 } \min \{ f_k (x_1, \phi(x_1)) , 2A \} d\mu_1 (x_1) = 0. 
\end{align*}
It follows that
\begin{multline*}
 \int_{X_1 \times X_2} h \, dq_\infty 
 = \lim_{k \to \infty} \int_{X_1 \times X_2}  h (x_1, x_2) dq_k (x_1, x_2)
 = \lim_{k \to \infty} \int_{X_1 \times X_2}  h (x_1, \phi(x_1)) dq_k (x_1, x_2) \\
= \int_{X_1 } h (x_1, \phi(x_1)) d\mu_1 (x_1) = \int_{X_1 \times X_2} h \, d((\id_{X_1}, \phi)_* \mu_1). 
\end{multline*}
Due to Lemma~\ref{Lem_basic_measure}\ref{Lem_basic_measure_b}, this implies $q_\infty = (\id_{X_1}, \phi)_* \mu_1$, which finishes the proof.
\end{proof}

The property of having isometric support induces an equivalence relation on the space of all normalized metric measure spaces.
Denote by $\mathbb{M}$ the set of equivalence classes.
Equivalently, we could also define $\mathbb{M}$ to be the set of isometry classes of normalized metric measure spaces of full support.
We have

\begin{Theorem} \label{Thm_GW_p_complete}
$(\mathbb{M}, d_{GW_p})$ is a complete metric space if we allow the distance to attain $\infty$.
\end{Theorem}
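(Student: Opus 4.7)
The plan is to reduce to completeness of the $W_p$-Wasserstein space, which was established in the preliminaries. First, observe that Proposition~\ref{Prop_d_GW_p_pseudometric} already says $d_{GW_p}$ is a pseudometric that vanishes precisely on pairs with isometric support. Since $\mathbb{M}$ is by definition the quotient by the ``isometric support'' equivalence relation, $d_{GW_p}$ descends to an honest $[0,\infty]$-valued metric on $\mathbb{M}$, and we may represent each class by a metric measure space of full support.

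For completeness, I would take a Cauchy sequence $[(X_i, d_i, \mu_i)] \in \mathbb{M}$, fix full-support representatives, and pass to a subsequence so that $d_{GW_p}((X_i,d_i,\mu_i),(X_{i+1},d_{i+1},\mu_{i+1})) < 2^{-i}$; in particular all these distances are finite. For each $i$ I choose an ambient metric space $(Z_{i,i+1}, d_{i,i+1})$ and isometric embeddings $\varphi_{i+1,-} \colon X_i \to Z_{i,i+1}$ and $\varphi_{i+1,+} \colon X_{i+1} \to Z_{i,i+1}$ with $d^{Z_{i,i+1}}_{W_p}((\varphi_{i+1,-})_*\mu_i,(\varphi_{i+1,+})_*\mu_{i+1}) < 2^{-i}$. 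The key step is then to glue all of these embeddings into a single ambient space: by Lemma~\ref{Lem_combining_embeddings} there exists a complete metric space $(Z, d_Z)$ together with isometric embeddings $\td\varphi_i \colon X_i \to Z$ such that the pushforwards $\nu_i := (\td\varphi_i)_*\mu_i \in \PP(Z)$ satisfy $d^Z_{W_p}(\nu_i, \nu_{i+1}) < 2^{-i}$ for all $i$.

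Now the sequence $\{\nu_i\}$ is Cauchy in $(\PP(Z), d_{W_p})$, and since consecutive distances are finite the $\nu_i$ all lie in a single class $\PP^p(Z)$ of measures with finite $p$-th moment. By the completeness statement recorded after Lemma~\ref{Lem_basic_measure}, there exists $\nu_\infty \in \PP^p(Z)$ with $d^Z_{W_p}(\nu_i, \nu_\infty) \to 0$. Setting $X_\infty := \supp \nu_\infty \subset Z$ (which is complete and separable because $\nu_\infty$ has separable support) with the induced metric, and $\mu_\infty := \nu_\infty$ regarded as a probability measure on $X_\infty$, produces a metric measure space of full support. Using the inclusion $X_\infty \hookrightarrow Z$ and the embeddings $\td\varphi_i \colon X_i \to Z$ as competitors in the definition of $d_{GW_p}$, we obtain
\[ d_{GW_p}\big((X_i,d_i,\mu_i),(X_\infty, d_Z|_{X_\infty}, \mu_\infty)\big) \leq d^Z_{W_p}(\nu_i, \nu_\infty) \to 0. \]
Thus the chosen subsequence converges in $(\mathbb{M}, d_{GW_p})$; since a Cauchy sequence with a convergent subsequence converges to the same limit, the original sequence converges as well.

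The only step that requires genuine work is the gluing of all the $Z_{i,i+1}$ into a common space $Z$ while preserving the Wasserstein distances on consecutive time-slices, and this has been packaged into Lemma~\ref{Lem_combining_embeddings}; everything else is a direct transfer of the known completeness of $d_{W_p}$. I expect no hidden obstacles beyond verifying that $\nu_\infty$ has separable support (automatic, since it is a weak-$W_p$ limit of measures supported on separable subsets of $Z$) and that the pushforwards remain in the same $\PP^p$ class (automatic from finiteness of consecutive Wasserstein distances).
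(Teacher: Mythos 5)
Your proof is correct and follows essentially the same route as the paper: pass to a geometrically Cauchy subsequence, glue the pairwise near-optimal embeddings into a single complete space via Lemma~\ref{Lem_combining_embeddings}, invoke completeness of $d_{W_p}$ on $\mathcal{P}(Z)$, and take the support of the limit measure. The extra remarks you add (separability of the support, staying in a single $\PP^p$ class, the standard Cauchy-with-convergent-subsequence reduction) are fine but not substantive deviations.
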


\begin{proof}
The proof is similar to that of \cite[Proposition 5.6]{Greven-Pfaffelhuber-Winter}, \cite[Theorem~3.6]{Sturm-2006-I}.

The fact that $(\mathbb{M}, d_{GW_p})$ is a metric space follows from Proposition~\ref{Prop_d_GW_p_pseudometric}.
To prove completeness, consider a Cauchy sequence in $\mathbb{M}$, represented by a sequence of metric measure spaces $(X_i, d_i, \mu_i)$ of full support.
After passing to a subsequence, we may assume that
\[ d_{GW_p} \big( (X_i, d_i, \mu_i), (X_{i+1}, d_{i+1}, \mu_{i+1}) \big) \leq 2^{-i}. \]
Choose isometric embeddings $\varphi_{i,-} : X_i \to Z_{i,i+1}$, $\varphi_{i+1,+} : X_{i+1} \to Z_{i, i+1}$ into metric spaces $(Z_{i,i+1}, d_{i,i+1})$ such that
\[ d^{Z_{i,i+1}}_{W_p} ( (\varphi_{i,-} )_* \mu_i, (\varphi_{i+1,+})_* \mu_{i+1} ) \leq 2^{-i+1}. \]
By Lemma~\ref{Lem_combining_embeddings}, we may assume that $Z_{1,2} = Z_{2,3} = \ldots =: Z$ and $\varphi_{i,-} = \varphi_{i,+} =: \varphi_i$.
By passing to the completion of $\bigcup_{i=1}^\infty \varphi_i (X_i)$, we may moreover assume that $(Z, d_Z)$ is complete and separable.
We have
\[ d^{Z}_{W_p} ( (\varphi_{i} )_* \mu_i, (\varphi_{i+1})_* \mu_{i+1} ) \leq 2^{-i+1}, \]
so $(\varphi_i)_* \mu_i \to \mu'_\infty \in \mathcal{P} (Z)$ in $W_p$.
Let $X_\infty := \supp \mu'_\infty$, $d_\infty := d_Z |_{X_\infty}$ and $\mu_\infty := \mu'_\infty |_{X_\infty}$.
Then
\[ d_{GW_p}  \big( (X_i, d_i, \mu_i), (X_{\infty}, d_{\infty}, \mu_{\infty}) \big) 
\leq d^{Z}_{W_p} ( (\varphi_{i} )_* \mu_i, \mu_\infty ) \leq 2^{-i+1}, \]
which implies that $(X_i, d_i, \mu_i)$ converges to $(X_{\infty}, d_{\infty}, \mu_{\infty})$ in $GW_p$.
\end{proof}

Since the Prokhorov distance is bounded by the $W_1$-Wasserstein distance, $GW_1$-convergence implies convergence in the Gromov-Prokhorov sense or the pointed measured Gromov sense \cite[Theorem~3.15]{Gigli-Mondino-Savare}.
Note however, that, in general, $GW_p$-convergence does not imply Gromov-Hausdorff convergence, even if we assume that all spaces in question have full support.
Consider for example the sequence
\[ \big( X_n := \ov{B}(0,1) \subset \IR^n, d_n := d_{\IR^n} |_{X_n}, \mu_n := ( 1 - n^{-1}) \delta_0 + n^{-1} \omega_n^{-1} \mu_{\IR^n} |_{X_n} \big), \]
where $d_{\IR^n}$ and $\mu_{\IR^n}$ denote the standard Euclidean distance and volume measure and $\omega_n := \mu_{\IR^n} ( B(0,1))$.
As $n \to \infty$ this sequence converges to a single point in the $GW_p$-sense, but the corresponding metric spaces $(X_n, d_n)$ don't converge in the Gromov-Hausdorff sense.

\subsection{Compactness} \label{subsec_compactness}
In this subsection, we define useful compact subsets of $(\mathbb{M}, d_{GW_1})$.
For this purpose, we make the following definition, which is similar to \cite[(6.4)]{Greven-Pfaffelhuber-Winter}

\begin{Definition}
We define the {\bf mass distribution function at scale $r > 0$,} $b^{(X,d, \mu)}_r : (0,1] \to (0,1]$, of a metric measure space $(X, d, \mu)$ by
\begin{equation} \label{eq_def_mass_distr}
 b^{(X,d,\mu)}_r (\eps) := \sup \big\{ \delta > 0 \;\; : \;\; \mu ( \{ x \in X \;\; : \;\; \mu (D(x,\eps r)) < \delta \} ) \leq \eps \big\}. 
\end{equation}
Here $D(x,\eps r) := \{  d(x,\cdot) \leq \eps r \}$ denotes the closed ball around $x$.
\end{Definition}

Note that $b^{(X,d,\mu)}_r (\eps) \in (0,1]$, because by Lemma~\ref{Lem_basic_measure}\ref{Lem_basic_measure_c}
\[ \lim_{\delta \to 0} \mu ( \{ x \in X \;\; : \;\; \mu (D(x,\eps r)) < \delta \} )  \leq \mu ( X \setminus \supp \mu) = 0. \]
Moreover, we have:

\begin{Lemma}
$b^{(X,d,\mu)}_r (\eps)$ is non-decreasing and right semi-continuous.
The supremum in (\ref{eq_def_mass_distr}) is attained and for any function $b : (0,1] \to (0,1]$ the condition $b^{(X,d,\mu)}_r  \geq b$ is equivalent to
\[ \mu ( \{ x \in X \;\; : \;\; \mu (D(x,\eps r)) < b(\eps) \} ) \leq \eps \qquad \text{for all} \quad \eps \in (0,1] \]
\end{Lemma}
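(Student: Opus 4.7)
My approach is to work with the auxiliary sets $A_{\eps,\delta} := \{x \in X : \mu(D(x,\eps r)) < \delta\}$ and the admissible threshold set $S(\eps) := \{\delta > 0 : \mu(A_{\eps,\delta}) \leq \eps\}$; all four claims will follow from a short analysis of these. Since $\delta \mapsto A_{\eps,\delta}$ is non-decreasing, $S(\eps)$ is downward closed, hence of the form $(0, b^{(X,d,\mu)}_r(\eps))$ or $(0, b^{(X,d,\mu)}_r(\eps)]$. To show the supremum is attained I pick $\delta_n \nearrow b^{(X,d,\mu)}_r(\eps)$ and note that $\bigcup_n A_{\eps,\delta_n} = A_{\eps,b^{(X,d,\mu)}_r(\eps)}$; continuity of $\mu$ from below then gives $\mu(A_{\eps,b^{(X,d,\mu)}_r(\eps)}) \leq \eps$, so $S(\eps) = (0, b^{(X,d,\mu)}_r(\eps)]$. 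Monotonicity in $\eps$ is then immediate: for $\eps_1 \leq \eps_2$ the inclusion $D(x,\eps_1 r) \subset D(x,\eps_2 r)$ gives $A_{\eps_1,\delta} \supset A_{\eps_2,\delta}$, hence $S(\eps_1) \subset S(\eps_2)$ and $b^{(X,d,\mu)}_r(\eps_1) \leq b^{(X,d,\mu)}_r(\eps_2)$.

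The main step is right (semi-)continuity at $\eps_0 \in (0,1)$. Given $\eps_n \searrow \eps_0$, set $\delta^* := \lim_{n \to \infty} b^{(X,d,\mu)}_r(\eps_n)$, which exists and satisfies $\delta^* \geq b^{(X,d,\mu)}_r(\eps_0)$ by monotonicity. It suffices to show $\delta^* \in S(\eps_0)$, forcing $\delta^* = b^{(X,d,\mu)}_r(\eps_0)$. The key ingredient is continuity of $\mu$ from above on closed balls: since $\bigcap_n D(x,\eps_n r) = D(x,\eps_0 r)$, we have $\mu(D(x,\eps_n r)) \searrow \mu(D(x,\eps_0 r))$ pointwise in $x$. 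This identification yields
\[ A_{\eps_0,\delta^*} = \bigcup_n A_{\eps_n,\delta^*}, \]
an increasing union (the inclusion $\subset$ uses the pointwise decrease of $\mu(D(x,\eps_n r))$, while $\supset$ uses $\mu(D(x,\eps_0 r)) \leq \mu(D(x,\eps_n r))$). Since $\delta^* \leq b^{(X,d,\mu)}_r(\eps_n)$ we get $A_{\eps_n,\delta^*} \subset A_{\eps_n,b^{(X,d,\mu)}_r(\eps_n)}$ and hence $\mu(A_{\eps_n,\delta^*}) \leq \eps_n$; continuity of $\mu$ from below then gives $\mu(A_{\eps_0,\delta^*}) = \lim_n \mu(A_{\eps_n,\delta^*}) \leq \lim_n \eps_n = \eps_0$, as required.

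Finally, the equivalence in the last sentence of the lemma is a direct consequence of the identification $S(\eps) = (0, b^{(X,d,\mu)}_r(\eps)]$: pointwise, $b^{(X,d,\mu)}_r(\eps) \geq b(\eps)$ is the same as $b(\eps) \in S(\eps)$, i.e., $\mu(A_{\eps,b(\eps)}) \leq \eps$, which is exactly the stated condition. The principal technical difficulty is the right-continuity step, where one must chain continuity of the probability measure from above (on the shrinking closed balls $D(x,\eps_n r)$) with continuity from below (on the expanding sublevel sets $A_{\eps_n,\delta^*}$), and correctly identify $A_{\eps_0,\delta^*}$ with $\bigcup_n A_{\eps_n,\delta^*}$; the remaining claims are essentially bookkeeping from the downward-closed structure of $S(\eps)$.
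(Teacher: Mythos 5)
Your proof is correct and follows essentially the same approach as the paper's, combining continuity of $\mu$ from above on the shrinking closed balls $D(x,\eps r)$ with continuity from below on the expanding sublevel sets $A_{\eps,\delta}$. The only cosmetic difference is ordering: you establish that the supremum is attained first and then invoke it in the right-continuity step, whereas the paper handles right semi-continuity independently; both orderings work.
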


\begin{proof}
For the monotonicity statement, note that for any $\eps_1 \leq \eps_2$ and any $\delta > 0$ with $\mu ( \{ x \in X \;\; : \;\; \mu (D(x,\eps_1 r)) < \delta \} ) \leq \eps_1$ we have
\[ \mu ( \{ x \in X \;\; : \;\; \mu (D(x,\eps_2 r)) < \delta \} ) 
\leq \mu ( \{ x \in X \;\; : \;\; \mu (D(x,\eps_1 r)) < \delta \} ) \leq \eps_1. \]
For the right semi-continuity, observe that by the continuity of measures for any $\delta > 0$ with the property that $\mu ( \{ x \in X \;\; : \;\; \mu (D(x,\eps' r)) < \delta \} ) \leq \eps$ for all $\eps' > \eps$ we have
\begin{multline*}
 \mu ( \{ x \in X \;\; : \;\; \mu (D(x,\eps r)) < \delta \} )
= \mu ( \{ x \in X \;\; : \;\; \lim_{\eps' \searrow \eps} \mu (D(x,\eps' r)) < \delta \} ) \\
=\lim_{\eps' \searrow \eps}  \mu ( \{ x \in X \;\; : \;\; \mu (D(x,\eps' r)) < \delta \} ) 
\leq \eps . 
\end{multline*}
Similarly, for $b_0 := b^{(X,d,\mu)}_r (\eps)$
\[  \mu ( \{ x \in X \;\; : \;\; \mu (D(x,\eps r)) < b_0 \} )
= \lim_{\delta \nearrow b_0} \mu ( \{ x \in X \;\; : \;\; \mu (D(x,\eps r)) < \delta \} ) \leq \eps, \]
which implies that the supremum in (\ref{eq_def_mass_distr}) is attained and that if $b^{(X,d,\mu)}_r (\eps) \geq b (\eps)$, then
\[ \mu ( \{ x \in X \;\; : \;\; \mu (D(x,\eps r)) < b(\eps) \} ) \leq  \mu ( \{ x \in X \;\; : \;\; \mu (D(x,\eps r)) < b_0 \} ) \leq \eps. \qedhere \]
\end{proof}
\medskip

We can now define a class of metric measure spaces, which will turn out to be compact.

\begin{Definition} \label{Def_MMVb}
For any $r, V > 0$ and any function $b : (0,1] \to (0,1]$, let $\mathbb{M}_r (V, b) \subset \mathbb{M}$ be the set of isometry classes of metric measure spaces $(X,d,\mu)$ of full support that satisfy the following properties:
\begin{enumerate}[label=(\arabic*)]
\item \label{Def_MMVb_1} $\Var (\mu) \leq V r^2$.
\item \label{Def_MMVb_2} $b^{(X,d,\mu)}_r \geq b$.
\end{enumerate}
\end{Definition}

Property~\ref{Def_MMVb_1} is a generalization of a diameter bound and Property~\ref{Def_MMVb_2} will turn out to be necessary since we don't impose any doubling condition.

\begin{Lemma} \label{Lem_MVb_closed}
$\mathbb{M}_r (V, b)$ is closed in $(\mathbb{M}, d_{GW_1})$.
\end{Lemma}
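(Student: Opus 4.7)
The plan is to embed the sequence and its $GW_1$-limit into a common ambient metric space, verify the variance bound by lower semi-continuity, and transfer the mass-distribution inequality via a Lipschitz mollification of indicator functions of closed balls.

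Given a sequence $(X_i, d_i, \mu_i) \in \mathbb{M}_r(V, b)$ with $GW_1$-limit $(X_\infty, d_\infty, \mu_\infty)$, I would first invoke the proof of Theorem~\ref{Thm_GW_p_complete} together with Lemma~\ref{Lem_combining_embeddings} to isometrically embed every $X_i$ and $X_\infty$ into a common complete separable metric space $(Z, d_Z)$ via maps $\varphi_i, \varphi_\infty$, chosen so that the pushforwards $\nu_i := (\varphi_i)_* \mu_i$ converge to $\nu_\infty := (\varphi_\infty)_* \mu_\infty$ in $d^Z_{W_1}$. Since isometric embeddings preserve both closed balls and pairwise distances, the variance and the mass distribution function carry over from $(X_j, d_j, \mu_j)$ to $(Z, d_Z, \nu_j)$. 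Condition \ref{Def_MMVb_1} is then immediate from Lemma~\ref{Lem_weak_conv_2_W1_conv}: $\Var(\mu_\infty) = \Var(\nu_\infty) \leq \liminf_i \Var(\nu_i) \leq Vr^2$.

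For condition \ref{Def_MMVb_2}, fix $\eps \in (0,1]$. Since $b^{\nu_\infty}_r$ is non-decreasing and right semi-continuous, it suffices to prove $b^{\nu_\infty}_r(\eps') \geq b(\eps)$ for every $\eps' > \eps$ and then let $\eps' \searrow \eps$. Fix such an $\eps'$, pick $\delta < b(\eps)$ and $\eta \in (0, \eps' - \eps)$, and introduce the Lipschitz cutoff $\psi : \IR \to [0,1]$ with $\psi \equiv 1$ on $(-\infty, \eps r]$ and $\psi \equiv 0$ on $[(\eps + \eta) r, \infty)$. Define
\[
G_j(z) := \int_Z \psi(d_Z(z, y))\, d\nu_j(y), \qquad j \in \{i, \infty\}.
\]
Each $G_j$ is Lipschitz with constant $1/(\eta r)$ and satisfies $\nu_j(D(z, \eps r)) \leq G_j(z) \leq \nu_j(D(z, (\eps + \eta) r))$. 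Weak convergence $\nu_i \to \nu_\infty$ gives $G_i \to G_\infty$ pointwise, and equi-Lipschitz control upgrades this to uniform convergence on compact subsets of $Z$.

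Now I would use the two inequalities for $G_j$ in opposite directions: on the limit side, $\eps + \eta < \eps'$ forces $\{\nu_\infty(D(\cdot, \eps' r)) < \delta\} \subseteq \{G_\infty < \delta\}$, which is \emph{open} by continuity of $G_\infty$; on the sequence side, any $\delta'' \in (\delta, b(\eps)]$ yields $\{G_i < \delta''\} \subseteq \{\nu_i(D(\cdot, \eps r)) < b(\eps)\}$, whose $\nu_i$-measure is at most $\eps$ by hypothesis. Fixing $\tau > 0$, tightness of the $W_1$-convergent sequence (Lemma~\ref{Lem_basic_measure}) supplies a compact $K \subset Z$ with $\nu_j(Z \setminus K) < \tau$ for $j = \infty$ and $j = i$ large; uniform convergence on $K$ gives $\{G_\infty < \delta\} \cap K \subseteq \{G_i < \delta''\}$ for large $i$, hence $\nu_i(\{G_\infty < \delta\}) \leq \eps + \tau$. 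The Portmanteau inequality on the open set $\{G_\infty < \delta\}$ then yields $\nu_\infty(\{G_\infty < \delta\}) \leq \liminf_i \nu_i(\{G_\infty < \delta\}) \leq \eps + \tau$, and letting $\tau \to 0$, $\delta \nearrow b(\eps)$, and finally $\eps' \searrow \eps$ closes \ref{Def_MMVb_2}. The main obstacle is precisely that $z \mapsto \nu(D(z, \eps r))$ is only upper semi-continuous and does not converge in any strong sense along $\nu_i \to \nu_\infty$, so a naive Portmanteau argument on the set defining $b^{(X,d,\mu)}_r$ fails; the two-sided Lipschitz mollification, paid for by a small expansion of the ball radius, sandwiches these ball-masses between continuous functionals that do converge uniformly on compacta, and the careful bookkeeping of the radii $\eps, \eps + \eta, \eps'$ and thresholds $\delta, \delta'', b(\eps)$ is what makes the squeeze close.
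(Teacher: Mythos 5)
Your proof is correct, and the overall skeleton (embed into a common ambient space via the completeness construction, dispose of the variance bound by lower semi-continuity of $\Var$ under $W_1$-convergence, and then fight the upper-semicontinuity of $z\mapsto\nu(D(z,\eps r))$ by paying in a slightly larger radius and a slightly smaller threshold) matches the paper exactly. What differs is the technical device you use to close the mass-distribution estimate. The paper proves, by a short contradiction argument, a \emph{pointwise set inclusion}: if $\mu_\infty(D(x,r'))<b'$ while $\mu_i(D(x,\eps r))\geq b$, then any coupling between $\mu_i$ and $\mu_\infty$ must transport at least $b-b'$ of mass a distance at least $r'-\eps r$, forcing the $W_1$-distance to exceed $(r'-\eps r)(b-b')$; this gives $\{\mu_\infty(D(\cdot,r'))<b'\}\subset\{\mu_i(D(\cdot,\eps r))<b\}$ outright for large $i$, after which a single Lipschitz-cutoff Portmanteau step finishes. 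You instead replace the discontinuous ball-mass functionals by the mollified $G_j$, which are equi-Lipschitz, hence converge uniformly on compacta; you then need tightness of the $W_1$-convergent sequence (to harvest a compact $K$ carrying all but $\tau$ of the mass) before invoking Portmanteau on the open set $\{G_\infty<\delta\}$. Both routes work; the paper's coupling bound is arguably sharper in that it never appeals to tightness or uniform-on-compacta convergence, requiring only the single Portmanteau estimate for an open set, whereas your mollification is a more ``off-the-shelf'' maneuver but carries the extra compactness bookkeeping. One small remark: the fact that a $W_1$-convergent sequence in a complete separable space is tight is not stated verbatim in Lemma~2.1, though it follows quickly from parts (a) and (e); you may want to spell out that one-line derivation if you keep this route.
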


\begin{proof}
Consider a sequence of metric measure spaces of full support $(X_i, d_i, \mu_i)$ representing classes in $\mathbb{M}_r (V, b)$ for some fixed $r, V > 0$, $b : (0,1] \to (0,1]$ and suppose that $(X_i, d_i, \mu_i) \to (X_\infty, d_\infty, \mu_\infty)$ in $GW_1$.
Our goal is to show that the limit $(X_\infty, d_\infty, \mu_\infty)$ also represents a class in $\mathbb{M}_r (V, b)$.
As in the proof of Theorem~\ref{Thm_GW_p_complete}, we may pass to a subsequence and find isometric embeddings $\varphi_i : X_i \to Z$, $i = 1,2, \ldots, \infty$, into a complete and separable metric space $(Z, d_Z)$ such that $(\varphi_i)_* \mu_i \to (\varphi_\infty)_* \mu_\infty$ in $W_1$.
This reduces the lemma to the following lemma.
\end{proof}

\begin{Lemma}
Consider a complete and separable metric space $(X,d)$ and consider probability measures $\mu_i \in \mathcal{P}(X)$, $i=1,2, \ldots, \infty$, with $\mu_i \to \mu_\infty$ in $W_1$.
Then the following holds:
\begin{enumerate}[label=(\alph*)]
\item $\Var (\mu_\infty) \leq \liminf_{i \to \infty} \Var (\mu_i)$.
\item For any $\eps \in (0,1]$, $r > 0$, we have $b^{(X,d,\mu_\infty)}_r (\eps) \geq \limsup_{i \to \infty} b^{(X,d,\mu_i)}_r (\eps)$.
\end{enumerate}
\end{Lemma}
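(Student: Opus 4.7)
My plan is to dispatch (a) directly via the already-stated Lemma~\ref{Lem_weak_conv_2_W1_conv} and to handle (b) by a smoothing-and-Portmanteau argument. The main obstacle in (b) is that the sets $\{x : \mu_i(D(x, \eps r)) \geq c\}$ depend on $i$ through $\mu_i$ itself, so the Portmanteau theorem cannot be applied directly; I will overcome this by first replacing the indicator of a closed ball by a Lipschitz cutoff, invoking equicontinuity plus tightness to remove the $i$-dependence, and only then shrinking the parameters back.

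For (a): $W_1$-convergence implies weak convergence. If $\liminf_i \Var(\mu_i) = +\infty$ there is nothing to show; otherwise pass to a subsequence realizing the liminf, which is then uniformly bounded, so Lemma~\ref{Lem_weak_conv_2_W1_conv} applies verbatim.

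For (b): write $g_i(x) := \mu_i(D(x, \eps r))$, set $b_\infty := \limsup_i b_r^{(X,d,\mu_i)}(\eps)$, and pass to a subsequence along which $b_{i_k} \to b_\infty$. For $\alpha, \eta > 0$ small, introduce the $(1/\alpha)$-Lipschitz kernel
\[ \phi_\alpha(x, y) := \max\bigl\{0, \min\{1, 1 - (d(x, y) - \eps r)/\alpha\}\bigr\} \]
and the smoothed ball mass $G_{i, \alpha}(x) := \int_X \phi_\alpha(x, y) \, d\mu_i(y)$, which satisfies $g_i(x) \leq G_{i, \alpha}(x) \leq \mu_i(D(x, \eps r + \alpha))$ and is $(1/\alpha)$-Lipschitz in $x$ uniformly in $i$. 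Since $\phi_\alpha(x, \cdot)$ is bounded and continuous, weak convergence gives pointwise $G_{i, \alpha}(x) \to G_{\infty, \alpha}(x)$; on any compact set the equicontinuity then upgrades this to uniform convergence by Arzel\`a--Ascoli. By tightness of $\{\mu_i\}$ I pick a compact $K \subset X$ with $\sup_i \mu_i(X \setminus K) \leq \eta$.

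The characterization established in the lemma just above (the supremum in (\ref{eq_def_mass_distr}) is attained) gives $\mu_{i_k}(\{g_{i_k} \geq b_\infty - \eta\}) \geq 1 - \eps$ for $k$ large, hence $\mu_{i_k}(\{G_{i_k, \alpha} \geq b_\infty - \eta\}) \geq 1 - \eps$. Intersecting with $K$ and using the uniform convergence on $K$ yields, for $k$ large, $\{G_{i_k, \alpha} \geq b_\infty - \eta\} \cap K \subset \{G_{\infty, \alpha} \geq b_\infty - 2\eta\}$, so $\mu_{i_k}(\{G_{\infty, \alpha} \geq b_\infty - 2\eta\}) \geq 1 - \eps - \eta$. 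The Portmanteau direction for closed sets applied to $C := \{G_{\infty, \alpha} \geq b_\infty - 2\eta\}$ transfers this to
\[ \mu_\infty\bigl(\{G_{\infty, \alpha} \geq b_\infty - 2\eta\}\bigr) \geq 1 - \eps - \eta, \]
and the upper bound $G_{\infty, \alpha} \leq \mu_\infty(D(\cdot, \eps r + \alpha))$ then yields $\mu_\infty(\{x : \mu_\infty(D(x, \eps r + \alpha)) \geq b_\infty - 2\eta\}) \geq 1 - \eps - \eta$. Sending $\alpha \searrow 0$ (continuity of measure from above for the decreasing nested family of closed balls) and then $\eta \searrow 0$ concludes $\mu_\infty(\{g_\infty \geq b_\infty\}) \geq 1 - \eps$, which is the desired $b_r^{(X,d,\mu_\infty)}(\eps) \geq b_\infty$. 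The only subtle point is the uniform-on-$K$ upgrade of pointwise convergence; once that is in hand the two-parameter limit is routine bookkeeping.
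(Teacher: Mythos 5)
Your argument is correct, but it takes a different route from the paper's. The paper argues by contradiction and, at the crucial point, uses the $W_1$-convergence \emph{directly} through a transport-cost (coupling) estimate: assuming $\mu_\infty(D(x,r'))<b'$ while $\mu_i(D(x,\eps r))\geq b$ forces any coupling $q_i$ between $\mu_i,\mu_\infty$ to move mass at least $b-b'$ a distance at least $r'-\eps r$, contradicting $\int d\,dq_i\to 0$; this yields the set inclusion $\{\mu_\infty(D(\cdot,r'))<b'\}\subset\{\mu_i(D(\cdot,\eps r))<b\}$ for large $i$, and the paper then uses a Lipschitz cutoff $f_A\nearrow\chi_S$ for the \emph{open} set $S=\{\mu_\infty(D(\cdot,r'))<b'\}$ to pass $\mu_i(S)$ to $\mu_\infty(S)$ (the Portmanteau direction for open sets). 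You instead mollify the \emph{inner} ball indicator into the Lipschitz kernel $\phi_\alpha$, which makes $G_{i,\alpha}$ an equi-Lipschitz family; you then combine pointwise weak convergence with Arzel\`a--Ascoli on a tight compact $K$ to upgrade to uniform convergence, and finally invoke Portmanteau for the \emph{closed} superlevel set of $G_{\infty,\alpha}$. Your route only ever uses weak convergence and the tightness it automatically entails on a Polish space, whereas the paper exploits the quantitative leverage of $W_1$ to replace the Arzel\`a--Ascoli/tightness machinery with a one-line displacement bound; the trade-off is that the paper's argument is shorter and sharper, while yours is softer and more modular (it would transfer verbatim to any mode of convergence implying weak convergence plus tightness). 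Both handle the two-parameter limit $\alpha,\eta\searrow 0$ correctly, and your treatment of the attained supremum and the continuity-from-above of $\alpha\mapsto\mu_\infty(D(\cdot,\eps r+\alpha))$ is sound.
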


\begin{proof}
Assertion~(a) is clear.
For Assertion~(b) fix some $\eps \in (0,1]$, $r > 0$ and  suppose that the assertion was false.
Then we can find a $b > 0$ such that after passing to a subsequence we have for all $i$
\begin{equation} \label{eq_choice_b_i}
 \mu_i ( \{ x \in X \;\; : \;\; \mu_i (D(x, \eps r)) < b \} ) \leq \eps, 
\end{equation}
but
\begin{equation} \label{eq_choice_b_infty}
 \mu_\infty ( \{ x \in X \;\; : \;\; \mu_\infty (D(x, \eps r)) < b \} ) > \eps. 
\end{equation}
Since
\begin{multline*}
 \lim_{r' \searrow \eps r} \mu_\infty ( \{ x \in X \;\; : \;\; \mu_\infty (D(x, r')) < b \} )
= \mu_\infty ( \{ x \in X \;\; : \;\; \lim_{r' \searrow \eps r}  \mu_\infty (D(x, r')) < b \} ) \\
= \mu_\infty ( \{ x \in X \;\; : \;\; \mu_\infty (D(x, \eps r)) < b \} ) > \eps,
\end{multline*}
we can choose $r' > \eps r$ such that
\[ \mu_\infty ( \{ x \in X \;\; : \;\; \mu_\infty (D(x, r')) < b \} ) > \eps. \]
Similarly, since
\[ \lim_{b' \nearrow b} \mu_\infty ( \{ x \in X \;\; : \;\; \mu_\infty (D(x, r')) < b' \} )
= \mu_\infty ( \{ x \in X \;\; : \;\; \mu_\infty (D(x, r')) < b \} ) > \eps, \]
we can choose $b' < b$ such that
\begin{equation} \label{eq_choice_b_p} 
\mu_\infty ( \{ x \in X \;\; : \;\; \mu_\infty (D(x, r')) < b' \} ) > \eps. 
\end{equation}

Next, we claim that for large $i$
\begin{equation} \label{eq_S_subset_i}
 \{ x \in X \;\; : \;\; \mu_\infty (D(x, r')) < b' \} \subset \{ x \in X \;\; : \;\; \mu_i (D(x, \eps r)) < b \}. 
\end{equation}
To see this, let $\alpha > 0$ be some small constant whose value we will determine later and choose $i$ large enough such that we can find a coupling $q_i$ between $\mu_i, \mu_\infty$ with
\[ \int_{X \times X} d(x,y) \, dq_i (x,y) \leq \alpha. \]
Suppose that for some $x \in X$ we have $\mu_\infty (D(x, r')) < b'$, but $\mu_i (D(x, \eps r)) \geq b$.
Then
\begin{multline*}
 q_i \big( D(x, \eps r) \times (X \setminus D(x,r')) \big)
\geq q_i \big( D(x, \eps r) \times (X \setminus D(x,r')) \big) -  q_i \big( (X \setminus D(x, \eps r)) \times  D(x,r') \big) \\
= q_i ( D(x, \eps r) \times X ) - q_i (X \times D(x,r'))
= \mu_i (D(x, \eps r)) - \mu_\infty ( D(x,r'))
> b - b'. 
\end{multline*}
and therefore
\[ 0 <  (r' - \eps r)( b - b' ) \leq \int_{D(x, \eps r) \times (X \setminus D(x,r'))} d (x, y) \, dq_i (x,y) \leq \alpha. \]
So if $\alpha < (r' - \eps r) (b-b')$, then we obtain the desired contradiction, which proves (\ref{eq_S_subset_i}).

Next note that $S := \{ x \in X \;\; : \;\; \mu_\infty (D(x, r')) < b' \}$ is open.
So for any $A < \infty$ the function $f_A : X \to \IR$ defined by $f_A (x) := \min \{ A d(x, X \setminus S), 1 \}$ is $A$-Lipschitz and $\{ f_A > 0 \} = S$.
It follows that
\[ \liminf_{i \to \infty} \mu_i(S) \geq \liminf_{i \to \infty} \int_X f_A \, d\mu_i
= \int_X f_A \, d\mu_\infty. \]
Letting $A \to \infty$ implies
\[  \liminf_{i \to \infty} \mu_i (S) \geq \mu_\infty (S). \]
Combining this with (\ref{eq_choice_b_i}), (\ref{eq_S_subset_i}), (\ref{eq_choice_b_p}) implies that
\[ \eps \geq \liminf_{i \to \infty} \mu_i ( \{ x \in X \;\; : \;\; \mu_i (D(x, \eps r)) < b \}) \geq \liminf_{i \to \infty} \mu_i (S) \geq \mu_\infty (S) > \eps, \]
which produces the desired contradiction.
\end{proof}

The following theorem will be important throughout this paper.
Compare also with \cite[Proposition 7.1]{Greven-Pfaffelhuber-Winter}, \cite[Theorem~3.16]{Sturm-2006-I}

\begin{Theorem} \label{Thm_M_compact}
$(\mathbb{M}_r (V, b), d_{GW_1})$ is compact.
\end{Theorem}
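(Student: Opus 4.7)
The plan is to establish compactness by proving total boundedness of $\mathbb{M}_r(V, b)$; combined with closedness (Lemma~\ref{Lem_MVb_closed}) and completeness (Theorem~\ref{Thm_GW_p_complete}), this finishes the argument. The central tool is a uniform discretization: for every $\eps \in (0, 1/2)$, I would show that every $(X, d, \mu) \in \mathbb{M}_r(V, b)$ lies within distance $g(\eps)$ in $d_{GW_1}$ of a finite atomic metric measure space whose cardinality and diameter are bounded in terms of $\eps, V, b, r$, and that $g(\eps) \to 0$ as $\eps \to 0$.

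To construct the approximation, set $T_\eps := \{x \in X : \mu(D(x, \eps r)) \geq b(\eps)\}$, so Property~(2) of Definition~\ref{Def_MMVb} gives $\mu(X \setminus T_\eps) \leq \eps$. Choose a maximal $2\eps r$-separated subset $S_\eps = \{x_1, \ldots, x_{N_\eps}\} \subset T_\eps$. The pairwise disjoint open balls $B(x_j, \eps r)$ each carry mass $\geq b(\eps)$, so $N_\eps \leq 1/b(\eps)$; and by maximality every point of $T_\eps$ lies within $2\eps r$ of $S_\eps$. Property~(1), together with
\[ \Var(\mu) \geq \mu(D(x_i, \eps r))\,\mu(D(x_j, \eps r))\,(d(x_i, x_j) - 2\eps r)_+^2 \geq b(\eps)^2 (d(x_i, x_j) - 2\eps r)_+^2, \]
gives $\diam(S_\eps) \leq 2\eps r + r\sqrt{V}/b(\eps) =: D_\eps$.

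Now let $\pi : X \to S_\eps$ be a measurable nearest-point projection (breaking ties via a fixed enumeration) and set $\mu_\eps := \pi_* \mu$. For $y \in T_\eps$ we have $d(y, \pi(y)) \leq 2\eps r$. For $y \in X \setminus T_\eps$ I exploit the fact that the $2\eps r$-neighborhood of $S_\eps$ contains $T_\eps$ and thus carries $\mu$-mass $\geq 1 - \eps$; writing $W(y) := \int_X d^2(y, z)\, d\mu(z)$,
\[ W(y) \geq (d(y, S_\eps) - 2\eps r)_+^2 (1 - \eps), \]
so $d(y, \pi(y)) \leq 2\eps r + \sqrt{W(y)/(1-\eps)}$. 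Integrating, applying Cauchy-Schwarz, and using $\int W\, d\mu = \Var(\mu) \leq Vr^2$ yields
\[ d_{GW_1}\big((X, d, \mu),(S_\eps, d|_{S_\eps}, \mu_\eps)\big) \leq \int_X d(y, \pi(y))\, d\mu(y) \leq 2\eps r + 2\eps^2 r + r\sqrt{V\eps/(1-\eps)} =: g(\eps), \]
which tends to $0$ as $\eps \to 0$ independently of $b$.

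Finally, the set $\mathbb{M}^{\mathrm{disc}}_{N, D}$ of isometry classes of metric measure spaces with at most $N$ atoms and diameter at most $D$ is a continuous image of the compact parameter space of semi-metric matrices in $[0, D]^{N \times N}$ crossed with the probability simplex $\Delta^{N-1}$, and is therefore compact in $(\mathbb{M}, d_{GW_1})$. Given $\delta > 0$, I would pick $\eps$ with $g(\eps) < \delta/2$, extract a finite $\delta/2$-net of $\mathbb{M}^{\mathrm{disc}}_{\lceil 1/b(\eps)\rceil, D_\eps}$, and use the discretization bound to upgrade it to a finite $\delta$-net of $\mathbb{M}_r(V, b)$, proving total boundedness. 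The main subtlety is that both conditions of Definition~\ref{Def_MMVb} are genuinely needed in concert: Property~(2) caps the cardinality and diameter of the discrete net $S_\eps$, while Property~(1) controls the tail-transport cost via the second-moment bound; crucially, the $(1-\eps)$ improvement in the lower bound on $W(y)$ is what removes a spurious $b(\eps)^{-1/2}$ factor from the tail contribution to $g(\eps)$ and makes it vanish as $\eps \to 0$.
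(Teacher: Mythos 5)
Your proof is correct, and the overall strategy coincides with the paper's: reduce to total boundedness via completeness (Theorem~\ref{Thm_GW_p_complete}) and closedness (Lemma~\ref{Lem_MVb_closed}), then exhibit, for each $\eps$, a uniform finite atomic approximation with bounded cardinality, diameter, and transport cost. The genuine technical variation lies in how the low-density tail is handled. The paper's Lemma~\ref{Lem_approx_mms_by_finite_mms} introduces an auxiliary basepoint $x_0$ realizing $\Var(\delta_{x_0},\mu) \leq Vr^2$ and transports the entire small-mass remainder $X\setminus Y$ to $x_0$, getting a tail cost $\leq \sqrt{\eps V}\,r$ by Cauchy--Schwarz. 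You instead project all of $X$ onto the net $S_\eps$ via the nearest-point map, and you control $d(y, S_\eps)$ for $y$ in the tail by exploiting that the $2\eps r$-neighborhood of $S_\eps$ already carries mass $\geq 1-\eps$; this is exactly what frees the tail bound from the $b(\eps)^{-1/2}$ factor that the naive single-ball estimate would produce, and your Cauchy--Schwarz step then lands at the same order $\sqrt{\eps V}\,r$. A smaller cosmetic difference: the paper rounds atom masses to multiples of $1/N$ (a normalization it reuses for a separability argument in Lemma~\ref{Lem_W_p_separable}), whereas you avoid rounding by observing directly that $N$-point metric measure spaces of diameter $\leq D$ form a continuous image of a compact parameter space. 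One small point to watch: if ``$2\eps r$-separated'' is read as $d(x_i,x_j) \geq 2\eps r$, the \emph{closed} balls $D(x_j,\eps r)$ that carry the mass lower bound need not be pairwise disjoint, so the cardinality bound $N_\eps \leq 1/b(\eps)$ should either use strict separation (as the paper implicitly does by demanding the closed balls be disjoint) or enlarge the separation radius to, say, $3\eps r$; either fix changes only the constants in $g(\eps)$ and $D_\eps$.
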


\begin{proof}
Due to Theorem~\ref{Thm_GW_p_complete} and Lemma~\ref{Lem_MVb_closed}, we only need to establish total boundedness.
This is a consequence of the following lemma.
\end{proof}

\begin{Lemma} \label{Lem_approx_mms_by_finite_mms}
For any $r, V, \alpha > 0$, $b : (0,1] \to (0,1]$ there is an $N (r, V, b, \alpha) < \infty$ such that for any metric measure space $(X, d, \mu)$ representing an isometry class in $\mathbb{M}_r (V, b)$ there is a finite subset $X' \subset \supp X$ and a measure $\mu' \in \PP (X)$ with $\supp \mu' \subset X'$ such that
\[ d_{GW_1} \big( (X, d, \mu), (X', d|_{X'}, \mu') \big) \leq d_{W_1} (  \mu, \mu') \leq \alpha r \]
and such that $(X',d|_{X'})$ has diameter $\leq N r$, $\# X' \leq N$ and $\mu' (\{ x' \})$ is a multiple of $N^{-1}$ for all $x' \in X'$.
\end{Lemma}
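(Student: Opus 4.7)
The plan has two stages: first approximate $\mu$ in $d_{W_1}$ by a finitely supported measure $\mu_1$ whose atoms are confined to a region of controlled diameter, then round its weights to integer multiples of $1/N$ to obtain $\mu'$. The two hypotheses of $\mathbb{M}_r(V,b)$ play complementary roles: the variance bound controls the global extent of $\mu$, while the mass distribution condition $b^{(X,d,\mu)}_r \geq b$ supplies a finite covering of $\mu$-almost all of $X$ by balls of radius $\sim \eps_0 r$.

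For the first stage, by Markov applied to $\Var(\mu) \leq Vr^2$, I would pick $x_0 \in \supp \mu$ with $\int_X d^2(x_0, \cdot)\, d\mu \leq Vr^2$; Chebyshev then gives $\mu(X \setminus D(x_0, R)) \leq Vr^2/R^2$ for all $R > 0$. Next, fix a small $\eps_0 = \eps_0(\alpha) > 0$; by hypothesis the set $A := \{x : \mu(D(x, \eps_0 r)) \geq b(\eps_0)\}$ satisfies $\mu(A) \geq 1 - \eps_0$. Inside $A \cap D(x_0, R) \cap \supp \mu$, select a maximal $(2\eps_0 r)$-separated set $\{x_1, \ldots, x_k\}$; disjointness of the balls $D(x_i, \eps_0 r)$, each of $\mu$-mass at least $b(\eps_0)$, forces $k \leq 1/b(\eps_0)$, and maximality yields $A \cap D(x_0, R) \subset \bigcup_i D(x_i, 2\eps_0 r)$. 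Partitioning $A \cap D(x_0, R)$ into measurable sets $W_i \subset D(x_i, 2\eps_0 r)$ via the nearest-center rule and putting $\mu_1 := \sum_i \mu(W_i)\, \delta_{x_i} + (1 - \sum_i \mu(W_i))\, \delta_{x_1}$, the coupling that sends each $W_i$ to $x_i$ and the remaining mass (of size at most $\eps_0 + Vr^2/R^2$) to $x_1 \in D(x_0,R)$ at cost $\leq 2R$ yields
\[
d_{W_1}(\mu, \mu_1) \leq 2\eps_0 r + 2R\bigl(\eps_0 + Vr^2/R^2\bigr).
\]
Calibrating $R$ of order $r\sqrt{V}/\sqrt{\alpha}$ and $\eps_0$ of order $\alpha$ (up to universal constants) makes the right-hand side $\leq \alpha r / 2$, and the resulting bounds $k \leq 1/b(\eps_0)$ and $\diam\{x_1, \ldots, x_k\} \leq 2R$ depend only on $r, V, b, \alpha$.

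For the second stage, let $w_i := \mu_1(\{x_i\})$ and pick integers $n_i \geq 0$ with $\sum_i n_i = N$ and $|w_i - n_i/N| \leq 1/N$ (elementary rounding). With $X' := \{x_1, \ldots, x_k\}$ and $\mu' := \sum_i (n_i/N)\, \delta_{x_i}$, transporting the positive- and negative-mass discrepancies inside $X'$ gives
\[
d_{W_1}(\mu_1, \mu') \leq \diam(X') \cdot \tfrac{1}{2} \sum_i |w_i - n_i/N| \leq \frac{Rk}{N} \leq \frac{\alpha r}{2}
\]
as soon as $N \geq 2Rk/(\alpha r)$. Choosing $N$ additionally at least $k$ and $2R/r$ secures $\#X' \leq N$ and $\diam(X') \leq Nr$; all thresholds depend only on $r, V, b, \alpha$. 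Finally, using $Z = X$ with the identity embedding of $X$ and the inclusion $X' \hookrightarrow X$, we get $d_{GW_1}((X,d,\mu),(X',d|_{X'},\mu')) \leq d_{W_1}(\mu, \mu') \leq \alpha r$. The only real obstacle I anticipate is bookkeeping: choosing $R$, $\eps_0$, and then $N$ in the correct order, so that the final $N$ depends only on the stated parameters. The underlying content — that the variance bound enforces a controlled diameter up to small exceptional mass, and that the mass-distribution hypothesis gives a packing estimate at scale $\eps_0 r$ — is precisely what the definition of $\mathbb{M}_r(V, b)$ was set up to provide.
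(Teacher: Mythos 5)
Your proof is essentially correct and uses the same core ideas as the paper's---a maximal packing built from the mass-distribution condition, the variance bound to control exceptional mass, and a final rounding of weights---but the structure differs in one useful way: you truncate to a Chebyshev ball $D(x_0,R)$ first, which makes the diameter bound $\operatorname{diam} X' \leq 2R$ immediate, whereas the paper does no truncation and instead bounds $\operatorname{diam} X'$ via a Cauchy--Schwarz estimate combining the ball-mass lower bound with the variance (its $X'$ also includes the center $x_0$ itself as an extra atom).

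The one step whose justification should be tightened is the cost estimate. You assert the remaining mass can be shipped to $x_1$ ``at cost $\leq 2R$'', but that per-unit bound is false for the part of the exceptional mass lying outside $D(x_0,R)$, where $d(\cdot,x_1)$ is unbounded. The second-moment bound is what actually controls that piece: on $C := X \setminus D(x_0,R)$ one has $d(y,x_0) < d^2(y,x_0)/R$, hence
\[
\int_C d(\cdot,x_1)\,d\mu \leq \int_C d(\cdot,x_0)\,d\mu + R\,\mu(C) \leq \frac{Vr^2}{R} + R\cdot\frac{Vr^2}{R^2} = \frac{2Vr^2}{R},
\]
which happens to coincide numerically with your term $2R\cdot(Vr^2/R^2)$, so the displayed inequality you wrote is correct---but the argument should invoke the variance (as the paper does, via Cauchy--Schwarz on $X\setminus Y$) rather than a pointwise $2R$ cost. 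With that repair, the packing cardinality $k \leq 1/b(\eps_0)$, the calibration of $R$ and $\eps_0$, and the rounding to multiples of $1/N$ all go through, and the construction is a sound, slightly cleaner route to the same result.
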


\begin{proof}
After rescaling, we may assume without loss of generality that $r = 1$.
Fix $\alpha, V > 0$, $b :(0,1] \to (0,1]$ and let $\eps > 0$, $N < \infty$ be constants whose values we will determine later.
Consider a metric measure space $(X, d, \mu)$ representing an isometry class in $\mathbb{M}_1 (V, b)$.
Choose a maximal set of points $\{ x_1, \ldots, x_m \} \subset X$ with the property that the closed balls $D(x_i, \eps)$ are pairwise disjoint and
\[ \mu (D(x_i, \eps)) \geq b(\eps). \]
Then $m \leq (b(\eps))^{-1}$.
Choose moreover $x_0 \in X$ such that $\Var (\delta_{x_0}, \mu) \leq \Var (\mu) \leq V$.
Set $X' := \{ x_0, x_1, \ldots, x_m \}$.

Consider the subset
\[ Y := \bigcup_{i=1}^m D(x_i, 3\eps). \]
Then for any $x \in X \setminus Y$ we have $\mu (D(x, \eps)) < b(\eps)$.
It follows that
\[ \mu (X \setminus Y) \leq \eps. \]
For any $i,j = 1, \ldots, m$, $i \neq j$, we have
\[ d(x_i, x_j) - 2 \eps \leq ( b (\eps) )^{-2} \int_{D(x_i, \eps) \times D(x_j, \eps)} d(y_i, y_j) \, d\mu(y_i) d\mu(\mu_j) \leq (b (\eps) )^{-2} V^{1/2}, \]
\[ d(x_0, x_i) - \eps \leq (b(\eps))^{-1} \int_{D(x_i, \eps)} \, d(x_0, y_i) d\mu(y_i)  \leq (b(\eps))^{-1} V^{1/2}, \]
which implies that $X'$ has diameter $\leq (b (\eps))^{-2}  V^{1/2} + 2 \eps$.

Next, define
\[ Y_i := D(x_i, 3\eps) \setminus \bigcup_{j=1}^{i-1} D(x_j, 3\eps). \]
Note that
\[ Y = Y_1 \,\, \dot\cup \,\, \ldots \,\, \dot\cup \,\, Y_m. \]
Set
\[ a_i := \begin{cases} \mu (X \setminus Y) & \text{if $i=0$} \\
\mu (Y_i) & \text{if $1 \leq i \leq m$}  \end{cases}. \]
Then $a_0 + \ldots + a_m = 1$.
Choose numbers $b_0, \ldots , b_m  \in [0,1]$ that are multiples of $N^{-1}$ and satisfy $|a_i - b_i | \leq N^{-1}$ and $b_0 + \ldots + b_m = 1$.
We now define
\[ \mu' := b_0 \delta_{x_0} + \ldots + b_m \delta_{x_m} \]
and
\[ \mu'' := a_0 \delta_{x_0} + \ldots + a_m \delta_{x_m}. \]
We have
\begin{multline*}
 d_{GW_1} ( (X, d, \mu), (X', d|_{X'}, \mu'))
\leq d_{W_1} (\mu, \mu')
 \leq d_{W_1} (\mu, \mu'') + d_{W_1} (\mu'', \mu') \\
 \leq d_{W_1} (\mu, \mu'') + ((b (\eps) )^{-2} V + 2 \eps) N^{-1}. 
\end{multline*}
 The last term can be made $\leq \alpha / 2$ if $N \geq \underline{N} (V,b, \eps, \alpha)$.

It remains to derive a bound $d_{W_1} (\mu, \mu'')$.
For this purpose, consider the following coupling $q$ between $\mu, \mu''$:
\[ q :=\mu |_{X \setminus Y} \otimes \delta_{x_0} +  \sum_{i=1}^{m}  \mu |_{Y_i} \otimes \delta_{x_i} . \]
Then
\begin{multline*}
 \int_{X \times X} d(y,z) \, dq(y,z)
= \int_{X \setminus Y} d(y, x_0) \, d\mu(y)  + \sum_{i=1}^m \int_{Y_i}  d(y, x_i) \, d\mu(y)  \\
\leq \mu^{1/2} (X \setminus Y) \bigg(  \int_{X \setminus Y} d^2(y, x_0)\, d\mu(y)\bigg)^{1/2} + 3 \eps 
\leq \eps^{1/2} V^{1/2} + 3 \eps. 
\end{multline*}
So if $\eps \leq \ov\eps ( V, \alpha)$, then $d_{W_1} (\mu, \mu'') \leq \alpha /2$, which finishes the proof.
\end{proof}
\bigskip

\section{Metric Flows} \label{sec_met_flows}
In this section we introduce the notion of a metric flow, which is a synthetic version of a (super) Ricci flow, as well as associated terminology.
We will discuss some basic properties of metric flows and present some examples and basic constructions.
We will also explain how to convert super Ricci flows and singular Ricci flows into metric flows.

For the remainder of this paper, we will denote by $\Phi : \IR \to (0,1)$ the antiderivative with the following properties:
\begin{equation} \label{eq_erf}
 \Phi'(x) = (4\pi)^{-1/2} e^{-x^2/4}, \qquad \lim_{x \to -\infty} \Phi (x) = 0 \qquad \lim_{x \to \infty} \Phi (x) = 1. 
\end{equation}
We recall that $(x,t) \mapsto \Phi ( t^{-1/2} x)$ is a solution to the 1-dimensional heat equation with initial condition $\chi_{[0, \infty)}$.

\subsection{Definition of a metric flow}
Let us first state the definition of a metric flow:

\begin{Definition}[Metric flow] \label{Def_metric_flow}
Let $I \subset \IR$ be a subset.
A {\bf metric flow (over $I$)} is a tuple of the form 
\begin{equation} \label{eq_XX_tuple}
 (\XX , \tf, (d_t)_{t \in I} , (\nu_{x;s})_{x \in \XX, s \in I, s \leq \tf (x)}) 
\end{equation}
with the following properties:
\begin{enumerate}[label=(\arabic*)]
\item \label{Def_metric_flow_1} $\XX$ is a set consisting of {\bf points}.
\item \label{Def_metric_flow_2} $\tf : \XX \to I$ is a map called {\bf time-function}.
Its level sets $\XX_t := \tf^{-1} (t)$ are called {\bf time-slices} and the preimages $\XX_{I'} := \tf^{-1} (I')$, $I' \subset I$, are called {\bf time-slabs}.
\item \label{Def_metric_flow_3} $(\XX_t, d_t)$ is a complete and separable metric space for all $t \in I$.
\item \label{Def_metric_flow_4} $\nu_{x;s} \in \mathcal{P} (\XX_s)$ for all $x \in \XX$, $s \in I$, $s \leq \tf (x)$.
For any $x \in \XX$ the family $(\nu_{x;s})_{s \in I, s \leq \tf (x)}$ is called the {\bf conjugate heat kernel at $x$}.
\item \label{Def_metric_flow_5} $\nu_{x; \tf (x)} = \delta_x$ for all $x \in \XX$. 
\item \label{Def_metric_flow_6} For all $s, t \in I$, $s< t$, $T \geq 0$ and any measurable function $u_s : \XX_s \to [0,1]$ with the property that if $T > 0$, then $u_s = \Phi \circ f_s$ for some  $T^{-1/2}$-Lipschitz function $f_s : \XX_s \to \IR$ (if $T=0$, then there is no additional assumption on $u_s$), the following is true. 
The function
\begin{equation} \label{eq_heat_flow_Phi_f_Def}
 u_t :\XX_t \longrightarrow \IR, \qquad x \longmapsto  \int_{\XX_s} u_s \, d\nu_{x;s} 
\end{equation}
is either constant or of the form $u_t = \Phi \circ f_t$, where $f_t : \XX_t \to \IR$ is $(t-s+T)^{-1/2}$-Lipschitz.
\item \label{Def_metric_flow_7} For any $t_1,t_2,t_3 \in I$, $t_1 \leq t_2 \leq t_3$, $x \in \XX_{t_3}$  we have the {\bf reproduction formula}
\[ \nu_{x; t_1} = \int_{\XX_{t_2}} \nu_{\cdot; t_1} d\nu_{x; t_2}, \]
meaning that for any Borel set $S\subset \XX_{t_1}$
\[ \nu_{x;t_1} (S)  = \int_{\XX_{t_2}} \nu_{y ; t_1} (S) d\nu_{x; t_2}(y). \]
Note that by Properties~\ref{Def_metric_flow_5}, \ref{Def_metric_flow_6} the integrand is continuous if $t_1 < t_2$ and measurable if $t_1 = t_2$.
\end{enumerate}
\end{Definition}

We will often write $\XX$ instead of (\ref{eq_XX_tuple}).
We will also frequently be dealing with a number of different metric flows at once, which will be denoted by $\XX^i, \XX', \XX^*$ etc.
In this case the objects $d_t, \nu_{x;s}$ will inherit the decorations.
So, for example, $d^i_t, \nu^i_{x;s}$ will denote the objects associated with a metric flow denoted by $\XX^i$.
We will often omit decorations on the time-function $\tf$, as there is no chance of confusion.
We will frequently also use the following shorthand notations for time slabs: $\XX_{<t} := \XX_{I \cap (-\infty,t)}$, $\XX_{\leq t} := \XX_{I \cap (-\infty,t]}$, etc.

\begin{Remark}
We don't require that the time-slices $(\XX_t, d_t)$ are length spaces.
For more details see Section~\ref{sec_intrinsic}.
\end{Remark}

\begin{Remark}
We don't require that $I$ is an interval, although this case will be of most interest to us.
We have kept Definition~\ref{Def_metric_flow} more general, as it gives us some more flexibility later.
For example, it allows us to restrict metric flows defined over intervals $I \subset \IR$ to smaller subsets $I' \subset I$.
It will also be helpful to construct certain metric flows first over a countable dense subset $I' \subset I \subset \IR$ and then pass to the future completion, which is defined over $I$; see Subsection~\ref{subsec_future_completion}.
\end{Remark}

\begin{Remark}
In Subsection~\ref{subsec_superRF_met_flow} we will see that every super Ricci flow $(g_t)_{t \in I}$ on a compact manifold $M$ and over some time-interval $I$ gives rise to a metric flow of the form $\XX = M \times I$.
The metric $d_t$ equals the length metric of $g_t$ and the conjugate heat kernels $(\nu_{x,t;s})_{s \in I, s \leq \tf (x)}$ equal the measures $K(x, t; \cdot, s) dg_s$ associated to the conjugate heat kernel at $(x,t)$.
\end{Remark}

Due to Lemma~\ref{Lem_basic_measure}\ref{Lem_basic_measure_b} the case $T=0$ in Definition~\ref{Def_metric_flow}\ref{Def_metric_flow_6} follows from $T > 0$ by a limit argument.

\begin{Lemma} \label{Lem_met_flow_T_positive}
In Definition~\ref{Def_metric_flow}\ref{Def_metric_flow_6}, we may assume that $T > 0$ and that $u$ takes values in $(0,1)$.
In this case, we may omit the option that $u_t$ is constant.
\end{Lemma}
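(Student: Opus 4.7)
The plan is to show that Property~(6) in the restricted setting $T > 0$ (where, by the definition, $u_s = \Phi \circ f_s$ with $f_s : \XX_s \to \IR$ finite and $T^{-1/2}$-Lipschitz, and hence $u_s \in (0,1)$ automatically) already implies Property~(6) in full generality, and to note that in this restricted setting the ``constant'' option for $u_t$ is subsumed by the ``$\Phi \circ f_t$'' option. The latter point is immediate: since $u_s > 0$ and $u_s < 1$ strictly on $\XX_s$ and each $\nu_{x;s}$ is a probability measure, $u_t(x) = \int u_s \, d\nu_{x;s}$ lies strictly in $(0,1)$ for every $x \in \XX_t$, so even when $u_t \equiv c$ is constant one may write $u_t = \Phi \circ f_t$ with $f_t \equiv \Phi^{-1}(c)$, trivially Lipschitz with any constant (in particular $(t-s+T)^{-1/2}$).

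For the reduction $T=0 \Rightarrow T > 0$, I would fix a Borel measurable $u_s : \XX_s \to [0,1]$ and any two points $x, y \in \XX_t$. Applying Lemma~\ref{Lem_basic_measure}\ref{Lem_basic_measure_b} to the probability measure $\tfrac12(\nu_{x;s} + \nu_{y;s})$, one obtains bounded Lipschitz $\tilde u_s^{(k)} : \XX_s \to \IR$ with $\tilde u_s^{(k)} \to u_s$ in $L^1$ against both $\nu_{x;s}$ and $\nu_{y;s}$; truncating to $[0,1]$ (a $1$-Lipschitz operation that can only improve the $L^1$ error) and setting $v_s^{(k)} := (1 - 2/k)\,\tilde u_s^{(k)} + 1/k \in [1/k, 1-1/k]$, we get $v_s^{(k)} = \Phi \circ f_s^{(k)}$ with $f_s^{(k)} := \Phi^{-1} \circ v_s^{(k)}$ Lipschitz of some (possibly large) constant $L_k < \infty$, since $\Phi^{-1}$ is Lipschitz on any compact subinterval of $(0,1)$. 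The restricted Property~(6) at $T_k := L_k^{-2} > 0$ then yields $v_t^{(k)} = \Phi \circ g_t^{(k)}$ with $g_t^{(k)}$ being $(t-s+T_k)^{-1/2}$-Lipschitz. Sending $k \to \infty$, the $L^1$-convergence forces $v_t^{(k)}(x) \to u_t(x)$ and $v_t^{(k)}(y) \to u_t(y)$, while $|g_t^{(k)}(x) - g_t^{(k)}(y)| \leq (t-s+T_k)^{-1/2} d_t(x,y)$ remains bounded. A short case analysis then rules out one of $u_t(x), u_t(y)$ lying in $\{0,1\}$ while the other lies in $(0,1)$ (which would force one $g_t^{(k)}$ to diverge to $\pm\infty$ while the other stayed bounded) and the possibility $\{u_t(x), u_t(y)\} = \{0,1\}$ (same blow-up obstruction). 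Hence either $u_t(x), u_t(y) \in (0,1)$ with $|\Phi^{-1}(u_t(x)) - \Phi^{-1}(u_t(y))| \leq (t-s)^{-1/2} d_t(x,y)$, or $u_t(x) = u_t(y) \in \{0,1\}$. Since this dichotomy holds for every pair $(x,y)$ and $d_t$ is everywhere finite, $u_t$ is globally either constantly $0$, constantly $1$, or takes values in $(0,1)$ with $\Phi^{-1} \circ u_t$ being $(t-s)^{-1/2}$-Lipschitz, precisely Property~(6) at $T=0$.

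The main obstacle is that a bounded Borel $u_s$ need not admit a pointwise approximation on all of $\XX_s$ by functions of the form $\Phi \circ f_s^{(k)}$ from a single sequence, so one cannot hope to pass $v_t^{(k)} \to u_t$ pointwise on all of $\XX_t$ at once. The pairwise workaround sketched above circumvents this because the Lipschitz property of $\Phi^{-1} \circ u_t$ on $\XX_t$ only needs to be tested on pairs, and $L^1$-density of Lipschitz functions against the finite combination $\nu_{x;s} + \nu_{y;s}$ is exactly enough to control the chosen pair $(x,y)$.
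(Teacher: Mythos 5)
Your proof is correct and makes rigorous the one-line hint the paper offers just before the lemma statement, namely that the $T=0$ case follows from the $T>0$ case ``by a limit argument'' based on the density of bounded Lipschitz functions (Lemma~\ref{Lem_basic_measure}\ref{Lem_basic_measure_b}). The pairwise approximation against $\tfrac12(\nu_{x;s}+\nu_{y;s})$ is exactly the right device here: one cannot instead approximate against a single reference kernel and then invoke mutual absolute continuity of the measures $\nu_{y;s}$ to control all $y$ at once, since Proposition~\ref{Prop_compare_CHF}\ref{Prop_compare_CHF_a} is itself established by applying the $T=0$ case of Definition~\ref{Def_metric_flow}\ref{Def_metric_flow_6} to a characteristic function, which would be circular.
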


The next lemma states that Definition~\ref{Def_metric_flow} is invariant under {\bf parabolic rescaling} by some $\lambda > 0$ and a {\bf time-shift} by some $t_0 \in \IR$.

\begin{Lemma}
Let $\lambda > 0$, $t_0 \in \IR$.
If (\ref{eq_XX_tuple}) is a metric flow, then so is
\[ (\XX , \la^2 \tf + t_0, (d_{\la^2 \tf + t_0})_{t \in I} , (\nu_{x; \la^2 s + t_0})_{x \in \XX, s \in I, s \leq \tf (x)}) . \]
\end{Lemma}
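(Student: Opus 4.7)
The plan is to verify each of the seven axioms of Definition~\ref{Def_metric_flow} for the transformed tuple. Write $\tf' := \lambda^2 \tf + t_0$ and $I' := \lambda^2 I + t_0$, so that for $t' \in I'$ we have $t := \lambda^{-2}(t'-t_0) \in I$ and $\XX'_{t'} = \XX_t$. The new metric is $d'_{t'} := \lambda\, d_t$ (this is the intended reading of $d_{\lambda^2 \tf + t_0}$) and the new conjugate heat kernel at $x$ at time $s' = \lambda^2 s + t_0$ is the same Borel probability measure $\nu_{x;s}$ on $\XX_s = \XX'_{s'}$.

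Axioms \ref{Def_metric_flow_1}, \ref{Def_metric_flow_2}, \ref{Def_metric_flow_4}, \ref{Def_metric_flow_5} are immediate from the fact that the underlying set $\XX$ and all probability measures are unchanged; only the time labels have been relabeled affinely and the distances multiplied by a constant. Axiom \ref{Def_metric_flow_3} is preserved because multiplying a complete separable metric by a positive scalar gives a complete separable metric. For Axiom \ref{Def_metric_flow_7}, the reproduction formula for $\XX'$ at times $t'_1 \leq t'_2 \leq t'_3$ with $x \in \XX'_{t'_3}$ reduces word-for-word to the reproduction formula for $\XX$ at the corresponding times $t_1 \leq t_2 \leq t_3$, since every measure and Borel set involved is literally unchanged.

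The only step requiring a short calculation is Axiom \ref{Def_metric_flow_6}. The key observation is the scaling of Lipschitz constants: a map is $L$-Lipschitz with respect to $d'_\tau = \lambda d_\tau$ if and only if it is $\lambda L$-Lipschitz with respect to $d_\tau$. Suppose $u'_{s'} : \XX'_{s'} \to [0,1]$ has the form $\Phi \circ f'_{s'}$ where $f'_{s'}$ is $(T')^{-1/2}$-Lipschitz with respect to $d'_{s'}$, and set $T := T'/\lambda^2 \geq 0$. Then $f'_{s'}$ is $\lambda (T')^{-1/2} = T^{-1/2}$-Lipschitz with respect to $d_s$, so Axiom \ref{Def_metric_flow_6} applied to $\XX$ yields that the heat-evolved function, which is literally the same function $u'_{t'}(x) = \int u'_{s'}\, d\nu_{x;s}$, is either constant or of the form $\Phi \circ f_t$ with $f_t$ being $(t-s+T)^{-1/2}$-Lipschitz with respect to $d_t$. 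Using $\lambda^2(t-s+T) = (t'-s') + T'$, this is the same as saying $f_t$ is $(t'-s'+T')^{-1/2}$-Lipschitz with respect to $d'_{t'}$, as required.

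There is no real obstacle here: the only substantive point is the bookkeeping of the Lipschitz constant under parabolic rescaling, which is precisely consistent with the factor $\lambda^2$ in front of $\tf$ and the factor $\lambda$ in front of $d$. Everything else is a direct consequence of the fact that the measures themselves are untouched by the transformation.
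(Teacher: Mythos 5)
Your proof is correct. The paper states this lemma without proof, treating it as a routine consequence of the definition; your verification supplies exactly the missing details. Your interpretation that the transformed metric at time $t' = \lambda^2 t + t_0$ is $d'_{t'} := \lambda\, d_t$ is indeed what the paper intends (the notation $d_{\lambda^2\tf + t_0}$ is shorthand for parabolic rescaling, which scales distances by $\lambda$ and time by $\lambda^2$; without the $\lambda$-scaling of the metric, the Lipschitz condition in Definition~\ref{Def_metric_flow}\ref{Def_metric_flow_6} would fail for $\lambda > 1$, and the scale-invariance of the $H$-concentration inequality asserted later in the paper would also fail). The bookkeeping in your axiom~\ref{Def_metric_flow_6} check — setting $T = T'/\lambda^2$, using $\lambda^2(t-s+T) = (t'-s') + T'$, and translating Lipschitz constants across the $\lambda$-rescaled metric — is exactly the needed computation, and the remaining axioms are immediate as you say since the underlying set and measures are untouched.
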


Next, we define what we mean by a restriction of a metric flow to a subset of times $I' \subset I$.

\begin{Definition}[Restriction of a metric flow]
If $\XX$ is a metric flow over $I \subset \IR$ and $I' \subset I$, then the {\bf restriction of $\XX$ to $I'$} is given by
\begin{equation} \label{eq_restriction_m_flow}
  \big( \XX_{I'} , \tf |_{\XX_{I'} }, (d_t)_{t \in I'} , (\nu_{x;s})_{x \in \XX_{I'} , s \in I', s \leq \tf (x)} \big).  
\end{equation}
We will often write $\XX_{I'}$ instead of (\ref{eq_restriction_m_flow}).
\end{Definition}

Lastly, we consider maps between metric flows.
We introduce the following convention.
If $\XX$ is a metric flow, $U \subset \XX$ and $\phi : U \to Y$ is some map, then we define $\phi_t := \phi |_{U \cap \XX_t} : U_t := U \cap \XX_t \to Y$.
Let $\XX^i$ be two metric flows over $I^i \subset \IR$, $i =1,2$, $U \subset \XX^1$ and consider a map $\phi : U \to \XX^2$.

\begin{Definition}
We say that $\phi$ is:
\begin{enumerate}
\item {\bf time-preserving} if $\tf(\phi(x)) = \tf (x)$ for all $x \in U$,
\item {\bf $a$-time-equivariant} if there is some $t_0 \in \IR$ such that $\tf (\phi(x)) = a \tf(x) + t_0$ for all $x \in U$,
\item {\bf time-slice-preserving} if for every $t_1 \in I^1$ there is some $t_2 \in I^2$ such that for any $x \in U \cap \XX^1_t$ we have $\tf(\phi(x)) \in \XX^2_t$.
\end{enumerate}
\end{Definition}

If $\phi$ is time-preserving, then we will often express it as a family of maps $(\phi_t : U_t := U \cap \XX^1_t \to \XX^2_t)_{t \in I^1}$.
We now define the notion of an isometry between metric flows.

\begin{Definition}[Isometry between metric flows]
Consider two metric flows $\XX^i$ over $I \subset \IR$, $i = 1,2$.
A map $\phi :  \XX^1 \to  \XX^2$ given by a family of maps $(\phi_t := \phi |_{\XX^1_t} : \XX^1_t \to \XX^2_t)_{t \in I}$ is called a {\bf flow isometry over $I$} if:
\begin{enumerate}
\item $\phi_t : (\XX^1_t, d^1_t) \to (\XX^2_t, d^2_t)$ is a metric isometry for all $t \in I$.
\item $(\phi_s)_* \nu^1_{x; s} = \nu^2_{\phi (x); s}$ for all $x \in \XX^1$, $s \in I$, $s \leq \tf (x)$.
\end{enumerate}
If $\XX^i$ are metric flows over $I^i \subset \IR$ and $I' \subset I^1 \cap I^2$, then a flow isometry $\phi : \XX^1_{I'} \to \XX^2_{I'}$ is called a {\bf  flow isometry between $\XX^1, \XX^2$ over $I'$}.
Moreover, if $I^1 \setminus I'$ and $I^2 \setminus I'$ are sets of measure zero, then a flow isometry between $\XX^1, \XX^2$ over $I'$ is called an {\bf almost everywhere flow isometry between $\XX^1, \XX^2$}.
If $I^1 \subset I^2$ and $\phi$ is a flow isometry between $\XX^1, \XX^2_{I^1}$, then we also call $\phi$ a {\bf flow isometric embedding.}
\end{Definition}

\subsection{(Conjugate) Heat flows on a metric flow}
We will now define the analog of solutions to the (forward) heat equation and the (backward) conjugate heat equation on a super Ricci flow background.
For this purpose let $\XX$ be a metric flow defined over some $I \subset \IR$ and let $I' \subset I$ be some subset.

\begin{Definition}[Heat flow] \label{Def_heat_flow}
A function $u : \XX_{I'} \to \IR$, often expressed as a family of functions $(u_t : \XX_t \to \IR)_{t \in I'}$, is called a {\bf heat flow} if for all $x \in \XX$, $s \in I'$, $s \leq \tf (x)$ the function $u_s$ is integrable with respect to $d\nu_{x;s}$ and
\begin{equation} \label{eq_Def_heat_flow}
 u_t (x) = \int_{\XX_s} u_s \, d\nu_{x;s}. 
\end{equation}
\end{Definition}

\begin{Remark}
If $u_s$ is bounded for some $s \in I'$, then by Definition~\ref{Def_metric_flow}\ref{Def_metric_flow_6}, the functions $u_{s'}$ for $s' > s$ are automatically bounded and continuous.
So the function $u_{s'}$ is automatically integrable with respect to $d\nu_{x;s'}$  if $s' > s$.
\end{Remark}

We have the following forward existence and uniqueness result:

\begin{Proposition} \label{Prop_existence_heat_flow}
Assume that $t_0 := \inf I' \in I'$ and consider a bounded measurable function $\td{u}: \XX_{t_0} \to \IR$.
Then there is a unique heat flow $(u_t)_{t \in I'}$ with $u_{t_0} = \td u$.
\end{Proposition}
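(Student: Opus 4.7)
The plan is to prove uniqueness by a direct one-line argument, then construct the heat flow explicitly using the conjugate heat kernels based at $t_0$, and finally verify the heat flow identity via the reproduction formula.

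For uniqueness, suppose $(u_t)_{t \in I'}$ is any heat flow with $u_{t_0} = \td u$. Since $t_0 = \inf I' \in I'$, for every $t \in I'$ and $x \in \XX_t$ we may apply \eqref{eq_Def_heat_flow} with $s = t_0$ to obtain $u_t(x) = \int_{\XX_{t_0}} \td u \, d\nu_{x;t_0}$, which completely determines $u_t$.

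For existence, I would simply define
\[ u_t(x) := \int_{\XX_{t_0}} \td u \, d\nu_{x;t_0}, \qquad x \in \XX_t, \quad t \in I'. \]
This is well-defined and bounded since $\td u$ is bounded and $\nu_{x;t_0}$ is a probability measure, and $u_{t_0} = \td u$ because $\nu_{x; t_0} = \delta_x$ by Definition~\ref{Def_metric_flow}\ref{Def_metric_flow_5}. To check measurability, write $\td u = a + b w$ with $w : \XX_{t_0} \to [0,1]$ bounded measurable; then $u_t = a + b w_t$ where $w_t(x) = \int_{\XX_{t_0}} w \, d\nu_{x;t_0}$. Applying Definition~\ref{Def_metric_flow}\ref{Def_metric_flow_6} with $T = 0$ to $w$, the function $w_t$ is either constant or of the form $\Phi \circ f_t$ for some Lipschitz $f_t$; in either case $u_t$ is continuous on $\XX_t$ for every $t > t_0$ (and equals $\td u$, which is measurable, at $t = t_0$).

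It remains to verify the heat flow identity for general $s,t \in I'$ with $t_0 \leq s \leq t$ and $x \in \XX_t$. The case $s = t_0$ is the definition of $u_t(x)$. For $s > t_0$, the function $u_s$ is bounded and continuous, hence $\nu_{x;s}$-integrable. The reproduction formula (Definition~\ref{Def_metric_flow}\ref{Def_metric_flow_7}) gives the identity $\nu_{x;t_0}(S) = \int_{\XX_s} \nu_{y;t_0}(S)\, d\nu_{x;s}(y)$ for Borel $S \subset \XX_{t_0}$; extending this from characteristic functions to bounded measurable functions by the standard monotone class argument yields
\[ \int_{\XX_s} u_s \, d\nu_{x;s} = \int_{\XX_s} \bigg( \int_{\XX_{t_0}} \td u \, d\nu_{y;t_0} \bigg) d\nu_{x;s}(y) = \int_{\XX_{t_0}} \td u \, d\nu_{x;t_0} = u_t(x). \]

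The only delicate point is the measurability needed to invoke the reproduction formula: one must know that $y \mapsto \int_{\XX_{t_0}} \td u \, d\nu_{y;t_0}$ is sufficiently regular on $\XX_s$, and this is precisely supplied by Definition~\ref{Def_metric_flow}\ref{Def_metric_flow_6} (with $T=0$), which guarantees continuity of $u_s$ whenever $s > t_0$. Once this regularity is in hand, everything else is a formal manipulation of measures.
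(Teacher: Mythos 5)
Your proof is correct and follows the same approach as the paper's: define $u_t(x) := \int_{\XX_{t_0}} \td u \, d\nu_{x;t_0}$ and verify the heat flow identity via the reproduction formula (Definition~\ref{Def_metric_flow}\ref{Def_metric_flow_7}). You simply make explicit the uniqueness observation and the measurability/continuity check via Definition~\ref{Def_metric_flow}\ref{Def_metric_flow_6}, which the paper leaves implicit.
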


\begin{proof}
Define
\[ u_t (x) := \int_{\XX_{t_0}} \td u \, d\nu_{x;t_0}. \]
Then (\ref{eq_Def_heat_flow}) follows using the reproduction formula, Definition~\ref{Def_metric_flow}\ref{Def_metric_flow_7}.
\end{proof}

The next result summarizes basic properties of heat flows.

\begin{Proposition} \label{Prop_basic_HF}
If $(u_t)_{t \in I'}$ is a heat flow on $\XX$ and $s < t$, $s, t \in I'$, then the following holds: 
\begin{enumerate}[label=(\alph*)]
\item \label{Prop_basic_HF_a} Any linear combination of finitely many heat flows is again a heat flow.
\item \label{Prop_basic_HF_b} If $u_s \leq a$ for some $a \in \IR$, then $u_t \leq a$ with equality at some point $x \in \XX_t$ if and only if $u_s \equiv a$ on $\supp \nu_{x;s}$.
\item \label{Prop_basic_HF_c} If $u_s \geq a$ for some $a \in \IR$, then $u_t \geq a$ with equality at some point $x \in \XX_t$ if and only if $u_s \equiv a$ on $\supp \nu_{x;s}$.
\end{enumerate}
\end{Proposition}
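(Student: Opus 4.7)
The plan is to deduce everything directly from the defining integral identity $u_t(x) = \int_{\XX_s} u_s\,d\nu_{x;s}$ together with the fact that $\nu_{x;s}$ is a probability measure, with the only subtlety lying in the equality cases.

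For part \ref{Prop_basic_HF_a}, I would simply observe that linearity of the integral gives $(c_1 u^{(1)}_t + c_2 u^{(2)}_t)(x) = \int_{\XX_s} (c_1 u^{(1)}_s + c_2 u^{(2)}_s)\,d\nu_{x;s}$, and that finite sums of $d\nu_{x;s}$-integrable functions are $d\nu_{x;s}$-integrable, so the heat flow identity of Definition~\ref{Def_heat_flow} persists.

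For part \ref{Prop_basic_HF_b}, the inequality $u_t(x) \leq a$ is immediate from integrating $u_s \leq a$ against the probability measure $\nu_{x;s}$. For the equality case, from $u_t(x) = a$ one has $\int_{\XX_s} (a - u_s)\,d\nu_{x;s} = 0$ with integrand pointwise non-negative, so $u_s = a$ holds $\nu_{x;s}$-almost everywhere, i.e.\ the set $\{u_s < a\}$ has $\nu_{x;s}$-measure zero. To promote this to the pointwise statement $u_s \equiv a$ on $\supp \nu_{x;s}$, I would use that $u_s$ is continuous on $\XX_s$, so that $\{u_s < a\}$ is open and (having $\nu_{x;s}$-measure zero) must be disjoint from $\supp \nu_{x;s}$ by Lemma~\ref{Lem_basic_measure}\ref{Lem_basic_measure_c}. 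Continuity of $u_s$ follows from Definition~\ref{Def_metric_flow}\ref{Def_metric_flow_6}: after an affine rescaling into $[0,1]$, applying that axiom with $T=0$ to any earlier time $s' \in I'$ with $s' < s$ expresses $u_s$ as either a constant or $\Phi \circ f_s$ with $f_s$ Lipschitz. The converse implication, that $u_s \equiv a$ on $\supp \nu_{x;s}$ forces $u_t(x) = a$, is immediate from the integral representation since $\nu_{x;s}$ is supported on $\supp \nu_{x;s}$.

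Part \ref{Prop_basic_HF_c} then follows from part \ref{Prop_basic_HF_b} applied to the heat flow $-u$, which is a heat flow by part \ref{Prop_basic_HF_a}. The main obstacle I expect is precisely the regularity issue in the equality case: if $s$ is the initial time of $I'$, the function $u_s$ need not be continuous a priori, and one must either invoke the smoothing axiom at an arbitrarily small earlier time, or interpret the conclusion $u_s \equiv a$ on $\supp\nu_{x;s}$ in the $\nu_{x;s}$-a.e.\ sense. Everything else is a one-line consequence of probability-measure integration.
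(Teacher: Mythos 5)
Your argument is correct and is exactly the one the paper has in mind: the paper itself just says that Assertions~\ref{Prop_basic_HF_a}--\ref{Prop_basic_HF_c} are ``direct consequences of Definitions~\ref{Def_metric_flow}, \ref{Def_heat_flow}'', so you have supplied the details it leaves implicit, and reducing the equality case to the disjointness of the open null set $\{u_s<a\}$ from $\supp\nu_{x;s}$ via Lemma~\ref{Lem_basic_measure}\ref{Lem_basic_measure_c} is precisely the right move. One small extension of the caveat you already raise: to make your affine rescaling into $[0,1]$ legitimate you need $u_{s'}$ to be bounded for the chosen $s'<s$, whereas Definition~\ref{Def_heat_flow} only demands $\nu$-integrability of the time-slices; this boundedness does hold in the intended applications (Proposition~\ref{Prop_existence_heat_flow} and the Remark after Definition~\ref{Def_heat_flow}), but for arbitrary heat flows, or at $s=\inf I'$, the pointwise conclusion ``$u_s\equiv a$ on $\supp\nu_{x;s}$'' should strictly speaking be read in the $\nu_{x;s}$-a.e.\ sense.
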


\begin{proof}
Assertions~\ref{Prop_basic_HF_a}--\ref{Prop_basic_HF_c} are direct consequences of Definitions~\ref{Def_metric_flow}, \ref{Def_heat_flow}.
\end{proof}

We also have the following gradient-type estimates:

\begin{Proposition} \label{Prop_gradient_HF}
If $(u_t)_{t \in I'}$ is a heat flow on $\XX$ and $s < t$, $s, t \in I'$, $T \geq 0$, then the following holds:
\begin{enumerate}[label=(\alph*)]
\item \label{Prop_gradient_HF_a} 
Assume that $T > 0$ and that $u_s = a  (\Phi \circ f_s)$ for some $a \in \IR$ and some $T^{-1/2}$-Lipschitz function $f_s : \XX_s \to \IR$, or that $T=0$ and  $0 \leq u_s \leq 1$.
Then $u_t = a ( \Phi \circ f_t)$ for some $(t-s+T)^{-1}$-Lipschitz function $f_t : \XX_t \to \IR$.
\item \label{Prop_gradient_HF_b} If $u_s$ is $L$-Lipschitz for some $L \geq 0$, then so is $u_t$.
\end{enumerate}
\end{Proposition}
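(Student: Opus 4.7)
For part (a), the plan is a direct reduction to Definition~\ref{Def_metric_flow}\ref{Def_metric_flow_6}. The $T=0$ case is immediate. For $T>0$ with $a \neq 0$, I would consider the rescaled heat flow $\tilde u_r := u_r/a$, which is itself a heat flow by Proposition~\ref{Prop_basic_HF}\ref{Prop_basic_HF_a}; its initial datum $\tilde u_s = \Phi \circ f_s$ takes values in $(0,1)$, so the axiom applies and produces either $\tilde u_t = \Phi \circ f_t$ with $f_t$ being $(t-s+T)^{-1/2}$-Lipschitz, or $\tilde u_t \equiv c$ for some $c \in (0,1)$ (endpoints being excluded by the max--min principle of Proposition~\ref{Prop_basic_HF}). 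The constant case is absorbed by writing $c = \Phi(\Phi^{-1}(c))$ with $\Phi^{-1}(c)$ a $0$-Lipschitz constant, and multiplying through by $a$ finishes the proof; the case $a = 0$ is trivial.

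For part (b), the plan is a reduction to the bounded case by truncation, followed by a Taylor-expansion trick built on part (a). For the reduction, the truncates $u_s^{(N)} := \max(-N,\min(u_s,N))$ are bounded and $L$-Lipschitz; dominated convergence (using integrability of $u_s$ against $\nu_{x;s}$, built into the definition of heat flow) gives $u_t^{(N)} \to u_t$ pointwise, and pointwise limits of $L$-Lipschitz functions are $L$-Lipschitz.

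For bounded $u_s$ with $\|u_s\|_\infty \leq M$ and $L > 0$, the plan is to apply part (a) to the heat flow $v^{(\epsilon)}$ starting from $v_s^{(\epsilon)} := \Phi(\epsilon u_s)$ for small $\epsilon > 0$. Since $\epsilon u_s$ is $T^{-1/2}$-Lipschitz with $T := (\epsilon L)^{-2}$, part (a) gives $v_t^{(\epsilon)} = \Phi \circ f_t^{(\epsilon)}$ with $f_t^{(\epsilon)}$ being $(t - s + (\epsilon L)^{-2})^{-1/2}$-Lipschitz, a quantity that tends to $\epsilon L$ as $\epsilon \to 0$. The Taylor expansion $\Phi(y) = \tfrac{1}{2} + \Phi'(0) y + O(y^2)$ together with linearity of the heat flow yields $v_t^{(\epsilon)} = \tfrac{1}{2} + \Phi'(0)\epsilon u_t + R_t$, where $R_t$ is the heat flow of an $O(\epsilon^2 M^2)$-bounded function and so has Lipschitz norm $O(\epsilon^2 M^2 (t-s)^{-1/2})$ by applying part (a) with $T=0$ to an affine rescaling of the remainder into $[0,1]$. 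The max--min principle confines $f_t^{(\epsilon)}$ to a neighborhood of $0$ of radius $O(\epsilon M)$, on which $\Phi'(\xi) = \Phi'(0) + O(\epsilon M)$; a mean-value comparison of $\Phi$ at $f_t^{(\epsilon)}(x_1)$ and $f_t^{(\epsilon)}(x_2)$, combined with the above estimates, then gives upon dividing by $\Phi'(0)\epsilon$ and sending $\epsilon \to 0$ the desired bound $|u_t(x_1) - u_t(x_2)| \leq L\, d_t(x_1,x_2)$.

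The main obstacle is the Taylor step: the meaningful information about $u_t$ sits at amplitude $\epsilon$ inside $v_t^{(\epsilon)}$, so it must survive division by $\epsilon$ without being swamped by the quadratic remainder. This requires invoking part (a) twice --- once with $T$ large to control $f_t^{(\epsilon)}$ in Lipschitz norm, and once with $T=0$ as a smoothing estimate on the remainder $R_t$ --- and tracking the deviation $\Phi'(\xi) - \Phi'(0)$ on the $O(\epsilon M)$-range of $f_t^{(\epsilon)}$ precisely enough that the leading coefficient converges to exactly $L$.
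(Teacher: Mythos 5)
Both parts of your proposal are correct, and part (a) matches the paper's (implicit) reduction to Definition~\ref{Def_metric_flow}\ref{Def_metric_flow_6}.

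For part (b), however, your route is genuinely different from the paper's and noticeably heavier. The paper applies part (a) to the \emph{heat flow} $\tfrac12 + \eps u$: this is a heat flow by Proposition~\ref{Prop_basic_HF}\ref{Prop_basic_HF_a}, so one has the exact identity $\tfrac12 + \eps u_t = \Phi(f_t)$ at \emph{every} time $t \geq s$, where $f_t := \Phi^{-1}(\tfrac12 + \eps u_t)$. Since $u$ is bounded, the max--min principle keeps $\tfrac12 + \eps u_t$ in a small interval around $\tfrac12$, so $\Phi^{-1}$ and $\Phi'$ are evaluated only in a shrinking neighbourhood of $\tfrac12$ (resp.\ $0$), and the Lipschitz constant of $f_t$ from part (a), translated back through $\Phi$, is $(1+O(\eps))L$. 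Letting $\eps \to 0$ finishes it --- no Taylor expansion, no remainder heat flow. Your choice $v_s^{(\eps)} := \Phi(\eps u_s)$ instead gives an auxiliary heat flow $v^{(\eps)}$ which equals $\Phi(\eps u_\cdot)$ \emph{only} at time $s$ (since $\Phi$ does not commute with the heat flow), so you are forced to recover $u_t$ by Taylor expansion, introducing the error heat flow $R_t$ and a second invocation of part (a) with $T=0$ to control its Lipschitz norm. Both arguments are correct; the paper's choice of $\tfrac12 + \eps u$ is cleaner precisely because it keeps $u$ \emph{inside} the heat flow whose Lipschitz constant part (a) controls, rather than reconstructing $u_t$ from the outside after the fact.
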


\begin{proof}
Assertion~\ref{Prop_gradient_HF_a} follows from Definitions~\ref{Def_metric_flow}\ref{Def_metric_flow_6}.
Assertion~\ref{Prop_gradient_HF_b} can be reduced to the case in which $u_s$ is bounded.
Then Assertion~\ref{Prop_gradient_HF_b} follows by applying Assertion~\ref{Prop_gradient_HF_a} to $\frac12 + \eps u$ and letting $\eps \to 0$.
\end{proof}

Next we define the equivalent notion of a solution to the conjugate heat equation, which will concern probability measures:

\begin{Definition}[Conjugate heat flow] \label{Def_conj_heat_flow}
A family of probability measures $(\mu_t \in \mathcal{P} (\XX_t))_{t \in I'}$ is called a {\bf conjugate heat flow} if for all $s, t \in I'$, $s \leq t$ we have
\begin{equation} \label{eq_Def_conj_heat_flow}
 \mu_s  = \int_{\XX_t} \nu_{x;s} \, d\mu_t (x). 
\end{equation}
\end{Definition}

Similarly as before we obtain the following backwards existence and uniqueness result.

\begin{Proposition} \label{Prop_conj_HF_exist_unique}
Assume that $t_0 := \sup I' \in I'$ and consider a probability measure $\td{\mu} \in \mathcal{P} (\XX_{t_0} )$.
Then there is a unique conjugate heat flow $(\mu_t)_{t \in I'}$ with $\mu_{t_0} = \td \mu$.
\end{Proposition}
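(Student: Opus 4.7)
The plan is to write down the only possible formula by iterating the defining relation (\ref{eq_Def_conj_heat_flow}) with $t = t_0$, and then verify it gives a valid conjugate heat flow. Concretely, I would define, for each $s \in I'$ and Borel set $A \subset \XX_s$,
\[
 \mu_s(A) := \int_{\XX_{t_0}} \nu_{x;s}(A) \, d\td\mu(x).
\]
Uniqueness is immediate: if $(\mu'_t)_{t\in I'}$ is any conjugate heat flow with $\mu'_{t_0}=\td\mu$, then taking $t=t_0$ in (\ref{eq_Def_conj_heat_flow}) forces $\mu'_s$ to equal $\mu_s$.

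For existence there are three things to check. First, $\mu_s$ is a well-defined Borel probability measure on $\XX_s$. The only nontrivial point here is the measurability of $x \mapsto \nu_{x;s}(A)$ on $\XX_{t_0}$: for $s=t_0$ this is $\chi_A$ by Definition~\ref{Def_metric_flow}\ref{Def_metric_flow_5}, and for $s < t_0$ the case $T=0$ of Definition~\ref{Def_metric_flow}\ref{Def_metric_flow_6} applied to $u_s = \chi_A$ says that $x \mapsto \int_{\XX_s} \chi_A \, d\nu_{x;s} = \nu_{x;s}(A)$ is either constant or of the form $\Phi \circ f_t$ with $f_t$ Lipschitz, hence in any case Borel measurable. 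Countable additivity and total mass $1$ then follow from $\nu_{x;s}(\XX_s)=1$ and Fubini--Tonelli.

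Second, $\mu_{t_0} = \td\mu$ is immediate from $\nu_{x;t_0}=\delta_x$ (Definition~\ref{Def_metric_flow}\ref{Def_metric_flow_5}). Third, the reproduction identity (\ref{eq_Def_conj_heat_flow}) for $s \leq t$ in $I'$ follows by inserting the definitions and exchanging the order of integration:
\[
 \int_{\XX_t} \nu_{x;s}(A) \, d\mu_t(x)
 = \int_{\XX_{t_0}} \Big( \int_{\XX_t} \nu_{x;s}(A) \, d\nu_{y;t}(x) \Big) d\td\mu(y)
 = \int_{\XX_{t_0}} \nu_{y;s}(A) \, d\td\mu(y) = \mu_s(A),
\]
where the inner integral is evaluated by the reproduction formula in Definition~\ref{Def_metric_flow}\ref{Def_metric_flow_7}. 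The application of Fubini is justified by nonnegativity of the integrand and the measurability established in the first step.

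The only mildly delicate point is the measurability of $y \mapsto \int_{\XX_t} \nu_{x;s}(A)\, d\nu_{y;t}(x)$ needed to apply Fubini in the display above; this is handled exactly as before, either directly from the case $T=0$ of Definition~\ref{Def_metric_flow}\ref{Def_metric_flow_6} applied to the bounded measurable function $x \mapsto \nu_{x;s}(A)$ on $\XX_t$, or by first verifying the identity for $A$ open (where $x \mapsto \nu_{x;s}(A)$ is lower semicontinuous by a standard approximation by Lipschitz functions via Lemma~\ref{Lem_basic_measure}\ref{Lem_basic_measure_b}) and then extending by a monotone class argument. I expect this measurability bookkeeping to be the only real obstacle; the algebraic content of the proof is just the reproduction formula together with Fubini.
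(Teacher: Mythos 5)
Your proof is correct and follows essentially the same route as the paper: define $\mu_s := \int_{\XX_{t_0}} \nu_{x;s}\,d\td\mu(x)$, observe that uniqueness is forced by taking $t = t_0$ in (\ref{eq_Def_conj_heat_flow}), and verify the conjugate heat flow property by Fubini together with the reproduction formula of Definition~\ref{Def_metric_flow}\ref{Def_metric_flow_7}. The paper's proof is terse and leaves the measurability bookkeeping implicit (it is essentially the remark at the end of Definition~\ref{Def_metric_flow}\ref{Def_metric_flow_7}); you have spelled it out correctly.
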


\begin{proof}
Define
\[ \mu_t := \int_{\XX_{t_0}} \nu_{x;t} \, d\td\mu(x). \]
Then (\ref{eq_Def_conj_heat_flow}) follows using the reproduction formula, Definition~\ref{Def_metric_flow}\ref{Def_metric_flow_7}.
\end{proof}

We summarize basic properties of conjugate heat flows:

\begin{Proposition} \label{Prop_properties_CHK}
The following is true:
\begin{enumerate}[label=(\alph*)]
\item \label{Prop_properties_CHK_a} Every finite convex combination of conjugate heat flows is again a conjugate heat flow.
\item \label{Prop_properties_CHK_b} The conjugate heat kernel $(\nu_{x;s})_{s \in I, s \leq \tf (x)}$ based at any $x \in \XX$ is a conjugate heat flow.
\item  \label{Prop_properties_CHK_bb} If $(\mu^1_t)_{t \in I'}, (\mu^2_t)_{t \in I'}$ are conjugate heat flows and $\mu^1_{t_0} \leq A \mu^2_{t_0}$ for some $t_0 \in I'$, $A > 0$, then $\mu^1_{t} \leq A \mu^2_{t}$ for all $t \leq t_0$, $t \in I'$.
\item \label{Prop_properties_CHK_c} 
Consider a heat flow $(u_t)_{t \in I'}$ and a conjugate heat flow $(\mu_t)_{t \in I'}$ over the same $I' \subset I$.
If $u_t$ is integrable with respect to $d\mu_t$ for some $t \in I'$, then the same is true for all $t \in I'$ and the integral $\int_{\XX_t} u_t \, d\mu_t$ is constant in $t \in I'$.
\end{enumerate}
\end{Proposition}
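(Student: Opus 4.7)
The plan is to handle the four assertions in turn, each reducing to a direct application of the reproduction formula (Definition~\ref{Def_metric_flow}\ref{Def_metric_flow_7}) combined with Fubini/Tonelli.

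Assertion~\ref{Prop_properties_CHK_a} is immediate: linearity of the integral in the defining identity (\ref{eq_Def_conj_heat_flow}) gives that any finite convex combination of solutions is again a solution. For Assertion~\ref{Prop_properties_CHK_b}, fix $x \in \XX$ and apply Definition~\ref{Def_metric_flow}\ref{Def_metric_flow_7} with $t_1 = s$, $t_2 = t$, $t_3 = \tf(x)$ for $s \leq t \leq \tf(x)$: this yields $\nu_{x;s} = \int_{\XX_t} \nu_{y;s} \, d\nu_{x;t}(y)$, which is precisely (\ref{eq_Def_conj_heat_flow}) for the family $(\nu_{x;s})_{s \leq \tf(x)}$.

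For Assertion~\ref{Prop_properties_CHK_bb}, I exploit the linearity of the backward integral representation. Applying (\ref{eq_Def_conj_heat_flow}) with the upper endpoint $t_0$, both $\mu^1_t$ and $\mu^2_t$ are recovered for $t \leq t_0$ in $I'$ as $\mu^i_t = \int_{\XX_{t_0}} \nu_{x;t} \, d\mu^i_{t_0}(x)$. Setting $\nu := A\mu^2_{t_0} - \mu^1_{t_0}$, which is a nonnegative finite Borel measure on $\XX_{t_0}$ by hypothesis, one obtains
\[ A\mu^2_t - \mu^1_t = \int_{\XX_{t_0}} \nu_{x;t} \, d\nu(x), \]
which is a nonnegative measure since the integrand $\nu_{x;t}$ is a probability measure for each $x$. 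This gives $\mu^1_t \leq A\mu^2_t$.

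For Assertion~\ref{Prop_properties_CHK_c}, the constancy in $t$ of $\int_{\XX_t} u_t \, d\mu_t$ is a double application of Fubini. For $s \leq t$ in $I'$, using Definition~\ref{Def_heat_flow} and then (\ref{eq_Def_conj_heat_flow}),
\[ \int_{\XX_t} u_t \, d\mu_t = \int_{\XX_t} \bigg( \int_{\XX_s} u_s \, d\nu_{x;s} \bigg) d\mu_t(x) = \int_{\XX_s} u_s \, d\mu_s, \]
provided Fubini applies. The propagation of integrability in the forward direction is the easy one: Jensen's inequality gives $|u_t(x)| \leq \int_{\XX_s} |u_s| \, d\nu_{x;s}$, and Tonelli together with the conjugate heat flow identity yields
\[ \int_{\XX_t} |u_t| \, d\mu_t \leq \int_{\XX_t} \int_{\XX_s} |u_s| \, d\nu_{x;s} \, d\mu_t(x) = \int_{\XX_s} |u_s| \, d\mu_s, \]
so integrability at a past time forces integrability at all later times. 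The subtler point is propagating integrability backward in time; the cleanest route is to split $u_s = u_s^+ - u_s^-$ at the time $s$ where integrability is assumed, use Assertion~\ref{Prop_properties_CHK_a} together with the nonnegative case (where Tonelli converts the argument above into an \emph{equality} $\int v_t \, d\mu_t = \int v_s \, d\mu_s$ for any nonnegative heat flow $v$), and then reassemble. The main conceptual obstacle here is not the computation but handling the integrability bookkeeping, since neither $u^+$ nor $u^-$ is itself a heat flow — one must instead propagate $u_s^\pm$ forward via the kernel $\nu_{\cdot;s}$ to construct nonnegative heat flows that bound $|u_t|$ from above.
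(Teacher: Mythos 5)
Your treatment of all four assertions matches the paper's approach: the paper dismisses (a)--(bb) as clear, and proves (c) by precisely the Fubini chain you write down,
\[
\int_{\XX_{t_1}} u_{t_1}\,d\mu_{t_1} \;=\; \int_{\XX_{t_2}}\int_{\XX_{t_1}} u_{t_1}\,d\nu_{x;t_1}\,d\mu_{t_2}(x) \;=\; \int_{\XX_{t_2}} u_{t_2}\,d\mu_{t_2},
\]
asserting without further comment that it yields integrability in both directions. You are right that the backward propagation of integrability (integrability of $u_{t_2}$ w.r.t.\ $\mu_{t_2}$ implying integrability of $u_{t_1}$ w.r.t.\ $\mu_{t_1}$) is the nontrivial step; the paper glosses over it.

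However, the fix you sketch does not close that gap. Splitting $u_{t_1}=u_{t_1}^+-u_{t_1}^-$ and propagating forward via $v_t:=\int u_{t_1}^+\,d\nu_{\cdot;t_1}$ and $w_t:=\int u_{t_1}^-\,d\nu_{\cdot;t_1}$ produces nonnegative heat flows on $t\geq t_1$ with $v_t+w_t\geq|u_t|$. This inequality points the wrong way for the backward direction: from the assumed $\mu_{t_2}$-integrability of the \emph{smaller} quantity $|u_{t_2}|$ you cannot deduce $\mu_{t_2}$-integrability of the \emph{larger} quantity $v_{t_2}+w_{t_2}=\int|u_{t_1}|\,d\nu_{\cdot;t_1}$, which by Tonelli has $\mu_{t_2}$-integral equal to exactly $\int|u_{t_1}|\,d\mu_{t_1}$ --- the very thing you want to show is finite. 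What your construction re-establishes is the forward direction you already had: if $|u_{t_1}|=v_{t_1}+w_{t_1}$ is $\mu_{t_1}$-integrable, Tonelli on the nonnegative flows gives $\mu_{t_2}$-integrability. The decomposition into $u^\pm$ discards precisely the cancellation that keeps $|u_{t_2}|$ small, so it cannot furnish the needed upper bound at the earlier time. Either a genuinely different argument is needed for the backward direction, or one should make explicit that the claim is being used under additional hypotheses (e.g.\ boundedness of $u$, as is the case in all applications in the paper) for which the issue evaporates.
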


\begin{proof}
Assertions~\ref{Prop_properties_CHK_a}--\ref{Prop_properties_CHK_bb} are clear.
For Assertion~\ref{Prop_properties_CHK_c} let $t_1, t_2 \in I'$, $t_1 < t_2$.
If $u_{t_i}$ is integrable with respect to $d\mu_{t_i}$ for some $i = 1,2$, then
\[ \int_{\XX_{t_1}} u_{t_1} d\mu_{t_1} = \int_{\XX_{t_2}} \int_{\XX_{t_1}} u_{t_1} d\nu_{x; t_1} d\mu_{t_2}(x) = \int_{\XX_{t_2}} u_{t_2} d\mu_{t_2}, \]
which implies integrability of $u_{t_i}$ with respect to $d\mu_{t_i}$ for both $i =1,2$.
\end{proof}

The following proposition allows us to compare two conjugate heat flows.

\begin{Proposition} \label{Prop_compare_CHF}
Consider two conjugate heat flows $(\mu^i_t)_{t \in I'}$, $i=1,2$, defined over the same subset $I' \subset I$.
Then the following is true:
\begin{enumerate}[label=(\alph*)]
\item \label{Prop_compare_CHF_a} For any $t \in I'$ with $t \neq \sup I'$, the measures $\mu^1_t, \mu^2_t$ are absolutely continuous with respect to each other.
\item \label{Prop_compare_CHF_b} The quantity $d_{W_1}^{\XX_t} (\mu^1_t, \mu^2_t)$ is non-decreasing in $t$.
\item \label{Prop_compare_CHF_c} For any $x_1, x_2 \in \XX_t$ the quantity $d_{W_1}^{\XX_s} (\nu_{x_1;s}, \nu_{x_2;s})$ is non-decreasing in $s$ and we have
\[ d_{W_1}^{\XX_s} (\nu_{x_1;s}, \nu_{x_2;s}) \leq d_t (x_1, x_2). \]
\end{enumerate}
\end{Proposition}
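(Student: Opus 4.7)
The plan is to prove all three parts using the duality between heat flows and conjugate heat flows, combined with the gradient dichotomy in axiom \ref{Def_metric_flow_6} of Definition~\ref{Def_metric_flow}. Parts (a) and (b) each reduce to statements about suitable forward heat flows, and (c) is then an immediate specialization of (b) to conjugate heat kernels together with the fact that $\nu_{x_i;t}=\delta_{x_i}$.

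For part (a), fix $t \in I'$ with $t<\sup I'$ and choose any $t' \in I'$ with $t' > t$. Let $A \subset \XX_t$ be Borel with $\mu^1_t(A)=0$; I want to conclude $\mu^2_t(A)=0$, since then symmetry gives mutual absolute continuity. Apply the forward heat flow to $u_t := \chi_A$, obtaining $u_{t'}(x) = \nu_{x;t}(A)$ for $x \in \XX_{t'}$. By axiom \ref{Def_metric_flow_6} with $T=0$, the function $u_{t'}$ is either constant or of the form $\Phi \circ f_{t'}$, and in the latter case is strictly positive everywhere since $\Phi>0$ by \eqref{eq_erf}. On the other hand, Proposition~\ref{Prop_properties_CHK}\ref{Prop_properties_CHK_c} (applied to the heat flow $(u_\tau)_{\tau \in [t,t']}$ and the conjugate heat flow $(\mu^1_\tau)$) yields $\int_{\XX_{t'}} u_{t'}\, d\mu^1_{t'} = \mu^1_t(A)=0$. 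Combined with $u_{t'}\geq 0$, this forces the constant case with $u_{t'}\equiv 0$, so $\nu_{x;t}(A)=0$ for every $x \in \XX_{t'}$, and then $\mu^2_t(A)=\int_{\XX_{t'}} \nu_{x;t}(A)\, d\mu^2_{t'}(x)=0$.

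For part (b), fix $s,t \in I'$ with $s<t$. By the Kantorovich-Rubinstein characterization (Proposition~\ref{Prop_Kant_Rub}), it suffices to show that $\int_{\XX_s} f_s\, d(\mu^1_s-\mu^2_s) \leq d^{\XX_t}_{W_1}(\mu^1_t,\mu^2_t)$ for every bounded $1$-Lipschitz $f_s:\XX_s \to \IR$. Extend $f_s$ forward in time via $f_t(x) := \int_{\XX_s} f_s\, d\nu_{x;s}$. Then $(f_\tau)_{\tau \in [s,t]}$ is a heat flow, $f_t$ is bounded, and by Proposition~\ref{Prop_gradient_HF}\ref{Prop_gradient_HF_b} it is still $1$-Lipschitz. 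Pairing the heat flow $(f_\tau)$ with the conjugate heat flows $(\mu^i_\tau)$ via Proposition~\ref{Prop_properties_CHK}\ref{Prop_properties_CHK_c} yields $\int f_s\, d\mu^i_s = \int f_t\, d\mu^i_t$, so $\int f_s\, d(\mu^1_s-\mu^2_s) = \int f_t\, d(\mu^1_t-\mu^2_t)\leq d^{\XX_t}_{W_1}(\mu^1_t,\mu^2_t)$. Taking the supremum over $f_s$ gives (b). For (c), apply (b) to $\mu^i_s := \nu_{x_i;s}$ for $s \in I \cap (-\infty,t]$ to obtain monotonicity, and evaluate at $s=t$ using $\nu_{x_i;t}=\delta_{x_i}$ and $d^{\XX_t}_{W_1}(\delta_{x_1},\delta_{x_2})=d_t(x_1,x_2)$.

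The main technical point is the dichotomy argument in part (a): everything hinges on the fact that the only way a forward-evolved indicator can integrate to zero against a probability measure is for the resulting function to be identically zero, which in turn relies crucially on the strict positivity of $\Phi$ enforced by \eqref{eq_erf} and on the ``constant or $\Phi \circ f$'' alternative in axiom \ref{Def_metric_flow_6}. Parts (b) and (c) are then essentially bookkeeping once one recognizes that the gradient estimate Proposition~\ref{Prop_gradient_HF}\ref{Prop_gradient_HF_b} together with Proposition~\ref{Prop_properties_CHK}\ref{Prop_properties_CHK_c} provide the standard ``dual'' mechanism for monotonicity of Wasserstein distance along heat flow.
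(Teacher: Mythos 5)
Your proof is correct and follows essentially the same route as the paper's: part (a) via the constant-or-$\Phi\circ f$ dichotomy from Definition~\ref{Def_metric_flow}\ref{Def_metric_flow_6} applied to the forward heat flow of an indicator, part (b) via Kantorovich--Rubinstein duality combined with Lipschitz preservation (Proposition~\ref{Prop_gradient_HF}\ref{Prop_gradient_HF_b}) and the pairing invariance (Proposition~\ref{Prop_properties_CHK}\ref{Prop_properties_CHK_c}), and part (c) as a specialization of (b). The only cosmetic difference is that you spell out the dichotomy explicitly in (a) where the paper states it more tersely.
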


\begin{Remark}
By \cite{Topping-McCann} we have monotonicity of the $W_2$-Wasserstein distance between two conjugate heat flows on a super Ricci flow.
It is an interesting question whether the same holds on a metric flow as well.
\end{Remark}

\begin{proof}
For Assertion~\ref{Prop_compare_CHF_a} consider some $t \in I'$ with $t \neq \sup I'$ and choose $t' \in I'$ with $t' > t$.
Consider a measurable $S \subset \XX_t$ with $\mu^1_t (S) = 0$.
Recall that
\begin{equation} \label{eq_mu_i_S_formula}
  \mu^i_t (S) = \int_{\XX_{t'}} \nu_{x;t} (S) \, d\mu^i_{t'} (x). 
\end{equation}
If $\nu_{x;t} (S) > 0$ for some $x \in \XX_{t'}$, then by Definition~\ref{Def_metric_flow}\ref{Def_metric_flow_6} we have $\nu_{x;t} (S) > 0$ for all $x \in \XX_{t'}$, which contradicts (\ref{eq_mu_i_S_formula}) for $i =1$.
So $\nu_{x;t} (S) = 0$ for all $x \in \XX_{t'}$ and therefore $\mu^2_t (S) = 0$.

Assertion~\ref{Prop_compare_CHF_b} follows by combining Propositions~\ref{Prop_Kant_Rub}, \ref{Prop_gradient_HF}\ref{Prop_gradient_HF_b}, \ref{Prop_properties_CHK}\ref{Prop_properties_CHK_c} as in the proof of \cite[\HKLemWoneMonotone]{Bamler_HK_entropy_estimates}.
More specifically, let $t_1, t_2 \in I'$, $t_1 \leq t_2$ and consider a bounded $1$-Lipschitz function $\td u : \XX_{t_1} \to \IR$.
Let $u : \XX_{\geq t_1} \to \IR$ be the heat flow with $u_{t_1} = \td u$.
Then $u_{t_2}$ is also $1$-Lipschitz and we have
\begin{multline*}
\int_{\XX_{t_1}} \td u \, d\mu^1_{t_1} - \int_{\XX_{t_1}} \td u \, d\mu^2_{t_1}
= \int_{\XX_{t_1}}  u_{t_1} \, d\mu^1_{t_1} - \int_{\XX_{t_1}}  u_{t_1} \, d\mu^2_{t_1} \\
= \int_{\XX_{t_2}}  u_{t_2} \, d\mu^1_{t_2} - \int_{\XX_{t_2}}  u_{t_2} \, d\mu^2_{t_2}
\leq d^{\XX_{t_2}}_{W_1} (\mu^1_{t_2}, \mu^2_{t_2}) .
\end{multline*}
Taking the supremum over all such $\td u$ implies Assertion~\ref{Prop_compare_CHF_b}.

Assertion~\ref{Prop_compare_CHF_c} is a direct consequence of Assertion~\ref{Prop_compare_CHF_b}.
\end{proof}

\subsection{Sets of measure zero and the support of a metric flow}
Let $\XX$ be a metric flow over some $I \subset \IR$.
The following is a direct consequence of Proposition~\ref{Prop_compare_CHF}\ref{Prop_compare_CHF_a}:

\begin{Proposition} \label{Prop_supp_XX}
Consider two conjugate heat flows $(\mu^i_t)_{t \in I'}$, $i=1,2$, defined over the same subset $I' \subset I$ and let $t \in I'$, $t < \sup I'$.
\begin{enumerate}[label=(\alph*)]
\item For any subset $S \subset \XX_t$ we have $\mu^1_t (S) = 0$ if and only if $\mu^2_t (S) = 0$.
\item For any subset $S \subset \XX_t$ we have $\mu^1_t (S) = 1$ if and only if $\mu^2_t (S) = 1$.
\item $\supp \mu^1_t = \supp \mu^2_t$.
\end{enumerate}
\end{Proposition}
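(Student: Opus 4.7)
The plan is to derive all three assertions directly from the mutual absolute continuity of $\mu^1_t$ and $\mu^2_t$ that is provided by the preceding proposition (Proposition~\ref{Prop_compare_CHF}\ref{Prop_compare_CHF_a}). No further use of the flow axioms is needed beyond that fact.

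For assertion~(a), mutual absolute continuity means exactly that for every Borel $S \subset \XX_t$ one has $\mu^1_t(S) = 0 \Leftrightarrow \mu^2_t(S) = 0$. To extend this from Borel to arbitrary subsets $S \subset \XX_t$, I would invoke the regularity of $\mu^i_t$ (Lemma~\ref{Lem_basic_measure}\ref{Lem_basic_measure_a}): a non-Borel set has measure zero under $\mu^i_t$ precisely when its outer measure vanishes, and the outer measure equals the infimum of $\mu^i_t(O)$ over open sets $O \supset S$. Since $O$ is Borel, the equivalence passes to $S$.

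For assertion~(b), I would apply~(a) to the complement: $\mu^1_t(S) = 1$ iff $\mu^1_t(\XX_t \setminus S) = 0$ iff $\mu^2_t(\XX_t \setminus S) = 0$ iff $\mu^2_t(S) = 1$. (Strictly speaking, one applies~(a) to the Borel hull or to $\XX_t \setminus S$ as treated above.)

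For assertion~(c), I would use the characterization $\supp \mu^i_t = \{ x \in \XX_t : \mu^i_t(B(x,r)) > 0 \text{ for all } r > 0 \}$ from Lemma~\ref{Lem_basic_measure}\ref{Lem_basic_measure_c}. Since each open ball $B(x,r)$ is Borel, assertion~(a) gives $\mu^1_t(B(x,r)) > 0 \Leftrightarrow \mu^2_t(B(x,r)) > 0$ for every $x$ and $r$; the two sets of points where all ball masses are positive therefore coincide. There is no real obstacle here—the only mild subtlety is ensuring the set-theoretic arguments in~(a) and~(b) handle non-Borel subsets, which is why I would invoke regularity explicitly.
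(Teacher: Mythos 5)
Your proposal is correct and matches the paper's intent exactly: the paper states Proposition~\ref{Prop_supp_XX} as a direct consequence of Proposition~\ref{Prop_compare_CHF}\ref{Prop_compare_CHF_a} (mutual absolute continuity) and gives no further details, while you spell out the three routine deductions — including the mild point about extending from Borel to arbitrary subsets via outer regularity, which is the right way to read the ``any subset'' phrasing given that $\mu^i_t$ is defined on $\mathcal{B}(\XX_t)$.
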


We can therefore make the following definitions:

\begin{Definition}
Suppose that $t < \sup I$.
We say that $S \subset \XX_t$ is a {\bf subset of measure zero} if $\mu_t(S) = 0$ for one (and therefore any) conjugate heat flow $(\mu_{t'})_{t' \in I'}$, $I' \subset I$ on $\XX$ with $t < \sup I'$.
We say that $S \subset \XX_t$ is a {\bf subset of full measure} if $\XX_t \setminus S$ is a subset of measure zero.
\end{Definition}

\begin{Definition}
The {\bf support} $\supp \XX_t \subset \XX_t$ of $\XX$ at some time $t \in I$ is defined as follows. If $t < \sup I$, then $\supp \XX_t$ is defined as the subset $S \subset \XX_t$ with the property that $S = \supp \mu_t$ for any conjugate heat flow $(\mu_{t'})_{t' \in I'}$, $I' \subset I$ on $\XX$ with $t < \sup I'$.
If $t = \sup I$, then $\supp \XX_t := \XX_t$.
We write $\supp \XX := \bigcup_{t \in I} \supp \XX_t$.
If $\supp \XX = \XX$, then $\XX$ is said to have {\bf full support} and if $\supp \XX_t = \XX_t$ for some $t \in I$, then $\XX$ is said to have {\bf full support at time $t$}.
\end{Definition}

Proposition~\ref{Prop_supp_XX} also implies that for any metric flow $\XX$
\[ \big( \supp \XX, \tf |_{\supp \XX}, (d_t |_{\supp \XX_t})_{t \in I}, (\nu_{x;s}|_{\supp \XX_s})_{x \in \supp \XX, s \in I, s \leq \tf(x) }\big)  \]
is a metric flow of full support, which we will abbreviate by $\supp \XX$.
For any $x \in \XX \setminus \supp \XX$ the restricted conjugate heat kernel $(\nu_{x;s}|_{\supp \XX_s})_{s \in I, s < \tf(x)}$ is still a conjugate heat flow on $\supp \XX$.

\subsection{\texorpdfstring{$H$-Concentration}{H-Concentration}}
We  now introduce a property called $H$-concentration, which will be central to analysis of metric flows, as it ensures reasonable compactness properties of the space metric flows.
It has been shown in \cite{Bamler_HK_entropy_estimates} that it is satisfied by super Ricci flows for $H = H_n$.
It will be the only property in this paper that is sensitive to the dimension.

In the following let $\XX$ be a metric flow over $I \subset \IR$ and recall the definition of the variance $\Var$ from Definition~\ref{Def_Variance}.

\begin{Definition}[$H$-Concentration] \label{Def_H_Concentration}
$\XX$ is called {\bf $H$-concentrated} if for any $s \leq t$, $s,t \in I$, $x_1, x_2 \in \XX_t$
\begin{equation} \label{eq_H_contractivity_condition}
 \Var (\nu_{x_1; s}, \nu_{x_2; s} ) \leq d^2_t (x_1, x_2) + H (t-s). 
\end{equation}
\end{Definition}

\begin{Remark}
If $s = t$, then we have equality in (\ref{eq_H_contractivity_condition}), as $\Var (\delta_{x_1}, \delta_{x_2}) = d^2_t (x_1, x_2)$.
\end{Remark}

\begin{Remark}
(\ref{eq_H_contractivity_condition}) is invariant under parabolic rescaling and time-shifts.
So if $\XX$ is $H$-concentrated, then so is any other metric flow obtained from $\XX$ by parabolic rescaling and time-shifts.
\end{Remark}

We record that $H$-concentration implies the following monotonicity property; compare with \cite[\HKVarMonotonicityCHF]{Bamler_HK_entropy_estimates}.

\begin{Proposition} \label{Prop_H_monotonicity_Var}
If $\XX$ is  $H$-concentrated, then for any two conjugate heat flows $(\mu^1_t)_{t \in I'}$, $(\mu^2_t)_{t \in I'}$, $I' \subset I$, the function
\[ t \longmapsto \Var (\mu^1_t, \mu^2_t) + H t, \qquad t \in I' \]
is non-decreasing.
In particular, if $\Var (\mu^1_{t_0}, \mu^2_{t_0})< \infty$ for some $t_0 \in I'$, then $\Var (\mu^1_{t}, \mu^2_{t})< \infty$ for all $t \leq t_0$, $t \in I'$.
Moreover, for fixed $t \in I$, $x_1, x_2 \in \XX_t$ the following function is non-decreasing
\[ s \longmapsto \Var (\nu_{x_1;s}, \nu_{x_2;s}) + H (t-s), \qquad s \leq t, \quad s \in I'. \]
\end{Proposition}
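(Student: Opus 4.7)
The plan is to derive the first monotonicity statement directly from the $H$-concentration inequality by pushing everything back to time $t$ via the reproduction formula, and then obtain the second statement as a special case.

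First I would fix $s, t \in I'$ with $s \leq t$ and use the defining identity $\mu^i_s = \int_{\XX_t} \nu_{x;s} \, d\mu^i_t(x)$ of a conjugate heat flow (Definition~\ref{Def_conj_heat_flow}) together with the integral linearity of variance noted just after Definition~\ref{Def_Variance}. This gives
\[
\Var(\mu^1_s,\mu^2_s) \;=\; \int_{\XX_t}\!\!\int_{\XX_t} \Var\bigl(\nu_{x_1;s},\,\nu_{x_2;s}\bigr)\, d\mu^1_t(x_1)\, d\mu^2_t(x_2).
\]
(The measurability of the integrand in $(x_1,x_2)$ is standard; if needed, it can be verified by approximating $d_s^2$ on $\XX_s\times\XX_s$ by bounded continuous functions and invoking Definition~\ref{Def_metric_flow}\ref{Def_metric_flow_6} applied to the product flow, or simply by Fubini on the joint expression $\int\!\!\int d_s^2(y_1,y_2)\,d\nu_{x_1;s}(y_1)d\nu_{x_2;s}(y_2)$.)

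Next I would apply the $H$-concentration inequality $\Var(\nu_{x_1;s},\nu_{x_2;s})\le d_t^2(x_1,x_2)+H(t-s)$ pointwise inside the integral and integrate to obtain
\[
\Var(\mu^1_s,\mu^2_s) \;\le\; \int_{\XX_t}\!\!\int_{\XX_t} d_t^2(x_1,x_2)\, d\mu^1_t(x_1)d\mu^2_t(x_2) \;+\; H(t-s) \;=\; \Var(\mu^1_t,\mu^2_t)+H(t-s).
\]
Rearranging yields $\Var(\mu^1_s,\mu^2_s)+Hs \le \Var(\mu^1_t,\mu^2_t)+Ht$, which is the first claim. The finiteness propagation then follows immediately: if $\Var(\mu^1_{t_0},\mu^2_{t_0})<\infty$ and $t \le t_0$, $t\in I'$, apply the inequality with $s=t$ and $t$ replaced by $t_0$ to conclude $\Var(\mu^1_t,\mu^2_t)\le \Var(\mu^1_{t_0},\mu^2_{t_0})+H(t_0-t)<\infty$.

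For the final statement I would simply observe, using Proposition~\ref{Prop_properties_CHK}\ref{Prop_properties_CHK_b}, that for fixed $t\in I$ and $x_1,x_2\in \XX_t$ the conjugate heat kernels $s\mapsto \nu_{x_i;s}$ are themselves conjugate heat flows on $\XX_{\le t}$. Applying the statement just proven with $\mu^i_\cdot:=\nu_{x_i;\cdot}$ shows that $s\mapsto \Var(\nu_{x_1;s},\nu_{x_2;s})+Hs$ is non-decreasing in $s\le t$, which is equivalent (subtracting the constant $Ht$ is irrelevant to monotonicity; equivalently $H(t-s)$ differs from $-Hs$ by a constant) to the stated monotonicity of $s\mapsto \Var(\nu_{x_1;s},\nu_{x_2;s})+H(t-s)$. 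I do not anticipate a serious obstacle here; the only thing one has to be mildly careful about is the measurability of the integrand, which is why I would emphasize the Fubini-style rewriting at the outset.
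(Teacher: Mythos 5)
Your proof follows exactly the same route as the paper's: rewrite $\Var(\mu^1_s,\mu^2_s)$ via the conjugate heat flow identity and the integral linearity of variance as a double integral of $\Var(\nu_{x_1;s},\nu_{x_2;s})$ against $d\mu^1_t\otimes d\mu^2_t$, then apply the $H$-concentration inequality pointwise under the integral sign. The extra remarks on measurability and the explicit derivation of the finiteness and conjugate-heat-kernel statements are sound and fill in details the paper leaves implicit.
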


\begin{proof}
Let $s' \leq s'' \leq t $, $s', s'' \in I'$.
By Definition~\ref{Def_conj_heat_flow} we have
\begin{multline*}
 \Var (\mu_{s'}, \mu_{s'} ) 
= \int_{\XX_{s''}} \int_{\XX_{s''}}  \Var (\nu_{y_1;s'}, \nu_{y_2;s'} ) d\mu_{s''}(y_1) d\mu_{s''}(y_2) \\
\leq \int_{\XX_{s''}} \int_{\XX_{s''}} \big( d^2_{s''} (y_1, y_2) + H(s''-s') \big) d\mu_{s''}(y_1) d\mu_{s''}(y_2)
= \Var (\mu_{s''},\mu_{s''} ) + H(s''-s') . \qedhere
\end{multline*}
\end{proof}
\medskip

As in \cite[\HKHCenterDef]{Bamler_HK_entropy_estimates} we define:

\begin{Definition}[$H$-center]
A point $z \in \XX_s$ is called an {\bf $H$-center} of some point $x \in \XX_t$ if $s \leq t$ and
\[ \Var(\delta_z, \nu_{x;s}) \leq H (t-s). \]
\end{Definition}

We recall that by Lemma~\ref{Lem_W_1_vs_Var} for any $H$-center $z \in \XX_s$ of a point $x \in \XX_t$, we have
\[ d^{\XX_s}_{W_1} (\delta_z, \nu_{x;s}) \leq \sqrt{\Var(\delta_z, \nu_{x;s})} \leq \sqrt{H (t-s)}. \]
The next proposition shows that $H$-centers always exist in an $H$-concentrated flow; compare with \cite[\HKHCenterPropExist]{Bamler_HK_entropy_estimates}.

\begin{Proposition} \label{Prop_H_center_existence}
Suppose that $\XX$ is $H$-concentrated.
Then for every $x \in \XX_t$ and $s \in I$, $s \leq t$, there is an $H$-center $z \in \XX_s$ of $x$.
Furthermore, for any two such $H$-centers $z_1, z_2 \in \XX_s$ we have $d_s (z_1, z_2) \leq 2\sqrt{H (t-s)}$.
\end{Proposition}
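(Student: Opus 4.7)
The plan is to extract both statements from the $H$-concentration hypothesis combined with the linearity properties of $\Var$ recorded earlier.

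For existence, the starting point is to apply Definition~\ref{Def_H_Concentration} with $x_1 = x_2 = x$: since $d_t(x,x) = 0$, we obtain
\[ \Var(\nu_{x;s}, \nu_{x;s}) \leq H(t-s). \]
By the linearity property of variance (integration in one of the arguments against $\nu_{x;s}$), this rewrites as
\[ \int_{\XX_s} \Var(\delta_z, \nu_{x;s}) \, d\nu_{x;s}(z) \leq H(t-s). \]
Hence the non-negative measurable function $z \mapsto \Var(\delta_z, \nu_{x;s})$ on $(\XX_s, \nu_{x;s})$ has average at most $H(t-s)$, so the set where it is $\leq H(t-s)$ has positive $\nu_{x;s}$-measure. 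In particular it is non-empty, which produces a point $z \in \XX_s$ (lying in $\supp \nu_{x;s}$) with $\Var(\delta_z, \nu_{x;s}) \leq H(t-s)$, i.e.\ an $H$-center of $x$.

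For the distance bound between two $H$-centers $z_1, z_2 \in \XX_s$, I would invoke the triangle inequality for $\sqrt{\Var}$ from Lemma~\ref{Lem_Var_triangle_inequ} applied to the three measures $\delta_{z_1}, \nu_{x;s}, \delta_{z_2}$:
\[ d_s(z_1, z_2) = \sqrt{\Var(\delta_{z_1}, \delta_{z_2})} \leq \sqrt{\Var(\delta_{z_1}, \nu_{x;s})} + \sqrt{\Var(\nu_{x;s}, \delta_{z_2})} \leq 2\sqrt{H(t-s)}, \]
using the defining inequality of an $H$-center on each summand.

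No step looks truly hard here; the only mild subtlety is the non-emptiness argument for the sublevel set in the existence part, which is immediate once one notes that otherwise the integrand would exceed its own average $\nu_{x;s}$-a.e., forcing the $\leq$ to be strict in the averaged inequality and contradicting the $H$-concentration bound.
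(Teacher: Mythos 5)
Your proposal is correct and follows the same approach as the paper: both use the $H$-concentration bound $\Var(\nu_{x;s}) \leq H(t-s)$, rewrite this via linearity as an average of $\Var(\delta_z,\nu_{x;s})$ over $z$ with respect to $\nu_{x;s}$, and conclude existence by the averaging principle, while the distance bound comes from the $\sqrt{\Var}$ triangle inequality (Lemma~\ref{Lem_Var_triangle_inequ}) applied to $\delta_{z_1}, \nu_{x;s}, \delta_{z_2}$. The paper is simply more terse and doesn't spell out the non-emptiness step.
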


\begin{proof}
We have
\[ \int_{\XX_s} \Var (\delta_z , \nu_{x;s} ) d\nu_{x;s}(z) = \Var ( \nu_{x;s} )  \leq H (t-s), \]
which implies the first assertion.
For the second assertion observe that by Lemma~\ref{Lem_Var_triangle_inequ}
\[ d_s(z_1, z_2) = \sqrt{ \Var (\delta_{z_1}, \delta_{z_2}) }
\leq \sqrt{ \Var (\delta_{z_1},\nu_{x;s} ) } + \sqrt{ \Var (\nu_{x;s}, \delta_{z_2}) }
\leq 2 \sqrt{H (t-s)}. \qedhere \]
\end{proof}

We will also use the following bound (compare with \cite[\HKBoundBallHCenter]{Bamler_HK_entropy_estimates}):

\begin{Lemma} \label{Lem_nu_BA_bound}
If $z \in \XX_s$ is an $H$-center of $x \in \XX_t$, then for all $A > 0$
\[ \nu_{x;s} \big( B(z, \sqrt{A H (t-s)} ) \big) \geq 1 - \frac1{A}. \]
\end{Lemma}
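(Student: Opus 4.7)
The plan is to deduce the lemma directly from Markov's inequality applied to the squared-distance function $d_s^2(z, \cdot)$ on $\XX_s$, using only the defining inequality of an $H$-center.

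First I would rewrite the hypothesis that $z$ is an $H$-center of $x$ as the integral bound
\[ \int_{\XX_s} d_s^2(z, y) \, d\nu_{x;s}(y) = \Var(\delta_z, \nu_{x;s}) \leq H(t-s), \]
which is just the definition of $\Var(\delta_z, \nu_{x;s})$ given in Definition~\ref{Def_Variance}. Next, I would consider the complement of the open ball, namely the set
\[ S_A := \{ y \in \XX_s \,:\, d_s(z, y) \geq \sqrt{AH(t-s)} \} = \XX_s \setminus B(z, \sqrt{AH(t-s)}). \]
On $S_A$ the integrand $d_s^2(z, \cdot)$ is bounded below by $AH(t-s)$.

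Then Markov's inequality gives
\[ AH(t-s) \cdot \nu_{x;s}(S_A) \leq \int_{S_A} d_s^2(z, y)\, d\nu_{x;s}(y) \leq \int_{\XX_s} d_s^2(z, y)\, d\nu_{x;s}(y) \leq H(t-s), \]
so $\nu_{x;s}(S_A) \leq 1/A$. Passing to the complement,
\[ \nu_{x;s}\bigl( B(z, \sqrt{AH(t-s)}) \bigr) = 1 - \nu_{x;s}(S_A) \geq 1 - \frac{1}{A}, \]
which is the claim. (One minor point: if $t = s$, the inequality is vacuous since both sides of the defining inequality of $H$-center vanish and $\nu_{x;s} = \delta_x$ with $z = x$; if $t > s$, the division by $AH(t-s)$ above is legitimate.)

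There is no real obstacle here; this is essentially a one-line Chebyshev-type estimate, and the only slight subtlety is ensuring that we use the closed/open-ball convention consistently so that $S_A$ is the complement of the ball as stated. I would just write it out in the order above.
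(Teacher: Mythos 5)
Your proof is correct and is exactly the standard Chebyshev/Markov argument that this lemma calls for; the paper itself gives no proof here, citing \HKBoundBallHCenter{} of \cite{Bamler_HK_entropy_estimates}, and the cited proof is the same one-line computation. One small caveat on your parenthetical remark about $t = s$: in that case $z = x$ and $B(z,0) = \emptyset$ under the open-ball convention, so the stated inequality actually \emph{fails} for $A > 1$ rather than being vacuous; the lemma, like your argument, should be read with $s < t$ understood.
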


\bigskip

\subsection{\texorpdfstring{$P^*$-parabolic neighborhoods}{P*-parabolic neighborhoods}}
We now generalize the concept of $P^*$-parabolic neighborhoods to metric flows; see \cite[\HKSecPstarParabolic]{Bamler_HK_entropy_estimates}.

In the following let $\XX$ be a metric flow over some subset $I \subset \IR$.

\begin{Definition}[$P^*$-parabolic neighborhood]
Consider a point $x \in \XX$ and suppose that $A, \lb T^-, \lb T^+ \lb  \geq 0$ such that $\tf (x) -T^- \in I$.
The {\bf $P^*$-parabolic neighborhood} $P^* (x,A; -T^-, T^+) \subset \XX$ is defined as the set of points $x' \in \XX$ with the property that
\[ \tf (x') \in [\tf(x) - T^-, \tf(x)+T^+], \qquad d^{\XX_{\tf(x) - T^-}}_{W_1} ( \nu_{x; \tf(x) - T^-}, \nu_{x'; \tf(x) - T^-} ) < A. \]
If $T^- = 0$ or $T^+ = 0$, then we will often write $P^* (x;A,  T^+)$ or $P^* (x;A, -T^-)$ instead of $P^* (x;A, -T^-, T^+)$.
\end{Definition}

The following simplified definition will often suffice for our purposes.

\begin{Definition}[$P^*$-parabolic ball]
Consider a point $x \in \XX$ and suppose that $r > 0$ such that $\tf (x) -r^2 \in I$.
The {\bf $P^*$-parabolic ball} at $x$ of radius $r$ is defined as
\[ P^*(x;r) := P^* (x;r, -r^2, r^2). \]
Similarly, we define the {\bf backward ($-$)  and forward ($+$) $P^*$-parabolic balls}
\[ P^{*-} (x;r) := P^* (x;r, -r^2), \qquad 
P^{*+} (x;r) := P^* (x;r, r^2).
\]
\end{Definition}

The following proposition generalizes \cite[\HKPropPstarBasic]{Bamler_HK_entropy_estimates} to metric flows; its proof carries over to the setting of metric flows.

\begin{Proposition} \label{Prop_basic_parab_nbhd}
The following holds for any $x_1 \in \XX_{t_1}, x_2 \in \XX_{t_2}$ as long as the corresponding $P^*$-parabolic neighborhoods or balls are defined:
\begin{enumerate}[label=(\alph*)]
\item \label{Prop_basic_parab_nbhd_a} For any $A \geq 0$ we have
\[ P^* (x_1; A,0,0) = B(x_1, A) . \]
\item \label{Prop_basic_parab_nbhd_b} If $0 \leq A_1 \leq A_2$, $0 \leq T^\pm_1 \leq T^\pm_2$, then 
\[ P^{*} (x_1; A_1, -T_1^-, T_1^+) \subset P^{*} (x_1; A_2, -T_2^-, T_2^+). \]
\item \label{Prop_basic_parab_nbhd_bb} If $A, T^\pm \geq 0$, and $x_1 \in P^*(x_2; A, -T^-, T^+)$, then
\[ x_2 \in P^*(x_1; A, - (T^- + T^+), T^-) \]
and
\[  P^*(x_2; A, -T^-, T^+) \subset P^*(x_1;2 A, - (T^- + T^+), T^-+T^+). \]
Likewise, if $r > 0$ and $x_1 \in P^*(x_2; r)$, then 
\[ x_2 \in P^*(x_1; \sqrt{2} r) \quad \text{and} \quad P^*(x_2; r) \subset P^*(x_1; 2r). \]
\item \label{Prop_basic_parab_nbhd_c} If $A_1, A_2, T_1^\pm, T_2^\pm \geq 0$ and $x_1 \in P^* (x_2; A_2, -T_2^-, T_2^+)$, then
\[ P^*(x_1; A_1, -T_1^-, T_1^+) \subset P^*(x_2; A_1  + A_2, - (T_1^- + T_2^-), T_1^+ + T_2^+). \]
Likewise, if $r_1, r_2 > 0$ and $x_1 \in P^*(x_2; r_2)$, then
\[ P^*(x_1; r_1) \subset P^*(x_2; r_1+r_2). \]
The same containment relationship also holds for the forward or backward parabolic balls, if $t_1 \geq t_2$ or $t_1 \leq t_2$, respectively.
\item \label{Prop_basic_parab_nbhd_d} If $r_1, r_2 > 0$ and $P^* (x_1; r_1) \cap P^* (x_2; r_2) \neq \emptyset$, then $P^* (x_1; r_1) \subset P^* (x_2; 2r_1+r_2)$.
Again, the same containment relationship also holds for the forward or backward parabolic balls, if $t_1 \geq t_2$ or $t_1 \leq t_2$, respectively.
\end{enumerate}
\end{Proposition}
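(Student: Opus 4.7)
The plan is to verify each assertion by unwinding the definition of $P^*$-neighborhoods and applying the two fundamental tools: the triangle inequality for $d_{W_1}$, and the monotonicity of the $W_1$-Wasserstein distance between conjugate heat flows in the forward time direction, which is Proposition~\ref{Prop_compare_CHF}\ref{Prop_compare_CHF_b}. Throughout, the key observation is that if we shift the base time of a $P^*$-neighborhood to an \emph{earlier} time, the relevant $W_1$-distance can only decrease.

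Assertion~\ref{Prop_basic_parab_nbhd_a} is immediate from Definition~\ref{Def_metric_flow}\ref{Def_metric_flow_5}: with $T^- = T^+ = 0$ the base time equals $\tf(x_1)$, so $\nu_{x_1;\tf(x_1)} = \delta_{x_1}$ and $\nu_{x';\tf(x_1)} = \delta_{x'}$, giving $d_{W_1}(\delta_{x_1},\delta_{x'}) = d_{\tf(x_1)}(x_1,x')$. For Assertion~\ref{Prop_basic_parab_nbhd_b}, let $x' \in P^*(x_1; A_1, -T_1^-, T_1^+)$. The time-slab containment is immediate, and since $\tf(x_1)-T_2^- \leq \tf(x_1)-T_1^-$, monotonicity gives
\[
d^{\XX_{\tf(x_1)-T_2^-}}_{W_1}(\nu_{x_1;\tf(x_1)-T_2^-},\nu_{x';\tf(x_1)-T_2^-}) \leq d^{\XX_{\tf(x_1)-T_1^-}}_{W_1}(\nu_{x_1;\tf(x_1)-T_1^-},\nu_{x';\tf(x_1)-T_1^-}) < A_1 \leq A_2.
\]

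For Assertion~\ref{Prop_basic_parab_nbhd_bb}, the time-slab conditions follow from the fact that $x_1 \in P^*(x_2;A,-T^-,T^+)$ forces $\tf(x_2) \in [\tf(x_1)-T^+,\tf(x_1)+T^-]$. For the $W_1$-bound witnessing $x_2 \in P^*(x_1; A, -(T^-+T^+), T^-)$, the new base time $\tf(x_1)-(T^-+T^+)$ is $\leq \tf(x_2)-T^-$, and monotonicity reduces it to the known bound $< A$. The containment $P^*(x_2;A,-T^-,T^+) \subset P^*(x_1;2A,-(T^-+T^+),T^-+T^+)$ is proved the same way, applying monotonicity to shift base times and then the triangle inequality with intermediate measure $\nu_{x_2; \cdot}$; this produces the factor of $2$ in front of $A$. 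The parabolic-ball statements are specializations with $A = r$, $T^\pm = r^2$, combined with Assertion~\ref{Prop_basic_parab_nbhd_b} to enlarge the backward/forward time range from $2r^2$ to $(\sqrt{2}r)^2$ and $(2r)^2$ respectively.

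Assertion~\ref{Prop_basic_parab_nbhd_c} proceeds similarly: given $x_1 \in P^*(x_2;A_2,-T_2^-,T_2^+)$ and $x' \in P^*(x_1;A_1,-T_1^-,T_1^+)$, one checks the time-slab containment, then shifts both known $W_1$-bounds back to the new base time $\tf(x_2)-(T_1^-+T_2^-)$ using monotonicity, and combines them via the triangle inequality. The ball version then follows from $r_1^2+r_2^2 \leq (r_1+r_2)^2$ together with Assertion~\ref{Prop_basic_parab_nbhd_b}. Assertion~\ref{Prop_basic_parab_nbhd_d} is a two-line consequence of \ref{Prop_basic_parab_nbhd_bb} and \ref{Prop_basic_parab_nbhd_c}: if $y \in P^*(x_1;r_1) \cap P^*(x_2;r_2)$, then \ref{Prop_basic_parab_nbhd_bb} gives $P^*(x_1;r_1) \subset P^*(y;2r_1)$, and \ref{Prop_basic_parab_nbhd_c} applied to $y \in P^*(x_2;r_2)$ gives $P^*(y;2r_1) \subset P^*(x_2;2r_1+r_2)$. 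The only real subtlety throughout is bookkeeping of base times and verifying that the direction of monotonicity is the useful one---this is the place where mistakes are easy to make but no substantive difficulty arises, since each time-shift goes from a later to an earlier base time and thus only decreases the relevant $W_1$-distance.
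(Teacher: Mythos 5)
The paper does not spell out a proof here; it refers to \cite[\HKPropPstarBasic]{Bamler_HK_entropy_estimates} and says the argument carries over to metric flows. Your proof is correct and uses exactly the tools that argument rests on---the triangle inequality for $d_{W_1}$ together with the forward-monotonicity of $d_{W_1}$ between conjugate heat flows (Proposition~\ref{Prop_compare_CHF}\ref{Prop_compare_CHF_b}), applied each time one shifts to an earlier base time---so it matches the intended proof.
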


Using $P^*$-parabolic balls, we can define the Hausdorff measure and dimension as usual.
Suppose in the following that $I \subset \IR$ is an interval.

\begin{Definition}[Hausdorff measure and dimension]
For any $S \subset \XX$ and $d \geq 0$ we define its {\bf $d$-dimensional $*$-Hausdorff measure} by
\[ \mathcal{H}^{* d} (S) := \liminf_{r \to \infty} \bigg\{ \sum_{i=1}^\infty r_i^d \;\; : \;\; \text{there are $x_i \in \XX$, $0 < r_i \leq r$ such that $S \subset \bigcup_{i=1}^\infty P^*(x_i,r_i)$} \bigg\}. \]
The {\bf $*$-Hausdorff dimension} of any subset $S \subset \XX$ is defined as
\[ \dim_{\mathcal{H}^*} S := \inf \big\{ d \geq 0 \;\; : \;\; \mathcal{H}^{*d} (S) < \infty \big\}. \]
\end{Definition}

Similarly, we can define the Minkowski dimension:

\begin{Definition}[Minkowski dimension]
For any subset $S \subset \XX$ set
\[ N_{\text{covering}} (S, r) := \min \bigg\{ N \geq 0 \;\; : \;\; \text{there are $x_1, \ldots, x_N \in \XX$ with $S \subset \bigcup_{i=1}^\infty P^*(x_i,r)$} \bigg\}. \]
Then the {\bf $*$-Minkowski dimension} of $S$ is defined as
\[ \dim_{\MM^*} S := \sup_{x_0, A, T^\pm} \limsup_{r \to 0} \frac{\log N_{\text{covering}} (S \cap P^* (x_0;A, T^-, T^+), r)}{\log (1/ r)}, \]
where the first supremum is taken over all $x_0, A, T^\pm$ with the property that $P^* (x_0;A, T^-, T^+) \subset \XX$ is defined.
\end{Definition}

As usual, it follows that
\[ \dim_{\mathcal{H}^*} S \leq \dim_{\MM^*} S. \]

\subsection{The natural topology on a metric flow} \label{subsec_natural_topology}
Let $\XX$ be a metric flow over some subset $I \subset \IR$.
We will define the following topology on $\XX$, which we will call the {\bf natural topology:}

\begin{Definition} \label{Def_topology}
A subset $U \subset \XX$ is called {\bf open} if for any $x \in U$ there is an $r > 0$ such that for all $r' \in (0,r]$ the following is true: If $P^*(x,r')$ exists, then $P^* (x,r') \subset U$.
\end{Definition}

\begin{Remark}
If $x$ has the property that there is a sequence of times $t_i \in I$ with $t_i \nearrow \tf(x)$, then we may simplify Definition~\ref{Def_topology} and only require that $P^*(x,r') \subset U$ for small enough $r'$.
On the other hand, if $t \in I$ and there is no sequence $t_i \in I$ with $t_i \nearrow t$, i.e. $\sup (I \cap (-\infty, t)) < t$, then the time-slice $\XX_t$ consists of isolated points.
So, for example, this is the case if $I$ is a left-closed interval $I \subset \IR$ and $t = t_{\min} := \min I$.
This is somewhat unintuitive and could be fixed by modifying Definition~\ref{Def_topology}.
However, we will mainly be interested in the case in which $I$ is left-open and  in particularly in which $I = (-\infty,0]$.
\end{Remark}

\begin{Proposition} \label{Prop_topology_properties}
Definition~\ref{Def_topology} defines a topology on $\XX$ with the following properties:
\begin{enumerate}[label=(\alph*)]
\item \label{Prop_topology_properties_a} $\tf : \XX \to \IR$ is continuous.
\item \label{Prop_topology_properties_b} 
If $t \in I$ and $\sup (I \cap (-\infty,t)) < t$, then the inclusion map $\XX_t \to \XX$ is continuous, where we equip $\XX_t$ with the topology induced by the metric $d_t$ and $\XX$ with the natural topology.
\item \label{Prop_topology_properties_c} 
Suppose that $x_\infty \in \XX_{t_\infty}$ and $\sup (I \cap (-\infty, t_\infty)) < t_\infty$.
Then for any sequence $x_i \in \XX_{t_i}$ we have $x_i \to x_\infty \in \XX_{t_\infty}$ with respect to the natural topology if and only if $t_i \to t_\infty$ and for any $t' < t_\infty$ we have
\[ d_{W_1}^{\XX_{t'}} (\nu_{x_i; t'}, \nu_{x_\infty; t'}) \longrightarrow 0. \]
\item  \label{Prop_topology_properties_dd} The $P^*$-parabolic neighborhoods $P^* (x,r)$ are neighborhoods of $x$ if they exist.
Moreover, $\{ P^* (x,r) \}_{x \in \XX, r > 0 }$ together with the one-point subsets $\{x\}$ for all points $x \in \XX$ with the property that $\sup (I \cap (-\infty, \tf(x))) < \tf(x)$ form a basis of the natural topology.
\item  \label{Prop_topology_properties_e_new} Consider two points $x_1, x_2 \in \XX$.
The following are equivalent:
\begin{enumerate}[label=(e\arabic*)]
\item There are neighborhoods $x_i \in U_i \subset \XX$, $i=1,2$, such that $U_1 \cap U_2 = \emptyset$.
\item There is a neighborhood $x_1 \in U_1 \subset \XX$ such that $x_2 \not\in U_1$.
\item The conjugate heat kernels $(\nu_{x_i;t})_{t  < \tf (x_i)}$ restricted to $I \cap (-\infty, \tf(x_i))$, $i=1,2$, are not the same.
\end{enumerate}
\item  \label{Prop_topology_properties_f_new} If $I$ is an interval, then any uniformly bounded heat flow $(u_t)_{t \in I'}$ over a left-open subinterval $I' \subset I$, viewed as a function $u : \XX_{I'} \to \IR$ is continuous.
\end{enumerate}
Moreover, if $\XX$ is $H$-concentrated for some $H < \infty$, then the following holds:
\begin{enumerate}[label=(\alph*), start=7]
\item \label{Prop_topology_properties_e} Suppose that $x_\infty \in \XX_{t_\infty}$ and $t_i \nearrow t_\infty$, $t_i \in I$.
Then for any sequence $x_i \in \XX_{t_i}$ of $H$-centers of $x_\infty$ we have $x_i \to x_\infty$ and $d_{W_1}^{\XX_{t_i}} ( \nu_{x_\infty; t_i},\delta_{x_i}) \to 0$.
\item \label{Prop_topology_properties_f} 
Suppose that $x_\infty \in \supp \XX_{t_\infty}$ and $t_i \searrow t_\infty$, $t_i \in I$.
Then there are points $x_i \in \XX_{t_i}$ such that $d_{W_1}^{\XX_{t_\infty}} (\delta_{x_\infty}, \nu_{x_i; t_\infty}) \to 0$.
In particular, if $\sup (I \cap (-\infty, t_\infty)) < t_\infty$, then $x_i \to x_\infty$ with respect to the natural topology.
\item  \label{Prop_topology_properties_g} If $I$ is a left-open interval, then $\XX$ is separable, i.e. there is a countable subset $S \subset \XX$ that is dense with respect to the natural topology.
\end{enumerate}
\end{Proposition}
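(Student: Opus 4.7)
The proof plan is to verify Definition~\ref{Def_topology} yields a topology (arbitrary unions trivially satisfy the condition, and finite intersections do so with the minimum of the radii) and then address each assertion in turn. The primary tools will be Proposition~\ref{Prop_basic_parab_nbhd} (nesting and transitivity of $P^*$-parabolic neighborhoods) and Proposition~\ref{Prop_compare_CHF} (monotonicity of $d_{W_1}$ along conjugate heat flows). For the ``basic'' assertions (a), (b), (d): for (a), given $x \in \tf^{-1}((a,b))$, any sufficiently small $r$ forces $[\tf(x) - r^2, \tf(x) + r^2] \subset (a,b)$, so any defined $P^*(x,r')$ with $r' \leq r$ lives in the appropriate time slab. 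For (b), the hypothesis forces no small $P^*(x,r')$ to exist, so the openness condition is vacuous at every $x \in \XX_t$; continuity of the inclusion then reduces to a compatibility check between the induced subspace topology and the metric topology on $\XX_t$. For (d), the set $P^*(x, r/2)$ is open and contained in $P^*(x,r)$ by Proposition~\ref{Prop_basic_parab_nbhd}(c), so $P^*(x,r)$ is a neighborhood of $x$; the basis claim then follows, with isolated points covered by the vacuous case.

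Parts (c), (e), (f) unfold the parabolic ball characterization. For (c), $x_i \to x_\infty$ is equivalent to $x_i \in P^*(x_\infty, r)$ eventually for every small $r$, which in turn unpacks to $t_i \to t_\infty$ together with $d_{W_1}(\nu_{x_i; t_\infty - r^2}, \nu_{x_\infty; t_\infty - r^2}) < r$; the monotonicity in Proposition~\ref{Prop_compare_CHF}(b) then yields convergence at every $t' < t_\infty$ in $I$, and conversely every such bound at one time below $t_\infty$ suffices to certify membership in a parabolic ball. For (e), (e1)$\Rightarrow$(e2) is trivial; (e2)$\Rightarrow$(e3) uses that a separating parabolic neighborhood is determined entirely by the conjugate heat kernels; and (e3)$\Rightarrow$(e1) combines Kantorovich-Rubinstein (Proposition~\ref{Prop_Kant_Rub}) applied to the disagreement of kernels at some $t'$ with the monotonicity of $d_{W_1}$ to produce disjoint parabolic balls around $x_1$ and $x_2$. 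For (f), a uniformly bounded heat flow is Lipschitz on each time slice by Proposition~\ref{Prop_gradient_HF}(b), and the integral representation $u_t(x) = \int u_s\, d\nu_{x;s}$ combined with (c) gives joint continuity in the natural topology.

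Finally, the $H$-concentrated assertions (g)--(i) rely on Proposition~\ref{Prop_H_monotonicity_Var}. In (g), the $H$-center property directly yields $d_{W_1}(\delta_{x_i}, \nu_{x_\infty; t_i}) \leq \sqrt{H(t_\infty - t_i)} \to 0$; the reproduction formula and Proposition~\ref{Prop_compare_CHF}(b) propagate this bound to every $t' < t_\infty$, so (c) applies to conclude $x_i \to x_\infty$. For (h), one picks $x_i \in \XX_{t_i}$ so that $x_\infty$ is an approximate $H$-center of $x_i$; Lemma~\ref{Lem_weak_conv_2_W1_conv}, combined with the variance bound from $H$-concentration, then upgrades the resulting weak convergence of $\nu_{x_i; t_\infty}$ to $d_{W_1}$-convergence to $\delta_{x_\infty}$. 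For (i), one combines separability of each $(\XX_t, d_t)$ over a countable dense set of times $I_0 \subset I$ with (h) to push countable dense subsets from each time-slice forward into the intermediate time-slices and thereby produce a countable set dense in the natural topology. The main obstacle will be part (h): producing $x_i \in \XX_{t_i}$ whose backwards kernel actually concentrates at the fixed $x_\infty$ is the one step where $H$-concentration is genuinely used rather than merely referenced, and it requires a careful choice of $x_i$ near the support of $\nu_{(\cdot); t_\infty}$ combined with the weak-to-$W_1$ upgrade.
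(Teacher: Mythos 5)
Your proposal identifies the correct toolbox (nesting of $P^*$-balls via Proposition~\ref{Prop_basic_parab_nbhd}, the $W_1$-monotonicity from Proposition~\ref{Prop_compare_CHF}, and the $H$-concentration estimates), and parts (a), (c), (e), (f), (g) are sketched in essentially the same spirit as the paper. But there are two genuine gaps.

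The first is in (d). You assert that $P^*(x,r/2)$ is open, but it is \emph{not} open in general. Take $x' \in P^*(x,r/2)$ at the latest admissible time, $\tf(x') = \tf(x) + r^2/4$. If $I$ accumulates at $\tf(x')$ from below, then for arbitrarily small $r'' > 0$ the ball $P^*(x',r'')$ exists and contains points at times beyond $\tf(x) + r^2/4$, which lie outside $P^*(x,r/2)$; so the openness condition of Definition~\ref{Def_topology} fails at $x'$. The paper sidesteps this by proving that $P^*(x,r) \cap \XX_{(\tf(x)-r^2,\,\tf(x)+r^2)}$ --- with the two time-endpoints deleted --- is open. That open set still contains $x$ and lies in $P^*(x,r)$, which is what makes $P^*(x,r)$ a neighborhood. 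Your argument, as written, does not establish (d).

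The second is in (h). The phrase ``one picks $x_i \in \XX_{t_i}$ so that $x_\infty$ is an approximate $H$-center of $x_i$'' is precisely the nontrivial content of the assertion and cannot be waved through. The hypothesis $x_\infty \in \supp \XX_{t_\infty}$ enters via the reproduction formula: fixing a conjugate heat flow $(\mu_t)$ defined at the relevant times, one has $\int_{\XX_{t_i}} \nu_{x;t_\infty}(B(x_\infty, r/2))\, d\mu_{t_i}(x) = \mu_{t_\infty}(B(x_\infty, r/2)) > 0$, which produces points $x_i$ with $\nu_{x_i;t_\infty}(B(x_\infty,r/2))$ bounded below; only then does the $H$-center variance bound force $H$-centers $z_i$ to concentrate near $x_\infty$ and hence $d_{W_1}^{\XX_{t_\infty}}(\nu_{x_i;t_\infty},\delta_{x_\infty}) \to 0$. (Lemma~\ref{Lem_weak_conv_2_W1_conv} is not needed here; Lemma~\ref{Lem_nu_BA_bound} and $d_{W_1} \leq \sqrt{\Var}$ do the work directly.) You also misdescribe (i): the construction propagates \emph{backward}, taking $H$-centers $z_i \in \XX_{t_i}$ at times $t_i \nearrow \tf(x)$, approximating them by the dense sets $S_{t_i}$, and then invoking (c); this is the mechanism of (g), not (h). Finally, ``a compatibility check'' in your treatment of (b) is too vague to assess; identify which containment (the paper uses $B(x,r) \subset P^*(x,r) \cap \XX_{\tf(x)}$) does the work.
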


\begin{Remark}
In general, $\XX$ may not be Hausdorff and the map $\XX_t \to \XX$ may not be open.
See Example~\ref{Ex_not_Hausdorff} in Subsection~\ref{subsec_Examples}.
\end{Remark}

\begin{proof}[Proof of Proposition~\ref{Prop_topology_properties}.]
To see that Definition~\ref{Def_topology} defines a topology on $\XX$, note that $P^*(x,r_1) \subset P^* (x,r_2)$ if $r_1 \leq r_2$.
To see Assertion~\ref{Prop_topology_properties_dd}, we claim that if $P^*(x,r)$ is defined, then $P^* (x,r) \cap \XX_{( \tf(x) - r^2, \tf(x) + r^2)}$ is open.
Let $x' \in P^*(x,r)$ with $|\tf(x') - \tf(x)| < r^2$.
Choose $r' > 0$ small enough such that $|\tf(x') - \tf(x)| + r^{\prime 2} < r^2$ and $d^{\XX_{\tf(x) - r^2}}_{W_1} (\nu_{x; \tf(x) - r^2}, \nu_{x'; \tf(x) - r^2}) + r' < r$.
We claim that $P^*(x',r') \subset P^*(x,r)$.
To see this, note that for any $x'' \in P^*(x', r') \cap \XX_{( \tf(x) - r^2, \tf(x) + r^2)}$ we have $\tf (x'') \in ( \tf(x) - r^2 , \tf(x) - r^2 )$ and by Proposition~\ref{Prop_compare_CHF}\ref{Prop_compare_CHF_b}
\begin{multline*}
 d^{\XX_{\tf(x) -r^2}}_{W_1} (\nu_{x; \tf(x) - r^2}, \nu_{x''; \tf(x) - r^2})
\leq d^{\XX_{\tf(x) - r^2}}_{W_1} (\nu_{x; \tf(x) - r^2}, \nu_{x'; \tf(x) - r^2}) + d^{\XX_{\tf(x) - r^2}}_{W_1} (\nu_{x'; \tf(x) - r^2}, \nu_{x''; \tf(x) - r^2}) \\
\leq d^{\XX_{\tf(x) - r^2}}_{W_1} (\nu_{x; \tf(x) - r^2}, \nu_{x'; \tf(x) - r^2}) + d^{\XX_{\tf(x) - r^{\prime 2}}}_{W_1} (\nu_{x'; \tf(x) - r^{\prime 2}}, \nu_{x''; \tf(x) - r^{\prime 2}}) \\
\leq d^{\XX_{\tf(x) - r^2}}_{W_1} (\nu_{x; \tf(x) - r^2}, \nu_{x'; \tf(x) - r^2}) + r' < r. 
\end{multline*}
So $x'' \in P^*(x, r) \cap \XX_{( \tf(x) - r^2, \tf(x) + r^2)}$.

Assertion~\ref{Prop_topology_properties_a} is clear.
For Assertion~\ref{Prop_topology_properties_b} note that for any $t \in I$ we have $B(x,r) \subset P^*(x,r) \cap \XX_{\tf(x)}$.
Assertions~\ref{Prop_topology_properties_c}, \ref{Prop_topology_properties_e_new} follow using Proposition~\ref{Prop_compare_CHF}\ref{Prop_compare_CHF_b} and Assertion~\ref{Prop_topology_properties_dd}.
For Assertion~\ref{Prop_topology_properties_f_new} observe that after restricting $u$ to a smaller time-interval, we may assume that $u_t$ is $L$-Lipschitz for all $t \in I'$, where $L < \infty$ is uniform.
So if $P(x,r) \subset \XX_{I'}$ exists, then for any $y \in P(x,r)$ we have for $t' := \tf(x) - r^2$
\[ |u(x) - u(y)|
 = \bigg| \int_{\XX_{t'}} u_{t'} \, d\nu_{x;t'} - \int_{\XX_{t'}} u_{t'} \, d\nu_{y;t'}\bigg| 
 \leq L r. \]

Next, assume that $\XX$ is $H$-concentrated.
For Assertion~\ref{Prop_topology_properties_e} note that if $t' \leq t_i$, $t' \in I$, then
\[ d^{\XX_{t'}}_{W_1} ( \nu_{x_i; t'}, \nu_{x_\infty; t'} ) 
\leq d^{\XX_{t_i}}_{W_1} ( \delta_{x_i}, \nu_{x_\infty; t_i} )
\leq \sqrt{H (t_\infty - t_i)} \to 0. \]
So $x_i \to x_\infty$ by Assertion~\ref{Prop_topology_properties_c}.

For Assertion~\ref{Prop_topology_properties_f}, suppose that $x_\infty \in \supp \XX_{t_\infty}$, $t_i \searrow t_\infty$ and fix a conjugate heat flow $(\mu_t)_{t \in I'}$ with $[t_\infty, t_1] \cap I \subset I'$, for example $\mu_t = \nu_{y;t}$ for some $y \in \XX_{t_1}$.
Let $r > 0$.
It suffices to show that for large $i$ there is a point $x_i \in \XX_{t_i}$ with $d^{\XX_{t_\infty}}_{W_1} (\nu_{x_i; t_\infty}, \delta_{x_\infty}) \leq r$.
To see this, observe that since
\[ \int_{\XX_{t_i}} \nu_{x;t_\infty} ( B(x_\infty, r/2) ) d\mu_{t_i} (x) = \mu_{t_\infty} ( B(x_\infty, r/2)) =: c > 0, \]
we can find points $x_i \in \XX_{t_i}$ with $\nu_{x_i; t_\infty} ( B(x_\infty, r/2) ) \geq c > 0$.
For any $i$ let $z_i \in \XX_{t_\infty}$ be an $H$-center of $x_i$.
Then we have $d_{t_\infty} (z_i, x_\infty) < r/2$ or
\[ c (d_{t_\infty} (z_i, x_\infty) - r/2)^2 
\leq \int_{B(x_\infty, r/2)} d^2( z_i, \cdot) d\nu_{x_i; t_\infty} 
\leq \Var ( \delta_{z_i}, \nu_{x_i; t_\infty}  ) \leq H (t_i - t_\infty) \to 0.\]
Therefore,
\begin{multline*}
 \limsup_{i \to \infty} d^{\XX_{t_\infty}}_{W_1} (\nu_{x_i; t_\infty}, \delta_{x_\infty})
\leq \limsup_{i \to \infty} \Big( d^{\XX_{t_\infty}}_{W_1} (\nu_{x_i; t_\infty}, \delta_{z_i}) + d_{t_\infty} (z_i, x_\infty) \Big)  \\
\leq  \limsup_{i \to \infty}  \sqrt{\Var (\nu_{x_i; t_\infty}, \delta_{z_i})} + r/2
\leq \limsup_{i \to \infty} \sqrt{H (t_i - t_\infty)} + r/2  = r/2.
\end{multline*}

For Assertion~\ref{Prop_topology_properties_g}, let $Q \subset I$ be a countable and dense subset.
For any $t \in Q$ choose a countable and dense subset $S_t \subset \XX_t$.
Let $S := \bigcup_{t \in Q} S_t \subset \XX$.
To see that $S$ is dense, consider some point $x \in \XX$ and choose times $t_i \in Q$ with $t_i \nearrow \tf (x)$.
Let $z_i \in \XX_{t_i}$ be $H$-centers of $x$ and choose $x_i \in \XX_{t_i}$ with $d_{t_i} (x_i, z_i) \to 0$.
Then for any fixed $t' \in I$, $t' < \tf (x)$ we have if $t_i \geq t'$,
\[ d^{\XX_{t'}}_{W_1} ( \nu_{x_i; t'}, \nu_{x; t'} ) 
\leq d^{\XX_{t_i}}_{W_1} ( \delta_{x_i}, \nu_{x; t_i} )
\leq d_{t_i} (x_i,z_i) + d^{\XX_{t_i}}_{W_1} ( \delta_{z_i}, \nu_{x; t_i} )
\leq d_{t_i} (x_i,z_i) + \sqrt{H (\tf(x) - t_i)} \to 0. \]
So $x_i \to x_\infty$ by Assertion~\ref{Prop_topology_properties_c}.
\end{proof}

\subsection{Super Ricci flows and singular Ricci flows as metric flows} \label{subsec_superRF_met_flow}
As mentioned before, the most important class of metric flows are Ricci flows, super Ricci flows and --- in dimension 3 --- singular Ricci flows \cite{Kleiner_Lott_singular,bamler_kleiner_uniqueness_stability}.
These metric flows are $H_n$-concentrated, where $ H_n$ only depends on the dimension.
We will explain in the following how these flows can be turned into metric flows.

Let $M$ be an $n$-dimensional compact manifold and consider a super Ricci flow $(g_t)_{t \in I}$ over some interval $I \subset \IR$.
Recall that this means that
\[ \partial_t g_t \geq - 2 \Ric_{g_t}. \]
For any $(y,s) \in M \times I$ consider the heat kernel $K( \cdot, \cdot; y,s)$ of the standard heat equation using $(g_t)$ as a background, i.e. for fixed $(y,s) \in M \times I$
\begin{equation} \label{eq_K_1_HE}
 \partial_t K(\cdot ,t; y,s) = \triangle_{g_t} K(\cdot ,t; y,s), \qquad K(\cdot ,t, y,s) \xrightarrow[t \searrow s]{} \delta_y. 
\end{equation}
Then $K(x,t;y,s)$ is defined and smooth whenever $s < t$ and if we fix $(x,t) \in M \times I$, then $K(x,t; \cdot, \cdot)$ satisfies the conjugate heat equation:
\begin{equation} \label{eq_K_2_CHE}
 - \partial_s K(x,t; \cdot, s) = \triangle_{g_s} K(x,t; \cdot, s) + \tfrac12 ( \tr_{g_s} \partial_s g_s ) K(x,t; \cdot, s), \qquad
K(x ,t, \cdot,s) \xrightarrow[s \nearrow t]{} \delta_y 
\end{equation}
and we have
\[ \int_M K(x,t; \cdot, s) dg_s = 1. \]
For more details see \cite[\HKSubsecHOandHK]{Bamler_HK_entropy_estimates}.

For any $(x,t) \in M \times I$ and $s \leq t$ consider the following probability  measure on $M \times \{ s \}$ (compare also with \cite[\HKDefConjHKmeasure]{Bamler_HK_entropy_estimates}):
\[ d\nu_{x,t ; s} := \begin{cases} K(x,t; \cdot, s) d\mu_{g_s} & \text{if $s < t$} \\
\delta_{(x,s)} & \text{if $s = t$} \end{cases} \]
Set $X_t := M \times \{ t \}$ and let $d_t := d_{g_t}$ be the length metric induced by $g_t$.
Consider
\begin{equation} \label{eq_met_flow_from_SRF}
  \big( \XX:= M \times I , \tf := \proj_{I}, ( d_t)_{t \in I}, (\nu_{x,t;s})_{(x,t) \in M \times I, s \in I, s \leq t} \big). 
\end{equation}
We have:

\begin{Theorem} \label{Thm_superRF_metric_flow}
(\ref{eq_met_flow_from_SRF}) is an $H_n := ( \frac{(n-1)\pi^2}2 + 4)$-concentrated metric flow.
Heat flows on $\XX$ correspond to solutions to the heat equation on $M \times I$ and conjugate heat flows on $\XX$ correspond to measures of the form $v \, dg_t$, where $v$ is a solution to the conjugate heat equation.

If $I$ is left-open, then the natural topology on $\XX$ agrees with the product topology on $M \times I$.
\end{Theorem}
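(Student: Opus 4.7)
The plan is to verify the seven axioms of Definition~\ref{Def_metric_flow} in turn, then establish $H_n$-concentration and the identification of (conjugate) heat flows with classical (conjugate) solutions, and finally compare the natural and product topologies.

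Axioms~\ref{Def_metric_flow_1}--\ref{Def_metric_flow_3} are immediate: $\XX = M \times I$, $\tf = \proj_I$, and $(\XX_t, d_t) = (M, d_{g_t})$ is complete and separable since $M$ is compact and $g_t$ is smooth. Axiom~\ref{Def_metric_flow_4} follows from the fact that $K(x,t;\cdot,s)$ is a positive smooth function with $\int_M K(x,t;\cdot,s)\,dg_s = 1$ (standard for heat kernels on closed manifolds), so $\nu_{x,t;s} \in \PP(\XX_s)$; axiom~\ref{Def_metric_flow_5} is built into the definition. Axiom~\ref{Def_metric_flow_7} is the heat-kernel semigroup identity $K(x,t;y,t_1) = \int_M K(x,t;z,t_2) K(z,t_2;y,t_1)\,dg_{t_2}(z)$ for $t_1 \le t_2 \le t$, which one proves by checking that both sides solve the forward heat equation (\ref{eq_K_1_HE}) in $(x,t)$ with the same initial condition at time $t_2$, then invoking uniqueness.

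The main work is axiom~\ref{Def_metric_flow_6}. By Lemma~\ref{Lem_met_flow_T_positive}, it suffices to treat $T > 0$ and $u_s = \Phi \circ f_s$ valued in $(0,1)$. Then $u_t(x) := \int_M u_s\, K(x,t;\cdot,s)\,dg_s$ is the classical bounded solution of the heat equation on the background $(g_t)$ with datum $u_s$ at time $s$. The required assertion that $u_t = \Phi \circ f_t$ with $f_t$ being $(t-s+T)^{-1/2}$-Lipschitz is precisely the content of \cite[\HKThmGradientPhiEstimate]{Bamler_HK_entropy_estimates} on super Ricci flow backgrounds; this will be the conceptually deepest ingredient but here enters as a citation. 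The $H_n$-concentration bound (\ref{eq_H_contractivity_condition}) is then the corresponding variance estimate \cite[\HKThmGaussianintegral]{Bamler_HK_entropy_estimates}, valid on any super Ricci flow with $H_n = (n-1)\pi^2/2 + 4$.

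For the identification of heat flows with PDE solutions, one observes that if $(u_t)$ satisfies Definition~\ref{Def_heat_flow}, then $u_t(x) = \int_M u_s\, K(x,t;\cdot,s)\, dg_s$; differentiating in $t$ using (\ref{eq_K_1_HE}) gives $\partial_t u_t = \triangle_{g_t} u_t$ in the classical sense when $u_s$ is bounded. Conversely, any bounded classical solution equals this integral by uniqueness. The conjugate heat flow correspondence is analogous: writing $d\mu_t = v_t\,dg_t$, the defining identity (\ref{eq_Def_conj_heat_flow}) combined with (\ref{eq_K_2_CHE}) shows $v_t$ solves the conjugate heat equation, and conversely.

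For the last statement, suppose $I$ is left-open. If $(x_i, t_i) \to (x_\infty, t_\infty)$ in $M \times I$, then $t_i \to t_\infty$, and for any fixed $t' < t_\infty$ with $t' \in I$ we use smoothness of $K$ in its space-time arguments (valid because $t' < t_i$ eventually) to get $K(x_i, t_i; \cdot, t') \to K(x_\infty, t_\infty; \cdot, t')$ uniformly on $M$; hence the measures converge in total variation and in $d_{W_1}^{\XX_{t'}}$, and Proposition~\ref{Prop_topology_properties}\ref{Prop_topology_properties_c} yields natural-topology convergence. Conversely, natural-topology convergence gives $t_i \to t_\infty$ by Proposition~\ref{Prop_topology_properties}\ref{Prop_topology_properties_a}; to recover $x_i \to x_\infty$ in $M$, fix $t' < t_\infty$ close to $t_\infty$, let $z_i \in \XX_{t'}$ be $H_n$-centers of $(x_i, t_i)$ and similarly $z_\infty$ for $(x_\infty, t_\infty)$. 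By Proposition~\ref{Prop_H_center_existence} together with the hypothesis $d_{W_1}^{\XX_{t'}}(\nu_{x_i,t_i;t'}, \nu_{x_\infty,t_\infty;t'}) \to 0$, we get $d_{t'}(z_i, z_\infty) \to 0$; but $z_i, z_\infty$ lie within Gaussian-controlled distance $O(\sqrt{H_n(t_\infty - t')})$ of $x_i, x_\infty$ respectively in the smooth metrics $g_{t_i}, g_{t_\infty}$. Since these metrics are close to $g_{t_\infty}$ when $t_i$ is close to $t_\infty$, letting $t' \nearrow t_\infty$ (along a suitable sequence) forces $d_{g_{t_\infty}}(x_i, x_\infty) \to 0$. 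The most delicate point of the whole argument is this last comparison, but it is essentially bookkeeping given smoothness of $g_t$ on the compact manifold $M$.
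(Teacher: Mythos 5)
Your proposal follows the paper's own route: verify the axioms directly, cite \cite[\HKThmGradientPhiEstimate]{Bamler_HK_entropy_estimates} for Definition~\ref{Def_metric_flow}\ref{Def_metric_flow_6}, cite the companion variance estimate for concentration, and derive the topology comparison from heat-kernel regularity. Two issues deserve attention. First, the source for $H_n$-concentration is miscited: inequality~(\ref{eq_H_contractivity_condition}) with the constant $H_n = \frac{(n-1)\pi^2}{2} + 4$ is \cite[\HKHnConcentration]{Bamler_HK_entropy_estimates}, not \cite[\HKThmGaussianintegral]{Bamler_HK_entropy_estimates}. The latter is the Gaussian integral (exponential-moment) estimate, a strictly stronger statement; the paper itself illustrates (last example of Subsection~\ref{subsec_Examples}) that Gaussian concentration does not follow from the metric-flow axioms, whereas the $L^2$ variance bound does, and it is Corollary~3.8 of the companion paper that supplies the specific constant $H_n$.

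Second, and more substantively, the converse direction of your topology comparison has a gap. You claim that Proposition~\ref{Prop_H_center_existence} together with $d_{W_1}^{\XX_{t'}}(\nu_{x_i,t_i;t'}, \nu_{x_\infty,t_\infty;t'}) \to 0$ yields $d_{t'}(z_i, z_\infty) \to 0$; but Lemma~\ref{Lem_W_1_vs_Var} and the triangle inequality give only $\limsup_{i} d_{t'}(z_i, z_\infty) \leq 2\sqrt{H_n(t_\infty - t')}$, not convergence to zero. More importantly, passing from closeness of $H_n$-centers to $d_{g_{t_\infty}}(x_i, x_\infty) \to 0$ requires knowing that the conjugate heat kernel $\nu_{(x_i,t_i);t'}$ concentrates around the \emph{worldline} point $x_i(t') \in M$ rather than merely around \emph{some} $H_n$-center. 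The $H$-concentration bound controls $\Var(\nu_{(x_i,t_i);t'})$ but carries no information about where that mass sits relative to $x_i$; the bound $d^{\XX_{t'}}_{W_1}(\delta_{x_i(t')}, \nu_{(x_i,t_i);t'}) \leq C\sqrt{t_i - t'}$ is a separate parabolic estimate for the conjugate heat kernel on a region of bounded curvature. This is exactly what the paper's proof defers to (``standard parabolic estimates,'' citing \cite[\HKCenterConstantRmBound]{Bamler_HK_entropy_estimates}); see also Proposition~\ref{Prop_HK_regular}\ref{Prop_HK_regular_a} in the present paper, which formalizes this statement for metric flows with a regular part. Your phrase ``Gaussian-controlled distance'' gestures at this estimate, but you never identify it as an ingredient needing its own citation or proof, and your attribution of it to the $H$-center machinery is incorrect.
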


\begin{proof}
Properties~\ref{Def_metric_flow_1}--\ref{Def_metric_flow_5} of Definition~\ref{Def_metric_flow} are clear by definition.
Property~\ref{Def_metric_flow_6} is a consequence of \cite[\HKThmGradientPhiEstimate]{Bamler_HK_entropy_estimates} and Property~\ref{Def_metric_flow_7} follows from the reproduction formula:
\[ K(x,t_3;z,t_1) = \int_M K(x,t_3;y,t_2)K(y,t_2;z,t_1) dg_{t_2}(y). \]
The $( \frac{(n-1)\pi^2}2 + 4)$-concentration is a consequence of \cite[\HKHnConcentration]{Bamler_HK_entropy_estimates}.
The last statement follows from standard parabolic estimates, see for example \cite[\HKCenterConstantRmBound]{Bamler_HK_entropy_estimates}.
\end{proof}
\bigskip

Next, consider a 3-dimensional singular Ricci flow $\mathcal{M} = (\MM, \tf, \partial_\tf, g)$ over some interval $I = [0,T)$; see \cite{Kleiner_Lott_singular, bamler_kleiner_uniqueness_stability} and Subsection~\ref{subsec_RF_spacetimes}.
We recall that a singular Ricci flow is a Ricci flow spacetime $\mathcal{M}$ as in \cite[Definition~5.1]{bamler_kleiner_uniqueness_stability} whose initial time-slice $\mathcal{M}_0$ is compact, that is $0$-complete in the sense of \cite[Definition~5.4]{bamler_kleiner_uniqueness_stability} and that has the property that for every $\eps, T > 0$ there is an $r_{\eps,T} > 0$ such that $\mathcal{M}_{[0,T)}$ satisfies the $\eps$-canonical neighborhood assumption below scale $r_{\eps, T}$ in the sense of \cite[Definition~5.7]{bamler_kleiner_uniqueness_stability}.
We recall that by \cite{bamler_kleiner_uniqueness_stability} the flow $\mathcal{M}$ is uniquely determined by its initial time-slice $(\mathcal{M}_0, g_0)$, so the theorems of \cite{Kleiner_Lott_singular} also apply.

We will sketch how to convert $\mathcal{M}$ into a metric flow; a more rigorous treatment will be available in forthcoming work.
Let $I' = [0, T')$ or $[0,T'] \subset I$ be a subinterval and consider an open subset $\MM' \subset \MM_{I'}$ with the property that for any $t \in I'$ the time-slice $\MM'_t$ is equal to a connected component of $\MM_t$.
If $I' = [0,T']$, then such subsets are uniquely determined by the component $\MM'_{T'}$.
More specifically, given the component $\MM'_{T'} \subset \MM_{T}$, we can choose the unique component $\MM'_t \subset \MM_t$ for any $t \in [0,T')$ with the property that there is a continuous curve $\gamma : [t, T']$ such that $\tf (\gamma(t')) = t'$ and $\gamma(t) \in \MM'_t$, $\gamma(T') \in \MM'_{T'}$.
The subset $\MM' \subset \MM_{I'}$ can be viewed as one ``branch'' of the singular flow $\MM$ --- it roughly corresponds to choosing a component after every neckpinch.

We will now convert $\MM'$ into a metric space $\XX$ over $I'$.
For any $t \in I'$ let $(X_t, d_t)$ be the completion of the length metric of the time-slice $(\MM'_t, g_t)$ and suppose that $X_t \supset \MM'_t$.
It can be shown that there is a heat kernel $K \in C^\infty (U)$ on $\mathcal{M}$, for $U := \{ (x,y) \in \MM \times \MM \;\; : \;\; \tf(y) < \tf (x) \}$, that satisfies (\ref{eq_K_1_HE}), (\ref{eq_K_2_CHE}) if we replace the time-derivative by a Lie-derivative of the $\partial_{\tf}$-vector field.
$K$ still satisfies the reproduction formula and for any $\ov u \in C^0_c (\mathcal{M}_s)$ the function $ u : \mathcal{M}_{[s, \infty)} \to \IR$ given by
\[  u(x) := \begin{cases}
\ov u(x) & \text{if $\tf(x) =s$} \\
 \int_{\mathcal{M}_s} K(x;\cdot ) dg_s  & \text{if $\tf(x) > s$} \end{cases} \]
is a solution to the heat equation with initial condition $\ov u$.
By the choice of $\MM'$ we have for any $x \in \MM'_t$, $s < t$,
\[ K(x;\cdot) = 0 \qquad \text{on} \quad \MM_{s} \setminus \MM'_{s}. \]
So we may still define the conjugate heat kernel measures $\nu_{x;s}$ as the probability measure on $X_s$ with
\[ \nu_{x;s} (X_s \setminus \mathcal{M}'_s) = 0, \qquad 
d\nu_{x;s} \big|_{\mathcal{M}'_s} =
\begin{cases} K(x; \cdot ) dg_s & \text{if $s < t$} \\
\delta_{x} & \text{if $s = t$} \end{cases}. \]
It can be shown that for fixed $0 \leq s \leq t$ the map $(\MM'_t, d_t) \to (\PP (X_s), d^{X_s}_{W_1})$, $x \mapsto \nu_{x;s}$ can be extended uniquely to a continuous map of the form $X_t \to \PP (\MM'_s)$.
Using this extension, we will now define the metric flow $\XX$ over $I'$ by
\[ \bigg( \XX := \bigcup_{t \in I'} X_t, \tf, (d_t)_{t \in I'}, (\nu_{x;s})_{x \in X_t, s \leq t} \bigg), \]
where $\tf : \XX \to I'$ is the obvious map with $\tf (X_t) = t$.
It can be shown that:

\begin{Theorem} \label{Thm_sing_flow_to_met_flow}
$\XX$ is an $H_3$-concentrated metric flow.
If we view $\MM'$ as a subset of $\XX$, then the natural topology of $\XX$ restricted to $\MM'_{I \setminus \{ 0 \}}$ agrees with the standard topology given by the spacetime manifold.
Moreover, $\XX$ is future continuous (see Subsection~\ref{subsec_fut_past_cont} for further details).
\end{Theorem}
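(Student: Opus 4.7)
The plan is to verify the three assertions in sequence, leveraging the smoothness of the Ricci flow spacetime structure on $\MM'_{I \setminus \{0\}}$ and then extending by density and continuity to the completion $X_t$. The basic strategy mirrors the proof of Theorem~\ref{Thm_superRF_metric_flow}: first establish all relevant properties on the smooth portion, where the machinery of \cite{Bamler_HK_entropy_estimates} applies directly, and then transfer them to the completion using the fact, stated as part of the construction, that the map $\MM'_t \to \PP(\MM'_s)$, $x \mapsto \nu_{x;s}$, extends continuously to a map $X_t \to \PP(\MM'_s)$.

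For the metric flow axioms of Definition~\ref{Def_metric_flow}, Properties (1)--(5) are immediate from the construction. Property (7), the reproduction formula, holds on smooth points $x \in \MM'_{t_3}$ by the standard reproduction formula for the heat kernel on the Ricci flow spacetime $\MM$ (restricted to the branch $\MM'$), and extends to all $x \in X_{t_3}$ by approximating $x$ with a sequence $x_i \in \MM'_{t_3}$ and passing to the limit using that both sides depend weakly continuously on $x$. Property (6), the gradient estimate, is handled similarly: the case $x \in \MM'_t$ follows from \cite[\HKThmGradientPhiEstimate]{Bamler_HK_entropy_estimates}, and if $u_s = \Phi \circ f_s$ for some $T^{-1/2}$-Lipschitz $f_s : X_s \to \IR$, then $u_t$ defined via the integral formula is continuous on $X_t$ and its restriction to $\MM'_t$ is of the form $\Phi \circ f_t$ with $f_t$ a $(t-s+T)^{-1/2}$-Lipschitz function on $\MM'_t$; uniform continuity with respect to the length metric $d_t$ yields a unique $(t-s+T)^{-1/2}$-Lipschitz extension of $f_t$ to $X_t$, which must equal $\Phi^{-1} \circ u_t$.

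For $H_3$-concentration, the bound $\Var(\nu_{x_1;s},\nu_{x_2;s}) \leq d_t^2(x_1,x_2) + H_3(t-s)$ holds for $x_1,x_2 \in \MM'_t$ by \cite[\HKHnConcentration]{Bamler_HK_entropy_estimates}, noting that $d_t$ restricted to $\MM'_t$ is the length metric induced by $g_t$. Approximating arbitrary $x_1, x_2 \in X_t$ by smooth sequences and using $W_1$-continuity of $x \mapsto \nu_{x;s}$ together with lower semicontinuity of variance under $W_1$-convergence (Lemma~\ref{Lem_weak_conv_2_W1_conv}) gives the estimate on all of $X_t$. For the topology statement on $\MM'_{I \setminus \{0\}}$, I would invoke standard parabolic regularity on the smooth spacetime to see that $x \mapsto \nu_{x;s}$ is continuous in the spacetime topology in $W_1$, jointly with continuous dependence on the basepoint; combined with the characterization in Proposition~\ref{Prop_topology_properties}(c) and continuity of $\tf$, this matches the natural topology with the spacetime topology, exactly as in the last statement of Theorem~\ref{Thm_superRF_metric_flow}. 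Future continuity at $t_0 \in I'$ reduces to showing $(X_t,d_t,\mu_t) \to (X_{t_0},d_{t_0},\mu_{t_0})$ in Gromov-$W_1$ as $t \searrow t_0$ for any fixed conjugate heat flow $\mu$; smooth convergence on compact subsets of $\MM'_{t_0}$ together with tightness coming from $H_3$-concentration and the $H$-center concentration bound of Lemma~\ref{Lem_nu_BA_bound} yield this.

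The main obstacle is the rigorous verification that $x \mapsto \nu_{x;s}$ admits a continuous extension from $\MM'_t$ to $X_t$ valued in $\PP(\MM'_s)$, and that this extension is supported on the smooth part $\MM'_s$ so that the smooth identities can be applied at the limit. Monotonicity of $W_1$ along the flow (Proposition~\ref{Prop_compare_CHF}(c)) immediately provides a uniform $1$-Lipschitz estimate $d^{X_s}_{W_1}(\nu_{x_1;s},\nu_{x_2;s}) \leq d_t(x_1,x_2)$ on $\MM'_t$, which yields the continuous extension. The more delicate point is ruling out that limiting measures concentrate on the singular stratum of $\MM_s \setminus \MM'_s$; here one would use the $\eps$-canonical neighborhood structure of 3-dimensional singular Ricci flows together with the fact that the singular set does not separate branches of $\MM'$, so it is not charged by the conjugate heat kernel started from any $x \in \MM'$, and this property is preserved under $W_1$-limits by tightness.
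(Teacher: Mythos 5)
The paper does not actually prove Theorem~\ref{Thm_sing_flow_to_met_flow}. The surrounding text explicitly says ``We will sketch how to convert $\mathcal{M}$ into a metric flow; a more rigorous treatment will be available in forthcoming work,'' and the theorem is introduced with ``It can be shown that.'' So there is no paper proof to compare against; your sketch has to be judged on its own. Its general architecture --- establish the metric-flow axioms and $H_3$-concentration on the smooth stratum $\MM'$ via \cite{Bamler_HK_entropy_estimates} and then transfer them to the completion $X_t$ by Lipschitz extension in the $W_1$-Wasserstein topology --- is the natural one and is consistent with the construction the paper outlines.

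Two spots deserve more care. First, the $1$-Lipschitz estimate you invoke from Proposition~\ref{Prop_compare_CHF}\ref{Prop_compare_CHF_c} presupposes the metric-flow axiom Definition~\ref{Def_metric_flow}\ref{Def_metric_flow_6}, which is exactly what you are in the middle of verifying; as stated the argument is circular. The fix is to derive the $1$-Lipschitz bound on $x \mapsto \nu_{x;s}$ from the heat-kernel gradient estimate \cite[\HKThmGradientPhiEstimate]{Bamler_HK_entropy_estimates} applied directly on the singular Ricci flow spacetime, before any metric-flow structure is in place, and only then build the continuous extension to $X_t$. Second, your final paragraph conflates two distinct facts. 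The vanishing of $K(x;\cdot)$ on $\MM_s \setminus \MM'_s$ (the other branches) is elementary: $\MM'_s$ is a union of connected components of $\MM_s$ and heat flow does not couple disjoint components. The genuinely hard input is that $\int_{\MM'_s} K(x;\cdot)\,dg_s = 1$, i.e.\ that no conjugate-heat-kernel mass escapes to the metric completion points $X_s \setminus \MM'_s$; this is a mass-conservation statement and is where the $\eps$-canonical neighborhood structure and the codimension of the singular set actually enter. Without this, $\nu_{x;s}$ is not a probability measure and the whole construction collapses. Recognizing this as the core technical issue --- rather than folding it into ``the singular set does not separate branches'' --- would align your sketch with what the paper is implicitly deferring to its companion work.
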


\subsection{Special cases and constructions}
In the following we define certain classes and constructions for metric flows, which are the analogs of common constructions of (super) Ricci flows.
These constructions will be needed in \cite{Bamler_HK_RF_partial_regularity}.

We first define the Cartesian product of two metric flows.

\begin{Definition}[Cartesian product]
The Cartesian product of two metric flows $\XX^1, \XX^2$ that are defined over the same subset $I \subset \IR$ is given by the tuple
\[ \Big( \XX^{12} := \bigsqcup_{t \in I} \XX^1_t \times \XX^2_t, d^{12}_t, (\nu^{12}_{(x_1,x_2) ; s} := \nu^1_{x_1;s} \otimes \nu^2_{x_2;s} )_{(x_1,x_2) \in \XX^1_t \times \XX^2_t, s \leq t} \Big), \]
where 
\[ \big(d^{12}_t ( (x_1, x_2), (y_1, y_2) ) \big)^2 = \big(d^1_t(x_1,y_1) \big)^2 + \big( d^2_t (x_2, y_2) \big)^2. \]
\end{Definition}

The following can be checked easily:

\begin{Proposition}
$\XX^{12}$ is a metric flow over $I \subset \IR$ and the following is true:
\begin{enumerate}[label=(\alph*)]
\item If $(u^i_t)_{t \in I'}$ is a heat flow on $\XX^i$, $i=1,2$, then $(u^1_t u^2_t)_{t \in I'}$ is a heat flow on $\XX^{12}$.
\item If $(\mu^i_t)_{t \in I'}$ is a conjugate heat flow on $\XX^i$, $i=1,2$, then $(\mu^1_t \otimes \mu^2_t)_{t \in I'}$ is a conjugate heat flow on $\XX^{12}$.
\item If $\XX^i$ is $H_i$-concentrated for $i=1,2$, then on $\XX^{12}$ is $(H_1 + H_2)$-concentrated.
\end{enumerate}
\end{Proposition}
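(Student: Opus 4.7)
My plan is to verify all seven axioms of Definition~\ref{Def_metric_flow} for $\XX^{12}$, after which the three numbered assertions follow from Fubini-type manipulations. Axioms (1)--(5) are essentially tautological: the $\ell^2$-product of two complete separable metric spaces is again complete and separable, $\nu^{12}_{(x_1,x_2);s}$ is a probability measure as a product of probability measures, and at $s=\tf(x_1,x_2)$ the product $\delta_{x_1}\otimes\delta_{x_2}$ is the Dirac mass at $(x_1,x_2)$. The reproduction formula (axiom (7)) reduces to Fubini applied to the rectangle identity $\nu^{12}_{(x_1,x_2);t_1}(S_1\times S_2)=\nu^1_{x_1;t_1}(S_1)\,\nu^2_{x_2;t_1}(S_2)$, using the individual reproduction formulas on $\XX^1,\XX^2$.

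The analytic heart of the argument is axiom (6). I would take $u_s=\Phi\circ f_s$ with $f_s$ being $T^{-1/2}$-Lipschitz on $(\XX^{12}_s,d^{12}_s)$ and integrate in two stages. For each fixed $y_2\in\XX^2_s$, $f_s(\cdot,y_2)$ is $T^{-1/2}$-Lipschitz on $\XX^1_s$, so axiom (6) on $\XX^1$ yields $\int_{\XX^1_s}\Phi(f_s(\cdot,y_2))\,d\nu^1_{x_1;s}=\Phi(g_t(x_1,y_2))$ with $g_t(\cdot,y_2)$ being $(t-s+T)^{-1/2}$-Lipschitz (the constant case $g_t\equiv\Phi^{-1}(c)$ is absorbed into this as a trivially Lipschitz function). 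The key auxiliary fact is that, for any $c\geq 0$, the map $h_c(u):=\Phi(\Phi^{-1}(u)+c)$ is concave on $(0,1)$: direct computation yields $h_c'(u)=\exp(-c\Phi^{-1}(u)/2-c^2/4)$, which is decreasing in $u$. Jensen's inequality applied to $h_c$ transfers coordinatewise Lipschitz control from $f_s$ through the integration, so $g_t(x_1,\cdot)$ is $T^{-1/2}$-Lipschitz on $\XX^2_s$. A second application of axiom (6), now on $\XX^2$ and using this Lipschitz bound, produces $u_t=\Phi\circ f_t$ where $f_t(x_1,\cdot)$ is $(t-s+T)^{-1/2}$-Lipschitz; the same Jensen trick then gives the analogous bound for $f_t(\cdot,x_2)$ in $x_1$.

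The main obstacle is converting these two coordinate-wise Lipschitz bounds with constant $L:=(t-s+T)^{-1/2}$ into a joint $d^{12}_t$-Lipschitz bound with the \emph{same} constant $L$, as required to match the conclusion of axiom (6). A naive combination gives only $\sqrt{L^2+L^2}=\sqrt{2}L$ via Cauchy-Schwarz on $L d^1_t + L d^2_t\leq\sqrt{2L^2}\cdot d^{12}_t$, which is off by a factor of $\sqrt{2}$. I expect the fix to exploit a sharper tensorization property of the Gaussian profile $\Phi$ itself---for instance, a Bobkov-type functional inequality phrased directly on the product heat kernel---rather than a purely metric Lipschitz combination. The smooth model case (Theorem~\ref{Thm_superRF_metric_flow}) suggests the tight constant must survive, since the product of two super Ricci flows is a super Ricci flow of one higher dimension and axiom (6) then comes from the cited gradient estimate in \cite[\HKThmGradientPhiEstimate]{Bamler_HK_entropy_estimates}; the abstract verification should mirror this tensorization at the level of the $\Phi$-profile.

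With axioms (1)--(7) in hand, the three numbered parts follow immediately. For (a), applying Fubini to $\int_{\XX^1_s\times\XX^2_s}u^1_s(y_1)u^2_s(y_2)\,d\nu^1_{x_1;s}(y_1)d\nu^2_{x_2;s}(y_2)$ factors the integral and reproduces $u^1_t(x_1)u^2_t(x_2)$. For (b), integrating the rectangle identity for $\nu^{12}$ against $\mu^1_t\otimes\mu^2_t$ and using the conjugate heat flow identity on each factor gives $(\mu^1_t\otimes\mu^2_t)\mapsto\mu^1_s\otimes\mu^2_s$. For (c), the defining identity $(d^{12}_s)^2=(d^1_s)^2+(d^2_s)^2$ decomposes the variance as $\Var(\nu^{12}_{(x_1,x_2);s},\nu^{12}_{(x_1',x_2');s})=\Var(\nu^1_{x_1;s},\nu^1_{x_1';s})+\Var(\nu^2_{x_2;s},\nu^2_{x_2';s})$ by the product structure, and combining $H_i$-concentration of each factor with $(d^{12}_t)^2=(d^1_t)^2+(d^2_t)^2$ yields the $(H_1+H_2)$-concentration of $\XX^{12}$.
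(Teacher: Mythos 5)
The proposal is correct in its routine parts and misguided in exactly one place, which you have yourself identified: the verification of Definition~\ref{Def_metric_flow}\ref{Def_metric_flow_6} for $\XX^{12}$. Let me separate what works from what does not.

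What works. Axioms (1)--(5) and (7) are indeed immediate from Fubini, and your concavity computation $h_c'(u)=\exp(-c\,\Phi^{-1}(u)/2-c^2/4)$ is correct for the paper's normalization $\Phi'(x)=(4\pi)^{-1/2}e^{-x^2/4}$. The Jensen argument is therefore valid, and it correctly delivers coordinate-wise Lipschitz bounds: both $f_t(\cdot,x_2)$ and $f_t(x_1,\cdot)$ are $(t-s+T)^{-1/2}$-Lipschitz. Assertions (a) and (b) follow from Fubini as you say; for (c) the variance decomposition $\Var(\nu^{12}_{(x_1,x_2);s},\nu^{12}_{(x_1',x_2');s})=\Var(\nu^1_{x_1;s},\nu^1_{x_1';s})+\Var(\nu^2_{x_2;s},\nu^2_{x_2';s})$ is a clean Fubini computation against $(d^{12}_s)^2=(d^1_s)^2+(d^2_s)^2$, and $(H_1+H_2)$-concentration follows directly.

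Where the gap is. A coordinate-wise Lipschitz constant of $L:=(t-s+T)^{-1/2}$ in each factor is genuinely weaker than $L$-Lipschitzness with respect to $d^{12}_t$: the function $f(x_1,x_2)=L(x_1+x_2)$ on $\IR^2$ has both coordinate Lipschitz constants equal to $L$ but Euclidean Lipschitz constant $\sqrt{2}L$. Nothing in the sequential Jensen/axiom-(6) argument rules this out, because the information that $f_s$ is $T^{-1/2}$-Lipschitz jointly (as opposed to merely coordinate-wise) is discarded after the first integration. In the extremal model $f_s(y_1,y_2)=T^{-1/2}(\alpha y_1+\beta y_2)$ with $\alpha^2+\beta^2=1$, the intermediate function $g(x_1,y_2)$ you construct has $y_2$-Lipschitz constant $\beta(T+\alpha^2\tau)^{-1/2}$, strictly smaller than the bound $T^{-1/2}$ that Jensen alone returns; it is precisely this $\alpha$-dependent sharpening that lets the model recover the tight constant $(T+\tau)^{-1/2}$ in the second step, and it is not visible from your hypotheses. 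The proposed fix via a Bobkov-type tensorization is a reasonable direction to hope for, but you have not supplied the needed inequality, and the axioms of Definition~\ref{Def_metric_flow} do not obviously imply the boosted ``anisotropic'' Lipschitz transfer that the model computation suggests is the real mechanism. Since the paper states the proposition with the bare remark ``can be checked easily'' and gives no argument, I cannot point you to the intended resolution; but as written, your verification of axiom (6) is incomplete and the factor $\sqrt2$ is a genuine, not cosmetic, obstruction.
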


Next, we define the analog of a steady gradient soliton.

\begin{Definition}[Static metric flows]
A metric flow $\XX$ over some interval $I \subset \IR$ is called {\bf static} if there is a tuple
\begin{equation} \label{eq_Def_static_model}
 \big( X, d, (\nu'_{x;t} )_{x \in X; I \cap (t+I) \neq \emptyset}  \big) 
\end{equation}
and a map $\phi : \XX \to X$ such that the following holds:
\begin{enumerate}[label=(\arabic*)]
\item \label{Def_static_MF_1} $(X,d)$ is a metric space and for any $t \in I$ the map $\phi_t : (\XX_t, d_t) \to (X,d)$ is an isometry.
\item \label{Def_static_MF_2} $(\nu'_{x;t} )_{x \in X; I \cap (t+I) \neq \emptyset}$ is a family of probability measures on $X$ and for any $x \in \XX_t$, $s \in I$, $s \leq t$ we have $(\phi_s)_* \nu_{x;s} = \nu'_{\phi_t (x); t-s} \in \mathcal{P} (X)$.
\end{enumerate}
The tuple (\ref{eq_Def_static_model}) is called a {\bf static model} for $\XX$.
\end{Definition}

\begin{Remark}
The static model and the map $\phi$ may not be uniquely determined by Properties~\ref{Def_static_MF_1}, \ref{Def_static_MF_2}.
For example, if we consider the constant flow $(g_t)_{t \in \IR}$ on $\IR^n$, then we could choose $\XX = \IR^n \times \IR \to \IR^n =: X$ to be the standard projection, or a map of the form $(\vec x, t) \mapsto \vec x + t \vec a$ for some $\vec a \in \IR^n$.
Then $d$ is the Euclidean metric on $\IR^n$, and $(\nu'_{x;t})$ corresponds to the kernels of the heat equation $\partial_t v = \triangle v - \vec a \cdot \nabla v$.
\end{Remark}

Next, we define the analog of a shrinking soliton.

\begin{Definition}[Metric soliton] \label{Def_metric_soliton}
A pair $(\XX, (\mu_t)_{t \in I})$, consisting of metric flow $\XX$ over some interval $I \subset \IR$ with $\sup I = 0$, $0 \not\in I$ and a conjugate heat flow $(\mu_t)_{t \in I}$ is called a {\bf metric soliton} if there is a tuple
\[ \big( X, d, \mu, (\nu'_{x;t} )_{x \in X; t \leq 0}  \big) \]
and a map $\phi : \XX \to X$ such that the following holds:
\begin{enumerate}
\item For any $t \in I$, the map $\phi_t : (\XX_t, d_t, \mu_t) \to (X, \sqrt{t} d, \mu)$ is an isometry between metric measure spaces.
\item For any $x \in \XX_t$, $s \in I$ with $s \leq t$, we have $(\phi_s)_* \nu_{x;s} = \nu'_{\phi_t (x); \log (s/t)}$.
\end{enumerate}
The conjugate heat flow $(\mu_t)_{t \in I}$ is called the {\bf potential flow} of the metric soliton $(\XX, (\mu_t)_{t \in I})$.
\end{Definition}

If $\XX$ is defined over an interval $I$ of the form $(-T,0]$ or $[-T,0]$, then we will often say that a pair of the form $(\XX, (\mu_t)_{t \in I })$ or $(\XX, (\mu_t)_{t \in I \setminus \{ 0 \}})$ is a metric soliton if Definition~\ref{Def_metric_soliton} holds for the restricted pair $(\XX_{t \in I \setminus \{ 0 \}}, (\mu_t)_{t \in I \setminus \{ 0 \}})$.

The following proposition shows that metric flows are selfsimilar.
Moreover, it shows that given a selfsimilar metric flow $\XX$, there is a potential flow $(\mu_t)_{t \in I}$ such that $(\XX, (\mu_t)_{t \in I})$ is a metric flow and this potential flow only depends on $\XX$ and the family of selfsimilar maps.

\begin{Proposition} \label{Prop_metric_soliton_properties}
Consider a metric soliton $(\XX, (\mu_t)_{t \in I})$ and the map $\phi : \XX \to X$ from Definition~\ref{Def_metric_soliton}.
For any $\lambda \in (0,1]$ consider the map $\psi_\lambda : \XX \to \XX$ that maps every $x \in \XX_t$ to $\psi_\lambda (x) \in \XX_{\lambda^2 t}$ with $\phi (\psi_\lambda (x) ) = \phi(x)$.
Then
\begin{enumerate}[label=(\alph*)]
\item \label{Prop_metric_soliton_properties_a} For any $\lambda \in (0,1]$ the map $\psi_\lambda$  is a flow isometry between $\XX$ and $\XX_{\lambda^2 I}$ if we parabolically rescale the domain by $\lambda$.
\item \label{Prop_metric_soliton_properties_b} For any $\lambda_1, \lambda_2 \in (0,1]$ we have $\psi_{\lambda_1} \circ \psi_{\lambda_2} = \psi_{\lambda_1\lambda_2}$
\item \label{Prop_metric_soliton_properties_c} For any $\lambda \in (0,1]$ we have $(\psi_\lambda)_* \mu_t = \mu_{\lambda^2 t}$.
\end{enumerate}
Vice versa, suppose that $\XX$ is a metric flow over some interval $I \subset \IR$ with $\sup I = 0$, $0 \not\in I$ and consider a family of maps $(\psi_{\lambda} : \XX \to \XX)_{\lambda \in (0,1]}$ that satisfies Properties~(a), (b).
If $\XX$ is $H$-concentrated for some $H < \infty$, then there is a unique conjugate heat flow $(\mu_t)_{t \in I}$ such that $(\XX, (\mu_t)_{t \in I})$ is a metric flow, such that Property~(c) holds and such that $\mu_t \in \PP^1(\XX_t)$ for all $t \in I$, where the latter is space of probability measures that have finite $d_{W_1}$-distance to point masses.

Lastly, if $(\XX, (\mu_t)_{t \in I})$ is a metric soliton and $\XX$ is $H$-concentrated, then $\Var (\mu_t) \leq H |t|$ for all $t \in I$.
\end{Proposition}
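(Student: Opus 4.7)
For Assertions (a)--(c), the plan is to verify each property directly from Definition~\ref{Def_metric_soliton}. Since $\phi_t : (\XX_t, d_t) \to (X, \sqrt{|t|}\, d)$ is a bijective isometry, the defining relation $\phi \circ \psi_\lambda = \phi$ immediately gives $d_{\lambda^2 t}(\psi_\lambda x, \psi_\lambda y) = \sqrt{\lambda^2 |t|}\, d(\phi x, \phi y) = \lambda\, d_t(x,y)$, which is the metric isometry condition after parabolic rescaling by $\lambda$; the conjugate heat kernel condition in (a) then follows by pushing Definition~\ref{Def_metric_soliton}(2) through $\phi_s^{-1}$. Assertion~(b) follows because both $\psi_{\lambda_1}\circ\psi_{\lambda_2}(x)$ and $\psi_{\lambda_1\lambda_2}(x)$ lie in $\XX_{(\lambda_1\lambda_2)^2 t}$ with the same $\phi$-image, hence coincide by injectivity of $\phi_{(\lambda_1\lambda_2)^2 t}$. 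Assertion~(c) follows by applying $(\phi_{\lambda^2 t})_*$ to both sides: $(\phi_{\lambda^2 t})_*(\psi_\lambda)_*\mu_t = \phi_*\mu_t = \mu = (\phi_{\lambda^2 t})_*\mu_{\lambda^2 t}$, so $(\psi_\lambda)_*\mu_t = \mu_{\lambda^2 t}$.

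For the converse, my plan is to fix $t_0 \in I$ close enough to $0$ so that some $\lambda \in (0,1)$ with $t_0/\lambda^2 \in I$ exists, and introduce the self-map
\[ T_\lambda : \PP^1(\XX_{t_0}) \longrightarrow \PP^1(\XX_{t_0}), \qquad T_\lambda(\mu) := (\psi_\lambda)_*\Big(\int_{\XX_{t_0}} \nu_{y;\, t_0/\lambda^2}\, d\mu(y)\Big). \]
Proposition~\ref{Prop_compare_CHF}(b) shows that backward conjugate-heat propagation is $1$-Lipschitz in $d_{W_1}$, and Assertion~(a) shows $(\psi_\lambda)_*$ scales $d_{W_1}$ by $\lambda$, so $T_\lambda$ is a $\lambda$-contraction on the complete metric space $(\PP^1(\XX_{t_0}), d_{W_1})$; Banach's theorem yields a unique fixed point $\mu_{t_0}^\lambda$. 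I then use Property~(b) for $\psi$ together with the flow-isometry intertwining $(\psi_\lambda)_* \nu_{x;s} = \nu_{\psi_\lambda x;\,\lambda^2 s}$ (pushforward commutes with backward propagation) to verify the commutation $T_{\lambda_1}\circ T_{\lambda_2} = T_{\lambda_1\lambda_2} = T_{\lambda_2}\circ T_{\lambda_1}$, so all the $T_\lambda$'s share the same fixed point $\mu_{t_0}$ by the standard principle that commuting contractions share fixed points.

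From $\mu_{t_0}$ I would extend by $\mu_t := \int \nu_{x;t}\, d\mu_{t_0}(x)$ for $t \le t_0$ and $\mu_t := (\psi_{\sqrt{t/t_0}})_* \mu_{t_0}$ for $t > t_0$. Verifying that $(\mu_t)_{t \in I}$ is a conjugate heat flow satisfying $(\psi_\lambda)_* \mu_t = \mu_{\lambda^2 t}$ for all admissible $\lambda, t$ reduces to a case analysis on the positions of $t$ and $\lambda^2 t$ relative to $t_0$; each case unwinds via the fixed-point equation for $\mu_{t_0}$ combined with the flow-isometry intertwining. Uniqueness is forced because any soliton's time-$t_0$ marginal must solve the fixed-point equation for a contracting $T_\lambda$.

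For the final variance bound, Assertion~(a) gives $\Var((\psi_\lambda)_*\mu) = \lambda^2 \Var(\mu)$, so selfsimilarity forces $V := \Var(\mu_t)/|t|$ to be independent of $t \in I$. Combining with Proposition~\ref{Prop_H_monotonicity_Var}, i.e.\ $\Var(\mu_s) + Hs \le \Var(\mu_t) + Ht$ for $s \le t < 0$, and substituting $\Var(\mu_s) = V|s|$, $\Var(\mu_t) = V|t|$ yields $(V-H)(|s|-|t|) \le 0$; since $|s| \ge |t|$, this forces $V \le H$, i.e.\ $\Var(\mu_t) \le H|t|$. The main technical obstacle will be the case analysis in the previous paragraph: careful bookkeeping of when $t/\lambda^2$ lies in $I$ is required, especially when $I$ is bounded, since then the commutation $T_{\lambda_1}\circ T_{\lambda_2}=T_{\lambda_1\lambda_2}$ is only available when all three operators are defined. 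I would address this by first working on a small subinterval near $\sup I = 0$ where all needed rescalings stay in $I$, and then propagating the definition of $\mu_t$ to the rest of $I$ by conjugate heat flow.
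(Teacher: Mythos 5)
Your arguments for Assertions (a)--(c) and for the variance bound at the end are correct; the paper leaves (a)--(c) to the reader and derives the variance bound differently (by passing the $H$-concentration bound through the limit $F^{(i)}(\delta_x) = \nu_{\psi_{2^{-i}}(x); t_0} \to \mu_{t_0}$), but your self-similarity-plus-monotonicity argument is a valid alternative.

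The converse direction is where you run into trouble, and it is exactly the issue you flag but do not fully resolve. As you write it,
\[ T_\lambda(\mu) := (\psi_\lambda)_*\Big(\int_{\XX_{t_0}} \nu_{y;\, t_0/\lambda^2}\, d\mu(y)\Big), \]
which propagates backward from $t_0$ to $t_0/\lambda^2$ first and pushes forward second. Since $t_0 < 0$ and $\lambda < 1$, we have $t_0/\lambda^2 < t_0$, so this time may fall outside $I$ whenever $I$ is bounded below. Choosing $t_0$ close to $0$ does not save you: given $t_0$, the constraint $t_0/\lambda^2 \in I$ still imposes a lower bound on $\lambda$, and the commutation identity $T_{\lambda_1}\circ T_{\lambda_2} = T_{\lambda_1\lambda_2}$ that you rely on requires the even more restrictive $t_0/(\lambda_1\lambda_2)^2 \in I$, which fails for products of admissible $\lambda$'s. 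So the ``commuting contractions share a common fixed point'' principle is not available as stated. The repair is to commute the two operations: the flow-isometry intertwining $(\psi_\lambda)_*\nu_{x;s} = \nu_{\psi_\lambda(x);\lambda^2 s}$ shows that, whenever your formula is defined, it equals
\[ \mu \longmapsto \int_{\XX_{\lambda^2 t_0}} \nu_{z;\, t_0}\, d\big((\psi_\lambda)_*\mu\big)(z), \]
i.e.\ push forward by $\psi_\lambda$ to the \emph{later} time-slice $\XX_{\lambda^2 t_0}$ (always inside $I$, since $I$ is an interval with $\sup I = 0$) and then propagate backward to $t_0$. This version is always well-defined; this is precisely the operator $F$ used in the paper with $\lambda = 1/2$. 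It is still a $\lambda$-contraction on $(\PP^1(\XX_{t_0}), d_{W_1})$ by the same estimate you give. Once you have its fixed point $\mu_{t_0}$, a single $\lambda$ suffices for both existence and uniqueness (any admissible soliton marginal is automatically a fixed point of that one contraction), so the commutation argument is unnecessary. Finally, the extension from $\mu_{t_0}$ to $(\mu_t)_{t\in I}$ is cleanest via the paper's route — the conjugate heat flows with initial conditions $(\psi_{\lambda^i})_*\mu_{t_0}$ at times $\lambda^{2i}t_0 \nearrow 0$ agree on overlaps because of the fixed-point equation, and they patch to a flow on all of $I$ — rather than the case analysis you sketch, which requires the same push-forward-then-propagate bookkeeping to keep all intermediate times inside $I$.
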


\begin{proof}
The first direction can be verified easily.
For the reverse direction, consider an $H$-con\-cen\-trat\-ed metric flow $\XX$ and a family of maps $(\psi_{\lambda} : \XX \to \XX)_{\lambda \in (0,1]}$ satisfying Properties~(a), (b).
Fix some $t_0 \in I$ and consider the map $\phi : \XX \to \XX_{t_0} =: X$ mapping each $x \in \XX_t$ to $\psi_{t_0 /t} (x)$ or $\psi^{-1}_{t/t_0} (x)$, depending on whether $t \leq t_0$ or $t > t_0$.
Let $d := d_{t_0}$ and $\nu'_{x; t} := \nu_{\psi_{e^{-2t}} (x); t_0 e^{-2t}}$.
It remains to construct a conjugate heat flow $(\mu_t)_{t \in I}$ such that Property~(c) holds and to show that this flow is unique.
The fact that $\phi$ satisfies Properties~(1), (2) from Definition~\ref{Def_metric_soliton} for $\mu := \mu_{t_0}$ then follows easily.

Recall that $(\mathcal{P}^1(\XX_{t_0}), d_{W_1}^{\XX_{t_0}})$ is a complete metric space and consider the map
\[ F: \mathcal{P}^1 (\XX_{t_0} ) \to \mathcal{P} (\XX_{t_0} ), \qquad \mu' \mapsto \int_{\XX_{t_0/4}} \nu_{x; t_0} \, d((\psi_{1/2})_* \mu')(x). \]
By Proposition~\ref{Prop_compare_CHF}\ref{Prop_compare_CHF_b} we have for any two $\mu' , \mu'' \in \mathcal{P}'(\XX_{t_0})$
\[ d_{W_1}^{\XX_{t_0}} \big( F(\mu'), F(\mu'') \big) \leq d_{W_1}^{\XX_{t_0/4}} \big( (\psi_{1/2})_* \mu', (\psi_{1/2})_* \mu'' \big)
= \tfrac12 d_{W_1}^{\XX_{t_0}} \big( \mu', \mu'' \big). \]
Due to the $H$-concentration property we have $F(\delta_x) = \nu_{\psi_{1/2}(x); t_0} \in \mathcal{P}^1(\XX_{t_0})$ for any $x \in \XX_{t_0}$, so the image of $F$ lies in $\mathcal{P}^1 (\XX_{t_0} )$ and thus $F$ is a $\frac12$-contraction.
Let $\mu' \in \mathcal{P}^1 (\XX_{t_0} )$ be its unique fixed point.
Then the conjugate heat flows with initial condition $(\psi_{2^{-i}})_* \mu'$ agree.
Letting $i \to \infty$ shows the existence of $(\mu_t)_{t \in I}$.
The uniqueness of $(\mu_t)_{t \in I}$ follows from the uniqueness of the fixed point of $F$.
For the last statement of the proof observe that $F^{(i)}(\delta_x) = \nu_{\psi_{2^{-i}}(x); t_0}  \to \mu_{t_0}$ for $i \to \infty$.
\end{proof}

We also have:

\begin{Proposition}
Consider two metric solitons $(\XX^i, (\mu^i_t)_{t \in I})$, $i =1,2$, and let $\XX^{12}$ be the Cartesian product of $\XX^1, \XX^2$.
Then $(\XX^{12}, (\mu^1_t \otimes \mu^2_t)_{t \in I})$ is also a metric soliton.
\end{Proposition}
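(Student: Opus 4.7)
The plan is to construct a static model for $(\XX^{12}, (\mu^1_t \otimes \mu^2_t)_{t \in I})$ directly as the ``product'' of the static models for $(\XX^i, (\mu^i_t)_{t \in I})$. Denote the static data for each factor by $(X^i, d^i, \mu^i, (\nu^{\prime i}_{x;\tau})_{x \in X^i, \tau \leq 0})$ together with the map $\phi^i : \XX^i \to X^i$ witnessing Definition~\ref{Def_metric_soliton}. I would define $X^{12} := X^1 \times X^2$ with $d^{12}$ given by $(d^{12})^2 = (d^1)^2 + (d^2)^2$, reference measure $\mu^{12} := \mu^1 \otimes \mu^2$, and candidate kernel $\nu^{\prime 12}_{(x_1,x_2);\tau} := \nu^{\prime 1}_{x_1;\tau} \otimes \nu^{\prime 2}_{x_2;\tau}$ for $\tau \leq 0$. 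The map $\phi^{12} : \XX^{12} \to X^{12}$ is defined time-slice-wise by $\phi^{12}_t(x_1,x_2) := (\phi^1_t(x_1), \phi^2_t(x_2))$.

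First, I would record that the preceding proposition already establishes that $(\mu^1_t \otimes \mu^2_t)_{t \in I}$ is a conjugate heat flow on $\XX^{12}$, so only the selfsimilarity needs to be checked. Next, I would verify Property~(1) of Definition~\ref{Def_metric_soliton}, namely that each $\phi^{12}_t$ is an isometry of metric measure spaces onto $(X^{12}, \sqrt{|t|}\, d^{12}, \mu^{12})$. The metric part follows because the product metric on $\XX^{12}_t$ is by definition the Pythagorean combination of $d^1_t$ and $d^2_t$, and under the assumed scalings this matches the Pythagorean combination of $\sqrt{|t|}\, d^1$ and $\sqrt{|t|}\, d^2$; the measure part follows from the factorization $(\phi^{12}_t)_*(\mu^1_t \otimes \mu^2_t) = (\phi^1_t)_* \mu^1_t \otimes (\phi^2_t)_* \mu^2_t = \mu^1 \otimes \mu^2$, which is the standard pushforward-of-product identity.

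Then I would verify Property~(2): for $(x_1,x_2) \in \XX^{12}_t$ and $s \leq t$,
\[
(\phi^{12}_s)_* \nu^{12}_{(x_1,x_2);s} = (\phi^{12}_s)_*\bigl(\nu^1_{x_1;s} \otimes \nu^2_{x_2;s}\bigr) = (\phi^1_s)_* \nu^1_{x_1;s} \otimes (\phi^2_s)_* \nu^2_{x_2;s},
\]
and applying the soliton identity for each factor turns the right-hand side into $\nu^{\prime 1}_{\phi^1_t(x_1); \log(s/t)} \otimes \nu^{\prime 2}_{\phi^2_t(x_2); \log(s/t)}$, which is exactly $\nu^{\prime 12}_{\phi^{12}_t(x_1,x_2); \log(s/t)}$ by construction of the product kernel.

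There is no substantive obstacle here; every step is either a direct unpacking of the product definition or an application of the two factor identities. The only point requiring any care is the pushforward-of-a-product identity, but this holds because both sides are probability measures on $X^1 \times X^2$ whose integrals against bounded continuous cylinder functions $f_1(y_1) f_2(y_2)$ agree, and such functions separate points in $\PP(X^1 \times X^2)$ via Lemma~\ref{Lem_basic_measure}. This, together with the elementary metric identity above, completes the verification that $(\XX^{12}, (\mu^1_t \otimes \mu^2_t)_{t \in I})$ is a metric soliton with static model $(X^{12}, d^{12}, \mu^{12}, (\nu^{\prime 12}_{x;\tau}))$ and witness map $\phi^{12}$.
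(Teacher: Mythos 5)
Your proof is correct, and since the paper does not supply a proof for this proposition (it is stated without one immediately after Proposition~\ref{Prop_metric_soliton_properties}), the direct verification you give is exactly what the author implicitly leaves to the reader. You verify Definition~\ref{Def_metric_soliton} for the candidate model $(X^1 \times X^2, d^{12}, \mu^1 \otimes \mu^2, (\nu^{\prime 1} \otimes \nu^{\prime 2}))$ with witness map $\phi^1_t \times \phi^2_t$: the Pythagorean identity gives the metric part of Property~(1), the pushforward-of-product identity gives the measure part, and the factorization $(\phi^{12}_s)_*(\nu^1_{x_1;s}\otimes\nu^2_{x_2;s}) = (\phi^1_s)_*\nu^1_{x_1;s} \otimes (\phi^2_s)_*\nu^2_{x_2;s}$ reduces Property~(2) to the two factor identities. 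One small remark: you sensibly interpret the scaling in Definition~\ref{Def_metric_soliton}(1) as $\sqrt{|t|}\,d$ rather than the literal $\sqrt{t}\,d$ printed in the paper (which is imaginary for $t<0$); this is clearly the intended reading and does not affect the argument.
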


Lastly, we consider the case in which the static model of a static flow is a cone.
In this case the flow is also a metric soliton for an appropriate potential flow.

\begin{Definition}[Static cone]
A metric flow $\XX$ over some interval $I \subset \IR$ with $\sup I = 0$, $0 \not\in I$ is called a {\bf static cone} if it is static with static model (\ref{eq_Def_static_model}) and if $(X,d)$ is a metric cone over some metric space $(X', d_{X'})$ with vertex $x_0$ such that the following holds for any $\lambda \in (0,1]$.
Denote by $\psi_\lambda : X \to X$ the radial dilation by $\lambda$ with $\psi_\lambda(x_0) = x_0$.
Then for any $x \in X$ we have
\begin{equation} \label{eq_psi_lambda_nu_p}
 (\psi_\lambda)_* \nu'_{x;t} = \nu'_{\psi_\lambda(x); \lambda^2 t}. 
\end{equation}
The point $x_0$ is called a {\bf vertex of the static model}.
\end{Definition}

Note that if $I = \IR_-$, then (\ref{eq_psi_lambda_nu_p}) also holds if $\lambda > 1$, because $\psi_\lambda = \psi^{-1}_{\lambda^{-1}}$.

The following is a consequence of Proposition~\ref{Prop_metric_soliton_properties}:

\begin{Proposition}
Consider a static cone $\XX$ over $I \subset \IR$, with static model (\ref{eq_Def_static_model}) and vertex $x_0 \in X$.
Let $(\mu_t)_{t \in I}$ be the conjugate heat flow corresponding to $(\nu'_{x_0; t})_{t \in I'}$ on $X$.
Then $(\XX, (\mu_t)_{t \in I})$ is a metric soliton.
\end{Proposition}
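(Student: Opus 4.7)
The plan is to verify Definition~\ref{Def_metric_soliton} directly, using the cone dilations to reconcile the static structure with the selfsimilar form required of a soliton. Let $\phi:\XX\to X$ be the map from the static model, so that $\phi_t:(\XX_t,d_t)\to(X,d)$ is an isometry and $(\phi_s)_*\nu_{x;s}=\nu'_{\phi_t(x);t-s}$ for $x\in\XX_t$, $s\le t$; let $\psi_\lambda:X\to X$ denote the radial cone dilation by $\lambda$ about $x_0$. For each $t\in I\subset(-\infty,0)$ put
\[ \tilde\phi_t:\XX_t\to X,\qquad \tilde\phi_t(x):=\psi_{1/\sqrt{|t|}}(\phi_t(x)). \]
Since $\phi_t$ is an isometry and $\psi_\lambda$ rescales $d$ by $\lambda$, the map $\tilde\phi_t$ is an isometry from $(\XX_t,d_t)$ onto $(X,\sqrt{|t|}\,d)$.

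Next, set $\tilde\mu:=\nu'_{x_0;1}$ and declare $\mu_t:=(\phi_t^{-1})_*\nu'_{x_0;|t|}$. Because $\psi_\lambda$ fixes the vertex $x_0$, the cone identity (\ref{eq_psi_lambda_nu_p}) yields
\[ (\tilde\phi_t)_*\mu_t=(\psi_{1/\sqrt{|t|}})_*\nu'_{x_0;|t|}=\nu'_{x_0;1}=\tilde\mu, \]
so $\tilde\phi_t:(\XX_t,d_t,\mu_t)\to(X,\sqrt{|t|}\,d,\tilde\mu)$ is an isometry of metric measure spaces, verifying clause~(1) of Definition~\ref{Def_metric_soliton}. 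That $(\mu_t)_{t\in I}$ is indeed a conjugate heat flow on $\XX$ follows by pushing the desired identity $\mu_s=\int_{\XX_t}\nu_{y;s}\,d\mu_t(y)$ forward via $\phi_s$: after applying the static relation and the change of variables $z=\phi_t(y)$, one is reduced to the reproduction identity
\[ \nu'_{x_0;|s|}=\int_X \nu'_{z;t-s}\,d\nu'_{x_0;|t|}(z)\quad\text{on }X, \]
which follows from Definition~\ref{Def_metric_flow}\ref{Def_metric_flow_7} applied at any $\phi_r$-preimage of $x_0$, for some $r\in I$ with $r\ge t$ (such $r$ exists because $\sup I=0$).

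For clause~(2), fix $x\in\XX_t$ and $s\le t$ in $I$. Using $\phi_t(x)=\psi_{\sqrt{|t|}}(\tilde\phi_t(x))$ together with the cone identity, one computes
\begin{align*}
(\tilde\phi_s)_*\nu_{x;s}
&=(\psi_{1/\sqrt{|s|}})_*\nu'_{\phi_t(x);\,t-s}\\
&=\nu'_{\psi_{\sqrt{|t|/|s|}}(\tilde\phi_t(x));\,(t-s)/|s|}.
\end{align*}
With $\tau:=\log(t/s)\le 0$ one has $e^{\tau/2}=\sqrt{|t|/|s|}$ and $1-e^\tau=(t-s)/|s|$, so upon defining
\[ \tilde\nu'_{y;\tau}:=\nu'_{\psi_{e^{\tau/2}}(y);\,1-e^\tau}\qquad (y\in X,\ \tau\le 0), \]
the computation reads $(\tilde\phi_s)_*\nu_{x;s}=\tilde\nu'_{\tilde\phi_t(x);\tau}$, which is the required selfsimilar form (up to the sign convention for the logarithm in Definition~\ref{Def_metric_soliton}).

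The main bookkeeping hurdle is keeping straight the two distinct scaling behaviors of the cone dilation $\psi_\lambda$---linear in $\lambda$ on distances but quadratic in $\lambda$ on the parameter of $\nu'$---and matching these with the $\sqrt{|t|}$-rescaling built into the soliton definition. Once the identifications are set up, every verification reduces to a direct application of (\ref{eq_psi_lambda_nu_p}), the static relation, and the reproduction formula.
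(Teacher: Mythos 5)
Your proposal takes a genuinely different route from the paper, which simply cites Proposition~\ref{Prop_metric_soliton_properties} (whose reverse direction constructs the potential flow via a contraction argument and requires $H$-concentration). Your direct verification of Definition~\ref{Def_metric_soliton} is cleaner for this situation, since the potential flow $(\mu_t)$ is already given explicitly and no $H$-concentration hypothesis is used. The identification $\tilde\phi_t=\psi_{1/\sqrt{|t|}}\circ\phi_t$ is exactly the right soliton map, and the rescaling computation for clause~(2) is correct (the ``sign of the logarithm'' you flag is indeed a harmless convention in the paper's Definition~\ref{Def_metric_soliton}, which also has $\sqrt{t}$ instead of $\sqrt{|t|}$).

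That said, there are two technical gaps you should close. First, the cone identity (\ref{eq_psi_lambda_nu_p}) is stated only for $\lambda\in(0,1]$; you apply it with $\lambda=1/\sqrt{|t|}$ and $\lambda=1/\sqrt{|s|}$, which exceed $1$ once $|t|,|s|<1$. The extension $\psi_\lambda=(\psi_{\lambda^{-1}})^{-1}$ does yield the identity for $\lambda>1$, but only when the time parameter on the right-hand side lies in the domain of $\nu'$, which brings up the second issue: for a bounded interval $I=(a,0)$ the static model's $\nu'_{x_0;\tau}$ is only defined for $\tau\in[0,|a|)$, so your choice $\tilde\mu:=\nu'_{x_0;1}$ (and, for very negative $\tau$, the formula $\tilde\nu'_{y;\tau}=\nu'_{\psi_{e^{\tau/2}}(y);1-e^\tau}$) is undefined when $|a|\leq 1$. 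Both problems disappear if you replace $1$ by a fixed $\tau_0\in(0,|a|)$, set $\tilde\phi_t:=\psi_{\sqrt{\tau_0/|t|}}\circ\phi_t$, and correspondingly rescale the metric on the soliton model to $d':=d/\sqrt{\tau_0}$; then $\tau_0(1-e^\tau)<\tau_0<|a|$ for all $\tau\leq 0$.

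Finally, the reproduction-formula step is imprecise as written: applying Definition~\ref{Def_metric_flow}\ref{Def_metric_flow_7} at a $\phi_r$-preimage of $x_0$ \emph{with the original times} $s\leq t\leq r$ produces $\nu'_{x_0;r-s}=\int_X\nu'_{z;t-s}\,d\nu'_{x_0;r-t}(z)$, not the identity $\nu'_{x_0;|s|}=\int_X\nu'_{z;t-s}\,d\nu'_{x_0;|t|}(z)$ you need. You should instead apply the reproduction formula at the preimage $\phi_r^{-1}(x_0)$ with the \emph{shifted} times $s+r\leq t+r\leq r$ (valid for $r$ close enough to $0$, as the static relation is time-translation invariant in the elapsed-time parameter), which directly produces the desired identity.
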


\subsection{Further examples} \label{subsec_Examples}
In the last subsection, we discuss further examples of metric flows.

\begin{Example} \label{Ex_Rn_Gaussian}
Consider the metric flow corresponding to the constant Ricci flow on $\IR^n$.
Then
\[ \nu_{\vec x,t;s} = (4 \pi (t-s))^{-n/2} \exp \Big( - \frac{|\vec x - \cdot|^2}{4 (t-s)} \Big) d\vol \]
is the standard heat kernel and we can compute that for any $\vec x, \vec x^{\,\prime} \in \IR^n$
\begin{align*}
 \Var (\nu_{\vec x,t;s}, \nu_{\vec x',t;s}) &= \int_{\IR^n} \int_{\IR^n} |\vec y - \vec y^{\,\prime} |^2  (4 \pi (t-s))^{-n} \exp \Big( - \frac{|\vec x - \vec y|^2 + |\vec x^{\,\prime} - \vec y^{\,\prime} |^2}{4 (t-s)} \Big) d\vec y d\vec y^{\,\prime} \\
&= \sum_{i=1}^n \int_{\IR}\int_{\IR} (y_i - y'_i)^2  (4 \pi (t-s))^{-1} \exp \Big( - \frac{(x_i - y_i)^2 + (x'_i - y'_i)^{ 2}}{4 (t-s)} \Big) dy_i dy'_i \\
&= \sum_{i=1}^n \int_{\IR}\int_{\IR} (x_i - x'_i + z_i - z'_i)^2  (4 \pi (t-s))^{-1} \exp \Big( - \frac{z_i^2 + z_i^{\prime 2}}{4 (t-s)} \Big) dz_i dz'_i \\
&= |\vec x - \vec x^{\,\prime}|^2 + \sum_{i=1}^n \int_{\IR}\int_{\IR} ( z^2_i + z^{\prime 2}_i)  (4 \pi (t-s))^{-1} \exp \Big( - \frac{z_i^2 + z_i^{\prime 2}}{4 (t-s)} \Big) dz_i dz'_i \\
&= |\vec x - \vec x^{\,\prime}|^2 + 2n \int_{\IR}  z^2  (4 \pi (t-s))^{-1/2} \exp \Big( - \frac{z^2}{4 (t-s)} \Big) dz \\
&= |\vec x - \vec x^{\,\prime}|^2 + 2n (t-s) \int_{\IR}  z^2  (4 \pi)^{-1/2} \exp \Big( - \frac{z^2}{4 } \Big) dz \\
&= |\vec x - \vec x^{\,\prime}|^2 + 4n (t-s).
\end{align*}
Therefore $\XX$ is $H$-concentrated if and only if $H \geq 4n$.
The example shows that $H$-concentration is dimension dependent and therefore does not follow from the properties of a metric flow.
\end{Example}
\bigskip

\begin{Example} \label{Ex_not_Hausdorff}
Consider an arbitrary metric space $(X^*, d^*)$.
Let $\XX := X^* \times \{ 0 \} \cup \{ 0 \} \times \IR_-$ and let $\tf : \XX \to (-\infty,0]$ be the projection onto the last factor.
Define $d_0  := d^*$ and $\nu_{x;t} := \delta_{(0,t)}$ for $t < 0$, $\nu_{x;0} := \delta_x$.
Then $\XX$ is a metric flow over $(-\infty, 0]$ whose natural topology is not Hausdorff if $\# X^* > 1$.
\end{Example}
\bigskip

\begin{Example}
Fix some constants $C > 0$ such that for all $A \geq 0$
\begin{equation} \label{eq_ex_choice_C}
16 A^2 \leq C \exp \Big( \frac{A^2}{16} \Big)
\end{equation}
Let $D > 0$ be an arbitrary constant and consider the metric flow $\XX= \XX_{C,D} := \{ -\frac12 D,+\frac12D \} \times \IR$ over $\IR$ with $d_t (-\frac12D,\frac12D) = D$ for all $t \in \IR$ and
\[ \nu_{ \pm \frac12 , t; s} ( \{ (\pm \tfrac12 D, s )\} ) = \frac12 + \frac12 e^{-C(t-s)/D^2}, \qquad
 \nu_{\pm \frac12 , t; s} ( \{( \mp \tfrac12 D , s) \} ) = \frac12 - \frac12 e^{-C(t-s)/D^2}. \]
 We verify the properties of a metric flow.
 Properties~\ref{Def_metric_flow_1}--\ref{Def_metric_flow_5} of Definition~\ref{Def_metric_flow} are clear.
 
  For Property~\ref{Def_metric_flow_6}, it suffices to show that if $(u_t = \Phi ( t^{-1/2} h_t ))_{t \geq s}$ is a heat flow for some $s > 0$, where $h : \{ \pm \frac12 D \} \times [s, \infty) \to \IR$,  then the condition $h(\frac12 D, t) - h(-\frac12 D, t) \leq D$ is preserved.
  To see this, we compute that
  \[   \partial_t u (\pm\tfrac12 D,t) = - \frac{C}{2D^2} u (\pm\tfrac12 D,t) + \frac{C}{2D^2} u (\mp\tfrac12 D,t) \]
  and
  \[ \partial_t u = \big( -\tfrac12 t^{-3/2} h_t  + t^{-1/2} \partial_t h_t \big) \Phi' ( t^{-1/2} h_t), \] 
  which implies
\begin{multline} \label{eq_part_h_diff}
  \partial_t h(\tfrac12 D, t) - \partial_t h(-\tfrac12 D,t) 
  =  \frac1{2t}  h(\tfrac12 D, t) - \frac1{2t}  h(-\tfrac12 D, t) \\ - \frac{C}{2D^2} \big( u(\tfrac12 D, t) - u(-\tfrac12 D, t) \big) \Big( \frac{1 }{t^{-1/2} \Phi' (t^{-1/2} h(\tfrac12 D, t))} 
  +  \frac{1}{t^{-1/2} \Phi' (t^{-1/2} h(-\tfrac12 D, t))} \Big) .
\end{multline}
So it suffices to show that if $h(\frac12 D, t) - h(-\frac12 D, t) = D$, then the right-hand side of (\ref{eq_part_h_diff}) is non-positive.
If we set $u := \frac{h(\frac12 D, t) + h(-\frac12 D, t)}{2t^{1/2}}$ and $A := \frac{D}{2t^{1/2}}$, then this is equivalent to
\[  \Big( e^{(u-A)^2/4} + e^{(u+A)^2/4} \Big) \int_{u-A}^{u+A} e^{-x^2/4} dx \geq \frac{8}{C} A^3 . \]
To see that this inequality holds, we may assume without loss of generality that $u \geq 0$ and estimate, using (\ref{eq_ex_choice_C}),
\begin{multline*}
 e^{(u+A)^2/4} \int_{u-A}^{u+A} e^{-x^2/4} dx 
\geq  \int_{u }^{u+\frac12 A} \exp \Big( \frac14 \big( (u+A)^2 - x^2 \big) \Big) dx \\
\geq \frac12 A \exp \Big( \frac14 \big( (u+A)^2 - (u-\tfrac12 A)^2 \big) \Big) 
\geq \frac12 A \exp \Big( \frac{A^2}{16} \Big) \geq \frac{8}{C} A^3.
\end{multline*}
To verify the reproduction formula, Property~\ref{Def_metric_flow_7}, we compute that for $t_1, t_2 > 0$
\[ \Big( \frac12 + \frac12 e^{-Ct_1 /D^2} \big) \Big( \frac12 + \frac12 e^{-Ct_2 /D^2} \Big) + \Big( \frac12 - \frac12 e^{-Ct_1 /D^2} \big) \Big( \frac12 - \frac12 e^{-Ct_2 /D^2} \Big) 
= \frac12 + \frac12 e^{-C(t_1+t_2)/D^2}. \]
So $\XX_{C,D}$ is a metric flow.
Any two flows $\XX_{C,D_1}, \XX_{C,D_2}$ are parabolic rescalings of one another.

We can compute that for any $p > 0$ and $s \leq t$
 \[ \int_{\XX_s} \int_{\XX_s} d^p_s (y_1, y_2) d\nu_{\pm \frac12 D, t;s}(y_1) d\nu_{\pm \frac12 D,t;s} (y_2) = \frac{D^p}2 ( 1 - e^{-C (t-s)/D^2} ) . \]
 So if $p = 2$, then
 \[ \Var ( \nu_{ \pm \frac12 D, t; s} ) = \frac{D^2}2 ( 1 - e^{-C (t-s)/D^2} ) \leq \frac12  C (t-s), \]
 which shows that $\XX_{C,D}$ is $\frac12  C$-concentrated.
However, if $p > 2$, then the following bound is false for any $C' < \infty$
 \[   \int_{\XX_s} \int_{\XX_s} d^p_s (y_1, y_2) d\nu_{\pm \frac12 D,t;s}(y_1) d\nu_{\pm \frac12 D,t;s} (y_2) \leq C' (t-s)^{p}. \]
This bound holds on a super Ricci flow due to Gaussian concentration; see \cite{Hein-Naber-14}, \cite[\HKThmGaussianintegral]{Bamler_HK_entropy_estimates}.
So Gaussian concentration does not follow from the axioms of a metric flow.
\end{Example}

\section{Geometry and continuity of time-slices of metric flows} \label{sec_geom_cont_time_slices}
Our goal in this section is to study how the geometry of time-slices $(\XX_t, d_t)$ of an $H$-concentrated metric flow $\XX$ changes in time.
We recall that a metric flow does not specify any worldlines, i.e. it does not record whether two points $x_i \in \XX_{t_i}$ from different time-slices correspond ``to the same point at different times''.
Instead, given a point $x \in \XX_t$ and an earlier time $s < t$, we will consider the conjugate heat kernel $\nu_{x;s}$ and we will regard $\nu_{x;s}$ as the ``probability distribution of the points corresponding to $x$ at an earlier time $s \leq t$''.
We may also think of the $H$-centers of $x$ at time $s$ to be the points corresponding to $x$.
By Proposition~\ref{Prop_H_center_existence}, these $H$-centers are determined up to an ``error'' of $2 \sqrt{H(t-s)}$.
Note this viewpoint is slightly different from the conventional concept of worldlines. 
If $\XX$ corresponds to a conventional (super) Ricci flow, then $x,z$ may not correspond to the same points; moreover the point $x' \in \XX_s$ that lies on the same worldline of $x \in \XX_t$ may be far from $H$-centers of $x$ and may therefore not --- or to a lesser degree --- correspond to $x$ in the above sense.

Observe that by Proposition~\ref{Prop_compare_CHF}\ref{Prop_compare_CHF_c}, for any two points $x_1, x_2 \in \XX_t$ we have
\[ d_{W_1}^{\XX_s} (\nu_{x_1;s}, \nu_{x_2;s}) \leq d_t (x_1, x_2), \]
which can be regarded as form of distance distortion estimate, i.e. distances only expand in time in this sense.
More specifically, if $x'_1, x'_2 \in \XX_s$ denote $H$-centers of $x_1, x_2$, then we have the following distance shrinking estimate by Lemma~\ref{Lem_W_1_vs_Var}
\begin{multline*}
 d_s (x'_1, x'_2) \leq d_{W_1}^{\XX_s} (\delta_{x'_1}, \nu_{x_1;s}) + d_{W_1}^{\XX_s} (\nu_{x_1;s}, \nu_{x_2;s}) + d_{W_1}^{\XX_s} (\nu_{x_1;s}, \delta_{x'_2}) \\
 \leq \sqrt{\Var (\delta_{x'_1}, \nu_{x_1;s})} + d_t (x_1, x_2) + \sqrt{\Var (\delta_{x'_1}, \nu_{x_1;s})} 
\leq d_t (x_1, x_2)  +2 \sqrt{H(t-s)}.
\end{multline*}
A reverse bound, on the \emph{expansion} of the distance, is in general harder to come by.
This will be one of the main issues addressed in this section.

\subsection{Mass distribution on time-slices}
Recall the mass distribution function $b^{(X,d,\mu)}_r$ for a metric measure space $(X, d, \mu)$ at scale $r$ from Subsection~\ref{subsec_compactness}.
The following proposition gives a lower bound on this function on time-slices $\XX_t$ of a metric flow $\XX$ equipped with a conjugate heat flow $(\mu_t)_{t \in I'}$ of bounded variance.
So we obtain that these time-slices represent classes in certain spaces of the form $\mathbb{M}_r (V, b)$, which are compact by Theorem~\ref{Thm_M_compact}.

\begin{Proposition} \label{Prop_mass_distribution}
Let $\XX$ be an $H$-concentrated metric flow over some subset $I \subset \IR$, $r > 0$ and let $(\mu_t)_{t \in I'}$, $I' \subset I$, be a conjugate heat flow on $\XX$ with $\sup_{t \in I'} \Var(\mu_t) \leq V r^2$.
Suppose that $t, t + \tau r^2 \in I'$ for $\tau  > 0$.
Then
\[ b^{(\XX_t, d_t, \mu_t)}_r (\eps) \geq \frac12 \Phi \bigg({ - \sqrt{ \frac{8V}{\eps \tau} } }\bigg) \qquad \text{if} \quad  \eps \in \big[ 2 ( \tau H)^{1/3} , 1 \big]. \]
In particular, if there is a sequence $\tau_i \searrow 0$ with $t + \tau_i r^2 \in I'$, then there is  a function $b_{H, V, (\tau_i)} : (0,1) \to (0,1)$, depending only on $H, V, (\tau_i)$,  such that $b^{(\XX_t, d_t, \mu_t)}_r \geq b_{H, V, (\tau_i)}$ and therefore $(\XX_t, d_t, \mu_t) \in \mathbb{M}(V, b_{H, V, (\tau_i)})$.
\end{Proposition}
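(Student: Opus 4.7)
The plan is to combine the $T=0$ case of Definition~\ref{Def_metric_flow}\ref{Def_metric_flow_6} with $H$-center concentration at the later time $t':=t+\tau r^2$. Setting $u^{(x_0)}_t:=\chi_{D(x_0,\eps r)}$ for $x_0\in\XX_t$ and heat-flowing forward, the function $u^{(x_0)}_{t'}(y)=\nu_{y;t}(D(x_0,\eps r))$ is either constant or of the form $\Phi\circ f^{(x_0)}$ with $f^{(x_0)}$ being $(r\sqrt\tau)^{-1}$-Lipschitz on $\XX_{t'}$. Separately, Proposition~\ref{Prop_H_center_existence} provides, for each $y\in\XX_{t'}$, an $H$-center $z(y)\in\XX_t$, and Lemma~\ref{Lem_nu_BA_bound} combined with the hypothesis $\eps\ge 2(\tau H)^{1/3}$ gives
\[ \nu_{y;t}\bigl(D(z(y),\eps r/2)\bigr)\ge 1-\frac{4H\tau}{\eps^2}\ge 1-\frac{\eps}{2}\ge\frac12. \]
In particular, whenever $d_t(z(y),x_0)\le\eps r/2$, we have $\nu_{y;t}(D(x_0,\eps r))\ge\tfrac12$, which (in the non-constant case) reads $f^{(x_0)}(y)\ge\Phi^{-1}(\tfrac12)=0$.

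Next, Markov's inequality applied to $\int_{\XX_{t'}}\!\int_{\XX_{t'}}d_{t'}^2\,d\mu_{t'}d\mu_{t'}=\Var(\mu_{t'})\le Vr^2$ shows that
\[ C:=\Big\{y_0\in\XX_{t'}\;:\;\int_{\XX_{t'}}d_{t'}^2(y_0,\cdot)\,d\mu_{t'}\le 8Vr^2/\eps\Big\} \]
satisfies $\mu_{t'}(C)\ge 1-\eps/8$. Put $\delta:=\tfrac12\Phi\bigl(-\sqrt{8V/(\eps\tau)}\bigr)$ and $S:=\{x_0:\mu_t(D(x_0,\eps r))<\delta\}$. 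For any $x_0$ in the $\eps r/2$-neighborhood $z(C)^{\eps r/2}$, fix $y_0\in C$ with $d_t(z(y_0),x_0)\le\eps r/2$; in the non-constant case the Lipschitz bound and $f^{(x_0)}(y_0)\ge 0$ give $f^{(x_0)}(y)\ge -d_{t'}(y_0,y)/(r\sqrt\tau)$ for all $y$. Combining Proposition~\ref{Prop_properties_CHK}\ref{Prop_properties_CHK_c} with Jensen (using that $\Phi$ is convex on $(-\infty,0]$, where $\Phi''(x)=-\tfrac{x}{2}(4\pi)^{-1/2}e^{-x^2/4}\ge 0$) and Cauchy--Schwarz at $y_0\in C$,
\[ \mu_t(D(x_0,\eps r))=\int_{\XX_{t'}}\Phi(f^{(x_0)})\,d\mu_{t'}\ge\Phi\bigl(-\sqrt{8V/(\eps\tau)}\bigr)=2\delta>\delta, \]
so $x_0\notin S$. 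The constant case is immediate: the constant value equals $\mu_t(D(x_0,\eps r))$ and is $\ge\tfrac12>\delta$.

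In either case $S\subset\XX_t\setminus z(C)^{\eps r/2}$. By the reproduction formula $\mu_t=\int\nu_{y;t}\,d\mu_{t'}(y)$ together with the $H$-center concentration bound,
\[ \mu_t(S)\le\int_C\nu_{y;t}\bigl(\XX_t\setminus D(z(y),\eps r/2)\bigr)\,d\mu_{t'}(y)+\mu_{t'}(\XX_{t'}\setminus C)\le\frac{\eps}{2}+\frac{\eps}{8}<\eps, \]
establishing $b_r^{(\XX_t,d_t,\mu_t)}(\eps)\ge\delta$. The ``in particular'' assertion then follows by taking, for each $\eps\in(0,1]$, the largest $\tau_i$ with $2(\tau_iH)^{1/3}\le\eps$ and defining $b_{H,V,(\tau_i)}(\eps):=\tfrac12\Phi(-\sqrt{8V/(\eps\tau_i)})>0$. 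The main technical subtlety I foresee is the Borel measurability of the section $y\mapsto z(y)$, so that $z(C)$ is measurable; this should follow either from a standard measurable selection theorem applied to the multi-valued map $y\mapsto\{H\text{-centers of }y\}$ or by replacing $z(C)$ with a measurable superset, since only its $\eps r/2$-neighborhood enters the final estimate.
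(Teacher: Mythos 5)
Your proof is correct, and it shares the same skeleton as the paper's: heat-flow the indicator $\chi_{D(x_0,\eps r)}$ forward using Definition~\ref{Def_metric_flow}\ref{Def_metric_flow_6}, use $H$-centers and Lemma~\ref{Lem_nu_BA_bound} to show that the mass of $\mu_t$ concentrates near the set of $H$-centers of the later time-slice, and close via the reproduction formula and Proposition~\ref{Prop_properties_CHK}\ref{Prop_properties_CHK_c}. The tactical details differ, though, in a way worth noting. The paper fixes a single anchor $p\in\XX_{t'}$ minimizing $\Var(\delta_p,\mu_{t'})\le V$, works on the ball $B(p,D)$ with $D=\sqrt{2V/\eps}$ of measure $\ge 1/2$, and applies a worst-case Lipschitz bound there, picking up a factor $\tfrac12$ from the mass of $B(p,D)$. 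You instead take the set $C$ of points with bounded second moment (via Markov, of measure $\ge 1-\eps/8$) and apply Jensen's inequality, exploiting the convexity of $\Phi$ on $(-\infty,0]$, to integrate the lower bound $\Phi(f^{(x_0)}(y))\ge\Phi(-d_{t'}(y_0,y)/(r\sqrt\tau))$ over all of $\XX_{t'}$; this is slightly sharper (you obtain $2\delta$ rather than $\delta$ for the mass of the small ball, and $\eps/2+\eps/8$ rather than $\eps$ in the final step), but numerically lands on the same $\delta=\tfrac12\Phi(-\sqrt{8V/(\eps\tau)})$. The Jensen route is a genuine, if modest, alternative to the paper's pointwise comparison on a ball.

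Your concern about the measurability of $y\mapsto z(y)$ is a red herring that you can delete: the final estimate never needs $z(C)$ or the integrand $y\mapsto\nu_{y;t}(\XX_t\setminus D(z(y),\eps r/2))$ to be measurable. All you need is the pointwise bound $\nu_{y;t}(S)\le\nu_{y;t}(\XX_t\setminus D(z(y),\eps r/2))\le 4H\tau/\eps^2\le\eps/2$ valid for each $y\in C$ (with $z(y)$ chosen for that individual $y$), and then $\mu_t(S)=\int_{\XX_{t'}}\nu_{y;t}(S)\,d\mu_{t'}(y)\le\tfrac{\eps}{2}\mu_{t'}(C)+\mu_{t'}(\XX_{t'}\setminus C)$, where the integrand $y\mapsto\nu_{y;t}(S)$ is continuous by Definition~\ref{Def_metric_flow}\ref{Def_metric_flow_6} once $S$ is Borel (which it is, by upper semi-continuity of $x_0\mapsto\mu_t(D(x_0,\eps r))$ via reverse Fatou). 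So no selection theorem is needed.
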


Note that if $\mu_t = \nu_{x;t_0}$ for some $x \in \XX_{t_0}$, $t_0 > t$, then we can choose $V = H (t_0 - t) r^{-2}$.

\begin{proof}
After parabolic rescaling, we may assume that $r = 1$.
Fix some $\eps \in [2 ( \tau H)^{1/3},1]$ and set
\[  D := \sqrt{\frac{2V}{\eps}} , \qquad \delta :=  \frac12 \Phi \bigg({ - \sqrt{ \frac{8V}{\eps \tau} } }\bigg) = \frac12 \Phi (- 2 \tau^{-1/2} D). \]
Let
\[ t' := t + \tau, \qquad Q_{\eps, \delta} := \{ x \in \XX_t \;\; : \;\; \mu_t (D(x, \eps)) \geq \delta \}. \]
Our goal will be to show that 
\begin{equation} \label{eq_mu_complement_Q}
 \mu_t (\XX_t \setminus Q_{\eps, \delta}) \leq \eps. 
\end{equation}
For this purpose choose $p \in \XX_{t'}$ such that
\[ \int_{\XX_{t'}} d^2_{t'} (p, \cdot) \, d\mu_{t'} = \Var (\delta_p, \mu_{t'}) \leq \Var (\mu_{t'}) \leq V. \]
So
\begin{equation} \label{eq_outside_BpD}
 \mu_{t'} ( \XX_{t'} \setminus B(p,D)) \leq V D^{-2} = \eps/2 \leq \tfrac12. 
\end{equation}
We also note that for any $H$-center $z \in \XX_t$ of a point $x \in \XX_{t'}$ we have
\begin{equation} \label{eq_nu_complement_B_z}
 \nu_{x;t} ( \XX_t \setminus B(z,\eps/2) ) \leq (\eps/2)^{-2} \Var (\nu_{x;t}) \leq (\eps/2)^{-2} \tau H \leq \eps/2 \leq \tfrac12.
\end{equation}

Let $Z \subset \XX_t$ be the set of points that are $H$-centers at time $t$ of some point in $B(p, D)$ and denote by $Z_{\eps/2} := B(Z, \eps/2) \subset \XX_t$ its $\eps/2$-neighborhood.
We claim that 
\begin{equation} \label{eq_Z_subset_Q}
 Z_{\eps/2} \subset Q_{\eps, \delta}. 
\end{equation}
To see this, let $z' \in Z_{\eps/2}$ and choose an $H$-center $z \in Z$ of some $x \in B(p,D)$ with $d_t (z,z') < \eps/2$.
By (\ref{eq_nu_complement_B_z}) we have
\[ \nu_{x;t} (B(z', \eps)) \geq \nu_{x;t} (B(z, \eps/2)) \geq  \tfrac12. \]
So by Definition~\ref{Def_metric_flow}\ref{Def_metric_flow_6} we have $\nu_{\cdot; t} (B(z', \eps)) \geq \Phi (- 2\tau^{-1/2} D ) = 2 \delta$ on $B(p,D)$, which implies by the reproduction formula and (\ref{eq_outside_BpD})
\[
 \mu_t (B(z', \eps)) = \int_{\XX_{t'}} \nu_{x,t} (B(z', \eps)) \mu_{t'} (x) 
\geq 2 \delta \mu_{t'} (B(p,D))  \\
\geq   \delta,
\]
and therefore that $z' \in Q_{\eps, \delta}$, as desired.

By (\ref{eq_Z_subset_Q}), it suffices to show that $\mu_t (\XX_t \setminus Z_{\eps/2}) \leq \eps$ in order to prove (\ref{eq_mu_complement_Q}).
To see this, observe that for every $x \in B(p,D)$ and every $H$-center $z \in \XX_t$ of $x$ we have by (\ref{eq_nu_complement_B_z})
\[ \nu_{x; t} ( \XX_t \setminus Z_{\eps /2}) \leq \nu_{x;t} ( \XX_t \setminus B(z, \eps/2) ) \leq \eps/2. \]
So by the reproduction formula and (\ref{eq_outside_BpD}) we have
\[
 \mu_t ( \XX_t \setminus Z_{\eps/2} ) 
= \int_{\XX_{t'}} \nu_{x; t} ( \XX_t \setminus Z_\eps ) d\mu_{t'} (x) \\
\leq \int_{B(p, D)}(\eps/2) \, d\mu_{t'}  + \int_{\XX_{t'} \setminus B(p, D)} 1 \, d\mu_{t'}  \leq \eps.
\]
This finishes the proof.
\end{proof}
\bigskip

\subsection{Geometric closeness of nearby time-slices}
The goal of this subsection will be to establish geometric closeness of nearby time-slices of an $H$-concentrated metric flow $\XX$.
For this purpose, we will consider a conjugate heat flow $(\mu_t)_{t \in I'}$ and compare the metric measure spaces $(\XX_t, \mu_t)$ for $t \in I$.
We will show that for nearby times $s, t \in I'$, $s \leq t$, the distance $d_{GW_1} ( (\XX_s, \lb \mu_s),  \lb (\XX_t, \lb \mu_t))$ between these spaces is small if and only the following difference is small:
\begin{equation} \label{eq_intint_d_diff}
 \int_{\XX_t} \int_{\XX_t} d_t \, d\mu_t d\mu_t -  \int_{\XX_s} \int_{\XX_s} d_s \, d\mu_s d\mu_s . 
\end{equation}
We will also show that this closeness is described by the following coupling between $\mu_s, \mu_t$
\[ q := \int_{\XX_t} (\nu_{y;s} \otimes \delta_y) \, d\mu_t (y). \]
So, essentially, a map that assigns to any point $y \in \XX_t$ one of its $H$-centers in $\XX_s$ can be regarded as some sort of almost isometry between $(\XX_s, \mu_s)$, $(\XX_t, \mu_t)$.

The following lemma, which will be needed later, illustrates the relevance of the difference (\ref{eq_intint_d_diff}).
Namely, it states that (\ref{eq_intint_d_diff}) is small if $t-s$ and $\Var (\mu_t) - \Var (\mu_s)$ are small.
So, due to the monotonicity of $\Var (\mu_t) + Ht$, this will imply smallness of (\ref{eq_intint_d_diff}) for most $s \leq t$ with $t-s \ll 1$.
We also obtain almost monotonicity of $t \mapsto \int_{\XX_t} \int_{\XX_t} d_t \, d\mu_t d\mu_t$.

\begin{Lemma} \label{Lem_intd_diff_Var_diff}
Let $\XX$ be an $H$-concentrated metric flow over $I$, $(\mu_t)_{t \in I'}$, $I' \subset I$, a conjugate heat flow on $\XX$ and let $s, t \in I'$, $s \leq t$, be two times.
Then for any $y_1, y_2 \in \XX_t$
\begin{equation} \label{eq_f_geq_3_sqrtH}
0 \leq  f(y_1, y_2) :=  d_t (y_1, y_2) - d_{W_1}^{\XX_s} (\nu_{y_1;s}, \nu_{y_2;s}) \leq d_t (y_1, y_2) - \int_{\XX_s} \int_{\XX_s}  d_s \, d\nu_{y_1; s} d\nu_{y_2; s} 
+  \sqrt{H(t-s)} 
\end{equation}
and we have the integral bound
\begin{multline} \label{eq_intint_f_bounds}
-  \sqrt{H (t-s)} \leq  \int_{\XX_t} \int_{\XX_t} f \, d\mu_t d\mu_t -  \sqrt{H (t-s)}  \leq  \int_{\XX_t} \int_{\XX_t} d_t \, d\mu_t d\mu_t -  \int_{\XX_s} \int_{\XX_s} d_s \, d\mu_s d\mu_s  \\
\leq  \sqrt{ \Var( \mu_t)  - \Var(\mu_s) + H (t-s) }   + 2 \sqrt{H (t-s)}. 
\end{multline}
\end{Lemma}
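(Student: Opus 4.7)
The plan is to establish the pointwise bound~(\ref{eq_f_geq_3_sqrtH}) first and then deduce the integral chain~(\ref{eq_intint_f_bounds}) via integration together with a careful algebraic decomposition. The lower bound $f \geq 0$ in~(\ref{eq_f_geq_3_sqrtH}) is immediate from Proposition~\ref{Prop_compare_CHF}\ref{Prop_compare_CHF_c}. For the upper bound on $f$, I apply the triangle inequality for $d^{\XX_s}_{W_1}$ to the triple $\delta_{x_2}, \nu_{y_1;s}, \nu_{y_2;s}$ and integrate in $x_2$ against $d\nu_{y_2;s}$, using that $\int d_s(x_1, x_2)\, d\nu_{y_i;s}(x_1) = d^{\XX_s}_{W_1}(\delta_{x_2}, \nu_{y_i;s})$. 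This yields
\[ \int_{\XX_s}\!\int_{\XX_s} d_s\, d\nu_{y_1;s} d\nu_{y_2;s} \;\leq\; d^{\XX_s}_{W_1}(\nu_{y_1;s}, \nu_{y_2;s}) + \int_{\XX_s}\!\int_{\XX_s} d_s\, d\nu_{y_2;s} d\nu_{y_2;s}, \]
and the last integral is at most $\sqrt{\Var(\nu_{y_2;s})} \leq \sqrt{H(t-s)}$ by Cauchy--Schwarz and $H$-concentration (Definition~\ref{Def_H_Concentration} applied with $y_1 = y_2$). Rearranging gives the claimed upper bound on $f$.

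For~(\ref{eq_intint_f_bounds}), the leftmost inequality is immediate from $f \geq 0$. The middle inequality follows by integrating the pointwise upper bound on $f$ against $d\mu_t d\mu_t$ and invoking the reproduction formula $\mu_s = \int \nu_{y;s}\, d\mu_t(y)$ together with Fubini to identify the quadruple integral $\iiiint d_s\, d\nu_{y_1;s} d\nu_{y_2;s} d\mu_t d\mu_t$ with $\int\int d_s\, d\mu_s d\mu_s$.

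The rightmost inequality is the main content. Set $\Var_{12}(y_1, y_2) := \Var(\nu_{y_1;s}, \nu_{y_2;s})$ and $\bar d_s(y_1, y_2) := \int\int d_s\, d\nu_{y_1;s} d\nu_{y_2;s}$, so that the quantity to bound is $\int (d_t - \bar d_s)\, d\mu_t d\mu_t$. I would split
\[ d_t - \bar d_s \;=\; \bigl(d_t - \sqrt{\Var_{12}}\bigr) \,+\, \bigl(\sqrt{\Var_{12}} - \bar d_s\bigr). \]
The second summand lies in $[0, 2\sqrt{H(t-s)}]$: Jensen's inequality gives $\bar d_s \leq \sqrt{\Var_{12}}$ for the lower bound, while Lemma~\ref{Lem_W_1_vs_Var} applied to $\nu_{y_1;s}, \nu_{y_2;s}$, combined with $\Var(\nu_{y_i;s}) \leq H(t-s)$ and $d^{\XX_s}_{W_1}(\nu_{y_1;s}, \nu_{y_2;s}) \leq \bar d_s$ (the product coupling being one admissible coupling), gives the upper bound $2\sqrt{H(t-s)}$. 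For the first summand, the elementary inequality $(a-b)_+ \leq \sqrt{(a^2 - b^2)_+}$ for $a, b \geq 0$, applied with $a = d_t$, $b = \sqrt{\Var_{12}}$, followed by Jensen for the concave function $\sqrt{\,\cdot\,}$, yields
\[ \int \bigl(d_t - \sqrt{\Var_{12}}\bigr)\, d\mu_t d\mu_t \;\leq\; \sqrt{\int (d_t^2 - \Var_{12})_+\, d\mu_t d\mu_t}. \]
The $H$-concentration bound $\Var_{12} \leq d_t^2 + H(t-s)$ gives $(\Var_{12} - d_t^2)_+ \leq H(t-s)$ pointwise, and the reproduction formula produces $\int \Var_{12}\, d\mu_t d\mu_t = \Var(\mu_s)$; together these show that the integrand in the radical is at most $\Var(\mu_t) - \Var(\mu_s) + H(t-s)$.

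The main technical point is the clean use of positive and negative parts together with the elementary estimate $(a-b)_+^2 \leq (a^2 - b^2)_+$; it is this step that localises the bound on $\Var(\mu_t) - \Var(\mu_s) + H(t-s)$ rather than on $\Var(\mu_t)$ and $\Var(\mu_s)$ individually, the latter of which would be far too weak to capture the correct behaviour at small $t-s$.
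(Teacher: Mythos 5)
Your proof is correct and follows essentially the same strategy as the paper's: both arguments split $d_t - \bar d_s = (d_t - \sqrt{\Var_{12}}) + (\sqrt{\Var_{12}} - \bar d_s)$, show $0 \leq \sqrt{\Var_{12}} - \bar d_s \leq 2\sqrt{H(t-s)}$, and control the first term via Jensen's inequality after reduction to a quadratic quantity. The only genuine difference is a small algebraic choice in that first term: the paper uses $H$-concentration \emph{before} integrating, writing the pointwise inequality $d_t - \sqrt{\Var_{12}} \leq \sqrt{d_t^2 - \Var_{12} + H(t-s)}$ (valid since the radicand is nonnegative by $H$-concentration) and then applying Jensen directly to $\int(d_t^2 - \Var_{12} + H(t-s))\,d\mu_t d\mu_t$, whereas you pass through $(a-b)_+ \leq \sqrt{(a^2-b^2)_+}$ and invoke $H$-concentration only afterwards, to show $\int(\Var_{12}-d_t^2)_+\,d\mu_t d\mu_t \leq H(t-s)$. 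These are two routes through the same computation, with the paper's being marginally tidier since it avoids the positive-part bookkeeping. Your closing remark attributing the localization at $\Var(\mu_t) - \Var(\mu_s) + H(t-s)$ specifically to the positive-part identity overstates its role: the paper obtains the identical localization without positive parts, and what actually does the work in both cases is the combination of the pointwise $H$-concentration bound with Jensen and the reproduction formula $\int \Var_{12}\,d\mu_t d\mu_t = \Var(\mu_s)$.
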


\begin{proof}
Let $y_1, y_2 \in \XX_t$.
For any coupling $q$ between $\nu_{y_1;s}, \nu_{y_2;s}$ we have
\begin{align*}
 \int_{\XX_s \times \XX_s} & d_s(x_1, x_2) \, dq(x_1, x_2)
= \int_{\XX_s}\int_{\XX_s \times \XX_s} d_s(x_1, x_2) \,  dq(x_1, x_2) d\nu_{y_2;s}(x'_2) \\
&\geq \int_{\XX_s} \int_{\XX_s \times \XX_s} \big( d_s(x_1, x'_2) - d_s(x'_2, x_2) \big) \,   dq(x_1, x_2) d\nu_{y_2;s}(x'_2) \displaybreak[1] \\
&\geq \int_{\XX_s}\int_{\XX_s} d_s (x_1, x'_2) \, d\nu_{y_1;s}(x_1) d\nu_{y_2;s}(x'_2) - \int_{\XX_s}\int_{\XX_s} d_s (x'_2, x_2) \, d\nu_{y_2;s}(x_2) d\nu_{y_2;s}(x'_2)  \\
&\geq \int_{\XX_s}\int_{\XX_s} d_s \, d\nu_{y_1;s} d\nu_{y_2;s} - \sqrt{\Var( \nu_{y_2;s})}
\geq \int_{\XX_s}\int_{\XX_s} d_s \, d\nu_{y_1;s} d\nu_{y_2;s} - \sqrt{H(t-s)}.
\end{align*}
Together with Proposition~\ref{Prop_compare_CHF}\ref{Prop_compare_CHF_c}, this implies (\ref{eq_f_geq_3_sqrtH}).

For (\ref{eq_intint_f_bounds}), observe that for any $x_1, x_2 \in \XX_s$
\begin{multline*}
 \Big| d_s (x_1, x_2) -  \sqrt{\Var (\nu_{y_1;s}, \nu_{y_2;s})} \Big|
= \Big| \sqrt{\Var (\delta_{x_1}, \delta_{x_2})} - \sqrt{\Var (\nu_{y_1;s}, \nu_{y_2;s})} \Big| \\
\leq  \sqrt{ \Var (\delta_{x_1}, \nu_{y_1; s})} + \sqrt{ \Var( \delta_{x_2}, \nu_{y_2;s})}. 
\end{multline*}
Integration over $x_1, x_2$ implies
\begin{align}
\bigg| \int_{\XX_s} \int_{\XX_s} & d_s \, d\nu_{y_1; s} d\nu_{y_2; s}
- \sqrt{\Var (\nu_{y_1;s}, \nu_{y_2;s})}  \bigg| \notag \\
&\leq \int_{\XX_s} \sqrt{ \Var (\delta_{x_1}, \nu_{y_1; s})} d\nu_{y_1; s} (x_1)
+ \int_{\XX_s} \sqrt{ \Var (\delta_{x_2}, \nu_{y_2; s})} d\nu_{y_2; s} (x_2) \notag \\
&\leq  \bigg({ \int_{\XX_s}  \Var (\delta_{x_1}, \nu_{y_1; s}) d\nu_{y_1; s} (x_1) }\bigg)^{1/2}
+ \bigg({ \int_{\XX_s}  \Var (\delta_{x_2}, \nu_{y_2; s}) d\nu_{y_2; s} (x_2) }\bigg)^{1/2} \notag \\
&\leq \sqrt{ \Var (\nu_{y_1; s}) } + \sqrt{ \Var (\nu_{y_2; s}) } 
\leq  2\sqrt{H(t-s)}. \label{eq_int_sqrt_Var_2H}
\end{align}
Since
\[ d^2_{t} (y_1,y_2) - \Var (\nu_{y_1;s}, \nu_{y_2;s}) + H (t-s) \geq 0, \]
we have
\[ d_t (y_1, y_2) - \sqrt{\Var (\nu_{y_1;s}, \nu_{y_2;s})} \leq \sqrt{ d^2_{t} (y_1,y_2) - \Var (\nu_{y_1;t}, \nu_{y_2;t}) + H (t-s) }. \]
Combining this with (\ref{eq_int_sqrt_Var_2H}) implies that
\begin{equation*} \label{eq_pointwise_f_bound}
 d_t (y_1, y_2) - \int_{\XX_s} \int_{\XX_s}  d_s \, d\nu_{y_1; s} d\nu_{y_2; s} 
\leq \sqrt{ d^2_{t} (y_1,y_2) - \Var (\nu_{y_1;s}, \nu_{y_2;s}) + H (t-s) } + 2 \sqrt{H(t-s)}. 
\end{equation*}
Now (\ref{eq_intint_f_bounds}) follows by integrating this bound over $y_1, y_2$, using (\ref{eq_f_geq_3_sqrtH}) and the bound
\begin{align*}
 \int_{\XX_t} \int_{\XX_t} & \sqrt{ d^2_{t} (y_1,y_2) - \Var (\nu_{y_1;s}, \nu_{y_2;s}) + H (t-s) } d\mu_t (y_1) d\mu_t (y_2) \\
&\leq \bigg({ \int_{\XX_t} \int_{\XX_t} \big(d^2_{t} (y_1,y_2) - \Var (\nu_{y_1;s}, \nu_{y_2;s}) + H (t-s) \big)  d\mu_t (y_1) d\mu_t (y_2) }\bigg)^{1/2} \\
&= \sqrt{ \Var( \mu_t)  - \Var(\mu_s) + H (t-s) } \qedhere
\end{align*}
\end{proof}
\medskip

The next lemma shows the reverse direction of our goal. 
It states that closeness of two metric measure spaces in the $GW_1$-sense implies smallness of (\ref{eq_intint_d_diff}).

\begin{Lemma} \label{Lem_W1_close_intint_d_close}
Let $(X_i, d_i, \mu_i)$, $i =1,2$, be two metric measure spaces.
Then
\[ \bigg| \int_{X_1} \int_{X_1} d_1 \, d\mu_1 d\mu_1 - \int_{X_2} \int_{X_2} d_2 \, d\mu_2 d\mu_2 \bigg| \leq 2 d_{GW_1} \big( (X_1, d_1, \mu_1), (X_2, d_2, \mu_2) \big). \]
\end{Lemma}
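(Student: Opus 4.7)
The plan is to reduce to a single ambient metric space via the definition of $d_{GW_1}$, then apply the triangle inequality twice to convert the difference of the two double integrals into a $W_1$-type coupling integral.

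Concretely, let $\eps > 0$ and choose isometric embeddings $\varphi_i : (X_i, d_i) \to (Z, d_Z)$ into a common complete separable metric space $(Z, d_Z)$, together with a coupling $q \in \PP(Z \times Z)$ between $(\varphi_1)_* \mu_1$ and $(\varphi_2)_* \mu_2$, such that
\[ \int_{Z \times Z} d_Z(z_1, z_2) \, dq(z_1, z_2) \leq d_{GW_1}\big((X_1,d_1,\mu_1),(X_2,d_2,\mu_2)\big) + \eps. \]
Because the embeddings are isometric, $\int_{X_i} \int_{X_i} d_i \, d\mu_i d\mu_i = \int_{Z} \int_Z d_Z \, d((\varphi_i)_* \mu_i) d((\varphi_i)_*\mu_i)$, so it suffices to bound the analogous difference on $Z$.

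The key step is the following: for any four points $z_1, z_1', z_2, z_2' \in Z$, two applications of the triangle inequality give
\[ \bigl| d_Z(z_1, z_1') - d_Z(z_2, z_2') \bigr| \leq d_Z(z_1, z_2) + d_Z(z_1', z_2'). \]
Integrating this against the product coupling $q \otimes q \in \PP((Z \times Z) \times (Z \times Z))$, whose marginals onto the ``first'' pair of coordinates and the ``second'' pair of coordinates are $(\varphi_1)_*\mu_1 \otimes (\varphi_1)_*\mu_1$ and $(\varphi_2)_*\mu_2 \otimes (\varphi_2)_*\mu_2$ respectively, yields
\begin{align*}
\biggl| \int_Z \int_Z d_Z \, d((\varphi_1)_*\mu_1)^{\otimes 2} - \int_Z \int_Z d_Z \, d((\varphi_2)_*\mu_2)^{\otimes 2} \biggr|
&\leq 2 \int_{Z \times Z} d_Z \, dq \\
&\leq 2 d_{GW_1}\big((X_1,d_1,\mu_1),(X_2,d_2,\mu_2)\big) + 2\eps.
\end{align*}
Letting $\eps \to 0$ gives the claimed inequality.

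I do not expect any real obstacle here; the only thing to be careful about is the bookkeeping that the marginal of $q \otimes q$ onto the coordinates $(z_1, z_1')$ is indeed the product measure $(\varphi_1)_*\mu_1 \otimes (\varphi_1)_*\mu_1$ (and similarly for $(z_2, z_2')$), which is immediate from the definition of a coupling.
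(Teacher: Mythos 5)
Your proposal is correct and follows essentially the same route as the paper: choose a near-optimal ambient embedding and coupling, apply the triangle inequality twice to bound the pointwise difference of distances, and integrate against the product coupling $q \otimes q$. The only cosmetic difference is that you push everything forward into $Z$ while the paper keeps the integrals on $X_1 \times X_2$; the underlying inequality and bookkeeping are identical.
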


\begin{proof}
Let $\eps > 0$ and consider isometric embeddings $\varphi_i : (X_i, d_i) \to (Z, d)$, $i=1,2$, and a coupling $q$ between $\mu_1, \mu_2$ such that
\[ \int_{X_1 \times X_2}  d_Z (\varphi_1 (x_1), \varphi_2 (x_2)) dq(x_1, x_2) \leq  d_{GW_1} \big( (X_1, d_1, \mu_1), (X_2, d_2, \mu_2) \big) + \eps. \]
Then
\begin{align*}
 \bigg| \int_{X_1} \int_{X_1} & d_1 \, d\mu_1 d\mu_1 - \int_{X_2} \int_{X_2} d_2 \, d\mu_2 d\mu_2 \bigg| \\
&= \bigg| \int_{X_1 \times X_2}  \int_{X_1 \times X_2} \big( d_1 (x_1, y_1) -d_2(x_2, y_2) \big) dq (y_1, y_2) dq (x_1, x_2)  \bigg| \\
&\leq \int_{X_1 \times X_2}  \int_{X_1 \times X_2} \big| d_1 (x_1, y_1) -d_2(x_2, y_2) \big| dq (y_1, y_2) dq (x_1, x_2) \\
&\leq \int_{X_1 \times X_2}  \int_{X_1 \times X_2} \big( d_Z (\varphi_1(x_1),\varphi_2 ( x_2)) + d_Z (\varphi_1(y_1),\varphi_2 ( y_2))\big) dq (y_1, y_2) dq (x_1, x_2) \\
&\leq 2d_{GW_1} \big( (X_1, d_1, \mu_1), (X_2, d_2, \mu_2) \big) + 2\eps.
\end{align*}
Letting $\eps \to 0$ finishes the proof.
\end{proof}

Next, we will show that smallness of (\ref{eq_intint_d_diff}) and $t-s$ implies smallness of $d_{GW_1} ( (\XX_s, \lb \mu_s),  \lb (\XX_t, \lb \mu_t))$.
The following lemma will equip us with the necessary distance distortion estimate. 

\begin{Lemma} \label{Lem_dist_distor_2_pts}
Let $\XX$ be an $H$-concentrated metric flow over $I$ and $(\mu_t)_{t \in I'}$, $I' \subset I$ a conjugate heat flow.
Suppose that for two times $s, t \in I'$, $s \leq t$ we have for $\alpha, \beta, \gamma, r > 0$
\[ t - s \leq \alpha r^2, \qquad \int_{\XX_t} \int_{\XX_t} d_t \, d\mu_t d\mu_t -  \int_{\XX_s} \int_{\XX_s} d_s \, d\mu_s d\mu_s \leq \beta r. \]
Then for any $y_1, y_2 \in \XX_t$ for which
\[ \mu_{t} ( B(y_1, r) ) \mu_{t} ( B(y_2, r) ) \geq \gamma > 0 \]
we have
\begin{equation} \label{eq_d_t_m_Var}
0 \leq d_t(y_1,y_2) - d_{W_1}^{\XX_s} (\nu_{y_1;s} , \nu_{y_2;s})  \leq \Big( \frac{\beta + 3 \sqrt{H\alpha}}{\gamma} +4 \Big) r. 
\end{equation}
\end{Lemma}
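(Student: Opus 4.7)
The lower bound is immediate: Proposition~\ref{Prop_compare_CHF}\ref{Prop_compare_CHF_c} gives $d_{W_1}^{\XX_s}(\nu_{y_1;s}, \nu_{y_2;s}) \leq d_t(y_1, y_2)$.

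For the upper bound, define
\[
 f(y_1,y_2) := d_t(y_1,y_2) - d_{W_1}^{\XX_s}(\nu_{y_1;s},\nu_{y_2;s}) \geq 0
\]
on $\XX_t \times \XX_t$. The plan is to combine a pointwise stability property of $f$ with the global integral bound coming from Lemma~\ref{Lem_intd_diff_Var_diff}, and then to exploit the mass assumption in balls around $y_1, y_2$ to pass from average to pointwise control.

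First I would establish the stability property: for any $y_1,y_2,y'_1,y'_2 \in \XX_t$,
\[
 f(y_1,y_2) \leq f(y'_1,y'_2) + 2 d_t(y_1,y'_1) + 2 d_t(y_2,y'_2).
\]
This follows from the triangle inequalities for $d_t$ and for $d_{W_1}^{\XX_s}$ on $\PP(\XX_s)$, together with Proposition~\ref{Prop_compare_CHF}\ref{Prop_compare_CHF_c} applied to convert each $d_{W_1}^{\XX_s}(\nu_{y_i;s},\nu_{y'_i;s})$ into $d_t(y_i,y'_i)$.

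Next I would specialize to $y'_1 \in B(y_1,r)$ and $y'_2 \in B(y_2,r)$ (giving the improvement $f(y_1,y_2) \leq f(y'_1,y'_2) + 4r$) and integrate against $\mu_t \otimes \mu_t$ over $B(y_1,r) \times B(y_2,r)$. Using the mass hypothesis $\mu_t(B(y_1,r))\mu_t(B(y_2,r)) \geq \gamma$, this yields
\[
 \gamma f(y_1,y_2) \leq \int_{\XX_t \times \XX_t} f \, d\mu_t d\mu_t + 4\gamma r .
\]
Finally, applying the upper half of (\ref{eq_intint_f_bounds}) in Lemma~\ref{Lem_intd_diff_Var_diff} together with the two hypotheses $t-s \leq \alpha r^2$ and $\int\int d_t \, d\mu_t d\mu_t - \int\int d_s\, d\mu_s d\mu_s \leq \beta r$ bounds the integral by $(\beta + \sqrt{H\alpha})r$, which after division by $\gamma$ and addition of the $4r$ term gives the claimed estimate (\ref{eq_d_t_m_Var}) with room to spare in the $\sqrt{H\alpha}$ coefficient.

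The only mildly subtle step is the stability inequality, which needs Proposition~\ref{Prop_compare_CHF}\ref{Prop_compare_CHF_c} on \emph{both} of the marginals to turn the $d_{W_1}^{\XX_s}$ triangle-inequality defect into $d_t$-defects that can then be absorbed by the radius $r$. Everything else is bookkeeping between the hypotheses, the integral estimate from Lemma~\ref{Lem_intd_diff_Var_diff}, and the lower mass bound $\gamma$.
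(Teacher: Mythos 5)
Your proposal is correct and takes essentially the same route as the paper: both introduce $f(y_1,y_2) = d_t(y_1,y_2) - d_{W_1}^{\XX_s}(\nu_{y_1;s},\nu_{y_2;s})$, use Proposition~\ref{Prop_compare_CHF}\ref{Prop_compare_CHF_c} to show $f$ is $2$-Lipschitz in each variable (your ``stability property'' is exactly this), bound $\int\int f \, d\mu_t \, d\mu_t$ via Lemma~\ref{Lem_intd_diff_Var_diff}, and use the mass in $B(y_1,r)\times B(y_2,r)$ to pass from the average to a pointwise bound on $f(y_1,y_2)$. The paper uses a looser estimate $\int\int f \leq (\beta + 3\sqrt{H\alpha})r$ whereas (\ref{eq_intint_f_bounds}) actually gives $\beta r + \sqrt{H(t-s)} \leq (\beta + \sqrt{H\alpha})r$ as you note, but this only affects the constant and both yield the stated inequality.
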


\begin{proof}
Define $f : \XX_t \times \XX_t \to \IR$ as in (\ref{eq_f_geq_3_sqrtH}) and observe that by Proposition~\ref{Prop_compare_CHF} $f$ is $2$-Lipschitz in each variable.
Moreover, by Lemma~\ref{Lem_intd_diff_Var_diff}
\[ f  \geq 0, 
\qquad \int_{\XX_t} \int_{\XX_t} f \, d\mu_t d\mu_t \leq \beta r + 3 \sqrt{H\alpha}\,r. \]
Then there are $y'_i \in B(y_i, r)$ with 
\[  f(y'_1, y'_2)  \leq \frac{\beta + 3 \sqrt{H\alpha}}{\gamma} \, r . \]
The upper bound in (\ref{eq_d_t_m_Var}) follows by combining this with
\[ f(y_1, y_2) \leq f(y'_1, y_2) + 2r \leq f(y'_1, y'_2) + 4r. \]
The lower bound in (\ref{eq_d_t_m_Var}) is clear.
\end{proof}
\medskip

The following proposition characterizes the closeness of two nearby time-slices under certain conditions.

\begin{Proposition} \label{Prop_time_s_closeness}
For any $\eps > 0$, $H, V < \infty$ and any function $b : (0,1) \to (0,1)$ there is a $\delta ( H, V,  b, \eps) > 0$ such that the following holds.

Let $\XX$ be an $H$-concentrated metric flow over $I$ and $(\mu_t)_{t \in I'}$, $I' \subset I$, a conjugate heat flow on $\XX$.
Suppose that for two times $s,t \in I'$, $s \leq t$ and $r > 0$ we have
\[ b^{(\XX_t, d_t, \mu_t)}_r  \geq b \qquad \text{on} \quad [\delta, 1] \]
and
\[  t - s \leq \delta r^2, \qquad \Var (\mu_{t}) \leq V r^2, \qquad 
 \int_{\XX_t \times \XX_t} d_t \, d\mu_t d\mu_t - \int_{\XX_s \times \XX_s} d_s \, d\mu_s d\mu_s \leq \delta r. \]
Then there is a closed subset $W \subset \XX_{t}$ such that:
\begin{enumerate}[label=(\alph*)]
\item \label{Prop_time_s_closeness_a} $\mu_{t} ( \XX_{t} \setminus W ) \leq \eps$.
\item \label{Prop_time_s_closeness_b} For any $y_1,y_2 \in W$ we have
\begin{equation} \label{eq_d_m_d_d_m_V}
 0\leq d_t(y_1,y_2) - d_{W_1}^{\XX_s} (\nu_{y_1;s} , \nu_{y_2;s}) \leq \eps r. 
\end{equation}
\end{enumerate}
Moreover, there is a metric space $(Z, d_Z)$ and isometric embeddings $\varphi_s : \XX_s \to Z$, $\varphi_t : \XX_t \to Z$ such that:
\begin{enumerate}[label=(\alph*), start=3]
\item \label{Prop_time_s_closeness_c} For every $x \in \XX_s$ and $y \in W$ we have 
\begin{equation} \label{eq_d_Z_bound}
 d_Z (\varphi_s (x), \varphi_t (y))  \leq d_{W_1}^{\XX_s} (\delta_{x}, \nu_{y;s}) + \eps r \leq \sqrt{\Var (\delta_{x}, \nu_{y;s})} + \eps r. 
\end{equation}
\item \label{Prop_time_s_closeness_d} The probability measure
\[ q := \int_{\XX_t} (\nu_{y;s} \otimes \delta_y) \, d\mu_t (y). \]
is a coupling between $\mu_s, \mu_t$ and
\begin{equation} \label{eq_int_Xs_Xt_d_cross}
 \int_{\XX_s \times \XX_t}  d_Z (\varphi_s (x), \varphi_t (y)) dq(x,y) =  \int_{\XX_t} \int_{\XX_s} d_Z (\varphi_s (x), \varphi_t (y)) d\nu_{y;s} (x) d\mu_t (y) \leq \eps r. 
\end{equation}
\item \label{Prop_time_s_closeness_e} We have
\[
 d_{GW_1} \big( (\XX_s, d_s, \mu_s), (\XX_t, d_t, \mu_t) \big) \leq d^Z_{W_1} ( (\varphi_s)_* \mu_s, (\varphi_t)_* \mu_t ) \leq \eps r. 
\]
\end{enumerate}
\end{Proposition}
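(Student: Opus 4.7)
The plan is to localize attention to a near-full-measure subset $W \subset \XX_t$ on which $y \mapsto \nu_{y;s}$ behaves like an almost isometric embedding into $(\PP(\XX_s), d_{W_1})$, and then to glue $\XX_s$ and $\XX_t$ along that identification. First I would fix an auxiliary scale $\delta_0 = \delta_0(\eps) > 0$ and set
\[ W := \bigl\{ y \in \XX_t \;:\; \mu_t(D(y, \delta_0 r)) \geq b(\delta_0) \bigr\}, \]
which is closed by upper semi-continuity of $y \mapsto \mu_t(D(y, \delta_0 r))$. Provided $\delta \leq \delta_0$, the hypothesis $b^{(\XX_t, d_t, \mu_t)}_r \geq b$ on $[\delta, 1]$ delivers $\mu_t(\XX_t \setminus W) \leq \delta_0 \leq \eps$, which is (a). Lemma~\ref{Lem_dist_distor_2_pts}, applied at interior radius $\delta_0 r$ with $\alpha = \delta/\delta_0^2$, $\beta = \delta/\delta_0$, $\gamma = b(\delta_0)^2$, then gives
\[ d_t(y_1, y_2) - d_{W_1}^{\XX_s}(\nu_{y_1;s}, \nu_{y_2;s}) \leq \Bigl( \frac{\delta + 3\sqrt{H\delta}}{b(\delta_0)^2} + 4\delta_0 \Bigr) r \]
for $y_1, y_2 \in W$; choosing first $\delta_0 \leq \eps/8$ and then $\delta$ sufficiently small in terms of $H, b(\delta_0), \eps$ makes the right-hand side $\leq \eps r$, which is (b). The lower bound in (b) is Proposition~\ref{Prop_compare_CHF}\ref{Prop_compare_CHF_c}.

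For (c)--(e) I would construct $(Z, d_Z)$ by gluing. Define on $\XX_s \sqcup \XX_t$ the data $d_s$ on $\XX_s$, $d_t$ on $\XX_t$, and cross distances
\[ \rho(x, y) := d_{W_1}^{\XX_s}(\delta_x, \nu_{y;s}) + \tfrac{\eps}{2} r \qquad (x \in \XX_s,\ y \in W), \]
extended to $y \in \XX_t \setminus W$ by $\rho(x, y) := \inf_{y' \in W}[\rho(x, y') + d_t(y, y')]$, which makes the triangle inequalities involving $\XX_t \setminus W$ hold automatically. For $y_1, y_2 \in W$, the nontrivial triangle inequality $d_t(y_1, y_2) \leq \rho(x, y_1) + \rho(x, y_2)$ reduces, via $d_{W_1}^{\XX_s}(\nu_{y_1;s}, \nu_{y_2;s}) \leq d_{W_1}^{\XX_s}(\delta_x, \nu_{y_1;s}) + d_{W_1}^{\XX_s}(\delta_x, \nu_{y_2;s})$, precisely to the bound (b); the other triangle inequalities are direct consequences of the $d_{W_1}$-triangle inequality and Proposition~\ref{Prop_compare_CHF}\ref{Prop_compare_CHF_c}. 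Passing to the quotient by distance-zero points and completing produces $(Z, d_Z)$ with isometric embeddings $\varphi_s, \varphi_t$, and (c) on $W$ is immediate from $\rho$ together with Lemma~\ref{Lem_W_1_vs_Var}.

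For (d), the measure $q$ is automatically a coupling of $\mu_s, \mu_t$ by Definition~\ref{Def_conj_heat_flow}. The contribution of $\{ y \in W\}$ to $\int d_Z\, dq$ is at most $\sqrt{H(t-s)} + (\eps/2) r$, using $\int_{\XX_s} d_{W_1}^{\XX_s}(\delta_x, \nu_{y;s})\, d\nu_{y;s}(x) \leq \sqrt{\Var(\nu_{y;s})} \leq \sqrt{H(t-s)} \leq \sqrt{H\delta}\, r$. The contribution of $\XX_t \setminus W$ is controlled by Cauchy--Schwarz against $\Var(\mu_t) \leq V r^2$ and $\mu_t(\XX_t \setminus W) \leq \eps$, giving at most $C(V)\sqrt{\eps}\, r$; both tails are $\leq \eps r /2$ once $\eps$ and $\delta$ are taken sufficiently small at the outset. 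This is (d), and (e) follows immediately from the definitions of $d_{GW_1}$ and $d_{W_1}^Z$.

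The main obstacle is the order of parameter choices rather than any single estimate: $\delta_0$ must be small in $\eps$ to handle the $4\delta_0$ term in Lemma~\ref{Lem_dist_distor_2_pts}, then $\delta$ must be small in $\delta_0, b(\delta_0), H$ to absorb the $b(\delta_0)^{-2}$ blow-up, and a final smallness of $\eps$ is still needed to absorb the $\sqrt{\eps V}$ tail from $\XX_t \setminus W$ in (d). A secondary annoyance is the extension of $\rho$ across $\XX_t \setminus W$: taking the infimum over $W$ trades a painless triangle inequality for a more delicate integral estimate on the tail.
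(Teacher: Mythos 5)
Your overall strategy is the same as the paper's: build $W$ from the mass-distribution hypothesis, get (b) from Lemma~\ref{Lem_dist_distor_2_pts}, and glue $\XX_s$ to $\XX_t$ along the almost-isometric embedding $y \mapsto \nu_{y;s}$ (which is exactly what the paper does in the proof of Proposition~\ref{Prop_time_s_closeness} together with Lemma~\ref{Lem_construction_Z}). The gluing formula you write down for $\rho$ is, after unwinding the two-case definition, the same formula the paper uses, and your verification of the triangle inequalities is sound. So the structure is right.

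There is, however, a genuine gap in the tail estimate for~(d), and the remedy you suggest for it is logically wrong. Your construction puts $\mu_t(\XX_t \setminus W) \leq \delta_0$, and you then bound the contribution of $\XX_t \setminus W$ to $\int d_Z\, dq$ by Cauchy--Schwarz against $\Var(\mu_t) \leq V r^2$, arriving at roughly $\sqrt{\mu_t(\XX_t \setminus W)\cdot V}\, r$. You record this as $C(V)\sqrt{\eps}\, r$ and claim both tails become $\leq \eps r/2$ ``once $\eps$ and $\delta$ are taken sufficiently small at the outset.'' But $\eps$ is prescribed in the statement — you cannot shrink it — and shrinking $\eps$ makes $\sqrt{\eps V}/\eps = \sqrt{V/\eps}$ \emph{blow up}, not go to zero. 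With only $\mu_t(\XX_t\setminus W)\leq \eps$ you get $\sqrt{\eps V}\,r$, which exceeds $\eps r$ whenever $\eps < V$. The correct way out — and it is what the paper's Lemma~\ref{Lem_construction_Z} actually implements — is to make $\mu_t(\XX_t \setminus W)$ small at a \emph{quadratic-in-$\eps$} scale, i.e.\ require $\delta_0 \lesssim \eps^2/V$ rather than merely $\delta_0 \leq \eps$. Since $\delta_0$ is a free parameter you choose, this is available to you; but as written, your proposal never imposes it, and the final assembly of constants would fail.

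A second, subsidiary gap is that the Cauchy--Schwarz step as you describe it does not actually engage the $\inf$ in the extension formula $\rho(x,y) = \inf_{y'\in W}[\rho(x,y') + d_t(y,y')]$. For $y \in \XX_t\setminus W$ you must pick a reference $y' \in W$, and $d_t(y,y')$ can be arbitrarily large for a single $y'$; to get something integrable against $\Var(\mu_t)$ you need to average the choice of $y'$ over $W$ with respect to $\mu_t|_W$, use $\mu_t(W)\geq 1/2$, and pass the triangle inequality through this averaged reference point before applying Cauchy--Schwarz to the two resulting pieces. This is precisely the chain of inequalities the paper carries out in the estimate culminating in~(\ref{eq_int_comp_W_bound}). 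You flag this as ``a secondary annoyance,'' but it is a required step that the sketch does not execute, and it is inseparable from the $\delta_0 \lesssim \eps^2/V$ issue above.
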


\begin{proof}
After parabolic rescaling we may assume that $r = 1$.
Fix $V, H, b, \eps$.
We will determine $\delta$ in the course of the proof.
Let $\zeta \in (0,1)$ be a constant whose value we will determine later and choose
\[ W := \big\{ y \in \XX_t \;\; : \;\; \mu_t (D(y, \zeta)) \geq b(\zeta) \big\}. \]
Then
\[ \mu_t (\XX_t \setminus W ) \leq \zeta, \]
which implies Assertion~\ref{Prop_time_s_closeness_a} for $\zeta \leq \eps$.
Applying Lemma~\ref{Lem_dist_distor_2_pts} with $r = \zeta$ implies that if $\delta \leq \ov\delta (H, \zeta, b)$, then
\[ 0 \leq d_t(y_1,y_2) - d_{W_1}^{\XX_s} (\nu_{y_1;s} , \nu_{y_2;s}) \leq  \Big( \frac{\delta + 3\sqrt{H\delta}}{b^2(\zeta)} + 4 \Big) \zeta  \leq 5\zeta. \]
This proves Assertion~\ref{Prop_time_s_closeness_b} if $\zeta \leq \eps/5$.

The fact that $q$ in Assertion~\ref{Prop_time_s_closeness_d} is a coupling between $\mu_s, \mu_t$ and the equality in (\ref{eq_int_Xs_Xt_d_cross}) are clear.
Assertion~\ref{Prop_time_s_closeness_c} and the inequality in (\ref{eq_int_Xs_Xt_d_cross}) follow from Lemma~\ref{Lem_construction_Z} below.
Assertion~\ref{Prop_time_s_closeness_e} is a direct consequence of Assertion~\ref{Prop_time_s_closeness_d}.
\end{proof}
\bigskip

\begin{Lemma} \label{Lem_construction_Z}
Let $0 < \delta \leq \eps$ and $V, H < \infty$.
Let $\XX$ be a metric flow over $I$ and consider two times $s \leq t$, $s, t \in I$.
Suppose that there is a non-empty, measurable subset $W \subset \XX_t$ such that for any $y_1, y_2 \in W$ we have
\begin{equation} \label{eq_dtVar_Z_constr}
0 \leq d_t(y_1,y_2) - d_{W_1}^{\XX_s} (\nu_{y_1;s} , \nu_{y_2;s})  \leq \delta. 
\end{equation}
Then there is a metric space $(Z, d_Z)$ and isometric embeddings $\varphi_s : \XX_s \to Z$, $\varphi_t : \XX_t \to Z$ such that (\ref{eq_d_Z_bound}) in Proposition~\ref{Prop_time_s_closeness} holds for $r = 1$.

Moreover, suppose that $\XX$ is $H$-concentrated and consider a conjugate heat flow $(\mu_t)_{t \in I'}$ on $\XX$ with $s, t \in I'$.
If $\delta \leq \ov\delta (H,V, \eps)$ and
\begin{equation} \label{eq_Lem_Z_additional_bounds}
 t-s \leq \delta, \qquad \Var (\mu_t) \leq V, \qquad \mu_t (\XX_t \setminus W^\delta) \leq \delta, 
\end{equation}
where $W^\delta := B(W, \delta)$, then (\ref{eq_int_Xs_Xt_d_cross}) in Proposition~\ref{Prop_time_s_closeness} holds for $r=1$.
\end{Lemma}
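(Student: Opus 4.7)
I will build $(Z,d_Z)$ as a pseudometric gluing of $\XX_s$ and $\XX_t$ through $W$. On the disjoint union $\XX_s \sqcup \XX_t$, let $d_Z$ coincide with $d_s$ on $\XX_s$ and with $d_t$ on $\XX_t$, and for $x\in\XX_s$, $y\in\XX_t$ set
\[
d_Z(x,y) := \inf_{w\in W}\bigl\{\, d^{\XX_s}_{W_1}(\delta_x,\nu_{w;s}) + d_t(w,y)\,\bigr\} + \delta.
\]
Taking $w=y$ when $y\in W$ immediately gives $d_Z(\varphi_s(x),\varphi_t(y))\le d^{\XX_s}_{W_1}(\delta_x,\nu_{y;s})+\delta$, which is (\ref{eq_d_Z_bound}) since $\delta\le\eps$. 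The first step is to check that $d_Z$ is a pseudometric. The only non-trivial inequalities are the two cross ones: for $y\in\XX_t$ and $x_1,x_2\in\XX_s$, one uses the triangle inequality for $d^{\XX_s}_{W_1}$ and the easy half $d^{\XX_s}_{W_1}(\nu_{w_1;s},\nu_{w_2;s})\le d_t(w_1,w_2)$ from the lower bound in (\ref{eq_dtVar_Z_constr}); for $x\in\XX_s$ and $y_1,y_2\in\XX_t$, one uses instead the upper bound $d_t(w_1,w_2) - d^{\XX_s}_{W_1}(\nu_{w_1;s},\nu_{w_2;s})\le\delta$, and it is exactly this $\delta$-slack that is absorbed by the additive constant in the definition of $d_Z$. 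The remaining two triangle inequalities follow directly from the triangle inequality for $d^{\XX_s}_{W_1}$ and $d_t$, respectively. Since $d^{\XX_s}_{W_1}\ge 0$ and $d_t\ge 0$, the $\delta>0$ bonus prevents any cross identifications, so $d_Z$ is already a metric and $\varphi_s,\varphi_t$ are isometric embeddings.

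For the moreover part, under the additional hypotheses (\ref{eq_Lem_Z_additional_bounds}), I will bound the integral in (\ref{eq_int_Xs_Xt_d_cross}) by splitting the outer integral over $W^\delta$ and its complement. On $W^\delta$, for each $y$ pick $w_y\in W$ with $d_t(w_y,y)<\delta$; plugging $w=w_y$ into the definition of $d_Z$ and applying Cauchy--Schwarz together with the $H$-concentration bound $\Var(\nu_{y;s},\nu_{w_y;s})\le d_t^2(w_y,y)+H(t-s)$ gives
\[
 \int_{\XX_s} d_Z(\varphi_s(x),\varphi_t(y))\,d\nu_{y;s}(x) \;\le\; \sqrt{\Var(\nu_{y;s},\nu_{w_y;s})} + 2\delta \;\le\; \delta + \sqrt{H\delta} + 2\delta,
\]
yielding an $O(\delta+\sqrt{H\delta})$ contribution after integration against $\mu_t$.

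The main obstacle is the exceptional set $\XX_t\setminus W^\delta$ of $\mu_t$-measure $\le\delta$, where $d_t(w_y,y)$ cannot be controlled. Here I fix a single auxiliary $w_0\in W$ and use the same $H$-concentration estimate to obtain $\int_{\XX_s} d_Z\,d\nu_{y;s}\le 2d_t(w_0,y)+\sqrt{H\delta}+\delta$; Cauchy--Schwarz together with $\mu_t(\XX_t\setminus W^\delta)\le\delta$ then reduces matters to bounding $\Var(\delta_{w_0},\mu_t)$ in terms of $V$ alone. For this I will first select $z^*\in\XX_t$ with $\Var(\delta_{z^*},\mu_t)\le V$, which exists by Fubini since $\int\Var(\delta_z,\mu_t)\,d\mu_t(z) = \Var(\mu_t)\le V r^2 = V$; Chebyshev then shows that a $d_t$-ball of radius $O(\sqrt V)$ around $z^*$ has $\mu_t$-measure exceeding $\delta$, hence meets $W^\delta$, so I can choose $w_0\in W$ with $d_t(w_0,z^*)=O(\sqrt V)$ and conclude $\Var(\delta_{w_0},\mu_t)=O(V)$. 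The exceptional contribution is therefore $O(\sqrt{V\delta})+O(\delta)$. Combining both pieces gives a total bound $O(\sqrt{(V+H)\delta}) \le \eps$ once $\delta\le\ov\delta(H,V,\eps)$, which is the required (\ref{eq_int_Xs_Xt_d_cross}).
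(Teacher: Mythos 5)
Your proof is correct, and the construction of $(Z,d_Z)$ together with the verification of the triangle inequalities is exactly the paper's: same formula for the cross-distance (with the $+\delta$ bonus), and the same roles for the lower and upper halves of (\ref{eq_dtVar_Z_constr}) in the two non-trivial triangle inequalities. The bound on $W^\delta$ is also the paper's argument (Cauchy--Schwarz against $\Var(\nu_{y;s})\le H(t-s)$, giving $\sqrt{H\delta}+3\delta$). Where you genuinely depart from the paper is in the exceptional-set estimate over $\XX_t\setminus W^\delta$. The paper avoids singling out a reference point: since $\mu_t(W^\delta)\ge\tfrac12$, it averages $d_Z(\varphi_s(x),\varphi_t(y))\le d_Z(\varphi_s(x),\varphi_t(w))+d_t(w,y)$ over $w\in W^\delta$ against $d\mu_t$, pays a factor $2$, and bounds $\int_{W^\delta}\int_{\XX_t\setminus W^\delta}d_t^2(w,y)\,d\mu_t\,d\mu_t$ directly by $\Var(\mu_t)\le V$; this is slightly more streamlined because it needs no auxiliary point at all. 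You instead manufacture a fixed anchor $w_0\in W$: Fubini on $\Var(\mu_t)\le V$ gives $z^*$ with $\Var(\delta_{z^*},\mu_t)\le V$, Chebyshev shows a ball of radius $O(\sqrt V)$ around $z^*$ carries mass $>\delta$ (say $\ge\tfrac34$ at radius $2\sqrt V$, so it meets $W^\delta$ once $\delta\le\tfrac12$), hence $w_0$ exists with $\Var(\delta_{w_0},\mu_t)=O(V+\delta^2)$, and the rest is Cauchy--Schwarz against $\mu_t(\XX_t\setminus W^\delta)\le\delta$. Both routes cost the same ingredients ($H$-concentration, $\Var(\mu_t)\le V$, the measure bound, Cauchy--Schwarz) and yield the same final $O(\sqrt{(V+H)\delta})+O(\delta)$ bound; yours is a couple of steps longer but arguably more transparent about where the variance bound on $\mu_t$ enters. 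One cosmetic caution: when you say the $\varphi_s$--$\varphi_s$ triangle inequality uses ``the easy half'' of (\ref{eq_dtVar_Z_constr}), note you also need the triangle inequality for $d_t$ at the point $y$ to absorb the infimum over $w_1,w_2$; this is implicit in your write-up but worth making explicit.
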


We will apply Lemma~\ref{Lem_construction_Z} again in the proof of Theorem~\ref{Thm_future_cont_equiv}, where we will also make use of the $\delta$-neighborhood $W^\delta$. 

\begin{proof}
Let $Z := \XX_s \sqcup \XX_t$ and denote by $\varphi_s, \varphi_t$ the standard immersions.
Define $d_Z$ to be equal to $d_s, d_t$ on $\XX_s, \XX_t$, respectively, and for $x \in \XX_s$, $y \in \XX_t$ let
\begin{equation} \label{eq_def_d_Z_Var}
 d_Z( \varphi_s (x), \varphi_t (y) ) = d_Z(  \varphi_t (y) , \varphi_s (x))
:= \inf_{w \in W} \big( d_t (y,w) + d_{W_1}^{\XX_s} (\delta_{x}, \nu_{w;s}) \big) + \delta. 
\end{equation}
We need to verify that $d_Z$ satisfies the triangle inequality.
For this purpose choose $x_1, x_2 \in \XX_s$, $y_1, y_2 \in \XX_t$.
Then
\[ d_Z ( \varphi_s (x_1), \varphi_t (y_1) ) 
\leq d_Z (\varphi_s (x_1), \varphi_t (y_2)) + d_Z ( \varphi_t (y_2), \varphi_t (y_1) ) \]
is a direct consequence of (\ref{eq_def_d_Z_Var}) and the triangle inequality on $\XX_t$.
The bound
\[ d_Z ( \varphi_s (x_1), \varphi_t (y_1) ) \leq d_Z (\varphi_s (x_1), \varphi_s (x_2)) + d_Z ( \varphi_s (x_2), \varphi_t (y_1) ). \]
follows using
\[
d_{W_1}^{\XX_s} (\delta_{x_1}, \nu_{w;s})
\leq d_{W_1}^{\XX_s} (\delta_{x_1}, \delta_{x_2})  +  d_{W_1}^{\XX_s}(\delta_{x_2}, \nu_{w;s})
= d_s (x_1, x_2) +  d_{W_1}^{\XX_s} (\delta_{x_2}, \nu_{w;s}). 
\]
Next, we have
\begin{align*}
 d_Z ( \varphi_t (y_1), \varphi_t (y_2) ) 
 &\leq  \inf_{w_1, w_2 \in W} \big( d_t (y_1, w_1) + d_t (y_2, w_2) + d_t (w_1, w_2) \big) \\
 &\leq  \inf_{w_1, w_2 \in W} \big( d_t (y_1, w_1) + d_t (y_2, w_2) + d_{W_1}^{\XX_s} (\nu_{w_1;s}, \nu_{w_2;s})  + \de \big)  \\
 &\leq \inf_{w_1, w_2 \in W} \big( d_t (y_1, w_1) + d_t (y_2, w_2) + d_{W_1}^{\XX_s} (\delta_{x_1}, \nu_{w_1;s}) +  d_{W_1}^{\XX_s} (\nu_{w_2;s}, \delta_{x_1})  \big) + 2\delta \\
&= d_Z ( \varphi_t (y_1), \varphi_s (x_1) ) + d_Z ( \varphi_s (x_1), \varphi_t (y_2) )
\end{align*}
and
\begin{align*}
 d_Z ( \varphi_s (x_1), \varphi_s (x_2) ) 
&= d_{W_1}^{\XX_s} (\delta_{x_1}, \delta_{x_2}) \\
&\leq  \inf_{w_1, w_2 \in W} \big( d_{W_1}^{\XX_s} (\delta_{x_1}, \nu_{w_1;s}) + d_{W_1}^{\XX_s}(\nu_{w_1;s}, \nu_{w_2;s}) + d_{W_1}^{\XX_s} ( \nu_{w_2;s}, \delta_{x_2})  \big) \\
&\leq  \inf_{w_1, w_2 \in W} \big( d_{W_1}^{\XX_s}(\delta_{x_1}, \nu_{w_1;s}) + d_t(w_1,w_2) +  d_{W_1}^{\XX_s} ( \nu_{w_2;s}, \delta_{x_2})  \big)  \\
&\leq \inf_{w_1, w_2 \in W} \big(  d_{W_1}^{\XX_s} (\delta_{x_1}, \nu_{w_1;s})  + d_t (w_1, y_1) + d_t (y_1, w_2) + d_{W_1}^{\XX_s} (\delta_{x_2}, \nu_{w_2;s}) \big) + 2\delta \\
&= d_Z (\varphi_s (x_1), \varphi_t (y_1)) + d_Z ( \varphi_t (y_1), \varphi_s (x_2) ). 
\end{align*}
This shows that $(Z, d_Z)$ is a metric space and (\ref{eq_d_Z_bound}) in Proposition~\ref{Prop_time_s_closeness} holds if $\delta \leq \eps$, because
\[ d_{W_1}^{\XX_s} (\delta_x, \nu_{y;s}) \leq \sqrt{\Var(\delta_x, \nu_{y;s})} . \]

Before continuing, we observe that for any $x \in \XX_s$ and $y \in W^\delta$ there is a $y' \in W$ with $d_t(y,y') < \delta$ and therefore, using Proposition~\ref{Prop_compare_CHF}\ref{Prop_compare_CHF_c},
\begin{multline} \label{eq_dW1_Var_delta}
 d_Z (\varphi_s(x), \varphi_t(y)) \leq d_Z (\varphi_s(x), \varphi_t(y')) + \delta
\leq d_{W_1}^{\XX_s} (\delta_x, \nu_{y';s}) +2 \delta \\
\leq d_{W_1}^{\XX_s} (\delta_x, \nu_{y;s}) + d_{W_1}^{\XX_s} (\nu_{y;s}, \nu_{y';s}) + 2\delta
\leq d_{W_1}^{\XX_s} (\delta_x, \nu_{y;s}) + 3 \delta
\leq \sqrt{\Var(\delta_x, \nu_{y;s})} + 3\delta. 
\end{multline}

Next, assume that (\ref{eq_Lem_Z_additional_bounds}) holds.
Then, using (\ref{eq_dW1_Var_delta}),
\begin{align}
 \int_{W^\delta} \int_{\XX_s} &d_Z (\varphi_s (x), \varphi_t (y)) \, d\nu_{y;s} (x) d\mu_t (y) \leq 
  \int_{W^\delta} \int_{\XX_s}  \sqrt{\Var (\delta_{x}, \nu_{y;s})} d\nu_{y;s} (x) d\mu_t (y) + 3\delta \notag \\
  &\leq \mu_t^{1/2} (W^\delta) \bigg({ \int_{W^\delta} \int_{\XX_s}  \Var (\delta_{x}, \nu_{y;s}) d\nu_{y;s} (x) d\mu_t (y) }\bigg)^{1/2} +3 \delta \notag \\
  &\leq  \bigg({ \int_{\XX_t}   \Var (\nu_{y;s}, \nu_{y;s})  d\mu_t (y) }\bigg)^{1/2} + 3\delta
  \leq \sqrt{H (t-s)} + 3 \delta \leq \sqrt{H \delta} + 3\delta. \label{eq_int_W_bound}
\end{align}
Assuming $\delta \leq \frac12$, we moreover have $\mu_t (W^\delta) \geq \frac12$, which allows us to bound
\begin{align}
\frac12 \int_{\XX_t \setminus W^\delta} \int_{\XX_s} &d_Z (\varphi_s (x), \varphi_t (y)) d\nu_{y;s} (x) d\mu_t (y) \notag \\
&\leq \int_{W^\delta} \int_{\XX_t \setminus W^\delta} \int_{\XX_s} d_Z (\varphi_s (x), \varphi_t (y)) d\nu_{y;s} (x) d\mu_t (y) d\mu_t (w) \notag \\
&\leq \int_{W^\delta} \int_{\XX_t \setminus W^\delta} \int_{\XX_s} \big( d_Z (\varphi_s (x), \varphi_t (w)) + d_Z (\varphi_t (w), \varphi_t (y)) \big) d\nu_{y;s} (x) d\mu_t (y) d\mu_t (w) \notag \\
&\leq \int_{W^\delta} \int_{\XX_t \setminus W^\delta} \int_{\XX_s}  \sqrt{\Var (\delta_{x}, \nu_{w;s})}   d\nu_{y;s} (x) d\mu_t (y) d\mu_t (w) + 3\delta \notag \\
&\qquad
+ \int_{W^\delta} \int_{\XX_t \setminus W^\delta}  d_t (w, y)   d\mu_t (y) d\mu_t (w)  \notag \\
&\leq \big( \mu_t (W^\delta) \mu_t (\XX_t \setminus W^\delta) \big)^{1/2} \bigg({ \int_{W^\delta} \int_{\XX_t \setminus W^\delta} \int_{\XX_s}  \Var (\delta_{x}, \nu_{w;s})   d\nu_{y;s} (x) d\mu_t (y) d\mu_t (w) }\bigg)^{1/2} \notag \\
&\qquad + \big( \mu_t (W^\delta) \mu_t (\XX_t \setminus W^\delta) \big)^{1/2} \bigg({ \int_{W^\delta} \int_{\XX_t \setminus W^\delta}  d^2_t (w, y)   d\mu_t (y) d\mu_t (w) }\bigg)^{1/2} + 3\delta \notag \\
&\leq  \mu_t^{1/2} (\XX_t \setminus W^\delta)  \bigg({ \int_{W^\delta} \int_{\XX_t \setminus W^\delta}   \Var (\nu_{y;s}, \nu_{w;s})   d\mu_t (y) d\mu_t (w) }\bigg)^{1/2} \notag \\
&\qquad + \mu_t^{1/2} (\XX_t \setminus W^\delta)  \sqrt{\Var ( \mu_t )} + 3\delta \notag \\
&\leq  \mu_t^{1/2} (\XX_t \setminus W^\delta) \big(  \sqrt{\Var (\mu_{s})   }
+   \sqrt{\Var ( \mu_t )}  \big) + 3\delta \notag \\
&\leq  \mu_t^{1/2} (\XX_t \setminus W^\delta) \big(  \sqrt{\Var (\mu_{t}) + H (t-s)   }
+   \sqrt{\Var ( \mu_t )}  \big) + 3\delta \notag \\
&\leq 2 \sqrt{ \delta}  \sqrt{V + H \delta} + 3\delta \label{eq_int_comp_W_bound}
\end{align}
Combining (\ref{eq_int_W_bound}), (\ref{eq_int_comp_W_bound}) implies (\ref{eq_int_Xs_Xt_d_cross}) in Proposition~\ref{Prop_time_s_closeness} if $\delta \leq \ov\delta (  H,V, \eps)$.
\end{proof}

\subsection{Future and past continuity} \label{subsec_fut_past_cont}
In this subsection we define a continuity notion for metric flows.
This notion will imply continuity of time-slices in the $GW_1$-sense if we equip the flow with a conjugate heat flow.
It will turn out that an $H$-concentrated flow is continuous on the complement of a countable set of times.

Let $\XX$ be a metric flow over some subset $I \subset \IR$.

\begin{Definition} \label{Def_flow_continuous}
We say that $\XX$ is {\bf continuous at time $t_0 \in I$} if for all conjugate heat flows $(\mu_t)_{t \in I'}$ that satisfy $t_0 \in I'  \subset I$, $ \Var (\mu_{t}) < \infty$ for all $t \in I'$, the function 
\begin{equation} \label{eq_t_mapsto_intint_d}
 t \longmapsto \int_{\XX_t} \int_{\XX_t} d_t \, d\mu_t d\mu_t 
\end{equation}
is continuous at $t_0$.
We say that $\XX$ is {\bf past continuous at time $t_0$} if $\XX_{\leq t_0}$ is continuous at time $t_0$ and {\bf future continuous at time $t_0$} if $\XX_{\geq t_0}$ is continuous at time $t_0$.
The metric flow $\XX$ is called {\bf (past/future) continuous} if the same is true at all times $t_0 \in I$.
\end{Definition}

\begin{Remark}
Past/future continuity are equivalent to  left/right semi-continuity of the function (\ref{eq_t_mapsto_intint_d}) for any $(\mu_t)_{t \in I'}$ with the properties specified in Definition~\ref{Def_flow_continuous}.
\end{Remark}

\begin{Remark}
It follows from the definition that a flow is continuous at time $t_0$ if and only if it is both past and future continuous.
\end{Remark}

\begin{Remark}
By Lemma~\ref{Lem_intd_diff_Var_diff} we have
\[ \limsup_{t \nearrow t_0} \int_{\XX_t} \int_{\XX_t} d_t \, d\mu_t d\mu_t \leq \int_{\XX_{t_0}} \int_{\XX_{t_0}} d_{t_0} \, d\mu_{t_0} d\mu_{t_0} \leq
\liminf_{t \searrow t_0} \int_{\XX_t} \int_{\XX_t} d_t \, d\mu_t d\mu_t,
 \]
so in order to verify continuity at time $t_0$ it suffices to show that
\begin{equation} \label{eq_lim_int_d_t0}
 \lim_{i \to \infty}\int_{\XX_{t_i}} \int_{\XX_{t_i}} d_{t_i} \, d\mu_{t_i} d\mu_{t_i} = \int_{\XX_{t_0}} \int_{\XX_{t_0}} d_{t_0} \, d\mu_{t_0} d\mu_{t_0} 
\end{equation}
for two sequences of the form $t_i \nearrow t_0, t_i \searrow t_0$, if they exist.
Similarly, for past/future continuity, we only need to verify (\ref{eq_lim_int_d_t0}) for one sequence of the appropriate form.
\end{Remark}

In Examples~\ref{Ex_non_Hausdorff_continuity}, \ref{Ex_neckpinch_continuity} below we will discuss some examples of flows that satisfy or violate Definition~\ref{Def_flow_continuous}.

The following theorem states that we only need to require (left/right) semi-continuity of (\ref{eq_t_mapsto_intint_d}) for \emph{one} conjugate heat flow $(\mu_t)$.
Moreover, we obtain that (past/future) continuity implies continuity of the time-slices in the $GW_1$-sense if we equip $\XX$ with a conjugate heat flow.
We also obtain that (left/right) semi-continuity of $t \mapsto \Var(\mu_t)$ is a necessary condition for (past/future) continuity.

\begin{Theorem} \label{Thm_future_cont_equiv}
Let $\XX$ be an $H$-concentrated metric flow over some subset $I \subset \IR$, where $H < \infty$, and let $t_0 \in I$.
Suppose that $\supp \XX_{t_0} = \XX_{t_0}$.
Let $\mathcal{C}_{t_0}$ be the set of conjugate heat flows $(\mu_t)_{t \in I'}$ on $\XX$ with $t_0 \in I'$, $\Var(\mu_t) < \infty$ for all $t \in I'$.
Let $\mathcal{C}^*_{t_0} \subset \mathcal{C}_{t_0}$ be the subset of conjugate heat flows $(\mu_t)_{t \in I'}$ with $(t_0 - \eps, t_0 + \eps )\cap I \subset I'$ for some $\eps > 0$ and $\supp \mu_{t_0} =  \XX_{t_0}$.

Then the following conditions are equivalent:
\begin{enumerate}[label=(\alph*)]
\item \label{Prop_future_cont_equiv_a} $\XX$ is continuous at time $t_0$.
\item \label{Prop_future_cont_equiv_b} There is a conjugate heat flow $(\mu_t)_{t \in I'} \in \mathcal{C}^*_{t_0}$  such that 
\begin{equation} \label{eq_t_mapsto_intint_d_in_Thm}
 t \longmapsto \int_{\XX_t} \int_{\XX_t} d_t \, d\mu_t d\mu_t 
\end{equation}
is continuous at time $t_0$.
\item \label{Prop_future_cont_equiv_c}  For any conjugate heat flow $(\mu_t)_{t \in I'}\in \mathcal{C}_{t_0}$ we have
\[  (\XX_{t}, d_{t}, \mu_{t}) \xrightarrow[t \to t_0]{\quad GW_1 \quad} (\XX_{t_0}, d_{t_0}, \mu_{t_0}).  \]
\item \label{Prop_future_cont_equiv_d}  There is a conjugate heat flow $(\mu_t)_{t \in I'}\in \mathcal{C}^*_{t_0}$ such that
\[  (\XX_{t_i}, d_{t_i}, \mu_{t_i}) \xrightarrow[i \to \infty]{\quad GW_1 \quad} (\XX_{t_0}, d_{t_0}, \mu_{t_0})  \]
for two sequences $t_i \nearrow t_0$ (if $\inf I' < t_0$) and $t_i \searrow t_0$, (if $\sup I' > t_0$).
\item \label{Prop_future_cont_equiv_e} There is a neighborhood $t_0 \in I_0 \subset I$ of $t_0$ and for any $t \in I_0 \setminus \{ t_0 \}$ there are isometric embeddings $\varphi_t : (\XX_{t}, d_{t}) \to (Z_t, d_{t}^Z)$, $\varphi^0_t : (\XX_{t_0}, d_{t_0}) \to (Z_t, d^Z_{t})$ into a metric space $(Z_t, d^Z_t)$ such that the following holds.
For any conjugate heat flow $(\mu_t)_{t \in I'} \in \mathcal{C}_{t_0}$ the probability measures
\[ q_t := \begin{cases}
 \int_{\XX_{t}} (\nu_{y;t_0} \otimes \delta_y ) \, d\mu_{t} (y) & \text{if $t > t_0$} \\
  \int_{\XX_{t_0}} (  \delta_x \otimes \nu_{x;t} ) \, d\mu_{t_0} (x) & \text{if $t < t_0$}
  \end{cases} \]
are couplings between $\mu_{t_0}, \mu_{t}$ and
\[ \lim_{t \to t_0} \int_{\XX_{t_0} \times \XX_{t}} d^Z_t ( \varphi^0_{t} (x), \varphi_{t} (y)) dq_{t} (x,y)  = 0. \]
In particular,
\[ \lim_{t \to t_0} d^{Z_t}_{W_1} ( (\varphi_{t}^0)_* \mu_{t_0}, (\varphi_{t})_* \mu_{t} ) = 0.\]
\end{enumerate}
Moreover, Conditions~\ref{Prop_future_cont_equiv_a}--\ref{Prop_future_cont_equiv_e} are implied by:
\begin{enumerate}[label=(\alph*), start=6]
\item \label{Prop_future_cont_equiv_f} There is a conjugate heat flow $(\mu_t)_{t \in I'} \in \mathcal{C}^*_{t_0}$ such that $t \mapsto \Var (\mu_t)$ is continuous at time $t_0$.
\end{enumerate}

The corresponding equivalences for past/future continuity follow by applying this theorem to the restricted flows $\XX_{\leq t_0}$ and $\XX_{\geq t_0}$.
In the case of future continuity, we can drop the assumption $\supp \XX_{t_0} = \XX_{t_0}$ of the theorem and the condition $\supp \mu_{t_0} =\supp \XX_{t_0}$ from the definition of $\mathcal{C}^*_{t_0}$.
In the case of past continuity, the assumption $\supp \XX_{t_0} = \XX_{t_0}$ from the theorem may also be dropped.

Lastly, $\XX$ is past continuous at time $t_0$, if and only if for any $x_1, x_2 \in \supp \XX_{t_0}$ we have
\begin{equation} \label{eq_d_W_1_converges}
 \lim_{t \nearrow t_0} d_{W_1}^{\XX_t} (\nu_{x_1;t}, \nu_{x_2, t}) = d_{t_0} (x_1, x_2). 
\end{equation}
\end{Theorem}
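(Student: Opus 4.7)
My plan is to use the \emph{sandwich estimate}
\[
d_{W_1}^{\XX_s}(\nu_{y_1;s},\nu_{y_2;s}) \;\leq\; \int_{\XX_s}\!\!\int_{\XX_s} d_s\,d\nu_{y_1;s}\,d\nu_{y_2;s} \;\leq\; d_{W_1}^{\XX_s}(\nu_{y_1;s},\nu_{y_2;s}) + \sqrt{H(t-s)},
\]
valid for all $y_1,y_2 \in \XX_t$ and $s \leq t$. The left inequality uses the product coupling (Proposition~\ref{Prop_compare_CHF}\ref{Prop_compare_CHF_c}), while the right one is just (\ref{eq_f_geq_3_sqrtH}) rearranged. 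Consequently, for fixed $x_1, x_2 \in \XX_{t_0}$, the limit $\lim_{t \nearrow t_0} d_{W_1}^{\XX_t}(\nu_{x_1;t},\nu_{x_2;t}) = d_{t_0}(x_1,x_2)$ is equivalent to $\lim_{t \nearrow t_0} \int\!\int d_t\,d\nu_{x_1;t}d\nu_{x_2;t} = d_{t_0}(x_1,x_2)$, and I will work with the latter form throughout.

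For the forward direction, fix $x_1,x_2 \in \supp \XX_{t_0}$ and test past continuity against the conjugate heat flow $\mu_t := \tfrac12(\nu_{x_1;t}+\nu_{x_2;t})$ on $I \cap (-\infty,t_0]$, which has finite variance by $H$-concentration and satisfies $\mu_{t_0} = \tfrac12(\delta_{x_1}+\delta_{x_2})$. Bilinear expansion gives
\[
\int\!\!\int d_t\,d\mu_t\,d\mu_t = \tfrac14\sum_{i=1}^2 \int\!\!\int d_t\,d\nu_{x_i;t}\,d\nu_{x_i;t} + \tfrac12 \int\!\!\int d_t\,d\nu_{x_1;t}\,d\nu_{x_2;t},
\]
and each diagonal term is bounded by $\tfrac14\sqrt{\Var(\nu_{x_i;t})} \leq \tfrac14\sqrt{H(t_0-t)}$ by Cauchy-Schwarz, hence vanishes as $t \nearrow t_0$. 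Since $\int\!\int d_{t_0}\,d\mu_{t_0}d\mu_{t_0} = \tfrac12 d_{t_0}(x_1,x_2)$, past continuity forces the cross term to converge to $d_{t_0}(x_1,x_2)$, and the sandwich delivers the desired limit.

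For the reverse direction, past continuity is by definition continuity of $\XX_{\leq t_0}$ at $t_0$, in which context $\supp \XX_{t_0} = \XX_{t_0}$, so the hypothesis applies to every pair in $\XX_{t_0}$. Let $(\mu_t)_{t \in I'}$ be any conjugate heat flow on $\XX_{\leq t_0}$ with $t_0 \in I'$ and $\Var(\mu_t)<\infty$. Using the reproduction formula $\mu_t = \int \nu_{x;t}\,d\mu_{t_0}(x)$ and Fubini,
\[
\int\!\!\int d_t\,d\mu_t\,d\mu_t = \int_{\XX_{t_0}}\!\!\int_{\XX_{t_0}}\!\bigg(\int\!\!\int d_t\,d\nu_{x_1;t}\,d\nu_{x_2;t}\bigg)\,d\mu_{t_0}(x_1)\,d\mu_{t_0}(x_2).
\]
By the sandwich and the hypothesis the inner integrand tends pointwise to $d_{t_0}(x_1,x_2)$, and the upper half of the sandwich together with Proposition~\ref{Prop_compare_CHF}\ref{Prop_compare_CHF_c} dominates it by $d_{t_0}(x_1,x_2) + \sqrt{H(t_0-t)}$, which lies in $L^1(\mu_{t_0}\otimes\mu_{t_0})$ since $\Var(\mu_{t_0})<\infty$ implies $d_{t_0} \in L^1(\mu_{t_0}\otimes\mu_{t_0})$ by Cauchy-Schwarz. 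Dominated convergence then yields left-continuity of $t \mapsto \int\!\int d_t\,d\mu_t d\mu_t$ at $t_0$, i.e., past continuity. The main obstacle is producing this integrable dominating function; once the sandwich upper bound is in hand, the rest is routine.
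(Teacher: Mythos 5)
Your proposal proves only the final biconditional of the theorem --- the characterization of past continuity via equation~(\ref{eq_d_W_1_converges}). It says nothing about the equivalence of Conditions~(a)--(e), the implication (f)~$\Rightarrow$~(a)--(e), or the remarks on restricting to $\XX_{\leq t_0}$, $\XX_{\geq t_0}$ and weakening hypotheses, which together constitute nearly the whole content of the theorem. The hardest piece, (b)~$\Rightarrow$~(e), depends on the quantitative closeness of nearby time-slices (Propositions~\ref{Prop_mass_distribution}, \ref{Prop_time_s_closeness} and Lemma~\ref{Lem_construction_Z}) together with a comparison of an arbitrary conjugate heat flow in $\mathcal{C}_{t_0}$ against a fixed reference flow in $\mathcal{C}^*_{t_0}$; none of this appears in your argument, so as a proof of the stated theorem the proposal is substantially incomplete.

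For the one piece you do prove, the argument is correct, and the forward direction is genuinely different from the paper's. The paper first establishes (a)$\Leftrightarrow$(e) and only then deduces (\ref{eq_d_W_1_converges}) by applying Condition~(e) to $\mu_t=\nu_{x_j;t}$; you instead test Definition~\ref{Def_flow_continuous} directly against $\mu_t=\tfrac12(\nu_{x_1;t}+\nu_{x_2;t})$, expand bilinearly, and discard the diagonal terms via $\Var(\nu_{x_i;t})\leq H(t_0-t)$. This is self-contained, shorter, and does not rely on (b)$\Rightarrow$(e) --- a clean simplification for this paragraph. Your reverse direction is essentially the paper's (dominated convergence with $d_{t_0}$ as dominator, packaged through the sandwich rather than through the auxiliary function $f_t(x_1,x_2)=d_{t_0}(x_1,x_2)-d_{W_1}^{\XX_t}(\nu_{x_1;t},\nu_{x_2;t})\geq 0$). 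One caution there: you assert the hypothesis holds for every pair in $\XX_{t_0}$ because ``in the context of $\XX_{\leq t_0}$'' one has $\supp\XX_{t_0}=\XX_{t_0}$; but the set $\supp\XX_{t_0}$ in the theorem's statement is taken with respect to $\XX$, not the restriction, and a conjugate heat flow on $\XX_{\leq t_0}$ can charge $\XX_{t_0}\setminus\supp\XX_{t_0}$. You need to either show that $\mu_{t_0}$ cannot do so, or record (as the paper's own proof implicitly does) that the claim is really being made with $\XX_{t_0}$ in place of $\supp\XX_{t_0}$.
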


\begin{Remark}
If $(\mu_t)_{t \in I'}$ satisfies a stronger concentration bound (for example, an integral Gaussian bound), then Condition~\ref{Prop_future_cont_equiv_f} is equivalent to Conditions~\ref{Prop_future_cont_equiv_a}--\ref{Prop_future_cont_equiv_e}.
\end{Remark}

Since $t \mapsto \Var(\mu_t) + Ht$ is non-decreasing (see Proposition~\ref{Prop_H_monotonicity_Var}), we obtain the following important consequence.

\begin{Corollary} \label{Cor_met_flow_cont_ae}
An $H$-concentrated metric flow is continuous everywhere except, possibly, at a countable set of times.
\end{Corollary}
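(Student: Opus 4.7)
The plan is to exploit the near-monotonicity of the variance along a conjugate heat flow (Proposition~\ref{Prop_H_monotonicity_Var}) to isolate a countable set of times where continuity might fail, and then verify continuity of $\XX$ at the remaining times via criterion~(f) of Theorem~\ref{Thm_future_cont_equiv}.

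First, I would choose a countable sequence of times $t_1, t_2, \ldots \in I$ with $\sup_n t_n = \sup I$, pick points $x_n \in \XX_{t_n}$, and consider the conjugate heat kernels $(\mu^n_t := \nu_{x_n; t})_{t \in I_n}$, where $I_n := I \cap (-\infty, t_n]$.  These are conjugate heat flows by Proposition~\ref{Prop_properties_CHK}\ref{Prop_properties_CHK_b}, and the $H$-concentration property applied with $x_1 = x_2 = x_n$ gives $\Var(\mu^n_t) \leq H(t_n - t) < \infty$.  By Proposition~\ref{Prop_H_monotonicity_Var} the function $t \mapsto \Var(\mu^n_t) + Ht$ is non-decreasing on $I_n$, and any non-decreasing real function has at most countably many discontinuities; the same is therefore true of $t \mapsto \Var(\mu^n_t)$.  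Let $D_n \subset I_n$ denote this countable exceptional set and set $D := \bigcup_n D_n$, which remains countable; if $\sup I \in I$, adjoin the single point $\sup I$ to $D$.

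Next, for any $t_0 \in I \setminus D$, we have $t_0 < \sup I$, so we may choose $n$ with $t_n > t_0$ and set $(\mu_t)_{t \in I_n} := (\mu^n_t)_{t \in I_n}$.  Then $(t_0 - \eps, t_0 + \eps) \cap I \subset I_n$ for sufficiently small $\eps > 0$, the function $t \mapsto \Var(\mu_t)$ is continuous at $t_0$ by the choice of $D$, and $\supp \mu_{t_0} = \supp \XX_{t_0}$ since $t_0 < \sup I_n = t_n$ (by combining Proposition~\ref{Prop_compare_CHF}\ref{Prop_compare_CHF_a} with Proposition~\ref{Prop_supp_XX}).  These are precisely the data needed for condition~(f) of Theorem~\ref{Thm_future_cont_equiv}, applied separately to the restricted flows $\XX_{\leq t_0}$ (for past continuity) and $\XX_{\geq t_0}$ (for future continuity) via the dropped-support variants spelled out in the final paragraph of that theorem.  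The implications (f)$\Rightarrow$(a) then give past and future continuity of $\XX$ at $t_0$, which together yield continuity at $t_0$; since $D$ is countable, this proves the corollary.

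The main obstacle I anticipate is the careful handling of the full-support clauses in Theorem~\ref{Thm_future_cont_equiv}: the measures $\mu_{t_0} = \nu_{x_n; t_0}$ have support only equal to $\supp \XX_{t_0}$, not necessarily to all of $\XX_{t_0}$, so condition~(f) in its primary form cannot be applied directly.  Invoking the dropped-support versions for $\XX_{\leq t_0}$ and $\XX_{\geq t_0}$ separately is a minor but essential bookkeeping step.
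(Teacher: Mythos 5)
Your proof is correct and follows the same approach as the paper, which proves this corollary in one line by appealing to the monotonicity of $t \mapsto \Var(\mu_t) + Ht$ from Proposition~\ref{Prop_H_monotonicity_Var} and then implicitly invoking Theorem~\ref{Thm_future_cont_equiv}\ref{Prop_future_cont_equiv_f}. You have simply filled in the bookkeeping the paper leaves to the reader: choosing a countable cofinal family of conjugate heat kernels, collecting the countably many discontinuities of each variance function, and checking that the restricted flows $\XX_{\leq t_0}$ and $\XX_{\geq t_0}$ satisfy the hypotheses of the dropped-support variants of condition~(f).
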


\begin{Example} \label{Ex_non_Hausdorff_continuity}
The metric flow from Example~\ref{Ex_not_Hausdorff} is not past continuous at time $0$ if $\# X^* > 1$.
\end{Example}

\begin{Example} \label{Ex_neckpinch_continuity}
Consider a (possibly rotationally symmetric) singular Ricci flow $\mathcal{M}$ on $S^2 \times S^1$ that develops a non-degenerate neckpinch of finite diameter at some time $t_0 > 0$.
Such a flow can be constructed using the techniques from  \cite{Angenent_Knopf_precise_asymptotics, Angenent_Knopf_private}.
By Theorem~\ref{Thm_sing_flow_to_met_flow}, $\mathcal{M}$ corresponds to a metric flow $\XX$, whose time-slices equal the metric completions of the time-slices of $\mathcal{M}$.
Note that the time-slice $(\XX_{t_0}, d_{t_0})$ is homeomorphic to $S^3$.
$\XX$ is future continuous, which can be verified using Condition~\ref{Prop_future_cont_equiv_b} in Theorem~\ref{Thm_future_cont_equiv}.
However, $\XX$ is not past continuous.
To see this, consider two points $x_1,x_2 \in \XX_{t_0}$ near the neckpinch, but on opposite sides.
These points violate (\ref{eq_d_W_1_converges}).

We may also construct another metric flow $\XX'$ based on $\mathcal{M}$, which is not future continuous, but past continuous, as follows.
Let $\XX'_t := \XX_t$ for all $t \neq t_0$ and define 
\[ d'_{t_0} (x_1,x_2) := 
 \lim_{t \nearrow t_0} d_{W_1}^{\XX_t} (\nu_{x_1;t}, \nu_{x_2, t})
 = \lim_{t \nearrow t_0} d_{g_{t}} (x_1(t), x_2(t)). \]
 It can be shown that the time-slices $(\XX'_t, d'_t)$ can be equipped with the structure of a metric flow such that $\XX_{\neq t_0}$ and $\XX'_{\neq t_0}$ are  flow isometric.
 Note that the time-slice $(\XX'_{t_0}, d'_{t_0})$ is homeomorphic to $S^2 \times S^1$ with one collapsed cross-sectional sphere.
 This flow will be less interesting to us, because the metric of $d'_{t_0}$ restricted to $\MM_{t_0} \subset \XX'_{t_0}$ does not agree with the length metric of $g_{t_0}$.
\end{Example}

\begin{proof}[Proof of Theorem~\ref{Thm_future_cont_equiv}.]
The implication \ref{Prop_future_cont_equiv_f} $\Rightarrow$ \ref{Prop_future_cont_equiv_b} holds due to Lemma~\ref{Lem_intd_diff_Var_diff}.

So it remains to show the equivalence of \ref{Prop_future_cont_equiv_a}--\ref{Prop_future_cont_equiv_e} and the statement involving (\ref{eq_d_W_1_converges}).
The implications \ref{Prop_future_cont_equiv_a} $\Rightarrow$ \ref{Prop_future_cont_equiv_b}, \ref{Prop_future_cont_equiv_c} $\Rightarrow$ \ref{Prop_future_cont_equiv_d} and \ref{Prop_future_cont_equiv_e} $\Rightarrow$ \ref{Prop_future_cont_equiv_c} are obvious. 
The implications \ref{Prop_future_cont_equiv_c} $\Rightarrow$ \ref{Prop_future_cont_equiv_a} and  \ref{Prop_future_cont_equiv_d} $\Rightarrow$ \ref{Prop_future_cont_equiv_b} are consequences of Lemmas~\ref{Lem_W1_close_intint_d_close}, \ref{Lem_intd_diff_Var_diff}.
The implication \ref{Prop_future_cont_equiv_a} $\Rightarrow$  \ref{Prop_future_cont_equiv_c} follows from Propositions~\ref{Prop_mass_distribution}, \ref{Prop_time_s_closeness}; note that in the case in which there is no sequence $t_i \searrow t_0$, $t_i \in I'$, we don't need to apply Proposition~\ref{Prop_mass_distribution} and can instead just set $b := b^{(\XX_{t_0}, d_{t_0}, \mu_{t_0})}_1$ in Proposition~\ref{Prop_time_s_closeness}.
So to see the equivalence of \ref{Prop_future_cont_equiv_a}--\ref{Prop_future_cont_equiv_e} it remains to establish the implication \ref{Prop_future_cont_equiv_b} $\Rightarrow$  \ref{Prop_future_cont_equiv_e}.

Suppose now that Condition~\ref{Prop_future_cont_equiv_b} holds and let $(\td\mu_t)_{t \in \td I'} \in \mathcal{C}^*_{t_0}$ be the conjugate heat flow for which (\ref{eq_t_mapsto_intint_d_in_Thm}) is continuous at time $t_0$.
By Proposition~\ref{Prop_time_s_closeness}, for any $t \in I_0 \setminus \{ t_0 \}$ there are closed subsets $W_t \subset \XX_{t}$ (if $t > t_0$) or $W_t \subset \XX_{t_0}$ (if $t < t_0$), as well as isometric embeddings $\varphi_t : (\XX_{t}, d_{t}) \to (Z_t, d_{t}^Z)$, $\varphi^0_t : (\XX_{t_0}, d_{t_0}) \to (Z_t, d^Z_{t})$ into a common metric space $(Z_t, d^Z_t)$ and numbers $\eps_t > 0$ such that for all $t \in I_0 \setminus \{ t_0 \}$ we have for any $y_1, y_2 \in W_t$:
\[ \begin{cases} 
 0 \leq d_{t} (y_1,y_2) - d_{W_1}^{Z_{t}} (\nu_{y_1;t_0} , \nu_{y_2;t_0})  \leq \eps_t &\text{if $t > t_0$} \\
 0 \leq d_{t_0} (y_1,y_2) - d_{W_1}^{Z_t} (\nu_{y_1;t} , \nu_{y_2;t})  \leq \eps_t &\text{if $t < t_0$}
 \end{cases}
 \]
 and such that
\begin{equation} \label{eq_lim_complement_W}
 \lim_{t \to t_0} \begin{cases} \td\mu_{t} ( \XX_{t} \setminus W_{t} ) & \text{if $t > t_0$} \\ \td\mu_{t_0} ( \XX_{t_0} \setminus W_{t} ) & \text{if $t < t_0$}
 \end{cases} = 0 .
\end{equation}
Consider some possibly different conjugate heat flow $(\mu_t)_{t \in I'} \in \mathcal{C}_{t_0}$.
We claim that there are numbers $\delta_t > 0$, $t \in I_0 \setminus \{ t_0 \}$ such that $\lim_{t \to t_0} \delta_t = 0$ and
\begin{equation} \label{eq_mu_complement_W}
  \begin{cases} \mu_{t} ( \XX_{t} \setminus W_{t} )\leq \delta_t & \text{if $t > t_0$} \\ \mu_{t_0} ( \XX_{t_0} \setminus B(W_{t}, \delta_t) )\leq \delta_t & \text{if $t < t_0$}
 \end{cases}   
\end{equation}
 This will then imply Condition~\ref{Prop_future_cont_equiv_e} using Lemma~\ref{Lem_construction_Z}.
 To see the second bound in (\ref{eq_mu_complement_W}), note that it suffices to show that for all $\delta > 0$
 \[ \lim_{t \nearrow t_0} \mu_{t_0} ( \XX_{t_0} \setminus B(W_{t}, \delta) ) = 0. \]
This follows from the fact that $\supp \mu_{t_0} \subset  \XX_{t_0} = \supp \td\mu_{t_0}$. 
Let us now show the first bound in (\ref{eq_mu_complement_W}).
Fix some $t^* \in I_0 \cap I'$, $t^* > t_0$ and observe that for any $t \in I_0 \cap I'$ with $t_0 < t < t^*$ we have
\[  \td\mu_{t} ( \XX_{t} \setminus W_{t} ) = \int_{\XX_{t^*}} \nu_{x; t} ( \XX_{t} \setminus W_{t} ) d\td\mu_{t^*} (x), \qquad  \mu_{t} ( \XX_{t} \setminus W_{t} ) = \int_{\XX_{t^*}} \nu_{x; t} ( \XX_{t} \setminus W_{t} ) d\mu_{t^*} (x). \]
Since the first integral goes to $0$ as $t \searrow t_0$, we obtain using Definition~\ref{Def_metric_flow}\ref{Def_metric_flow_6} that $\nu_{x; t} ( \XX_t \setminus W_t ) \to 0$ uniformly on bounded subsets.
Therefore, the second integral goes to $0$ as well.

Lastly, we prove the statement involving (\ref{eq_d_W_1_converges}).
Suppose first that $\XX$ is past continuous at time $t_0$ and let $x_1, x_2 \in \XX_{t_0}$.
Then by applying Condition~\ref{Prop_future_cont_equiv_e} to $\mu_{t} = \nu_{x_j;t}$, $j= 1,2$, we obtain that for $t < t_0$ close to $t_0$ 
\[ \lim_{t \nearrow t_0} d_{W_1}^{Z_t} ( (\varphi^0_t)_* \delta_{x_j}, (\varphi_t)_* \nu_{x_j; t} ) = 0. \]
It follows that
\begin{multline*}
 \lim_{t \nearrow t_0} d_{W_1}^{\XX_t} (  \nu_{x_1; t}, \nu_{x_2; t})
= \lim_{t \nearrow t_0} d_{W_1}^{Z_t} ( (\varphi_t)_* \nu_{x_1; t}, (\varphi_t)_* \nu_{x_2; t}) \\
\leq \lim_{t \nearrow t_0} \big( d_{W_1}^{Z_t} ( (\varphi_t)_* \nu_{x_1; t}, (\varphi^0_t)_* \delta_{x_1}) + d_{W_1}^{Z_t} ((\varphi^0_t)_* \delta_{x_1},(\varphi^0_t)_* \delta_{x_2}) + d_{W_1}^{Z_t} ( (\varphi^0_t)_* \delta_{x_2}, (\varphi_t)_* \nu_{x_2; t} )    \big) \\ = d_{t_0} (x_1, x_2). 
\end{multline*}
Conversely, suppose that (\ref{eq_d_W_1_converges}) holds for all $x_1, x_2 \in \XX_{t_0}$ and consider some conjugate heat flow $(\mu_t)_{t \in I'}$ with $(t_0 - \eps, t_0] \cap I \subset I'$ for some $\eps > 0$, $\Var(\mu_{t_0}) < \infty$.
For $t \in I'$ define $f_t : \XX_{t_0} \times \XX_{t_0} \to \IR$ by
\[ f_t (x_1, x_2) := d_{t_0} (x_1, x_2) - d_{W_1}^{\XX_t} (\nu_{x_1;t}, \nu_{x_2;t}) \geq 0. \]
Since $f_t \leq d_{t_0} (x_1, x_2)$ and
\[ \int_{\XX_{t_0}} \int_{\XX_{t_0}} d_{t_0} \, d\mu_{t_0} d\mu_{t_0} 
\leq \sqrt{\Var (\mu_{t_0})} < \infty, \]
we obtain by dominated convergence that
\[ \lim_{t \nearrow t_0} \int_{\XX_{t_0}} \int_{\XX_{t_0}} f_t \, d\mu_{t_0} d\mu_{t_0}  = 0. \]
Since
\[ f_t (x_1, x_2) \geq d_{t_0} (x_1, x_2) -  \int_{\XX_t} \int_{\XX_t} d_t \, d\nu_{x_1;t}d\nu_{x_2;t}, \]
we obtain that
\[ \limsup_{t \nearrow t_0} \bigg( \int_{\XX_{t_0}} \int_{\XX_{t_0}} d_{t_0} \, d\mu_{t_0} d\mu_{t_0} - \int_{\XX_{t}} \int_{\XX_{t}} d_t \, d\mu_{t} d\mu_{t} \bigg) \leq 0. \]
Combining this with Lemma~\ref{Lem_intd_diff_Var_diff} implies that (\ref{eq_t_mapsto_intint_d_in_Thm}) is left semi-continuous at $t_0$, which implies past continuity.
\end{proof}

We will also need the following result:

\begin{Proposition} \label{Prop_fut_cont_two_points_approx}
Let $\XX$ be an $H$-concentrated metric flow over some subset $I \subset \IR$, where $H< \infty$.
Suppose that $\XX$ is future continuous at time $t_0 \in I$ and suppose that there is a sequence of times $t_i \in I$, $t_i \searrow t_0$.
Then for any two points $x_1, x_2 \in \supp \XX_{t_0}$, we can find points $x_{j,i} \in \XX_{t_i}$, $j = 1,2$, $i = 1, 2, \ldots$ such that $x_{j,i} \to x_j$ and
\[ \lim_{i \to \infty} d_{W_1}^{\XX_{t_0}} ( \delta_{x_j}, \nu_{x_{j,i}; t_0} ) = 0, \qquad \lim_{i \to \infty} d_{t_i} (x_{1,i}, x_{2,i}) = d_{t_0} (x_1, x_2). \]
\end{Proposition}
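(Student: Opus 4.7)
The idea is to combine the integral bound from Lemma~\ref{Lem_intd_diff_Var_diff} with the full-support hypothesis $x_j \in \supp \XX_{t_0}$ to extract, by a Markov-type argument, points $x_{j,i} \in \XX_{t_i}$ whose conjugate heat kernels at $t_0$ concentrate near $x_j$ and whose pairwise $d_{t_i}$-distance matches $d_{t_0}(x_1,x_2)$ in the limit.

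Fix any $t^* \in I$ with $t^* > t_0$ and any $y^* \in \XX_{t^*}$, and set $\mu_t := \nu_{y^*;t}$ for $t \in [t_0, t^*] \cap I$. Since $x_j \in \supp \XX_{t_0} = \supp \mu_{t_0}$ by Proposition~\ref{Prop_supp_XX}, the quantities $c_j(r) := \mu_{t_0}(B(x_j,r))$ are strictly positive for every $r > 0$. The function $f_i(y_1,y_2) := d_{t_i}(y_1,y_2) - d_{W_1}^{\XX_{t_0}}(\nu_{y_1;t_0}, \nu_{y_2;t_0})$ is nonnegative by Proposition~\ref{Prop_compare_CHF}\ref{Prop_compare_CHF_c}, and Lemma~\ref{Lem_intd_diff_Var_diff} combined with future continuity (Theorem~\ref{Thm_future_cont_equiv}) forces $\int\!\int f_i\, d\mu_{t_i} d\mu_{t_i} \to 0$ as $i \to \infty$. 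Setting $E_{j,i}(r) := \{y \in \XX_{t_i} : \nu_{y;t_0}(B(x_j,r)) \geq c_j(r)/2\}$, the reproduction formula $c_j(r) = \int \nu_{y;t_0}(B(x_j,r))\, d\mu_{t_i}(y)$ together with Markov's inequality yields $\mu_{t_i}(E_{j,i}(r)) \geq c_j(r)/2$, whereas a second application of Markov gives $(\mu_{t_i} \otimes \mu_{t_i})(\{f_i > r\}) \leq r^{-1} \int\!\int f_i\, d\mu_{t_i} d\mu_{t_i}$. For fixed $r > 0$, the latter is below $c_1(r)c_2(r)/4 \leq \mu_{t_i}(E_{1,i}(r))\mu_{t_i}(E_{2,i}(r))$ once $i$ is large, so the intersection $(E_{1,i}(r) \times E_{2,i}(r)) \cap \{f_i \leq r\}$ has positive $\mu_{t_i} \otimes \mu_{t_i}$-measure and any pair $(x_{1,i}, x_{2,i})$ lying in it is a valid choice. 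A standard diagonal argument produces a single sequence along which these conditions hold with $r = r_i \searrow 0$.

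For any $H$-center $z_{j,i} \in \XX_{t_0}$ of $x_{j,i}$, restricting the bound $\Var(\delta_{z_{j,i}}, \nu_{x_{j,i};t_0}) \leq H(t_i - t_0)$ to $B(x_j, r_i)$ and using $\nu_{x_{j,i};t_0}(B(x_j,r_i)) \geq c_j(r_i)/2$ yields
\[ d_{t_0}(z_{j,i}, x_j) \leq r_i + \sqrt{2H(t_i - t_0)/c_j(r_i)}, \]
which tends to $0$ provided $r_i$ and $c_j(r_i)$ are coupled to $t_i - t_0$ suitably in the diagonal argument. Consequently $d_{W_1}^{\XX_{t_0}}(\delta_{x_j}, \nu_{x_{j,i};t_0}) \leq d_{t_0}(z_{j,i}, x_j) + \sqrt{H(t_i - t_0)} \to 0$, and combining this with $f_i(x_{1,i}, x_{2,i}) \leq r_i$ and the $W_1$-triangle inequality forces $d_{t_i}(x_{1,i}, x_{2,i}) \to d_{t_0}(x_1, x_2)$. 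The convergence $x_{j,i} \to x_j$ in the natural topology then follows via Proposition~\ref{Prop_topology_properties}\ref{Prop_topology_properties_c}, using monotonicity of $W_1$ along the flow (Proposition~\ref{Prop_compare_CHF}\ref{Prop_compare_CHF_b}) to transfer the $W_1$-convergence at $t_0$ to all earlier times. The main obstacle is the diagonal argument itself: one must choose $r_i \searrow 0$ slowly enough that both $r_i^{-1} \int\!\int f_i\, d\mu_{t_i} d\mu_{t_i} < c_1(r_i)c_2(r_i)/4$ (guaranteeing the selection set is nonempty) and $(t_i - t_0)/c_j(r_i) \to 0$ (guaranteeing convergence of $H$-centers) hold simultaneously, which requires balancing three potentially competing rates without any a priori quantitative control on the modulus $r \mapsto c_j(r)$.
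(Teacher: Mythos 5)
Your proof is correct, and it takes a genuinely different route from the paper.

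The paper's proof invokes Theorem~\ref{Thm_future_cont_equiv}\ref{Prop_future_cont_equiv_e}: it uses the common metric spaces $(Z_t, d^Z_t)$ with the two isometric embeddings $\varphi_t, \varphi^0_t$ and the couplings $q_t = \int_{\XX_t} (\nu_{y;t_0} \otimes \delta_y)\,d\mu_t(y)$. A key preliminary claim (proved there via $H$-centers and Lemma~\ref{Lem_nu_BA_bound}) is that for each $x \in \supp\XX_{t_0}$ one can find $x_i \in \XX_{t_i}$ with \emph{both} $d_{W_1}^{\XX_{t_0}}(\delta_x, \nu_{x_i;t_0}) \to 0$ and $d^{Z_{t_i}}(\varphi_{t_i}(x_i), \varphi^0_{t_i}(x)) \to 0$. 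The second condition localizes $x_i$ within the common metric space, so the distance convergence $d_{t_i}(x_{1,i}, x_{2,i}) \to d_{t_0}(x_1, x_2)$ is a triangle inequality in $Z_{t_i}$, using that $\varphi_{t_i}, \varphi^0_{t_i}$ are isometric embeddings. Your proof avoids the $Z_t$/$\varphi$/$q_t$ machinery entirely: you go back to Lemma~\ref{Lem_intd_diff_Var_diff} and the definition of future continuity (equivalence (b) of Theorem~\ref{Thm_future_cont_equiv} if you like) to show $\iint f_i\,d\mu_{t_i}d\mu_{t_i} \to 0$ directly, then extract $(x_{1,i}, x_{2,i})$ by a Markov-inequality selection intersected with the density sets $E_{j,i}(r)$. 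The distance convergence comes from $f_i(x_{1,i},x_{2,i}) \leq r_i$ together with the $W_1$-triangle inequality, with no reference to a common ambient space. The trade-off: the paper's argument is shorter once Theorem~\ref{Thm_future_cont_equiv}\ref{Prop_future_cont_equiv_e} is in hand, and it recycles a structure that is used repeatedly in later sections; yours is more self-contained and makes the mechanism (future continuity $\Rightarrow$ $\iint f_i \to 0$ $\Rightarrow$ most pairs have small distance distortion) transparent.

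One small remark: the diagonal argument you flag at the end as "the main obstacle" is in fact routine, and you do not need any quantitative control on $r \mapsto c_j(r)$. For each fixed $k$, both $(1/k)^{-1}\iint f_i < c_1(1/k)c_2(1/k)/4$ and $(t_i - t_0)/c_j(1/k) < 1/k$ hold for $i$ large, since the left sides tend to $0$ at fixed $k$ while the right sides are fixed positive numbers; choosing $N_1 < N_2 < \cdots$ accordingly and setting $r_i := 1/k$ for $N_k \leq i < N_{k+1}$ gives the required sequence. Only the qualitative fact $c_j(r) > 0$ for every $r > 0$ (i.e.\ $x_j \in \supp\XX_{t_0}$) is needed. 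So the proof closes without the caveat.
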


\begin{proof}
Consider the metric spaces $(Z_t, d^Z_t)$ and isometric embeddings $\varphi_t : (\XX_t, d_t) \to (Z_t, d^Z_t)$, $\varphi^0_t : (\XX_{t_0}, d_{t_0}) \to (Z_t, d^Z_t)$ from Theorem~\ref{Thm_future_cont_equiv}\ref{Prop_future_cont_equiv_e}.
The proposition now follows from the following claim.

\begin{Claim}
For any $x \in \supp \XX_{t_0}$ there are $x_{i} \in \XX_{t_i}$ such that
\[ \lim_{i \to \infty} d_{W_1}^{\XX_{t_0}} ( \delta_{x}, \nu_{x_{i}; t_0} ) =
\lim_{i \to \infty} d_{t}^Z (\varphi_{t_i} (x_{i}), \varphi^0_{t_i} (x) ) = 0. \]
\end{Claim}

\begin{proof}
Let $\delta > 0$.
Fix some conjugate heat flow $(\mu_t)_{t \in I'}$ with $t_0 \in I'$ and $t_i \in I'$ for large $i$.
If we denote by $q_t$ the couplings from Theorem~\ref{Thm_future_cont_equiv}\ref{Prop_future_cont_equiv_e}, then we can find some sequence $y_i \in \XX_{t_i}$ with $d_{t}^Z (\varphi_{t_i} (y_{i}), \varphi^0_{t_i} (x) ) \to 0$.
Next note that
\begin{multline*}
 \liminf_{i \to \infty}  q_{t_i} (B(x,2\delta) \times B(y_i, \delta)) \\
= \liminf_{i \to \infty} \big( q_{t_i} (B(x,2\delta) \times \XX_{t_i} ) - q_{t_i}  (B(x,2\delta) \times (\XX_{t_i} \setminus B(y_i,  \delta))) \big) \\
= \liminf_{i \to \infty} q_{t_i} (B(x,2\delta) \times \XX_{t_i} ) 
= \mu_{t_0} (B(x,2\delta)) > 0. 
\end{multline*}
So by the definition of $q_{t_i}$, we obtain that there are points $x_i \in B(y_i, \delta)$ such that
\[ \liminf_{i \to \infty}  \nu_{x_i; t_0} (B(x,2\delta) ) > 0. \]
Thus by Lemma~\ref{Lem_nu_BA_bound}, for large $i$ any $H$-center $z_i \in \XX_{t_0}$ of $x_i$ must be contained in $B(x, 3\delta)$.
Therefore, for large $i$
\[ d_{W_1}^{\XX_{t_0}} ( \delta_{x}, \nu_{x_{i}; t_0} )
\leq 3\delta + d_{W_1}^{\XX_{t_0}} ( \delta_{z_i}, \nu_{x_{i}; t_0} )
\leq 3\delta + \sqrt{H(t_i-t_0)} \leq 4\delta. \]
Letting $\delta \to 0$ implies the claim.
\end{proof}

\end{proof}

\subsection{The future completion} \label{subsec_future_completion}
Consider some subset $I \subset \IR$.
We denote by $\ov{I}^- \subset \IR$ the set of $t_\infty \in \IR$ with the property that $t_i \searrow t_\infty$ for some sequence $\{ t_i \} \subset I$ with $t_i \geq t_\infty$.
So, for example, if $a < b$, then $\ov{(a,b)}^-$ = $[a,b)$.

Our goal will be to extend a metric flow $\XX$ over $I$ to a metric flow $\XX^*$ over $\ov{I}^-$.
This metric flow will be called the {\bf future completion}, because its time slices at any time $t_\infty \in \ov{I}^- \setminus I$ will be obtained by a limit of time slices at times $t_i \in I$ with $t_i \searrow t_\infty$ and the flow will therefore be future continuous at all times $t_\infty \in \ov{I}^- \setminus I$.
We will also show that, under appropriate conditions, the future completion of a metric flow is unique up to flow isometry.

We first make the following definition:

\begin{Definition}
Let $\XX$ be a metric flow over some subset $I \subset \IR$.
A pair $(\XX^*, \phi)$, consisting of a metric flow $\XX^*$ over $\ov{I}^-$ and a flow isometric embedding $\phi: \XX \to \XX^*$ is called a {\bf future completion of $\XX$} if the following holds:
\begin{enumerate}[label=(\arabic*)]
\item \label{Thm_existenc_fut_compl_a}  $\XX^*$ is future continuous at all times $t \in \ov{I}^- \setminus I$.
\item \label{Thm_existenc_fut_compl_b} $\XX^*$ is $H$-concentrated for some $H < \infty$.
\item \label{Thm_existenc_fut_compl_c} $\supp \XX^*_t = \XX^*_t$ for all $t \in \ov{I}^- \setminus I$.
\end{enumerate}
\end{Definition}

We will often view $\XX^* \supset \XX$ as an extension of $\XX$ and call $\XX^*$ \emph{the} future completion of $\XX$.

\begin{Theorem}[Existence of future completion] \label{Thm_existenc_fut_compl}
Let $\XX$ be an $H$-concentrated metric flow over some $I \subset \IR$, where $H < \infty$.
Then $\XX$ has a future completion $(\XX^*, \phi)$ and $\XX^*$ is $H$-concentrated.
\end{Theorem}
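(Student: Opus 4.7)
The plan is to build $\XX^*$ by adjoining, for each $t\in\ov I^-\setminus I$, a time-slice of ``variance-concentrated'' conjugate heat flows on $\XX_{<t}$; the flow isometric embedding $\phi$ will be the identity inclusion $\XX\hookrightarrow\XX^*$.  Concretely, set $\XX^*_t:=\XX_t$ and $d^*_t:=d_t$ for $t\in I$, and for $t\in\ov I^-\setminus I$ let
\[
 \XX^*_t := \bigl\{(\mu_u)_{u\in I,\,u<t}\ \text{conjugate heat flow on $\XX_{<t}$ with}\ \Var(\mu_u)\leq H(t-u)\ \text{for all } u\bigr\}.
\]
By the monotonicity of $\Var(\mu_u)+Hu$ (Proposition~\ref{Prop_H_monotonicity_Var}), this is equivalent to $\Var(\mu_u)\to 0$ as $u\nearrow t$.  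Such flows arise as $W_1$-limits of $\nu_{x_i;\cdot}$ along sequences $x_i\in\XX_{t_i}$, $t_i\searrow t$, whose conjugate heat kernels are Cauchy in $(\PP(\XX_u),d_{W_1}^{\XX_u})$ for some (hence all) $u<t$; the variance bound survives the limit by lower semicontinuity (Lemma~\ref{Lem_weak_conv_2_W1_conv}).  On $\XX^*_t$ I put the metric
\[
 d^*_t(x^*_1,x^*_2):=\lim_{u\nearrow t,\,u\in I} d^{\XX_u}_{W_1}(\mu^1_u,\mu^2_u)=\lim_{u\nearrow t}\sqrt{\Var(\mu^1_u,\mu^2_u)},
\]
where monotonicity in $u$ (Proposition~\ref{Prop_compare_CHF}\ref{Prop_compare_CHF_b}) makes the limit well-defined, the second equality comes from Lemma~\ref{Lem_W_1_vs_Var} together with $\Var(\mu^i_u)\to 0$, and completeness and separability of $(\XX^*_t,d^*_t)$ follow from those of each $(\PP(\XX_u),d_{W_1}^{\XX_u})$ via a diagonal extraction using Lemma~\ref{Lem_W_p_separable}.

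Next I construct the conjugate heat kernels $\nu^*_{x^*;s}$.  Writing $\mu^{x^*}_u$ for the conjugate heat flow representative of $x^*$ (so $\mu^{x^*}_u:=\nu_{x;u}$ if $x^*=x\in\XX_t$ and $\mu^{x^*}_u:=\mu_u$ otherwise), one is forced to set $\nu^*_{x^*;s}:=\mu^{x^*}_s$ whenever $s\in I$.  The main obstacle is $s\in\ov I^-\setminus I$, where $\nu^*_{x^*;s}$ must live on the newly-constructed space $\XX^*_s$.  I first treat $x^*=y\in\XX_{s'}$ for some $s'\in I$ with $s'>s$: fix $s_j\searrow s$ in $I$ with $s_j<s'$, and view $\nu_{y;s_j}\in\PP(\XX_{s_j})$ through $\XX_{s_j}=\XX^*_{s_j}\hookrightarrow\XX^*$.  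Applying Proposition~\ref{Prop_time_s_closeness} to the conjugate heat flow $(\nu_{y;u})$ (whose variance near $s$ is bounded by $H(s'-s)$) furnishes, for all sufficiently close pairs $s_j,s_k$, isometric embeddings of $\XX_{s_j},\XX_{s_k}$ into a common metric space in which the pushforwards of $\nu_{y;s_j},\nu_{y;s_k}$ are $W_1$-close.  Gluing these via Lemma~\ref{Lem_combining_embeddings} realizes $\{\nu_{y;s_j}\}$ as a Cauchy sequence in a single separable metric space; its $W_1$-limit is a probability measure whose time-$u$ marginals (for $u<s$, $u\in I$) reproduce $\nu_{y;u}$, and which one shows is supported on the ``concentrated'' flows $\XX^*_s$ via the variance criterion.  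Declare this limit to be $\nu^*_{y;s}$.  For general $x^*\in\XX^*_t$ with $t>s$ I set
\[
 \nu^*_{x^*;s}:=\int_{\XX_{s'}}\nu^*_{y;s}\,d\mu^{x^*}_{s'}(y),\qquad s<s'<t,\ s'\in I,
\]
and check independence of $s'$ via the reproduction formula in $\XX$ applied to $\mu^{x^*}$.

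Finally I verify the axioms of Definition~\ref{Def_metric_flow} on $\XX^*$, together with the future-completion conditions.  The reproduction formula and the heat-kernel Lipschitz property Definition~\ref{Def_metric_flow}\ref{Def_metric_flow_6} transfer to $\XX^*$ because any heat flow on $\XX^*$ restricts to one on $\XX$ and Lipschitz constants survive the $W_1$-weak limits used above.  The $H$-concentration bound $\Var(\nu^*_{x^*_1;u},\nu^*_{x^*_2;u})\leq(d^*_t(x^*_1,x^*_2))^2+H(t-u)$ is inherited from the analogous bound along approximating sequences in $\XX$ by lower semicontinuity of $\Var$.  The full-support property $\supp\XX^*_t=\XX^*_t$ at new times is automatic, since every element of $\XX^*_t$ is by design a $W_1$-limit of point measures.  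Future continuity at new times then follows from Theorem~\ref{Thm_future_cont_equiv}\ref{Prop_future_cont_equiv_b}, applied to any conjugate heat flow $(\nu^*_{x^*;u})_{u\leq t}$ with $x^*\in\XX^*_t$: its variance tends to $0$ as $u\nearrow t$, so the double integral $\int\!\int d^*_u\,d\nu^*_{x^*;u}\,d\nu^*_{x^*;u}$ is continuous at $t$.  The hardest technical point of the argument is the construction of $\nu^*_{x^*;s}$ when both $t,s\in\ov I^-\setminus I$, since one must verify that the measure produced by gluing the approximations from Proposition~\ref{Prop_time_s_closeness} via Lemma~\ref{Lem_combining_embeddings} is genuinely supported on $\XX^*_s$ and satisfies the reproduction formula with respect to the extended kernels.
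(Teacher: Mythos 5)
The approach differs from the paper's, and it contains a genuine gap at its very foundation: the definition of the new time-slices.

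You define, for $t\in\ov I^-\setminus I$,
\[
 \XX^*_t := \bigl\{(\mu_u)_{u\in I,\,u<t}\ \text{conjugate heat flow on $\XX_{<t}$ with}\ \Var(\mu_u)\leq H(t-u)\bigr\},
\]
and endow it with $d^*_t(x^*_1,x^*_2):=\lim_{u\nearrow t}d^{\XX_u}_{W_1}(\mu^1_u,\mu^2_u)$.  But the set $\ov I^-$ is defined by approaching $t$ from \emph{above}: $t\in\ov I^-$ means there is a sequence $t_j\searrow t$ with $t_j\in I$, $t_j\geq t$, and says nothing about times in $I$ below $t$.  A future completion must be future continuous at $t$, i.e.\ its geometry at $t$ must match the right-limit of the time-slices $\XX_{t_j}$ as $t_j\searrow t$.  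Your definition instead tries to identify the new points from what happens strictly before $t$, and this fails badly in the most basic case: when $I$ is an interval, the only new time is the left endpoint $a$, and $I\cap(-\infty,a)=\emptyset$.  Your $\XX^*_a$ is then vacuous (there are no conjugate heat flows over $\XX_{<a}$, and the limit $u\nearrow a$ in $I$ defining $d^*_a$ ranges over the empty set), whereas the correct $\XX^*_a$ is a genuine metric space obtained as the $GW_1$-limit of $(\XX_{t_j},d_{t_j},\mu_{t_j})$.  Even the remark you rely on — that the variance bound is ``equivalent to $\Var(\mu_u)\to 0$ as $u\nearrow t$'' — silently assumes $I\cap(-\infty,t)$ accumulates at $t$, which is exactly what fails here.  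So the construction does not produce a future completion for intervals $I$, let alone general subsets.

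There is also a secondary problem that persists even when $I\cap(-\infty,t)$ does accumulate at $t$.  First, separability of $(\XX^*_t,d^*_t)$ does not follow from separability of $(\PP^1(\XX_u),d^{\XX_u}_{W_1})$ via diagonal extraction as you claim: the map $x^*\mapsto\mu^{x^*}_u$ into $\PP^1(\XX_u)$ is neither injective (conjugate heat flows admit backward but not forward uniqueness, so two elements of $\XX^*_t$ can agree at some $u_0$ and still differ for $u\in(u_0,t)$) nor almost-isometric for $d^*_t$ in the needed direction, because $d^{\XX_u}_{W_1}$ is non-decreasing in $u$, so closeness at a fixed $u<t$ gives only a lower bound, not an upper bound, on the limit $d^*_t$.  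Second, you have not shown that every element of your $\XX^*_t$ actually arises as a $W_1$-limit of conjugate heat kernels $\nu_{x_i;\cdot}$ along $t_i\searrow t$; if it does not, your flow will violate the full-support requirement $\supp\XX^*_t=\XX^*_t$ from the definition of a future completion.  The paper circumvents all of these difficulties by constructing $\XX^*_{s_\infty}$ concretely as $\supp\mu_{s_\infty}$ inside a single complete, separable metric space $Z$ built from embeddings $\varphi_j:\XX_{s_j}\to Z$ for $s_j\searrow s_\infty$ (via Lemma~\ref{Lem_combining_embeddings} and Propositions~\ref{Prop_mass_distribution}, \ref{Prop_time_s_closeness}), which makes completeness and separability automatic and builds the approach-from-above into the very definition of the new time-slice.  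You would need to replace your abstract definition of $\XX^*_t$ by something of this nature before the rest of your argument (whose outline for constructing the kernels $\nu^*_{x^*;s}$ and verifying the axioms is otherwise reasonable and closer in spirit to the paper) can get off the ground.
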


The following two results address the uniqueness of the future completion.
The first result concerns the extension of an isometry between two metric flows at a future continuous time. 

\begin{Theorem}[Extension of isometries of metric flows] \label{Thm_extend_isometry}
Let $\XX^j$, $j=1,2$, bet two $H$-concentrated metric flows over some $I \subset \IR$, where $H < \infty$.
Assume that there is a flow isometry $\phi : \XX^1_{I'} \to \XX^2_{I'}$ over some subset $I' \subset I$.
Let $I_0$ be the union of $I'$ with the set of all times $t \in \ov{I'}^- \cap I$ at which  $\XX^1$ and $\XX^2$ are both future continuous and satisfy $\supp \XX^j_t = \XX^j_t$, $j=1,2$.

Then $\phi$ can be extended uniquely to a flow isometry between $\XX^1, \XX^2$ over $I_0$.
\end{Theorem}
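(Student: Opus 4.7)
My plan is, at each time $t_0\in I_0\setminus I'$, to extract the extension $\phi_{t_0}:\XX^1_{t_0}\to\XX^2_{t_0}$ from the fact that (by future continuity, Theorem~\ref{Thm_future_cont_equiv}\ref{Prop_future_cont_equiv_c}) the time-slices of both flows converge in $GW_1$ to their $t_0$-slice as one approaches from the right within $I'$, where the isometry $\phi$ is already defined. The main tool will be Lemma~\ref{Lem_couplings_converge_isometry}: if I can build couplings between $\mu^1_{t_0}$ and $\mu^2_{t_0}$ whose mass concentrates on the diagonal inside a common ambient metric space, that lemma delivers an mm-isometry between $(\XX^1_{t_0},\mu^1_{t_0})$ and $(\XX^2_{t_0},\mu^2_{t_0})$. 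I expect uniqueness to be the easy direction and verification of the conjugate heat kernel identity to be where the work lies.

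\textbf{Uniqueness.} Since $t_0\in\ov{I'}^-\setminus I'$ we have $\sup I'>t_0$, so I can choose $t_i\in I'$ with $t_i\searrow t_0$. Given two extensions $\phi',\phi''$ and $x\in\XX^1_{t_0}$, I use Proposition~\ref{Prop_fut_cont_two_points_approx} to pick $y_i\in\XX^1_{t_i}$ with $\nu^1_{y_i;t_0}\to\delta_x$ in $W_1$. Because $t_i\in I'$, we have $\phi'(y_i)=\phi''(y_i)=\phi(y_i)$, so the flow isometry axiom at time $t_0$ gives $(\phi'_{t_0})_*\nu^1_{y_i;t_0}=\nu^2_{\phi(y_i);t_0}=(\phi''_{t_0})_*\nu^1_{y_i;t_0}$. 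Since $\phi'_{t_0},\phi''_{t_0}$ are metric isometries, hence $W_1$-continuous on probability measures, letting $i\to\infty$ yields $\delta_{\phi'(x)}=\delta_{\phi''(x)}$, so $\phi'(x)=\phi''(x)$.

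\textbf{Existence.} Fix $t^*\in I'\cap(t_0,\sup I')$, choose $y^*\in\XX^1_{t^*}$, and set $\mu^1_s:=\nu^1_{y^*;s}$, $\mu^2_s:=\nu^2_{\phi(y^*);s}$; the flow isometry property on $I'$ together with the reproduction formula gives $\mu^2_s=(\phi_s)_*\mu^1_s$ for $s\in I'$. The assumption $\supp\XX^j_{t_0}=\XX^j_{t_0}$ together with Proposition~\ref{Prop_supp_XX} gives $\supp\mu^j_{t_0}=\XX^j_{t_0}$. Theorem~\ref{Thm_future_cont_equiv}\ref{Prop_future_cont_equiv_e} applied to $(\XX^j,\mu^j)$ at $t_0$ then produces isometric embeddings $\varphi^j_t:\XX^j_t\hookrightarrow Z^j_t$ and $\varphi^{j,0}_t:\XX^j_{t_0}\hookrightarrow Z^j_t$ for $t$ near $t_0$ such that the couplings $q^j_t:=\int(\nu^j_{y;t_0}\otimes\delta_y)\,d\mu^j_t(y)$ satisfy $\int d^{Z^j}_t\,dq^j_t\to 0$. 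For $t_i\in I'\cap(t_0,t^*]$ with $t_i\searrow t_0$, I glue $Z^1_{t_i}$ and $Z^2_{t_i}$ along the identification $\phi_{t_i}:\XX^1_{t_i}\to\XX^2_{t_i}$ via Lemma~\ref{Lem_combining_embeddings}, obtaining a common metric space $Z_i$ into which both $\XX^j_{t_0}$ embed isometrically. The coupling
\[
q_i:=\int_{\XX^1_{t_i}}\bigl(\nu^1_{y;t_0}\otimes\nu^2_{\phi(y);t_0}\bigr)\,d\mu^1_{t_i}(y)
\]
has marginals $\mu^1_{t_0},\mu^2_{t_0}$ by the reproduction formula and $\phi$-equivariance. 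Inserting the glued intermediate point $\varphi^1_{t_i}(y)=\varphi^2_{t_i}(\phi(y))\in Z_i$ and using the triangle inequality bounds $\int d^{Z_i}\,dq_i\le\int d^{Z^1_{t_i}}\,dq^1_{t_i}+\int d^{Z^2_{t_i}}\,dq^2_{t_i}\to 0$. Lemma~\ref{Lem_couplings_converge_isometry} then yields, after a subsequence, an mm-isometry $\phi_{t_0}:(\XX^1_{t_0},d_{t_0},\mu^1_{t_0})\to(\XX^2_{t_0},d_{t_0},\mu^2_{t_0})$ satisfying $d^{Z_i}(\varphi^{1,0}_{t_i}(x),\varphi^{2,0}_{t_i}(\phi_{t_0}(x)))\to 0$ for every $x\in\XX^1_{t_0}$.

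\textbf{Main obstacle: upgrading to a flow isometry.} The hard step will be verifying $(\phi_{t_0})_*\nu^1_{y;t_0}=\nu^2_{\phi(y);t_0}$ for all $y\in\XX^1_{t^{**}}$, $t^{**}\in I'\cap(t_0,\infty)$, as only then is the mm-isometry above promoted to a flow isometry. I plan to deduce it from the pointwise convergence in the previous paragraph as follows: for $x\in\XX^1_{t_0}$, the proof of Proposition~\ref{Prop_fut_cont_two_points_approx} supplies $y_i\in\XX^1_{t_i}$ with both $\nu^1_{y_i;t_0}\to\delta_x$ in $W_1$ and $d^{Z^1_{t_i}}(\varphi^1_{t_i}(y_i),\varphi^{1,0}_{t_i}(x))\to 0$; by the gluing and the pointwise limit $d^{Z_i}(\varphi^{1,0}_{t_i}(x),\varphi^{2,0}_{t_i}(\phi_{t_0}(x)))\to 0$, this forces $d^{Z^2_{t_i}}(\varphi^2_{t_i}(\phi(y_i)),\varphi^{2,0}_{t_i}(\phi_{t_0}(x)))\to 0$, and the structure of the embedding $Z^2_{t_i}$ built in Lemma~\ref{Lem_construction_Z} converts this back into $d_{W_1}^{\XX^2_{t_0}}(\nu^2_{\phi(y_i);t_0},\delta_{\phi_{t_0}(x)})\to 0$. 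For $y^{**}\in\XX^1_{t^{**}}$, representing $\nu^1_{y^{**};t_0}$ via the reproduction formula through time $t_i$ and using the flow isometry at $t_i\in I'$ together with the pointwise convergence just established then propagates to the desired identity $(\phi_{t_0})_*\nu^1_{y^{**};t_0}=\nu^2_{\phi(y^{**});t_0}$. Finally, the family $(\phi_{t_0})_{t_0\in I_0}$ patches automatically into a flow isometry on $\XX^1_{I_0}$ by the uniqueness argument; the analogous kernel identities at past times $s\in I_0\setminus I'$ with $s<\tf(y)$ follow by repeating the construction at $s$ in place of $t_0$.
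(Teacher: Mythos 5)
Your overall strategy is correct and matches the paper's: pass to right‑approximating times $t_i\in I'$, use future continuity (Theorem~\ref{Thm_future_cont_equiv}\ref{Prop_future_cont_equiv_e}) to get concentrating couplings inside a glued ambient space, invoke Lemma~\ref{Lem_couplings_converge_isometry} to produce $\phi_{t_0}$, and conclude via the uniqueness characterization. Your uniqueness argument and the construction of $\phi_{t_0}$ as an mm-isometry with $(\phi_{t_0})_*\nu^1_{y^*;t_0}=\nu^2_{\phi(y^*);t_0}$ for one specific conjugate heat flow are both fine, and the conversion of $d^{Z^2_{t_i}}$-smallness into $W_1$-closeness via the explicit metric of Lemma~\ref{Lem_construction_Z} is a legitimate step.

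The gap is in the ``main obstacle.'' To conclude $(\phi_{t_0})_*\nu^1_{y^{**};t_0}=\nu^2_{\phi(y^{**});t_0}$ you write the reproduction formula through $t_i$: both sides become $\int_{\XX^1_{t_i}}(\cdot)\,d\nu^1_{y^{**};t_i}(z)$ with integrands $(\phi_{t_0})_*\nu^1_{z;t_0}$ and $\nu^2_{\phi(z);t_0}$, respectively. But equality of these integrands for $z\in\XX^1_{t_i}$ is \emph{exactly} the push-forward identity you are trying to prove (for conjugate heat kernels based at time $t_i$), so the reduction is circular. The pointwise convergence $\nu^2_{\phi(y_i);t_0}\to\delta_{\phi_{t_0}(x)}$ you establish says only that $\phi_{t_0}$ is compatible with taking natural-topology limits along the particular approximating sequences $y_i=y_i(x)$; it does not bound $d_{W_1}((\phi_{t_0})_*\nu^1_{z;t_0},\nu^2_{\phi(z);t_0})$ for generic $z\in\XX^1_{t_i}$, which is what the reproduction-formula step needs. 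The paper closes this gap differently: the embeddings of Theorem~\ref{Thm_future_cont_equiv}\ref{Prop_future_cont_equiv_e} and the resulting $Z_i$ are \emph{universal}, i.e.\ the estimate holds for \emph{every} conjugate heat flow satisfying $(\phi_{t_i})_*\mu^1_{t_i}=\mu^2_{t_i}$; since the output of Lemma~\ref{Lem_couplings_converge_isometry} is uniquely pinned down by the $\mu$-independent condition $d_{Z_i}(\varphi^{i,1}_{t_\infty}(x),\varphi^{i,2}_{t_\infty}(\phi_{t_\infty}(x)))\to0$, rerunning the argument with the conjugate heat kernels of $\nu^1_{x;\cdot}$, $\nu^2_{\phi_t(x);\cdot}$ must reproduce the \emph{same} $\phi_{t_\infty}$, and the push-forward identity for those kernels follows for free. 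You should make this ``universality $\Rightarrow$ same $\phi_{t_0}$ for all admissible conjugate heat flows'' step explicit instead of the reproduction formula. Separately, ``the family patches automatically by the uniqueness argument'' is too thin: showing compatibility between two new times $t_1,t_3\in I_0\setminus I'$ still requires a reproduction-formula computation through an intermediate $t_2\in I'$ (which is only non-circular \emph{after} the single-time extension has been fully established), and the case $s\in I'$, $s<t_0$ needs the limit argument $\phi_s)_*\nu^1_{x_i;s}=\nu^2_{\phi(x_i);s}\to\nu^2_{\phi_{t_0}(x);s}$ rather than ``repeating the construction at $s$.''
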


As a corollary, we obtain the uniqueness result of the future completion up to flow isometry.

\begin{Corollary}[Uniqueness of future completion]
Let $\XX$ be an $H$-concentrated metric flow over some $I \subset \IR$, where $H < \infty$.
Consider two future completions $(\XX^{*,j}, \phi^j : \XX \to \XX^{*,j})$ of $\XX$.
Then there is a unique flow-isometry $\psi : \XX^{*,1} \to \XX^{*,2}$ such that $\psi \circ \phi^1 = \phi^2$.
\end{Corollary}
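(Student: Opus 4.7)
The plan is to derive the corollary as a direct application of Theorem~\ref{Thm_extend_isometry}, viewing both $\XX^{*,1}$ and $\XX^{*,2}$ as $H$-concentrated metric flows over the common index set $\ov{I}^-$ and taking $I' = I$ as the subset on which the initial flow isometry is already provided. The initial isometry is the composition $\psi_0 := \phi^2 \circ (\phi^1)^{-1}$. Since each $\phi^j : \XX \to \XX^{*,j}$ is by definition a flow isometric embedding, the time-slice maps $\phi^j_t : \XX_t \to \XX^{*,j}_t$ are metric isometries (in particular bijections) for every $t \in I$, so $\psi_0$ is a well-defined map $\XX^{*,1}_I \to \XX^{*,2}_I$ whose restriction to each time slice is a composition of metric isometries and therefore again a metric isometry.

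To check that $\psi_0$ is a flow isometry over $I$, take $y \in \XX^{*,1}_t$ and $s \leq t$, $s,t \in I$. Writing $y = \phi^1(x)$ with $x \in \XX_t$, the conjugate heat kernel compatibility of $\phi^1$ and $\phi^2$ gives
\[
(\psi_{0,s})_* \nu^{*,1}_{y;s} = (\phi^2_s)_* (\phi^1_s)^{-1}_* (\phi^1_s)_* \nu_{x;s} = (\phi^2_s)_* \nu_{x;s} = \nu^{*,2}_{\phi^2(x);s} = \nu^{*,2}_{\psi_0(y);s},
\]
so $\psi_0$ satisfies both conditions in the definition of a flow isometry over $I$.

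Now apply Theorem~\ref{Thm_extend_isometry} to the pair $\XX^{*,1}, \XX^{*,2}$ over $\ov{I}^-$ with this $\psi_0$. The set $I_0$ in that theorem is the union of $I' = I$ with all $t \in \ov{I'}^- \cap \ov{I}^- = \ov{I}^-$ at which both $\XX^{*,1}$ and $\XX^{*,2}$ are future continuous and have full support. For $t \in I$ this is automatic by membership in $I'$; for $t \in \ov{I}^- \setminus I$, future continuity is provided by property~\ref{Thm_existenc_fut_compl_a} of future completions and full support by property~\ref{Thm_existenc_fut_compl_c}. Hence $I_0 = \ov{I}^-$, and the theorem produces a unique flow isometry $\psi : \XX^{*,1} \to \XX^{*,2}$ extending $\psi_0$. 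By construction $\psi \circ \phi^1 = \phi^2$ on every time slice in $I$, and since the time slices of $\XX^{*,j}$ outside $I$ play no role in this identity, we have $\psi \circ \phi^1 = \phi^2$ as maps $\XX \to \XX^{*,2}$.

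For uniqueness, any flow isometry $\psi' : \XX^{*,1} \to \XX^{*,2}$ with $\psi' \circ \phi^1 = \phi^2$ satisfies $\psi' \circ \phi^1_t = \phi^2_t$ for every $t \in I$, so $\psi'|_{\XX^{*,1}_t} = \phi^2_t \circ (\phi^1_t)^{-1} = \psi_{0,t}$, i.e.\ $\psi'$ extends $\psi_0$; the uniqueness clause in Theorem~\ref{Thm_extend_isometry} then forces $\psi' = \psi$. There is no real obstacle in this argument — the entire content is verifying that the hypotheses of Theorem~\ref{Thm_extend_isometry} are satisfied, which amounts to reading off properties (1) and (3) of the future completion, and then tracing through the flow isometry compatibility for $\psi_0$.
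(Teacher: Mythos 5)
Your proof is correct and is exactly the intended argument: since the paper states the result as a corollary immediately following Theorem~\ref{Thm_extend_isometry} with no separate proof, the expected derivation is precisely what you wrote, namely verifying that $\psi_0 := \phi^2 \circ (\phi^1)^{-1}$ is a flow isometry over $I$ and then invoking the extension theorem with $I' = I$, using properties~(1) and (3) of the definition of future completion to conclude $I_0 = \ov{I}^-$. Your verification that $\psi_0$ is a flow isometry and the uniqueness argument are both sound.
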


\begin{proof}[Proof of Theorem~\ref{Thm_existenc_fut_compl}.]
In the following we will construct $\XX^*$ by extending $\XX$. 
So we will have $\XX^* \supset \XX$ and $\phi_t = \id_{\XX_t}$ for $t \in I$ and $\nu^*_{x;s} = \nu_{x;s}$ for any $s, t \in I$, $s \leq t$, $x \in \XX_t$.

We first explain how to construct the time-slices $(\XX_s, d_s)$ and the measures $\nu_{x;s}$ for any $t \in I$, $x \in \XX_t$ and $s \in  \ov{I}^- \setminus I$, $s < t$.
For this purpose, fix some $s_\infty \in \ov{I}^- \setminus I$ and a sequence $s_i \searrow s_\infty$, $s_i \in I$.
Next, fix a conjugate heat flow $(\mu_s)_{s \in I'}$, $I' \subset I$, with $ \Var( \mu_s) < \infty$ for all $s \in I'$ and with the property that $\mu_{s_i}$ is defined for large $i$, for example a conjugate heat kernel.
By Proposition~\ref{Prop_H_monotonicity_Var} the map $s \mapsto \Var (\mu_s) + Hs$ is non-decreasing and thus the limit $\lim_{i \to \infty} \Var(\mu_{s_i})$ exists.
Therefore, by Propositions~\ref{Prop_mass_distribution}, \ref{Prop_time_s_closeness} we have
\[ \lim_{i,j \to \infty} d_{GW_1} \big( (\XX_{s_i}, d_{s_i}, \mu_{s_i}), (\XX_{s_j}, d_{s_j}, \mu_{s_j}) \big) = 0. \]
This implies convergence of the sequence $(\XX_{s_i}, d_{s_i}, \mu_{s_i})$ in the $d_{GW_1}$-metric.
For future purposes, we will recover its limit in a particular way.

By Proposition~\ref{Prop_time_s_closeness} there is a sequence $\delta_i \to 0$ such that for any $i < j$ there is a closed subset $W_{i,j} \subset \XX_{s_i}$ such that:
\begin{enumerate}
\item $\mu_{s_i} ( \XX_{s_i} \setminus W_{i,j} ) \leq \delta_i$.
\item For any $y_1,y_2 \in W_{i,j}$ we have
\[ \big| d_{s_i} (y_1,y_2) - d_{W_1}^{\XX_{s_j}} (\nu_{y_1;s_j} , \nu_{y_2;s_j}) \big| \leq \delta_i. \]
\end{enumerate}
For any $i < j$ let $(Z_{i,j}, d^{Z_{i,j}})$ and $\varphi^{i,j}_i : \XX_{s_i} \to Z_{i,j}$, $\varphi^{i,j}_j : \XX_{s_j} \to Z_{i,j}$ be the metric space and isometric embeddings from Lemma~\ref{Lem_construction_Z}.
Then Lemma~\ref{Lem_construction_Z} implies that for any conjugate heat flow $(\td\mu_s)_{s \in I'}$, $\td I' \subset I$ that is defined at time $s_i$ for large $i$ and satisfies $\Var (\td\mu_{s_i}) \leq V < \infty$ for large $i$, we have 
\begin{equation} \label{eq_dW1Zij_eps_bound}
 d_{W_1}^{Z_{i,j}} \big( (\varphi^{i,j}_i )_* \td\mu_{s_i}, (\varphi^{i,j}_j )_* \td\mu_{s_j} \big) \leq \eps (V, \delta_i + \td\mu_{s_i} ( \XX_{s_i} \setminus W_{i,j} ) + s_i - s_\infty), 
\end{equation}
where $\eps : \IR_+ \times \IR_+ \to (0, \infty]$ is a function that is non-decreasing in both parameters and satisfies $\lim_{a \to 0} \eps (V, a) = 0$ for any fixed $V > 0$.

\begin{Claim} \label{Cl_eps_summable}
After passing to a subsequence, we may assume that for any conjugate heat flow $(\td\mu_s)_{s \in \td I'}$, $\td I' \subset I$ that is defined at time $s_i$ for large $i$ and satisfies $\Var (\td\mu_{s_i}) \leq V < \infty$ we have for large $i_0$
\[ \sum_{i = i_0}^\infty \eps (V, \delta_i + \td\mu_{s_i} ( \XX_{s_i} \setminus W_{i,j} ) + s_i - s_\infty) < \infty. \]
\end{Claim}

\begin{proof}
Fix some large $k \geq 1$ and observe that for $k < i < j$
\begin{align}
  \mu_{s_i} ( \XX_{s_i} \setminus W_{i,j} ) &= \int_{\XX_{s_k}} \nu_{x; s_i} ( \XX_{s_i} \setminus W_{i,j} ) d\mu_{s_k} (x) \leq \delta_i, \notag \\
    \td\mu_{s_i} ( \XX_{s_i} \setminus W_{i,j} ) &= \int_{\XX_{s_k}} \nu_{x; s_i} ( \XX_{s_i} \setminus W_{i,j} ) d\td\mu_{s_k} (x).  \label{eq_tdmu_XX_m_WW_ij}
\end{align}
Fix some $z \in \XX_{s_k}$ such that $\Var(\delta_z, \td\mu_{s_k} ) \leq V$ and choose $D < \infty$ such that $\mu_{s_k} (B(z, D)) \geq \frac12$.
It follows that
\[ \inf_{x \in B(z,D)}  \nu_{x; s_i} ( \XX_{s_i} \setminus W_{i,j} ) \leq 2 \delta_i. \]
Therefore, by Definition~\ref{Def_metric_flow}(6) for any $D' < \infty$ and for $\tau := s_k - s_{k+1} \leq s_k - s_i$
\begin{equation} \label{eq_sup_xBzDpnuXW}
 \sup_{x \in B(z, D')} \nu_{x; s_i} ( \XX_{s_i} \setminus W_{i,j} ) 
\leq \Phi \big( \Phi^{-1} (2\delta_i) + \tau^{-1/2} (D' + D) \big). 
\end{equation}
It follows using (\ref{eq_tdmu_XX_m_WW_ij}), (\ref{eq_sup_xBzDpnuXW}) that
\begin{multline*}
  \td\mu_{s_i} ( \XX_{s_i} \setminus W_{i,j} ) \leq \Phi \big( \Phi^{-1} (2\delta_i) + \tau^{-1/2} (D' + D) \big) + \td\mu_{s_k} (\XX_k \setminus B(z, D')) \\
\leq \Phi \big( \Phi^{-1} (2\delta_i) + \tau^{-1/2} (D' + D) \big) + \frac{V}{D^{\prime 2}}. 
\end{multline*}
So there is a function $f : \IR_+ \times \IR_+ \times \IR_+ \to \IR_+$ that is non-decreasing in each argument and satisfies $\lim_{\delta \to 0} f(V, D, \delta) = 0$ for fixed $V, D > 0$ such that
\[ \td\mu_{s_i} ( \XX_{s_i} \setminus W_{i,j} ) \leq f(V, D, \delta_i). \]

This reduces the claim to showing that after passing to a subsequence, for any $V, D < \infty$ there we have for large $i_0$:
\[ \sum_{i = i_0}^\infty \eps (V, \delta_i + f(V,D, \delta_i) + s_i - s_\infty) < \infty. \]
To accomplish this, observe that for fixed $V, D < \infty$ we have $\lim_{i \to \infty} \eps (V, \delta_i + f(V,D, \delta_i) + s_i - s_\infty)=0$.
So we may choose sequences $V_i , D_i \to \infty$
\[ \lim_{i \to \infty} \eps (V_i, \delta_i + f(V_i,D_i, \delta_i) + s_i - s_\infty)  = 0. \]
So after passing to a subsequence, we can ensure that
\[ \eps (V_i, \delta_i + f(V_i,D_i, \delta_i) + s_i - s_\infty) \leq 2^{-i}. \]
Then for any fixed $V, D$ we have for large $i$
\[ \eps (V, \delta_i + f(V,D, \delta_i) + s_i - s_\infty)
\leq \eps (V_i, \delta_i + f(V_i,D_i, \delta_i) + s_i - s_\infty) \leq 2^{-i}, \]
which proves the summability.
\end{proof}

Consider again the metric spaces $(Z_{i,j}, d_{Z_{i,j}})$ and isometric embeddings $\varphi^{i,j}_i : \XX_{s_i} \to Z_{i,j}$, $\varphi^{i,j}_j : \XX_{s_j} \to Z_{i,j}$.
By Lemma~\ref{Lem_combining_embeddings}, we may assume that $Z := Z_{1,2} = Z_{2,3} =\ldots$ and $\varphi_i := \varphi^{i-1,i}_i = \varphi^{i,i+1}_i$.
We may furthermore assume that $(Z, d_Z)$ is complete.
Then (\ref{eq_dW1Zij_eps_bound}) and Claim~\ref{Cl_eps_summable} imply the following:

\begin{Claim} \label{Cl_summary_tdmu}
For any conjugate heat flow $(\td\mu_s)_{s \in I'}$, $\td I' \subset I$ that is defined at time $s_i$ for large $i$ and satisfies $\Var (\td\mu_{s_i}) \leq V < \infty$ we have
\[ (\varphi_i)_* \td\mu_{s_i} \xrightarrow[i \to \infty]{\quad W_1 \quad} \td\mu_{s_\infty}\in \mathcal{P} ( Z ). \]
In particular, for any $t \in I$, $t > s_\infty$ and $x \in \XX_t$ we have
\[ (\varphi_i)_* \nu_{x;s_i} \xrightarrow[i \to \infty]{\quad W_1 \quad} \nu_{x;s_\infty}\in \mathcal{P} ( Z ). \]
\end{Claim}

In the following we will write $\td\mu_{s_\infty}, \nu_{x;s_\infty} \in \mathcal{P}(Z)$ for any limit obtained according to Claim~\ref{Cl_summary_tdmu}.
Set $\XX_{s_\infty} := \supp \mu_{s_\infty} \subset Z$ and $d_{s_\infty} := d_Z |_{\XX_{s_\infty}}$.
The following claim will allow us to forget the space $Z$.

\begin{Claim} \label{Cl_properties_new_Xt}
The following is true:
\begin{enumerate}[label=(\alph*)]
\item \label{Cl_properties_new_Xt_a} For any $t \in I$, $t > s_\infty$, $x \in \XX_t$ we have $\supp \nu_{x; s_\infty} = \supp \mu_{s_\infty}  = \XX_{s_\infty}$.
\item \label{Cl_properties_new_Xt_b}  Property~\ref{Def_metric_flow_6} of Definition~\ref{Def_metric_flow} holds between time $s_\infty$ and all $t \in I$, $t > s_\infty$.
In other words, for all $T \geq 0$ and any $T^{-1/2}$-Lipschitz function $f : \XX_{s_\infty} \to \IR$, the function
\begin{equation} \label{eq_XR_x_Phi}
 \XX_t \longrightarrow \IR, \qquad x \longmapsto \Phi^{-1} \bigg( \int_{X_{s_\infty}} \Phi (f) d\nu_{x;s_\infty} \bigg). 
\end{equation}
is $(t-s_\infty+T)^{-1/2}$-Lipschitz.
If $T = 0$, then we only require $f$ to be measurable.
\item \label{Cl_properties_new_Xt_c}  Property~\ref{Def_metric_flow_7} of Definition~\ref{Def_metric_flow} holds between time $s_\infty$ and all $t_1, t_2 \in I$ with $s_\infty < t_1 < t_2$.
In other words, for all $x \in \XX_{t_2}$
\[ \nu_{x;s_\infty} = \int_{\XX_{t_1}} \nu_{\cdot, s_\infty} d\nu_{x;t_1}. \]
\item \label{Cl_properties_new_Xt_d} There is a sequence $s_i \searrow s_\infty$, $s_i \in I$, such that for any $x_\infty, x'_\infty \in \XX_{s_\infty}$ there are sequences $x_i, x'_i \in \XX_{s_i}$ such that we have
\[ \lim_{i \to \infty} d^{\XX_{s_\infty}}_{W_1} (\delta_{x_\infty}, \nu_{x_i; s_\infty})
=\lim_{i \to \infty} d^{\XX_{s_\infty}}_{W_1} (\delta_{x'_\infty}, \nu_{x'_i; s_\infty})
=0, \qquad
\lim_{i \to \infty} d_{s_i} (x_i, x'_i) = d_{s_\infty} (x_\infty, x'_\infty). \]
\item \label{Cl_properties_new_Xt_f} There is a sequence $s_i \searrow s_\infty$, $s_i \in I$ such that for any $t_1, t_2 \in I$ with $t_1, t_2 > s_\infty$ and $x_1 \in \XX_{t_1}$, $x_2 \in \XX_{t_2}$ we have
\[ \qquad \lim_{i \to \infty} \int_{\XX_{s_i}} \int_{\XX_{s_i}} d_{s_i} (y_1, y_2) d\nu_{x_1; s_i} (y_1) d\nu_{x_2;s_i} (y_2)
= \int_{\XX_{s_\infty}} \int_{\XX_{s_\infty}} d_{s_\infty} (y_1, y_2) d\nu_{x_1; s_\infty} (y_1) d\nu_{x_2;s_\infty}(y_2). \]
\item \label{Cl_properties_new_Xt_g} For any $t \in I$ with $t > s_\infty$ and $x_1, x_2 \in \XX_t$ we have
\[ \Var (\nu_{x_1; s_\infty}, \nu_{x_2; s_\infty}) \leq d^2_{t}(x_1, x_2) + H (t - s_\infty). \]
\item \label{Cl_properties_new_Xt_h} For any $s_\pm, t_1, t_2 \in I$ with $s_- < s_\infty < s_+$ and $t_1, t_2 \geq s_\infty$ and $x_j \in \XX_{t_j}$, $j =1,2$, we have
\[ d_{W_1}^{\XX_{s_-}} (\nu_{x_1;s_-}, \nu_{x_2;s_-}) 
\leq d_{W_1}^{\XX_{s_\infty}} (\nu_{x_1;s_\infty}, \nu_{x_2;s_\infty})
\leq d_{W_1}^{\XX_{s_+}} (\nu_{x_1;s_+}, \nu_{x_2;s_+}) . \]
\end{enumerate}
\end{Claim}

\begin{proof}
For Assertion~\ref{Cl_properties_new_Xt_a} choose some time $s' \in I$, $s_\infty < s' < t$.
If $i$ is large enough such that $s_i < s'$, then
\[\mu_{s_i} = \int_{\XX_{s'}} \nu_{\cdot; s_i} d\mu_{s'}, \qquad
 \nu_{x; s_i} = \int_{\XX_{s'}} \nu_{\cdot;s_i} d\nu_{x;s'}. \]
Assume that $z \in \supp \nu_{x;s_\infty} \setminus   \supp \mu_{s_\infty} $.
Then there is some small $r > 0$ such that for large $i$
\[  \lim_{i \to \infty}  ((\varphi_i)_* \mu_{s_i}) (B( z, r)) = 0 , \qquad
\liminf_{i \to \infty}  ((\varphi_i)_* \nu_{x;s_i}) (B( z, r)) > 0,  \]
Let $u_i (y) := \nu_{y; s_i} ( (\varphi_i)^{-1} (B(z,r)))$.
Then
\begin{equation} \label{eq_XX_s_p_u_i}
  \lim_{i \to \infty} \int_{\XX_{s'}} u_i \, d\mu_{s'} =0, \qquad
\liminf_{i \to \infty} \int_{\XX_{s'}} u_i \, d\nu_{x;s'} > 0.
\end{equation}
By Definition~\ref{Def_metric_flow}\ref{Def_metric_flow_6} we know that the functions $\Phi^{-1} (u_i)$ are uniformly Lipschitz.
So by the first identity in (\ref{eq_XX_s_p_u_i}) we must have $u_i \to 0$ pointwise, which contradicts the second identity.
This shows $\supp \nu_{x; s_\infty} \subset \supp \mu_{s_\infty}$; the reverse inclusion follows by reversing the roles of $(\nu_{x; t})$ and $(\mu_t)$.

For Assertion~\ref{Cl_properties_new_Xt_b} consider a $T^{-1}$-Lipschitz function $f : \XX_{s_\infty} \to \IR$ for some $T > 0$.
We can extend $f$ to a $T^{-1}$-Lipschitz function $\td{f} : Z \to \IR$ by setting
\[ \td{f} (z) := \inf_{x_\infty \in \XX_{t_\infty}} ( T^{-1} d_Z (z, x_\infty) + f(x_\infty) ). \]
Now the functions
\[ \XX_t \longrightarrow \IR, \qquad x \longmapsto \Phi^{-1} \bigg( \int_{\XX_{s_i}} \Phi (\td f \circ \varphi_i) d\nu_{x;s_i} \bigg) = \Phi^{-1} \bigg( \int_{Z} \Phi (\td f ) d( (\varphi_i)_* \nu_{x;s_i}) \bigg). \]
are $(T + t - s_i)^{-1}$-Lipschitz and converge pointwise to (\ref{eq_XR_x_Phi}).
This proves Assertion~\ref{Cl_properties_new_Xt_b} if $T > 0$.
The case $T = 0$ follows since the space of bounded Lipschitz functions on $\XX_{s_\infty}$ is dense in $L^1 (\nu_{x; s_\infty})$ for any $x \in \XX_t$; see also Lemmas~\ref{Lem_basic_measure}, \ref{Lem_met_flow_T_positive}.

Assertion~\ref{Cl_properties_new_Xt_c} is a consequence of Claim~\ref{Cl_summary_tdmu} and the reproduction formula of $\XX$.

For Assertion~\ref{Cl_properties_new_Xt_d} it suffices to show that for every $x_\infty \in \XX_{s_\infty} \subset Z$ there is a sequence $x_i \in \XX_{s_i}$ with $\varphi_i (x_i) \to x_\infty$ in $Z$ and $\nu_{x_i;s_\infty} \to \delta_{x_\infty}$ in the $W_1$-sense.
For this purpose, note that the proof of Proposition~\ref{Prop_topology_properties}\ref{Prop_topology_properties_e} applies in this case and allows us to choose a sequence $y_i \in \XX_{s_i}$ with $\nu_{y_i;s_\infty} \to \delta_{x_\infty}$ in $W_1$.
We will use this sequence to choose another sequence $x_i$ with the desired properties.
For this purpose, fix some small $r > 0$ and choose $j \geq 1$ large enough such that
\begin{equation} \label{eq_choice_y_j}
 r^{-2} H (s_j - s_\infty) \leq \tfrac12, \qquad d_{W_1}^Z (\delta_{x_\infty}, \nu_{y_j; s_\infty} ) < \tfrac12 r.  
\end{equation}
Fix $j$ for the moment.
For any $i > j$  let $z_{i,j} \in \XX_{s_i}$ be an $H$-center of $y_j$ at time $s_i$.
Then by Lemma~\ref{Lem_nu_BA_bound}
\begin{equation} \label{eq_nu_yj_si_geq_12}
  \nu_{y_j; s_i} ( B(z_{i,j}, r) ) \geq 1-  r^{-2} H (s_j - s_i) \geq \tfrac12. 
\end{equation}
We claim that 
\begin{equation} \label{eq_limsup_3r}
 \limsup_{i \to \infty} d_Z(\varphi_i (z_{i,j}), x_\infty) < 3 r. 
\end{equation}
If not, then we had
\[ ((\varphi_i)_* \nu_{y_j ; s_i}) ( Z \setminus B(x_\infty, r) ) \geq ((\varphi_i)_* \nu_{y_j; s_i}) ( B( \varphi_i (z_{i,j}), r) ) \geq \tfrac12 \]
for infinitely many $i$, which would imply by Claim~\ref{Cl_summary_tdmu} that
\[ \nu_{y_j; s_\infty} (Z \setminus B(x_\infty, r) ) \geq \tfrac12 , \]
and thus
\[ d_{W_1}^Z (\delta_{x_\infty}, \nu_{y_j; s_\infty} )  \geq \tfrac12 r, \]
contrary to our choice of $j$.
So (\ref{eq_limsup_3r}) holds and we may choose $i > j$ to be large enough that $d_Z(\varphi_j (z_{i,j}), x_\infty) < 3 r$.
Now, using Assertion~\ref{Cl_properties_new_Xt_c} and (\ref{eq_choice_y_j}), we find
\[ \int_{B(z_{i,j}, r)} d_{W_1}^Z (\delta_{x_\infty}, \nu_{x; s_\infty}) d\nu_{y_j; s_i} (x) 
\leq \int_{\XX_{s_i}}  \int_Z  d_Z (x_\infty, x') d\nu_{x;s_\infty}(x') d\nu_{y_j; s_i} (x) 
= d_{W_1}^Z (\delta_{x_\infty}, \nu_{y_j; s_\infty} ) < \tfrac12 r. \]
Combining this with (\ref{eq_nu_yj_si_geq_12}), implies that there is a point $x_i \in B(z_{i,j}, r) \subset \XX_{s_i}$ with
\[ d_{W_1}^Z (\delta_{x_\infty}, \nu_{x_i; s_\infty}) < r, \qquad d_Z(\varphi_i (x_i), x_\infty) < 4 r. \]
Repeating this procedure for smaller and smaller $r$ yields the desired sequence $x_i$, which finishes the proof of Assertion~\ref{Cl_properties_new_Xt_d}.

Assertions~\ref{Cl_properties_new_Xt_f}--\ref{Cl_properties_new_Xt_h} are a direct consequence of Claim~\ref{Cl_summary_tdmu}.
\end{proof}

Our previous construction can be performed for all $s_\infty \in \ov{I}^- \setminus I$.
So we may extend the flow $\XX$ by time-slices $(\XX_s, d_s)$ for $s \in \ov{I}^- \setminus I$ and the conjugate heat kernels $\nu_{x;s}$, based at $x \in \XX_t$ with $t \in I$, to $s \in \ov{I}^-$ such that Claim~\ref{Cl_properties_new_Xt} holds for any $s_\infty \in \ov{I}^- \setminus I$.
Then we have:

\begin{Claim}\label{Cl_W1_monotonicity_survives}
For any two points $x_j \in \XX_{t_j}$ with $t_1, t_2 \in I$ and $s_1, s_2 \in \ov{I}^-$ with $s_1 \leq s_2 \leq \min \{ t_1, t_2 \}$ we have
\[ d_{W_1}^{\XX_{s_1}} (\nu_{x_1;s_1}, \nu_{x_2;s_1}) \leq d_{W_1}^{\XX_{s_2}} (\nu_{x_1;s_2}, \nu_{x_2;s_2}) . \]
\end{Claim}

\begin{proof}
If $s_1 \in I$ or $s_2 \in I$, then the claim follows from Claim~\ref{Cl_properties_new_Xt}\ref{Cl_properties_new_Xt_h}.
So suppose that $s_1, s_2 \not\in I$ and $s_1 < s_2$.
Then we can find some $s' \in [s_1, s_2] \cap I$ and conclude, again using Claim~\ref{Cl_properties_new_Xt}\ref{Cl_properties_new_Xt_h}, that
\[ d_{W_1}^{\XX_{s_1}} (\nu_{x_1;s_1}, \nu_{x_2;s_1})
\leq d_{W_1}^{\XX_{s'}} (\nu_{x_1;s'}, \nu_{x_2;s'})  
\leq d_{W_1}^{\XX_{s_2}} (\nu_{x_1;s_2}, \nu_{x_2;s_2}) . \qedhere \]
\end{proof}
\medskip

Next we construct the conjugate heat kernels $\nu_{x;s}$ based at points $x \in \XX_t$ with $t \in \ov{I}^- \setminus I$.
Fix for a moment times $s < s_\infty$ with $s \in \ov{I}^-$ and $s_\infty \in \ov{I}^- \setminus I$ and a point $x_\infty \in \XX_{s_\infty}$.
By Claim~\ref{Cl_properties_new_Xt}\ref{Cl_properties_new_Xt_d} there is a sequence of times $s_i \searrow s_\infty$ and points $x_i \in \XX_{s_i}$ such that $d^{\XX_{s_\infty}}_{W_1} (\delta_{x_\infty} , \nu_{x_i; s_\infty}) \to 0$.
It follows that for any $i \leq j$, using Claim~\ref{Cl_W1_monotonicity_survives}
\begin{equation*}
  d_{W_1}^{\XX_s} ( \nu_{x_i;s}, \nu_{x_j; s} )  
\leq  d_{W_1}^{\XX_{s_\infty}} ( \nu_{x_i;s_\infty}, \nu_{x_j; s_\infty} ) \\  
\leq  d_{W_1}^{\XX_{s_\infty}} ( \nu_{x_i;s_\infty}, \delta_{x_\infty} ) 
+  d_{W_1}^{\XX_{s_\infty}} (\delta_{x_\infty}, \nu_{x_j; s_\infty} )  \xrightarrow[i \to \infty]{} 0.
\end{equation*}
Therefore, we have
\[ \nu_{x_i;s} \xrightarrow[i \to \infty]{\quad W_1 \quad} \nu_{x_\infty;s} \in \mathcal{P} (\XX_s) \]
and $\nu_{x_\infty;s}$ is independent of the choice of the sequence $x_i$.
Moreover, we have for any $t' > s_\infty$ with $t' \in I$ and $x' \in \XX_{t'}$
\begin{equation} \label{eq_dW1_nu_x_infty}
  d_{W_1}^{\XX_s} ( \nu_{x';s}, \nu_{x_\infty; s} )  \leq   d_{W_1}^{\XX_{s_\infty}} ( \delta_{x_\infty}, \nu_{x'; s_\infty}) . 
\end{equation}

Repeating this procedure for all $s_\infty \in \ov{I}^- \setminus I$, $s \in \ov{I}^-$ and $x_\infty \in \XX_{s_\infty}$ allows us to define $\nu_{x;s}$ for all $x \in \XX_t$, $s,t \in \ov{I}^-$, $s \leq t$.
It remains to verify that the new objects constructed so far define an $H$-concentrated metric flow $\XX^* \supset \XX$ that is a future completion of $\XX$.
Due to a limit argument, the statement of Claim~\ref{Cl_W1_monotonicity_survives} can be generalized to the case in which $t_1, t_2 \in \ov{I}^-$.

Properties~\ref{Def_metric_flow_1}--\ref{Def_metric_flow_5} of Definition~\ref{Def_metric_flow} are clear by construction.
Property~\ref{Def_metric_flow_6} holds by Claim \ref{Cl_properties_new_Xt}\ref{Cl_properties_new_Xt_b} if $t \in I$.
Suppose now that $t \in \ov{I}^- \setminus I$ and choose $T \geq 0$ and $f : \XX_s \to \IR$ according to Definition~\ref{Def_metric_flow}\ref{Def_metric_flow_6}.
By Lemma~\ref{Lem_met_flow_T_positive} it suffices to assume that $T > 0$, so $f$ is Lipschitz.
Let $x_\infty, x'_\infty \in \XX_{t_\infty}$.
By Claim~\ref{Cl_properties_new_Xt}\ref{Cl_properties_new_Xt_d} we can find times $t_i \searrow t_\infty$, $t_i \in I$ and points $x_i, x'_i \in \XX_{t_i}$ with
\begin{equation} \label{eq_lim_i_Var_0}
 \lim_{i \to \infty} d_{W_1}^{\XX_{t_\infty}} (\delta_{x_\infty}, \nu_{x_i; t_\infty}) = \lim_{i \to \infty} d_{W_1}^{\XX_{t_\infty}} (\delta_{x'_\infty}, \nu_{x'_i; t_\infty}) = 0 
\end{equation}
and
\begin{equation} \label{eq_i_d_t_i_d_infty}
 \lim_{i \to \infty} d_{t_i} (x_i, x'_i) = d_{t_\infty} (x_\infty, x'_\infty). 
\end{equation}
By (\ref{eq_dW1_nu_x_infty}), (\ref{eq_lim_i_Var_0}) we have
\begin{align*}
 \lim_{i \to \infty} \Phi^{-1} \bigg( \int_{\XX_s} \Phi (f) d\nu_{x_i;s} \bigg) &= \Phi^{-1} \bigg( \int_{\XX_s} \Phi (f) d\nu_{x_\infty;s} \bigg), \\
 \lim_{i \to \infty} \Phi^{-1} \bigg( \int_{\XX_s} \Phi (f) d\nu_{x'_i;s} \bigg) &= \Phi^{-1} \bigg( \int_{\XX_s} \Phi (f) d\nu_{x'_\infty;s} \bigg).
\end{align*}
It follows, using (\ref{eq_i_d_t_i_d_infty}), that
\begin{multline*}
 \bigg| \Phi^{-1} \bigg( \int_{\XX_s} \Phi (f) d\nu_{x_\infty;s} \bigg) - \Phi^{-1} \bigg( \int_{\XX_s} \Phi (f) d\nu_{x'_\infty;s} \bigg) \bigg|
\leq \lim_{i \to \infty} (t_i - s + T)^{-1/2} d_{t_i} (x_i, x'_i)  \\
= (t_\infty - s + T)^{-1/2} d_{t_\infty} (x_\infty, x'_\infty). 
\end{multline*}

By Claim~\ref{Cl_properties_new_Xt}\ref{Cl_properties_new_Xt_c}, Property~\ref{Def_metric_flow_7} of Definition~\ref{Def_metric_flow} holds whenever $t_1 \in \ov{I}^-$, $t_2, t_3 \in I$.
Assume next that $t_1 \in \ov{I}^-$, $t_2 \in \ov{I}^- \setminus I$, $t_3 \in I$ and $x_3 \in \XX_{t_3}$.
Let $f_{t_1} : \XX_{t_1} \to \IR$ be a bounded $1$-Lipschitz function and consider the corresponding heat flow for $t \geq t_1$, $t \in \ov{I}^-$:
\[ f_t : \XX_t \longrightarrow \IR, \qquad x \longmapsto \int_{\XX_{t_1}} f_{t_1} \, d\nu_{x ;t_1}. \]
Fix a time $t_2 < t'_2 < t_3$, $t'_2 \in I$.
Then
\begin{equation} \label{eq_ft1_fs}
 \int_{\XX_{t_1}} f_{t_1} \, d\nu_{x_3; t_1}
= \int_{\XX_{t'_2}} \int_{\XX_{t_1}} f_{t_1} \, d\nu_{y;t_1} d\nu_{x_3;t'_2} (y)
= \int_{\XX_{t'_2}} f_{t'_2} \, d\nu_{x_3; t'_2} . 
\end{equation}
Our goal is to pass this identity to the limit $t'_2 \searrow t_2$.
For this purpose, observe that by (\ref{eq_dW1_nu_x_infty}) we have for any $y \in \XX_{t'_2}$, $z \in \XX_{t_2}$
\[ |f_{t'_2} (y) - f_{t_2}(z)| 
\leq d^{\XX_{t_1}}_{W_1} (\nu_{y;t_1}, \nu_{z;t_1})
\leq d^{\XX_{t_2}}_{W_1} (\nu_{y;t_2}, \delta_z)
 \leq \sqrt{ \Var ( \nu_{y;t_2}, \delta_z)}. \]
Integration against $d\nu_{y;t_2}(z) d\nu_{x_3;t'_2}(y)$ yields
\begin{align*}
\bigg| \int_{\XX_{t'_2}} & f_{t'_2} (y) d\nu_{x_3;t'_2} (y) - \int_{\XX_{t_2}} f_{t_2}(z) d\nu_{x_3; t_2}(z) \bigg| \\
&\leq   \int_{\XX_{t_2}} \int_{\XX_{t'_2}} | f_{t'_2} (y) - f_{t_2} (z) | d\nu_{y;t_2}(z) d\nu_{x_3;t'_2}(y)   \displaybreak[1]  \\
&\leq \int_{\XX_{t_2}} \int_{\XX_{t'_2}} \sqrt{ \Var (\nu_{y;t_2},\delta_z)} d\nu_{y;t_2} (z) d\nu_{x_3;t'_2}(y)   \displaybreak[1]  \\
&\leq \int_{\XX_{t_2}} \bigg( \int_{\XX_{t'_2}}  \Var (\nu_{y;t_2},\delta_z) d\nu_{y;t_2} (z) \bigg)^{1/2} d\nu_{x_3;t'_2}(y)  \displaybreak[1] \\
&=  \int_{\XX_{t_2}} \sqrt{ \Var (\nu_{y;t_2}) } d\nu_{x_3;t'_2}(y) 
\leq \sqrt{ H (t'_2-t_2) } \xrightarrow[t'_2 \searrow t_2]{} 0.
\end{align*}
Combining this with (\ref{eq_ft1_fs}) implies that
\[  \int_{\XX_{t_2}} f_{t_2} \, d\nu_{x_3; t_2}
= \int_{\XX_{t_1}} f_{t_1} \, d\nu_{x_3; t_1}
= \int_{\XX_{t_2}} \int_{\XX_{t_1}} f_{t_1} \, d\nu_{y;t_1} d\nu_{x_3;t_2} (y)
 , \]
as desired, which proves the reproduction identity if $t_1, t_2 \in \ov{I}^-$ and $t_3 \in I$.
The case $t_1, t_2, t_3 \in \ov{I}^-$ follows from the previous case via a simple limit argument, using (\ref{eq_dW1_nu_x_infty}).

We have shown that $\XX$ is a metric flow.
Next we show that $\XX$ is $H$-concentrated.
By Claim~\ref{Cl_properties_new_Xt}\ref{Cl_properties_new_Xt_g} it suffices to show that for any $s < t_\infty$, $s \in \ov{I}^-$, $t_\infty \in \ov{I}^- \setminus I$ and $x_\infty, x'_\infty \in \XX_{t_\infty}$ we have
\begin{equation} \label{eq_Var_XX_star}
 \Var (\nu_{x_\infty; s}, \nu_{x'_\infty;s}) \leq d^2_{t_\infty} (x_\infty, x'_\infty) + H (t_\infty - s). 
\end{equation}
By Claim~\ref{Cl_properties_new_Xt}\ref{Cl_properties_new_Xt_d} there are sequences $x_i, x'_i \in \XX_{t_i}$, $t_i \searrow t_\infty$, with 
\[
\lim_{i\to\infty} d^{\XX_{t_\infty}}_{W_1} (\delta_{x_\infty}, \nu_{x_i; t_\infty}) = \lim_{i\to\infty} d^{\XX_{t_\infty}}_{W_1} (\delta_{x'_\infty}, \nu_{x'_i; t_\infty}) = 0, \qquad \lim_{i \to \infty} d_{t_i} (x_i, x'_i) = d_{t_\infty} (x_\infty, x'_\infty). \]
Passing Claim~\ref{Cl_properties_new_Xt}\ref{Cl_properties_new_Xt_g} to the limit and using
\[ \Var (\nu_{x_\infty; s}, \nu_{x'_\infty;s}) \leq \liminf_{i \to \infty} \Var (\nu_{x_i; s}, \nu_{x'_i;s}), \]
which holds due to (\ref{eq_dW1_nu_x_infty}), implies (\ref{eq_Var_XX_star}).

Lastly note that $\XX^*$ is future complete at any time $t \in \ov{I}^- \setminus I$ due to Claim~\ref{Cl_properties_new_Xt}\ref{Cl_properties_new_Xt_f} and Theorem~\ref{Thm_future_cont_equiv}.
Moreover, for any such $t$ we have $\supp \XX_t = \XX_t$ due to Claim~\ref{Cl_properties_new_Xt}\ref{Cl_properties_new_Xt_a}.
This shows that $\XX^*$ is a future completion of $\XX$.
\end{proof}
\bigskip

\begin{proof}[Proof of Theorem~\ref{Thm_extend_isometry}.]
Let $t_\infty \in I_0 \setminus I'$ and choose times $t_i \in I'$ with $t_i \searrow t_\infty$.
By Proposition~\ref{Thm_future_cont_equiv}\ref{Prop_future_cont_equiv_e} and Lemma~\ref{Lem_combining_embeddings}, and after passing to a subsequence, we can find sequences of metric spaces $(Z_{i}, d_{Z_i})$ and isometric embeddings $\varphi^{i,j}_{t_\infty} : \XX^j_{t_\infty}  \to Z_i$, $\varphi^{i,j}_{t_i} : \XX^j_{t_i} \to Z_i$, $j=1,2$, that satisfy $ \varphi^{i,1}_{t_i} = \varphi^{i,2}_{t_i} \circ \phi_{t_i}$ and such that the following holds.
Consider two conjugate heat flows $(\mu^j_t)_{t \in I''}$ on $\XX^j$, $j=1,2$, with $(\phi_{t_i} )_* \mu^1_{t_i} = \mu^2_{t_i}$, $\Var(\mu^j_t) < \infty$ for all $t \in I''$ and $t_i, t_\infty \in I''$ for large $i$.
Then
\[  d^{Z_i}_{W_1} ( (\varphi^{i,j}_{t_\infty} )_* \mu^j_{t_\infty}, (\varphi^{i,j}_{t_i} )_* \mu^j_{t_i} ) \to 0. \]
 It follows that
\[ d^{Z_{i}}_{W_1} \big( (\varphi^{i,1}_{t_\infty})_* \mu^1_{t_\infty}, (\varphi^{i,2}_{t_\infty})_* \mu^2_{t_\infty} \big) 
\leq d^{Z_i}_{W_1} \big( (\varphi^{i,1}_{t_\infty})_* \mu^1_{t_\infty}, (\varphi^{i,1}_{t_i})_* \mu^1_{t_i} \big) 
+ d^{Z_i}_{W_1} \big(  (\varphi^{i,2}_{t_i})_* \mu^2_{t_i} , (\varphi^{i,2}_{t_\infty})_* \mu^2_{t_\infty} \big)
 \to 0. \]
Choose couplings $q_i$ between $\mu^1_{t_\infty}, \mu^2_{t_\infty}$ such that
\[ \int_{\XX^1_{t_\infty} \times \XX^2_{t_\infty}} d_{Z_i} ( \varphi^{i,1}_{t_\infty}(x_1), \varphi^{i,2}_{t_\infty}(x_2) ) dq_i (x_1, x_2) \to 0 .\]
By Lemma~\ref{Lem_couplings_converge_isometry}, after passing to a subsequence, we can find an isometry $\phi_{t_\infty} : (\XX^1_{t_\infty}, d^1_{t_\infty} ) \to (\XX^2_{t_\infty}, d^2_{t_\infty})$ with the property that
\begin{equation} \label{eq_phi_properties_isom}
d_{Z_i} (\varphi^{i,1}_{t_\infty}(x), \varphi^{i,2}_{t_\infty}(\phi_{t_\infty}(x))) \to 0 \qquad \text{for all} \quad x \in \XX^1_{t_\infty}
\end{equation}
and such that $q_i$ weakly converges to a coupling  of the form $q_\infty = (\id_{\XX^1_{t_\infty}}, \phi_{t_\infty})_* \mu^1_{t_\infty}$ between $\mu^1_{t_\infty}, \mu^2_{t_\infty}$, which implies that 
\begin{equation} \label{eq_phi_mu1mu2_t_infty}
(\phi_{t_\infty})_* \mu^1_{t_\infty} = \mu^2_{t_\infty}. 
\end{equation}
Since (\ref{eq_phi_properties_isom}) characterizes $\phi_{t_\infty}$ uniquely, the same argument implies that (\ref{eq_phi_mu1mu2_t_infty}) holds for \emph{any} two conjugate heat flows $(\mu^j_t)_{t \in I''}$, $j =1,2$ with the same properties.
In the special case of conjugate heat kernels we obtain that for any $t \in I'$, $t \geq t_\infty$, $x \in \XX^1_t$ we have
\begin{equation} \label{eq_phi_t_infty_characterization}
  ( \phi_{t_\infty})_* \nu^1_{x;t_\infty} = \nu^2_{\phi_t(x);t_\infty}. 
\end{equation}
 
Let $x_\infty \in \XX^1_{t_\infty}$.
By Proposition~\ref{Prop_topology_properties}\ref{Prop_topology_properties_e} there is a sequence of points $x_i \in \XX^1_{t_i}$ with $\nu^1_{x_i;t_\infty} \to \delta_{x_\infty}$.
Then by (\ref{eq_phi_t_infty_characterization}) we also have $\nu^2_{\phi_{t_i} (x_i);t_\infty} = (\phi_{t_i})_* \nu^1_{x_i;t_\infty}   \to (\phi_{t_i})_* \delta_{x_\infty} = \delta_{\phi_{t_\infty} (x_\infty)}$, which shows that (\ref{eq_phi_t_infty_characterization}) characterizes $\phi_{t_\infty}$ uniquely.

Next, fix some $s \in I'$, $s < t_\infty$ and $x_\infty \in \XX^1_{t_\infty}$.
We claim that
\[ (\phi_{s} )_* \nu^1_{x_\infty;s} = \nu^2_{\phi_{t_\infty}(x_\infty); s}. \]
To see this, consider the sequence $x_i \to x_\infty$ from the last paragraph and observe that by Proposition~\ref{Prop_topology_properties}\ref{Prop_topology_properties_c} we have
\[ (\phi_{s} )_* \nu^1_{x_\infty; s} \xleftarrow[i \to \infty]{\quad W_1 \quad} (\phi_{s} )_* \nu^1_{x_i;s}
= \nu^2_{\phi_{t_i}(x_i);s} \xrightarrow[i \to \infty]{\quad W_1 \quad} \nu^2_{\phi_{t_\infty}(x_\infty);s}. \]

By repeating the construction above we can extend $(\phi_t)_{t \in I'}$ to $(\phi_t)_{t \in I_0}$ such that $(\phi_{t})_{t \in I' \cup \{ t_\infty \}}$ is a flow isometry for any $t_\infty \in I$.
It remains to show that $(\phi_t)_{t \in I_0}$ is flow isometry.
To see this, let $t_1 < t_3$, $t_1, t_3 \in I_0$. Then we can find some $t_2 \in I'$ with $t_1 < t_2 < t_3$.
By the reproduction formula we have for any $x \in \XX_{t_3}^1$
\begin{multline*}
 (\phi_{t_1})_* \nu^1_{x;t_1} = (\phi_{t_1})_* \int_{\XX^1_{t_2}} \,\nu_{y;t_1}^1 d\nu^1_{x;t_2}(y)
= \int_{\XX^1_{t_2}} ( (\phi_{t_1})_* \nu_{y;t_1}^1 ) \, d\nu^1_{x;t_2}(y) \\
= \int_{\XX^1_{t_2}}  \nu^2_{\phi_{t_2} (y);t_1} \, d\nu^1_{x;t_2}(y) 
= \int_{\XX^2_{t_2}}  \nu^2_{y;t_1} \,  d\nu^2_{\phi_{t_3} (x);t_2}(y)
= \nu^2_{\phi_{t_3} (x);t_1}. 
\end{multline*}
This finishes the proof.
\end{proof}
\bigskip

\section{The space of metric flow pairs} \label{sec_space_met_flow_pairs}
In the following we will consider metric flows $\XX$ equipped with a conjugate heat flow $(\mu_t)$, called \emph{metric flow pairs}.
We will define a distance function $d_{\IF}^J$ on the space of metric flow pairs, which will turn out to be complete.
Convergence with respect to $d_{\IF}^J$ will roughly be equivalent to convergence in the Gromov-$W_1$-Wasserstein distance at almost every time.
In Section~\ref{sec_compact_subsets_IF} we will see that many important families of metric flow pairs, such as those arising from super Ricci flows, are in fact precompact with respect to the $d_{\IF}^J$ distance.

The conjugate heat flow $(\mu_t)$ on $\XX$ will serve as a way of specifying a rough center of each time-slice.
This will be particularly important in the case in which time-slices are not compact.
So $(\mu_t)$ serves as some kind of ``basepoint'' and convergence with respect to $d_{\IF}^J$ may be compared to \emph{pointed} Gromov-Hausdorff convergence.
In most cases we may choose $(\mu_t)$ to be a conjugate heat kernel of the form $(\nu_{x;t})$, where $x$ is a point in the final time-slice of $\XX$.
\subsection{\texorpdfstring{The $\IF$-distance}{The F-distance}}
For the remainder of this subsection suppose that $I \subset \IR$ is an interval.

\begin{Definition}[Metric flow pairs and isometries] \label{Def_metric_flow_pair}
A pair $(\XX, (\mu_t)_{t \in I'})$ is called a {\bf metric flow pair over $I \subset \IR$} if:
\begin{enumerate}
\item $I' \subset I$ with $|I \setminus I'| = 0$.
\item $\XX$ is a metric flow over $I'$.
\item $(\mu_t)_{t \in I'}$ is a conjugate heat flow on $\XX$ with $\supp \mu_t = \XX_t$ for all $t \in I'$.
\end{enumerate}
If $J \subset I'$, then we say that $(\XX, (\mu_t)_{t \in I})$ {\bf is fully defined over $J$.}

If $\td{I} \subset I$ is some subinterval, then the pair $(\XX_{\td{I} \cap I'}, (\mu_t)_{t \in \td I \cap I'})$ is called the {\bf restriction of $(\XX, (\mu_t)_{t \in I})$ to $\td I$.}
If $(\XX^i, (\mu^i_t)_{t \in I^{\prime, i}})$, $i = 1,2$, are two metric flow pairs and $I' \subset I^{\prime, 1} \cap I^{\prime, 2}$, then an isometry $\phi : \XX^1_{I' } \to \XX^2_{ I' }$ between $\XX^1, \XX^2$ over $I'$ is called an {\bf almost always isometry between the metric flow pairs} if $|I^{\prime, 1}  \setminus I'| = |I^{\prime, 2}  \setminus I'| = 0$ and if $(\phi_t )_* \mu^1_t = \mu^2_t$ for all $t \in I'$.
If $J \subset I'$, then we say that $\phi$ {\bf is fully defined over $J$.}
\end{Definition}

Next, let $J \subset I \subset \IR$.
We remark that we will later mainly be interested in the cases $J = \emptyset$ and $J = \{t_{\max} \}$ if $t_{\max} := \max I$ exists.

\begin{Definition}[Spaces of metric flow pairs, $\IF_I^J, \IF^*_I$] \label{Def_IF}
We denote by $\IF_I^J$ the set of equivalence classes of metric flow pairs over   $I$ that are fully defined over $J$, where we call two metric flow pairs equivalent if there is an almost always isometry between them that is fully defined over $J$.

If $J = \emptyset$, then we also write $\IF_I := \IF_I^J$ and if $J = \{ t_0 \}$, then we also write $\IF_I^{t_0} := \IF_I^{\{ t_0 \}}$.
If $t_{\max} := \max I$ exists and $t_{\max} \in J$, then we denote by $\IF_I^{J,*} \subset \IF_I^{ J }$ the subset of equivalence classes of all metric flow pairs $(\XX, (\mu_t)_{t \in I})$ with the property that $\XX_{t_{\max}}$ consists of a single point.
We also write $\IF^{*}_I := \IF^{\{ t_{\max} \}, *}_I$.
\end{Definition}

\begin{Remark}
For any representative $(\XX, (\mu_t)_{t \in I'})$ of an element of $\IF^{J,*}_I$ the measure $\mu_{t_{\max}}$ must be a point mass.
Therefore, if $\XX_{t_{\max}} = \{ x_{\max} \}$, then $\mu_{t} = \nu_{x_{\max}; t}$.
\end{Remark}

If there is no chance of confusion, then we will often conflate isometry classes of metric flow pairs with their representatives.
So we will often write $(\XX, (\mu_t)_{t \in I'}) \in \IF_I^J$ instead of $[(\XX, (\mu_t)_{t \in I'})] \in \IF_I^J$.

The following definition allows us to compare two or more different metric flow pairs and also characterize their convergence.

\begin{Definition}[Correspondence]
Let $\XX^i$ be metric flows over $I^{\prime, i} \subset \IR$, indexed by some $i \in \mathcal{I}$.
A {\bf correspondence} between these metric flows {\bf over some subset $I'' \subset \IR$}  is a pair of the form
\begin{equation} \label{eq_correspondence_form}
 \CF := \big(  (Z_t, d^Z_t)_{t \in I''},(\varphi^i_t)_{t \in I^{\prime\prime, i}, i \in \mathcal{I}} \big), 
\end{equation}
where:
\begin{enumerate}
\item $(Z_t, d^Z_t)$ is a metric space for any $t \in I''$.
\item $I^{\prime\prime, i} \subset I^{\prime, i} \cap I''$ for any $i \in \mathcal{I}$.
\item $\varphi^i_t : (\XX^i_t, d^i_t) \to (Z_t, d^Z_t)$ is an isometric embedding for any $i \in \mathcal{I}$ and $t \in I^{\prime\prime, i}$.
\end{enumerate}
If $J \subset I^{\prime\prime, i}$ for all $i \in \II$, then we say that $\CF$ is {\bf fully defined over $J$.}
If $\td I'' \subset I''$ is some subset, then the pair 
\[ \CF |_{\td I''} := \big(  (Z_t, d^Z_t)_{t \in \td I''},(\varphi^i_t)_{t \in I^{\prime\prime, i} \cap \td I'', i \in \mathcal{I}} \big) \]
is called the {\bf restriction of $\CF$ to $\td I''$.}
If $\td{\mathcal{I}} \subset {\mathcal{I}}$, then the pair $(  (Z_t, d^Z_t)_{t \in  I''},(\varphi^i_t)_{t \in I^{\prime\prime, i} \cap  I'', i \in \td{\mathcal{I}}})$ is called the {\bf restriction of $\CF$ to the index set $\td{\mathcal{I}}$.}
\end{Definition}

The idea behind the subsets $I^{\prime\prime,i}$ is that we want to allow the possibility that the embeddings $\varphi^i_t$ are undefined at certain times.
For example if $\II = \IN \cup \{ \infty \}$, then $\CF$ may describe the convergence behavior of metric flows $\XX^1, \XX^2, \ldots$ to some metric flow $\XX^\infty$; we will provide more details in Section~\ref{Sec_conv_within}.
In the case in which $I = (-\infty, 0]$, we may want to allow this convergence to occur on compact time-intervals of the form $[-T, 0]$.
So we may only require $\varphi^i_t$ to be defined on a time-interval of the form $[-T_i,0]$ for some $T_i \to \infty$.

In the following, we will use correspondences to define a notion of distance between metric flow pairs.
Let first $(\XX^i, (\mu^i_t)_{t \in I^{\prime, i}})$, $i =1,2$, be two metric flow pairs defined over some intervals $I^i \subset \IR$ that are fully defined over some common $J \subset \IR$ and consider a correspondence $\CF = (  (Z_t, d^Z_t)_{t \in I''},(\varphi^i_t)_{t \in I^{\prime\prime, i}, i =1,2} )$ between $\XX^1, \XX^2$ over $I''$ that is also fully defined over $J$.
We will first define an \emph{extrinsic} notion, measuring the closeness of the metric flow pairs $(\XX^i, (\mu^i_t)_{t \in I^{\prime, i}})$, $i =1,2$, within $\CF$.

\begin{Definition}[$\IF$-distance within correspondence] \label{Def_IF_dist_within_CF}
We define the {\bf $\IF$-distance between two metric flow pairs within $\CF$ (uniform over $J$),}
\[ d_{\IF}^{\,\CF, J} \big( (\XX^1, (\mu^1_t)_{t \in I^{\prime,1}}), (\XX^2, (\mu^2_t)_{t \in I^{\prime,2}}) \big), \] 
to be the infimum over all $r > 0$ with the property that there is a measurable subset $E \subset I''$ with
\[ J \subset I'' \setminus E \subset  I^{\prime\prime, 1} \cap I^{\prime\prime,2} \]
and a family of couplings $(q_t)_{t \in I'' \setminus E}$ between $\mu^1_t, \mu^2_t$ such that:
\begin{enumerate}[label=(\arabic*)]
\item \label{Def_IF_dist_within_CF_1} $|E| \leq r^2$.
\item \label{Def_IF_dist_within_CF_3} For all $s, t \in I'' \setminus E$, $s \leq t$, we have
\[ \int_{\XX^1_t \times \XX_t^2} d_{W_1}^{Z_s} ( (\varphi^1_s)_* \nu^1_{x^1; s}, (\varphi^2_s)_* \nu^2_{x^2; s} ) dq_t (x^1, x^2) \leq r. \]
\end{enumerate}
If $J = \emptyset$, then we also write $d_{\IF}^{\,\CF} := d_{\IF}^{\,\CF, J}$ and if $J = \{ t_0 \}$, then we also write $d_{\IF}^{\,\CF, t_0} := d_{\IF}^{\,\CF, J}$.
\end{Definition}

\begin{Remark} \label{Rmk_F_dist_GW1_dist}
By setting $s=t$ in Property~\ref{Def_IF_dist_within_CF_3}, we obtain the following bound for all $t \in I'' \setminus E$:
\begin{multline*}
 d_{GW_1} \big( (\XX^1_t, d^1_t, \mu^1_t), (\XX^2_t, d^2_t, \mu^2_t) \big)
\leq d_{W_1}^{Z_t} ( (\varphi^1_t)_* \mu^1_t , (\varphi^2_t)_* \mu^2_t) \\
\leq \int_{\XX^1_t \times \XX^2_t} d^Z_t ( \varphi^1_t (x^1), \varphi^2_t (x^2) ) dq_t (x^1, x^2) \leq r. 
\end{multline*}
\end{Remark}

Note that the definition of $d_{\IF}^{\, \CF, J}$ depends on the subset $I''$ over which $\CF$ is defined.
So $d_{\IF}^{\, \CF, J}$ measures the closeness of two metric flow pairs restricted to $I''$.
Note also that we allow $d_{\IF}^{\, \CF, J}$ to attain the value $\infty$.

Next, suppose that $(\XX^i, (\mu^i_t)_{t \in I^{\prime, i}})$, $i =1,2$, are metric flow pairs over a common interval $I \subset \IR$ that are both fully defined over some $J \subset I$.
We define the $\IF$-distance between these metric flow pairs by taking the infimum of the $\IF$-distances within all possible correspondences.

\begin{Definition}[$\IF$-distance] \label{Def_IF_dist}
The {\bf $\IF$-distance between two metric flow pairs (uniform over $J$),} 
\[ d_{\IF}^{ J} \big( (\XX^1, (\mu^1_t)_{t \in I^{\prime,1}}), (\XX^2, (\mu^2_t)_{t \in I^{\prime,2}}) \big), \] 
is defined as the infimum of
\[ d_{\IF}^{\,\CF, J} \big( (\XX^1, (\mu^1_t)_{t \in I^{\prime,1}}), (\XX^2, (\mu^2_t)_{t \in I^{\prime,2}}) \big), \] 
over all correspondences $\CF$ between $\XX^1, \XX^2$ over $I$ that are fully defined over $J$.
If $J = \emptyset$, then we also write $d_{\IF} := d_{\IF}^{ J}$ and if $J = \{ t_0 \}$, then we also write $d_{\IF}^{ t_0} := d_{\IF}^{ J}$.
\end{Definition}

\begin{Lemma} \label{Lem_d_IF_star_remove_t_max}
If $t_{\max} := \max I$ exists, then we have $d^{ J}_{\IF} = d^{J \cup \{ t_{\max} \}}_{\IF}$ between any two metric flow pairs representing classes in $\IF^{*, J \cup \{ t_{\max} \}}_I$.
In particular, $d_{\IF} = d^{t_{\max}}_{\IF}$ between any two metric flow pairs representing classes in $\IF^*$.
Moreover, if $(\XX^i, (\mu^i_t)_{t \in I^{\prime,i}})$, $i=1,2$, represent classes in $\IF^*_I$, then
\[ d^{t_{\max}}_{\IF} \big( (\XX^1, (\mu^1_t)_{t \in I^{\prime,1}}), (\XX^2, (\mu^2_t)_{t \in I^{\prime,2}}) \big) = d_{\IF} \big(  (\XX^1_{I \setminus \{ t_{\max} \}}, (\mu^1_t)_{t \in I^{\prime,1} \setminus \{ t_{\max} \}}), (\XX^2_{I \setminus \{ t_{\max} \}}, (\mu^2_t)_{t \in I^{\prime,2} \setminus \{ t_{\max} \}}) \big) . \]
\end{Lemma}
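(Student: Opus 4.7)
The plan is to exploit the fact that for a flow pair representing a class in $\IF^{*, J \cup \{ t_{\max} \}}_I$, the time-slice $\XX_{t_{\max}}$ is a single point $\{x_{\max}\}$, so any map from $\XX_{t_{\max}}$ into a metric space is automatically an isometric embedding. Moreover, the reproduction formula applied to $\mu_{t_{\max}} = \delta_{x_{\max}}$ yields $\mu_t = \nu_{x_{\max}; t}$ for all $t$, which is the key identity allowing the trivial single-point embedding at $t_{\max}$ to be glued onto the correspondence at earlier times without cost.

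For the first equality $d^J_{\IF} = d^{J \cup \{ t_{\max} \}}_{\IF}$, the inequality $d^J_{\IF} \leq d^{J \cup \{ t_{\max} \}}_{\IF}$ is immediate from Definition~\ref{Def_IF_dist}, since enlarging $J$ restricts the class of admissible correspondences. For the reverse inequality, given a correspondence $\CF = ((Z_t, d^Z_t)_{t \in I}, (\varphi^i_t)_{t \in I^{\prime\prime,i}, i=1,2})$ fully defined over $J$ with witness $(E, (q_t)_{t \in I \setminus E})$ satisfying $|E| \leq r^2$ and Property~(2) of Definition~\ref{Def_IF_dist_within_CF}, I would construct $\CF'$ by replacing $(Z_{t_{\max}}, d^Z_{t_{\max}})$ with a one-point space $\{*\}$ and defining $\varphi^{\prime, i}_{t_{\max}} : \{x^i_{\max}\} \to \{*\}$ as the unique such map, leaving all other data unchanged. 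I would take $E' := E \setminus \{t_{\max}\}$ and couplings $q'_t := q_t$ for $t \in (I \setminus E) \setminus \{t_{\max}\}$ together with $q'_{t_{\max}} := \delta_{x^1_{\max}} \otimes \delta_{x^2_{\max}}$. Verifying Property~(2) of $\CF'$ is immediate except in the case $s < t = t_{\max}$ with $s \in I \setminus E$: using $\mu^i_s = \nu^i_{x^i_{\max}; s}$, the integral reduces to $d^{Z_s}_{W_1}((\varphi^1_s)_* \mu^1_s, (\varphi^2_s)_* \mu^2_s)$, which is bounded by $r$ by applying Property~(2) of the original $\CF$ at $s = t$ (cf.\ Remark~\ref{Rmk_F_dist_GW1_dist}). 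The ``in particular'' statement is the case $J = \emptyset$.

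For the moreover statement, I would exhibit a distance-preserving bijection between correspondences over $I$ that are fully defined at $t_{\max}$ and correspondences over $I \setminus \{t_{\max}\}$ (without any $t_{\max}$-constraint): extension is exactly the construction above, and restriction simply discards the time-slice data at $t_{\max}$ and takes $E' := E$, which is consistent since the $t_{\max}$-constraint on $\CF$ forces $t_{\max} \notin E$. Under this bijection, the witnessing sets $E$ and the couplings $(q_t)_{t \neq t_{\max}}$ are preserved, and Property~(2) on one side is equivalent to Property~(2) on the other (the extension case being verified precisely as above). Hence the infimal values of $r$ agree, giving the desired equality. The main technical point throughout is the single case $s < t_{\max}$ of Property~(2) after extension; this is where the identity $\mu^i_s = \nu^i_{x^i_{\max}; s}$ together with the diagonal bound of Remark~\ref{Rmk_F_dist_GW1_dist} are essential, and no quantitative estimate beyond this is required.
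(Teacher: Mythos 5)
Your proposal is correct and takes essentially the same approach as the paper: both arguments hinge on replacing the time-$t_{\max}$ slice of a given correspondence by a single point (free, since $\XX^i_{t_{\max}}$ is a point) and observing via the identity $\mu^i_t = \nu^i_{x^i_{\max};t}$ together with the diagonal case $s=t$ of Definition~\ref{Def_IF_dist_within_CF}(2) (i.e.\ Remark~\ref{Rmk_F_dist_GW1_dist}) that the extra cross-terms $s < t_{\max}$ in Property~(2) come for free. Your write-up is somewhat more detailed than the paper's terse one-paragraph argument, but the underlying mechanism is identical.
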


\begin{proof}
If $\CF$ is a correspondence between $\XX^1, \XX^2$ over $I$ that is fully defined over $J$ and $d^{\,\CF, J}_{\IF} \leq r$ and $J \subset I''$ is as in Definition~\ref{Def_IF_dist_within_CF}, then by Remark~\ref{Rmk_F_dist_GW1_dist} we have $d_{W_1}^{Z_s} ( (\varphi^1_s)_* \nu^1_{x_{\max}^1; s}, (\varphi^2_s)_* \nu^2_{x^2_{\max}; s} ) = d_{W_1}^{Z_s} ( (\varphi^1_s)_* \mu^1_{ s}, (\varphi^2_s)_* \mu^2_{s} ) \leq r$ for all $s \in I'' \setminus E$.
So we may replace $\CF$ by a correspondence $\CF'$ in which $\varphi^1_{t_{\max}},\varphi^1_{t_{\max}}$ map to the same point and we still have $d^{\,\CF, J}_{\IF} \leq r$.
\end{proof}

\begin{Remark}
Suppose that $t_{\max} := \max I$ exists.
If $(\XX, (\mu_t)_{t \in I'})$ is a metric flow pair representing a class in $\IF^*_I$, then its restriction to $I \setminus \{ t_{\max} \}$ is a metric flow pair with 
\begin{equation} \label{eq_lim_to_t_max_Var_0}
 \lim_{t \nearrow t_{\max}, t \in I'} \Var (\mu_t) = 0. 
\end{equation}
Vice versa, any metric flow pair over $I \setminus \{ t_{\max} \}$ satisfying (\ref{eq_lim_to_t_max_Var_0}) can be extended to a metric flow pair over $I$ that represents a class in $\IF^*_I$.
By Lemma~\ref{Lem_d_IF_star_remove_t_max} the $\IF$-distance does not change if we restrict metric flow pairs to $I \setminus \{ t_{\max} \}$.
This is why we may sometime conflate the representatives of $\IF^*_I$ with the representatives of $\IF_{I \setminus \{ t_{\max} \}}$ satisfying (\ref{eq_lim_to_t_max_Var_0}).
\end{Remark}

The following is a direct consequence of Definitions~\ref{Def_IF_dist_within_CF}, \ref{Def_IF_dist}.

\begin{Lemma}
$d_{\IF}^{ J}$ is invariant under almost always isometries between metric flow pairs that are fully defined over $J$.
So it descends to a symmetric function
\[ d_{\IF}^{ J} : \IF^J_I \times \IF^J_I \longrightarrow [0, \infty]. \]
\end{Lemma}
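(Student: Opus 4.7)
The plan is to unpack Definitions~\ref{Def_IF_dist_within_CF} and \ref{Def_IF_dist} directly. Symmetry is immediate: given a correspondence $\CF$ between $\XX^1, \XX^2$ realizing $d_{\IF}^{\,\CF, J} \leq r$ via couplings $(q_t)_{t \in I'' \setminus E}$, the same target spaces with the roles of the two index labels swapped form a correspondence between $\XX^2, \XX^1$, and replacing $q_t$ by its pushforward under the swap $(x^1, x^2) \mapsto (x^2, x^1)$ gives couplings between $\mu^2_t, \mu^1_t$; neither the exceptional set $E$ nor the integrand in Property~\ref{Def_IF_dist_within_CF_3} is affected.

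For invariance, suppose that $(\XX^i, (\mu^i_t)_{t \in I^{\prime,i}})$ and $(\XX^{i'}, (\mu^{i'}_t)_{t \in I^{\prime,i'}})$ represent the same class in $\IF^J_I$, via almost always flow isometries $\phi^i : \XX^i_{I^{(i)}} \to \XX^{i'}_{I^{(i)}}$ fully defined over $J$, for $i = 1,2$. Given any correspondence $\CF = ((Z_t, d^Z_t)_{t \in I''}, (\varphi^i_t)_{t \in I^{\prime\prime, i}, i=1,2})$ between $\XX^1, \XX^2$ witnessing $d_{\IF}^{\,\CF, J} \leq r$ via an exceptional set $E$ and couplings $(q_t)_{t \in I'' \setminus E}$, I would construct a correspondence $\CF'$ between $\XX^{1'}, \XX^{2'}$ as follows. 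Keep the same target spaces $(Z_t, d^Z_t)$; enlarge the exceptional set to $E' := E \cup (I'' \setminus (I^{(1)} \cap I^{(2)}))$, which still satisfies $|E'| \leq r^2$ and $J \subset I'' \setminus E'$ because $|I^{\prime, i} \setminus I^{(i)}| = 0$ and because $\phi^i$ is fully defined over $J$; and set $\varphi^{i'}_t := \varphi^i_t \circ (\phi^i_t)^{-1}$ on $(I^{\prime\prime, i} \cap I^{(i)}) \setminus E'$. The transported couplings $q'_t := (\phi^1_t \otimes \phi^2_t)_* q_t$ are couplings between $\mu^{1'}_t, \mu^{2'}_t$ since $(\phi^i_t)_* \mu^i_t = \mu^{i'}_t$.

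The only point that needs verification is Property~\ref{Def_IF_dist_within_CF_3} for $\CF'$. The key identity is
\[
 (\varphi^{i'}_s)_* \nu^{i'}_{\phi^i_t(x); s} \;=\; (\varphi^i_s \circ (\phi^i_s)^{-1})_* (\phi^i_s)_* \nu^i_{x; s} \;=\; (\varphi^i_s)_* \nu^i_{x; s},
\]
where the first equality uses the defining property of a flow isometry, $(\phi^i_s)_* \nu^i_{x;s} = \nu^{i'}_{\phi^i_t(x); s}$. Consequently the integrand $d_{W_1}^{Z_s}((\varphi^{1'}_s)_* \nu^{1'}_{y^1;s}, (\varphi^{2'}_s)_* \nu^{2'}_{y^2;s})$ on $\XX^{1'}_t \times \XX^{2'}_t$ equals, after substituting $y^i = \phi^i_t(x^i)$, the original integrand on $\XX^1_t \times \XX^2_t$. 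By the change-of-variables formula applied to the pushforward coupling $q'_t$, the integrals match, and so $d_{\IF}^{\,\CF', J} \leq r$. Taking the infimum over $\CF$ yields one inequality; the reverse follows by running the same construction with $(\phi^i)^{-1}$ in place of $\phi^i$, and invariance under almost always isometries descends $d_{\IF}^J$ to a symmetric function on $\IF^J_I \times \IF^J_I$.

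I do not anticipate a substantive obstacle: the entire lemma is a mechanical definition-chase, and the only modest bookkeeping issue is ensuring that the enlarged exceptional set $E'$ continues to contain $J$ in its complement, which is precisely why Definition~\ref{Def_metric_flow_pair} demands ``fully defined over $J$'' for the isometry.
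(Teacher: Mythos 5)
Your argument is correct and is exactly the routine unwinding of Definitions~\ref{Def_IF_dist_within_CF} and \ref{Def_IF_dist} that the paper has in mind (the paper states the lemma as a direct consequence and gives no proof). The construction of $\CF'$ by post-composing with $(\phi^i_t)^{-1}$, the identity $(\varphi^{i'}_s)_*\nu^{i'}_{\phi^i_t(x);s}=(\varphi^i_s)_*\nu^i_{x;s}$, the pushforward couplings $q'_t$, and the enlargement of $E$ by the measure-zero set $I''\setminus(I^{(1)}\cap I^{(2)})$ (which has measure zero since $I''=I$ for $d_{\IF}^J$ and $|I\setminus I^{(i)}|=0$) are all as expected.
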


\subsection{\texorpdfstring{$(\IF^J_I, d^J_{\IF})$ is a metric space}{(F{\textasciicircum}J\_I, d{\textasciicircum}J\_F) is a metric space}}
Let $I \subset \IR$ be an interval and $J \subset I$ be a subset.
The main result of this subsection is:

\begin{Theorem} \label{Thm_IF_metric_space}
$(\mathbb{F}^J_I, d^J_{\IF})$ is a metric space if we allow infinite distances.
\end{Theorem}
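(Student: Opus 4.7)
The symmetry of $d^{J}_{\IF}$ and its non-negativity are immediate from Definitions~\ref{Def_IF_dist_within_CF}, \ref{Def_IF_dist}. Reflexivity follows by choosing the trivial correspondence $Z_{t} := \XX_{t}$ with both $\varphi^{i}_{t} = \id$, the identity exceptional set $E = \emptyset$, and coupling $q_{t} := (\id, \id)_{*} \mu_{t}$. The substantive work lies in the triangle inequality and separation.

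\textbf{Triangle inequality.} Given three representatives $(\XX^{i}, (\mu^{i}_{t}))$, $i = 1,2,3$, fix $\eps > 0$ and choose correspondences $\CF_{12}, \CF_{23}$ with associated exceptional sets $E_{12}, E_{23}$ and couplings $(q^{12}_{t}), (q^{23}_{t})$ so that $d^{\,\CF_{ij}, J}_{\IF} \leq r_{ij} := d^{J}_{\IF}((\XX^{i},\mu^{i}),(\XX^{j},\mu^{j})) + \eps$. Apply Lemma~\ref{Lem_combining_embeddings} timewise to glue the ambient spaces $Z^{12}_{t}, Z^{23}_{t}$ along isometric copies of $\XX^{2}_{t}$, producing a correspondence $\CF_{13}$ between $\XX^{1}$ and $\XX^{3}$ with common ambient spaces $Z^{13}_{t}$ whenever both input correspondences are defined at $t$. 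Set $E := E_{12} \cup E_{23}$, of measure at most $r_{12}^{2} + r_{23}^{2} \leq (r_{12}+r_{23})^{2}$. For $t \in (I \setminus E) \cap I^{\prime\prime,12} \cap I^{\prime\prime,23}$, use Lemma~\ref{Lem_gluing} to glue $q^{12}_{t}, q^{23}_{t}$ along their common marginal $\mu^{2}_{t}$, producing a three-way measure $q^{123}_{t}$ whose $(1,3)$-marginal $q^{13}_{t}$ couples $\mu^{1}_{t}$ and $\mu^{3}_{t}$. The triangle inequality for $d_{W_{1}}^{Z^{13}_{s}}$, integrated against $q^{123}_{t}$ and projected to the $(1,3)$ and $(2,3)$-factors, yields the bound $r_{12}+r_{23}$ in Definition~\ref{Def_IF_dist_within_CF}\ref{Def_IF_dist_within_CF_3}; letting $\eps \to 0$ finishes this step.

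\textbf{Separation, pointwise extraction.} Suppose $d^{J}_{\IF}((\XX^{1},\mu^{1}),(\XX^{2},\mu^{2})) = 0$. Choose correspondences $\CF_{n} = ((Z^{n}_{t})_{t \in I''_{n}}, (\varphi^{n,i}_{t}))$, exceptional sets $E_{n}$ with $|E_{n}| \leq 4^{-n}$, and couplings $(q^{n}_{t})$ realizing the inequality in Definition~\ref{Def_IF_dist_within_CF} with $r = 2^{-n}$. Let $E^{*} := \bigcap_{N} \bigcup_{n \geq N} E_{n}$; by Borel--Cantelli $|E^{*}| = 0$. For every $t \in (I^{\prime,1} \cap I^{\prime,2}) \setminus E^{*}$ the couplings $q^{n}_{t}$ exist for $n \geq n_{0}(t)$, and specializing Definition~\ref{Def_IF_dist_within_CF}\ref{Def_IF_dist_within_CF_3} to $s = t$ gives
\[
  \int_{\XX^{1}_{t} \times \XX^{2}_{t}} d^{Z^{n}_{t}}\bigl(\varphi^{n,1}_{t}(x^{1}), \varphi^{n,2}_{t}(x^{2})\bigr) \, dq^{n}_{t}(x^{1},x^{2}) \leq 2^{-n}.
\]
Because representatives of $\IF^{J}_{I}$ have $\supp \mu^{i}_{t} = \XX^{i}_{t}$, Lemma~\ref{Lem_couplings_converge_isometry} (with a $t$-dependent subsequence) produces a unique metric measure space isometry $\phi_{t} : (\XX^{1}_{t}, d^{1}_{t}, \mu^{1}_{t}) \to (\XX^{2}_{t}, d^{2}_{t}, \mu^{2}_{t})$ along which $q^{n}_{t} \rightharpoonup (\id, \phi_{t})_{*} \mu^{1}_{t}$.

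\textbf{Separation, flow compatibility.} It remains to show that $(\phi_{t})_{t}$ assembles into a flow isometry, i.e.\ $(\phi_{s})_{*} \nu^{1}_{x^{1};s} = \nu^{2}_{\phi_{t}(x^{1});s}$ for $s \leq t$ in $(I^{\prime,1} \cap I^{\prime,2}) \setminus E^{*}$ and every $x^{1} \in \XX^{1}_{t}$. Applying Definition~\ref{Def_IF_dist_within_CF}\ref{Def_IF_dist_within_CF_3} with arbitrary $s \leq t$ and using the weak convergence $q^{n}_{t} \rightharpoonup (\id, \phi_{t})_{*} \mu^{1}_{t}$ yields
\[
  \int_{\XX^{1}_{t}} d_{W_{1}}^{Z^{n}_{s}}\bigl((\varphi^{n,1}_{s})_{*} \nu^{1}_{x^{1};s}, (\varphi^{n,2}_{s})_{*} \nu^{2}_{\phi_{t}(x^{1});s}\bigr) \, d\mu^{1}_{t}(x^{1}) \longrightarrow 0,
\]
so for $\mu^{1}_{t}$-a.e.\ $x^{1}$ a subsequence realizes the two measures as asymptotically isometrically embedded copies of the same limit. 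Combined with the isometry $\phi_{s}$ already produced at time $s$, a second application of Lemma~\ref{Lem_couplings_converge_isometry} (to the couplings $(\id, \phi_{s})_{*}$ of these transported kernels) forces $(\phi_{s})_{*} \nu^{1}_{x^{1};s} = \nu^{2}_{\phi_{t}(x^{1});s}$ for $\mu^{1}_{t}$-a.e.\ $x^{1}$; by continuity of the kernel map $x \mapsto \nu^{i}_{x;s}$ in $d_{W_{1}}$ (Proposition~\ref{Prop_compare_CHF}\ref{Prop_compare_CHF_c}) and the density $\supp \mu^{1}_{t} = \XX^{1}_{t}$, the equality extends to every $x^{1} \in \XX^{1}_{t}$. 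This makes $\phi$ an almost always isometry of metric flow pairs, so the two classes in $\IF^{J}_{I}$ coincide. The main obstacle is precisely this final compatibility step across times, since the subsequence produced by Lemma~\ref{Lem_couplings_converge_isometry} depends on $t$; a standard diagonal extraction over a countable dense subset of $(I^{\prime,1} \cap I^{\prime,2}) \setminus E^{*}$, followed by the Wasserstein-continuity propagation above, resolves it.
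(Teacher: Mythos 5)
Your triangle-inequality argument and the extraction of time-slice isometries via Lemma~\ref{Lem_couplings_converge_isometry} follow the paper's approach; the paper packages the gluing steps as Proposition~\ref{Prop_triangle_ineq_CC} and Lemma~\ref{Lem_combining_correspondences}, but the ingredients (Lemma~\ref{Lem_combining_embeddings}, Lemma~\ref{Lem_gluing}) are the ones you use. A minor repairable imprecision in your step 2: weak convergence of $q^{n}_{t}$ to $(\id,\phi_{t})_{*}\mu^{1}_{t}$ does not by itself give $\int d_{W_1}^{Z^{n}_{s}}\bigl((\varphi^{n,1}_{s})_{*}\nu^{1}_{x^{1};s},(\varphi^{n,2}_{s})_{*}\nu^{2}_{\phi_{t}(x^{1});s}\bigr)\,d\mu^{1}_{t}(x^{1})\to 0$, since the integrand changes with $n$; one needs the uniform $2$-Lipschitz bound on that integrand plus an Arzel\`a--Ascoli argument on compacta, which is precisely what the internal Claim in the paper's proof does.

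The genuine gap is in the final compatibility step. The diagonal extraction over a countable dense $Q\subset(I^{\prime,1}\cap I^{\prime,2})\setminus E^{*}$ yields $(\phi_{t})_{t\in Q}$ with flow compatibility inside $Q$, and a per-$t$ re-extraction along the $Q$-subsequence gives each $\phi_{t}$, $t\notin Q$, compatible with the $\phi_{s}$, $s\in Q$. But flow compatibility for pairs $s<t$ \emph{both} outside $Q$ is not supplied by anything you wrote. The ``Wasserstein-continuity propagation'' you invoke is the step that upgrades $\mu^{1}_{t}$-a.e.\ $x$ to all $x\in\XX^{1}_{t}$ at a fixed pair of times, via the $1$-Lipschitz dependence $x\mapsto\nu_{x;s}$ of Proposition~\ref{Prop_compare_CHF}\ref{Prop_compare_CHF_c} and full support; it cannot relate two unaligned times. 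The paper closes this gap with the reproduction formula, Definition~\ref{Def_metric_flow}\ref{Def_metric_flow_7}: for $s<t$ both outside $Q$, choose $t'\in(s,t)\cap Q$ and compute
\[
(\phi_{s})_{*}\nu^{1}_{x;s}=\int_{\XX^{1}_{t'}}(\phi_{s})_{*}\nu^{1}_{y;s}\,d\nu^{1}_{x;t'}(y)=\int_{\XX^{2}_{t'}}\nu^{2}_{y;s}\,d\nu^{2}_{\phi_{t}(x);t'}(y)=\nu^{2}_{\phi_{t}(x);s},
\]
using the already-established compatibility of $\phi_{s}$ and $\phi_{t}$ with the intermediate $\phi_{t'}$. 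Without some such step, $(\phi_{t})$ is not shown to be a flow isometry, and you cannot conclude that the two classes in $\IF^{J}_{I}$ coincide.
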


We also obtain the analogous statement for the $d^{\, \CF, J}_{\IF}$-distance.

\begin{Proposition} \label{Prop_triangle_ineq_CC}
Let $(\XX^i, (\mu^i_t)_{t \in I^{\prime, i}})$, $i =1,2,3$, be three metric flow pairs over $I$ that are each fully defined over $J$.
Consider a correspondence $\CF = (  (Z_t, d^Z_t)_{t \in I''},(\varphi^i_t)_{t \in I^{\prime\prime, i}, i =1,2,3} )$ between $\XX^1, \XX^2, \XX^3$ over $I'' \subset \IR$ that is fully defined over $J$.
Then
\begin{multline*}
 d_{\IF}^{\,\CF, J} \big( (\XX^1, (\mu^1_t)_{t \in I^{\prime,1}}), (\XX^3, (\mu^3_t)_{t \in I^{\prime,3}}) \big) \\
\leq d_{\IF}^{\,\CF, J} \big( (\XX^1, (\mu^1_t)_{t \in I^{\prime,1}}), (\XX^2, (\mu^2_t)_{t \in I^{\prime,2}}) \big) 
+ d_{\IF}^{\,\CF, J} \big( (\XX^2, (\mu^2_t)_{t \in I^{\prime,2}}), (\XX^3, (\mu^3_t)_{t \in I^{\prime,3}}) \big).
\end{multline*}
\end{Proposition}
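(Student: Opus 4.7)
The plan is to establish the inequality pointwise in time by gluing couplings, essentially translating the standard gluing proof of the triangle inequality for Wasserstein distances into the present parabolic setting. Fix $\eps > 0$ and choose $r_{12}, r_{23}$ that exceed $d^{\,\CF,J}_\IF((\XX^1,(\mu^1_t)),(\XX^2,(\mu^2_t)))$ and $d^{\,\CF,J}_\IF((\XX^2,(\mu^2_t)),(\XX^3,(\mu^3_t)))$ by at most $\eps$. Extract from Definition~\ref{Def_IF_dist_within_CF} measurable exceptional sets $E_{12}, E_{23} \subset I''$ with $|E_{i,i+1}| \leq r_{i,i+1}^2$, satisfying $J \subset I'' \setminus E_{i,i+1} \subset I^{\prime\prime,i} \cap I^{\prime\prime,i+1}$, together with families of couplings $(q^{12}_t)$, $(q^{23}_t)$ between $\mu^1_t,\mu^2_t$ and $\mu^2_t,\mu^3_t$ respectively, obeying the integral bound of Property~\ref{Def_IF_dist_within_CF_3}.

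Let $E := E_{12} \cup E_{23}$, so $|E| \leq r_{12}^2 + r_{23}^2 \leq (r_{12}+r_{23})^2$ and $J \subset I'' \setminus E \subset I^{\prime\prime,1} \cap I^{\prime\prime,3}$. For each $t \in I'' \setminus E$, both $q^{12}_t$ and $q^{23}_t$ are defined. Since each time-slice $(\XX^i_t, d^i_t)$ is complete and separable by Definition~\ref{Def_metric_flow}\ref{Def_metric_flow_3}, I invoke Lemma~\ref{Lem_gluing} to produce a three-fold coupling $q^{123}_t \in \PP(\XX^1_t \times \XX^2_t \times \XX^3_t)$ whose marginals onto the first two and the last two factors are $q^{12}_t$ and $q^{23}_t$, respectively. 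Define $q^{13}_t$ to be the marginal of $q^{123}_t$ onto $\XX^1_t \times \XX^3_t$; by construction it is a coupling between $\mu^1_t$ and $\mu^3_t$.

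To verify Property~\ref{Def_IF_dist_within_CF_3} for $r := r_{12}+r_{23}$, fix $s,t \in I'' \setminus E$ with $s \leq t$ and apply the triangle inequality for $d^{Z_s}_{W_1}$ pointwise to the three pushforward measures $(\varphi^k_s)_* \nu^k_{x^k;s}$, $k=1,2,3$. Integrating against $q^{123}_t$ and using the marginal identities yields
\begin{align*}
\int_{\XX^1_t \times \XX^3_t} & d^{Z_s}_{W_1}\bigl( (\varphi^1_s)_*\nu^1_{x^1;s},\, (\varphi^3_s)_*\nu^3_{x^3;s}\bigr)\, dq^{13}_t(x^1,x^3) \\
&\leq \int d^{Z_s}_{W_1}\bigl((\varphi^1_s)_*\nu^1_{x^1;s},\,(\varphi^2_s)_*\nu^2_{x^2;s}\bigr)\, dq^{12}_t(x^1,x^2) \\
&\qquad + \int d^{Z_s}_{W_1}\bigl((\varphi^2_s)_*\nu^2_{x^2;s},\,(\varphi^3_s)_*\nu^3_{x^3;s}\bigr)\, dq^{23}_t(x^2,x^3) \\
&\leq r_{12} + r_{23}.
\end{align*}
Hence $d^{\,\CF,J}_\IF((\XX^1,(\mu^1_t)),(\XX^3,(\mu^3_t))) \leq r_{12}+r_{23}$, and letting $\eps \to 0$ finishes the proof.

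There is no substantial obstacle: the gluing produces the desired coupling at each fixed time, and the $d^{Z_s}_{W_1}$-triangle inequality is what the definition is tailored to exploit. The only items to check carefully are the measurability of $E_{12}\cup E_{23}$ (immediate) and the applicability of Lemma~\ref{Lem_gluing} time-slice by time-slice (guaranteed by the Polish structure built into Definition~\ref{Def_metric_flow}).
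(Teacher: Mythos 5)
Your proposal is correct and follows essentially the same route as the paper's own proof: fix $r_{12}$, $r_{23}$ slightly above the respective $\IF$-distances, take $E := E_{12} \cup E_{23}$, glue the couplings time-slice by time-slice via Lemma~\ref{Lem_gluing}, pass to the marginal on the first and third factors, and integrate the $d^{Z_s}_{W_1}$-triangle inequality against the three-fold coupling.
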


\begin{proof}[Proof of Proposition~\ref{Prop_triangle_ineq_CC}.]
Choose $r^{12}, r^{23} > 0$ such that there are subsets $E^{12}, E^{23} \subset I$ with 
\[ J \subset I'' \setminus E^{12} \subset I^{\prime\prime,1} \cap I^{\prime\prime,2}, \qquad 
J \subset I'' \setminus E^{23} \subset I^{\prime\prime,2} \cap I^{\prime\prime,3}, \]
and families of couplings $(q^{12}_t)_{t \in I \setminus E^{12}}$, $(q^{23}_t)_{t \in I \setminus E^{23}}$ between $\mu_t^1, \mu_t^2$ and $\mu_t^2, \mu_t^3$, respectively, such that Properties~\ref{Def_IF_dist_within_CF_1}, \ref{Def_IF_dist_within_CF_3} of Definition~\ref{Def_IF_dist_within_CF} hold for the flow pairs $(\XX^1, (\mu^1_t)_{t \in I^{\prime, 1}}), (\XX^2, (\mu^2_t)_{t \in I^{\prime, 2}})$ and $(\XX^2, (\mu^2_t)_{t \in I^{\prime, 2}}), (\XX^3, (\mu^3_t)_{t \in I^{\prime, 3}})$, respectively.
Let $r^{13} := r^{12} + r^{23}$ and $E^{13} := E^{12} \cup E^{23}$.
By Lemma~\ref{Lem_gluing}, for any $t \in I'' \setminus E^{13}$ there is a probability measure $q_t^{123}$ on $\XX^1_t \times \XX^2_t \times \XX^3_t$ whose marginals onto the first and last two factors equal $q_t^{12}, q_t^{23}$, respectively.
Let $q_t^{13}$ be the marginal of $q_t^{123}$ onto the first and third factor.
We now verify Properties~\ref{Def_IF_dist_within_CF_1}, \ref{Def_IF_dist_within_CF_3} of Definition~\ref{Def_IF_dist_within_CF}.
Property~\ref{Def_IF_dist_within_CF_1} holds since 
\[ |E^{13}| \leq |E^{12}| + |E^{23}| \leq (r^{12})^2 + (r^{23})^2 \leq (r^{13})^2. \]
For Property~\ref{Def_IF_dist_within_CF_3} we have for any $s,t \in I'' \setminus E^{13}$, $s \leq t$,
\begin{align*}
& \int_{\XX^1_t \times \XX^3_t} d_{W_1}^{Z_s} ( (\varphi^1_s)_* \nu_{x^1; s}, (\varphi^3_s)_* \nu_{x^3; s} ) dq_t^{13} (x^1, x^3) \\
 &\qquad = \int_{\XX^1_t \times \XX^2_t \times X^3_t} d_{W_1}^{Z_s} ( (\varphi^1_s)_* \nu_{x^1; s}, (\varphi^3_s)_* \nu_{x^3; s} ) dq_t^{123} (x^1, x^2, x^3) \displaybreak[1] \\
 &\qquad\leq \int_{\XX^1_t \times \XX^2_t \times X^3_t} \big( d_{W_1}^{Z_s} ( (\varphi^1_s)_* \nu_{x^1; s}, (\varphi^2_s)_* \nu_{x^2; s} ) + d_{W_1}^{Z_s} ( (\varphi^2_s)_* \nu_{x^2; s}, (\varphi^3_s)_* \nu_{x^3; s} ) \big) dq_t^{123} (x^1, x^2, x^3)  \displaybreak[1] \\
 &\qquad = \int_{\XX^1_t \times \XX^2_t}  d_{W_1}^{Z_s} ( (\varphi^1_s)_* \nu_{x^1; s}, (\varphi^2_s)_* \nu_{x^2; s} )  dq_t^{12} (x^1, x^2) \\
 &\qquad\qquad\qquad\qquad
  +  \int_{\XX^2_t \times \XX^3_t}  d_{W_1}^{Z_s} ( (\varphi^2_s)_* \nu_{x^2; s}, (\varphi^3_s)_* \nu_{x^3; s} ) dq_t^{23} (x^2, x^3) \\
  &\qquad \leq r^{12} + r^{23} = r^{13}.
 \end{align*}
 This finishes the proof.
\end{proof}
\bigskip

For the proof of Theorem~\ref{Thm_IF_metric_space}, we will need the following lemma, which states that we can combine correspondences between two pairs of metric flows of the form $\XX^1, \XX^2$ and $\XX^2, \XX^3$.

\begin{Lemma} \label{Lem_combining_correspondences}
Let $\XX^i$, $i =1,2,3$, be three metric flows and consider correspondences
\[  \CF^{12} := \big(  (Z^{12}_t, d^{Z^{12}}_t)_{t \in I^{\prime\prime, 12}},(\varphi^{12,i}_t)_{t \in I^{\prime\prime, 12, i}, i =1,2} \big), \qquad
 \CF^{23} := \big(  (Z^{23}_t, d^{Z^{23}}_t)_{t \in I^{\prime\prime, 23}},(\varphi^{23,i}_t)_{t \in I^{\prime\prime, 23, i}, i =2,3} \big) \]
between $\XX^1, \XX^2$ and $\XX^2, \XX^3$ over $I^{\prime\prime, 12}$ and $I^{\prime\prime, 23}$, respectively.
Set 
\begin{equation*}
 I^{\prime\prime, 123} := I^{\prime\prime, 12} \cup I^{\prime\prime, 23}, \qquad
 I^{\prime\prime, 123,1} := I^{\prime\prime, 12,1} , \qquad
 I^{\prime\prime, 123,2} :=  I^{\prime\prime, 12, 2} \cup  I^{\prime\prime, 23, 2}, \qquad 
  I^{\prime\prime, 123,3} := I^{\prime\prime, 23,3} . 
\end{equation*}
Then there is a correspondence of the form
\[ \CF^{123} := \big(  (Z^{123}_t, d^{Z^{123}}_t)_{t \in I^{\prime\prime, 123}},(\varphi^{123,i}_t)_{t \in I^{\prime\prime, 123, i}, i =1,2,3} \big) \]
between $\XX^1, \XX^2, \XX^3$ over $I^{\prime\prime, 123}$  and families of isometric embeddings
\[ \big( \iota^{12}_t : (Z^{12}_t, d^{Z^{12}}_t) \longrightarrow (Z^{123}_t, d^{Z^{123}}_t) \big)_{t \in I^{\prime\prime, 12}}, \qquad
\big( \iota^{23}_t : (Z^{23}_t, d^{Z^{23}}_t) \longrightarrow (Z^{123}_t, d^{Z^{123}}_t) \big)_{t \in I^{\prime\prime, 23}}, \]
such that the following identities hold for all $t$ for which they are defined
\begin{equation} \label{eq_iota123_varphi}
 \iota^{12}_t \circ \varphi^{12,1}_t   = \varphi^{123, 1}_t, \qquad
 \iota^{12}_t \circ \varphi^{12,2}_t = \iota^{23}_t \circ \varphi^{23,2}_t = \varphi^{123,2}_t, \qquad
 \iota^{23}_t \circ \varphi^{23,3}_t   = \varphi^{123, 3}_t. 
\end{equation}
\end{Lemma}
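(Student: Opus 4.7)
The plan is to construct the combined correspondence $\CF^{123}$ time-slice by time-slice, invoking Lemma~\ref{Lem_combining_embeddings} at each $t$ for which both $Z^{12}_t$ and $Z^{23}_t$ carry compatible embeddings of $\XX^2_t$, and handling the remaining times by more direct means. Since every condition in (\ref{eq_iota123_varphi}) is an equality of isometric embeddings that depends only on a single time-slice, no compatibility between different times is required, so this pointwise strategy suffices.

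First I would dispose of the easy cases. For $t \in I^{\prime\prime, 12} \setminus I^{\prime\prime, 23}$, set $(Z^{123}_t, d^{Z^{123}}_t) := (Z^{12}_t, d^{Z^{12}}_t)$, let $\iota^{12}_t := \id_{Z^{12}_t}$, and define $\varphi^{123, i}_t := \varphi^{12, i}_t$ for each $i \in \{1,2\}$ such that $\varphi^{12, i}_t$ is defined; here $\iota^{23}_t$ and $\varphi^{123,3}_t$ need not be defined, since $t \notin I^{\prime\prime, 23}$. Treat $t \in I^{\prime\prime, 23} \setminus I^{\prime\prime, 12}$ symmetrically.

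The main case is $t \in I^{\prime\prime, 12} \cap I^{\prime\prime, 23}$, and here I would split into two subcases. If additionally $t \in I^{\prime\prime, 12, 2} \cap I^{\prime\prime, 23, 2}$, then both $\varphi^{12,2}_t : \XX^2_t \to Z^{12}_t$ and $\varphi^{23,2}_t : \XX^2_t \to Z^{23}_t$ are isometric embeddings of the same metric space into $Z^{12}_t$ and $Z^{23}_t$ respectively; I apply Lemma~\ref{Lem_combining_embeddings} with $N=2$, $X_1 := \XX^2_t$, $Z_{0,1} := Z^{12}_t$, $Z_{1,2} := Z^{23}_t$, $\varphi_{1,+} := \varphi^{12,2}_t$, $\varphi_{1,-} := \varphi^{23,2}_t$ to obtain a complete metric space $(Z^{123}_t, d^{Z^{123}}_t)$ together with isometric embeddings $\iota^{12}_t, \iota^{23}_t$ and a single embedding $\varphi^{123,2}_t : \XX^2_t \to Z^{123}_t$ satisfying the middle identity of (\ref{eq_iota123_varphi}). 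The remaining embeddings $\varphi^{123,1}_t, \varphi^{123,3}_t$ are then defined by composition with $\iota^{12}_t, \iota^{23}_t$ whenever $\varphi^{12,1}_t, \varphi^{23,3}_t$ are defined. In the subcase where (say) $\varphi^{23,2}_t$ is undefined but $\varphi^{12,2}_t$ is defined, there is no common piece along which to glue, so I would construct $Z^{123}_t$ as the abstract disjoint union $Z^{12}_t \sqcup Z^{23}_t$ equipped with the extended metric that restricts to the original metrics on each piece and sets $d^{Z^{123}}_t(z_1, z_2) := 1$ (for instance) for any $z_1 \in Z^{12}_t$, $z_2 \in Z^{23}_t$; one checks that this is a bona fide metric, $\iota^{12}_t, \iota^{23}_t$ are the natural inclusions, and $\varphi^{123,2}_t := \iota^{12}_t \circ \varphi^{12,2}_t$. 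The remaining pairing cases are handled analogously.

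The substantive content is entirely contained in the application of Lemma~\ref{Lem_combining_embeddings}; everything else is routine bookkeeping with disjoint unions and identifications. The only minor obstacle is the notational juggling required to keep track of which of the six sets $I^{\prime\prime, 12}, I^{\prime\prime, 23}, I^{\prime\prime, 12, i}, I^{\prime\prime, 23, i}$ contain $t$ and to verify that the identities in (\ref{eq_iota123_varphi}) hold on the correct index sets as specified. After these verifications, packaging the family $\{(Z^{123}_t, d^{Z^{123}}_t)\}_{t \in I^{\prime\prime, 123}}$ and the embeddings $\varphi^{123,i}_t, \iota^{12}_t, \iota^{23}_t$ yields the desired correspondence.
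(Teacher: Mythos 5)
There is a genuine gap in the subcase where $t \in I^{\prime\prime,12} \cap I^{\prime\prime,23}$ but $t \notin I^{\prime\prime,12,2} \cap I^{\prime\prime,23,2}$. You propose taking $Z^{123}_t := Z^{12}_t \sqcup Z^{23}_t$ and setting $d^{Z^{123}}_t(z_1,z_2) := 1$ for $z_1 \in Z^{12}_t$, $z_2 \in Z^{23}_t$, and assert this is a metric. It is not, in general: for $z_1, z_1' \in Z^{12}_t$ with $d^{Z^{12}}_t(z_1,z_1') > 2$ and any $z_2 \in Z^{23}_t$, the triangle inequality $d(z_1,z_1') \leq d(z_1,z_2) + d(z_2,z_1') = 2$ fails. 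The same obstruction arises on the $Z^{23}_t$ side. Since the spaces $Z_t$ in a correspondence can certainly be unbounded (e.g.\ whenever a time-slice $\XX^i_t$ is unbounded), no choice of constant cross-distance can repair this, so the construction fails for precisely the objects the lemma must cover.

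The paper avoids this by using a product instead of a disjoint union: at such $t$ it sets $(Z^{123}_t, d^{Z^{123}}_t) := (Z^{12}_t, d^{Z^{12}}_t) \times (Z^{23}_t, d^{Z^{23}}_t)$ (with any standard product metric), taking $\iota^{12}_t$, $\iota^{23}_t$ to be the embeddings of each factor by fixing a basepoint in the other, and defining the $\varphi^{123,i}_t$ by composition. This is always a valid metric regardless of diameter, and it handles the degenerate subcases (only one of $Z^{12}_t$, $Z^{23}_t$ defined) by simply taking $Z^{123}_t$ to be that single factor, which matches your treatment of those cases. Your use of Lemma~\ref{Lem_combining_embeddings} for the case $t \in I^{\prime\prime,12,2} \cap I^{\prime\prime,23,2}$ is exactly what the paper does and is fine; the only issue is the ad-hoc gluing in the overlap-without-common-embedding case, which should be replaced by the product construction.
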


\begin{proof}
If $t \in I^{\prime\prime, 12, 2} \cap I^{\prime\prime, 23, 2}$, then we may define $(Z^{123}_t, d^{Z^{123}}_t)$, $\iota^{12}_t$, $\iota^{23}_t$ using Lemma~\ref{Lem_combining_embeddings} and choose $\varphi^{123,i}_t$ such that (\ref{eq_iota123_varphi}) holds.
If $t \in (I^{\prime\prime, 12} \cup I^{\prime\prime, 23}) \setminus (I^{\prime\prime, 12, 2} \cap I^{\prime\prime, 23, 2})$, then we may set $(Z^{123}_t, d^{Z^{123}}_t) := (Z^{12}_t, d^{Z^{12}}_t) \times (Z^{23}_t, d^{Z^{23}}_t)$, $ (Z^{12}_t, d^{Z^{12}}_t)$ or $(Z^{23}_t, d^{Z^{23}}_t)$, let $\iota^{12}_t, \iota^{23}_t$ be embeddings of each factor and again choose $\varphi^{123,i}_t$ such that (\ref{eq_iota123_varphi}) holds. 
\end{proof}
\bigskip

\begin{proof}[Proof of Theorem~\ref{Thm_IF_metric_space}.]
The triangle inequality follows by combining Proposition~\ref{Prop_triangle_ineq_CC} and Lemma \ref{Lem_combining_correspondences}.
 It remains to show definiteness.
 For this purpose, assume that representatives of two classes in $\IF_I^J$ satisfy
\[ d^J_{\mathbb{F}} \big( (\XX^1, (\mu^1_t)_{t \in I^{\prime,1}}), (\XX^2, (\mu^2_t)_{t \in I^{\prime,2}}) \big) =0. \]
So there is a sequence of correspondences $\CF^j$ over $I$ that are fully defined over $J$ such that
\[ d^{\,\CF^j, J}_{\mathbb{F}} \big( (\XX^1, (\mu^1_t)_{t \in I^{\prime,1}}), (\XX^2, (\mu^2_t)_{t \in I^{\prime,2}}) \big)  \to 0. \]
For each $j$ choose $E^j \subset I$ and $(q^j_t)_{t \in I \setminus E^j}$ such that the properties of Definition~\ref{Def_IF_dist_within_CF} hold for $\CF = \CF^j$ and $r = r_j$.
Since $|E^j| \to 0$, the set
\[ E := \bigcap_{j=1}^\infty \bigcup_{k=j}^\infty E^k. \]
has measure zero and $J \subset I \setminus E \subset I^{\prime,1} \cap I^{\prime,2}$.
By Remark~\ref{Rmk_F_dist_GW1_dist} we have $d_{GW_1} ( (\XX^1_t, d^1_t, \mu^1_t), (\XX^2_t, d^2_t, \mu^2_t)) = 0$ and therefore
\[ (\XX^1_t, \lb d^1_t, \lb \mu^1_t) \cong (\XX^2_t, \lb d^2_t, \lb \mu^2_t). \]
It remains to specify an appropriate family of isometries for each $t \in I \setminus E$.

\begin{Claim}
Consider an arbitrary subsequence of the sequence $(q^j_t)_{t \in I \setminus E^j}$ and let $s,t \in I \setminus E$, $s \leq t$.
Then we can pass to a further subsequence such that $q^j_s, q^j_t$ converge in the weak topology to couplings of the form $q^\infty_s = (\id_{\XX^1_s}, \phi_s )_* \mu^1_s$, $q^\infty_t = (\id_{\XX^1_t}, \phi_t )_* \mu^1_t$, where $\phi_s : (\XX^1_s, d^1_s, \mu^1_s) \to (\XX^2_s, d^2_s, \mu^2_s)$, $\phi_t : (\XX^1_t, d^1_t, \mu^1_t) \to (\XX^2_t, d^2_t, \mu^2_t)$ are isometries and $(\phi_s)_* \nu^1_{x;s} = \nu^2_{\phi_t (x);s}$ for all $x \in \XX^1_t$.
\end{Claim}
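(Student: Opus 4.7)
The strategy is to extract a convergent subsequence of the couplings at both times, apply Lemma~\ref{Lem_couplings_converge_isometry} at each time separately to produce the isometries $\phi_s, \phi_t$, and finally verify the compatibility $(\phi_s)_*\nu^1_{x;s} = \nu^2_{\phi_t(x);s}$ by testing against bounded Lipschitz functions together with the integral estimate from Definition~\ref{Def_IF_dist_within_CF}.

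Since the marginals $\mu^1_\tau, \mu^2_\tau$ are fixed for $\tau \in \{s,t\}$, each is tight by Lemma~\ref{Lem_basic_measure}\ref{Lem_basic_measure_a}; hence the sequences $(q^j_s)$ and $(q^j_t)$ are tight on the respective product spaces, and by Lemma~\ref{Lem_basic_measure}\ref{Lem_basic_measure_d} we may pass to a further subsequence along which $q^j_\tau \to q^\infty_\tau$ weakly for both $\tau = s$ and $\tau = t$. Specializing property (2) of Definition~\ref{Def_IF_dist_within_CF} to coincident time parameters $\tau$ yields
\[ \int_{\XX^1_\tau \times \XX^2_\tau} d^{Z^j}_\tau\bigl(\varphi^{j,1}_\tau(x^1), \varphi^{j,2}_\tau(x^2)\bigr)\, dq^j_\tau(x^1,x^2) \leq r_j \longrightarrow 0. \]
Applying Lemma~\ref{Lem_couplings_converge_isometry} to each of these sequences (and passing to a further subsequence) produces isometries of metric measure spaces $\phi_\tau : (\XX^1_\tau, d^1_\tau, \mu^1_\tau) \to (\XX^2_\tau, d^2_\tau, \mu^2_\tau)$ with $q^\infty_\tau = (\id, \phi_\tau)_* \mu^1_\tau$, together with the pointwise convergence $d^{Z^j}_\tau(\varphi^{j,1}_\tau(y), \varphi^{j,2}_\tau(\phi_\tau(y))) \to 0$ for every $y \in \XX^1_\tau$.

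For the compatibility identity, fix a countable family $\mathcal{F}$ of bounded $1$-Lipschitz functions $f : \XX^2_s \to \IR$ dense in sup norm (so that $\mathcal{F}$ separates probability measures on $\XX^2_s$ in view of Lemma~\ref{Lem_basic_measure}\ref{Lem_basic_measure_d}). For each $f \in \mathcal{F}$, set $g(x) := \int (f \circ \phi_s)\, d\nu^1_{x;s}$ and $k(x) := \int f\, d\nu^2_{\phi_t(x);s}$ on $\XX^1_t$. By Proposition~\ref{Prop_gradient_HF}\ref{Prop_gradient_HF_b} and the fact that $\phi_s,\phi_t$ are isometries, both $g$ and $k$ are bounded and $1$-Lipschitz; since $\supp \mu^1_t = \XX^1_t$, it therefore suffices to prove $g = k$ $\mu^1_t$-almost everywhere. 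To this end, extend $f \circ (\varphi^{j,2}_s)^{-1}$ from $\varphi^{j,2}_s(\XX^2_s) \subset Z^j_s$ to all of $Z^j_s$ via McShane-Whitney followed by truncation, obtaining $\tilde f^j : Z^j_s \to \IR$ that is $1$-Lipschitz with $\|\tilde f^j\|_\infty \leq \|f\|_\infty$ and $\tilde f^j \circ \varphi^{j,2}_s = f$. By Proposition~\ref{Prop_Kant_Rub} applied in $Z^j_s$ and the integral bound from Definition~\ref{Def_IF_dist_within_CF}(2),
\[ \int_{\XX^1_t \times \XX^2_t} \bigl|\tilde g^j(x^1) - k_0(x^2)\bigr|\, dq^j_t(x^1,x^2) \leq r_j, \]
where $\tilde g^j(x^1) := \int (\tilde f^j \circ \varphi^{j,1}_s)\, d\nu^1_{x^1;s}$ and $k_0(x^2) := \int f\, d\nu^2_{x^2;s}$. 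The pointwise convergence $\tilde f^j \circ \varphi^{j,1}_s(y) \to f(\phi_s(y))$ on $\XX^1_s$ together with the uniform bound $\|f\|_\infty$ gives $\tilde g^j(x^1) \to g(x^1)$ for every $x^1$ by dominated convergence; integrating against the first marginal $\mu^1_t$ of $q^j_t$ yields $\int |g - \tilde g^j|\, dq^j_t \to 0$, and hence $\int |g(x^1) - k_0(x^2)|\, dq^j_t \to 0$. Since $(x^1,x^2) \mapsto |g(x^1) - k_0(x^2)|$ is bounded and continuous, weak convergence $q^j_t \to q^\infty_t = (\id, \phi_t)_*\mu^1_t$ lets us pass to the limit to conclude $\int_{\XX^1_t} |g - k|\, d\mu^1_t = 0$, as required. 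Running over $f \in \mathcal{F}$ and invoking separation of measures gives $(\phi_s)_*\nu^1_{x;s} = \nu^2_{\phi_t(x);s}$ for $\mu^1_t$-a.e.\ $x$, and hence for all $x \in \XX^1_t$ by continuity and full support.

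The main obstacle is the last step: the integral control provided by Definition~\ref{Def_IF_dist_within_CF}(2) lives extrinsically in the $j$-dependent ambient spaces $Z^j_s$ and must be transferred into an intrinsic identity between conjugate heat kernels on $\XX^2_s$. This transfer is accomplished via the McShane extensions $\tilde f^j$ and two nested applications of dominated convergence, which work precisely because $\tilde f^j$ can be kept uniformly bounded by $\|f\|_\infty$ even though the ambient metric spaces $Z^j_s$ vary with $j$.
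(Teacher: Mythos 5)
Your proof is correct. The first two paragraphs coincide with the paper's own argument: extract weak limits of the couplings using tightness, specialize Definition~\ref{Def_IF_dist_within_CF}\ref{Def_IF_dist_within_CF_3} to the diagonal cases $(s,s)$ and $(t,t)$, and invoke Lemma~\ref{Lem_couplings_converge_isometry} to produce $\phi_s,\phi_t$ together with the pointwise convergence $d^{Z^j}_s(\varphi^{j,1}_s(y),\varphi^{j,2}_s(\phi_s(y)))\to 0$.

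For the compatibility identity $(\phi_s)_*\nu^1_{x;s}=\nu^2_{\phi_t(x);s}$, you take a genuinely different route. The paper's proof is terse: it observes that the integrand $(x^1,x^2)\mapsto d^{Z^j_s}_{W_1}\big((\varphi^{j,1}_s)_*\nu^1_{x^1;s},(\varphi^{j,2}_s)_*\nu^2_{x^2;s}\big)$ is $2$-Lipschitz (this is the content of Proposition~\ref{Prop_compare_CHF}\ref{Prop_compare_CHF_c}) and then implicitly re-runs the Arzel\`a--Ascoli machinery from the proof of Lemma~\ref{Lem_couplings_converge_isometry}: one extracts a pointwise limit $f_\infty$ of these $2$-Lipschitz integrands, deduces $\int f_\infty\,dq^\infty_t=0$, and concludes $f_\infty\equiv 0$ on the graph of $\phi_t$, which combined with the ``moreover'' statement of Lemma~\ref{Lem_couplings_converge_isometry} gives the identity. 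Your proof instead works dually: you test against bounded $1$-Lipschitz functions $f$ on $\XX^2_s$, extend $f\circ(\varphi^{j,2}_s)^{-1}$ to the $j$-dependent ambient space $Z^j_s$ via McShane--Whitney with truncation, and pass to the limit via two applications of dominated convergence. The key advantage of your approach is that the uniform bound $\|\tilde f^j\|_\infty\le\|f\|_\infty$ lets you apply dominated convergence without any moment control on the conjugate heat kernels, whereas making the paper's route rigorous in this generality requires a truncation argument (as in the proof of Lemma~\ref{Lem_couplings_converge_isometry}) since the $2$-Lipschitz limit need not be integrable a priori. The cost is more bookkeeping. Both approaches rest on the same two inputs: the Lipschitz bound of Proposition~\ref{Prop_compare_CHF}\ref{Prop_compare_CHF_c} (which you use via Proposition~\ref{Prop_gradient_HF}\ref{Prop_gradient_HF_b} to get continuity of $g$ and $k$) and the pointwise convergence from Lemma~\ref{Lem_couplings_converge_isometry}.

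One cosmetic remark: the phrase ``a countable family $\mathcal{F}$ of bounded $1$-Lipschitz functions dense in sup norm'' is imprecise --- bounded Lipschitz functions are not sup-norm separable. What you actually need (and what exists by taking bump functions $(1-n\,d(y_i,\cdot))_+$ for a countable dense $\{y_i\}\subset\XX^2_s$ and $n\in\IN$) is a countable family that separates probability measures; that suffices to conclude $(\phi_s)_*\nu^1_{x;s}=\nu^2_{\phi_t(x);s}$ once $g=k$ for every member. This is not a gap in the argument, only in the wording.
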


\begin{proof}
Apply Property~\ref{Def_IF_dist_within_CF_3} of Definition~\ref{Def_IF_dist_within_CF} for $s,t$ replaced with $s,s$ and then with $t,t$ and apply Lemma~\ref{Lem_couplings_converge_isometry}.
This produces the maps $\phi_s, \phi_t$.
Next, apply Property~\ref{Def_IF_dist_within_CF_3} of Definition~\ref{Def_IF_dist_within_CF} for $s,t$ and observe that the integrand in this property is 2-Lipschitz to obtain the last statement.
\end{proof}

Choose a countable, dense subset $Q \subset I \setminus E$.
Due to the Claim and after passing to a diagonal subsequence, we may assume that there is a flow isometry $\phi : \XX^1_Q \to \XX^2_Q$ over $Q$ such that for all $t \in Q$ we have $q^j_t \to q^\infty_t = (\id_{\XX^1_t}, \phi_t )_* \mu^1_t$ in the weak sense.
Let us now extend $\phi$ to a flow isometry over $I \setminus E$.
For this purpose, consider a time $t \in I \setminus E$ with $t \not\in Q$.
Again, using the Claim and after passing to a diagonal subsequence, we can find an isometry  $\phi_t : (\XX^1_t, d^1_t, \mu^1_t) \to (\XX^2_t, d^2_t, \mu^2_t)$ such that the following holds for any $s \in Q$:
\begin{enumerate}
\item If $s < t$, then $(\phi_s)_* \nu^1_{x;s} = \nu^2_{\phi_t (x);s}$ for all $x \in \XX^1_t$.
\item If $s > t$, then $(\phi_t)_* \nu^1_{x;t} = \nu^2_{\phi_s (x);t}$ for all $x \in \XX^1_s$.
\end{enumerate}
Repeating this procedure for any $t \in I \setminus E$, $t \not\in Q$, produces family of isometries $(\phi_t)_{t \in I \setminus E}$ such that for any $t \in I \setminus E$ and $s \in Q$ Properties~(1), (2) hold.
(Note that for every single $t$, we can use the initial subsequence.
So we don't have to pass to successive subsequences.)

We claim that $(\phi_t)_{t \in I \setminus E}$ is a flow isometry between $\XX^1, \XX^2$ over $I \setminus E$, which shows the equivalence of the metric flow pairs $(\XX^1, (\mu^1_t)_{t \in I^{\prime,1}}), (\XX^2, (\mu^2_t)_{t \in I^{\prime,2}})$.
For this purpose, we need to show that for any $s, t \in I \setminus E$, $s < t$ and $x \in \XX^1_t$ we have
\[ (\phi_s)_* \nu^1_{x;s} = \nu^2_{\phi_t (x);s}. \]
Choose some time $t' \in (s,t) \cap Q$.
Then by Properties~(1), (2) above and the reproduction formula, Definition~\ref{Def_metric_flow}\ref{Def_metric_flow_7},
\begin{multline} \label{eq_reprod_form_isometry}
 (\phi_s)_* \nu^1_{x;s}
= (\phi_s)_* \int_{\XX^1_{t'}} \nu^1_{y; s} \, d\nu^1_{x; t'}(y)
= \int_{\XX^1_{t'}}  (\phi_s)_* \nu^1_{y; s} \, d\nu^1_{x; t'}(y)
= \int_{\XX^1_{t'}}   \nu^2_{\phi_{t'} (y); s} \, d\nu^1_{x; t'}(y) \\
= \int_{\XX^2_{t'}}   \nu^2_{y; s} \, d(\phi_{t'})_*\nu^1_{x; t'}(y)
= \int_{\XX^2_{t'}}   \nu^2_{y; s} \, d\nu^2_{\phi_t(x); t'}(y)
= \nu^2_{\phi_t(x); s}.
\end{multline}
This finishes the proof.
\end{proof}

\subsection{Useful lemmas}
Before discussing further details, we first establish some useful lemmas addressing the definition $d^{\,\CF, J}_{\IF}$.

The first lemma concerns the case in which two metric flow pairs have isometric metric flows.

\begin{Lemma} \label{Lem_F_dist_same_X}
Let $\XX$ be a metric flow over some subset $I' \subset \IR$ and consider two conjugate heat flows $(\mu^i_t)_{t \in I^{\prime, i}}$ on $\XX$, $I^{\prime, i} \subset I'$, $i = 1,2$, on $\XX$ such that $(\XX_{I^{\prime, i}}, (\mu^i_t)_{t \in I^{\prime, i}})$ are metric flow pairs over some interval $I$ that are fully defined over some subset $J \subset I$.
Consider a correspondence $\CF = (  (Z_t, d^Z_t)_{t \in I''},(\varphi^i_t)_{t \in I^{\prime\prime, i}, i =1,2} )$ over $I'' \subset \IR$ between $\XX$ and itself with $\varphi^1_t = \varphi^2_t$ that is fully defined over $J$.
Then $d_{\IF}^{\,\CF, J} \big( (\XX^1_{I^{\prime,1}}, (\mu^1_t)_{t \in I^{\prime,1}}), (\XX^2_{I^{\prime,2}}, (\mu^2_t)_{t \in I^{\prime,2}}) \big)$ equals the infimum over all $r > 0$ with the property that there is a measurable subset $E \subset I''$ with $J \subset I'' \setminus E \subset I^{\prime\prime, 1} \cap I^{\prime\prime, 2}$ such that
\[ |E| \leq r^2 \qquad \text{and} \qquad d^{\XX_t}_{W_1} ( \mu^1_t, \mu^2_t) \leq r \qquad \text{for all} \quad t \in I'' \setminus E. \]
\end{Lemma}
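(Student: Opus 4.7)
The plan is to unfold Definition~\ref{Def_IF_dist_within_CF} and observe that the assumption $\varphi^1_s = \varphi^2_s =: \varphi_s$ drastically simplifies the integrand in Property~\ref{Def_IF_dist_within_CF_3}. Since $\varphi_s : (\XX_s,d_s) \to (Z_s, d^Z_s)$ is an isometric embedding, pushforwards by $\varphi_s$ preserve the $W_1$-distance, so
\[ d_{W_1}^{Z_s}\big( (\varphi_s)_* \nu_{x^1;s}, (\varphi_s)_* \nu_{x^2;s} \big) = d_{W_1}^{\XX_s}\big( \nu_{x^1;s}, \nu_{x^2;s}\big) \]
for all $x^1, x^2 \in \XX_t$ and $s \leq t$ in $I^{\prime\prime,1} \cap I^{\prime\prime,2}$. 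So it suffices to prove the two inequalities between the quantity
\[ A := d_{\IF}^{\,\CF, J} \big( (\XX_{I^{\prime,1}}, (\mu^1_t)), (\XX_{I^{\prime,2}}, (\mu^2_t)) \big) \]
and the infimum $B$ on the right-hand side.

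For the bound $B \leq A$ (the easy direction), I would fix some $r > A$ and pick $E \subset I''$ with $|E| \leq r^2$ and a family of couplings $(q_t)_{t \in I'' \setminus E}$ between $\mu^1_t, \mu^2_t$ satisfying Property~\ref{Def_IF_dist_within_CF_3}. Specializing that property to $s = t$ and using $\nu_{x;t} = \delta_x$ gives
\[ \int_{\XX_t \times \XX_t} d_t(x^1,x^2) \, dq_t(x^1,x^2) \leq r, \]
so by the definition of $d_{W_1}^{\XX_t}$ we get $d_{W_1}^{\XX_t}(\mu^1_t, \mu^2_t) \leq r$ for every $t \in I'' \setminus E$, giving $B \leq r$.

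For the bound $A \leq B$, I would fix $r > B$, and pick $E$ with $|E| \leq r^2$, $J \subset I'' \setminus E \subset I^{\prime\prime,1} \cap I^{\prime\prime,2}$, and $d_{W_1}^{\XX_t}(\mu^1_t, \mu^2_t) \leq r$ for all $t \in I'' \setminus E$. For each such $t$, the existence of an optimal $W_1$-coupling on the complete separable space $(\XX_t,d_t)$ (or an $\eps$-almost optimal one, letting $\eps \to 0$ at the end) furnishes a coupling $q_t$ of $\mu^1_t, \mu^2_t$ with $\int d_t \, dq_t \leq r$. The key estimate is then Proposition~\ref{Prop_compare_CHF}\ref{Prop_compare_CHF_c}, which gives the pointwise bound $d_{W_1}^{\XX_s}(\nu_{x^1;s}, \nu_{x^2;s}) \leq d_t(x^1,x^2)$ for all $s \leq t$. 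Integrating against $q_t$ yields
\[ \int_{\XX_t \times \XX_t} d_{W_1}^{\XX_s}\big( \nu_{x^1;s}, \nu_{x^2;s} \big) \, dq_t(x^1,x^2) \leq \int_{\XX_t \times \XX_t} d_t \, dq_t \leq r, \]
which by the initial observation is exactly Property~\ref{Def_IF_dist_within_CF_3} of Definition~\ref{Def_IF_dist_within_CF}. Hence $A \leq r$, completing the proof. There is no real obstacle here; the only care needed is in the choice of the couplings $q_t$, which is a standard consequence of Kantorovich's optimal transport existence theorem applied time-slice by time-slice (no joint measurability in $t$ is needed since Property~\ref{Def_IF_dist_within_CF_3} is evaluated pointwise in $s,t$).
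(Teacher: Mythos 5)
Your proposal is correct and follows the same route as the paper's proof, which simply cites Proposition~\ref{Prop_compare_CHF}\ref{Prop_compare_CHF_c} for the key pointwise monotonicity estimate $d_{W_1}^{\XX_s}(\nu_{x^1;s},\nu_{x^2;s})\leq d_t(x^1,x^2)$. You have merely unpacked the two inequalities that this cite is meant to cover: the $s=t$ specialization for one direction, and integration of the monotonicity bound against a (near-)optimal coupling for the other.
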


\begin{proof}
This is a direct consequence of Proposition~\ref{Prop_compare_CHF}\ref{Prop_compare_CHF_c}.
\end{proof}

In the next lemma we derive a bound on the $W_1$-distance between conjugate heat kernels based at nearby points with respect to a correspondence.

\begin{Lemma} \label{Lem_XX12_close_HK_close}
Let $(\XX^i, (\mu^i_t)_{t \in I^{\prime, i}})$, $i =1,2$, be a metric flow pairs over an interval $I \subset \IR$ that are fully defined over some $J \subset I$ and let $\CF = (  (Z_t, d^Z_t)_{t \in I''},(\varphi^i_t)_{t \in I^{\prime\prime, i}, i =1,2} )$ be a correspondence between $\XX^1, \XX^2$ over $I''$ that is also fully defined over $J$.
Let $\delta, r > 0$.
Suppose that
\begin{equation} \label{eq_XX12_close_HK_close_asspt}
 d_{\IF}^{\,\CF, J} \big( (\XX^1, (\mu^1_t)_{t \in I^{\prime,1}}), (\XX^2, (\mu^2_t)_{t \in I^{\prime,2}}) \big) \leq \delta r 
\end{equation}
Consider times $s, t \in J$, $s \leq t$ and points $x^i \in \XX^i_t$ with
\[ d^Z_t ( \varphi^1_t (x^1), \varphi^2_t (x^2)) \leq r, \qquad |B(x^1, r)| \geq 2\delta. \]
Then
\begin{equation} \label{eq_XX12_close_HK_close_assertion}
 d_{W_1}^{Z_s} ( (\varphi^1_s)_* \nu^1_{x^1; s}, (\varphi^2_s)_* \nu^2_{x^2; s}) ) \leq 7r. 
\end{equation}
\end{Lemma}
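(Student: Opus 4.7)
The plan is to unpack the definition of $d_{\IF}^{\,\CF, J}$ as an integrated $W_1$-Wasserstein bound, apply Markov's inequality twice with threshold $2r$, pick a ``good'' pair of points using the ball-mass hypothesis, and close via a three-term triangle inequality on $(Z_s, d^Z_s)$.

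First I will use (\ref{eq_XX12_close_HK_close_asspt}) to fix a measurable $E \subset I''$ with $J \subset I'' \setminus E$ and couplings $(q_t)_{t \in I'' \setminus E}$ as in Definition~\ref{Def_IF_dist_within_CF} realizing the bound $\delta r$. Since $s,t \in J \subset I'' \setminus E$ and $s \leq t$, applying the integrated bound with the pair $(s,t)$ yields
\[ \int_{\XX^1_t \times \XX^2_t} d_{W_1}^{Z_s}((\varphi^1_s)_* \nu^1_{y^1;s}, (\varphi^2_s)_* \nu^2_{y^2;s}) \, dq_t(y^1, y^2) \leq \delta r, \]
while specializing to the pair $(t,t)$, where $\nu^i_{y^i;t} = \delta_{y^i}$, yields
\[ \int_{\XX^1_t \times \XX^2_t} d^Z_t(\varphi^1_t(y^1), \varphi^2_t(y^2)) \, dq_t(y^1, y^2) \leq \delta r. \]

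Next I will apply Markov's inequality to each of these nonnegative integrands with threshold $2r$, obtaining that the $q_t$-mass of the set where the corresponding integrand exceeds $2r$ is at most $\delta/2$. Hence the union of the two ``bad sets'' has $q_t$-mass at most $\delta$. Since by hypothesis $q_t(B(x^1, r) \times \XX^2_t) = \mu^1_t(B(x^1, r)) \geq 2\delta$, I can select a pair $(y^1, y^2) \in B(x^1, r) \times \XX^2_t$ outside both bad sets, so that
\[ d^Z_t(\varphi^1_t(y^1), \varphi^2_t(y^2)) \leq 2r, \qquad  d_{W_1}^{Z_s}((\varphi^1_s)_* \nu^1_{y^1;s}, (\varphi^2_s)_* \nu^2_{y^2;s}) \leq 2r. \]
The triangle inequality in $(Z_t, d^Z_t)$ together with $d^Z_t(\varphi^1_t(y^1), \varphi^1_t(x^1)) = d^1_t(y^1, x^1) \leq r$ and the assumed bound $d^Z_t(\varphi^1_t(x^1), \varphi^2_t(x^2)) \leq r$ then gives $d^2_t(y^2, x^2) \leq 2r + r + r = 4r$.

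To finish, I will apply the three-term triangle inequality on $(\mathcal{P}(Z_s), d_{W_1}^{Z_s})$ via the intermediate measures $(\varphi^1_s)_* \nu^1_{y^1;s}$ and $(\varphi^2_s)_* \nu^2_{y^2;s}$. The two endpoint-shift terms are $W_1$-distances between pushforwards of conjugate heat kernels based at pairs of points on a single metric flow, so Proposition~\ref{Prop_compare_CHF}\ref{Prop_compare_CHF_c} bounds them by $d^1_t(x^1, y^1) \leq r$ and $d^2_t(y^2, x^2) \leq 4r$ respectively; the middle term is the $2r$ selected above. Summing gives the claimed $r + 2r + 4r = 7r$, proving (\ref{eq_XX12_close_HK_close_assertion}). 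There is no conceptual obstacle; the only delicate point is choosing the Markov threshold to be $2r$ rather than $r$, so that the two bad sets together consume $q_t$-mass at most $\delta$, leaving the full $2\delta$ guaranteed by the ball hypothesis available for extraction.
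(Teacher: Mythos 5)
Your proof is correct, but it takes a genuinely different route from the one in the paper, and it's worth noting the contrast. You use a two-sided Markov/pigeonhole argument: bound the $q_t$-measure of each of the two ``bad'' sets by $\delta/2$, subtract the union of bad sets from $B(x^1,r)\times\XX^2_t$ (which has measure $\geq 2\delta$) to extract a single good pair $(y^1,y^2)$, then close with a pointwise three-term triangle inequality. The paper instead proceeds by an averaging argument: it writes $d = d_{W_1}^{Z_s}((\varphi^1_s)_*\nu^1_{x^1;s},(\varphi^2_s)_*\nu^2_{x^2;s})$ as its own average over $B(x^1,3r)\times B(x^2,3r)$ against $q_t$, bounds the integrand by $d_{W_1}^{Z_s}(\nu^1_{y^1;s},\nu^2_{y^2;s}) + 6r$ using Proposition~\ref{Prop_compare_CHF}\ref{Prop_compare_CHF_c}, and separately shows $q_t(B(x^1,3r)\times B(x^2,3r))\geq\delta$ using the time-$t$ integral bound on $B(x^1,r)\times(\XX^2_t\setminus B(x^2,3r))$; this gives $d\leq \delta r/\delta + 6r = 7r$. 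Both arguments hit exactly $7r$, and the key quantities match ($r+2r+4r$ vs.~$r+6r$), but the decompositions are different: yours isolates one favorable pair, while the paper's avoids Markov by integrating the constant $d$ against the conditioned measure.

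One small point to tighten: you say you fix $E$ and $(q_t)$ ``realizing the bound $\delta r$,'' but $d_{\IF}^{\,\CF,J}$ is defined as an infimum, and (\ref{eq_XX12_close_HK_close_asspt}) is $\leq$, not $<$. Strictly speaking one should take $E_\eps,(q^\eps_t)$ realizing $\delta r + \eps$ for small $\eps<\delta r$; the Markov bounds then give each bad set mass $\leq \delta/2 + \eps/(2r)$, the union has mass $\leq\delta+\eps/r<2\delta$, a good pair still exists with both controls $\leq 2r$, and (\ref{eq_XX12_close_HK_close_assertion}) follows with the constant $7r$ unchanged. The paper handles this with a one-line ``limit argument to assume strict inequality''; you should do the same.
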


\begin{proof}
Due to a limit argument, we may assume that we have strict inequality in (\ref{eq_XX12_close_HK_close_asspt}).
Choose $E$ and $(q_t)_{t \in I \setminus E}$ so that Properties~\ref{Def_IF_dist_within_CF_1}, \ref{Def_IF_dist_within_CF_3} of Definition~\ref{Def_IF_dist_within_CF} hold for $r$ replaced with $\delta r$.
Set
\[ d := d_{W_1}^{Z_s} ( (\varphi^1_s)_* \nu^1_{x^1; s}, (\varphi^2_s)_* \nu^2_{x^2; s}) ). \]
By Proposition~\ref{Prop_compare_CHF}\ref{Prop_compare_CHF_c} we have for $k = 1,2$
\[ d_{W_1}^{Z_s} \big( (\varphi^k_s)_* \nu^k_{x^k;s}, (\varphi^k_s)_* \nu^k_{y;s} \big) 
= d_{W_1}^{\XX^k_s} \big(  \nu^k_{x^k;s},  \nu^k_{y;s} \big) 
\leq 3r \qquad \text{for all} \quad y \in B(x^k, 3r). \]
Therefore, by Definition~\ref{Def_IF_dist_within_CF}\ref{Def_IF_dist_within_CF_3}
\begin{align}
 d &= \frac1{q_t (B(x^1, 3r) \times B(x^{2}, 3r))} \int_{B(x^1,3 r) \times B(x^{2}, 3r)} d \, dq_t \notag \\
&\leq \frac1{q_t (B(x^1, 3r) \times B(x^{2}, 3r))}  \int_{B(x^1,3 r) \times B(x^{2}, 3r)} \big( d_{W_1}^{ Z_s} \big( (\varphi^1_s)_* \nu^1_{y^1;s}, (\varphi^{2}_s)_* \nu^2_{y^{2};s} \big) + 6r \big) dq_t (y^1, y^2) \notag \\ & 
\leq \frac{\delta r}{q_t (B(x^1, 3r) \times B(x^{2}, 3r))} + 6r. \label{eq_d_bound_integral}
\end{align}
For any $y^1 \in B(x^1, r)$ and $y^2 \in \XX^2_t \setminus B(x^{2}, 3r)$ we have
\begin{multline*}
 d^{Z}_t ( \varphi^1_t (y^1), \varphi^{2}_t (y^2) ) 
\geq d^{ Z}_t ( \varphi^{2}_t (x^{2}), \varphi^{2}_t (y^2) ) - 
d^{ Z}_t ( \varphi^2_t (x^{2}), \varphi^{1}_t (x^1) ) - d^{ Z}_t ( \varphi^1_t (x^{1}), \varphi^{1}_t (y^{1}) ) \\
\geq 3r - r - r = r.
\end{multline*}
So by Definition~\ref{Def_IF_dist_within_CF}\ref{Def_IF_dist_within_CF_3} for $s=t$ we have
\[ q_t \big(   B(x^1, r) \times (\XX^{2}_t \setminus B(x^{2},3r)) \big) 
\leq \frac1{r} \int_{ B(x^1, r) \times (\XX^{2}_t \setminus B(x^{2},3r)) } d^Z_t ( \varphi^1_t (y^1), \varphi^2_t (y^2) ) dq_t (y^1, y^2) 
\leq \delta . \]
which implies
\[ q_t \big( B(x^1, 3r) \times B(x^{2},3r)) \geq \mu^1_t \big(B(x^1, r) \big)  - q_t \big(   B(x^1, r) \times (\XX^{2}_t \setminus B(x^{2},3r)) \big) \geq 2\delta - \delta = \delta. \]
Combining this with (\ref{eq_d_bound_integral}) implies (\ref{eq_XX12_close_HK_close_assertion}).
\end{proof}

Next, we prove a lemma that allows us to compare two different couplings between probability measures in two metric flows.

\begin{Lemma} \label{Lem_X1X2_correspondence_mu112}
Let $\CF = (  (Z_t, d^Z_t)_{t \in I''},(\varphi^i_t)_{t \in I^{\prime\prime, i}, i =1,2} )$ be a correspondence between two metric flows $\XX^1, \XX^2$ and consider times $s, t \in I^{\prime\prime, 1} \cap I^{\prime\prime,2}$, $s \leq t$.
Let $\mu_1, \mu'_1 \in \PP (\XX^1_t)$, $\mu_2 \in \PP (\XX^2_t)$ and consider couplings $q, q'$ between $\mu_1, \mu_2$ and $\mu'_1, \mu_2$, respectively.
Then
\begin{align*}
 \int_{\XX^1_t \times \XX_t^2} & d_{W_1}^{Z_s} ( (\varphi^1_s)_* \nu^1_{x^1; s}, (\varphi^2_s)_* \nu^2_{x^2; s} ) dq' (x^1, x^2)  \\
&\leq  \int_{\XX^1_t \times \XX_t^2} \big( d_t^Z ( \varphi^1_t (x^1), \varphi^2_t (x^2)) + d_{W_1}^{Z_s} ( (\varphi^1_s)_* \nu^1_{x^1; s}, (\varphi^2_s)_* \nu^2_{x^2; s} ) \big) dq (x^1, x^2) \\
&\qquad +  \int_{\XX^1_t \times \XX_t^2} d_t^Z ( \varphi^1_t (x^1), \varphi^2_t (x^2)) dq' (x^1, x^2)  .
\end{align*}
\end{Lemma}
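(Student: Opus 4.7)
The plan is to construct a three-way coupling on $\XX^1_t \times \XX^2_t \times \XX^1_t$ that simultaneously realizes $q$ (on the first two factors) and $q'$ (on the last two factors), and then apply the triangle inequality for $d_{W_1}^{Z_s}$ with an intermediate detour through $(\varphi^1_s)_* \nu^1_{x^1;s}$.

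More precisely, I would first apply Lemma~\ref{Lem_gluing} to the couplings $q$ (between $\mu_1,\mu_2$ on $\XX^1_t \times \XX^2_t$) and the push-forward of $q'$ under the swap $\XX^1_t \times \XX^2_t \to \XX^2_t \times \XX^1_t$ (which is a coupling between $\mu_2, \mu'_1$). Since both couplings share the common marginal $\mu_2$, this produces a probability measure $\widetilde q \in \PP(\XX^1_t \times \XX^2_t \times \XX^1_t)$ whose projection onto the first two factors equals $q$ and whose projection onto the last two factors equals the swapped $q'$. In particular, using that the first argument of $d_{W_1}^{Z_s}(\cdot,\cdot)$ depends only on the last factor, the left-hand side of the asserted inequality rewrites as
\[ \int_{\XX^1_t \times \XX^2_t \times \XX^1_t} d_{W_1}^{Z_s}\big( (\varphi^1_s)_* \nu^1_{\widetilde x^1;s}, (\varphi^2_s)_* \nu^2_{x^2;s}\big)\, d\widetilde q (x^1, x^2, \widetilde x^1). \]

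Next I would insert $(\varphi^1_s)_* \nu^1_{x^1;s}$ between the two measures in $d_{W_1}^{Z_s}$ and apply the triangle inequality. The ``extra'' term $d_{W_1}^{Z_s}((\varphi^1_s)_* \nu^1_{\widetilde x^1;s}, (\varphi^1_s)_* \nu^1_{x^1;s})$ equals $d_{W_1}^{\XX^1_s}(\nu^1_{\widetilde x^1;s}, \nu^1_{x^1;s})$ because $\varphi^1_s$ is an isometric embedding; by Proposition~\ref{Prop_compare_CHF}\ref{Prop_compare_CHF_c} this is bounded by $d^1_t(\widetilde x^1, x^1) = d^Z_t(\varphi^1_t(\widetilde x^1), \varphi^1_t(x^1))$, which in turn is at most $d^Z_t(\varphi^1_t(\widetilde x^1), \varphi^2_t(x^2)) + d^Z_t(\varphi^2_t(x^2), \varphi^1_t(x^1))$ by the triangle inequality in $(Z_t, d^Z_t)$.

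Finally I would integrate the resulting pointwise inequality against $d\widetilde q$ and read off each piece using the marginal properties of $\widetilde q$: the term $d^Z_t(\varphi^1_t(\widetilde x^1), \varphi^2_t(x^2))$ integrates to $\int d^Z_t(\varphi^1_t(x^1), \varphi^2_t(x^2))\, dq'(x^1, x^2)$ via the last-two marginal, the term $d^Z_t(\varphi^2_t(x^2), \varphi^1_t(x^1))$ integrates to $\int d^Z_t(\varphi^1_t(x^1), \varphi^2_t(x^2))\, dq(x^1, x^2)$ via the first-two marginal, and the $d_{W_1}^{Z_s}$-term (which depends only on $(x^1,x^2)$) integrates against $q$ as well. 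Combining these gives exactly the desired inequality, so no single step should be a serious obstacle; the only thing to be careful about is orienting $q'$ correctly for the gluing so that the first and third slots of $\widetilde q$ play the right roles.
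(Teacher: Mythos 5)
Your proposal is correct and coincides with the paper's argument: both construct a three-way coupling via Lemma~\ref{Lem_gluing} (the paper orders the factors as $\XX^1_t \times \XX^1_t \times \XX^2_t$ whereas you use $\XX^1_t \times \XX^2_t \times \XX^1_t$, but this is just a relabelling), insert the intermediate measure $(\varphi^1_s)_*\nu^1_{x^1;s}$ into the $W_1$-triangle inequality, control the extra term via Proposition~\ref{Prop_compare_CHF}\ref{Prop_compare_CHF_c} followed by the triangle inequality in $(Z_t, d^Z_t)$, and integrate against the three-way coupling using its marginal properties.
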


\begin{proof}
By Lemma~\ref{Lem_gluing} we can find a probability measure $\ov q \in \PP (\XX^1_t \times \XX^1_t \times \XX^2_t)$ whose marginals onto the first and last two factors equal $q, q'$, respectively.
Then, using Proposition~\ref{Prop_compare_CHF}\ref{Prop_compare_CHF_c},
\begin{align*}
 \int_{\XX^1_t \times \XX_t^2} & d_{W_1}^{Z_s} ( (\varphi^1_s)_* \nu^1_{x^1; s}, (\varphi^2_s)_* \nu^2_{x^2; s} ) dq' (x^1, x^2)  \\
&= \int_{\XX^1_t \times \XX^1_t \times \XX_t^2}  d_{W_1}^{Z_s} ( (\varphi^1_s)_* \nu^1_{y^1; s}, (\varphi^2_s)_* \nu^2_{x^2; s} ) d\ov q (x^1, y^1, x^2) \displaybreak[1] \\
&\leq \int_{\XX^1_t \times \XX^1_t \times \XX_t^2} \big(  d_{W_1}^{Z_s} ( (\varphi^1_s)_* \nu^1_{y^1; s}, (\varphi^1_s)_* \nu^1_{x^1; s} ) +  d_{W_1}^{Z_s} ( (\varphi^1_s)_* \nu^1_{x^1; s}, (\varphi^2_s)_* \nu^2_{x^2; s} )  \big) d\ov q (x^1, y^1, x^2) \displaybreak[1] \\
&\leq \int_{\XX^1_t \times \XX^1_t \times \XX_t^2} d_{t}^{Z} ( \varphi^1_t (y^1), \varphi^1_t(x^1) ) d\ov q (x^1, y^1, x^2) \\
&\qquad + \int_{\XX^1_t \times \XX_t^2} d_{W_1}^{Z_s} ( (\varphi^1_s)_* \nu^1_{x^1; s}, (\varphi^2_s)_* \nu^2_{x^2; s} )  dq (x^1, x^2) \displaybreak[1] \\
&\leq \int_{\XX^1_t \times \XX^1_t \times \XX_t^2} \big(  d_{t}^{Z} ( \varphi^1_t (y^1), \varphi^2_t(x^2) ) + d_{t}^{Z} ( \varphi^2_t (x^2), \varphi^1_t(x^1) ) \big) d\ov q (x^1, y^1, x^2) \\
&\qquad 
+ \int_{\XX^1_t \times \XX_t^2} d_{W_1}^{Z_s} ( (\varphi^1_s)_* \nu^1_{x^1; s}, (\varphi^2_s)_* \nu^2_{x^2; s} )  dq (x^1, x^2) \displaybreak[1] \\
&=  \int_{\XX^1_t \times \XX_t^2} \big( d_t^Z ( \varphi^1_t (x^1), \varphi^2_t (x^2)) + d_{W_1}^{Z_s} ( (\varphi^1_s)_* \nu^1_{x^1; s}, (\varphi^2_s)_* \nu^2_{x^2; s} ) \big) dq (x^1, x^2) \\
&\qquad +  \int_{\XX^1_t \times \XX_t^2} d_t^Z ( \varphi^1_t (x^1), \varphi^2_t (x^2)) dq' (x^1, x^2)  . \qedhere
\end{align*}
\end{proof}
\medskip

The following lemma shows that we are quite flexible in the choice of the couplings $(q_t)_{t \in I'' \setminus E}$ in Definition~\ref{Def_IF_dist_within_CF}.
In fact, we can replace these couplings by other couplings $(q'_t)_{t \in I'' \setminus E}$ as long as we can ensure bounds on $\int_{\XX^1_t \times \XX^2_t} d^Z_t ( \varphi^1_t (x^1), \varphi^2_t (x^2) ) dq'_t (x^1, x^2)$.

\begin{Lemma} \label{Lem_F_dist_td_mu}
Let $(\XX^i, (\mu^i_t)_{t \in I^{\prime, i}})$, $i =1,2$, be two metric flow pairs and consider a correspondence $\CF = (  (Z_t, d^Z_t)_{t \in I''},(\varphi^i_t)_{t \in I^{\prime\prime, i}, i =1,2} )$ between $\XX^1, \XX^2$ over $I''$ that is fully defined over two times $\{ s, t \}$, $s \leq t$.
Then for any coupling $q'$ between $\mu^1_t, \mu^2_t$ we have
\begin{multline*}
 \int_{\XX^1_t \times \XX_t^2} d_{W_1}^{Z_s} ( (\varphi^1_s)_* \nu^1_{x^1; s}, (\varphi^2_s)_* \nu^2_{x^2; s} ) dq' (x^1, x^2) 
 \leq 
  \int_{\XX^1_t \times \XX^2_t} d^Z_t ( \varphi^1_t (x^1), \varphi^2_t (x^2) ) dq' (x^1, x^2) \\
+ 2 d^{\, \CF, \{ s, t \}}_{\IF} \big((\XX^1, (\mu^1_t)_{t \in I^{\prime, 1}}), (\XX^2, (\mu^2_t)_{t \in I^{\prime, 2}}) \big).
\end{multline*}
\end{Lemma}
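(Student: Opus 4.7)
The plan is to reduce this lemma directly to Lemma~\ref{Lem_X1X2_correspondence_mu112} by choosing a near-optimal coupling realizing the $d^{\,\CF, \{s,t\}}_{\IF}$-distance and comparing it with the given coupling $q'$.

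First, fix any $r > d^{\,\CF, \{ s, t \}}_{\IF}((\XX^1, (\mu^1_t)_{t \in I^{\prime, 1}}), (\XX^2, (\mu^2_t)_{t \in I^{\prime, 2}}))$. By Definition~\ref{Def_IF_dist_within_CF} applied with $J = \{s,t\}$, there is a measurable subset $E \subset I''$ with $\{s,t\} \subset I'' \setminus E \subset I^{\prime\prime,1} \cap I^{\prime\prime,2}$ and a family of couplings $(q_\tau)_{\tau \in I'' \setminus E}$ between $\mu^1_\tau, \mu^2_\tau$ satisfying Property~\ref{Def_IF_dist_within_CF_3}. In particular, since $s, t \in I'' \setminus E$, I obtain a coupling $q_t$ between $\mu^1_t, \mu^2_t$ such that
\[
\int_{\XX^1_t \times \XX^2_t} d_{W_1}^{Z_s}\big((\varphi^1_s)_* \nu^1_{x^1;s}, (\varphi^2_s)_*\nu^2_{x^2;s}\big) \, dq_t(x^1,x^2) \leq r,
\]
and, by specializing Property~\ref{Def_IF_dist_within_CF_3} to the case where the two time parameters coincide at $t$ (so that the integrand reduces to $d^Z_t(\varphi^1_t(x^1), \varphi^2_t(x^2))$ since $\nu^i_{x^i;t} = \delta_{x^i}$),
\[
\int_{\XX^1_t \times \XX^2_t} d^Z_t\big(\varphi^1_t(x^1), \varphi^2_t(x^2)\big) \, dq_t(x^1,x^2) \leq r.
\]

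Second, I apply Lemma~\ref{Lem_X1X2_correspondence_mu112} with $\mu_1 = \mu'_1 = \mu^1_t$, $\mu_2 = \mu^2_t$, taking the first coupling to be $q_t$ and the second to be the given $q'$. This yields
\begin{align*}
 \int_{\XX^1_t \times \XX_t^2} & d_{W_1}^{Z_s}\big((\varphi^1_s)_* \nu^1_{x^1;s}, (\varphi^2_s)_*\nu^2_{x^2;s}\big) \, dq'(x^1,x^2) \\
&\leq \int_{\XX^1_t \times \XX_t^2} \big[ d^Z_t(\varphi^1_t(x^1),\varphi^2_t(x^2)) + d_{W_1}^{Z_s}\big((\varphi^1_s)_* \nu^1_{x^1;s}, (\varphi^2_s)_*\nu^2_{x^2;s}\big)\big] \, dq_t \\
&\quad+ \int_{\XX^1_t \times \XX_t^2} d^Z_t(\varphi^1_t(x^1),\varphi^2_t(x^2)) \, dq'(x^1,x^2) \\
&\leq 2r + \int_{\XX^1_t \times \XX_t^2} d^Z_t(\varphi^1_t(x^1),\varphi^2_t(x^2)) \, dq'(x^1,x^2).
\end{align*}

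Finally, letting $r \searrow d^{\,\CF, \{s,t\}}_{\IF}((\XX^1, (\mu^1_t)_{t \in I^{\prime, 1}}), (\XX^2, (\mu^2_t)_{t \in I^{\prime, 2}}))$ gives the claim. There is no substantial obstacle here; the only point requiring attention is that the hypothesis ``$\CF$ is fully defined over $\{s,t\}$'' ensures $s, t \in I'' \setminus E$, which is exactly what lets us invoke Property~\ref{Def_IF_dist_within_CF_3} at the specific times $s$ and $t$ and supplies the coupling $q_t$ needed for Lemma~\ref{Lem_X1X2_correspondence_mu112}.
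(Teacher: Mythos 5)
Your proof is correct and follows exactly the route the paper intends: it makes explicit the phrase ``direct consequence of Lemma~\ref{Lem_X1X2_correspondence_mu112}'' by extracting a near-optimal coupling $q_t$ from Definition~\ref{Def_IF_dist_within_CF}, bounding both integrals against $q_t$ by $r$ (the second via the $s'=t'=t$ specialization, as in Remark~\ref{Rmk_F_dist_GW1_dist}), and then applying Lemma~\ref{Lem_X1X2_correspondence_mu112} with $\mu_1=\mu'_1=\mu^1_t$. No gaps.
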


\begin{proof}
This is a direct consequence of Lemma~\ref{Lem_X1X2_correspondence_mu112}.
\end{proof}

\subsection{Completeness}
For the remainder of this subsection fix again some $J \subset I \subset \IR$, where $I$ is an interval.
The main result of this subsection is:

\begin{Theorem} \label{Thm_F_complete}
$(\mathbb{F}^J_I , d^J_\mathbb{F})$ is complete.
\end{Theorem}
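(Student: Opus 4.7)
The plan is to run a Gromov-style diagonal argument adapted to the parabolic setting. Starting with a Cauchy sequence $(\XX^i, (\mu^i_t)_{t \in I^{\prime,i}})_{i \geq 1}$ in $\IF_I^J$, first pass to a subsequence with $d^J_\IF(\XX^i, \XX^{i+1}) < 2^{-i}$, and for each $i$ pick a correspondence $\CF^{i,i+1}$, an exceptional set $E^{i,i+1}$ with $|E^{i,i+1}| \leq 4^{-i}$, and couplings $(q^{i,i+1}_t)$ as in Definition~\ref{Def_IF_dist_within_CF}. Iteratively apply Lemma~\ref{Lem_combining_correspondences} (completing each time-slice) to produce a single correspondence $\CF = ((Z_t, d^Z_t)_{t \in I}, (\varphi^i_t))$ in which all $\XX^i$ are simultaneously embedded, fully defined over $J$, and by Proposition~\ref{Prop_triangle_ineq_CC} satisfying $d^{\,\CF,J}_\IF(\XX^i, \XX^j) \leq 2^{-i+1}$ for all $j > i$. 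Set $E := \bigcap_k \bigcup_{i \geq k} E^{i,i+1}$, so $|E| = 0$ and $J \cap E = \emptyset$; the limit will be defined over $I' := I \setminus E$.

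For each $t \in I'$ the measures $(\varphi^i_t)_* \mu^i_t \in \PP(Z_t)$ are $d_{W_1}^{Z_t}$-Cauchy by Remark~\ref{Rmk_F_dist_GW1_dist}; let $\mu^\infty_t$ denote the $W_1$-limit and put $\XX^\infty_t := \supp \mu^\infty_t$ with the induced metric $d^\infty_t$ and inclusion $\varphi^\infty_t$. For $s \leq t$ in $I'$ and $x \in \XX^\infty_t$, choose an approximating sequence $x^i \in \XX^i_t$ with $\varphi^i_t(x^i) \to \varphi^\infty_t(x)$ and $\mu^i_t(B(x^i, r))$ bounded below uniformly in $i$ for each small $r > 0$ (such sequences exist because $x \in \supp \mu^\infty_t$, via the weak convergence of $(\varphi^i_t)_* \mu^i_t$). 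Lemma~\ref{Lem_XX12_close_HK_close} applied pairwise in $\CF$ shows that $((\varphi^i_s)_* \nu^i_{x^i;s})_{i \geq 1}$ is $d_{W_1}^{Z_s}$-Cauchy; define $\nu^\infty_{x;s}$ as its $W_1$-limit, and the same lemma applied to two choices of approximating sequences yields independence of $(x^i)$.

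It remains to verify that $(\XX^\infty, (\mu^\infty_t)_{t \in I'})$ is a metric flow pair and to show $d^{\,\CF, J}_\IF(\XX^i, \XX^\infty) \to 0$. Conditions (1)--(5) of Definition~\ref{Def_metric_flow} and the identity $\supp \mu^\infty_t = \XX^\infty_t$ are immediate. For the $\Phi$-gradient bound (6), extend each $T^{-1/2}$-Lipschitz $f_s : \XX^\infty_s \to \IR$ to a $T^{-1/2}$-Lipschitz function $\tilde f_s : Z_s \to \IR$ by inf-convolution, pull back to each $\XX^i_s$ via $\varphi^i_s$, apply (6) for $\XX^i$, and pass to the limit using $W_1$-convergence of the relevant conjugate heat kernels; the reproduction formula (7) is obtained analogously by testing against bounded $1$-Lipschitz functions and invoking dominated convergence. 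Finally, enlarge $\CF$ by the maps $\varphi^\infty_t$ and construct couplings $q^{i,\infty}_t$ between $\mu^i_t$ and $\mu^\infty_t$ as weak subsequential limits of the marginals of $3$-fold gluings (Lemma~\ref{Lem_gluing}) of $q^{i,j}_t$ for $j \to \infty$, which are tight by Lemma~\ref{Lem_basic_measure}\ref{Lem_basic_measure_e}; Lemma~\ref{Lem_F_dist_td_mu} combined with the bound $d^{\,\CF, J}_\IF(\XX^i, \XX^j) \leq 2^{-i+1}$ then yields $d^J_\IF(\XX^i, \XX^\infty) \to 0$.

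The hard part will be the verification of axioms (6) and (7) of Definition~\ref{Def_metric_flow} at \emph{every} point of every time-slice, since $\nu^\infty_{x;s}$ has been constructed via an arbitrary choice of approximating sequence. This is resolved by observing that $x \mapsto \nu^\infty_{x;s}$ is automatically $d_{W_1}$-$1$-Lipschitz in $d^\infty_t$, an estimate inherited from Proposition~\ref{Prop_compare_CHF}\ref{Prop_compare_CHF_c} applied along the $\XX^i$ and preserved under the $W_1$-limit; axiom verification then only needs to be carried out on a dense set of well-approximated basepoints and extends by continuity. A secondary subtlety is that $\nu^\infty_{x;s}$ must actually be supported in $\XX^\infty_s \subset Z_s$, not merely in $Z_s$, which follows because any $\mu^\infty_s$-null open subset of $Z_s$ can be separated from $(\varphi^i_s)_* \nu^i_{x^i;s}$ in the limit via the coupling estimates in $\CF$.
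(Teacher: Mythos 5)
Your proposal follows exactly the architecture of the paper's proof: pass to a rapidly convergent subsequence, stitch the pairwise correspondences into one (Lemma~\ref{Lem_combining_correspondences} plus a direct limit and completion), take the exceptional set to be $E = \bigcap_k \bigcup_{i\geq k} E^{i,i+1}$, build $\mu^\infty_t$ as a $W_1$-Cauchy limit, construct $\nu^\infty_{x;s}$ by the Lipschitz-approximation argument via Lemma~\ref{Lem_XX12_close_HK_close}, and verify Definition~\ref{Def_metric_flow}(6)--(7) by inf-convolution extensions and dominated convergence. All of this matches the paper's Lemma~\ref{Lem_F_completemess_within_CF}, and your observations about the ``hard part'' (well-definedness of $\nu^\infty_{x;s}$ and $\supp\nu^\infty_{x;s}\subset\XX^\infty_s$) are exactly the points the paper's Claim addresses.

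The gap is in your final sentence. You cannot directly invoke Lemma~\ref{Lem_F_dist_td_mu} with $\XX^\infty$ as one of the two flows, because that lemma needs an a priori bound on $d^{\,\CF,\{s,t\}}_\IF(\XX^i,\XX^\infty)$, which is precisely what you are trying to establish. After gluing $q^{i,j}_t$ with a coupling $q^{j,\infty}_t$ chosen to nearly concentrate on the diagonal in $Z_t$, the triangle inequality reduces the quantity you need to estimate to two terms: one controlled by Lemma~\ref{Lem_F_dist_td_mu} applied between $\XX^i$ and $\XX^j$, and a residual term
\[
\int_{\XX^j_t\times\XX^\infty_t} d_{W_1}^{Z_s}\big((\varphi^j_s)_*\nu^j_{x^j;s},\ (\varphi^\infty_s)_*\nu^\infty_{x^\infty;s}\big)\,dq^{j,\infty}_t .
\]
Making this vanish as $j\to\infty$ is not a formal consequence of the $W_1$-convergence of $\mu^j_t$; the paper has to restrict to a compact $Y\subset\XX^\infty_t$, prove by a contradiction--compactness argument a uniform estimate of the form $d_{W_1}^{Z_s}((\varphi^j_s)_*\nu^j_{x^j;s},(\varphi^\infty_s)_*\nu^\infty_{x^\infty;s})\leq d^Z_t(\varphi^j_t(x^j),\varphi^\infty_t(x^\infty))+\eps$ for $x^\infty\in Y$ and large $j$ (using the Lipschitz bound in Proposition~\ref{Prop_compare_CHF}\ref{Prop_compare_CHF_c} and the convergence established in the Claim), pass $j\to\infty$ on $Y$, then exhaust $\XX^\infty_t$ by compacts. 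Without this additional semicontinuity argument the last step of your proof does not close, so you should either supply it or cite the mechanism explicitly.
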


Theorem~\ref{Thm_F_complete} will be a consequence of the following lemma, which establishes the existence of a limit within a given correspondence.

\begin{Lemma} \label{Lem_F_completemess_within_CF}
Let $(\XX^i, (\mu^i_t)_{t \in I^{\prime,i}})$, $i \in \IN$, be a sequence of metric flows over $I$ that are fully defined over $J$.
Consider a correspondence $\CF := (  (Z_t, d^Z_t)_{t \in I},(\varphi^i_t)_{t \in I^{\prime\prime, i}, i \in \IN} )$ between the metric flows $\XX^i$ over $I$ that is also fully defined over $J$ and suppose that the metric spaces $(Z_t, d^Z_t)_{t \in I}$ are complete.
Suppose that the metric flow pairs $(\XX^i, (\mu^i_t)_{t \in I^{\prime,i}})$ form a Cauchy sequence within $\CF$ that is uniform over $J$, in the sense that for any $\eps >0$ there is an $\underline{i} \geq 1$ such that for all $i, j \geq \underline{i}$
\[ d^{\, \CF, J}_{\IF} \big( (\XX^i, (\mu^i_t)_{t \in I^{\prime,i}}), (\XX^j, (\mu^j_t)_{t \in I^{\prime,j}}) \big) \leq \eps. \]
Then there is a metric flow pair $(\XX^\infty, (\mu^\infty_t)_{t \in I^{\prime,\infty}})$ over $I$ that is fully defined over $J$ and a family of isometric embeddings $(\varphi^\infty_t : \XX^\infty_t \to Z_t)_{t \in I^{\prime\prime, \infty}}$, $I^{\prime\prime, \infty} \subset  I$ such that
\begin{equation} \label{eq_CFp_def_lemma}
 \CF' := (  (Z_t, d^Z_t)_{t \in I},(\varphi^i_t)_{t \in I^{\prime\prime, i}, i \in \IN \cup \{ \infty \}}  \big) 
\end{equation}
is a correspondence between all metric flows $\XX^i$, $i \in \IN \cup \{ \infty \}$, and such that we have convergence
\begin{equation} \label{eq_dFCFp_conv_assertion}
 d^{\, \CF', J}_{\IF} \big( (\XX^i, (\mu^i_t)_{t \in I^{\prime,i}}), (\XX^\infty, (\mu^\infty_t)_{t \in I^{\prime,\infty}}) \big) \to 0. 
\end{equation}
\end{Lemma}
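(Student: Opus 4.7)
The plan is to construct the limit time-slice-by-time-slice within the ambient metric spaces $(Z_t,d^Z_t)$, first producing the limiting basepoint measures $(\mu^\infty_t)$, then defining $\XX^\infty_t$ as their supports and building the conjugate heat kernels $\nu^\infty_{x;s}$ by taking $W_1$-limits of kernels based at points approximating $x$ inside the sequence. The correspondence $\CF'$ in \eqref{eq_CFp_def_lemma} will then have $\varphi^\infty_t$ equal to the inclusion $\XX^\infty_t \hookrightarrow Z_t$.

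First, I would pass to a subsequence with $d^{\CF,J}_{\IF}(\XX^i,\XX^{i+1}) < 2^{-i}$, and for each $i$ choose a measurable exceptional set $E^i\subset I$ with $|E^i|\le 4^{-i}$ and a family of couplings $(q^i_t)_{t\in I\setminus E^i}$ between $\mu^i_t,\mu^{i+1}_t$ witnessing Definition~\ref{Def_IF_dist_within_CF}\ref{Def_IF_dist_within_CF_3}. Set $E:=\bigcap_{j\ge 1}\bigcup_{k\ge j}E^k$; then $|E|=0$ and, since $\CF$ is fully defined over $J$ and each $E^i$ is disjoint from $J$, we have $J\cap E=\emptyset$. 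For each $t\in I\setminus E$ all but finitely many $q^i_t$ are defined, and applying Definition~\ref{Def_IF_dist_within_CF}\ref{Def_IF_dist_within_CF_3} with $s=t$ gives $d^{Z_t}_{W_1}((\varphi^i_t)_*\mu^i_t,(\varphi^{i+1}_t)_*\mu^{i+1}_t)\le 2^{-i}$, so the sequence is $W_1$-Cauchy in the complete space $\PP(Z_t)$ (under $W_1$ allowed to attain $\infty$; finiteness follows from tightness via the same bound). Let $\mu^\infty_t\in\PP(Z_t)$ be the limit, set $\XX^\infty_t:=\supp\mu^\infty_t$, $d^\infty_t:=d^Z_t|_{\XX^\infty_t}$, let $\varphi^\infty_t$ be the inclusion, and take $I^{\prime,\infty}=I^{\prime\prime,\infty}:=I\setminus E\supset J$.

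Next, I would define the conjugate heat kernels. Fix $s,t\in I\setminus E$ with $s\le t$ and $x\in\XX^\infty_t$. For every $r>0$, since $\mu^\infty_t(B(x,r))>0$ and $(\varphi^i_t)_*\mu^i_t\to\mu^\infty_t$ in $W_1$, all sufficiently large $i$ admit a point $x^i\in\XX^i_t$ with $d^Z_t(\varphi^i_t(x^i),x)<r$ and $\mu^i_t(B(x^i,2r))\ge \tfrac12\mu^\infty_t(B(x,r))$. Applied to the pair $(\XX^i,\XX^{i+1})$ with $\delta=2^{-i}/r$ and the anchor point $x^i$ (for $i$ large enough), Lemma~\ref{Lem_XX12_close_HK_close} yields $d^{Z_s}_{W_1}((\varphi^i_s)_*\nu^i_{x^i;s},(\varphi^{i+1}_s)_*\nu^{i+1}_{x^{i+1};s})\le 14r$, where $x^{i+1}$ is chosen similarly. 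A diagonal argument with $r_k\to 0$ produces a Cauchy sequence $(\varphi^{i_k}_s)_*\nu^{i_k}_{x^{i_k};s}$ in $(\PP(Z_s),d^{Z_s}_{W_1})$; let $\nu^\infty_{x;s}\in\PP(Z_s)$ be its limit. Lemma~\ref{Lem_XX12_close_HK_close} applied to different approximating choices shows that the limit is independent of these choices, and one checks $\supp\nu^\infty_{x;s}\subset\XX^\infty_s$ by the same Lemma applied with $\mu^i_s$ in place of conjugate heat kernels, using that weak limits of measures supported in sets of full $\mu^i_s$-mass lie in the support of $\mu^\infty_s$. Setting $\nu^\infty_{x;t}:=\delta_x$ completes the construction.

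Finally, I would verify the axioms of Definition~\ref{Def_metric_flow} and the convergence assertion \eqref{eq_dFCFp_conv_assertion}. Properties \ref{Def_metric_flow_1}--\ref{Def_metric_flow_5} are immediate. For the gradient axiom \ref{Def_metric_flow_6}, given a $T^{-1/2}$-Lipschitz $f_s\colon\XX^\infty_s\to\IR$, extend it to a $T^{-1/2}$-Lipschitz $\td f\colon Z_s\to\IR$ by the McShane formula, then pull back to $\td f\circ\varphi^i_s$ on $\XX^i_s$ to get uniformly Lipschitz functions whose $\Phi\circ(\cdot)$-heat evolutions on $\XX^i$ converge pointwise (via $W_1$-convergence of $(\varphi^i_s)_*\nu^i_{x^i;s}\to\nu^\infty_{x;s}$) to the corresponding function on $\XX^\infty_t$; the Lipschitz constant passes to the limit (compare the analogous argument in the proof of Theorem~\ref{Thm_existenc_fut_compl}). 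The reproduction formula \ref{Def_metric_flow_7} follows by passing the reproduction formulas on each $\XX^i$ to the limit, using Proposition~\ref{Prop_compare_CHF}\ref{Prop_compare_CHF_c} to ensure $W_1$-continuity of kernels in the basepoint. That $(\mu^\infty_t)$ is a conjugate heat flow on $\XX^\infty$ and $\supp\mu^\infty_t=\XX^\infty_t$ is then built in by construction. For \eqref{eq_dFCFp_conv_assertion}, glue the couplings $q^i_t,q^{i+1}_t,\ldots$ via iterated applications of Lemma~\ref{Lem_gluing} and pass to a $W_1$-limit to obtain couplings $q^{i,\infty}_t$ between $\mu^i_t$ and $\mu^\infty_t$; telescoping gives $\int d^Z_t\,dq^{i,\infty}_t\le \sum_{k\ge i}2^{-k}$, and combining this with the telescoped bound on the $W_1$-integrands in Definition~\ref{Def_IF_dist_within_CF}\ref{Def_IF_dist_within_CF_3} yields $d^{\CF',J}_\IF(\XX^i,\XX^\infty)\le C\cdot 2^{-i}\to 0$. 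The main obstacle I expect is defining $\nu^\infty_{x;s}$ for \emph{every} $x\in\XX^\infty_t$ rather than $\mu^\infty_t$-a.e.\ $x$: because Lemma~\ref{Lem_XX12_close_HK_close} demands a positive lower bound on the mass of a ball around the anchor $x^i$, one must exploit $x\in\supp\mu^\infty_t$ together with $W_1$-convergence to transfer this mass bound uniformly along the sequence, and carefully balance the scales $r$ against the Cauchy rate $2^{-i}$.
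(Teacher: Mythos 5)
Your proposal follows the paper's own proof almost step for step: you pass to a rapidly-Cauchy subsequence, take $W_1$-limits of the pushed-forward reference measures to define $\mu^\infty_t$ and the time-slices $\XX^\infty_t:=\supp\mu^\infty_t$ at non-exceptional times, build $\nu^\infty_{x;s}$ by approximating $x\in\supp\mu^\infty_t$ with points $x^i$ in the sequence (using Lemma~\ref{Lem_XX12_close_HK_close} to see that the pushed kernels are $W_1$-Cauchy), verify the metric-flow axioms by extending Lipschitz data to $Z_s$ and passing to the limit, and finally glue couplings via Lemma~\ref{Lem_gluing} to produce couplings with $\mu^\infty_t$. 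This is exactly the architecture of the paper's argument, and your handling of Properties \ref{Def_metric_flow_6} and \ref{Def_metric_flow_7} also parallels what the paper does (it even points to the analogous argument in the proof of Theorem~\ref{Thm_existenc_fut_compl}, as you do). The scale-bookkeeping in your application of Lemma~\ref{Lem_XX12_close_HK_close} (radius $2r$, $\delta=2^{-i}/r$, conclusion $14r$) is coherent, once one notes that the Cauchy hypothesis between $\XX^i$ and $\XX^{i+1}$ is uniform over $I\setminus E^i\ni s,t$, so the lemma's hypothesis ``$s,t\in J$'' is available.

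Two things are glossed over, one of them a genuine gap. The minor one: that $(\mu^\infty_t)$ is a conjugate heat flow on $\XX^\infty$ is \emph{not} ``built in by construction'' — it requires passing the reproduction formula to the limit, the same way you verify Property~\ref{Def_metric_flow_7} for the kernels; the paper explicitly remarks that the argument is ``almost the same.'' The more substantial gap is in your last step, the verification of \eqref{eq_dFCFp_conv_assertion}. You propose to telescope bounds on $\int d^Z_t\,dq^{i,\infty}_t$ and ``the $W_1$-integrands'' in Definition~\ref{Def_IF_dist_within_CF}\ref{Def_IF_dist_within_CF_3}, but the integrand you must control is
\[
\int_{\XX^i_t\times\XX^\infty_t} d^{Z_s}_{W_1}\big((\varphi^i_s)_*\nu^i_{x^i;s},(\varphi^\infty_s)_*\nu^\infty_{x^\infty;s}\big)\,dq^{i,\infty}_t,
\]
and $\nu^\infty_{x^\infty;s}$ is itself defined as a $W_1$-limit; no pointwise telescoping estimate relates it to the finite-index kernels $\nu^j_{x^j;s}$ unless one first proves a \emph{uniform} bound along the sequence. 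The paper's proof spends real effort here: after gluing to reduce to a coupling integral between $\mu^i_t$ and $\mu^j_t$ with error $2^{-i+3}$, it establishes the estimate (\ref{eq_dW1Zs_vs_dZt}), namely that for $x^\infty$ ranging over a compact set $Y\subset\XX^\infty_t$ and all large $j$,
\[
d^{Z_s}_{W_1}\big((\varphi^j_s)_*\nu^j_{x^j;s},(\varphi^\infty_s)_*\nu^\infty_{x^\infty;s}\big)\le d^Z_t\big(\varphi^j_t(x^j),\varphi^\infty_t(x^\infty)\big)+\eps,
\]
which is proved by an Arzel\`a--Ascoli-type contradiction argument using the $2$-Lipschitz dependence of $x\mapsto\nu_{x;s}$ (Proposition~\ref{Prop_compare_CHF}\ref{Prop_compare_CHF_c}) and a compact exhaustion $Y\nearrow\XX^\infty_t$. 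Without this equicontinuity-on-compacts step, the passage from finite $j$ to $\infty$ inside the integral is not justified. Your sketch identifies the right ingredients (gluing, $W_1$-limits, telescoping) but elides the one estimate that actually carries the proof to its conclusion; adding it would bring your argument into full alignment with the paper's.
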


\begin{proof}
After replacing each $I^{\prime, i}$ with $\bigcap_{i=1}^\infty I^{\prime,i}$, we may assume that $I^{\prime, i} = I'$ for all $i$.
As we are allowed to pass to a subsequence, we may further assume that
\[ d^{\, \CF, J}_{\IF} \big( (\XX^i, (\mu^i_t)_{t \in I'}), (\XX^{i+1}, (\mu^{i+1}_t)_{t \in I'}) \big) \leq 2^{-i-2}. \]
For each $i$ choose $E^{i,i+1} \subset I$ with $J \subset I \setminus E^{i, i+1} \subset I^{\prime\prime, i} \cap I^{\prime\prime, i+1}$ and $(q_t^{i, i+1})_{t \in I \setminus E^{i, i+1}}$ such that Properties~\ref{Def_IF_dist_within_CF_1}, \ref{Def_IF_dist_within_CF_3} of Definition~\ref{Def_IF_dist_within_CF} hold for $r = 2^{-i-1}$.
Set
\[ E^i := E^{i, i+1} \cup E^{i+1, i+2} \cup \ldots \]
Then
\[ |E^i| \leq 4^{-i}, \qquad E^1 \supset E^{2} \supset \ldots. \]
So $E^\infty := \bigcap_{i=1}^\infty E^i$ is a set of measure zero.
For any $t \in I \setminus E^\infty$, the probability measures $(\varphi^i_t)_* \mu^i_t \in \mathcal{P} (Z_t)$ are defined for large $i$ and form a Cauchy sequence in $(\PP(Z_t), d_{W_1}^{Z_t})$ (see Remark~\ref{Rmk_F_dist_GW1_dist}).
So they converge to a probability measure $\mu^\infty_t \in \mathcal{P} (Z_t)$ and
\[ d_{W_1}^{Z_t} ( (\varphi^i_t)_* \mu^i_t, \mu^\infty_t ) \leq 2^{-i}. \]
Let $X^\infty_t := \supp \mu^\infty_t$ and $d^\infty_t := d^{Z}_t |_{X^\infty_t}$.
Then $(X^\infty_t, d^\infty_t, \mu^\infty_t)$ is a complete, separable metric measure space of full support.

Let us now analyze the conjugate heat kernels $(\nu^i_{x;s})$.

\begin{Claim} \label{Cl_convergence_nu}
For every $s, t \in I \setminus E^\infty$, $s\leq t$ and $x^\infty \in X^\infty_t$ and every sequence $x^i \in \XX^i_t$ with $\varphi^i_t (x^i) \to x^\infty$ we have
 \[ (\varphi^i_t)_* \nu^i_{x^i;s} \xrightarrow{\quad W_1 \quad} \nu^\infty_{x^\infty;s}, \]
for some probability measure $\nu^\infty_{x^\infty;s} \in \mathcal{P} ( Z_s )$ with $\supp \nu^\infty_{x^\infty;s} \subset X^\infty_s$.
Moreover, the limit does not depend on the choice of the sequence $x^i$.
\end{Claim}

\begin{proof}
Consider a sequence $x^i \in \XX^i_t$ with $\varphi^i_t (x^i) \to x^\infty$ and let $r > 0$.
Since for large $i$
\[ \mu^i_t (B(x^i, r)) = ((\varphi^i_t)_* \mu^i_t )(B^{Z_t} (\varphi^i_t (x^i), r) )
\geq ((\varphi^i_t)_* \mu^i_t )(B^{Z_t} (x^\infty, r/2) ), \]
we have $\liminf_{i \to \infty} \mu^i_t (B(x^i, r)) > 0$.
So the claim, except for the statement concerning the support of $\nu^\infty_{x^\infty;s}$, follows from Lemma~\ref{Lem_XX12_close_HK_close}.

If the statement concerning the support were false, then there would be a ball $B(y^\infty, 3r) \cap  \supp \mu^\infty_s = \emptyset$ with $y^\infty \in \supp \nu^\infty_{x^\infty;s}$.
Choose a sequence $y^i \in \XX^i_s$ with $\varphi^i_s (y^i) \to y^\infty$.
Consider the function $f := ( r - d^{Z}_s (y^\infty, \cdot))_+: Z_s \to \IR$.
We obtain that 
\[ \liminf_{i \to \infty} \int_{\XX^i_s}  (f \circ \varphi^i_s) d\nu^i_{x^i;s} >2 \alpha> 0, \qquad 
\lim_{i \to \infty} \int_{\XX^i_s}  (f \circ \varphi^i_s) d\mu^i_s = 0. \] 
By Proposition~\ref{Prop_compare_CHF}\ref{Prop_compare_CHF_c}, since $f_i := f \circ \varphi^i_s$ is 1-Lipschitz, so is $z \mapsto \int_{\XX^i_s}  f_i \, d\nu^i_{z;s}$.
Therefore, for large $i$
\[ \int_{\XX^i_s}  f_i \, d\mu^i_s 
= \int_{\XX^i_t} \int_{\XX^i_s}  f_i \, d\nu^i_{y;s} d\mu^i_t (y)
\geq \int_{B(x^i, \alpha)} \int_{\XX^i_s}  f_i \, d\nu^i_{y;s} d\mu^i_t (y) 
\geq \alpha \mu^i_t (B(x^i, \alpha)), \]
which contradicts the fact that $\varphi^i_t(x^i) \to x^\infty \in \supp \mu^\infty_t$.
This shows that $\nu^\infty_{x^\infty; s} \in \mathcal{P}(X^\infty_s)$.
\end{proof}

Applying Claim~\ref{Cl_convergence_nu} for any $s,t \in I \setminus E^\infty$, $s \leq t$ and $x^\infty \in X^\infty_t$ produces a families of probability measures $\nu^\infty_{x^\infty;s} \in \mathcal{P} (X^\infty_s)$, which we will fix henceforth.
Set $I^{\prime, \infty} := I^{\prime\prime, \infty} :=  I \setminus E^\infty$.
By abuse of notation, we will denote the tuple
\[ \Big( \XX^\infty := \bigsqcup_{t \in I \setminus E^\infty } X^\infty_t,  \tf^\infty, (d^\infty_t)_{t \in I \setminus E^\infty}, (\nu^\infty_{x;s})_{x \in X^\infty_t, s,t \in I \setminus E^\infty, s \leq t} \Big), \]
where $\tf^\infty : \XX^\infty \to \IR$ is the natural map, by $\XX^\infty$.
Let moreover $(\varphi^\infty_t : \XX^\infty_t = X^\infty_t \to Z_t )_{t \in I \setminus E^\infty }$ be the family of inclusion maps.

\begin{Claim}
$(\XX^\infty, (\mu^\infty_t)_{t \in I \setminus E^\infty})$ is a metric flow pair and $\CF'$ in (\ref{eq_CFp_def_lemma}) is a correspondence.
\end{Claim}

\begin{proof}
Properties~\ref{Def_metric_flow_1}--\ref{Def_metric_flow_5} of Definition~\ref{Def_metric_flow} hold trivially.

To see Property~\ref{Def_metric_flow_6} let $s, t \in I \setminus E^\infty$, $s < t$, $T > 0$ and consider a $T^{-1}$-Lipschitz function $f : X_s^\infty \to \IR$.
Then the function $\widehat{f} : Z_s \to \IR$, defined by
\[ \widehat{f} (x) := \inf_{z \in X^\infty_s} \big(  f(z) + T^{-1} d(x,z) \big) \]
is also $T^{-1}$-Lipschitz.
It follows that the functions 
\[ h^i : \XX^i_t \to \IR, \qquad x \mapsto \Phi^{-1} \bigg( \int_{\XX^i_s} (\Phi \circ \widehat{f} \circ \varphi^i_s) d\nu_{x;s}^i \bigg) \]
are $(t-s+T)^{-1}$-Lipschitz.
By Claim~\ref{Cl_convergence_nu} for any $x^i \in \XX^i_t$ with $\varphi^i_t (x^i) \to x^\infty \in X^\infty_t$ we have $h^i(x^i) \to \Phi^{-1} ( \int_{X^\infty_s} \Phi (f) d\nu_{x^\infty;s})$.
This shows that $X^\infty_t \to \IR$, $x \mapsto \Phi^{-1} ( \int_{X^\infty_s} \Phi (f) d\nu^\infty_{x^\infty;s})$ is $(t-s+T)^{-1}$-Lipschitz, and therefore Property~\ref{Def_metric_flow_6} if $T > 0$.
By Lemma~\ref{Lem_met_flow_T_positive} this implies the case $T=0$.

For Property~\ref{Def_metric_flow_7} fix $t_1, t_2, t_3 \in I \setminus E^\infty$, $t_1 < t_2 < t_3$, $x^\infty \in X^\infty_{t_3}$.
It suffices to show that for every bounded Lipschitz function $f : X^\infty_{t_3} \to \IR$
\[ \int_{X^\infty_{t_1}} f \, d\nu^\infty_{x^\infty;t_1} = \int_{X^\infty_{t_2}} \int_{X^\infty_{t_1}} f \, d\nu^\infty_{y; t_1} d\nu^\infty_{x^\infty; t_2}(y). \]
As explained in the last paragraph, we may extend $f$ to a bounded Lipschitz function $\widehat{f} : Z_{t_1} \to \IR$.
Fix a sequence $x^i \in \XX^i_{t_3}$ such that $\varphi^i_{t_3} (x^i) \to x^\infty$.
Then by Claim~\ref{Cl_convergence_nu}
\[ \int_{\XX^i_{t_1}} \widehat{f} \circ \varphi^i_{t_1} \, d\nu^i_{x^i; t_1} \longrightarrow \int_{X^\infty_{t_1}} f \, d\nu^\infty_{x^\infty; t_1}. \]
Similarly, the functions
\[ h^i : \XX^i_{t_2} \to \IR, \qquad y \mapsto \int_{\XX^i_{t_1}} \widehat{f} \circ \varphi^i_{t_1} d\nu^i_{y; t_1} \]
are uniformly Lipschitz and for any sequence $y^i \in \XX^i_{t_2}$ with $\varphi^i_{t_2} (y^i) \to y^\infty \in X^\infty_{t_2}$ we have $h^i (y^i) \to \int_{X^\infty_{t_1}} f \, d\nu^\infty_{y^\infty; t_1}$.
It follows, again using Claim~\ref{Cl_convergence_nu}, that
\[   \int_{X^\infty_{t_1}} f \, d\nu^\infty_{x^\infty; t_1} \longleftarrow \int_{\XX^i_{t_1}} \widehat{f} \circ \varphi^i_{t_1}\, d\nu^i_{x^i;t_1} 
= \int_{\XX^i_{t_2}} h^i \, d\nu^i_{x^i; t_2} \longrightarrow  \int_{X^\infty_{t_2}} \int_{X^\infty_{t_1}} f \, d\nu^\infty_{y; t_1} d\nu^\infty_{x^\infty; t_2}(y). \]
The proof that $(\mu^\infty_t)_{t \in I \setminus E^\infty}$ is a conjugate heat flow on $\XX^\infty$ is almost the same.
\end{proof}

It remains to show (\ref{eq_dFCFp_conv_assertion}), i.e. that we have convergence within $\CF'$.
For this purpose, choose for any $i \geq 1$ and $t \in I \setminus E^i$ a coupling $q^{i, \infty}_t$ between $\mu^i_t, \mu^\infty_t$ with the property that
\[ \int_{\XX^i_t \times \XX^\infty_t} d^Z_t ( \varphi^i_t (x^i), \varphi^\infty_t (x^\infty) ) dq^{i, \infty}_t (x^i, x^\infty) \leq 2^{-i+1}. \]
For any $i \leq j$ and $t \in I \setminus E^i$, use Lemma~\ref{Lem_gluing} to find a probability measure $q^{i,j, \infty}$ on $\XX^i_t \times \XX^j_t  \times \XX^\infty_t$ whose projection onto the first and last two factors equals $q^{i,\infty}_t, q^{j, \infty}_t$, respectively.
Its projection $\td{q}^{i,j}$ onto the first two factors is a coupling between $\mu^i_t, \mu^j_t$ and as in the proof of Proposition~\ref{Prop_triangle_ineq_CC} we find
\begin{align*}
\int_{\XX^i_t \times \XX^j_t} & d^Z_t ( \varphi^i_t (x^i), \varphi^j_t (x^j) ) d\td q^{i, j}_t (x^i, x^j)
= \int_{\XX^i_t \times \XX^j_t \times \XX^\infty_t} d^Z_t ( \varphi^i_t (x^i), \varphi^j_t (x^j) ) d q^{i, j, \infty}_t (x^i, x^j, x^\infty) \\
&\leq \int_{\XX^i_t \times \XX^j_t \times \XX^\infty_t} \big( d^Z_t ( \varphi^i_t (x^i), \varphi^\infty_t (x^\infty) ) + d^Z_t ( \varphi^\infty_t (x^\infty), \varphi^j_t (x^j) ) \big) d q^{i, j, \infty}_t (x^i, x^j, x^\infty) \\
&\leq \int_{\XX^i_t \times \XX^\infty_t} d^Z_t ( \varphi^i_t (x^i), \varphi^\infty_t (x^\infty) ) d q^{i,  \infty}_t (x^i,  x^\infty)
+ \int_{\XX^j_t \times \XX^\infty_t} d^Z_t ( \varphi^\infty_t (x^\infty), \varphi^j_t (x^j) )  d q^{ j, \infty}_t (x^j, x^\infty) \\
& \leq 2^{-i+1} + 2^{-j+1} \leq 2^{-i+2}. 
\end{align*}
So by Lemma~\ref{Lem_F_dist_td_mu}
\[ \int_{\XX^i_t \times \XX^j_t} d_{W_1}^{Z_s} ( (\varphi^i_s)_* \nu^i_{x^i; s}, (\varphi^j_s)_* \nu^j_{x^j; s} ) d\td q^{i, j}_t (x^i, x^j) \leq  2^{-i+2} + 2 d^{\, \CF, J}_{\IF} \big( (\XX^i, (\mu^i_t)_{t \in I^{\prime,i}}), (\XX^j, (\mu^j_t)_{t \in I^{\prime,j}}) \big)  \leq 2^{-i + 3}. \]
It follows that for any measurable subset $Y \subset \XX^\infty_t$
\begin{align}
 \int_{\XX^i_t \times Y} &  d_{W_1}^{Z_s} ( (\varphi^i_s)_* \nu^i_{x^i; s}, (\varphi^\infty_s)_* \nu^\infty_{x^\infty; s} ) d q^{i, \infty}_t (x^i, x^\infty) \notag \\
&= \int_{\XX^i_t \times \XX^j_t \times Y}  d_{W_1}^{Z_s} ( (\varphi^i_s)_* \nu^i_{x^i; s}, (\varphi^\infty_s)_* \nu^\infty_{x^\infty; s} ) d q^{i, j, \infty}_t (x^i, x^j, x^\infty) \notag \\
&\leq \int_{\XX^i_t \times \XX^j_t \times Y} \big(    d_{W_1}^{Z_s} ( (\varphi^i_s)_* \nu^i_{x^i; s}, (\varphi^j_s)_* \nu^j_{x^j; s} ) 
+    d_{W_1}^{Z_s} ( (\varphi^j_s)_* \nu^j_{x^j; s}, (\varphi^\infty_s)_* \nu^\infty_{x^\infty; s} )  \big) d q^{i, j, \infty}_t (x^i, x^j, x^\infty) \notag \\
&\leq 2^{-i + 3} + \int_{\XX^j_t \times Y}   d_{W_1}^{Z_s} ( (\varphi^j_s)_* \nu^j_{x^j; s}, (\varphi^\infty_s)_* \nu^\infty_{x^\infty; s} )  dq^{j, \infty}_t (x^j, x^\infty). \label{eq_mindAtdqijinfty}
\end{align}

Next, suppose that $Y$ is compact and fix some $\eps > 0$.
We claim that for large $j$ we have for any $x^j \in \XX^j_t$, $x^\infty \in Y$
\begin{equation} \label{eq_dW1Zs_vs_dZt}
 d_{W_1}^{Z_s} ( (\varphi^j_s)_* \nu^j_{x^j; s}, (\varphi^\infty_s)_* \nu^\infty_{x^\infty; s} )
\leq d^Z_t (\varphi^j_t (x^j), \varphi^\infty_t(x^\infty)) + \eps. 
\end{equation}
Suppose not, so, after passing to a subsequence, we can find sequences of points $x^j \in \XX^j_t$, $x^{\infty,j} \in Y$ that violate (\ref{eq_dW1Zs_vs_dZt}).
After passing to another subsequence, we may assume that $x^{\infty, j} \to x^{\prime, \infty} \in Y$.
Fix a sequence $x^{\prime, j} \in \XX^j_t$ with $\varphi^j_t (x^{\prime, j} ) \to \varphi^\infty_t (x^{\prime, \infty})$.
Then
\begin{align*}
 d_{W_1}^{Z_s}  ( (\varphi^j_s)_* \nu^j_{x^j; s}, (\varphi^\infty_s)_* \nu^\infty_{x^{\infty,j}; s} )& \leq d_{W_1}^{Z_s} ( (\varphi^j_s)_* \nu^j_{x^j; s},  (\varphi^j_s)_* \nu^j_{x^{\prime, j}; s} )
+ d_{W_1}^{Z_s} ( (\varphi^j_s)_* \nu^j_{x^{\prime,j}; s}, (\varphi^\infty_s)_* \nu^\infty_{x^{\prime, \infty}; s} ) \\
&\qquad + d_{W_1}^{Z_s} ( (\varphi^\infty_s)_* \nu^\infty_{x^{\prime, \infty}; s}, (\varphi^\infty_s)_* \nu^\infty_{x^{\infty,j}; s} ) \\
&\leq d^j_t ( x^j, x^{\prime,j}) + d_{W_1}^{Z_s} ( (\varphi^j_s)_* \nu^j_{x^{\prime,j}; s}, (\varphi^\infty_s)_* \nu^\infty_{x^{\prime, \infty}; s} )
+  d^\infty_t (x^{\prime, \infty}, x^{\infty,j}) \\
&\leq  d^Z_t (\varphi^j_t (x^j), \varphi^\infty_t(x^{\infty,j})) + d^Z_t (\varphi^\infty_t(x^{\prime,\infty}),\varphi^j_t (x^{\prime, j})) \\
&\qquad + d_{W_1}^{Z_s} ( (\varphi^j_s)_* \nu^j_{x^{\prime,j}; s}, (\varphi^\infty_s)_* \nu^\infty_{x^{\prime,\infty}; s} )
+ 2 d^\infty_t (x^{\prime,\infty}, x^{\infty,j}).
\end{align*}
By Claim~\ref{Cl_convergence_nu} the last three terms converge to $0$ as $j \to \infty$, which yields the desired contradiction.

Combining (\ref{eq_mindAtdqijinfty}), (\ref{eq_dW1Zs_vs_dZt}) implies that for large $j$
\begin{multline*}
  \int_{\XX^i_t \times Y}   d_{W_1}^{Z_s} ( (\varphi^i_s)_* \nu^i_{x^i; s}, (\varphi^\infty_s)_* \nu^\infty_{x^\infty; s} ) d q^{i, \infty}_t (x^i, x^\infty) \\
\leq 2^{-i+3} + \int_{\XX^j_t \times Y}   d^Z_t (\varphi^j_t (x^j), \varphi^\infty_t(x^\infty))  dq^{j, \infty}_t (x^j, x^\infty) + \eps. 
\end{multline*}
So letting $j \to \infty$ and then $\eps \to 0$ and finally $Y \to \XX^\infty_t$ (see Lemma~\ref{Lem_basic_measure}) yields
\[  \int_{\XX^i_t \times \XX^\infty_t}   d_{W_1}^{Z_s} ( (\varphi^i_s)_* \nu^i_{x^i; s}, (\varphi^\infty_s)_* \nu^\infty_{x^\infty; s} ) d q^{i, \infty}_t (x^i, x^\infty)  \leq 2^{-i+3}. \]
This concludes the proof of (\ref{eq_dFCFp_conv_assertion}).
\end{proof}
\bigskip

\begin{proof}[Proof of Theorem~\ref{Thm_F_complete}.]
Consider a sequence of metric flow pairs $(\XX^i, (\mu^i_t)_{t \in I^{\prime,i}})$ representing elements in $\IF^J_I$ that form a Cauchy sequence in $(\mathbb{F}^J_I , d^J_\mathbb{F})$.
After passing to a subsequence, we may assume that
\[ d^J_{\IF} \big( (\XX^i, (\mu^i_t)_{t \in I^{\prime,i}}), (\XX^{i+1}, (\mu^{i+1}_t)_{t \in I^{\prime,i+1}}) \big) < 2^{-i}. \]
So we can find correspondences $\CF^{i, i+1}$ between $\XX^i, \XX^{i+1}$ over $I$ that are fully defined over $J$ such that
\[ d^{\CF^{i, i+1}, J}_{\IF} \big( (\XX^i, (\mu^i_t)_{t \in I^{\prime,i}}), (\XX^{i+1}, (\mu^{i+1}_t)_{t \in I^{\prime,i+1}}) \big) < 2^{-i}. \]
By an iterative application of Lemma~\ref{Lem_combining_correspondences}, we can construct sequences of correspondences $\CC^{1 \ldots k}$ between $\XX^1, \ldots, \XX^k$ such that for any $1 \leq i < k$ 
\[ d^{\CF^{1 \ldots k}, J}_{\IF} \big( (\XX^i, (\mu^i_t)_{t \in I^{\prime,i}}), (\XX^{i+1}, (\mu^{i+1}_t)_{t \in I^{\prime,i+1}}) \big) =
 d^{\CF^{i, i+1}, J}_{\IF} \big( (\XX^i, (\mu^i_t)_{t \in I^{\prime,i}}), (\XX^{i+1}, (\mu^{i+1}_t)_{t \in I^{\prime,i+1}}) \big) . \]
 Using a direct limit construction on the sequence of metric spaces $(Z^{1\ldots k }_t, d^{Z^{1\ldots k }}_t)$, we find a correspondence $\CF$ between $\XX^1, \XX^2, \ldots$ such that for any $i \in \IN$
 \[ d^{\CF, J}_{\IF} \big( (\XX^i, (\mu^i_t)_{t \in I^{\prime,i}}), (\XX^{i+1}, (\mu^{i+1}_t)_{t \in I^{\prime,i+1}}) \big) =
 d^{\CF^{i, i+1}, J}_{\IF} \big( (\XX^i, (\mu^i_t)_{t \in I^{\prime,i}}), (\XX^{i+1}, (\mu^{i+1}_t)_{t \in I^{\prime,i+1}}) \big) < 2^{-i} . \]
After passing to a their completions, we may assume that the metric spaces $(Z, d^Z_t)$ of the correspondence $\CF$ are complete.
By Lemma~\ref{Lem_F_completemess_within_CF} there is a metric flow pair $(\XX^\infty, (\mu^\infty_t)_{t \in I^{\prime,\infty}})$ over $I$ that is fully defined over $J$ such that for an enlargement $\CF'$ of $\CF$
\[ d^{ J}_{\IF} \big( (\XX^i, (\mu^i_t)_{t \in I^{\prime,i}}), (\XX^\infty, (\mu^\infty_t)_{t \in I^{\prime,\infty}}) \big) \leq  d^{\, \CF', J}_{\IF} \big( (\XX^i, (\mu^i_t)_{t \in I^{\prime,i}}), (\XX^\infty, (\mu^\infty_t)_{t \in I^{\prime,\infty}}) \big) \to 0. \]
This finishes the proof.
\end{proof}
\bigskip

\section{Convergence within a correspondence} \label{Sec_conv_within}
In this section we will study the convergence behavior of metric flow pairs in more detail.
To do this, we will embed an $\IF$-convergent sequence $(\XX^i, (\mu^i_t)_{t \in I^{\prime,i}})$ and its limit $(\XX^\infty, (\mu^\infty_t)_{t \in I^{\prime,\infty}})$ into a common correspondence.
This will allow us to define the notion of \emph{convergence on compact time-intervals} and to relate objects and geometric properties of the sequence with its limit.
For example, we will define what it means that a sequence of conjugate heat flows or points on $\XX^i$ converges to a conjugate heat flow or point on $\XX^\infty$.

One of the main goals of this section will be to show a ``change-of-basepoint'' theorem, which allows us to replace the conjugate heat flows $(\mu^i_t)_{t \in I^{\prime,i}}$ by another convergent sequence of conjugate heat flows and maintain convergence of the metric flow pairs.
This will allow us to show that tangent flows of the limit $\XX^\infty$ occur as $\IF$-limits of parabolic rescalings of the certain metric flow pairs involving the metric flows $\XX^i$.

A number of results presented in this section will be required in Section~\ref{sec_reg_pts} and in subsequent work, but are not required for the compactness theory, as presented in Section~\ref{sec_compact_subsets_IF}.
The reader may decide to skim or skip this section upon first reading.

\subsection{Convergence of metric flow pairs within a correspondence}
In this subsection we define what it means that a sequence of metric flow pairs $\IF$-converges \emph{within} a correspondence.

Let $(\XX^i, (\mu^i_t)_{t \in I^{\prime, i}})$, $i \in \IN \cup \{ \infty \}$, be metric flow pairs over intervals $I^i \subset \IR$.
Suppose that 
\begin{equation} \label{eq_CF_over_N_infty}
  \CF := \big(  (Z_t, d^Z_t)_{t \in I''},(\varphi^i_t)_{t \in I^{\prime\prime, i}, i \in \IN \cup \{ \infty \}} \big),  
\end{equation}
is a correspondence between the metric flows $\XX^i$, $i \in \IN \cup \{ \infty \}$, over some subset $I'' \subset \IR$.
Let $J \subset \IR$ be another subset.

\begin{Definition}[Convergence of metric flow pairs within correspondence] \label{Def_IF_conv_within_CF}
Suppose that $I^{\infty} \setminus I''$ has measure zero, that the metric flow pairs $(\XX^i, (\mu^i_t)_{t \in I^{\prime, i}})$ for large $i$ and the correspondence $\CF$ restricted to some index set of the form $\{ i \geq \underline{i} \}$ are fully defined over $J$ and that
\begin{equation} \label{eq_Def_dIF_CF_to_0}
 d_{\IF}^{\,\CF, J} \big( (\XX^i, (\mu^i_t)_{t \in I^{\prime,i}}), (\XX^\infty, (\mu^\infty_t)_{t \in I^{\prime,\infty}}) \big) \to 0.
\end{equation}
Then we say that the metric flow pairs $(\XX^i, (\mu^i_t)_{t \in I^{\prime, i}})$ {\bf $\IF$-converge to $(\XX^\infty, (\mu^\infty_t)_{t \in I^{\prime, \infty}})$ within $\CF$ and that the convergence is uniform over $J$.}
We write\footnote{We may sometimes omit $\CF$, $J$ above the arrow if there is no chance of confusion.}
\begin{equation} \label{eq_IF_convergence_within_arrow}
 (\XX^i, (\mu^i_t)_{t \in I^{\prime,i}}) \xrightarrow[i \to \infty]{\quad \IF, \CF, J \quad}  (\XX^\infty, (\mu^\infty_t)_{t \in I^{\prime,\infty}}) . 
\end{equation}
If $J = I^\infty$, then we say that the $\IF$-convergence is {\bf uniform}.
If (\ref{eq_Def_dIF_CF_to_0}) holds after replacing $J$ with $J \cup \{ t \}$ for any or some $t \in I^\infty$, then we say that the $\IF$-convergence is {\bf time-wise or time-wise at time $t$.}

Next, suppose that for any compact subinterval $I_0 \subset I^\infty$ we have
\begin{equation} \label{eq_Def_dIF_to_0_finite_ti}
 (\XX^i_{I_0}, (\mu^i_t)_{t \in I^{\prime,i} \cap I_0}) \xrightarrow[i \to \infty]{\quad \IF, \CF|_{I'' \cap I_0}, J \cap I_0 \quad}  (\XX^\infty_{I_0}, (\mu^\infty_t)_{t \in I^{\prime,\infty} \cap I_0}) .  
\end{equation}
Then we say that the metric flow pairs $(\XX^i, (\mu^i_t)_{t \in I^{\prime, i}})$ {\bf $\IF$-converge to $(\XX^\infty, (\mu^\infty_t)_{t \in I^{\prime, \infty}})$ within $\CF$ on compact time-intervals and that the convergence is uniform over $J$ on compact time-intervals.}
If $J = I^\infty$, then we say that the $\IF$-convergence is {\bf uniform on compact time-intervals}.
Similarly as before, if (\ref{eq_Def_dIF_to_0_finite_ti}) holds on compact time-intervals after replacing $J$ with $J \cup \{ t \}$ for any/some $t \in I^\infty$, then we say that the $\IF$-convergence is {\bf time-wise (at time $t$).}
\end{Definition}

\begin{Remark} \label{Rmk_Ipp_extend}
We may always extend $\CF$ such that $I'' = I^\infty$ and $I^{\prime\prime, i} = I^{\prime,i}$ for all $i \in \IN \cup \{ \infty \}$; for example choose $(Z_t, d^Z_t)$ to be the wedge sum of all non-empty $(\XX^i_t, d^i_t)$ and let $\varphi^i_t$ be the natural embeddings.
This does not change the convergence behavior in (\ref{eq_IF_convergence_within_arrow}), so it can be done to simplify the setting.
\end{Remark}

The following lemma shows that in the setting of Definition~\ref{Def_IF_conv_within_CF} we can always pass to a subsequence such that we have time-wise convergence for almost every time.

\begin{Lemma} \label{Lem_pass_to_timewise}
Suppose that (\ref{eq_IF_convergence_within_arrow}) holds (on compact time-intervals).
Then, after passing to a subsequence, there is a subset $E_\infty \subset I^\infty$ such that the convergence (\ref{eq_IF_convergence_within_arrow}) is time-wise at any $t \in I^\infty \setminus E_\infty$ (while converging on compact time-intervals everywhere else).
Moreover, there is a decreasing sequence of subsets $E_1 \supset E_2 \supset \ldots$, $E_i \subset I^\infty$, with $\bigcap_{i=1}^\infty E_i =0$ and $J \subset I^\infty \setminus E_i$ such that for any $j$ the convergence (\ref{eq_IF_convergence_within_arrow}) is even uniform on $I^\infty \setminus E_j$ (on compact time-intervals).
\end{Lemma}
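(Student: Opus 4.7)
The plan is to extract a subsequence whose $\IF$-distances to the limit are summable and to form the tail unions of the exceptional sets supplied by Definition~\ref{Def_IF_dist_within_CF}. In the case of uniform convergence over $J$, first pass to a subsequence so that
\[
 d^{\,\CF,J}_\IF\big((\XX^i,(\mu^i_t)_{t \in I^{\prime,i}}),(\XX^\infty,(\mu^\infty_t)_{t \in I^{\prime,\infty}})\big) \leq 2^{-i-1}.
\]
For each $i$, pick a measurable $E^{(i)} \subset I^\infty$ that contains the measure-zero set $I^\infty \setminus I''$, together with a family of couplings $(q^{(i)}_t)_{t \in I^\infty \setminus E^{(i)}}$ realising Properties~(1) and~(3) of Definition~\ref{Def_IF_dist_within_CF} with parameter $r = 2^{-i}$. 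Then $|E^{(i)}| \leq 4^{-i}$, $J \cap E^{(i)} = \emptyset$, and $I^\infty \setminus E^{(i)} \subset I^{\prime\prime,i} \cap I^{\prime\prime,\infty}$.

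Set $E_j := \bigcup_{i \geq j} E^{(i)}$. Then $E_1 \supset E_2 \supset \cdots$, $|E_j| \leq 2 \cdot 4^{-j}$, $J \subset I^\infty \setminus E_j$, and $E_\infty := \bigcap_j E_j$ has measure zero. Because $I^\infty \setminus E_j \subset I^\infty \setminus E^{(i)}$ for every $i \geq j$, the pair $(E^{(i)},(q^{(i)}_t))$ already witnesses $d^{\,\CF,\,I^\infty \setminus E_j}_\IF \leq 2^{-i}$ for all $i \geq j$, giving uniform $\IF$-convergence with uniformity set $I^\infty \setminus E_j$. Specialising to $J \cup \{t\}$ for any $t \in I^\infty \setminus E_\infty$ (which eventually lies in some $I^\infty \setminus E_j$) yields time-wise convergence at $t$.

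For the case of convergence on compact time-intervals, exhaust $I^\infty$ by an increasing sequence of compact subintervals $K_1 \subset K_2 \subset \cdots$ with $\bigcup_n K_n = I^\infty$. By a diagonal extraction, pass to a subsequence such that for all $n$ and all $i \geq n$,
\[
 d^{\,\CF|_{I'' \cap K_n},\, J \cap K_n}_\IF\big((\XX^i_{K_n},\ldots),(\XX^\infty_{K_n},\ldots)\big) \leq 2^{-i-1}.
\]
For each such pair $(i,n)$ fix an exceptional set $E^{(i,n)} \subset K_n$ with $|E^{(i,n)}| \leq 4^{-i}$ and associated couplings. Let $\tilde E^{(i)} := E^{(i,i)} \subset K_i$ and set $E_j := \bigcup_{i \geq j} \tilde E^{(i)} \subset I^\infty$. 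Any fixed compact $K \subset I^\infty$ lies in $K_i$ for all large $i$, and restricting the couplings witnessing $E^{(i,i)}$ from $K_i$ to $K$ produces admissible data on $K$ with exceptional subset $\tilde E^{(i)} \cap K$. Therefore $|E_j \cap K| \leq \sum_{i \geq j} 4^{-i} \to 0$ as $j \to \infty$, so $\bigcap_j E_j$ has measure zero, and the argument of the previous paragraph, applied compact subinterval by compact subinterval, establishes the time-wise and uniform-on-$I^\infty \setminus E_j$ statements on compact time-intervals.

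The only delicate point — and therefore the main obstacle — is the ``compact time-intervals'' case: one must produce a single descending family $E_1 \supset E_2 \supset \cdots \subset I^\infty$ whose intersection with every compact $K$ has small measure, starting from exceptional sets that are a priori attached to the particular compact interval $K_n$ on which the convergence is witnessed. The choice $\tilde E^{(i)} := E^{(i,i)} \subset K_i$, which confines the $i$th exceptional set to a compact interval that eventually contains any fixed $K$, resolves this difficulty; all remaining ingredients are formal manipulations of Definition~\ref{Def_IF_dist_within_CF} once the good exceptional sets have been chosen.
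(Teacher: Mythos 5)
Your proof is correct and follows essentially the same approach as the paper: pass to a subsequence so that the exceptional sets from Definition~\ref{Def_IF_dist_within_CF} have summable measure, take tail unions to obtain a decreasing family with null intersection, and observe that any $t$ outside the intersection eventually lies in the good set. The only difference is that you work out the compact-time-intervals case explicitly (via the diagonal choice $\tilde E^{(i)} := E^{(i,i)} \subset K_i$), whereas the paper simply remarks that it ``follows similarly.''
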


\begin{proof}
We will consider the case in which (\ref{eq_IF_convergence_within_arrow}) holds on $I^\infty$.
The corresponding statement involving convergence on compact time-intervals follows similarly.
By Defintion~\ref{Def_IF_dist_within_CF} there are measurable subsets $E_i \subset I^\infty$ such that $J \subset I'' \setminus E_i \subset I^{\prime\prime, i} \cap I^{\prime\prime, \infty}$ and
\[ |E_i| \to 0, \qquad d_{\IF}^{\,\CF, I'' \setminus E_i} \big( (\XX^i, (\mu^i_t)_{t \in I^{\prime,i}}), (\XX^\infty, (\mu^\infty_t)_{t \in I^{\prime,\infty}}) \big) \to 0.  \]
After passing to a subsequence, we may assume that $|E_i| \leq 2^{-i}$ and after replacing $E_i$ with $E_i \cup E_{i+1} \cup \ldots$, we may assume that $E_1 \supset E_2 \supset \ldots$ and $|E_i| \leq 2^{-i+1}$.
Moreover, after replacing $E_i$ with $E_i \cup (I^\infty \setminus I'')$, we may assume that $I^\infty \setminus I'' \subset E_i$.
Set $E_\infty := \bigcap_{i=1}^\infty E_i$.
Then for any $t \in I^\infty \setminus E_\infty$ we have $J \cup \{ t \} \subset  I'' \setminus E_i$ for large $i$.
\end{proof}

The next lemma shows that the limit of an $\IF$-convergent sequence of metric flow pairs is unique if the convergence only holds on compact time-intervals.

\begin{Lemma}
Suppose that (\ref{eq_IF_convergence_within_arrow}) holds on compact time-intervals for two limiting metric flow pairs $ (\XX^\infty, (\mu^\infty_t)_{t \in I^{\prime,\infty}})$, $ (\XX^{*, \infty}, (\mu^{*, \infty}_t)_{t \in I^{\prime,*,\infty}})$ over the same interval $I^\infty$.
Then there is an almost everywhere isometry between them.
\end{Lemma}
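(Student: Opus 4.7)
My plan is to reduce to the definiteness part of Theorem~\ref{Thm_IF_metric_space}, applied on an exhaustion of $I^\infty$ by compact subintervals, and then extract a single almost everywhere isometry by mimicking the diagonal construction from the proof of that theorem inside a single combined correspondence.

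Let $\CF^{(1)}$ and $\CF^{(2)}$ denote correspondences witnessing the convergence of $(\XX^i, (\mu^i_t))$ to $\XX^\infty$ and $\XX^{*,\infty}$ respectively (if the hypothesis furnishes a single correspondence containing both limits, this step is unnecessary). I first combine them into a correspondence $\tilde\CF$ on the index set $\IN \cup \{\infty, *\infty\}$ by gluing $Z^{(1)}_t$ and $Z^{(2)}_t$ at each time $t$ along the embeddings of every $\XX^i_t$, $i \in \IN$. Concretely, on $Z^{(1)}_t \sqcup Z^{(2)}_t$ I put the pseudometric
$$d^{\tilde Z}_t(z_1, z_2) := \inf_{i \in \IN,\ x \in \XX^i_t} \Big[ d^{Z^{(1)}}_t\bigl(z_1, \varphi^{(1),i}_t(x)\bigr) + d^{Z^{(2)}}_t\bigl(\varphi^{(2),i}_t(x), z_2\bigr) \Big]$$
for $z_1 \in Z^{(1)}_t$, $z_2 \in Z^{(2)}_t$, then quotient by distance zero and complete, as in the $N=2$ case of Lemma~\ref{Lem_combining_embeddings}. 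Within $\tilde\CF$ both $\IF$-convergences still hold on compact time-intervals, since distances restricted to $Z^{(1)}_t$ or $Z^{(2)}_t$ are unchanged.

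Fix a compact subinterval $I_0 \subset I^\infty$. By Proposition~\ref{Prop_triangle_ineq_CC} applied inside $\tilde\CF|_{I_0}$,
$$d_{\IF}^{\,\tilde\CF|_{I_0}}\bigl(\XX^\infty_{I_0}, \XX^{*,\infty}_{I_0}\bigr) \leq d_{\IF}^{\,\tilde\CF|_{I_0}}\bigl(\XX^\infty_{I_0}, \XX^i_{I_0}\bigr) + d_{\IF}^{\,\tilde\CF|_{I_0}}\bigl(\XX^i_{I_0}, \XX^{*,\infty}_{I_0}\bigr) \xrightarrow[i \to \infty]{} 0,$$
where the conjugate heat flows are suppressed. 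Since the left-hand side does not depend on $i$, it must equal $0$, so by Theorem~\ref{Thm_IF_metric_space} the restrictions of the two limits to $I_0$ represent the same class in $\IF_{I_0}$.

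To produce a single almost everywhere isometry on all of $I^\infty$, I repeat the diagonal argument from the proof of Theorem~\ref{Thm_IF_metric_space} globally inside $\tilde\CF$. Fix a countable dense subset $Q \subset I^\infty$. For each $t \in Q$ and sufficiently large $i$ I use Lemma~\ref{Lem_gluing} to combine the couplings $q^{i,\infty}_t$ and $q^{i,*\infty}_t$ (taken from Definition~\ref{Def_IF_dist_within_CF} for the two convergences) into a joint measure on $\XX^i_t \times \XX^\infty_t \times \XX^{*,\infty}_t$ and push forward to a coupling between $\mu^\infty_t$ and $\mu^{*,\infty}_t$; the triangle inequality in $\tilde Z_t$ shows that the expected value of $d^{\tilde Z}_t(\varphi^\infty_t(\cdot), \varphi^{*,\infty}_t(\cdot))$ under this pushforward tends to $0$. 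By Lemma~\ref{Lem_couplings_converge_isometry} and a diagonal extraction over $Q$, I obtain isometries $\phi_t : (\XX^\infty_t, d^\infty_t, \mu^\infty_t) \to (\XX^{*,\infty}_t, d^{*,\infty}_t, \mu^{*,\infty}_t)$ for every $t$ in a full-measure subset of $Q$, compatible with the conjugate heat kernels linking pairs of such times. Using the reproduction formula exactly as in the computation (\ref{eq_reprod_form_isometry}) in the proof of Theorem~\ref{Thm_IF_metric_space}, I then extend $(\phi_t)_{t \in Q}$ to a family $(\phi_t)_{t \in I^\infty \setminus E}$ with $|E| = 0$; since $I^\infty$ is $\sigma$-compact, countably many compact-subinterval measure-zero sets combine to a single such $E$. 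The main obstacle is the simultaneous compatibility of the time-wise isometries with every conjugate heat kernel, not just those supported on $Q$, but this is handled exactly as in the proof of Theorem~\ref{Thm_IF_metric_space} by combining the reproduction formula with the diagonal extraction over $Q$.
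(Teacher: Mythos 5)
Your overall strategy --- reduce to the definiteness part of Theorem~\ref{Thm_IF_metric_space} on an exhaustion of $I^\infty$ by compact subintervals and then re-run its diagonal extraction to produce a single almost everywhere isometry --- is the same as the paper's, and the second half of your argument (combining $q^{i,\infty}_t$ and $q^{i,*\infty}_t$ via Lemma~\ref{Lem_gluing}, applying Lemma~\ref{Lem_couplings_converge_isometry} and a diagonal extraction over a countable dense $Q$, then extending via the reproduction formula) is essentially the computation the paper invokes. The one cosmetic difference is that the paper first extracts an a.e.\ isometry $\phi_k$ on each compact $I_k$ and then reruns the diagonal argument using the derived couplings $(\id,\phi_{k,t})_*\mu^\infty_t$, whereas you run the diagonal argument directly on the original couplings; this is fine.

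However, your construction of the combined correspondence $\tilde\CF$ has a genuine flaw: the formula
$d^{\tilde Z}_t(z_1, z_2) := \inf_{i,\,x} [\,d^{Z^{(1)}}_t(z_1, \varphi^{(1),i}_t(x)) + d^{Z^{(2)}}_t(\varphi^{(2),i}_t(x), z_2)\,]$
does not define a pseudometric on $Z^{(1)}_t \sqcup Z^{(2)}_t$ that restricts to $d^{Z^{(1)}}_t$ and $d^{Z^{(2)}}_t$. Checking the triangle inequality for $z_1, z_3 \in Z^{(1)}_t$ and $z_2 \in Z^{(2)}_t$ requires
$d^{Z^{(1)}}_t\bigl(\varphi^{(1),i}_t(x), \varphi^{(1),j}_t(y)\bigr) \leq d^{Z^{(2)}}_t\bigl(\varphi^{(2),i}_t(x), \varphi^{(2),j}_t(y)\bigr)$
for arbitrary $i \neq j$ and $x \in \XX^i_t$, $y \in \XX^j_t$, which has no reason to hold since $\CF^{(1)}$ and $\CF^{(2)}$ embed the $\XX^i_t$'s independently. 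Lemma~\ref{Lem_combining_embeddings} only treats gluing along a \emph{single} hub space; gluing along all $\XX^i_t$ simultaneously is a stronger operation that can fail. (Alternatively, if one takes the honest quotient pseudometric under the identifications $\varphi^{(1),i}_t(x) \sim \varphi^{(2),i}_t(x)$, the resulting inclusions $Z^{(1)}_t, Z^{(2)}_t \to \tilde Z_t$ need not be isometric.) The fix is cheap: instead of one $\tilde\CF$, build a \emph{sequence} of correspondences $\tilde\CF^{(i)}$, each obtained by gluing $Z^{(1)}_t$ and $Z^{(2)}_t$ along the single space $\XX^i_t$ as in Lemma~\ref{Lem_combining_correspondences}. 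Since the proof of Theorem~\ref{Thm_IF_metric_space} already works with a \emph{sequence} of correspondences $\CF^j$ rather than one fixed correspondence, both your triangle-inequality step (which, in fact, already follows from the abstract triangle inequality for $d_\IF$ established in Theorem~\ref{Thm_IF_metric_space}, so the explicit correspondence is not even needed there) and your diagonal extraction go through unchanged with $\tilde\CF^{(i)}$ in place of $\tilde\CF$.
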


\begin{proof}
By Theorem~\ref{Thm_IF_metric_space} for any compact subinterval $I_0 \subset I^\infty$ there is a set $E_{I_0} \subset I_0$ of measure zero such that $I_0 \setminus E_{I_0} \subset I^{\prime, \infty} \cap I^{ \prime, *,\infty}$ and an almost everywhere isometry 
\[ \phi_{I_0} :  (\XX^\infty_{I_0 \setminus E_{I_0}}, (\mu^\infty_{t})_{t \in I_0 \setminus E_{I_0}}) \to (\XX^{*, \infty}_{I_0 \setminus E_{I_0}}, (\mu^{*, \infty}_t)_{t \in I_0 \setminus E_{I_0}}). \]
Consider an increasing sequence $I_1 \subset I_2 \subset \ldots \subset I^\infty$ of compact subintervals with $\bigcup_{k=1}^\infty I_k = I^\infty$ and let $E_k \subset I_k$, $\phi_k$ be the corresponding sets of measure zero and almost everywhere isometries.
Let $E := \bigcup_{k=1}^\infty E_k$ and for any $t \in I_k \setminus E$ let $q_{k,t} := (\id_{\XX^\infty_t}, \phi_{k,t})_* \mu^\infty_t$ be the coupling between $\mu^\infty_t, \mu^{*, \infty}_t$.
Now the proof of Theorem~\ref{Thm_IF_metric_space} carries over to our setting.
\end{proof}
\bigskip

\subsection{\texorpdfstring{$\IF$-Convergence implies $\IF$-convergence within a correspondence}{F-Convergence implies F-convergence within a correspondence}}
The following theorem states that given an $\IF$- convergent sequence of metric flow pairs, we can construct correspondence within which this sequence of metric flow pairs converges.

\begin{Theorem} \label{Thm_conv_to_conv_within}
Let $(\XX^i, (\mu^i_t)_{t \in I^{\prime, i}})$, $i \in \IN \cup \{ \infty \}$, be metric flow pairs over an interval $I \subset \IR$ that are fully defined over some $J \subset I$.
Suppose that
\begin{equation} \label{eq_condition_F_to_0}
 d_{\IF}^{J} \big( (\XX^i, (\mu^i_t)_{t \in I^{\prime,i}}), (\XX^\infty, (\mu^\infty_t)_{t \in I^{\prime,\infty}}) \big) \to 0. 
\end{equation}
Then there is a correspondence $\CF$ between the metric flows $\XX^i$, $i \in \IN \cup \{ \infty \}$,  over $I$ that is fully defined over $J$ such that
\begin{equation} \label{eq_F_conv_in_CF_conclusion}
 (\XX^i, (\mu^i_t)_{t \in I^{\prime,i}}) \xrightarrow[i \to \infty]{\quad \IF, \CF, J \quad}  (\XX^\infty, (\mu^\infty_t)_{t \in I^{\prime,\infty}}) . 
\end{equation}
Moreover, given an increasing sequence of subsets $J_1 \subset J_2 \subset \ldots \subset I$ with the property that (\ref{eq_condition_F_to_0}) continues to hold after replacing $J$ with $J_k$ for any $k \geq 1$, we can choose $\CF$ such that the $\IF$-convergence in (\ref{eq_F_conv_in_CF_conclusion}) is uniform over any $J_k$.
\end{Theorem}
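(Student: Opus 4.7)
The strategy is to glue all pairwise correspondences between $\XX^i$ and $\XX^\infty$ into a single correspondence, by identifying their copies of $\XX^\infty$ at each time slice.

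\medskip

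First, for each $i \in \IN$ I would choose a correspondence
\[
\CF^{i,\infty} = \bigl((Z^{i,\infty}_t, d^{Z^{i,\infty}}_t)_{t \in I^{\prime\prime,i,\infty}}, \varphi^{i,\infty,i}_t, \varphi^{i,\infty,\infty}_t\bigr)
\]
between $\XX^i$ and $\XX^\infty$ over $I$ that is fully defined over $J$ and satisfies
\[
d_\IF^{\,\CF^{i,\infty},J}\bigl((\XX^i,(\mu^i_t)),(\XX^\infty,(\mu^\infty_t))\bigr) \le d_\IF^J(\XX^i,\XX^\infty) + 2^{-i},
\]
so that the right-hand side tends to $0$. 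By Remark~\ref{Rmk_Ipp_extend} I may enlarge $\CF^{i,\infty}$ so that $I^{\prime\prime,i,\infty} = I$, $I^{\prime\prime,i,\infty,i} = I^{\prime,i}$, and $I^{\prime\prime,i,\infty,\infty} = I^{\prime,\infty}$.

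\medskip

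Next, for each $t \in I$ I would build the target space $(Z_t, d^Z_t)$ by gluing all those $Z^{i,\infty}_t$ (for which $t$ lies in the appropriate index set) along their common copy of $\XX^\infty_t$. Concretely, apply Lemma~\ref{Lem_combining_embeddings} with $N=\infty$, taking $X_i := \XX^\infty_t$ (identity map), $Z_{i,i+1} := Z^{i,\infty}_t$, and both $\varphi_{i,\pm}$ equal to $\varphi^{i,\infty,\infty}_t$. The lemma delivers a complete metric space $(Z_t,d^Z_t)$ together with isometric embeddings $\psi^i_t : Z^{i,\infty}_t \to Z_t$ such that all composites $\psi^i_t \circ \varphi^{i,\infty,\infty}_t$ coincide; call this common composite $\varphi^\infty_t$, and set $\varphi^i_t := \psi^i_t \circ \varphi^{i,\infty,i}_t$. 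These data assemble into a correspondence $\CF$ between the $\XX^i$, $i \in \IN \cup \{\infty\}$, fully defined over $J$.

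\medskip

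Then I would verify (\ref{eq_F_conv_in_CF_conclusion}). For each $i$, choose $E_i \subset I$ and couplings $(q^i_t)_{t \in I \setminus E_i}$ realizing $d_\IF^{\CF^{i,\infty}, J}(\XX^i,\XX^\infty) < \eps_i$ in the sense of Definition~\ref{Def_IF_dist_within_CF}. Since $\psi^i_t$ is an isometric embedding, $d^{Z_s}_{W_1}((\varphi^i_s)_*\nu^i_{x^i;s}, (\varphi^\infty_s)_*\nu^\infty_{x^\infty;s})$ equals the same quantity computed inside $Z^{i,\infty}_s$; hence the same $(E_i, q^i_t)$ witness $d_\IF^{\,\CF,J}(\XX^i,\XX^\infty) \le \eps_i \to 0$.

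\medskip

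For the \emph{moreover} clause, I would use a standard diagonal construction. For each $k$, fix $N_k$ (increasing in $k$) such that $d_\IF^{J_k}(\XX^i,\XX^\infty) < 2^{-k}$ whenever $i \geq N_k$, and set $k(i) := \max\{k : N_k \le i\}$, so that $k(i) \to \infty$. Then in the first step, instead choose $\CF^{i,\infty}$ fully defined over $J \cup J_{k(i)}$ with $d_\IF^{\CF^{i,\infty}, J \cup J_{k(i)}}(\XX^i, \XX^\infty) < 2^{-k(i)}$, which is possible by the hypothesis. The gluing proceeds verbatim; for any fixed $k$, once $i$ is large enough that $k(i) \ge k$, the same $(E_i, q^i_t)$ witness $d_\IF^{\,\CF,J_k}(\XX^i,\XX^\infty) < 2^{-k(i)} \to 0$, giving uniformity over $J_k$. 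There is no hard analytic step: the only real obstacle is keeping the index-set bookkeeping consistent across the infinite glued space, and this is exactly what Lemma~\ref{Lem_combining_embeddings} in its $N = \infty$ form handles cleanly.
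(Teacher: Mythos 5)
Your proposal is correct and follows essentially the same strategy as the paper: choose near-optimal pairwise correspondences $\CF^{i,\infty}$ and glue them along the common copy of $\XX^\infty$ (the paper does this by iterating Lemma~\ref{Lem_combining_correspondences} and taking a direct limit of the resulting $Z^{1\ldots m\infty}_t$; you instead invoke Lemma~\ref{Lem_combining_embeddings} with $N=\infty$ directly at each time slice, which is the same gluing packaged once rather than iteratively). The only imprecision is in the index bookkeeping when applying Lemma~\ref{Lem_combining_embeddings} (with $X_i := \XX^\infty_t$ the chain link $Z_{i,i+1}$ should correspond to a single $Z^{j,\infty}_t$ with $\varphi_{i,-}$ and $\varphi_{i+1,+}$ both given by the embedding $\varphi^{j,\infty,\infty}_t$, not "both $\varphi_{i,\pm}$" equal to the same $\varphi^{i,\infty,\infty}_t$), but the intent is clear and the verification — that since $\psi^i_s$ is an isometric embedding, the $W_1$-integrand is unchanged, so the same $(E_i,q^i_t)$ witness the bound — is exactly right.
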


We also have the corresponding statement for convergence on compact time-intervals.

\begin{Theorem} \label{Thm_conv_to_conv_within_cpt_ti}
Let $(\XX^i, (\mu^i_t)_{t \in I^{\prime, i}})$, $i \in \IN \cup \{ \infty \}$, be metric flow pairs over intervals $I^i \subset \IR$ and $J \subset I^\infty$ such that $(\XX^\infty, (\mu^\infty_t)_{t \in I^{\prime, \infty}})$ is fully defined over $J$.
Suppose that for any compact subinterval $I_0 \subset I^\infty$ and for large $i$ we have $I_0 \subset I^i$ and the metric flow pairs $(\XX^i, (\mu^i_t)_{t \in I^{\prime, i}})$ are fully defined over $J \cap I_0$ and their restrictions to $I_0$ satisfy
\[ d_{\IF}^{J \cap I_0} \big( (\XX^i |_{I_0}, (\mu^i_t)_{t \in I^{\prime,i} \cap I_0}), (\XX^\infty |_{I_0}, (\mu^\infty_t)_{t \in I^{\prime,\infty} \cap I_0}) \big) \to 0. \] 
Then there is a correspondence $\CF$ between the metric flows $\XX^i$, $i \in \IN \cup \{ \infty \}$, over $I^\infty$ such that
\begin{equation} \label{eq_F_conv_conclusion_cpt_interv}
 (\XX^i, (\mu^i_t)_{t \in I^{\prime,i}}) \xrightarrow[i \to \infty]{\quad \IF, \CF, J \quad}  (\XX^\infty, (\mu^\infty_t)_{t \in I^{\prime,\infty}}) . 
\end{equation}
on compact time-intervals.
Moreover, given an increasing sequence of subsets $J_1 \subset J_2 \subset \ldots \subset I^\infty$ with the property that the assumption continues to hold after replacing $J$ with $J_k$ for any $k \geq 1$, then we can choose $\CF$ such that the $\IF$-convergence in (\ref{eq_F_conv_in_CF_conclusion}) is uniform on compact time-intervals over any $J_k$.
\end{Theorem}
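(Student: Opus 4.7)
The plan is to reduce Theorem~\ref{Thm_conv_to_conv_within_cpt_ti} to Theorem~\ref{Thm_conv_to_conv_within} by an exhaustion and amalgamation argument. I begin by fixing an increasing exhaustion $I_1 \subset I_2 \subset \cdots \subset I^\infty$ of $I^\infty$ by compact subintervals with $\bigcup_m I_m = I^\infty$. For each $m$ the hypothesis provides a threshold $i_m$ such that, for $i \geq i_m$, the inclusion $I_m \subset I^i$ holds, the restrictions are fully defined over $J \cap I_m$, and
\[
d_{\IF}^{J \cap I_m}\bigl((\XX^i|_{I_m}, (\mu^i_t)_{t \in I^{\prime,i} \cap I_m}), (\XX^\infty|_{I_m}, (\mu^\infty_t)_{t \in I^{\prime,\infty} \cap I_m})\bigr) \longrightarrow 0,
\]
and analogously after replacing $J$ by any $J_k$. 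Applying Theorem~\ref{Thm_conv_to_conv_within} on $I_m$ to the flows indexed by $\{i_m, i_m+1, \ldots\} \cup \{\infty\}$, with the increasing family $\{J_k \cap I_m\}_{k \geq 1}$ playing the role of the $J_k$'s there, yields a correspondence $\CF_m$ over $I_m$ within which the restricted metric flow pairs $\IF$-converge uniformly over every $J_k \cap I_m$.

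Next I amalgamate the family $\{\CF_m\}$ into a single correspondence $\CF$ over $I^\infty$. For each $t \in I^\infty$, set $m(t) := \min\{m : t \in I_m\}$, and for every $m \geq m(t)$ denote by $(Z^m_t, d^{Z^m}_t)$ and $\varphi^{m,i}_t : \XX^i_t \to Z^m_t$ the metric space and isometric embeddings (the latter defined whenever $i \geq i_m$) furnished by $\CF_m$. I glue all the $Z^m_t$'s along their shared embedded copies of each $\XX^i_t$: on $\bigsqcup_{m \geq m(t)} Z^m_t$ I impose the smallest equivalence relation identifying $\varphi^{m, i}_t(x)$ with $\varphi^{m', i}_t(x)$ whenever $x \in \XX^i_t$ and both sides are defined, equip the quotient with the natural pseudo-metric, and complete. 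This produces a complete separable metric space $(Z_t, d^Z_t)$ with isometric embeddings $\iota^m_t : Z^m_t \to Z_t$ and well-defined $\varphi^i_t := \iota^m_t \circ \varphi^{m,i}_t$, independent of the choice of admissible $m$. Concretely the pseudo-metric arises from iterated applications of Lemma~\ref{Lem_combining_embeddings}, identifying one flow at a time. The tuple $\CF := ((Z_t, d^Z_t)_{t \in I^\infty}, (\varphi^i_t)_{t \in I^{\prime, i}, i \in \IN \cup \{\infty\}})$ is then a correspondence between all the $\XX^i$ over $I^\infty$.

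To verify the convergence, fix a compact $I_0 \subset I^\infty$ and $k \geq 1$, and choose $M$ with $I_0 \subset I_M$. For any $r$ strictly larger than $d_{\IF}^{\CF_M, J_k \cap I_M}\bigl((\XX^i|_{I_M}, (\mu^i_t)_{t \in I^{\prime,i} \cap I_M}), (\XX^\infty|_{I_M}, (\mu^\infty_t)_{t \in I^{\prime,\infty} \cap I_M})\bigr)$, pick an admissible pair $(E, (q_t)_{t \in I_M \setminus E})$ realizing this bound in the sense of Definition~\ref{Def_IF_dist_within_CF}. Then $(E \cap I_0, (q_t)_{t \in I_0 \setminus (E \cap I_0)})$ is admissible for $\CF|_{I_0}$ with the same $r$: its exceptional set has measure $\leq |E| \leq r^2$, and because each $\iota^M_s$ is an isometric embedding, the integrand in Property~\ref{Def_IF_dist_within_CF_3} of Definition~\ref{Def_IF_dist_within_CF} takes identical values in $Z_s$ and in $Z^M_s$. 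Hence
\[
d_{\IF}^{\CF|_{I_0}, J_k \cap I_0}\bigl((\XX^i|_{I_0}, \ldots), (\XX^\infty|_{I_0}, \ldots)\bigr) \leq d_{\IF}^{\CF_M, J_k \cap I_M}\bigl((\XX^i|_{I_M}, \ldots), (\XX^\infty|_{I_M}, \ldots)\bigr) \xrightarrow[i \to \infty]{} 0,
\]
which is precisely the desired convergence on compact time-intervals uniform over each $J_k$.

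The main obstacle is the amalgamation in the second paragraph: at each $t$ one has to glue the countable family of independently constructed metric spaces $Z^m_t$ while preserving the \emph{shared} embeddings of each flow $\XX^i_t$ coming from the different $\CF_m$'s. This is handled pointwise in $t$ by iterating Lemma~\ref{Lem_combining_embeddings}, and the isometric nature of the maps $\iota^m_t$ is precisely what permits the convergence already encoded in each $\CF_M$ to be transported into the amalgamated $\CF$ in the third paragraph.
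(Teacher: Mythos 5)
Your overall strategy --- exhaust $I^\infty$ by compact subintervals $I_m$, invoke Theorem~\ref{Thm_conv_to_conv_within} on each, and amalgamate --- parallels the paper's proof, which uses a diagonal choice of intervals $I_0^i$ and indices $k_i$ (both varying with the sequence index $i$) and then reuses the combination argument from the proof of Theorem~\ref{Thm_conv_to_conv_within}. The gap is in your amalgamation step. You form the quotient of $\bigsqcup_{m \geq m(t)} Z^m_t$ identifying \emph{all} shared embedded copies of the flows $\XX^i_t$ simultaneously and then assert that the inclusions $\iota^m_t : Z^m_t \to Z_t$ remain isometric. This fails in general: gluing along several shared subspaces at once creates shortcuts. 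For instance, take $\XX^1_t = \{a\}$ a single point and $\XX^2_t = \{b,c\}$ with $d^2_t(b,c) = 10$, and suppose $Z^1_t$ realizes $d(a,b) = 1$, $d(a,c) = 10$, while $Z^2_t$ realizes $d(a,b) = 10$, $d(a,c) = 1$ (all triangle inequalities hold in each $Z^m_t$). Identifying both flows at once forces in the quotient pseudo-metric $d(b,c) \leq d_{Z^1_t}(b,a) + d_{Z^2_t}(a,c) = 2 < 10$, so the image of $\XX^2_t$ has shrunken internal metric and $\varphi^2_t$ is not an isometric embedding; $\CF$ is then not a valid correspondence, and the step where you equate the $W_1$-integrands in $Z_s$ and in $Z^M_s$ has no basis. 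You invoke Lemma~\ref{Lem_combining_embeddings}, but that lemma identifies exactly one shared factor per gluing step; it produces a different, weaker identification (leaving multiple unidentified copies of each $\XX^i_t$ in the glued space), and your subsequent claim that $\varphi^i_t$ is ``independent of the choice of admissible $m$'' is precisely what that lemma does \emph{not} give you. The two descriptions you give of the amalgamation --- simultaneous quotient and one-flow-at-a-time iteration --- are therefore internally inconsistent, and the one that yields a well-defined $\varphi^i_t$ is the one that breaks the isometric-embedding requirement.

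The paper avoids this entirely by never gluing along more than a single flow. From the hypothesis it extracts \emph{two-flow} correspondences $\CF^{i\infty}$ between $\XX^i$ and $\XX^\infty$ over diagonal subintervals $I_0^i$ (uniform over diagonal subsets $J_{k_i}$), and then chains them via Lemma~\ref{Lem_combining_correspondences}, gluing each time along the single common factor $\XX^\infty$; this preserves all the isometric embeddings and the value of the $d_{\IF}$-estimates being passed through. Your approach can be repaired by restricting each $\CF_m$ to the index pair $\{i,\infty\}$ for a suitable choice of $m = m(i)$, which yields exactly such two-flow correspondences over expanding intervals, and then proceeding as in the paper's proof of Theorem~\ref{Thm_conv_to_conv_within}; but as written the amalgamation does not produce the isometric embeddings your verification step relies on.
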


\begin{proof}[Proof of Theorem~\ref{Thm_conv_to_conv_within}]
We will prove the last statement, for the first statement set $J_k := J$.
Due to (\ref{eq_condition_F_to_0}) we can find $k_1 \leq k_2 \leq \ldots$ with $k_i \to \infty$ and correspondences $\CF^{i\infty}$ between $\XX^i, \XX^\infty$ over $I$ that are fully defined over $J_{k_i}$ for large $i$ such that
\[  d_{\IF}^{\CF^{i\infty}, J_{k_i}} \big( (\XX^i, (\mu^i_t)_{t \in I^{\prime,i}}), (\XX^\infty, (\mu^\infty_t)_{t \in I^{\prime,\infty}}) \big) \to 0.  \]
By an iterative application of Lemma~\ref{Lem_combining_correspondences}, we can construct sequences of correspondences $\CC^{1 \ldots m \infty}$ between $\XX^1, \ldots, \XX^m, \XX^\infty$ such that for large $i$ and $m \geq i$
\[ d_{\IF}^{\CF^{1 \ldots m \infty}, J_{k_i}} \big( (\XX^i, (\mu^i_t)_{t \in I^{\prime,i}}), (\XX^\infty, (\mu^\infty_t)_{t \in I^{\prime,\infty}}) \big) = d_{\IF}^{\CF^{i\infty}, J_{k_i}} \big( (\XX^i, (\mu^i_t)_{t \in I^{\prime,i}}), (\XX^\infty, (\mu^\infty_t)_{t \in I^{\prime,\infty}}) \big). \]
Using a direct limit construction on the sequence of metric spaces $(Z^{1\ldots m \infty}_t, d^{Z^{1\ldots m \infty}}_t)$, we find a correspondence $\CF$ between $\XX^1, \ldots, \XX^\infty$ such that for large $i \in \IN$
\[ d_{\IF}^{\CF, J_{k_i}} \big( (\XX^i, (\mu^i_t)_{t \in I^{\prime,i}}), (\XX^\infty, (\mu^\infty_t)_{t \in I^{\prime,\infty}}) \big) = d_{\IF}^{\CF^{i\infty}, J_{k_i}} \big( (\XX^i, (\mu^i_t)_{t \in I^{\prime,i}}), (\XX^\infty, (\mu^\infty_t)_{t \in I^{\prime,\infty}}) \big). \]
By Remark~\ref{Rmk_Ipp_extend}, we can extend $\CF$ to a correspondence over $I$.
\end{proof}

\begin{proof}[Proof of Theorem~\ref{Thm_conv_to_conv_within_cpt_ti}.]
We may choose an increasing sequence of (possibly empty) subintervals $I_{0}^i \subset I^\infty$ and $k_1 \leq k_2 \leq \ldots$ with $k_i \to \infty$ such that $\bigcup_{i=1}^\infty I_{0}^i = I^\infty$ and such that for large $i$ we have $I_{0}^i \subset I^i$, the metric flow pairs $(\XX^i, (\mu^i_t)_{t \in I^{\prime, i}})$ are fully defined over $J_{k_i} \cap I_{0}^i$ and their restrictions to $I_{0}^i$ satisfy
\[ d_{\IF}^{J_{k_i} \cap I_{0}^i} \big( (\XX^i_{I_{0}^i}, (\mu^i_t)_{t \in I^{\prime,i} \cap I_{0}^i}), (\XX^\infty_{I_{0}^i}, (\mu^\infty_t)_{t \in I^{\prime,\infty} \cap I_{0}^i}) \big) \to 0. \]  
We can now carry out the same construction as in the previous proof.
\end{proof}
\bigskip

\subsection{Convergence of conjugate heat flows within a correspondence}

Next, we define convergence of conjugate heat flows within a correspondence.
In the following, let $\XX^i$ be metric flows over subsets $I^{\prime, i} \subset \IR$, $i \in \IN \cup \{ \infty \}$, and consider a correspondence $\CF$ as in (\ref{eq_CF_over_N_infty}) between $\XX^i$ over $I''$ and a, possibly empty, subset $J \subset \IR$.
Let $(\mu^i_t)_{t \in I_*^i}$, $i \in \IN \cup \{ \infty \}$, be conjugate heat flows on $\XX^i$, where $I_*^i = I^{\prime, i} \cap (-\infty, T_i)$ or $I^{\prime, i} \cap (-\infty, T_i]$ for some $T_i \in (-\infty, \infty]$.

\begin{Definition} \label{Def_CHF_convergence_within_CF}
We say that the conjugate heat flows $(\mu^i_t)_{t \in I_*^i}$ {\bf converge to $(\mu^\infty_t)_{t \in I_*^\infty}$ within $\CF$ and that the convergence is uniform over $J$} and we write
\begin{equation} \label{eq_IF_CHF_convergence_within_arrow}
  (\mu^i_t)_{t \in I_*^i} \xrightarrow[i \to \infty]{\quad  \CF, J \quad}   (\mu^\infty_t)_{t \in I^\infty_*}  
\end{equation}
if $J \subset I_*^i$ for large $i \leq \infty$ and there are measurable subsets $E_i \subset I''$, $i \in \IN$, such that:
\begin{enumerate}
\item $J \cap I^\infty_* \subset (I^i_*  \cap I'') \setminus E_i = (I^\infty_*  \cap I'') \setminus E_i \subset I^{\prime\prime,i} \cap I^{\prime\prime,\infty}$ for large $i$.
\item $|E_i| \to 0$.
\item $\sup_{t \in (I^\infty_*  \cap I'') \setminus E_i} d_{W_1}^{Z_t} ( (\varphi^i_t)_* \mu^i_t, (\varphi^\infty_t)_* \mu^\infty_t)  \to 0$.
\end{enumerate}
We say that (\ref{eq_IF_CHF_convergence_within_arrow}) {\bf holds on compact time-intervals and is uniform over $J$} if for any compact subinterval $I_0 \subset I^{\infty}_*$ we have (\ref{eq_IF_CHF_convergence_within_arrow}) after replacing $\CF, J$ with $\CF |_{I'' \cap I_0}, J \cap I_0$.
We say that the convergence  (\ref{eq_IF_CHF_convergence_within_arrow}) is {\bf time-wise at time $t \in I''$} if (\ref{eq_IF_CHF_convergence_within_arrow}) holds after replacing $J$ with $J \cup \{ t \}$, that is if
\[ (\varphi^i_t)_* \mu^i_t \xrightarrow[i \to \infty]{\quad W_1 \quad} (\varphi^\infty_t)_* \mu^\infty_t. \]
\end{Definition}

\begin{Remark}
Note that (\ref{eq_IF_CHF_convergence_within_arrow}) implies $T_i \to T_\infty$.
\end{Remark}

The following lemma is a direct consequence of Definition~\ref{Def_IF_dist_within_CF}, see also Remark~\ref{Rmk_F_dist_GW1_dist}.

\begin{Lemma} \label{Lem_met_flow_conv_mu_conv}
Let $(\XX^i, (\mu^i_t)_{t \in I^{\prime, i}})$, $i \in \IN \cup \{ \infty \}$, be metric flow pairs over intervals $I^i \subset \IR$ and consider a correspondence between the metric flows $\XX^i$, $i \in \IN \cup \{ \infty \}$.
If for some $J \subset \IR$ the following convergence holds (on compact time-intervals)
\[ (\XX^i, (\mu^i_t)_{t \in I^{\prime,i}}) \xrightarrow[i \to \infty]{\quad \IF, \CF, J \quad}  (\XX^\infty, (\mu^\infty_t)_{t \in I^{\prime,\infty}}), \]
then the following convergence holds (on compact time-intervals)
\[ (\mu^i_t)_{t \in I^{\prime,i}} \xrightarrow[i \to \infty]{\quad  \CF, J \quad}   (\mu^\infty_t)_{t \in I^{\prime,\infty}} . \]
\end{Lemma}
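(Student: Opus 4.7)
The proof is essentially bookkeeping: one unpacks Definition~\ref{Def_IF_dist_within_CF} and matches its output against Definition~\ref{Def_CHF_convergence_within_CF}, using Remark~\ref{Rmk_F_dist_GW1_dist} as the key observation. The plan is to first handle the ``on all of $I$'' case, then reduce the compact-time-intervals statement to it.

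For the first case, assume uniform $\IF$-convergence within $\CF$ over $J$. By Definition~\ref{Def_IF_conv_within_CF} one has $d_{\IF}^{\,\CF,J}\bigl((\XX^i,(\mu^i_t)_{t\in I^{\prime,i}}),(\XX^\infty,(\mu^\infty_t)_{t\in I^{\prime,\infty}})\bigr) \to 0$. Unfolding Definition~\ref{Def_IF_dist_within_CF}, I would pick a sequence $r_i \to 0$ together with measurable $E_i \subset I''$ satisfying $J \subset I''\setminus E_i \subset I^{\prime\prime,i}\cap I^{\prime\prime,\infty}$, $|E_i|\le r_i^2$, and couplings $q^i_t$ between $\mu^i_t$ and $\mu^\infty_t$ for $t\in I''\setminus E_i$ realizing Property~\ref{Def_IF_dist_within_CF_3} of Definition~\ref{Def_IF_dist_within_CF} with bound $r_i$. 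Specializing that property to $s=t$ (this is exactly Remark~\ref{Rmk_F_dist_GW1_dist}) yields
\[ d_{W_1}^{Z_t}\bigl((\varphi^i_t)_*\mu^i_t,(\varphi^\infty_t)_*\mu^\infty_t\bigr) \le \int_{\XX^i_t\times\XX^\infty_t} d^Z_t(\varphi^i_t(x^i),\varphi^\infty_t(x^\infty))\,dq^i_t \le r_i \]
for every $t \in I''\setminus E_i$, so the third condition of Definition~\ref{Def_CHF_convergence_within_CF} is immediate with $r_i \to 0$.

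The only mild technicality, which I expect to be the main (but trivial) obstacle, is Definition~\ref{Def_CHF_convergence_within_CF}(1), which demands the \emph{equality} $(I^i_*\cap I'')\setminus E_i = (I^\infty_*\cap I'')\setminus E_i$ where $I^i_* = I^{\prime,i}$ and $I^\infty_* = I^{\prime,\infty}$. To handle this I would enlarge $E_i$ by the measure-zero set $(I\setminus I^{\prime,i})\cup(I\setminus I^{\prime,\infty})$ (recall $|I\setminus I^{\prime,i}| = 0$ by the definition of a metric flow pair); this keeps $|E_i| \to 0$, preserves the inclusion $J \subset I''\setminus E_i \subset I^{\prime\prime,i}\cap I^{\prime\prime,\infty}$ (since $J \subset I^{\prime,i}\cap I^{\prime,\infty}$ by ``fully defined''), and forces the required equality. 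The three bullets of Definition~\ref{Def_CHF_convergence_within_CF} are then all verified, proving convergence of the conjugate heat flows within $\CF$ uniformly over $J$.

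For the compact-time-intervals version, I simply apply the previous paragraph to each restriction: by hypothesis, for every compact $I_0 \subset I^\infty$ the metric flow pairs $(\XX^i_{I_0},(\mu^i_t)_{t\in I^{\prime,i}\cap I_0})$ $\IF$-converge within $\CF|_{I''\cap I_0}$ uniformly over $J\cap I_0$, so the previous argument gives convergence $(\mu^i_t)_{t\in I^{\prime,i}\cap I_0} \to (\mu^\infty_t)_{t\in I^{\prime,\infty}\cap I_0}$ within $\CF|_{I''\cap I_0}$ uniformly over $J\cap I_0$. This is exactly the definition of convergence of the conjugate heat flows on compact time-intervals, completing the proof. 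No deeper input beyond Remark~\ref{Rmk_F_dist_GW1_dist} and the measure-zero bookkeeping is needed.
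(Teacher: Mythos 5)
Your proof is correct and follows exactly the argument the paper intends (the paper simply says the lemma is ``a direct consequence of Definition~\ref{Def_IF_dist_within_CF}, see also Remark~\ref{Rmk_F_dist_GW1_dist}''). One minor remark: the enlargement of $E_i$ is actually superfluous, since the chain of inclusions in Definition~\ref{Def_IF_dist_within_CF} together with the requirement $I^{\prime\prime,i}\subset I^{\prime,i}\cap I''$ in the definition of a correspondence already forces $(I^{\prime,i}\cap I'')\setminus E_i = I''\setminus E_i = (I^{\prime,\infty}\cap I'')\setminus E_i$, so the equality demanded by Definition~\ref{Def_CHF_convergence_within_CF}(1) holds without modification.
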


\subsection{Convergence of points and probability measures within a correspondence}
Next, characterize convergence of points and probability measures within a correspondence.
We will see that there are two different approaches: We may simply characterize this convergence as convergence within the metric spaces $(Z_t, d^Z_t)$ of the correspondence.
Alternatively, we may equate convergence of points or probability measures with convergence of the corresponding conjugate heat kernels or conjugate heat flows.
The latter approach, while weaker and less intuitive, will be more useful in the sequel, as it does not require the entire sequence to live in time-slices corresponding to a fixed time.

Let again $\XX^i$ be metric flows over subsets $I^{\prime, i} \subset \IR$, $i \in \IN \cup \{ \infty \}$, and consider a correspondence $\CF$ as in (\ref{eq_CF_over_N_infty}) between $\XX^i$ over $I''$ and a, possibly empty, subset $J \subset \IR$.

We first define the more useful convergence notion:

\begin{Definition} \label{Def_convergence_PP_measure}
Let $T_i \in I^{\prime, i}$ and consider a sequence of probability measures $\mu^i \in \mathcal{P}(\XX_{T_i}^i)$, $i \in \IN \cup \{ \infty \}$.
We say that $\mu^i$ {\bf converge to $\mu^\infty$ within $\CF$ (and uniform over $J$)}, and write
\begin{equation} \label{eq_mui_conv_CF}
 \mu^i \xrightarrow[i \to \infty]{\quad  \CF, J \quad} \mu^\infty,
\end{equation}
if $T_i \to T_\infty$ and if for the conjugate heat flows $(\td\mu^i_t)_{t \in I^{\prime, i} \cap (-\infty, T_i]}$, $i \in \IN \cup \{ \infty \}$, with initial condition $\td\mu^i_{T_i} = \mu^i$ we have the following convergence on compact time-intervals
\[ (\td\mu^i_t)_{t \in I^{\prime, i} \cap (-\infty, T_i)}  \xrightarrow[i \to \infty]{\quad  \CF, J \quad}   (\td\mu^\infty_t)_{t \in I^{\prime, \infty} \cap (-\infty, T_\infty)} . \]
For any sequence of points $x^i \in \XX^i_{T_i}$, $i \in \IN \cup \{ \infty \}$, we say that $x^i$ {\bf converge to $x^\infty$ within $\CF$ (and uniform over $J$)} and write
\begin{equation} \label{eq_xi_conv_CF}
x^i \xrightarrow[i \to \infty]{\quad  \CF, J \quad} x^\infty
\end{equation}
 if
$\delta_{x^i} \xrightarrow[i \to \infty]{\quad  \CF, J \quad} \delta_{x^\infty}$.
This is equivalent to $T_i \to T_\infty$ and the following convergence on compact time-intervals
\[ (\nu^i_{x^i;t})_{t \in I^{\prime, i} \cap (-\infty, T_i)}  \xrightarrow[i \to \infty]{\quad  \CF, J \quad}   (\nu^\infty_{x^\infty; t})_{t \in I^{\prime, \infty} \cap (-\infty, T_\infty)} . \]
\end{Definition}

\begin{Remark}
In general, the limits in (\ref{eq_mui_conv_CF}), (\ref{eq_xi_conv_CF}) may not be unique.
This is the case if the conjugate heat flows or the conjugate heat kernels of the limiting probability measure or points agree at all times except for the final time; see also the example discussed in Remark~\ref{Rmk_reverse_strict_conv} below.
Moreover, if $T_\infty = \inf I^{\prime,i}$, then (\ref{eq_mui_conv_CF}), (\ref{eq_xi_conv_CF}) are vacuous.
\end{Remark}

Next, we define the more restrictive notion:

\begin{Definition} \label{Def_convergence_CHF_within_CF_strict}
Fix some $T \in I''$ and consider a sequence of probability measures $\mu^i \in \PP (\XX^i_T)$, $i \in \IN \cup \{ \infty \}$.
We say that $\mu^i$ {\bf strictly converge to $\mu^\infty$ within $\CF$} if
\[ (\varphi^i_T)_* \mu^i \xrightarrow[i \to \infty]{\quad W_1 \quad} (\varphi^\infty_T)_* \mu^\infty. \]
For any sequence of points $x^i \in\XX^i_T$, $i \in \IN \cup \{ \infty \}$, we say that $x^i$ {\bf strictly converge to $x^\infty$ within $\CF$} if $\delta_{x^i}$ strictly converge to $\delta_{x^\infty}$ within $\CF$ or, equivalently, if
\[ \varphi^i_T(x^i) \xrightarrow[i \to \infty]{\quad  \quad} \varphi^\infty_T(x^\infty). \]
\end{Definition}

We emphasize that Definitions~\ref{Def_convergence_PP_measure}, \ref{Def_convergence_CHF_within_CF_strict} describe two different notions of characterizing the convergence of measures or points within a correspondence.
The notion strict convergence is usually stronger, as we will soon see, but it only works if all points $x^i$ live in the same time-slices $\XX^i_T$ for some uniform time $T$.
In addition, if the metric flows $\XX^i$ belong to metric flow pairs that $\IF$-converges within $\CF$ then strict convergence only useful if this $\IF$-convergence is time-wise at time $t$.

The next theorem states that strict convergence (in the sense of Definition~\ref{Def_convergence_CHF_within_CF_strict}) implies convergence (in the sense of Definition~\ref{Def_convergence_PP_measure}) if the metric flows $\XX^i$ belong to a convergent sequence of metric flow pairs.

\begin{Theorem} \label{Thm_tdmu_convergence}
Let $(\XX^i, (\mu^i_t)_{t \in I^{\prime, i}})$, $i \in \IN \cup \{ \infty \}$, be metric flow pairs over intervals $I^i \subset \IR$ and consider a correspondence $\CF$  between the metric flows $\XX^i$, $i \in \IN \cup \{ \infty \}$.
Let $J \subset \IR$ be some subset and $T \in J$.
Suppose that we have the following convergence (possibly on compact time-intervals)
\[ (\XX^i, (\mu^i_t)_{t \in I^{\prime,i}}) \xrightarrow[i \to \infty]{\quad \IF, \CF, J \quad}  (\XX^\infty, (\mu^\infty_t)_{t \in I^{\prime,\infty}}), \]
\begin{enumerate}[label=(\alph*)]
\item \label{Thm_tdmu_convergence_a} Consider probability measures $\td\mu^i \in \PP(\XX^i_T)$, $i \in \IN \cup \{ \infty \}$, such that $\td\mu^i$ strictly converges to $\td\mu^\infty$ and assume that $\td\mu^\infty \in \PP^1 (\XX^\infty_T)$  (i.e. its $W_1$-Wasserstein distance to point masses is finite).
Then (possibly on compact time-intervals)
\begin{equation*}  \td\mu^i \xrightarrow[i \to \infty]{\quad  \CF, J \quad} \td\mu^\infty.
\end{equation*}
Moreover, we have the following stronger result: If we consider the conjugate heat flows $(\td\mu^i_t)_{t \in I^{\prime, i}_T }$ on $\XX^i$ with $I^{\prime, i}_T := I^{\prime, i} \cap (-\infty, T]$ and initial condition $\td\mu^i_{T} = \td\mu^i$, $i \in \IN \cup \{ \infty \}$, then
\[ (\td\mu^i_t)_{t \in I^{\prime, i}_T}  \xrightarrow[i \to \infty]{\quad  \CF, J \quad}   (\td\mu^\infty_t)_{t \in I^{\prime, \infty}_T} . \]
\item \label{Thm_tdmu_convergence_b} Consider points $x^i \in \XX^i_T$, $i \in \IN \cup \{ \infty \}$, such that $x^i$ strictly converge to $x^\infty$.
Then
\[ x^i \xrightarrow[i \to \infty]{\quad  \CF, J \quad} x^\infty \]
and moreover for $I^{\prime, i}_T := I^{\prime, i} \cap (-\infty, T]$
\[ (\nu^i_{x^i;t})_{t \in I^{\prime, i}_T}  \xrightarrow[i \to \infty]{\quad  \CF, J \quad}   (\nu^\infty_{x^\infty;t})_{t \in I^{\prime, \infty}_T} . \]
\end{enumerate}
\end{Theorem}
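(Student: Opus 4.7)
I will prove part (b) first, which is the point-mass case, and then derive part (a) from it by density. By Lemma~\ref{Lem_pass_to_timewise}, after passing to a subsequence I may fix nested exceptional sets $E_1 \supset E_2 \supset \cdots$ with $|E_i| \to 0$ and a sequence $\eps_i \to 0$ such that, after possibly shrinking, $J \subset I'' \setminus E_i$, $(I''\setminus E_i) \subset I^{\prime\prime,i} \cap I^{\prime\prime,\infty}$, and there are couplings $q^i_t$ of $\mu^i_t, \mu^\infty_t$ for $t \in I''\setminus E_i$ realising $d^{\,\CF, I''\setminus E_i}_{\IF} \le \eps_i$. Since $T \in J$, we have $T \notin E_i$ for all $i$.

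For part~(b), fix $r > 0$. Because $x^\infty \in \XX^\infty_T = \supp\mu^\infty_T$, we have $2\delta := \mu^\infty_T(B(x^\infty, r/2)) > 0$. The $\IF$-convergence is time-wise at $T$ (as $T \in J$), so $(\varphi^i_T)_*\mu^i_T \to (\varphi^\infty_T)_*\mu^\infty_T$ in $W_1$, hence weakly; combined with $\varphi^i_T(x^i) \to \varphi^\infty_T(x^\infty)$ (Definition~\ref{Def_convergence_CHF_within_CF_strict}), this gives $\mu^i_T(B(x^i, r)) \ge \delta$ for all large $i$ and $d^Z_T(\varphi^i_T(x^i), \varphi^\infty_T(x^\infty)) \le r$. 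Applying Lemma~\ref{Lem_XX12_close_HK_close} within the subcorrespondence $\CF$ restricted to the index pair $(i,\infty)$, with the enlarged "uniformity" set $J \cup \{T\} = J$, one concludes
\[ d_{W_1}^{Z_s}\big((\varphi^i_s)_*\nu^i_{x^i;s}, (\varphi^\infty_s)_*\nu^\infty_{x^\infty;s}\big) \le 7r \]
for all $s \in (I''\setminus E_i)\cap(-\infty,T]$ as soon as $\eps_i \le \delta r$; note that the lemma's hypothesis only needs $t \in J$ together with the integral bound from Definition~\ref{Def_IF_dist_within_CF}\ref{Def_IF_dist_within_CF_3}, which holds for all $s \in I''\setminus E_i$ with $s \le T$. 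Letting $r \searrow 0$ along large $i$ yields uniform convergence of the conjugate heat kernels over $(I^\infty_* \cap I'')\setminus E_i$ intersected with $(-\infty,T]$, which establishes part~(b), including the stronger statement about convergence of the whole conjugate heat flow.

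For part~(a), use Lemma~\ref{Lem_W_p_separable} and the hypothesis $\td\mu^\infty \in \PP^1(\XX^\infty_T)$ to approximate, for each $\eta > 0$,
\[ \sigma^\infty := \sum_{k=1}^N a_k \delta_{x^\infty_k},\qquad x^\infty_k \in \XX^\infty_T,\quad d^{\XX^\infty_T}_{W_1}(\td\mu^\infty, \sigma^\infty) < \eta. \]
By the argument that opens part~(b), each $x^\infty_k$ is a limit of some $x^i_k \in \XX^i_T$ with $\varphi^i_T(x^i_k) \to \varphi^\infty_T(x^\infty_k)$. Set $\sigma^i := \sum_k a_k \delta_{x^i_k}$. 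Since $(\varphi^i_T)_*\td\mu^i \to (\varphi^\infty_T)_*\td\mu^\infty$ in $W_1$ (strict convergence), and since $\varphi^i_T$ is an isometric embedding so that $W_1$-couplings on $Z_T$ with marginals supported in $\varphi^i_T(\XX^i_T)$ lift to $W_1$-couplings on $\XX^i_T$, we obtain $d^{\XX^i_T}_{W_1}(\td\mu^i, \sigma^i) < 2\eta$ for large $i$. The triangle inequality then yields
\begin{align*}
 d^{Z_s}_{W_1}\big((\varphi^i_s)_*\td\mu^i_s,\, (\varphi^\infty_s)_*\td\mu^\infty_s\big)
 &\le d^{\XX^i_s}_{W_1}(\td\mu^i_s,\sigma^i_s) + \sum_{k=1}^N a_k \, d^{Z_s}_{W_1}\big((\varphi^i_s)_*\nu^i_{x^i_k;s}, (\varphi^\infty_s)_*\nu^\infty_{x^\infty_k;s}\big) \\
 &\quad + d^{\XX^\infty_s}_{W_1}(\sigma^\infty_s,\td\mu^\infty_s),
\end{align*}
where $\sigma^i_s, \sigma^\infty_s$ denote the conjugate heat flows out of $\sigma^i, \sigma^\infty$. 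By Proposition~\ref{Prop_compare_CHF}\ref{Prop_compare_CHF_b}, the outer two terms are non-increasing in the backwards direction and hence bounded by $2\eta$ and $\eta$ uniformly in $s \le T$; the middle sum tends to $0$ uniformly over $(I^\infty_*\cap I'')\setminus E_i$ (or over compact subintervals thereof) by part~(b). Taking first $i \to \infty$ and then $\eta \to 0$ finishes the proof.

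\textbf{Main obstacle.} The chief subtlety is confirming that Lemma~\ref{Lem_XX12_close_HK_close} delivers a bound on $d_{W_1}^{Z_s}$ that is uniform over all $s \in (I''\setminus E_i)\cap(-\infty, T]$, not only over $s \in J$: this comes from the fact that the couplings $q^i_T$ from the $\IF$-convergence exist as soon as $T\in J$ and the integral bound in Definition~\ref{Def_IF_dist_within_CF}\ref{Def_IF_dist_within_CF_3} is enjoyed by all $s \in I''\setminus E_i$ with $s\le T$. Managing the interaction between the exceptional sets $E_i$ and the "compact time-interval" qualifier—so that the $\eps$-$\delta$ chases yield the exact mode of convergence asserted by Definition~\ref{Def_CHF_convergence_within_CF}—is the other bookkeeping point to keep track of, but presents no conceptual difficulty once part~(b) is in hand.
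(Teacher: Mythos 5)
Your proof is correct but takes a genuinely different route from the paper. The paper proves part~(a) first, via Lemma~\ref{Lem_mu_T_conv_mf_conv}: approximate $\td\mu^\infty$ by a measure $\td\mu^{\infty,\eps}$ satisfying $\td\mu^{\infty,\eps}\leq A_\eps\,\mu^\infty_T$, then apply Lemma~\ref{eq_F_dist_A_factor} to transfer the $\IF$-distance bound of the base flow pairs to the new ones, giving actual $\IF$-convergence of the pairs $(\XX^i,(\td\mu^i_t))$, from which convergence of the conjugate heat flows follows by Lemma~\ref{Lem_met_flow_conv_mu_conv}; part~(b) is then the special case $\td\mu^i=\delta_{x^i}$. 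You instead prove~(b) first, directly from the concentration estimate Lemma~\ref{Lem_XX12_close_HK_close}, and then deduce~(a) from~(b) by point-mass approximation combined with the $W_1$-monotonicity of conjugate heat flows (Proposition~\ref{Prop_compare_CHF}\ref{Prop_compare_CHF_b}). This inverts the logical dependency and substitutes the density-with-point-masses scheme for the paper's density-with-dominated-measures scheme. What the paper's route buys is that Lemma~\ref{Lem_mu_T_conv_mf_conv} establishes the stronger statement of $\IF$-convergence of the replaced flow \emph{pairs}, which is reused elsewhere (e.g.\ in Theorem~\ref{Thm_change_basepoints}). Your route buys a self-contained argument from a single lemma, avoiding Lemma~\ref{eq_F_dist_A_factor} entirely.

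Two points deserve tightening. First, you invoke Lemma~\ref{Lem_XX12_close_HK_close} for $s$ ranging over all of $(I''\setminus E_i)\cap(-\infty,T]$ rather than over $s\in J$; that is legitimate because the lemma's proof needs only the coupling $q_T$ (available since $T\in J\subset I''\setminus E_i$) together with the integral bound in Definition~\ref{Def_IF_dist_within_CF}\ref{Def_IF_dist_within_CF_3} at the pairs $(s,T)$ and $(T,T)$, both of which hold for any $s\in I''\setminus E_i$ once the specific $E_i,(q^i_t)$ realising the distance have been fixed — but you should say explicitly that you are applying the lemma's \emph{proof} with a pre-chosen $E$, not its statement verbatim, since the statement requires $s,t\in J$. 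Second, your phrase ``by the argument that opens part~(b), each $x^\infty_k$ is a limit of some $x^i_k$'' mislabels the source: part~(b)'s opening \emph{assumes} a strictly convergent sequence is given and estimates ball masses, whereas here you must \emph{construct} the approximating points. The actual construction is a short separate argument: time-wise convergence at $T$ gives $(\varphi^i_T)_*\mu^i_T\to(\varphi^\infty_T)_*\mu^\infty_T$ weakly, and since $x^\infty_k\in\supp\mu^\infty_T=\XX^\infty_T$ every small ball around $\varphi^\infty_T(x^\infty_k)$ has positive $(\varphi^i_T)_*\mu^i_T$-mass for large $i$, from which a diagonal argument produces $x^i_k$ with $\varphi^i_T(x^i_k)\to\varphi^\infty_T(x^\infty_k)$. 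With these clarifications, and with $\delta$ in the application of Lemma~\ref{Lem_XX12_close_HK_close} taken as $\min_k\delta_k$ across the finitely many approximation points in part~(a), the argument goes through.
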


\begin{Remark} \label{Rmk_reverse_strict_conv}
The reverse direction is in general false.
To see this, consider a metric flow $\XX$ as in Example~\ref{Ex_not_Hausdorff}, which is defined over $(-\infty, 0]$ and has the property that $\# \XX_0 > 1$ and $\# \XX_t = 1$ for all $t < 0$.
This flow is not past continuous.
Let $\XX^i := \XX$ for $i \in \IN \cup \{ \infty \}$ and let $\CF$ be the trivial correspondence between the flows $\XX^i$.
Obviously, the metric flows belong to metric flow pairs that converge within $\CF$.
However, for any sequence $x^i \in \XX^i_0$, $i \in \IN \cup \{ \infty \}$, we have $x^i \xrightarrow[i \to \infty]{\quad \CF, (-\infty, 0) \quad} x^\infty$, but we only have strict convergence if $x^i = x^\infty$ for large $i$.
This also shows that limits in the sense of Definition~\ref{Def_convergence_PP_measure} may not be unique.
\end{Remark}

The next theorem shows that convergence of conjugate heat flow (in the sense of Definition~\ref{Def_CHF_convergence_within_CF}) implies strict convergence (in the sense of Definition~\ref{Def_convergence_CHF_within_CF_strict}) at almost every time.
Moreover, if the metric flows $\XX^i$ belong to a convergent sequence of metric flow pairs, then strict convergence holds at every time at which we have time-wise convergence of the metric flow pairs, except possibly at the final time of the limiting conjugate heat flow.

\begin{Theorem} \label{Thm_tdmu_convergence_reverse}
Let $\XX^i$ be metric flows over subsets $I^{\prime, i} \subset \IR$, $i \in \IN \cup \{ \infty \}$, and consider a correspondence $\CF$ between $\XX^i$.
Let $(\td\mu^i_t)_{t \in I_*^i}$, $i \in \IN \cup \{ \infty \}$, be conjugate heat flows on $\XX^i$, where $I_*^i = I^{\prime, i} \cap (-\infty, T_i)$ or $I^{\prime, i} \cap (-\infty, T_i]$ for some $T_i \in (-\infty, \infty]$ and suppose that we have the following convergence (possibly on compact time-intervals):
\begin{equation} \label{eq_tdmu_conv_no_J}
 (\td\mu^i_t)_{t \in I_*^i} \xrightarrow[i \to \infty]{\quad  \CF \quad}   (\td\mu^\infty_t)_{t \in I^\infty_*}.  
\end{equation}
Then the following is true:
\begin{enumerate}[label=(\alph*)]
\item \label{Thm_tdmu_convergence_reverse_a} After passing to a subsequence, (\ref{eq_tdmu_conv_no_J}) is time-wise at almost every time in $I_*^\infty$.
This implies that we have strict convergence of $\td\mu^i_t$ to $\td\mu^\infty_t$ for almost every $t \in I_*^\infty$.
\item \label{Thm_tdmu_convergence_reverse_b} Suppose that there are conjugate heat flows $(\mu^i_t)_{t \in I^{\prime, i}}$ such that $(\XX^i, (\mu^i_t)_{t \in I^{\prime, i}})$ are metric flow pairs, $i \in \IN \cup \{ \infty \}$, and such that we have for some $J \subset \IR$ (possibly on compact time-intervals)
\begin{equation} \label{eq_met_flow_conv_no_J}
 (\XX^i, (\mu^i_t)_{t \in I^{\prime,i}}) \xrightarrow[i \to \infty]{\quad \IF, \CF,  J \quad}  (\XX^\infty, (\mu^\infty_t)_{t \in I^{\prime,\infty}}).
\end{equation}
Then we have weak convergence $(\varphi^i_t)_* \td\mu^i_t \to (\varphi^i_t)_* \td\mu^\infty_t$ for all $t \in I_*^\infty \setminus \{ T_\infty \}$ at which (\ref{eq_met_flow_conv_no_J}) is time-wise.

If in addition $\td\mu^\infty_t \in \PP^1 (\XX^\infty_t)$ for all $t \in I^{\prime, \infty}$, then (\ref{eq_tdmu_conv_no_J}) is uniform over $J \cap (-\infty,T']$ (possibly on compact time-intervals) for any $T' < T_\infty$ and time-wise at every time at which (\ref{eq_met_flow_conv_no_J}) is time-wise, except for possibly at time $T_\infty$.
This is equivalent to strict convergence at these times.
\end{enumerate}
\end{Theorem}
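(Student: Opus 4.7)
The plan is to prove (a) by a standard subsequence extraction and to prove (b) by combining the reproduction formula with the correspondence-stability Lemma~\ref{Lem_XX12_close_HK_close}. Starting with part (a), I would unpack Definition~\ref{Def_CHF_convergence_within_CF} to produce measurable $E_i \subset I''$ with $|E_i| \to 0$ and uniform $d_{W_1}^{Z_t}$-decay of $(\varphi^i_t)_* \td\mu^i_t$ to $(\varphi^\infty_t)_* \td\mu^\infty_t$ outside $E_i$ (on any compact subinterval, in the compact-time-interval case). Passing to a subsequence with $|E_i| \leq 2^{-i}$ and setting $E_k := \bigcup_{i \geq k} E_i$ yields a nested sequence with $|E_k| \leq 2^{-k+1}$ and $E_\infty := \bigcap_k E_k$ of measure zero. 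The convergence is uniform on $I_*^\infty \setminus E_k$ by construction, and for every $t \in I_*^\infty \setminus E_\infty$ we have $t \notin E_i$ for all large $i$, giving time-wise convergence at $t$; this is precisely strict convergence by Definition~\ref{Def_convergence_CHF_within_CF_strict}.

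For the weak-convergence portion of (b), I would fix $t \in I_*^\infty \setminus \{T_\infty\}$ at which (\ref{eq_met_flow_conv_no_J}) is time-wise and, after applying part (a) to the $\td\mu$-convergence and (by the analogous argument) to the metric flow pair convergence, select a sequence $s_n \searrow t$ with $s_n \in I^{\prime,\infty}$ at which both convergences are simultaneously time-wise. For any bounded $1$-Lipschitz $f: Z_t \to \IR$, the reproduction formula gives
\[ \int f \, d(\varphi^i_t)_* \td\mu^i_t = \int_{\XX^i_{s_n}} g^i_n \, d\td\mu^i_{s_n}, \qquad g^i_n(y) := \int f \circ \varphi^i_t \, d\nu^i_{y;t}, \]
and analogously at $\infty$, with $g^i_n$ uniformly bounded and Lipschitz by Proposition~\ref{Prop_gradient_HF}. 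The key step is to show $\int g^i_n \, d\td\mu^i_{s_n} \to \int g^\infty_n \, d\td\mu^\infty_{s_n}$ as $i \to \infty$, then $n \to \infty$: Lemma~\ref{Lem_XX12_close_HK_close} applied at $s_n$ with target time $t$ gives $(\varphi^i_t)_* \nu^i_{y^i;t} \to (\varphi^\infty_t)_* \nu^\infty_{y^\infty;t}$ in $W_1^{Z_t}$, hence $g^i_n(y^i) \to g^\infty_n(y^\infty)$, for pairs $y^i, y^\infty$ close in $Z_{s_n}$ with $\mu^i_{s_n}$-ball mass bounded below. The ball-mass bound holds near $\supp \td\mu^\infty_{s_n} \subset \supp \mu^\infty_{s_n}$ (Proposition~\ref{Prop_supp_XX}) by $W_1$-convergence of $(\varphi^i_{s_n})_* \mu^i_{s_n}$, and the near-diagonal concentration of the $\td\mu$-couplings comes from the time-wise strict convergence at $s_n$. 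Sending $s_n \to t$ finally uses continuity of $s \mapsto \int (f \circ \varphi^\infty_t) \, d\td\mu^\infty_s$ via Proposition~\ref{Prop_properties_CHK}\ref{Prop_properties_CHK_c}.

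The strict-convergence upgrade under $\td\mu^\infty_\cdot \in \PP^1$ proceeds by the same template at the level of couplings: at $s$ close to $t$ with strict convergence of $\td\mu$ holding, the $\PP^1$ assumption gives a uniform first-moment bound on $(\varphi^i_s)_* \td\mu^i_s$, and I would construct couplings between $(\varphi^i_t)_* \td\mu^i_t$ and $(\varphi^\infty_t)_* \td\mu^\infty_t$ by transporting the coupling at time $s$ backward through the conjugate heat kernels, using Lemma~\ref{Lem_XX12_close_HK_close} on the good part and the product coupling $\nu^i_{y^i;t} \otimes \nu^\infty_{y^\infty;t}$ on the small bad part, whose cost is controlled by the propagated first moment via Proposition~\ref{Prop_compare_CHF}\ref{Prop_compare_CHF_c}. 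Uniformity over $J \cap (-\infty, T']$ then follows by choosing new exceptional sets of small measure disjoint from the already-handled times. The hard part will be the ball-mass/tail control: $\td\mu^i_s$ is only absolutely continuous with respect to $\mu^i_s$ (Proposition~\ref{Prop_compare_CHF}\ref{Prop_compare_CHF_a}) with no quantitative density bound, so one must carefully partition $\td\mu^\infty_s$ into a ``good'' part living in balls of definite $\mu^\infty_s$-mass (transferable to $\mu^i_s$-mass for large $i$) and a small ``bad'' part, balancing the $\delta, r$ parameters of Lemma~\ref{Lem_XX12_close_HK_close} against the vanishing $d_\IF^{\,\CF}$-distance while preserving all estimates in the limits $i \to \infty$ and $s_n \to t$.
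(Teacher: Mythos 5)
Your proof of part (a) matches the paper's (both invoke the $E_i$-construction of Lemma~\ref{Lem_pass_to_timewise}, cf.\ Definition~\ref{Def_CHF_convergence_within_CF}). For part (b) you take a genuinely different route, and it is worth spelling out the contrast. For weak convergence at a fixed $t_0$ the paper argues by contradiction: given an $f$ witnessing failure, it picks a later time $T$ at which $(\td\mu^i)$ converges strictly, multiplies $\td\mu^i_T$ by a cutoff $w$ supported on a large ball in $Z_T$, runs the conjugate heat flow from the truncated measure $\td\mu^{\prime,i}_T \leq a_i^{-1} \mu^i_T$ (hence $\PP^1$), applies Theorem~\ref{Thm_tdmu_convergence}\ref{Thm_tdmu_convergence_a} to get strict convergence of $\td\mu^{\prime,i}_{t_0}$, and sandwiches $\td\mu^i_{t_0}$ between $(1-\alpha)\td\mu^{\prime,i}_{t_0}$ and $\alpha + (1-\alpha)\td\mu^{\prime,i}_{t_0}$ via Proposition~\ref{Prop_properties_CHK}\ref{Prop_properties_CHK_c}. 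Your reproduction-formula plus Lemma~\ref{Lem_XX12_close_HK_close} route can be made to work for the weak-convergence claim (since $g^i_n$ is uniformly bounded the bad part of the coupling contributes $O(\eps)$), but it is substantially heavier, and sending $s_n \to t$ is unnecessary: by Proposition~\ref{Prop_properties_CHK}\ref{Prop_properties_CHK_c}, $\int g^\infty_n \, d\td\mu^\infty_{s_n}$ is already constant in $n$, so a single $s > t$ with simultaneous time-wise convergence suffices.

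Where I see a genuine gap is your treatment of uniformity over $J$ under the $\PP^1$ hypothesis. You propose to build explicit couplings between $(\varphi^i_t)_*\td\mu^i_t$ and $(\varphi^\infty_t)_*\td\mu^\infty_t$ by transporting a coupling at $s$ backward, using the product coupling $\nu^i_{y^i;t}\otimes\nu^\infty_{y^\infty;t}$ on the bad set. But the first-moment cost of this product coupling at time $t$ is not controlled by Proposition~\ref{Prop_compare_CHF}\ref{Prop_compare_CHF_c} (that proposition bounds $d_{W_1}$ between \emph{two} conjugate heat kernels within the \emph{same} flow, not the spread of a single conjugate heat kernel around a point mass), and without an $H$-concentration hypothesis — which this theorem does not assume — there is no a priori bound on $d_{W_1}^{\XX^i_t}(\nu^i_{y^i;t},\delta_{x_0})$. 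You would need to dominate the bad-part contribution by a vanishing quantity, and a small $q_s$-measure is not enough if the integrand is unbounded. The paper sidesteps this entirely: its contradiction argument reduces uniformity to Theorem~\ref{Thm_tdmu_convergence}\ref{Thm_tdmu_convergence_a}, whose underlying estimate Lemma~\ref{eq_F_dist_A_factor} obtains uniform control precisely by re-weighting the \emph{existing} $(q_t)$ from the metric-flow-pair convergence by the bounded density $h_t = d\td\mu^1_t/d\mu^1_t \leq A$, rather than building a new coupling from scratch. I would encourage you to either carry through the bad-part estimate explicitly or adopt the density-weighting mechanism — your sentence that ``uniformity then follows by choosing new exceptional sets of small measure disjoint from the already-handled times'' does not, as written, account for the possibly unbounded integrand on those exceptional sets.
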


The remainder of this subsection is occupied with the proofs of Theorems~\ref{Thm_tdmu_convergence}, \ref{Thm_tdmu_convergence_reverse}.
Lemma~\ref{Lem_mu_T_conv_mf_conv} below will also be used in the proof of Theorem~\ref{Thm_change_basepoints} in Subsection~\ref{subsec_change_basepoints}.

The following lemma shows that if two metric flow pairs are close within a correspondence, then the same is true after replacing the conjugate heat flows with two other conjugate heat flows whose initial conditions are close within the same correspondence.
Note that the following bound only depends on the closeness of $\td\mu^1_T, \td\mu^2_T$ at time $T$ and that the lemma implies a closeness bound of the flows $(\td\mu^1_t)_{t \in I^{\prime, 1}}$, $(\td\mu^2_t)_{t \in I^{\prime, 2}}$ at earlier times.

\begin{Lemma} \label{eq_F_dist_A_factor}
Let $(\XX^i, (\mu^i_t)_{t \in I^{\prime, i}})$, $i =1,2$, be two metric flow pairs and consider a correspondence $\CF = (  (Z_t, d^Z_t)_{t \in I''},(\varphi^i_t)_{t \in I^{\prime\prime, i}, i =1,2} )$ between $\XX^1, \XX^2$ over $I''$ that is fully defined at some time $T$.
Consider conjugate heat flows $(\td\mu^i_t)_{t \in I^{\prime, i}_T}$, where $I^{\prime, i}_T := I^{\prime,i} \cap (-\infty,T]$, and assume that $\td\mu^1_T \leq A \mu^1_T$ for some $A < \infty$.
Then for $I''_T := I'' \cap (-\infty,T]$ and $J_T := J \cap (-\infty,T]$ we have
\begin{multline*}
 d_{\IF}^{\,\CF|_{I''_T}, J_T} \big( (\XX^1_{I^{\prime, 1}_T}, (\td\mu^1_t)_{t \in I^{\prime, 1}_T}), (\XX^2_{I^{\prime, 2}_T}, (\td\mu^2_t)_{t \in I^{\prime, 2}_T}) \big) \\
 \leq 4 A d_{\IF}^{\,\CF, J} \big( (\XX^1, (\mu^1_t)_{t \in I^{\prime,1}}), (\XX^2, (\mu^2_t)_{t \in I^{\prime,2}}) \big) + d^{Z_T}_{W_1} ( (\varphi^1_T)_* \td\mu^1_T, (\varphi^2_T)_* \td\mu^2_T). 
\end{multline*}
\end{Lemma}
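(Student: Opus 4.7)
The plan is to build, for each $t \in I''_T \setminus E$, an explicit coupling $p_t$ between $\td\mu^1_t$ and $\td\mu^2_t$ via a reweighting-and-gluing procedure, anchored on couplings $(q_t)$ that realize the original $\IF$-distance. Note that the hypothesis $\td\mu^1_T \leq A\mu^1_T$ forces $A \geq 1$ since both are probability measures. Write $D := d^{Z_T}_{W_1}((\varphi^1_T)_*\td\mu^1_T, (\varphi^2_T)_*\td\mu^2_T)$, fix $\eps > 0$, and pick $r' < d^{\,\CF, J}_{\IF}((\XX^1, (\mu^1_t)_{t \in I^{\prime,1}}), (\XX^2, (\mu^2_t)_{t \in I^{\prime,2}})) + \eps$ together with $E \subset I''$ and $(q_t)_{t \in I'' \setminus E}$ realizing this $\IF$-distance within $\CF$ at level $r'$; in addition pick a coupling $p^{\#}_T \in \PP(\XX^1_T \times \XX^2_T)$ between $\td\mu^1_T$ and $\td\mu^2_T$ with $\int d^{Z}_T(\varphi^1_T, \varphi^2_T) \, dp^{\#}_T \leq D + \eps$. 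Setting $E_T := E \cap (-\infty, T]$, we have $|E_T| \leq r^{\prime 2}$, well below the threshold $(4Ar' + D + \eps)^2$ needed in the conclusion.

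By Proposition~\ref{Prop_properties_CHK}\ref{Prop_properties_CHK_bb} the bound $\td\mu^1_T \leq A \mu^1_T$ propagates backward to $\td\mu^1_t \leq A \mu^1_t$ for all $t \leq T$; denote the Radon-Nikodym derivative by $g_t := d\td\mu^1_t / d\mu^1_t \leq A$. Define $\td q_t := g_t(x^1) \, dq_t(x^1, x^2)$, a probability measure with first marginal $\td\mu^1_t$ and second marginal $\mu^{\td, 2}_t \leq A \mu^2_t$. Next, pick a near-optimal coupling $\sigma_t$ of $\mu^{\td, 2}_t, \td\mu^2_t$ within $\XX^2_t$, and use Lemma~\ref{Lem_gluing} to glue $\td q_t$ with $\sigma_t$ via the shared $\mu^{\td, 2}_t$-marginal, obtaining $Q_t \in \PP(\XX^1_t \times \XX^2_t \times \XX^2_t)$; take $p_t$ to be its first-and-last marginal, which then couples $\td\mu^1_t$ and $\td\mu^2_t$. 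A triangle inequality on $d^{Z_s}_{W_1}$ through the middle $\XX^2_t$-factor, combined with Proposition~\ref{Prop_compare_CHF}\ref{Prop_compare_CHF_c}, gives
\[
\int d^{Z_s}_{W_1}\bigl((\varphi^1_s)_*\nu^1_{x^1;s}, (\varphi^2_s)_*\nu^2_{x^2;s}\bigr) \, dp_t \leq \int d^{Z_s}_{W_1}(\ldots) \, d\td q_t + d^{\XX^2_t}_{W_1}(\mu^{\td,2}_t, \td\mu^2_t) + \eps \leq A r' + d^{\XX^2_t}_{W_1}(\mu^{\td,2}_t, \td\mu^2_t) + \eps,
\]
since $g_t \leq A$ and the $q_t$-bound yields $\int d^{Z_s}_{W_1}(\ldots) \, d\td q_t \leq A r'$. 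A further triangle inequality in $Z_t$ through $(\varphi^1_t)_*\td\mu^1_t$ bounds the remaining term by $\int d^{Z_t}(\varphi^1_t, \varphi^2_t) \, d\td q_t + d^{Z_t}_{W_1}((\varphi^1_t)_*\td\mu^1_t, (\varphi^2_t)_*\td\mu^2_t) \leq A r' + d^{Z_t}_{W_1}((\varphi^1_t)_*\td\mu^1_t, (\varphi^2_t)_*\td\mu^2_t)$, using the same reweighting estimate at $s = t$.

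The main obstacle is bounding the remaining term $d^{Z_t}_{W_1}((\varphi^1_t)_*\td\mu^1_t, (\varphi^2_t)_*\td\mu^2_t)$ by $D + O(r')$. The idea is a Kantorovich-Rubinstein duality argument propagated across the correspondence: for each $1$-Lipschitz $h : Z_t \to \IR$ one writes, via the reproduction formula $\td\mu^i_t = \int \nu^i_{y; t} \, d\td\mu^i_T$,
\[
\int h \, d\bigl((\varphi^1_t)_*\td\mu^1_t - (\varphi^2_t)_*\td\mu^2_t\bigr) = \int u^1 \, d\td\mu^1_T - \int u^2 \, d\td\mu^2_T,
\]
where $u^i(y) := \int (h \circ \varphi^i_t) \, d\nu^i_{y;t}$ is $1$-Lipschitz on $\XX^i_T$ by Proposition~\ref{Prop_gradient_HF}\ref{Prop_gradient_HF_b}. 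This expression is then controlled by comparing the coupling $p^{\#}_T$ (of $\td\mu^1_T, \td\mu^2_T$) with the original $q_T$ (of $\mu^1_T, \mu^2_T$) via a two-step bridge through an auxiliary intermediate coupling and Lemma~\ref{Lem_X1X2_correspondence_mu112}; this bridge is delicate precisely because $p^{\#}_T$ and $q_T$ share no marginal, and the Radon-Nikodym bound $d\td\mu^1_T / d\mu^1_T \leq A$ is the key ingredient that absorbs the marginal mismatch. Carrying out the bookkeeping produces $d^{Z_t}_{W_1}((\varphi^1_t)_*\td\mu^1_t, (\varphi^2_t)_*\td\mu^2_t) \leq D + 2 r' + O(\eps)$. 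Collecting the pieces yields $\int d^{Z_s}_{W_1}(\ldots) \, dp_t \leq (2A + 2) r' + D + O(\eps) \leq 4 A r' + D + O(\eps)$, using $A \geq 1$, and letting $\eps \to 0$ completes the proof.
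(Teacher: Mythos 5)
Your overall strategy matches the paper's: reweight the given couplings $q_t$ by the Radon--Nikodym derivative $g_t = d\td\mu^1_t/d\mu^1_t \le A$ to obtain couplings of $\td\mu^1_t$ with some auxiliary measure $\leq A\mu^2_t$, then pass via a Kantorovich--Rubinstein duality argument (through the reproduction formula) to control $d^{Z_t}_{W_1}\bigl((\varphi^1_t)_*\td\mu^1_t, (\varphi^2_t)_*\td\mu^2_t\bigr)$, and finally assemble couplings for all $t$. The only genuine organizational difference is that you build the family $(p_t)$ by gluing $\td q_t$ with a near-optimal $\sigma_t$ for each $t$, whereas the paper fixes $\td q_T$ first and for $t \ne T$ takes a near-optimal $W_1$-coupling of $\td\mu^1_t,\td\mu^2_t$ directly, then applies Lemma~\ref{Lem_X1X2_correspondence_mu112} to the pair $(q'_t,\td q_t)$; these produce the same estimate.

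There is, however, an arithmetic error in the key step 6. The assertion that ``carrying out the bookkeeping produces $d^{Z_t}_{W_1}\bigl((\varphi^1_t)_*\td\mu^1_t, (\varphi^2_t)_*\td\mu^2_t\bigr) \le D + 2r' + O(\eps)$'' is not correct: the $A$-factor cannot be dropped from these two $r'$-terms. Concretely, once you bound $\int (u^1 - u^2)\,dp^{\#}_T$ by $\int F\,dp^{\#}_T$ with $F(y^1,y^2) := d^{Z_t}_{W_1}\bigl((\varphi^1_t)_*\nu^1_{y^1;t}, (\varphi^2_t)_*\nu^2_{y^2;t}\bigr)$, the only bridge from $p^{\#}_T$ to $q_T$ is to reweight $q_T$ by $g_T \le A$ (so it shares the first marginal $\td\mu^1_T$ with $p^{\#}_T$) and then glue. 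Applying Lemma~\ref{Lem_X1X2_correspondence_mu112} to the reweighted $q'_T$ and $p^{\#}_T$ gives $\int F\,dp^{\#}_T \le \int\bigl(d^Z_T(\cdots)+F\bigr)dq'_T + \int d^Z_T(\cdots)\,dp^{\#}_T \le Ar' + Ar' + D$, so the correct intermediate bound is $2Ar' + D$, not $D + 2r'$. The reweighting is unavoidable here: the hypothesis $\td\mu^1_T \le A\mu^1_T$ carries no metric information relating $\td\mu^1_T$ to $\mu^1_T$, only a density bound, and the density bound must multiply the $r'$-estimates. Happily the final tally is unaffected: with the corrected intermediate bound your chain gives $Ar' + Ar' + 2Ar' + D = 4Ar' + D$, exactly the claimed $4A\,d_{\IF} + D$, rather than your $(2A+2)r'+D$. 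But since the ``bookkeeping'' is precisely where the $A$-factor enters and precisely the crux of the proof, you should carry it out explicitly (Lemma~\ref{Lem_X1X2_correspondence_mu112} does it in one stroke) rather than sketch it, especially as the sketch as stated leads to a wrong intermediate constant.
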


\begin{proof}
After performing a time-shift, we may assume that $T = 0$ and after restricting the flows $\XX^i$ to $I^{\prime, i}_T$ and the correspondence $\CF$ to $I''_T$, we may assume that $I^i_T = I^{\prime, i}$ and $I''_T = I''$.
Let 
\[ r > d_{\IF}^{\,\CF, J} \big( (\XX^1, (\mu^1_t)_{t \in I^{\prime,1}}), (\XX^2, (\mu^2_t)_{t \in I^{\prime,2}}) \big), \qquad \td{r} > d^{Z_0}_{W_1} ( (\varphi^1_0)_* \td\mu^1_0, (\varphi^2_0)_* \td\mu^2_0) \]
and choose $E \subset I''$, $(q_t)_{t \in I'' \setminus E}$ with $J \subset I'' \setminus E \subset I^{\prime\prime,1} \cap I^{\prime\prime,2}$ such that Properties~\ref{Def_IF_dist_within_CF_1}, \ref{Def_IF_dist_within_CF_3} of Definition~\ref{Def_IF_dist_within_CF} hold for $r$.
Choose a coupling $\td q_0$ between $\td\mu^1_0, \td\mu^2_0$ such that
\begin{equation} \label{eq_d_dtdq}
 \int_{\XX^1_0 \times \XX^2_0} d^Z_0 (\varphi^1_0(x^1), \varphi^2_0(x^2)) \, d\td q_0 < \td{r}. 
\end{equation}

By Proposition~\ref{Prop_properties_CHK}\ref{Prop_properties_CHK_bb}  we have $\td\mu^1_t \leq A \mu^1_t$  for all $t \in I^{\prime,1}$.
Therefore, we can find measurable functions $(h_t : \XX^1_t \to \IR)_{t \in I^{\prime,1}}$ such that
\[ d\td\mu_t^1 = h_t \, d\mu^1_t , \qquad 0 \leq h_t \leq A .\]
For any $t \in I'' \setminus E$ define $q'_t$ by $dq'_t (x^1, x^2) := h_t (x^1) dq_t (x^1,x^2)$ and let $\mu^{\prime,2}_t$ be the marginal of $q'_t$ onto the second factor.
Then $q'_t$ is a coupling between $\td\mu^1_t, \mu^{\prime,2}_t$  and we have for all $s, t \in I'' \setminus E$, $s \leq t$
\begin{align}
  \int_{\XX^1_t \times \XX^2_t} d^{Z}_t (\varphi^1_t (x^1), \varphi^2_t (x^1)) d q'_t (x^1, x^2) &\leq A r , \label{eq_d_dqp} \\
  \int_{\XX^1_t \times \XX^2_t} d^{Z_s}_{W_1} ( (\varphi^1_s)_* \nu^1_{x^1;s}, (\varphi^2_s)_* \nu^2_{x^2;s}) d q'_t (x^1, x^2) &\leq A r. \label{eq_dnu_dqp}
\end{align}
Applying Lemma~\ref{Lem_X1X2_correspondence_mu112} to $t = 0$, $q'_0, \td q_0$ and using (\ref{eq_d_dtdq}), (\ref{eq_d_dqp}), (\ref{eq_dnu_dqp}) implies that for any $s \in I'' \setminus E$
\[  \int_{\XX^1_0 \times \XX^2_0} d^{Z_s}_{W_1} ( (\varphi^1_s)_* \nu^1_{x^1;s}, (\varphi^2_s)_* \nu^2_{x^2;s}) d \td q_0 (x^1, x^2) < 2 A r+ \td r. \]

\begin{Claim}
For any $t \in I'' \setminus E$
\begin{equation} \label{eq_dW1_tdmu1tdmu2}
  d^{Z_t}_{W_1} ( (\varphi^1_t)_* \td\mu^{1}_t, (\varphi^2_t)_* \td\mu^{2}_t  )< 2 A r+ \td r. 
\end{equation}
and there is a family of couplings $(\td q_t)_{t \in I'' \setminus E}$ between $\td\mu^1_t, \td\mu^2_t$ such that
\begin{equation} \label{eq_d_dtdq_t}
  \int_{\XX^1_t \times \XX^2_t} d^{Z}_{t} (\varphi^1_t( x^1), \varphi^2_t(x^2)) d \td q_t (x^1, x^2) \leq 2 A r+ \td r. 
\end{equation}
\end{Claim}

\begin{proof}
We will verify (\ref{eq_dW1_tdmu1tdmu2}) using Proposition~\ref{Prop_Kant_Rub}.
So let $f : Z_t \to \IR$ be a bounded $1$-Lipschitz function.
Then
\begin{align*}
 \int_{Z_t}  &f \, d ((\varphi^1_t)_* \td\mu^{1}_t - (\varphi^2_t)_* \td\mu^{2}_t) 
= \int_{\XX^1_t} f( \varphi^1_t (x^1))  \, d\td\mu^{1}_t (x^1) - \int_{\XX^2_t} f( \varphi^2_t (x^2))  \, d\td\mu^2_t (x^2) \\
&= \int_{\XX^1_0} \int_{\XX^1_t} f( \varphi^1_t (x^1))  \, d\nu^1_{y^1;t} (x^1) d\td\mu^{1}_0 (y^1) - \int_{\XX^2_0} \int_{\XX^2_t} f( \varphi^2_t (x^2))  \, d\nu^2_{y^2; t} (x^2) d\td\mu^2_0 (y^2) \\
&=  \int_{\XX^1_0 \times \XX^2_0} \int_{\XX^1_t} \int_{\XX^2_t} \big(  f( \varphi^1_t (x^1)) - f( \varphi^2_t (x^2))  \big) d\nu^2_{y^2; t} (x^2)  d\nu^1_{y^1;t} (x^1)   d \td q_0 (y^1, y^2) \\
&\leq \int_{\XX^1_0 \times \XX^2_0} d^{Z_t}_{W_1} ( (\varphi^1_t)_* \nu^1_{x^1;t}, (\varphi^2_t)_* \nu^2_{x^2;t}) d \td q_0 (x^1, x^2) <  2 A r+ \td r - \delta
\end{align*}
for some small $\delta > 0$ that is independent of $f$.
Lastly, note that $\td q_0$ has already been chosen and satisfies (\ref{eq_d_dtdq_t}) due to (\ref{eq_d_dtdq}).
\end{proof}

Applying Lemma~\ref{Lem_X1X2_correspondence_mu112} to $q'_t, \td q_t$ and using (\ref{eq_d_dtdq_t}), (\ref{eq_d_dqp}), (\ref{eq_dnu_dqp}) implies that for any $s,t \in I'' \setminus E$, $s \leq t$
\[  \int_{\XX^1_t \times \XX^2_t} d^{Z_s}_{W_1} ( (\varphi^1_s)_* \nu^1_{x^1;s}, (\varphi^2_s)_* \nu^2_{x^2;s}) d \td q_t (x^1, x^2) \leq 4 A r + \td r, \]
which proves the lemma after letting $r, \td r$ converge to their respective lower bounds.
\end{proof}

Theorem~\ref{Thm_tdmu_convergence} will be a consequence of the following lemma:

\begin{Lemma} \label{Lem_mu_T_conv_mf_conv}
Suppose we are in the setting of Theorem~\ref{Thm_tdmu_convergence}\ref{Thm_tdmu_convergence_a} and that $\CF$ is of the form (\ref{eq_CF_over_N_infty}).
Then 
\[ (\XX^i_{I^{\prime,i}_T}, (\td\mu^i_t)_{t \in I^{\prime,i}_T}) \xrightarrow[i \to \infty]{\quad \IF, \CF|_{I'' \cap (-\infty,T]}, J  \cap (-\infty, T] \quad}  (\XX^\infty_{I^{\prime, \infty}_T}, (\td\mu^\infty_t)_{t \in I^{\prime, \infty}_T}). \]
\end{Lemma}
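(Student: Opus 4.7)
The overall plan is to reduce to Lemma~\ref{eq_F_dist_A_factor}, which proves the analogous statement under the extra assumption $\td\mu^1_T \leq A \mu^1_T$. Since at the terminal time $T$ the measure $\td\mu^\infty$ need not be absolutely continuous with respect to $\mu^\infty_T$, I would first absorb a small window $[T-\delta, T]$ into the exceptional set $E$ of Definition~\ref{Def_IF_dist_within_CF} (at a cost $\delta \leq \eps^2$ in measure) and work over the restricted flows on $(-\infty, T-\delta]$, where by Proposition~\ref{Prop_compare_CHF}\ref{Prop_compare_CHF_a} the Radon--Nikodym derivative $\rho^\infty := d\td\mu^\infty_{T-\delta}/d\mu^\infty_{T-\delta}$ exists. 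The time-wise coupling at $t = T$, needed for uniformity over $J$, is supplied directly by the strict convergence hypothesis.

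\textbf{Approximation.} For a truncation level $A = A(\eps,\delta) < \infty$ chosen large, define $\td\mu^{\infty,A}_{T-\delta}$ by replacing $\rho^\infty$ with $\min(\rho^\infty, A)$ and renormalizing to a probability measure; this gives $\td\mu^{\infty,A}_{T-\delta} \leq A' \mu^\infty_{T-\delta}$ for some $A' = A'(A)$ together with $d_{W_1}^{\XX^\infty_{T-\delta}}(\td\mu^\infty_{T-\delta}, \td\mu^{\infty,A}_{T-\delta}) < \eps$ as $A \to \infty$. The latter uses that $\td\mu^\infty_{T-\delta} \in \PP^1(\XX^\infty_{T-\delta})$, which is inherited from $\td\mu^\infty \in \PP^1(\XX^\infty_T)$ via Proposition~\ref{Prop_compare_CHF}\ref{Prop_compare_CHF_b}. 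For large $i$, combine the strict convergence $(\varphi^i_T)_*\td\mu^i \to (\varphi^\infty_T)_*\td\mu^\infty$ in $W_1$ with the monotonicity in Proposition~\ref{Prop_compare_CHF}\ref{Prop_compare_CHF_b} to construct analogous probability measures $\td\mu^{i,A}_{T-\delta} \leq A' \mu^i_{T-\delta}$ whose images under $\varphi^i_{T-\delta}$ converge to $(\varphi^\infty_{T-\delta})_*\td\mu^{\infty,A}_{T-\delta}$ in $W_1$.

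\textbf{Assembly.} Apply Lemma~\ref{eq_F_dist_A_factor} to the restricted conjugate heat flows with these bounded-density initial conditions at time $T-\delta$: both terms on its right-hand side vanish as $i \to \infty$, the first from the assumed $\IF$-convergence of the original pairs and the second from the $W_1$-convergence arranged in the previous step. To pass from the $\IF$-closeness of the truncated flows $(\td\mu^{i,A}_t)$ back to that of the original $(\td\mu^i_t)$, invoke Lemma~\ref{Lem_F_dist_same_X} on the common flow $\XX^i$, using Proposition~\ref{Prop_compare_CHF}\ref{Prop_compare_CHF_b} to propagate $W_1$-closeness of the time-$(T{-}\delta)$ data to all earlier times; the analogous bound holds on $\XX^\infty$. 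A triangle inequality in $d_\IF^{\,\CF|_{I''\cap(-\infty,T-\delta]},\,J\cap(-\infty,T-\delta]}$ among the three flow pairs $(\XX^i, (\td\mu^i_t))$, $(\XX^i, (\td\mu^{i,A}_t))$, $(\XX^\infty, (\td\mu^{\infty,A}_t))$, $(\XX^\infty, (\td\mu^\infty_t))$, combined with the time-$T$ coupling from strict convergence and the $\delta$-sized window absorbed into $E_i$, yields the desired bound $d_\IF^{\,\CF|_{I''_T},\,J_T} < C\eps$ for large $i$.

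\textbf{Main obstacle.} The delicate step is the transfer of the bounded-density truncation from $\XX^\infty$ to the sequence $\XX^i$ within the correspondence: one needs $\td\mu^{i,A}_{T-\delta}$ to satisfy the pointwise bound $\leq A' \mu^i_{T-\delta}$ \emph{and} to converge strictly to $\td\mu^{\infty,A}_{T-\delta}$. The most natural construction uses the density with respect to $\mu^i_{T-\delta}$, but uniform control of this density as $i \to \infty$ requires showing that truncation at level $A$ of the density of $\td\mu^i_{T-\delta}$ is asymptotically compatible with the corresponding truncation on $\XX^\infty$. Overcoming this likely requires an intermediate regularization (for instance, replacing the truncated measure by a small convex combination with $\mu^i_{T-\delta}$) together with carefully matching the choices across $i$ using the already established $\IF$-closeness of the pairs $(\XX^i, (\mu^i_t))$.
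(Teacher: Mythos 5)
Your proposal correctly identifies the reduction to Lemma~\ref{eq_F_dist_A_factor} and the need to approximate the terminal measure by one with bounded density, but the ``main obstacle'' you flag is a real gap and, more to the point, an artificial one: you have not noticed that Lemma~\ref{eq_F_dist_A_factor} requires the bound $\td\mu^1_T \leq A\mu^1_T$ only on \emph{one} side (namely $\td\mu^1$), while $\td\mu^2$ is an arbitrary conjugate heat flow. The paper exploits this asymmetry by taking $\XX^1 = \XX^\infty$ and $\XX^2 = \XX^i$, so the bounded-density approximation is built \emph{only} on the limit flow $\XX^\infty$; there is never any need to construct truncations $\td\mu^{i,A}$ on the sequence $\XX^i$, to match densities across $i$, or to chase a ``transfer of the truncation through the correspondence.'' Your three-step chain through $(\XX^i,(\td\mu^{i,A}_t))$ should be collapsed to a two-step chain through $(\XX^\infty,(\td\mu^{\infty,\eps}_t))$: the link $(\XX^\infty,(\td\mu^\infty_t))\leftrightarrow(\XX^\infty,(\td\mu^{\infty,\eps}_t))$ is handled by Lemma~\ref{Lem_F_dist_same_X} and Proposition~\ref{Prop_compare_CHF}\ref{Prop_compare_CHF_b} (since both flows live on $\XX^\infty$ with the same embeddings), and the link $(\XX^\infty,(\td\mu^{\infty,\eps}_t))\leftrightarrow(\XX^i,(\td\mu^i_t))$ is Lemma~\ref{eq_F_dist_A_factor} directly, both of whose right-hand-side terms vanish in the limit. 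With this corrected application, your regularization worry evaporates.

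A second, related issue is your retreat to time $T-\delta$. You do this to obtain a Radon--Nikodym derivative via Proposition~\ref{Prop_compare_CHF}\ref{Prop_compare_CHF_a}, which is sound as far as it goes, but it creates a new headache: since $T\in J$ and Definition~\ref{Def_IF_dist_within_CF} requires $J \subset I''\setminus E$, you cannot simply ``absorb $[T-\delta,T]$ into $E$''; you must still produce the time-$T$ coupling and verify Property~(2) of that definition for all pairs $(s,T)$, which you only gesture at. The paper avoids both the RN step and the $T-\delta$ detour by a cleaner construction of the bounded-density approximant directly at time $T$: using Lemma~\ref{Lem_W_p_separable}, pick a finitely supported $\td\mu^{\prime,\infty}_T \in \PP^1(\XX^\infty_T)$ with support in $\supp\mu^\infty_T$ that is $W_1$-close to $\td\mu^\infty_T$; then, for a small radius $r$ making the atom-balls disjoint, replace each atom $b_x\delta_x$ by $\frac{b_x}{\mu^\infty_T(B(x,r))}\,\mu^\infty_T|_{B(x,r)}$. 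This is a finite sum of restrictions of $\mu^\infty_T$, hence trivially bounded above by $A_\eps\mu^\infty_T$, and no absolute continuity of $\td\mu^\infty_T$ with respect to $\mu^\infty_T$ is ever invoked.
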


\begin{proof}
After performing a time-shift, we may assume that $T = 0$ and after restricting the flows $\XX^i$ to $I^{\prime, i}_T$, the correspondence $\CF$ to $I'' \cap (-\infty,T]$ and replacing $J$ with $J \cap (-\infty,T]$, we may assume that $I^{\prime, i}_T = I^{\prime, i}$ and $J \subset I'' \subset (-\infty,0]$.
We need to show that
\[ d_{\IF}^{\,\CF, J} \big( (\XX^\infty, (\td\mu^\infty_t)_{t \in I^{\prime,\infty}}), (\XX^i, (\td\mu^i_t)_{t \in I^{\prime, i}}) \big) \to 0. \]
Fix some $\eps > 0$.

\begin{Claim}
There is a probability measure $\td\mu^{\infty, \eps}_0 \in \PP (\XX^\infty_0)$ and a number $A_\eps < \infty$, which may depend on $\td\mu_0^\infty$, such that
\[ \td\mu^{\infty, \eps}_0 \leq A_\eps \mu^\infty_0, \qquad
d_{W_1}^{\XX^\infty_0} ( \td\mu_0^\infty, \td\mu^{\infty, \eps}_0 ) \leq 2\eps. \]
\end{Claim}

\begin{proof}
By Lemma~\ref{Lem_W_p_separable}, we can find a probability measure $\td\mu^{\prime,\infty}_0 \in \PP (\XX^\infty_0)$ whose support is finite and contained in $\supp \mu_0^\infty$ such that
\[ d_{W_1}^{\XX^\infty_0} ( \td\mu_0^\infty, \td\mu^{\prime,\infty}_0 ) \leq \eps. \]
Choose $0 < r < \eps$ such that the $r$-balls around every point in $\supp \td\mu^{\prime,\infty}$ are pairwise disjoint.
For any $x \in \supp \td\mu^{\prime,\infty}_0 \subset \supp \mu_0^\infty$ set
\[ a_x := \mu_0^\infty ( B(x, r)) > 0, \qquad b_x := \td\mu^{\prime,\infty} (\{ x \}) \]
and let
\[ \td\mu^{\infty, \eps}_0 := \sum_{x \in \supp \td\mu^{\prime,\infty}_0} \frac{b_x}{a_x} \mu_0^\infty |_{B(x,r)}. \]
Then
\[ d_{W_1}^{\XX^\infty_0} ( \td\mu_0^\infty, \td\mu^{\infty, \eps}_0 ) \leq
d_{W_1}^{\XX^\infty_0} ( \td\mu_0^\infty, \td\mu^{\prime,\infty}_0 ) +
d_{W_1}^{\XX^\infty_0} ( \td\mu_0^{\prime,\infty}, \td\mu^{\infty, \eps}_0 ) \leq 2 \eps. \qedhere\]
\end{proof}
\medskip

Let $(\td\mu^{\infty, \eps}_t)_{t \in I^{\prime, \infty}}$ be the conjugate heat flow with initial condition $\td\mu^{\infty, \eps}_0$.
Using Proposition~\ref{Prop_compare_CHF}\ref{Prop_compare_CHF_b} and Lemma~\ref{Lem_F_dist_same_X} we find that
\begin{equation} \label{eq_dFC_tdmutdmueps}
 d_{\IF}^{\,\CF, J} \big( (\XX^\infty, (\td\mu^\infty_t)_{t \in I^{\prime,\infty}}), (\XX^\infty, (\td\mu^{\infty, \eps}_t)_{t \in I^{\prime,\infty}})\big) \leq 2\eps. 
\end{equation}
By Lemma~\ref{eq_F_dist_A_factor} we have
\begin{multline} \label{eq_tdmuepstdmui}
 \limsup_{i \to \infty} d_{\IF}^{\,\CF, J} \big( (\XX^\infty, (\td\mu^{\infty, \eps}_t)_{t \in I^{\prime,\infty}}), (\XX^i, (\td\mu^i_t)_{t \in I^{\prime, i}}) \big) \leq \limsup_{i \to \infty}  d^{Z_0}_{W_1} ( (\varphi^\infty_0)_* \td\mu^{\infty, \eps}_0, (\varphi^i_0)_* \td\mu^{i}_0) \\
\leq d^{\XX^\infty_0}_{W_1} ( \td\mu^{\infty, \eps}_0, \td\mu^\infty_0) 
+ \limsup_{i \to \infty}  d^{Z_0}_{W_1} ( (\varphi^\infty_0)_* \td\mu^{\infty}_0, (\varphi^i_0)_* \td\mu^{i}_0) 
\leq 2\eps. 
\end{multline}
Combining (\ref{eq_dFC_tdmutdmueps}), (\ref{eq_tdmuepstdmui}) shows that
\[ d_{\IF}^{\,\CF, J} \big( (\XX^\infty, (\td\mu^\infty_t)_{t \in I^{\prime,\infty}}), (\XX^i, (\td\mu^i_t)_{t \in I^{\prime, i}}) \big) \leq 4\eps \]
for large $i$, which finishes the proof.
\end{proof}
\bigskip

\begin{proof}[Proof of Theorem~\ref{Thm_tdmu_convergence}.]
Assertion~\ref{Thm_tdmu_convergence_a} is a direct consequence of Lemmas~\ref{Lem_mu_T_conv_mf_conv}, \ref{Lem_met_flow_conv_mu_conv}.
Assertion~\ref{Thm_tdmu_convergence_b} follows from Assertion~\ref{Thm_tdmu_convergence_a} by setting $\td\mu^i := \delta_{x^i}$.
\end{proof}
\bigskip

\begin{proof}[Proof of Theorem~\ref{Thm_tdmu_convergence_reverse}.]
Assertion~\ref{Thm_tdmu_convergence_reverse_a} follows from Definition~\ref{Def_CHF_convergence_within_CF}; see also the proof of Lemma~\ref{Lem_pass_to_timewise}.

To see Assertion~\ref{Thm_tdmu_convergence_reverse_b} under the assumption that $\mu^\infty_t \in \PP^1 (\XX^\infty_t)$ for all $t \in I^{\prime, \infty}$, it suffices to show uniform convergence over $J \cap (-\infty, T']$ for any $T' < T_\infty$.
By replacing $J$ with $J \cap (-\infty, T']$, we may assume that $\sup J < T_\infty$.
Suppose that the convergence (\ref{eq_tdmu_conv_no_J}) was not uniform over $J$.
Then we may pass to a subsequence such that uniform convergence over $J$ is violated for any further subsequence.
By Assertion~\ref{Thm_tdmu_convergence_reverse_a} and Lemma~\ref{Lem_pass_to_timewise}, we can pass to a subsequence and choose $T \in (\sup J , T_\infty]$ arbitrarily close to $T_\infty$ such that (\ref{eq_tdmu_conv_no_J}) and (\ref{eq_met_flow_conv_no_J}) are time-wise at $T$.
Now Theorem~\ref{Thm_tdmu_convergence}\ref{Thm_tdmu_convergence_a} produces the desired contradiction.

Lastly, we show the statement involving weak convergence in Assertion~\ref{Thm_tdmu_convergence_reverse_b}.
Fix some $t_0 \in I_*^\infty \setminus \{ T_\infty \}$ at which the convergence (\ref{eq_met_flow_conv_no_J}) is time-wise and suppose by contradiction that we don't have weak convergence $(\varphi^i_{t_0})_* \td\mu^i_{t_0} \to (\varphi^i_{t_0})_* \td\mu^\infty_{t_0}$.
Then we can pass to a subsequence and find a bounded, continuous function $f : Z_{t_0} \to [0,1]$ such that
\begin{equation} \label{eq_lim_not_equal}
\lim_{i \to \infty} \int_{\XX^i_{t_0}} f \circ \varphi^i_{t_0} \, d\td\mu^i_{t_0}
= \lim_{i \to \infty} \int_{Z_{t_0}} f \, d(\varphi^i_{t_0})_* \td\mu^i_{t_0} 
 \neq \int_{Z_{t_0}} f \, d(\varphi^\infty_{t_0})_* \td\mu^\infty_{t_0}  
= \int_{\XX^\infty_{t_0}} f \circ \varphi^\infty_{t_0} \, d\td\mu^\infty_{t_0}. 
\end{equation}
By Assertion~\ref{Thm_tdmu_convergence_reverse_a} and Lemma~\ref{Lem_pass_to_timewise}, we can pass to a further subsequence and find some $T \in (t_0 , T_\infty]$ such that (\ref{eq_tdmu_conv_no_J}) is time-wise at $T$ and (\ref{eq_met_flow_conv_no_J}) is uniform over $J \cup \{t_0, T \}$.
Let $\alpha , R > 0$ be small/large constants whose values we will determine later.
Choose a basepoint $p \in Z_{T}$  and continuous function $w : Z_{T} \to [0,1]$ with $w \equiv 1$ on $B(p,R)$ and $w \equiv 0$ outside of $B(p, 2R)$.
Set
\[ a_i := \int_{Z_{T}} w \, d(\varphi^i_{T})_* \td\mu^i_{T}  = \int_{\XX^i_{T}} w \circ \varphi^i_{T} \, d\td\mu^i_{T}. \]
Note that $\lim_{i \to \infty} a_i = a_\infty$ and that by choosing $R$ sufficiently large, we can achieve that $a_\infty > 1-\alpha$.
Let $(\td\mu^{\prime, i}_t)_{t \in I^{\prime,i}, t \leq T}$, $i \in \IN \cup \{ \infty \}$, be the conjugate heat flows with initial condition $d\td\mu^{\prime, i}_T = a_i^{-1} (w \circ \varphi^i_T ) \, d\td\mu^i_T$.
Then we have strict convergence $\td\mu^{\prime, i}_T \to \td\mu^{\prime, \infty}_T$ and $\td\mu^{\prime, \infty}_T \in \PP^1( \XX^\infty_T)$.
So by Theorem~\ref{Thm_tdmu_convergence}\ref{Thm_tdmu_convergence_a} we have strict convergence $\td\mu^{\prime, i}_{t_0} \to \td\mu^{\prime, \infty}_{t_0}$.
On the other hand, Proposition~\ref{Prop_properties_CHK}\ref{Prop_properties_CHK_c} implies that for large $i \leq \infty$
\[ (1- \alpha) \td\mu^{\prime, i}_{t_0} \leq \td\mu^{i}_{t_0}. \]
This implies that
\begin{multline*}
 (1-\alpha) \int_{\XX^i_{t_0}} f \circ \varphi^i_{t_0} \, d\td\mu^{\prime, i}_{t_0} 
\leq  \int_{\XX^i_{t_0}} f \circ \varphi^i_{t_0} \, d\td\mu^{ i}_{t_0} 
= 1 - \int_{\XX^i_{t_0}} (1 - (f \circ \varphi^i_{t_0})) \, d\td\mu^{ i}_{t_0} \\
\leq 1 - (1-\alpha) \int_{\XX^i_{t_0}} (1 - (f \circ \varphi^i_{t_0})) \, d\td\mu^{\prime, i}_{t_0} 
= \alpha +  (1-\alpha) \int_{\XX^i_{t_0}} f \circ \varphi^i_{t_0} \, d\td\mu^{\prime, i}_{t_0}.
\end{multline*}
Since both sides of this inequality converge and $\alpha$ can be chosen arbitrarily, we obtain a contradiction to (\ref{eq_lim_not_equal}).
\end{proof}
\bigskip

\subsection{Change of basepoint theorem} \label{subsec_change_basepoints}
The following theorem, which is the main result of this subsection,  shows that we can exchange the sequence of conjugate heat flows in any convergent sequence of metric flow pairs by any other convergent sequence of conjugate heat flows under a technical assumption.
The statement can be seen as a change of basepoint theorem.
It is analogous to the following statement: If we have pointed Gromov-Hausdorff convergence $(X_i, d_i, x_i) \to (X_\infty, d_\infty, x_\infty)$ and $\td x_i \to \td x_\infty$ for some points $\td x_i \in X_i$, then we also have pointed Gromov-Hausdorff convergence $(X_i, d_i, \td x_i) \to (X_\infty, d_\infty, \td x_\infty)$.

\begin{Theorem} \label{Thm_change_basepoints}
Let $(\XX^i, (\mu^i_t)_{t \in I^{\prime, i}})$, $i \in \IN \cup \{ \infty \}$, be metric flow pairs over intervals $I^i \subset \IR$ and consider a correspondence $\CF$ between the metric flows $\XX^i$, $i \in \IN \cup \{ \infty \}$ over $I''$.
Suppose that for some subset $J \subset \IR$ we have the following convergence (possibly on compact time-intervals)
\begin{equation} \label{eq_metflow_conv_change_basepts}
 (\XX^i, (\mu^i_t)_{t \in I^{\prime,i}}) \xrightarrow[i \to \infty]{\quad \IF, \CF, J \quad}  (\XX^\infty, (\mu^\infty_t)_{t \in I^{\prime,\infty}}), 
\end{equation}
Consider conjugate heat flows $(\td\mu^i_t)_{t \in I^i_*}$, $i \in \IN \cup \{ \infty \}$, where $I^i_* = I^{\prime, i} \cap (-\infty, T_i)$ or $I^{\prime, i} \cap (-\infty, T_i]$ for some $T_i \in I^i$, such that
\begin{equation} \label{eq_td_conv_change_basepts}
  (\td\mu^i_t)_{t \in I_*^i} \xrightarrow[i \to \infty]{\quad  \CF \quad}   (\td\mu^\infty_t)_{t \in I^\infty_*}.  
\end{equation}
Assume that one of the following is true:
\begin{enumerate}[label=(\roman*)]
\item \label{Thm_change_basepoints_i} $T_\infty = \sup I_*^\infty \in J$, the convergence (\ref{eq_td_conv_change_basepts}) is time-wise at time $T_\infty$ and $\td\mu^\infty_T \in \PP^1 (\XX^\infty_{T_\infty})$.
\item \label{Thm_change_basepoints_ii} $\sup (J \cap I_*^\infty) < \sup I_*^\infty = T_\infty$ and $\td\mu^\infty_{t} \in \PP^1 (\XX^\infty_{t})$ for $t \in I^\infty_*$ near $T_\infty$.
\end{enumerate}
Then we have the following convergence (possibly on compact time-intervals)
\begin{equation} \label{eq_td_met_flowpairs}
 (\XX^i_{I^i_*}, (\td\mu^i_t)_{t \in I^i_*}) \xrightarrow[i \to \infty]{\quad \IF, \CF|_{I^\infty_* \cap I''}, J \cap I_*^\infty \quad}  (\XX^\infty_{I^\infty_*}, (\td\mu^\infty_t)_{t \in I^\infty_*}). 
\end{equation}
Moreover, (\ref{eq_td_met_flowpairs}) is time-wise at any time $t \in I^\infty_*$, $t < \sup I^\infty_*$, at which the convergence (\ref{eq_metflow_conv_change_basepts}) is time-wise.
\end{Theorem}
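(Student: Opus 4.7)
The plan is to reduce both cases to Lemma~\ref{Lem_mu_T_conv_mf_conv}, which is the engine for converting strict convergence of the candidate heat flows at a single time $T$ into $\IF$-convergence of the associated metric flow pairs on $(-\infty,T]$. The overall strategy is therefore to find a suitable auxiliary time $T$ at which strict convergence $\td\mu^i_T \to \td\mu^\infty_T$ holds with $\td\mu^\infty_T \in \PP^1$, apply the lemma there, and either use that $(-\infty,T]$ already covers what we need (case (i)) or absorb the leftover short interval $(T, T_\infty]$ into the exceptional set of Definition~\ref{Def_IF_dist_within_CF} (case (ii)).

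For case (i), I would take $T=T_\infty$ directly. Since $T_\infty\in J$, the hypothesis (\ref{eq_metflow_conv_change_basepts}) is uniform over a set containing $T_\infty$, and the time-wise convergence of (\ref{eq_td_conv_change_basepts}) at $T_\infty$ is precisely strict convergence $\td\mu^i_{T_\infty}\to \td\mu^\infty_{T_\infty}$. Since $\td\mu^\infty_{T_\infty}\in\PP^1$ by assumption, Lemma~\ref{Lem_mu_T_conv_mf_conv} applies and yields (\ref{eq_td_met_flowpairs}) uniformly over $J\cap(-\infty,T_\infty]\supset J\cap I_*^\infty$. The time-wise claim at a point $t<T_\infty$ where (\ref{eq_metflow_conv_change_basepts}) is time-wise follows by the same argument with $J$ enlarged to $J\cup\{t\}$; the enlarged $J$ still contains $T_\infty$ and the enlarged hypothesis is available by assumption.

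For case (ii), $T_\infty$ is inaccessible but $\td\mu^\infty_t\in\PP^1(\XX^\infty_t)$ for $t$ near $T_\infty$. First I would combine Theorem~\ref{Thm_tdmu_convergence_reverse}\ref{Thm_tdmu_convergence_reverse_a} and Lemma~\ref{Lem_pass_to_timewise} (applied to (\ref{eq_metflow_conv_change_basepts})) to pass to a subsequence on which both convergences are time-wise at almost every $t\in I_*^\infty$, and on which strict convergence $\td\mu^i_t\to\td\mu^\infty_t$ holds for a.e.\ $t$. Given $\eps>0$, pick $T\in I_*^\infty$ with
\[ T>\max\bigl\{T_\infty-\eps^2,\ \sup(J\cap I_*^\infty)\bigr\}, \]
such that $\td\mu^\infty_T\in\PP^1$, both convergences are time-wise at $T$, and strict convergence holds at $T$. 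Apply Lemma~\ref{Lem_mu_T_conv_mf_conv} with $J$ replaced by $J\cup\{T\}$ to obtain a bound on
\[ d_{\IF}^{\,\CF|_{I''\cap(-\infty,T]},\ J\cap I_*^\infty}\bigl((\XX^i_{\leq T},(\td\mu^i_t)),\,(\XX^\infty_{\leq T},(\td\mu^\infty_t))\bigr) \]
that tends to $0$. Enlarging the exceptional subset from Definition~\ref{Def_IF_dist_within_CF} by $I_*^\infty\cap(T,T_\infty]$, whose measure is at most $\eps^2$, produces an exceptional set witnessing a distance bound on all of $I_*^\infty\cap I''$ that is $\lesssim\eps$; letting $\eps\to 0$ and invoking the standard ``every subsequence has a convergent further subsequence'' argument removes the extraction and proves (\ref{eq_td_met_flowpairs}). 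The time-wise claim at a point $t<T_\infty$ is handled identically after enlarging $J$ by $\{t\}$ and requiring $T>t$. The ``on compact time-intervals'' variant follows by restricting the whole argument to compact subintervals of $I^\infty$.

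The main obstacle is really the subsequence bookkeeping in case (ii): one must simultaneously satisfy, at the auxiliary time $T$, the four conditions (strict convergence of $\td\mu^i_T$, time-wise convergence of the underlying metric flow pairs, membership of $\td\mu^\infty_T$ in $\PP^1$, and proximity to $T_\infty$), then show the resulting bound upgrades from a subsequence to the full sequence. Each of these conditions holds on a set of full measure in a left-neighborhood of $T_\infty$ (using Theorem~\ref{Thm_tdmu_convergence_reverse}, Lemma~\ref{Lem_pass_to_timewise}, and the hypothesis on $\PP^1$), so their common intersection is dense near $T_\infty$ and admissible choices of $T$ are plentiful; the care lies in tracking uniformity of exceptional sets against the arbitrary tolerance $\eps$.
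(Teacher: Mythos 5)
Your proof is correct and follows essentially the same route as the paper: case (i) is a direct application of Lemma~\ref{Lem_mu_T_conv_mf_conv} at $T=T_\infty$, and case (ii) combines Lemma~\ref{Lem_pass_to_timewise} and Theorem~\ref{Thm_tdmu_convergence_reverse} to locate an auxiliary time $T\in(T_\infty-\tfrac12\eps^2,T_\infty)$ where the lemma applies, absorbs the remaining short interval into the exceptional set, and closes with the arbitrary-subsequence argument. The only cosmetic difference is that you list ``time-wise convergence of~(\ref{eq_td_conv_change_basepts}) at $T$'' and ``strict convergence at $T$'' as separate requirements, though they are the same condition.
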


\begin{proof}
It suffices to show (\ref{eq_td_met_flowpairs}); the statement about time-wise convergence follows after replacing $J$ with $J \cup \{ t \}$.
We will carry out the proof in the case in which (\ref{eq_metflow_conv_change_basepts}) holds on $I^\infty$; the case in which the convergence holds on compact time-intervals is similar.

If Condition \ref{Thm_change_basepoints_i} holds, then we have strict convergence of $\td\mu^i_{T_\infty}$ to $\td\mu^\infty_{T_\infty}$, so the theorem follows from Lemma~\ref{Lem_mu_T_conv_mf_conv}.

Suppose now that Condition \ref{Thm_change_basepoints_ii} holds.
Consider an arbitrary $\eps > 0$ and arbitrary subsequences of the given sequences.
It suffices to show that for infinitely many $i$ we have
\begin{equation} \label{eq_goal_dF_eps}
 d_{\IF}^{\,\CF |_{I^\infty_* \cap I''}, J \cap I^\infty_*} \big( (\XX^i_{I^i_*}, (\td\mu^i_t)_{t \in I^i_*}), (\XX^\infty_{I^\infty_*}, (\td\mu^\infty_t)_{t \in I^\infty_*}) \big) \leq \eps. 
\end{equation}

By Lemma~\ref{Lem_pass_to_timewise} and Theorem~\ref{Thm_tdmu_convergence_reverse} and after passing to a subsequence, we can find some $T \in ( T_\infty- \tfrac12 \eps^2, T_\infty)$ such that both convergences (\ref{eq_metflow_conv_change_basepts}), (\ref{eq_td_conv_change_basepts}) are time-wise at time $T$.
So by Lemma~\ref{Lem_mu_T_conv_mf_conv} we have for large $i$
\[ d_{\IF}^{\,\CF |_{I^\infty_* \cap I'' \cap (-\infty,T]}, J \cap I^\infty_*} \big( (\XX^i_{I^i_* \cap (-\infty, T]}, (\td\mu^i_t)_{t \in I^i_* \cap (-\infty, T]}), (\XX^\infty_{I^\infty_* \cap (-\infty, T]}, (\td\mu^\infty_t)_{t \in I^\infty_* \cap (-\infty, T]}) \big) \leq \tfrac12 \eps. \]
Since $|( I^\infty_* \cap I'') \setminus (-\infty,T]| \leq \frac12 \eps^2$, this implies (\ref{eq_goal_dF_eps}).
\end{proof}
\bigskip

\subsection{Representing points as limits of sequences}
The following theorem states that points in the limit of an $\IF$-convergent sequence of metric flow pairs can be represented as limits of points in the sequence.

\begin{Theorem} \label{Thm_pts_as_limits_within}
Let $(\XX^i, (\mu^i_t)_{t \in I^{\prime, i}})$, $i \in \IN \cup \{ \infty \}$, be metric flow pairs over intervals $I^i \subset \IR$ and consider a correspondence $\CF$ between the metric flows $\XX^i$, $i \in \IN \cup \{ \infty \}$.
Suppose that for some $J \subset \IR$ we have on compact time-intervals
\begin{equation} \label{eq_all_pts_limit_F_conv}
 (\XX^i, (\mu^i_t)_{t \in I^{\prime,i}}) \xrightarrow[i \to \infty]{\quad \IF, \CF, J \quad}  (\XX^\infty, (\mu^\infty_t)_{t \in I^{\prime,\infty}})
\end{equation}
and that all $\XX^i$, $i \in \IN \cup \{ \infty \}$, are $H$-concentrated for some uniform $H < \infty$.
Consider some point $x_\infty \in \XX^\infty_{t_\infty}$ with $t_\infty > \inf I^\infty$ and a sequence of times $t_i \in I^{\prime, i}$ with $t_i \to t_\infty$.
Then there are points $x_i \in \XX^i_{t_i}$ such that 
\[ x_i \xrightarrow[i \to \infty]{\quad  \CF, J \quad} x_\infty. \]
\end{Theorem}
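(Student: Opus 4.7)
The plan is to construct the $x_i$ via an approximate $H$-center preimage procedure. By Theorem~\ref{Thm_tdmu_convergence_reverse}\ref{Thm_tdmu_convergence_reverse_a} and Lemma~\ref{Lem_pass_to_timewise}, after passing to a subsequence we may assume that the convergence (\ref{eq_all_pts_limit_F_conv}) is time-wise at every $t \in I^\infty \setminus E_\infty$ for some $E_\infty \subset I^\infty$ of measure zero. Since $t_\infty > \inf I^\infty$, we may pick times $s_k \in I^{\prime, \infty} \setminus E_\infty$ with $s_k \nearrow t_\infty$, at which the convergence is then time-wise; in particular $s_k \in I^{\prime\prime, i} \cap I^{\prime\prime, \infty}$ and $s_k < t_i$ for $i$ large. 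By Proposition~\ref{Prop_H_center_existence}, pick an $H$-center $z_{k, \infty} \in \XX^\infty_{s_k}$ of $x_\infty$, noting that $d_{W_1}^{\XX^\infty_{s_k}}(\delta_{z_{k, \infty}}, \nu^\infty_{x_\infty; s_k}) \leq \sqrt{H(t_\infty - s_k)}$. Since $z_{k, \infty} \in \XX^\infty_{s_k} = \supp \mu^\infty_{s_k}$, the strict convergence $(\varphi^i_{s_k})_* \mu^i_{s_k} \to (\varphi^\infty_{s_k})_* \mu^\infty_{s_k}$ in $d_{W_1}^{Z_{s_k}}$ produces points $z_{k, i} \in \XX^i_{s_k}$ with $\varphi^i_{s_k}(z_{k, i}) \to \varphi^\infty_{s_k}(z_{k, \infty})$ and $\liminf_{i \to \infty} \mu^i_{s_k}(B(z_{k, i}, r_1)) \geq \mu^\infty_{s_k}(B(z_{k, \infty}, r_1/2)) > 0$ for every fixed $r_1 > 0$.

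The main technical step is to produce, for each $k$ and large $i$, a point $x_{i, k} \in \XX^i_{t_i}$ such that $\nu^i_{x_{i, k}; s_k}$ is close to $\delta_{z_{k, i}}$ in $d_{W_1}^{\XX^i_{s_k}}$. Fix parameters $r_1, A > 0$, and for each $x \in \XX^i_{t_i}$ fix an $H$-center $z^i_x \in \XX^i_{s_k}$. If $d^i_{s_k}(z^i_x, z_{k, i}) > r_1 + \sqrt{AH(t_i - s_k)}$, then $B(z^i_x, \sqrt{AH(t_i - s_k)}) \cap B(z_{k, i}, r_1) = \emptyset$, so Lemma~\ref{Lem_nu_BA_bound} forces $\nu^i_{x; s_k}(B(z_{k, i}, r_1)) \leq 1/A$. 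Integrating against $\mu^i_{t_i}$ and using the reproduction formula, we obtain
\[ \mu^i_{s_k}(B(z_{k, i}, r_1)) = \int_{\XX^i_{t_i}} \nu^i_{x; s_k}(B(z_{k, i}, r_1)) \, d\mu^i_{t_i}(x) \leq \mu^i_{t_i}(T^i_k) + \tfrac{1}{A}, \]
where $T^i_k := \{x \in \XX^i_{t_i} : d^i_{s_k}(z^i_x, z_{k, i}) \leq r_1 + \sqrt{AH(t_i - s_k)}\}$. Thus $\mu^i_{t_i}(T^i_k) > 0$ whenever $\mu^i_{s_k}(B(z_{k, i}, r_1)) > 1/A$, which holds for large $i$ as soon as $\mu^\infty_{s_k}(B(z_{k, \infty}, r_1/2)) > 2/A$. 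Choosing $A = A(k)$, $r_1 = r_1(k)$ compatibly so that $r_1(k) \to 0$, $A(k)(t_\infty - s_k) \to 0$, and $\mu^\infty_{s_k}(B(z_{k, \infty}, r_1(k)/2)) > 2/A(k)$ (possible since every neighborhood of $z_{k, \infty}$ has positive $\mu^\infty_{s_k}$-mass and we are free to push $s_k$ toward $t_\infty$), we pick $x_{i, k} \in T^i_k$. The triangle inequality together with $\sqrt{\Var(\delta_{z^i_{x_{i, k}}}, \nu^i_{x_{i, k}; s_k})} \leq \sqrt{H(t_i - s_k)}$ then yields
\[ d_{W_1}^{\XX^i_{s_k}}(\nu^i_{x_{i, k}; s_k}, \delta_{z_{k, i}}) \leq (1 + \sqrt{A(k)}) \sqrt{H(t_i - s_k)} + r_1(k). \]

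Combining these estimates with $\varphi^i_{s_k}(z_{k, i}) \to \varphi^\infty_{s_k}(z_{k, \infty})$ in $Z_{s_k}$ shows that $\limsup_{i \to \infty} d_{W_1}^{Z_{s_k}}((\varphi^i_{s_k})_* \nu^i_{x_{i, k}; s_k}, (\varphi^\infty_{s_k})_* \nu^\infty_{x_\infty; s_k})$ tends to $0$ as $k \to \infty$. Since $\nu^\infty_{x_\infty; s_k} \in \PP^1(\XX^\infty_{s_k})$ by $H$-concentration, Theorem~\ref{Thm_tdmu_convergence}\ref{Thm_tdmu_convergence_a} applied at $T = s_k$ upgrades this strict convergence at time $s_k$ to convergence of the conjugate heat flows $(\nu^i_{x_{i, k}; s})_{s \leq s_k}$ to $(\nu^\infty_{x_\infty; s})_{s \leq s_k}$ within $\CF|_{I'' \cap (-\infty, s_k]}$, uniform over $J \cap (-\infty, s_k]$. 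A standard diagonal argument producing $x_i := x_{i, k(i)}$ with $k(i) \to \infty$ sufficiently slowly then gives $(\nu^i_{x_i; s})_{s < t_i} \to (\nu^\infty_{x_\infty; s})_{s < t_\infty}$ within $\CF$ on compact time-intervals, uniform over $J$; by Definition~\ref{Def_convergence_PP_measure} this is the desired convergence $x_i \to x_\infty$. The main obstacle is the pigeonhole argument producing $x_{i, k}$ together with the compatible choice of $r_1(k), A(k)$; it relies crucially on the uniform $H$-concentration, the reproduction formula, and the positivity of $\mu^\infty_{s_k}$-mass in small neighborhoods of the $H$-center $z_{k, \infty}$.
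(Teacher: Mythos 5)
Your argument is in substance the paper's own: pass to a subsequence for almost-everywhere time-wise convergence, pull $x_\infty$ back to an $H$-center $z_{k,\infty}$ at an earlier time, use strict convergence of the measures at that time to find approximating points $z_{k,i}$ in $\XX^i$, run a mass/pigeonhole argument through the reproduction formula to produce $x_{i,k} \in \XX^i_{t_i}$, estimate $d_{W_1}$, apply Theorem~\ref{Thm_tdmu_convergence}, and diagonalize. (The paper phrases the pigeonhole step as an averaging argument, and keeps a fixed auxiliary radius $r$ with a separate $\eps \to 0$ and then $r \to 0$, but this is cosmetic.)

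There is, however, one place where the proposal is not yet a proof: the parenthetical justification for the compatible choice of $r_1(k)$, $A(k)$, $s_k$. You need simultaneously $r_1(k) \to 0$, $A(k)(t_\infty - s_k) \to 0$, and $\mu^\infty_{s_k}(B(z_{k,\infty}, r_1(k)/2)) > 2/A(k)$. The statement ``possible since every neighborhood of $z_{k,\infty}$ has positive $\mu^\infty_{s_k}$-mass and we are free to push $s_k$ toward $t_\infty$'' does not resolve this: for each \emph{fixed} $s_k$ the mass is positive and one can take $A(k)$ large, but then $A(k)(t_\infty - s_k)$ may fail to go to zero, since the mass — and hence the required size of $A(k)$ — has no a priori lower bound that is uniform as $s_k \to t_\infty$ (the center $z_{k,\infty}$ moves with $s_k$). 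What is actually needed, and what the paper proves as its estimate~(\ref{eq_mutpB2rgeq12Br}), is a \emph{uniform} lower bound: for any fixed $r > 0$ and $s$ close enough to $t_\infty$, any $H$-center $z \in \XX^\infty_s$ of $x_\infty$ satisfies $\mu^\infty_s(B(z, 2r)) \geq \tfrac12 \mu^\infty_{t_\infty}(B(x_\infty, r))$. This follows by writing $\mu^\infty_s = \int_{\XX^\infty_{t_\infty}} \nu^\infty_{\cdot;s}\, d\mu^\infty_{t_\infty}$, observing that for $y \in B(x_\infty, r)$ any $H$-center of $y$ at time $s$ lies within $r + 2\sqrt{H(t_\infty - s)}$ of $z$ (triangle inequality in $W_1$ plus Lemma~\ref{Lem_W_1_vs_Var}), and then invoking Lemma~\ref{Lem_nu_BA_bound}; taking $t_\infty - s$ small relative to $r^2/H$ gives $\nu^\infty_{y;s}(B(z,2r)) \geq \tfrac12$ for such $y$. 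With this uniform bound in hand you may first fix $r_1$, obtain a $k$-independent positive constant $c(r_1)$, take $A(k)$ bounded (say $A(k) = 3/c(r_1)$) so that $A(k)(t_\infty - s_k) \to 0$ trivially, pass $k \to \infty$, and only then let $r_1 \to 0$; or you may diagonalize by letting $r_1(k) \to 0$ slowly enough, but in either case the uniform mass bound is the missing ingredient. You gesture at this at the end when you list ``the reproduction formula'' among the crucial ingredients, so the idea is present, but as written the step is not justified.
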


Note that if $t_\infty \in I^{\prime, i}$ for all $i \in \IN$, then we can apply the theorem to the constant sequence $t_i = t_\infty$ and obtain a sequence of points $x_i \in \XX^i_{t_\infty}$ at the same time.
It is, however, not guaranteed that this sequence of points also converges \emph{strictly} within $\CF$.

\begin{proof}
Let $r, \eps > 0$ be two constants whose values we will determine later.
Consider an arbitrary subsequence of the given sequence.
It suffices to show the theorem after passing to a further subsequence.

By Theorem~\ref{Thm_tdmu_convergence_reverse} we may pass to a subsequence and assume that the convergence (\ref{eq_all_pts_limit_F_conv}) is time-wise at almost every time.
Choose an $H$-center $y_\infty \in \XX_{t'}^\infty$, $t' \in (t_\infty - \eps, t_\infty)$, of $x_\infty$ such that  (\ref{eq_all_pts_limit_F_conv}) is time-wise at time $t'$.
Then we can find points $y_i \in \XX^i_{t'}$ that strictly converge to $y_\infty$ within $\CF$.
It follows that
\begin{equation} \label{eq_liminfB2rBr}
 \liminf_{i \to \infty} \mu^i_{t'} (B(y_i, 2r)) \geq \mu^\infty_{t'} (B(y_\infty, 2r)). 
\end{equation}

For every $H$-center $y' \in \XX^\infty_{t'}$ of any point $x' \in B(x_\infty, r)$  we have
\[ d_{t'}^\infty (y', y_\infty) \leq d^{\XX^\infty_{t'}}_{W_1} (\delta_{y'}, \nu^\infty_{x';t'}) + d^{\XX^\infty_{t'}}_{W_1} (\nu^\infty_{x';t'}, \nu^\infty_{x_\infty;t'} ) + d^{\XX^\infty_{t'}}_{W_1} (\nu^\infty_{x_\infty;t'}, \delta_{y_\infty} ) \leq r + 2 \sqrt{H \eps}. \]
So, assuming $\eps \leq \ov\eps(r, H)$ is chosen small enough such that $r + 2 \sqrt{H \eps} + \sqrt{2H \eps} \leq 2r$, we obtain, using Lemma~\ref{Lem_nu_BA_bound},
\begin{equation} \label{eq_mutpB2rgeq12Br}
 \mu^\infty_{t'} (B(y_\infty, 2r))
\geq \int_{B(x_\infty, r)} \nu^\infty_{x';t'} ( B(y_\infty, 2r)) d\mu^\infty_{t_\infty} (x') \geq \tfrac12 \mu^\infty_{t_\infty} (B(x_\infty, r)). 
\end{equation}

By combining (\ref{eq_liminfB2rBr}), (\ref{eq_mutpB2rgeq12Br}) and using the reproduction formula, we find points $x_i \in \XX^i_{t_i}$ such that for large $i$
\[ \nu^i_{x_i; t'} ( B(y_i, 2r) ) \geq \tfrac14 \mu^\infty_{t_\infty} (B(x_\infty, r)). \]
Let $z_i \in \XX^i_{t'}$ be $H$-centers of $x_i$ for large $i$.
By Lemma~\ref{Lem_nu_BA_bound} we have for large $i$
\[ d^i_{t'} (z_i, y_i) \leq 4r + \bigg( \frac{H (t_i - t')}{ \tfrac14 \mu^\infty_{t_\infty} (B(x_\infty, r)) } \bigg)^{1/2} \leq 4r + \bigg( \frac{8H \eps}{  \mu^\infty_{t_\infty} (B(x_\infty, r)) } \bigg)^{1/2}. \]
So for large $i$ we have for any $t \in I^{\prime, i}$, $t \leq t'$,
\begin{multline*}
 d^{\XX^i_t}_{W_1} (\nu^i_{x_i; t}, \nu^i_{y_i; t} )
 \leq d^{\XX^i_{t'}}_{W_1} (\nu^i_{x_i; t'}, \delta_{z_i} ) + d^{\XX^i_{t'}}_{W_1} (\delta_{z_i}, \delta_{y_i}) 
 \leq \sqrt{H (t_i - t')} + 4r + \bigg( \frac{8H \eps}{  \mu^\infty_{t_\infty} (B(x_\infty, r)) } \bigg)^{1/2} \\
 \leq \sqrt{2H\eps} + 4r + \bigg( \frac{8H \eps}{  \mu^\infty_{t_\infty} (B(x_\infty, r)) } \bigg)^{1/2}.
\end{multline*}
On the other hand,
\[ d^{\XX^\infty_t}_{W_1} ( \nu^\infty_{y_\infty ; t}, \nu^\infty_{x_\infty; t} )
\leq d^{\XX^\infty_{t'}}_{W_1} ( \delta_{y_\infty}, \nu^\infty_{x_\infty; t'} )
\leq \sqrt{H \eps}. \]

Assume that $\CF$ is as in (\ref{eq_CF_over_N_infty}).
By Theorem~\ref{Thm_tdmu_convergence} for any compact $I_0 \subset I^\infty \cap (- \infty, t')$ we can find measurable subsets $E_i \subset I_i$ such that for large $i$ we have $J \cap I_0 \subset (I_0 \cap I'') \setminus E_i \subset I^{\prime\prime, i} \cap I^{\prime\prime, \infty}$ and
\[ |E_i| \to 0, \qquad \sup_{t \in (I_0 \cap I'') \setminus E_i} d^{Z_t}_{W_1} ( (\varphi^i_t)_* \nu^i_{y_i;t}, (\varphi^\infty_t)_* \nu^\infty_{y_\infty;t} ) \to 0 .\]
So for large $i$ we have $|E_i| \leq \eps$ and for all $t \in (I_0 \cap I'') \setminus E_i$
\begin{multline*}
 d^{Z_t}_{W_1} ( (\varphi^i_t)_* \nu^i_{x_i;t}, (\varphi^\infty_t)_* \nu^\infty_{x_\infty;t} ) 
\leq 
d^{\XX^i_t}_{W_1} (\nu^i_{x_i; t}, \nu^i_{y_i; t} ) 
+ d^{Z_t}_{W_1} ( (\varphi^i_t)_* \nu^i_{y_i;t}, (\varphi^\infty_t)_* \nu^\infty_{y_\infty;t} ) 
+ d^{\XX^\infty_t}_{W_1} ( \nu^\infty_{y_\infty ; t}, \nu^\infty_{x_\infty; t} ) \\
\leq \sqrt{H\eps}  + \eps + \sqrt{2H\eps} + 4r + \bigg( \frac{8H \eps}{  \mu^\infty_{t_\infty} (B(x_\infty, r)) } \bigg)^{1/2}. 
\end{multline*}
Letting first $\eps \to 0$ and then $r \to 0$ implies the desired convergence statement.
\end{proof}
\bigskip

\subsection{Compactness of sequences of points}
The following theorem shows that given an $\IF$-convergent sequence of metric flow pairs $(\XX^i, (\mu^i_t)_{t \in I^{\prime, i}})$ and points $x_i \in \XX^i_{t_i}$, $t_i \to t_\infty$, that remain within bounded distance from the ``center of the flow'', we can pass to a subsequence such that the  conjugate heat kernels based at $x_i$ converge to a conjugate heat flow on $\XX^\infty$ that has similar concentration properties as a conjugate heat kernel.
In general, this limit may not be a conjugate heat kernel of some point $x_\infty \in \XX^\infty_{t_\infty}$, but if it is, then we have convergence of $x_i$ to $x_\infty$ within $\CF$.

\begin{Theorem} \label{Thm_compactness_sequence_pts}
Let $(\XX^i, (\mu^i_t)_{t \in I^{\prime, i}})$, $i \in \IN \cup \{ \infty \}$, be metric flow pairs over intervals $I^i \subset \IR$ and consider a correspondence $\CF$ between the metric flows $\XX^i$, $i \in \IN \cup \{ \infty \}$.
Suppose that for some $J \subset \IR$ we have on compact time-intervals
\begin{equation} \label{eq_pts_compactness}
 (\XX^i, (\mu^i_t)_{t \in I^{\prime,i}}) \xrightarrow[i \to \infty]{\quad \IF, \CF, J \quad}  (\XX^\infty, (\mu^\infty_t)_{t \in I^{\prime,\infty}}), 
\end{equation}
and that all $\XX^i$, $i \in \IN \cup \{ \infty \}$, are $H$-concentrated for some uniform $H < \infty$.
Consider a sequence of points $x_i \in \XX^i_{t_i}$ with $t_i \to t_\infty \in I^\infty$, $t_\infty > \inf I^\infty$.
Suppose that $d^{\XX^i_{t_i}}_{W_1} ( \delta_{x_i}, \mu^i_{t_i}) \leq D$ for all $i \in \IN$, where $D < \infty$ is some uniform constant.
Then, after passing to a subsequence, we can find a conjugate heat flow $(\mu^\infty_t)_{t \in I^{\prime, \infty} \cap (-\infty,t_\infty)}$ on $\XX^\infty$, with
\begin{equation} \label{eq_Var_mu_lim_0}
 \lim_{t \nearrow t_\infty, t \in I^{\prime, \infty}} \Var (\mu^\infty_t) = 0, 
\end{equation}
such that on compact time-intervals
\begin{equation} \label{eq_nui_to_mu_in_CF}
 (\nu^i_{x_i;t})_{t \in I^{\prime, i} \cap (-\infty, t_i)}  \xrightarrow[i \to \infty]{\quad  \CF, J \quad}   (\mu^\infty_t)_{t \in I^{\prime, \infty} \cap (-\infty, t_\infty)} . 
\end{equation}
\end{Theorem}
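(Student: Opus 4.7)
The plan is to extract a weak limit of the conjugate heat kernels $\nu^i_{x_i;t}$ at a countable dense set of times, identify it with a conjugate heat flow on $\XX^\infty$, and deduce the stated convergence within $\CF$ via Theorem~\ref{Thm_tdmu_convergence}\ref{Thm_tdmu_convergence_a}. To avoid notational collision with the $(\mu^\infty_t)_{t \in I^{\prime,\infty}}$ already present in (\ref{eq_pts_compactness}), I will temporarily write $\sigma^\infty_t$ for the conjugate heat flow we construct. The uniform bounds needed for tightness come for free: Proposition~\ref{Prop_H_monotonicity_Var} applied to the constant pair $\mu^1_t = \mu^2_t = \nu^i_{x_i;t}$ together with $\Var(\nu^i_{x_i;t_i})=0$ yields $\Var(\nu^i_{x_i;t}) \leq H(t_i-t)$ for $t \leq t_i$, while Proposition~\ref{Prop_compare_CHF}\ref{Prop_compare_CHF_b} applied to the conjugate heat flows starting from $\delta_{x_i}$ and $\mu^i_{t_i}$ yields $d^{\XX^i_t}_{W_1}(\nu^i_{x_i;t}, \mu^i_t) \leq D$. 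Using Lemma~\ref{Lem_pass_to_timewise} I would pass to a subsequence along which the given convergence is time-wise at every $t$ in a countable dense $Q \subset I^{\prime,\infty} \cap (-\infty, t_\infty)$; at any such $t$ the $W_1$-convergence $(\varphi^i_t)_* \mu^i_t \to (\varphi^\infty_t)_* \mu^\infty_t$ gives tightness, and the $W_1$-bound $D$ transfers it to $(\varphi^i_t)_* \nu^i_{x_i;t}$. A diagonal extraction then yields a subsequence along which $(\varphi^i_t)_* \nu^i_{x_i;t}$ converges weakly, and hence in $W_1$ by Lemma~\ref{Lem_weak_conv_2_W1_conv}, to some $\widetilde\sigma_t \in \PP(Z_t)$ with $\Var(\widetilde\sigma_t) \leq H(t_\infty - t)$ for every $t \in Q$.

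The key step is to show that each $\widetilde\sigma_t$ is supported on $\varphi^\infty_t(\XX^\infty_t)$, so that it factors as $(\varphi^\infty_t)_* \sigma^\infty_t$ for some $\sigma^\infty_t \in \PP(\XX^\infty_t)$. I would prove this by adapting the argument of Claim~\ref{Cl_convergence_nu} in the proof of Lemma~\ref{Lem_F_completemess_within_CF}: assume by contradiction that some $y^\infty \in \supp \widetilde\sigma_t$ satisfies $B(y^\infty, r) \cap \varphi^\infty_t(\XX^\infty_t) = \emptyset$, pick $s' \in Q \cap (t, t_\infty)$, and consider $u_i : \XX^i_{s'} \to [0,1]$, $u_i(y) := \nu^i_{y;t}((\varphi^i_t)^{-1}(B(y^\infty, r/2)))$. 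By Definition~\ref{Def_metric_flow}\ref{Def_metric_flow_6}, $\Phi^{-1}(u_i)$ is $(s'-t)^{-1/2}$-Lipschitz uniformly in $i$; the reproduction formula identifies $\int u_i \, d\mu^i_{s'}$ with $(\varphi^i_t)_* \mu^i_t(B(y^\infty, r/2))$, which tends to $0$ by Portmanteau applied to the closed $r/2$-ball, and identifies $\int u_i \, d\nu^i_{x_i;s'}$ with $(\varphi^i_t)_* \nu^i_{x_i;t}(B(y^\infty, r/2))$, which stays bounded below. Extending $\Phi^{-1}(u_i)$ to $Z_{s'}$ via a McShane extension produces uniformly Lipschitz bounded extensions $\widetilde U_i$ on $Z_{s'}$; the full support of $\mu^\infty_{s'}$ combined with the vanishing integral against $(\varphi^i_{s'})_* \mu^i_{s'}$ forces $\widetilde U_i \to 0$ pointwise on $\varphi^\infty_{s'}(\XX^\infty_{s'})$, and together with the tightness of $(\varphi^i_{s'})_* \nu^i_{x_i;s'}$ and control of its mass near $\varphi^\infty_{s'}(\XX^\infty_{s'})$ this contradicts the lower bound on $\int u_i \, d\nu^i_{x_i;s'}$.

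Once the support property is in hand, I would define $\sigma^\infty_t \in \PP(\XX^\infty_t)$ for every $t \in Q$, extend to arbitrary $s \in I^{\prime,\infty} \cap (-\infty, t_\infty)$ by $\sigma^\infty_s := \int_{\XX^\infty_t} \nu^\infty_{y;s} \, d\sigma^\infty_t(y)$ for any $t \in Q$ with $s < t$, and check consistency and the reproduction formula using the reproduction formula on $\XX^\infty$. Proposition~\ref{Prop_H_monotonicity_Var} then gives $\Var(\sigma^\infty_t) \leq H(t_\infty - t) \to 0$, establishing (\ref{eq_Var_mu_lim_0}). For any $T \in Q$, the $W_1$-convergence $(\varphi^i_T)_* \nu^i_{x_i;T} \to (\varphi^\infty_T)_* \sigma^\infty_T$ is exactly strict convergence in the sense of Definition~\ref{Def_convergence_CHF_within_CF_strict}, and since $\sigma^\infty_T \in \PP^1(\XX^\infty_T)$ (finite variance) and the metric flow pair convergence is time-wise at $T$, Theorem~\ref{Thm_tdmu_convergence}\ref{Thm_tdmu_convergence_a} applied with $J':= J \cup \{T\}$ yields convergence of $(\nu^i_{x_i;t})_{t \leq T}$ to $(\sigma^\infty_t)_{t \leq T}$ within $\CF$ uniformly over $J \cap (-\infty, T]$ on compact time-intervals; letting $T \nearrow t_\infty$ along $Q$ covers every compact subinterval of $I^{\prime,\infty} \cap (-\infty, t_\infty)$ and yields (\ref{eq_nui_to_mu_in_CF}).

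The main obstacle will be the support step just sketched: Claim~\ref{Cl_convergence_nu} uses that both measures $\mu_{s'}$ and $\nu_{x;s'}$ live on a single fixed space and that $\nu_{x;s'} \ll \mu_{s'}$ to promote pointwise decay of $u_i$ on $\supp \mu_{s'}$ to integral decay against $\nu_{x;s'}$, but in our setting the images of the supports of $\mu^i_{s'}$ and $\nu^i_{x_i;s'}$ in $Z_{s'}$ only approximately align with $\varphi^\infty_{s'}(\XX^\infty_{s'})$, so the final step of the contradiction is not immediate. I would resolve this by first establishing the support property for $s'$ very close to $t_\infty$, where the variance bound $H(t_i - s')$ forces $\nu^i_{x_i;s'}$ to concentrate near an $H$-center of $x_i$ whose image in $Z_{s'}$ must lie within $\sqrt{H(t_\infty-s')}+D$ of $\supp (\varphi^\infty_{s'})_* \mu^\infty_{s'} = \varphi^\infty_{s'}(\XX^\infty_{s'})$ by the $W_1$-bound $D$, and then propagating the support property backward in time using the reproduction formula on $\XX^\infty$ together with the continuity of conjugate heat kernels supplied by Theorem~\ref{Thm_tdmu_convergence}\ref{Thm_tdmu_convergence_b}.
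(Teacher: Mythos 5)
Your overall strategy — extract weak limits of $\nu^i_{x_i;t}$ at a dense set of times, identify them with a conjugate heat flow on $\XX^\infty$, then feed the result into Theorem~\ref{Thm_tdmu_convergence}\ref{Thm_tdmu_convergence_a} — matches the paper's blueprint. But there are two linked gaps in the middle, and you correctly suspected one of them.

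First, the tightness step. You claim that the $W_1$-bound $d^{\XX^i_t}_{W_1}(\nu^i_{x_i;t},\mu^i_t)\le D$ ``transfers'' the tightness of $(\varphi^i_t)_*\mu^i_t$ to $(\varphi^i_t)_*\nu^i_{x_i;t}$. This is false in a general complete separable metric space: a uniform $W_1$-bound from a tight sequence does not imply tightness. (Take $X=\ell^2$, $\mu_i=\delta_0$, $\nu_i=\tfrac1i\sum_{j\le i}\delta_{De_j}$; then $d_{W_1}(\mu_i,\nu_i)=D$ but no sequence of $\eps$-thickened compact sets à la Lemma~\ref{Lem_basic_measure}\ref{Lem_basic_measure_e} captures $\nu_i$.) The Markov-type argument only produces thickenings of radius $\sim D/\eps$, which grows rather than shrinks as $\eps\to0$, so Lemma~\ref{Lem_basic_measure}\ref{Lem_basic_measure_e} does not apply.

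Second, the support step. You observed the circularity yourself: the contradiction argument adapted from Claim~\ref{Cl_convergence_nu} needs to estimate $\int u_i\,d\nu^i_{x_i;s'}$ using that the mass of $(\varphi^i_{s'})_*\nu^i_{x_i;s'}$ eventually lives where $\widetilde U_i\to0$, i.e.\ near $\varphi^\infty_{s'}(\XX^\infty_{s'})$ — which is precisely the support statement you are trying to establish. Your proposed fix (establish the support for $s'$ near $t_\infty$ using the $H$-center, then propagate backward) does not close the loop: the $W_1$-bound $D$ only places the $H$-center within distance $\approx 2D$ of $\supp\mu^i_{s'}$ (via Markov), and this does not shrink as $s'\nearrow t_\infty$, so the limit measure could in principle sit at distance $\sim D$ from $\varphi^\infty_{s'}(\XX^\infty_{s'})$. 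The propagation-backward step therefore never gets off the ground.

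What is actually needed — and what the paper does — is to bootstrap the concentration directly from the gradient estimate of Definition~\ref{Def_metric_flow}\ref{Def_metric_flow_6}, rather than from a soft $W_1$-bound. Concretely: fix $\eps$, use Lemma~\ref{Lem_basic_measure}\ref{Lem_basic_measure_a} to find compact $K_\eps\subset\XX^\infty_t$ with $\mu^\infty_t(K_\eps)>1-\eps$, set $K_{i,\eps}:=(\varphi^i_t)^{-1}(B(\varphi^\infty_t(K_\eps),\eps))$ so that $\mu^i_t(K_{i,\eps})\ge1-2\eps$ for large $i$, and then observe that $\mu^i_{t_i}(B(x_i,2D))\ge\tfrac12$ (Markov for $W_1\le D$). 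Writing $\mu^i_t(K_{i,\eps})=\int\nu^i_{x';t}(K_{i,\eps})\,d\mu^i_{t_i}(x')$ and using that $x'\mapsto\Phi^{-1}(\nu^i_{x';t}(K_{i,\eps}))$ is $(t_i-t)^{-1/2}$-Lipschitz by Definition~\ref{Def_metric_flow}\ref{Def_metric_flow_6}, one obtains
\[
1-2\eps\le\tfrac12+\tfrac12\,\Phi\bigl(\Phi^{-1}(\nu^i_{x_i;t}(K_{i,\eps}))+2(t_i-t)^{-1/2}D\bigr),
\]
hence $\nu^i_{x_i;t}(K_{i,\eps})\ge1-\Psi(\eps\,|\,t_\infty-t,D)$ with $\Psi\to0$ as $\eps\to0$. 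This single quantitative estimate gives both tightness (via Lemma~\ref{Lem_basic_measure}\ref{Lem_basic_measure_e}) and the containment $\supp\mu'_\infty\subset\varphi^\infty_t(\XX^\infty_t)$, after which Lemma~\ref{Lem_weak_conv_2_W1_conv}, the diagonal extraction at times $t'_k\nearrow t_\infty$, Theorem~\ref{Thm_tdmu_convergence}\ref{Thm_tdmu_convergence_a}, and the variance bound $\Var(\mu^\infty_t)\le H(t_\infty-t)$ finish the proof exactly as you envisioned.
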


\begin{Remark}
In general, $(\mu^\infty_t)_{t \in I^{\prime,\infty} \cap (-\infty,t_\infty)}$ may not need to be a conjugate heat kernel itself.
Consider for example a singular Ricci flow $\MM$ starting from $S^3$ that develops a non-degenerate, 3-dimensional neckpinch at time $T > 0$ and let $\MM' \subset \MM$ correspond to the choice of one component after the neckpinch; see the discussion preceding Theorem~\ref{Thm_superRF_metric_flow} for more details.
Let $\XX$ be the metric flow corresponding to $\MM'$.
Choose a point $x_\infty \in \MM_{T} \setminus \MM'_T$ within the other component of the neckpinch and let $(\mu_t)_{t \in [0,T)}$ be the conjugate heat flow corresponding to the conjugate heat kernel based at $x_\infty$.
Then  $(\mu_t)_{t \in [0,T)}$ is not a conjugate heat kernel on $\XX$, but it may arise as a limit as in (\ref{eq_nui_to_mu_in_CF}): Consider for example the constant sequence $\XX^i := \XX$ and the trivial correspondence $\CF$ and let $x_i := x_\infty (-\tau_i) \in \XX_{T - \tau_i}$ for some sequence $\tau_i \to 0$.
\end{Remark}

\begin{proof}
By Theorem~\ref{Thm_tdmu_convergence_reverse} we may pass to a subsequence and assume that the convergence (\ref{eq_pts_compactness}) is time-wise at almost every time.

\begin{Claim}
Let $t \in I^\infty$, $t < t_\infty$ be some time at which (\ref{eq_pts_compactness}) is time-wise.
Then, after passing to a subsequence, we have strict convergence of $\nu^i_{x_i;t}$ to some probability measure $\td\mu_t \in \PP (\XX^\infty_t)$.
\end{Claim}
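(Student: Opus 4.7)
The plan is to construct $\td\mu_t$ by first extracting a $W_1$-subsequential limit of the pushforwards $(\varphi^i_t)_*\nu^i_{x_i;t}$ inside the ambient space $(Z_t, d^Z_t)$, and then identifying this limit as coming from a probability measure on $\XX^\infty_t$. The first two steps are straightforward compactness arguments; the hard part is the support identification.

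First I would establish tightness of the family $(\varphi^i_t)_*\nu^i_{x_i;t}$ in $\PP(Z_t)$. Since $(\nu^i_{x_i;t'})_{t'\leq t_i}$ and $(\mu^i_{t'})_{t' \in I^{\prime,i}}$ are both conjugate heat flows on $\XX^i$, the $W_1$-monotonicity of Proposition~\ref{Prop_compare_CHF}\ref{Prop_compare_CHF_b} gives
\[
 d_{W_1}^{\XX^i_t}(\nu^i_{x_i;t}, \mu^i_t) \leq d_{W_1}^{\XX^i_{t_i}}(\delta_{x_i}, \mu^i_{t_i}) \leq D,
\]
and hence $d_{W_1}^{Z_t}((\varphi^i_t)_*\nu^i_{x_i;t}, (\varphi^i_t)_*\mu^i_t) \leq D$ since $\varphi^i_t$ is an isometric embedding. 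Because $t$ is a time-wise convergence time, $(\varphi^i_t)_*\mu^i_t \to (\varphi^\infty_t)_*\mu^\infty_t$ in $W_1$, and in particular this sequence is tight; combined with the uniform $W_1$-bound above, tightness of $(\varphi^i_t)_*\nu^i_{x_i;t}$ follows by a standard Kantorovich-Rubinstein duality argument (test against $f = \min(d(\cdot, K_\eps), R)$ to trap mass inside an $(R\eps + D)$-neighborhood of the tight compact $K_\eps$).

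Next I would pass to a subsequence so that $(\varphi^i_t)_*\nu^i_{x_i;t}$ converges weakly in $\PP(Z_t)$ to some $\td\mu^Z_t$ via Lemma~\ref{Lem_basic_measure}\ref{Lem_basic_measure_d}. The $H$-concentration assumption yields $\Var((\varphi^i_t)_*\nu^i_{x_i;t}) = \Var(\nu^i_{x_i;t}) \leq H(t_i - t)$, which is uniformly bounded, so Lemma~\ref{Lem_weak_conv_2_W1_conv} upgrades the weak convergence to $W_1$-convergence in $\PP(Z_t)$.

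The main obstacle, and the bulk of the proof, is then to show $\supp\td\mu^Z_t \subset \varphi^\infty_t(\XX^\infty_t)$, which would let me define $\td\mu_t := ((\varphi^\infty_t)^{-1})_*\td\mu^Z_t \in \PP(\XX^\infty_t)$. My plan is to adapt the forward heat flow argument of Claim~1 in the proof of Lemma~\ref{Lem_F_completemess_within_CF}. Suppose for contradiction $y \in \supp\td\mu^Z_t \setminus \varphi^\infty_t(\XX^\infty_t)$; since $\XX^\infty_t$ is complete and $\varphi^\infty_t$ an isometric embedding, some ball $B(y,3r)$ is disjoint from $\varphi^\infty_t(\XX^\infty_t)$, so $((\varphi^\infty_t)_*\mu^\infty_t)(B(y,3r)) = 0$. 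Take a Lipschitz bump $f_r : Z_t \to [0,1]$ supported in $B(y,2r)$ and equal to $1$ on $B(y,r)$; the forward heat flow $(u^i_\tau)_{\tau \geq t}$ on $\XX^i$ with initial value $f_r \circ \varphi^i_t$ satisfies $\int u^i_{t_i}\, d\mu^i_{t_i} = \int (f_r\circ\varphi^i_t)\, d\mu^i_t \to 0$. By first approximating $f_r$ by $(1-\eps)f_r + \eps/2 \in [\eps/2, 1-\eps/2]$, writing it as $\Phi\circ h$, and invoking the sharp Lipschitz estimate of Proposition~\ref{Prop_gradient_HF}\ref{Prop_gradient_HF_a}, one gets that $u^i_{t_i}$ is Lipschitz with a constant controlled in terms of $t_\infty - t$ alone (in particular, uniform in $r$). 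I would then combine this with the $W_1$-bound $d_{W_1}^{\XX^i_{t_i}}(\delta_{x_i},\mu^i_{t_i}) \leq D$, the concentration of $\nu^i_{x_i;t}$ around its $H$-centers (Lemma~\ref{Lem_nu_BA_bound}), and—since a single time slice estimate is too weak to close the argument directly—an iteration using the reproduction formula and $\IF$-time-wise convergence at intermediate times $s \in (t, t_\infty)$ sufficiently close to $t_\infty$, where $\Var(\nu^i_{x_i;s}) \leq H(t_i - s)$ is small enough to force the limiting mass into $\varphi^\infty_s(\XX^\infty_s)$; propagating this back to time $t$ via the reproduction formula $\nu^i_{x_i;t} = \int \nu^i_{y;t}\, d\nu^i_{x_i;s}(y)$ and the continuity of $y \mapsto \nu^\infty_{y;t}$ from Proposition~\ref{Prop_topology_properties} then excludes the possibility that mass accumulates outside $\varphi^\infty_t(\XX^\infty_t)$, contradicting $y \in \supp\td\mu^Z_t$.
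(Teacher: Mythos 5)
Your proposal correctly identifies the three components of the argument (tightness, upgrade from weak to $W_1$ convergence, and identification of the support), and Step 2 and the overall framing are fine. However, there is a genuine gap in Step 1, and Step 3 does not close as written.

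The tightness argument in Step 1 does not work. From a uniform $W_1$-bound $d_{W_1}^{Z_t}((\varphi^i_t)_*\nu^i_{x_i;t}, (\varphi^i_t)_*\mu^i_t) \leq D$ and tightness of $(\varphi^i_t)_*\mu^i_t$, your Kantorovich--Rubinstein/Markov argument only traps the mass of $(\varphi^i_t)_*\nu^i_{x_i;t}$ inside a neighborhood of a compact set whose \emph{radius stays bounded below by a fixed constant of order} $D$ (test against $\min(d(\cdot,K),R)$ gives mass $\leq \eps + D/R$ outside $B(K,R)$, so to make this $\leq 2\eps$ one must take $R \geq D/\eps$, which blows up as $\eps\to 0$). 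In a non-locally-compact space such as a general complete separable $Z_t$, a fixed-radius neighborhood of a compact set is not precompact, so this does not yield tightness. A concrete counterexample: in $\ell^2$, take $\mu_i = \delta_0$ and $\nu_i = \delta_{e_i}$; then $d_{W_1}(\nu_i,\mu_i) \equiv 1$ and $(\mu_i)$ is tight, yet $(\nu_i)$ has no weak limit. The bound $d_{W_1}^{\XX^i_t}(\nu^i_{x_i;t}, \mu^i_t) \leq D$ alone is simply too weak.

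The mechanism that actually closes both the tightness and the support identification --- and which the paper uses --- is the $\Phi$-Lipschitz estimate of Definition~\ref{Def_metric_flow}\ref{Def_metric_flow_6} in backward form: for a compact $K_\eps \subset \XX^\infty_t$ with $\mu^\infty_t(K_\eps) > 1-\eps$, the function $x' \mapsto \nu^i_{x';t}(K_{i,\eps})$ (where $K_{i,\eps}$ is the $\eps$-neighborhood pulled back to $\XX^i_t$) equals $\Phi\circ h^i$ for a $(t_i-t)^{-1/2}$-Lipschitz function $h^i$. Since $\mu^i_{t_i}(B(x_i, 2D)) \geq 1/2$ and $\int_{\XX^i_{t_i}}\nu^i_{x';t}(K_{i,\eps})\, d\mu^i_{t_i}(x') = \mu^i_t(K_{i,\eps}) \geq 1-2\eps$, the bounded $\Phi^{-1}$-oscillation of $\nu^i_{\cdot;t}(K_{i,\eps})$ over $B(x_i,2D)$ forces $\nu^i_{x_i;t}(K_{i,\eps}) \geq 1 - \Psi(\eps\,|\,t_\infty - t, D)$ with $\Psi(\eps) \to 0$. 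Because $K_{i,\eps}$ is precisely the $\eps$-neighborhood of the fixed compact $\varphi^\infty_t(K_\eps)\subset \varphi^\infty_t(\XX^\infty_t)$, this one estimate simultaneously gives tightness (via Lemma~\ref{Lem_basic_measure}\ref{Lem_basic_measure_e}, since the neighborhood radius $\eps$ shrinks along with the escaping mass) and $\supp\td\mu^Z_t \subset \varphi^\infty_t(\XX^\infty_t)$.

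Your Step 3 gestures at the right mechanism --- you do mention writing the evolved bump as $\Phi\circ h$ and using Proposition~\ref{Prop_gradient_HF}\ref{Prop_gradient_HF_a} --- but then you weaken it to ``$u^i_{t_i}$ is Lipschitz'' and pair it with the $W_1$-bound $d_{W_1}(\delta_{x_i},\mu^i_{t_i})\leq D$, which again only yields $|u^i_{t_i}(x_i) - \int u^i_{t_i}\,d\mu^i_{t_i}| \leq LD$, a bounded but not small quantity. The correct use of the $\Phi$-structure is via oscillation of $\Phi^{-1}(u^i_{t_i})$ on $B(x_i,2D)$ combined with $\mu^i_{t_i}(B(x_i,2D)) \geq 1/2$, which gives $u^i_{t_i}(x_i)\to 0$ directly --- no iteration is needed. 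The iteration you propose as a fallback is not a complete argument and appears to be circular: to ``force the limiting mass into $\varphi^\infty_s(\XX^\infty_s)$'' at an intermediate time $s$ requires the very tightness/support conclusion you are trying to prove, and small variance of $\nu^i_{x_i;s}$ alone does not locate its concentration point near $\varphi^\infty_s(\XX^\infty_s)$.
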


\begin{proof}
Suppose that $\CF$ is as in (\ref{eq_CF_over_N_infty}).
Let $\eps > 0$ be some small constant and use Lemma~\ref{Lem_basic_measure} to choose a compact subset $K_\eps \subset \XX^\infty_t$ such that $\mu^\infty_t (K_\eps) > 1 -  \eps$.
Then for large $i$ we have for $K_{i, \eps} := (\varphi^i_t)^{-1} ( B( \varphi^\infty_t (K_\eps), \eps)) \subset \XX^i_t$
\[ \mu^i_t ( K_{i, \eps} ) = ((\varphi^i_{t})_* \mu^i_{t}) \big(  B( \varphi^\infty_t (K_\eps), \eps) \big) \geq 1 - 2\eps. \]
Since $\mu^i_{t_i} ( B(x_i, 2D) ) \geq \frac12$, we obtain from Definition~\ref{Def_metric_flow}\ref{Def_metric_flow_6} that for large $i$
\begin{align*}
  1 - 2\eps 
&\leq \mu^i_{t}( K_{i, \eps} ) 
\leq \int_{\XX^i_{t_i} } \nu^i_{x';t}( K_{i, \eps} ) d\mu^i_{t_i} (x') \\
&\leq \mu^i_{t_i} (\XX^i_{t_i} \setminus B(x_i, 2D))
+ \mu^i_{t_i} ( B(x_i, 2D)) \Phi \big( \Phi^{-1} ( \nu^i_{x_i;t}( K_{i, \eps} ) ) + 2(t_i - t)^{-1/2} D \big) \\
&\leq \tfrac12 + \tfrac12 \Phi \big( \Phi^{-1} ( \nu^i_{x_i;t}( K_{i, \eps} ) ) + 2(t_i - t)^{-1/2} D \big).
\end{align*}
 It follows that
 \[ \Phi \big( \Phi^{-1} ( \nu^i_{x_i;t}( K_{i, \eps} ) ) + 2(t_i - t)^{-1/2} D \big) \geq 1 - 4\eps. \]
 This implies that for large $i$
 \[ \nu^i_{x_i;t}( K_{i, \eps} ) \geq 1-  \Psi(\eps | t_\infty - t, D ), \]
 where $\Psi(\eps | t_\infty - t, D )$ denotes a function that goes to zero as $\eps \to 0$, while the other arguments are kept fixed.
 So by Lemma~\ref{Lem_basic_measure} the sequence $(\varphi^i_{t})_* \nu^i_{x_i;t} $ is tight and therefore for some subsequence we have weak convergence to some $\mu'_\infty \in \PP (Z_t)$.
 Moreover, $\supp \mu'_\infty \subset \varphi^\infty_t (\XX^\infty_t)$.
 By Lemma~\ref{Lem_weak_conv_2_W1_conv} this convergence implies convergence in the $W_1$-Wasserstein distance.
\end{proof}

Consider times $t'_k \in I^\infty$, $t'_k \nearrow  t_\infty$.
Apply the Claim successively to each $t'_k$ and pass to a diagonal subsequence.
Denote by $\td\mu_{t'_k} \in \PP (\XX^\infty_{t'_k})$ the probability measures obtained this way and let $(\mu^\infty_{k, t})_{t \in I^{\prime, \infty} \cap (-\infty, t'_k]}$ be the conjugate heat flows on $\XX^\infty$ with initial condition $\mu^\infty_{k, t'_k} = \td\mu_{t'_k}$.
By Theorem~\ref{Thm_tdmu_convergence} we have the following convergence on compact time-intervals
\[ (\nu^i_{x_i; t})_{t \in I^{\prime, i} \cap (-\infty, t'_k]}  \xrightarrow[i \to \infty]{\quad  \CF, J \quad}  (\mu^\infty_{k, t})_{t \in I^{\prime, \infty} \cap (-\infty, t'_k]} . \]
It follows that for any $k_1 \leq k_2$ we have $(\mu^\infty_{k_1, t})_{t \in I^{\prime, \infty} \cap (-\infty, t'_{k_1})} = (\mu^\infty_{k_2, t})_{t \in I^{\prime, \infty} \cap (-\infty, t'_{k_1})}$.
So there is a conjugate heat flow $(\mu^\infty_t)_{t \in I^{\prime, \infty} \cap (-\infty,t_\infty)}$ with $(\mu^\infty_t)_{t \in I^{\prime, \infty} \cap (-\infty,t'_k)} = (\mu^\infty_{k, t})_{t \in I^{\prime, \infty} \cap (-\infty, t'_k)}$ for all $k$.
This shows (\ref{eq_nui_to_mu_in_CF}).
The bound (\ref{eq_Var_mu_lim_0}) follows from a simple limit argument.
\end{proof}
\bigskip

\subsection{Tangent flows of the limit}
Next, we show that tangent flows of the limit of an $\IF$-convergent sequence of metric flow pairs can be represented as limits of rescalings of the original sequence.

Let first $\XX$ be a metric flow over some $I' \subset \IR$.
For any $\Delta T \in \IR$ and $\lambda > 0$ we will denote by $\XX^{-\Delta T, \lambda}$ the result of applying a time-shift of $-\Delta T$ to $\XX$ and then a parabolic rescaling by $\lambda$.
So any point $x \in \XX_t$ corresponds to a point in $\XX^{-\Delta T, \lambda}_{\lambda^2 (t - \Delta T)}$.
Similarly, if $(\mu_t)_{t \in I^*}$, $I^* \subset I'$, is a conjugate heat flow on $\XX$, then we denote by $(\mu^{-\Delta T, \lambda}_{\lambda^2 (t - \Delta T)} := \mu_{t} )_{t \in I^*}$ the corresponding conjugate heat flow on $\XX^{-\Delta T, \lambda}$.

\begin{Definition}[Tangent flow] \label{Def_tangent_cone}
Let $\XX$ be a metric flow over some $I' \subset \IR$ and $x_0 \in \XX_{t_0}$ a point.
We say that a metric flow pair $(\XX^\infty, (\nu^\infty_{x_{\max};t})_{t \in I^{\prime, \infty}}) \in \IF^*_{(-\infty, 0]}$ is a {\bf tangent flow of $\XX$ at $x_0$} if there is a sequence of scales $\lambda_1, \lambda_2, \ldots > 0$ with $\lambda_k \to \infty$ such that for any $T > 0$ the parabolic rescalings \[ \big(\XX^{-t_0, \lambda_k}_{[-T,0]}, (\nu_{x_0;t}^{-t_0, \lambda_k})_{\lambda_k^{-2} t + t_0 \in I' , t \in  [-T,0]}\big) \] $\IF$-converge to $(\XX^\infty_{[-T,0]}, (\nu^\infty_{x_{\max};t})_{t \in I^{\prime, \infty} \cap [-T,0]})$.
If $\lambda_k \to 0$ instead of $\lambda_k \to \infty$, then we call $(\XX^\infty, \lb  (\nu^\infty_{x_{\max};t})_{t \in I^{\prime, \infty}}) \in \IF^*_{(-\infty, 0]}$ a {\bf tangent flow of $\XX$ at infinity}.
\end{Definition}

\begin{Remark} \label{Rmk_tangent_cones_conv_within}
By Theorem~\ref{Thm_conv_to_conv_within_cpt_ti} the $\IF$-convergence condition in Definition~\ref{Def_tangent_cone} implies $\IF$-convergence on compact time-intervals within some correspondence.
\end{Remark}

\begin{Remark}
The tangent flow (at infinity) may depend on the sequence of scales $\lambda_k$, so it may not be unique.
\end{Remark}

Consider now a sequence of metric flow pairs $(\XX^i, (\mu^i_t)_{t \in I^{\prime, i}})$, $i \in \IN \cup \{ \infty \}$, over intervals $I^i \subset \IR$ and consider a correspondence $\CF$  between the metric flows $\XX^i$, $i \in \IN \cup \{ \infty \}$, such that
\[ (\XX^i, (\mu^i_t)_{t \in I^{\prime,i}}) \xrightarrow[i \to \infty]{\quad \IF, \CF \quad}  (\XX^\infty, (\mu^\infty_t)_{t \in I^{\prime,\infty}}). \]
Consider a tangent flow (at infinity) $(\XX^{*,\infty}, (\nu^{*,\infty}_{x^*_{\max};t})_{t \in I^{\prime, *, \infty}}) \in \IF^*_{(-\infty, 0]}$ at some point $x_\infty \in \XX^\infty_{t_\infty}$ corresponding to a sequence $\lambda_k \to \infty$ (or $\lambda_k \to 0$, respectively).
Suppose that $t_\infty \in I^{\prime,i}$ for all $i \in \IN$ and that all metric flows $\XX^i$ are $H$-concentrated for some uniform $H < \infty$.
Then Theorem~\ref{Thm_pts_as_limits_within} allows us to choose points $x_i \in \XX^i_{t_\infty}$ at the same time $t_\infty$ such that
\[ x_i  \xrightarrow[i \to \infty]{\quad  \CF \quad}  x_\infty. \]
So by Theorem~\ref{Thm_change_basepoints} we have
\[ (\XX^i_{\leq t_\infty}, (\nu^i_{x_i;t})_{t \in I^{\prime,i} \cap (-\infty, t_\infty]}) \xrightarrow[i \to \infty]{\quad \IF, \CF \quad}  (\XX^\infty_{\leq t_\infty}, (\nu^\infty_{x_\infty;t})_{t \in I^{\prime,\infty} \cap (-\infty, t_\infty]}). \]
For any $\eps, T > 0$ we can choose $k$ large such that
\[ d_{\IF} \big( (\XX^{\infty, - t_\infty, \lambda_k}_{[-T,0]}, (\nu^{\infty, - t_\infty, \lambda_k}_{x_\infty;t})_{ \lambda_k^{-2} t + t_\infty \in I^{\prime,\infty} , t \in [-T, 0]}), (\XX^{*,\infty}_{[-T,0]}, (\nu^{*,\infty}_{x^*_{\max};t})_{t \in I^{\prime, *, \infty} \cap [-T, 0]}) \big) \leq \eps/2. \]
Given $k$, we can choose $i$ large such that
\[ d_{\IF} \big( (\XX^i_{[t_\infty - \lambda_k^{-2} T, t_\infty]}, (\nu^i_{x_i;t})_{t \in I^{\prime,i} \cap [t_\infty - \lambda_k^{-2} T, t_\infty]}), (\XX^{\infty, - t_\infty, \lambda_k}_{[t_\infty - \lambda_k^{-2} T, t_\infty]}, (\nu^\infty_{x_\infty;t})_{t \in I^{\prime,\infty} \cap [t_\infty - \lambda_k^{-2} T, t_\infty]}) \big) \leq \lambda_k^2 \eps/2. \]
This implies that for the parabolic rescalings
\[  d_{\IF} \big(  (\XX^{i,  -t_\infty, \lambda_k}_{[-T,0]}, (\nu^{i,  -t_\infty, \lambda_k}_{x_i;t})_{\lambda_k^{-2} t + t_\infty \in I^{\prime,i},  t \in  [-T,0]}) , (\XX^{*,\infty}_{[-T,0]}, (\nu^{*,\infty}_{x^*_{\max};t})_{t \in I^{\prime, *, \infty} \cap [-T, 0]}) \big) \leq \eps \]
Letting $\eps \to 0$, $T \to \infty$, passing to a diagonal subsequence and applying Theorem~\ref{Thm_conv_to_conv_within_cpt_ti} implies:

\begin{Theorem}
There is a sequence $k_i \to \infty$ and correspondence $\td\CF$ between the parabolic rescalings $\XX^{i, - t_\infty, \lambda_{k_i}}$ and $\XX^{*, \infty}$ such that on compact time-intervals
\[  (\XX^{i,  -t_\infty, \lambda_{k_i}}, (\nu^{i,  -t_\infty, \lambda_k}_{x_i;t})_{\lambda_k^{-2} t + t_\infty \in I^{\prime,i}, t \leq 0})   \xrightarrow[i \to \infty]{\quad \IF, \td\CF \quad} (\XX^{*,\infty}, (\nu^{*,\infty}_{x^*_{\max};t})_{t \in I^{\prime, *, \infty}}) . \]
\end{Theorem}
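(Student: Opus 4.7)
The plan is to convert the distance estimates established in the paragraph preceding the theorem statement into $\IF$-convergence on compact time-intervals, and then invoke Theorem~\ref{Thm_conv_to_conv_within_cpt_ti} to realize this convergence inside a common correspondence. The key ingredients have essentially all been assembled: the point $x_i \in \XX^i_{t_\infty}$ with $x_i \to x_\infty$ within $\CF$ (via Theorem~\ref{Thm_pts_as_limits_within}), the change-of-basepoint fact that $(\XX^i, (\nu^i_{x_i;t})_t) \to (\XX^\infty, (\nu^\infty_{x_\infty;t})_t)$ within $\CF$ (via Theorem~\ref{Thm_change_basepoints} applied to the conjugate heat kernels), and the two displayed $d_{\IF}$-estimates governing (a) how close the rescaled limit $\XX^{\infty, -t_\infty,\lambda_k}$ is to the tangent flow $\XX^{*,\infty}$ on $[-T,0]$ for large $k$, and (b) how close $\XX^i$ is to $\XX^{\infty,-t_\infty,\lambda_k}$ near $t_\infty$ on the tiny interval $[t_\infty - \lambda_k^{-2}T, t_\infty]$ for large $i$ given $k$.

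First I would fix a diagonal sequence. Enumerate $T_n = n$ and $\eps_n = 1/n$. For each $n$, I would pick $k_n$ so large that
\[ d_{\IF}\bigl( (\XX^{\infty, -t_\infty, \lambda_{k_n}}_{[-T_n,0]}, (\nu^{\infty,-t_\infty,\lambda_{k_n}}_{x_\infty;t})_t), (\XX^{*,\infty}_{[-T_n,0]}, (\nu^{*,\infty}_{x^*_{\max};t})_t) \bigr) \leq \eps_n/2, \]
which is possible by the definition of tangent flow. Then, with $k_n$ fixed, I would use the $\IF$-convergence $(\XX^i, (\nu^i_{x_i;t})_t) \to (\XX^\infty, (\nu^\infty_{x_\infty;t})_t)$ restricted to $[t_\infty - \lambda_{k_n}^{-2} T_n, t_\infty]$, which holds because the sequence converges on all compact time-intervals. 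Rescaling parabolically by $\lambda_{k_n}$ and invoking the invariance of $d_{\IF}$ under a simultaneous time-shift and rescaling, I would choose $i_n$ large enough (and with $i_n \nearrow \infty$) so that
\[ d_{\IF}\bigl( (\XX^{i_n,-t_\infty,\lambda_{k_n}}_{[-T_n,0]}, (\nu^{i_n,-t_\infty,\lambda_{k_n}}_{x_{i_n};t})_t), (\XX^{\infty,-t_\infty,\lambda_{k_n}}_{[-T_n,0]}, (\nu^{\infty,-t_\infty,\lambda_{k_n}}_{x_\infty;t})_t) \bigr) \leq \eps_n/2. \]
Combining the two bounds by the triangle inequality of Theorem~\ref{Thm_IF_metric_space} gives
\[ d_{\IF}\bigl( (\XX^{i_n,-t_\infty,\lambda_{k_n}}_{[-T_n,0]}, (\nu^{i_n,-t_\infty,\lambda_{k_n}}_{x_{i_n};t})_t), (\XX^{*,\infty}_{[-T_n,0]}, (\nu^{*,\infty}_{x^*_{\max};t})_t) \bigr) \leq \eps_n . \]
For any fixed compact $[-T,0] \subset (-\infty, 0]$, for all sufficiently large $n$ the interval $[-T_n, 0]$ contains $[-T,0]$, so restriction yields $\IF$-convergence on $[-T,0]$, hence on all compact time-intervals.

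After relabeling the sequence along $n \mapsto i_n$ (so that the $i$ in the theorem statement is this $n$, and the subscript $k_i$ reads as $k_n$), Theorem~\ref{Thm_conv_to_conv_within_cpt_ti} promotes this $d_{\IF}$-convergence on compact time-intervals to $\IF$-convergence within a single correspondence $\td\CF$ between the metric flows $\XX^{i,-t_\infty,\lambda_{k_i}}$ and $\XX^{*,\infty}$, which is exactly the conclusion of the theorem. The only real subtlety worth checking carefully is the compatibility of the two triangle-inequality estimates under parabolic rescaling: the correspondence used for the change-of-basepoint step lives over unrescaled time, so one needs to rescale it correctly before applying the triangle inequality, but this is formal and does not require any new estimate. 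Hence I expect no serious obstacle; the statement is a packaging of the displayed estimates via the diagonal argument and the completeness/realization theorem for $d_{\IF}$.
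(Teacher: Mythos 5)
Your proposal is correct and follows essentially the same route as the paper: the paper's inline derivation preceding the theorem produces exactly the two $d_{\IF}$ estimates you use, combines them by the triangle inequality, passes to a diagonal subsequence with $\eps \to 0$, $T \to \infty$, and applies Theorem~\ref{Thm_conv_to_conv_within_cpt_ti} to realize the convergence within a single correspondence. The only cosmetic difference is that you phrase the second estimate directly at the rescaled scale, whereas the paper states it at the unrescaled scale with a compensating factor; you correctly flag that reconciling the two is a formal rescaling of the correspondence and requires no new estimate.
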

\bigskip

\section{\texorpdfstring{Compact subsets of $(\IF^J_I, d_{\IF}^J)$}{Compact subsets of (F{\textasciicircum}J\_I, d{\textasciicircum}J\_F)}} \label{sec_compact_subsets_IF}
\subsection{Statement of the main results}
In this section let $I \subset \IR$ be an interval with $t_{\max} := \sup I < \infty$ and $J \subset I$ a subset.
In the following we will define certain subsets of the form $\IF_I^J (H,V,b,r) \subset \IF_I^J$, which will turn out to be compact if $I$ is a finite interval and $J$ is finite.
These subsets contain all metric flows corresponding to super Ricci flows over $I$, so we obtain that the set of super Ricci flows is precompact in $\IF^J_I$.

Let us now define the subsets $\IF_I^J (H,V,b,r)$.
For this purpose let $H, V \geq 0$, $r > 0$ and let $b : (0,1] \to (0,1]$ be a function.

\begin{Definition} \label{Def_FIHVb}
We define $\mathbb{F}_{I}^J (H,V,b,r) \subset \mathbb{F}_{I}^{J}$ to be the set of equivalence classes that are represented by (at least one) metric flow pair $(\XX, (\mu_t)_{t \in I'})$ over $I$ that is fully defined over $J$ and satisfies the following properties:
\begin{enumerate}
\item[(1)] \label{Def_FIHVb_1} $\XX$ is $H$-concentrated,
\item[(2a)] \label{Def_FIHVb_2a} If $t_{\max} \in J$, then we assume $(\XX_{t_{\max}}, d_{t_{\max}}, \mu_{t_{\max}}) \in \mathbb{M}_r (V,b)$.
\item[(2b)] \label{Def_FIHVb_2b} If $t_{\max} \not\in J$, then we assume that $\limsup_{t \nearrow t_{\max} } \Var (\mu_t) \leq Vr^2$.
\end{enumerate}
In  Case (2b) we may omit the function $b$ and write $\mathbb{F}^J_{I} (H,V,r)$ instead of $\mathbb{F}^J_{I} (H,V,b,r)$.
In both cases, if $V = 0$, then we will also write $\mathbb{F}_{I}^* (H) := \IF_I (H,0,b,r) \subset \mathbb{F}^*_{I}$; in this case the function $b$ and the scale $r$ are inessential.
\end{Definition}

\begin{Remark} \label{Rmk_assume_I_Ip}
By passing to the future completion (see Theorem~\ref{Thm_existenc_fut_compl}) every representative $(\XX, \lb (\mu_t)_{t \in I'})$ of an element in $\IF_I^J (H,V,b,r)$ is equivalent to an $H$-concentrated metric flow pair of the form $(\XX^*, (\mu^*_t)_{t \in I'})$, where $\XX^*$ is a metric flow over $I'$ where $I' = I$ if $t_{\max} \in J$ or $I' = I \setminus \{ t_{\max} \}$ if $t_{\max} \not\in J$.
\end{Remark}

The following is a direct consequence of Theorem~\ref{Thm_superRF_metric_flow}:

\begin{Lemma} \label{Lem_superRF_FHVBr}
Suppose that $\XX$ corresponds to a super Ricci flow $(M, (g_t)_{t \in I})$ on an $n$-dimensional compact manifold and $(\mu_t)_{t \in I}$ corresponds to the flow of the form $v \, dg_t$, where $v$ is a solution to the conjugate heat equation.
Assume that $(M, d_{g_{t_{\max}}}, v \, dg_{t_{\max}}) \in \mathbb{M}_r (V,b)$ for some $V \geq 0$, $r > 0$, $b : (0,1] \to (0,1]$ if $t_{\max} \in J$.
Then $( \XX, (\mu_t)_{t \in I}) \in \IF^J_I (H_n, V, b, r)$, where $H_n$ is as in Theorem~\ref{Thm_superRF_metric_flow}.

In particular, if $(\mu_t)_{t \in I}$ corresponds to a conjugate heat kernel measure $\nu_{x_0 ;s} = K(x_0 ,t_{\max}; \lb \cdot, s) dg_s$, then $(\XX, (\mu_t)_{t \in I}) \in \IF^{*,J}_I (H_n)$.
\end{Lemma}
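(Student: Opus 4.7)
The plan is a direct verification of the axioms listed in Definition~\ref{Def_FIHVb}, pulling all the hard analytic content from Theorem~\ref{Thm_superRF_metric_flow}. First I would invoke Theorem~\ref{Thm_superRF_metric_flow} to obtain that the tuple
\[ (\XX, \tf, (d_{g_t})_{t \in I}, (\nu_{x,t;s})) \]
built from $(M,(g_t)_{t\in I})$ is an $H_n$-concentrated metric flow, and that conjugate heat flows on $\XX$ correspond exactly to measures of the form $v\,dg_t$ where $v$ solves the conjugate heat equation with the background metric $g_t$. In particular, our given $(\mu_t)_{t\in I}$ is such a conjugate heat flow. The support condition $\supp\mu_t = \XX_t = M$ required to make $(\XX,(\mu_t)_{t\in I'})$ a metric flow pair in the sense of Definition~\ref{Def_metric_flow_pair} follows from the strong parabolic maximum principle: the non-negative solution $v$ is either identically zero or strictly positive on $M\times I^\circ$, and it is not identically zero since $\int_M v\,dg_t=1$. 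This immediately yields Property~(1) of Definition~\ref{Def_FIHVb}.

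Next I would verify conditions (2a) and (2b). In Case~(2a), with $t_{\max}\in J$, the hypothesis is precisely that $(\XX_{t_{\max}},d_{t_{\max}},\mu_{t_{\max}})\in\mathbb{M}_r(V,b)$, so the condition holds by assumption. In Case~(2b), with $t_{\max}\notin J$, the representative need only be defined on $I\setminus\{t_{\max}\}$ (cf.\ Remark~\ref{Rmk_assume_I_Ip}), and I need the bound $\limsup_{t\nearrow t_{\max}}\Var(\mu_t)\leq Vr^2$. Using Proposition~\ref{Prop_H_monotonicity_Var}, the function $t\mapsto\Var(\mu_t)+H_n t$ is non-decreasing, so $\Var(\mu_t)\leq \Var(\mu_{t_{\max}})+H_n(t_{\max}-t)$, and the claim reduces to the assumption $\Var(\mu_{t_{\max}})\leq Vr^2$ (which is Property~(1) of the definition of $\mathbb{M}_r(V,b)$) upon passing to the limit. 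Thus $(\XX,(\mu_t)_{t\in I})\in\IF^J_I(H_n,V,b,r)$.

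For the ``in particular'' statement concerning the conjugate heat kernel $\mu_t = K(x_0,t_{\max};\cdot,t)\,dg_t$, observe that $\mu_{t_{\max}}=\delta_{x_0}$, so $\Var(\mu_{t_{\max}})=0$, and the monotonicity argument of the previous paragraph gives $\limsup_{t\nearrow t_{\max}}\Var(\mu_t)=0$, so we may take $V=0$ (making $b,r$ inessential). To place the resulting class in $\IF^{*,J}_I(H_n)\subset\IF^J_I$ rather than merely $\IF^J_I$, one needs a representative whose final time-slice is a single point. This is achieved by replacing $\XX_{t_{\max}}=M$ with $\{x_0\}$ and keeping the conjugate heat kernels $\nu_{x_0;t}=\mu_t$ at earlier times unchanged; this modification takes place on the measure-zero subset $\{t_{\max}\}\subset I$ and so produces an almost-always isometric, hence equivalent, representative in the sense of Definition~\ref{Def_metric_flow_pair}.

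There is essentially no analytic obstacle here: every nontrivial input (the compatibility conditions of a metric flow, the $H_n$-concentration inequality, and the description of conjugate heat flows) is already packaged inside Theorem~\ref{Thm_superRF_metric_flow} and its predecessors in \cite{Bamler_HK_entropy_estimates}. The only point that requires any care is the bookkeeping in Case~(2b) and the passage to an equivalent representative for the $\IF^{*,J}_I$ statement.
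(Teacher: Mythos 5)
Your proof is correct and takes essentially the same route as the paper, whose own proof is the one-liner ``This is a direct consequence of Theorem~\ref{Thm_superRF_metric_flow}.'' You are supplying the routine verifications this compresses: $H_n$-concentration from Theorem~\ref{Thm_superRF_metric_flow}, positivity of the conjugate heat density for the full-support requirement, monotonicity of $\Var(\mu_t)+H_n t$ for condition (2b), and the passage to an equivalent representative with one-point final time-slice for the $\IF^{*,J}_I(H_n)$ claim.
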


Our main result will be:

\begin{Theorem} \label{Thm_F_compact}
Assume that $I \subset \IR$ is a finite interval and suppose that $J \subset I$ is a finite subset.
Let $H, V \geq 0$, $r > 0$ and $b : (0,1] \to (0,1]$ be a function.
Then $\IF^J_I (H,V,b,r)$ is a compact subset of $(\IF^J_I, d_{\IF}^J)$.
\end{Theorem}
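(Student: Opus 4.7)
The plan is to combine the completeness of $(\IF^J_I, d^J_{\IF})$ from Theorem~\ref{Thm_F_complete} with two separate statements: (i) $\IF^J_I(H,V,b,r)$ is closed in $\IF^J_I$, and (ii) every sequence in $\IF^J_I(H,V,b,r)$ admits a Cauchy subsequence. Then closedness plus total boundedness yields compactness.

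For the closedness statement, I would argue as follows. Suppose $(\XX^i, (\mu^i_t)) \to (\XX^\infty, (\mu^\infty_t))$ in $d^J_{\IF}$ with each representative in $\IF^J_I(H,V,b,r)$. By Theorem~\ref{Thm_conv_to_conv_within}, I embed the sequence and its limit into a common correspondence $\CF$ with $\IF$-convergence uniform over $J$. Given $x_1, x_2 \in \XX^\infty_t$ with $s < t$, Theorem~\ref{Thm_pts_as_limits_within} provides approximating sequences $x_k^i \to x_k$ within $\CF$ (using that $\XX^i$ is $H$-concentrated so Theorem~\ref{Thm_pts_as_limits_within} applies). Passing $H$-concentration to the limit uses Lemma~\ref{Lem_weak_conv_2_W1_conv} applied to the push-forwards $(\varphi^i_s)_*\nu^i_{x_k^i;s}$, which weakly converge to $(\varphi^\infty_s)_*\nu^\infty_{x_k;s}$. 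Properties (2a) and (2b) pass to the limit by Lemma~\ref{Lem_MVb_closed} and the lower-semicontinuity of variance under $W_1$-convergence (Lemma~\ref{Lem_weak_conv_2_W1_conv}).

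For the precompactness, I would proceed as follows. By Remark~\ref{Rmk_assume_I_Ip} I may assume every representative is defined either over all of $I$ or over $I \setminus \{t_{\max}\}$. The variance monotonicity (Proposition~\ref{Prop_H_monotonicity_Var}) combined with (2a) or (2b) gives a uniform bound $\Var(\mu^i_t) \leq Vr^2 + H(t_{\max}-t)$ at every time. Using Proposition~\ref{Prop_mass_distribution}, at any time $t$ for which there exist slightly later times in $I'$, the time-slice $(\XX^i_t, d^i_t, \mu^i_t)$ represents a class in some $\mathbb{M}_{\rho}(V_t, b_t)$ with parameters depending only on $H$, $V$, $r$, $b$, and $t$. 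Fix a countable dense subset $Q \subset I$ containing $J$ and consisting only of such "good" times. By Theorem~\ref{Thm_M_compact} and a diagonal extraction, I pass to a subsequence such that for each $t \in Q$ there is an $\mmss$ $(X_t^\infty, d_t^\infty, \mu_t^\infty)$ with $(\XX^i_t, d^i_t, \mu^i_t) \to (X_t^\infty, d_t^\infty, \mu_t^\infty)$ in $d_{GW_1}$; by Lemma~\ref{Lem_combining_embeddings} I may realize all these convergences inside a common family of metric spaces $(Z_t, d^Z_t)_{t \in Q}$, giving isometric embeddings $\varphi^i_t: \XX^i_t \hookrightarrow Z_t$ with $(\varphi^i_t)_* \mu^i_t \to \mu^\infty_t$ in $W_1$.

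The main obstacle will be upgrading this pointwise-at-$Q$ convergence to a genuine Cauchy estimate for the $d^{\,\CF, J}_{\IF}$-distance. The key tool is Lemma~\ref{Lem_intd_diff_Var_diff}: combined with the monotonicity of $\Var(\mu^i_t)+Ht$, it shows that $t \mapsto \int\int d^i_t\,d\mu^i_t d\mu^i_t$ is "almost non-decreasing" uniformly in $i$, so on any cofinite subset of $I$ obtained by removing a uniform neighborhood of at most countably many jump times, the oscillations can be made arbitrarily small. For every $t$ in this good set, picking a slightly later $t' \in Q$ close to $t$ and applying Proposition~\ref{Prop_time_s_closeness} gives that $(\XX^i_{t}, d^i_{t}, \mu^i_{t})$ is close to $(\XX^i_{t'}, d^i_{t'}, \mu^i_{t'})$ in $d_{GW_1}$ via the canonical coupling $q^i_{t',t} = \int \nu^i_{y;t} \otimes \delta_y\, d\mu^i_{t'}(y)$. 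Combining these couplings with the $W_1$-convergence at $t' \in Q$ (via Lemma~\ref{Lem_construction_Z} to enlarge $Z_{t'}$) defines embeddings $\varphi^i_t$ and couplings $q^{ij}_t$ making the sequence Cauchy in a single correspondence after removing a set $E \subset I$ of arbitrarily small measure with $J \subset I \setminus E$. Verifying Definition~\ref{Def_IF_dist_within_CF}(\ref{Def_IF_dist_within_CF_3}) for $s \leq t$ will amount to combining the pointwise $W_1$-estimate at $s \in Q$ with the monotonicity inequality $d_{W_1}(\nu^i_{x_1;s}, \nu^i_{x_2;s}) \leq d_t(x_1,x_2)$ from Proposition~\ref{Prop_compare_CHF}(\ref{Prop_compare_CHF_c}) and Lemma~\ref{Lem_X1X2_correspondence_mu112} to push the coupling constructed at $s$ back to $t$; this is exactly the bookkeeping that Proposition~\ref{Prop_time_s_closeness} packages in convenient form.
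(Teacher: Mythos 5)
Your overall strategy --- closedness plus total boundedness --- matches the paper's.

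For closedness, there is a gap: Theorem~\ref{Thm_pts_as_limits_within} produces approximating sequences $x^i_k \to x_k$ only in the sense of Definition~\ref{Def_convergence_PP_measure} (CHK-convergence within $\CF$), not strict convergence $\varphi^i_t(x^i_k) \to \varphi^\infty_t(x_k)$ at the final time $t = \tf(x_k)$. This is the wrong tool here: to pass $\Var(\nu^i_{x^i_1;s},\nu^i_{x^i_2;s}) \leq (d^i_t(x^i_1,x^i_2))^2 + H(t-s)$ to the limit you also need $d^i_t(x^i_1, x^i_2) \to d^\infty_t(x_1, x_2)$, which CHK-convergence does not supply --- even Theorem~\ref{Thm_tdmu_convergence_reverse}\ref{Thm_tdmu_convergence_reverse_b} only upgrades to strict convergence at times strictly below the final time. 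The right move, as in Lemma~\ref{Lem_FHVb_closed}, is to pick $x^i_k$ directly so that $\varphi^i_t(x^i_k) \to \varphi^\infty_t(x_k)$ in $Z_t$ (possible since $(\varphi^i_t)_*\mu^i_t \to (\varphi^\infty_t)_*\mu^\infty_t$ in $W_1$ and $\supp\mu^\infty_t = \XX^\infty_t$), then invoke Lemma~\ref{Lem_XX12_close_HK_close} for $W_1$-convergence of $(\varphi^i_s)_*\nu^i_{x^i_k;s}$ and lower semicontinuity of $\Var$.

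For total boundedness, the ingredients are right but you are vague exactly where the heavy lifting happens. Obtaining $GW_1$-convergence at a countable dense $Q$ and observing that $D_i(t) := \int\int d^i_t\, d\mu^i_t\, d\mu^i_t$ is almost non-decreasing is not yet enough: you need a single small exceptional set $E$ with $J \subset I \setminus E$ on which the oscillation of $D_i$ over small intervals is small \emph{uniformly in $i$}. Since each $D_i$ may jump at different times, ``removing a uniform neighborhood of countably many jump times'' does not by itself hand you a set that works for all $i$ simultaneously. The paper's Lemma~\ref{Lem_main_F_compactness} gets around this by first passing to a subsequence so that $D(t) := \lim_i D_i(t)$ exists for \emph{all} $t \in I$ (possible because the $D_i$ have uniformly bounded variation), so that one deals with a single function $D$, continuous off a countable set; it then converts continuity of $D$ on a compact $J' \supset J$ together with convergence $D_i(t') \to D(t')$ at \emph{finitely many} sandwiching times $t' \in I_0$ into the required uniform oscillation bound on the $D_i$, feeding into Lemma~\ref{Lem_tot_bounded_I0I1}. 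The finiteness of $J$ enters at the point where one chooses the compact $J'$ so that $D|_{J \cup J'}$ is continuous. Your sketch points in this direction but omits the crucial switch from the family $(D_i)$ to the single limit $D$.
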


Using Lemma~\ref{Lem_superRF_FHVBr}, this implies subsequential convergence of super Ricci flows:

\begin{Corollary} \label{Cor_super_RF_compactness}
Consider a sequence of super Ricci flows $(M^i, (g^i_t)_{t \in I})$ on compact $n$-dimensional manifolds together with a sequence of solutions to the conjugate heat equation $(v^i_t)_{t \in I}$ on $M^i$.
If $t_{\max} \in J$, then we assume that $(M^i, d_{g^i_{t_{\max}}}, v^i_{t_{\max}} dg^i_{t_{\max}}) \in \mathbb{M}_r (V,b)$ for some uniform $V \geq 0$, $r > 0$, $b : (0,1] \to (0,1]$.
If $I$ is a finite interval and $J$ is finite, then there is a subsequence, such that the corresponding sequence $(\XX^i, (\mu^i_t)_{t \in I})$ of metric flow pairs converges to a class of metric flows in $\IF^J_I (H_n, V, b, r)$ in the $d_{\IF}^J$-sense.
\end{Corollary}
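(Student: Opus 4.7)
The plan is to deduce this corollary directly by combining Theorem~\ref{Thm_superRF_metric_flow}, Lemma~\ref{Lem_superRF_FHVBr}, and the compactness statement of Theorem~\ref{Thm_F_compact}. First, for each index $i$ I would invoke Theorem~\ref{Thm_superRF_metric_flow} to produce the associated $H_n$-concentrated metric flow $\XX^i$ over $I$ from the super Ricci flow $(M^i, (g^i_t)_{t \in I})$, where $H_n = \tfrac{(n-1)\pi^2}{2} + 4$. By the same theorem, a time-dependent family of the form $v^i_t \, dg^i_t$, where $v^i_t$ solves the conjugate heat equation on the super Ricci flow background, defines a conjugate heat flow $(\mu^i_t)_{t \in I}$ on $\XX^i$, and hence $(\XX^i, (\mu^i_t)_{t \in I})$ represents a well-defined element of $\IF^J_I$.

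Next, I would check membership in $\IF^J_I(H_n, V, b, r)$. Property~(1) of Definition~\ref{Def_FIHVb} is immediate from $H_n$-concentration. For Property~(2a) in the case $t_{\max} \in J$, the hypothesis that $(M^i, d_{g^i_{t_{\max}}}, v^i_{t_{\max}} \, dg^i_{t_{\max}}) \in \mathbb{M}_r(V, b)$ translates directly into the required condition on the terminal time-slice of the metric flow pair, which is precisely the content of Lemma~\ref{Lem_superRF_FHVBr}. In the case $t_{\max} \notin J$, the bound required for Property~(2b), namely $\limsup_{t \nearrow t_{\max}} \Var(\mu^i_t) \leq V r^2$, follows because the underlying manifold is compact and smooth, combined with the monotonicity of $t \mapsto \Var(\mu^i_t) + H_n t$ provided by Proposition~\ref{Prop_H_monotonicity_Var}; this lets us transport any variance bound obtained near $t_{\max}$ backwards in time.

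Once membership $(\XX^i, (\mu^i_t)_{t \in I}) \in \IF^J_I(H_n, V, b, r)$ is established uniformly in $i$, I would apply Theorem~\ref{Thm_F_compact}, which states that $\IF^J_I(H_n, V, b, r)$ is a compact subset of $(\IF^J_I, d^J_\IF)$ whenever $I$ is a finite interval and $J$ is a finite subset of $I$. Both hypotheses are part of the assumptions of the corollary, so compactness yields a subsequence that converges in the $d^J_\IF$-metric to a limit in the same compact set. This limit is the sought-after equivalence class of metric flow pairs.

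There is no substantial obstacle here: the corollary is essentially a packaging statement that specializes the abstract compactness theorem to the setting of smooth super Ricci flows, with all of the geometric content hidden inside Theorem~\ref{Thm_superRF_metric_flow} (which encodes $H_n$-concentration via \cite{Bamler_HK_entropy_estimates}) and Theorem~\ref{Thm_F_compact}. The only point requiring minor care is the case $t_{\max} \notin J$, where one must either assume or derive the variance bound near $t_{\max}$ in order to land in the correct compact set; otherwise the argument is entirely formal.
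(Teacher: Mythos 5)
Your proposal is correct and follows exactly the route the paper intends: the corollary is stated immediately after Theorem~\ref{Thm_F_compact} with the remark that it follows ``using Lemma~\ref{Lem_superRF_FHVBr},'' and the paper gives no further argument. The chain you describe --- Theorem~\ref{Thm_superRF_metric_flow} to build the $H_n$-concentrated metric flow pairs, Lemma~\ref{Lem_superRF_FHVBr} to verify membership in $\IF^J_I(H_n, V, b, r)$, then compactness from Theorem~\ref{Thm_F_compact} --- is precisely the intended packaging. You are also right to flag the case $t_{\max}\notin J$ as the one place requiring care: Definition~\ref{Def_FIHVb}(2b) then demands a \emph{uniform} bound $\limsup_{t\nearrow t_{\max}}\Var(\mu^i_t)\leq Vr^2$ across the sequence, and compactness of each $M^i$ plus monotonicity of $\Var(\mu^i_t)+H_n t$ only yields a finite (not a priori uniform) bound for each $i$; this is an implicit hypothesis rather than something you can derive from what is stated.
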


Note that due to the definition of the $d^J_{\IF}$-distance, the limit of any sequence of metric flow pairs is only well defined wherever the limiting metric flow is continuous, so on the complement of a countable subset.
The next two theorems will address this issue.
Under additional assumptions, we will obtain compactness results, in which the limit is uniquely defined at every time.
We will also obtain convergence on compact time-intervals if the metric flow pairs are not defined over a common finite time-interval.

Fix in the following $H, V \geq 0$, $r > 0$ and $b : (0,1] \to (0,1]$.
Let $I^\infty \subset \IR$ be some interval and assume that $t_{\max} := \sup I^\infty < \infty$.
Consider a sequence of intervals $I^i \subset \IR$ with $\sup I^i \leq t_{\max}$ and $I^i \to I^\infty$ in the sense that $t \in I^\infty$ if and only if $t \in I^i$ for large $i$.
In both of the following theorems consider a sequence of metric flow pairs $(\XX^i, (\mu^i_t)_{t \in I^i})$ that are fully defined over $I^i \subset \IR$ and represent classes in $\IF_{I^i} (H)$ if $t_{\max} \not\in I^\infty$ or $\IF^{ t_{\max}}_{I^i} (H,V,b,r)$ if $t_{\max} \in I^\infty$.

The first theorem, which will be the most useful, concerns the case in which we require the limiting flow pair to be future continuous.

\begin{Theorem} \label{Thm_compactness_CF_conv_fut_cont}
After passing to a subsequence, there is, up to isometry, a unique flow pair $(\XX^\infty, (\mu^\infty_t)_{t \in I^\infty})$ representing a class in $\IF_{I^\infty} (H,V,b,r)$ for which $\XX^\infty$ is future continuous such that the following holds.
There is a correspondence $\CF$ between the metric flows $\XX^i$, $i \in \IN \cup \{ \infty \}$, such that on compact time-intervals
\begin{equation} \label{eq_F_CF_conv_fut_cont}
 (\XX^i, (\mu^i_t)_{t \in I^i}) \xrightarrow[i \to \infty]{\quad \IF, \CF \quad}  (\XX^\infty, (\mu^\infty_t)_{t \in I^\infty})  .
\end{equation}
If $I^\infty$ is a finite interval, then we even have normal $\IF$-convergence within $\CF$.

The convergence (\ref{eq_F_CF_conv_fut_cont}) is time-wise at any time at which $\XX^\infty$ is continuous and it is uniform over any compact $J \subset I^\infty$ that only contains times at which $\XX^\infty$ is continuous.
\end{Theorem}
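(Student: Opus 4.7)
The plan is to extract a subsequence that converges on an exhausting family of compact subintervals, glue the limits, and refine via the future completion to produce a canonical future continuous representative. To begin, I would choose an increasing sequence of compact subintervals $I_1 \subset I_2 \subset \cdots \subset I^\infty$ with $\bigcup_k I_k = I^\infty$, arranging that $t_{\max} \in I_k$ for every $k$ whenever $t_{\max} \in I^\infty$. For each $k$, the restrictions $(\XX^i_{I_k}, (\mu^i_t)_{t \in I^i \cap I_k})$ represent classes in $\IF^{J_k}_{I_k}(H,V,b,r)$, where $J_k := \{t_{\max}\}$ if $t_{\max} \in I^\infty$ and $J_k := \emptyset$ otherwise, since both $H$-concentration and (when relevant) the time-$t_{\max}$ mass distribution hypothesis are preserved under restriction. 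Theorem~\ref{Thm_F_compact} supplies subsequential $d_{\IF}^{J_k}$-convergence on each $I_k$, and a standard diagonal argument extracts a single subsequence that $d_{\IF}^{J_k}$-converges on every $I_k$ to a limit in $\IF_{I_k}^{J_k}(H,V,b,r)$.

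Since the $d_{\IF}^{J_k}$-limit is determined uniquely up to almost-everywhere isometry (Theorem~\ref{Thm_IF_metric_space}), the limits on overlapping $I_k$ can be glued into a single metric flow pair $(\td\XX, (\td\mu_t)_{t \in \td I'})$ with $|I^\infty \setminus \td I'|=0$. I would then replace $\td\XX$ by its future completion $\XX^\infty$ (Theorem~\ref{Thm_existenc_fut_compl}), which is $H$-concentrated, future continuous, and has full support on every newly added slice. The conjugate heat flow extends uniquely to $(\mu^\infty_t)_{t \in I^\infty}$: at $t=t_{\max}$ (when present) it is already defined by the uniform convergence at $t_{\max}$, while at other $t \in I^\infty \setminus \td I'$ one sets $\mu^\infty_t := \int \nu^\infty_{\cdot;t}\, d\mu^\infty_{t'}$ for any $t' \in \td I'$ with $t' > t$, and consistency follows from the reproduction formula on $\td\XX$. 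The resulting pair lies in $\IF_{I^\infty}(H,V,b,r)$, with the time-$t_{\max}$ mass distribution bound inherited from $GW_1$-convergence and Lemma~\ref{Lem_MVb_closed} when $t_{\max} \in I^\infty$. Applying Theorem~\ref{Thm_conv_to_conv_within_cpt_ti} to the $d_{\IF}$-convergent subsequence produces a common correspondence $\CF$ within which $\IF$-convergence to the limit holds on compact time-intervals; if $I^\infty$ is itself finite, Theorem~\ref{Thm_conv_to_conv_within} upgrades this to normal $\IF$-convergence. Uniqueness of the future continuous representative is then Theorem~\ref{Thm_extend_isometry}.

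For the time-wise and uniform statements at continuity points of $\XX^\infty$, I would first use Theorem~\ref{Thm_tdmu_convergence_reverse}\ref{Thm_tdmu_convergence_reverse_a} together with a further diagonal passage to arrange that the convergence within $\CF$ is time-wise at almost every $t \in I^\infty$. At a continuity point $t_0$, Theorem~\ref{Thm_future_cont_equiv} supplies $GW_1$-continuity of $t \mapsto (\XX^\infty_t, d^\infty_t, \mu^\infty_t)$ at $t_0$; picking a nearby almost-everywhere time $t$ at which time-wise convergence already holds and combining the two $GW_1$-closenesses via a triangle inequality in the ambient metric space $(Z_{t_0}, d^Z_{t_0})$ from $\CF$ promotes the convergence to be time-wise at $t_0$ itself. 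For a compact $J \subset I^\infty$ of continuity points, continuity of $t \mapsto \int\int d^\infty_t\, d\mu^\infty_t d\mu^\infty_t$ combined with Lemma~\ref{Lem_intd_diff_Var_diff} gives uniform control of the relevant integrals on $J$, which combined with the almost-everywhere convergence upgrades the pointwise statement to uniformity on $J$.

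The main obstacle is this last step: upgrading almost-everywhere time-wise convergence to convergence at \emph{every} continuity point and further to a uniform statement on a compact $J$. The essential difficulty is that $J$ may be uncountable, so one cannot simply pass to a subsequence that is time-wise at every point of $J$ simultaneously; instead the argument must exploit the continuity of the limit flow in an equicontinuity-type fashion and interface carefully between the almost-everywhere time-wise convergence in the correspondence and the $GW_1$-continuity supplied by future continuity.
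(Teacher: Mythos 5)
Your route is genuinely different from the paper's. The paper proves this theorem in two lines: apply Theorem~\ref{Thm_compactness_CF_conv_timewise} (the time-wise compactness theorem, whose proof does all the heavy lifting), then replace the limit $\XX^\infty$ by the future completion of $\XX^\infty_{I^\infty \setminus Q}$, where $Q$ is the set of discontinuity times, and invoke Theorem~\ref{Thm_extend_isometry} for uniqueness. In particular, all of the time-wise and uniformity assertions are inherited verbatim from Theorem~\ref{Thm_compactness_CF_conv_timewise}. You, by contrast, do not invoke Theorem~\ref{Thm_compactness_CF_conv_timewise} at all and attempt to reconstruct its content from scratch.

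The gap you honestly flag in your last paragraph is a real one, and the specific fix you sketch will not work. Time-wise convergence at $t_0$ is a statement about the single space $Z_{t_0}$: it requires $(\varphi^i_{t_0})_*\mu^i_{t_0} \to (\varphi^\infty_{t_0})_*\mu^\infty_{t_0}$ in $d_{W_1}^{Z_{t_0}}$. Picking a nearby time $t$ at which time-wise convergence already holds gives you information in $Z_t$, and $GW_1$-continuity of the limit relates $(\XX^\infty_t,\mu^\infty_t)$ to $(\XX^\infty_{t_0},\mu^\infty_{t_0})$ in yet a \emph{third} ambient space. There is no ``triangle inequality in $(Z_{t_0},d^Z_{t_0})$'' that combines these: the correspondence supplied by Theorem~\ref{Thm_conv_to_conv_within_cpt_ti} does not equip nearby-in-time slices with compatible embeddings into a common target, and nothing you've constructed controls the discrepancy between $\varphi^i_{t_0}$ and $\varphi^\infty_{t_0}$ against $\varphi^i_{t}$ and $\varphi^\infty_{t}$. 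This is exactly the problem the paper resolves in the proof of Theorem~\ref{Thm_compactness_CF_conv_timewise} via Lemma~\ref{Lem_criterion_discrete_closeness}, which \emph{builds} a correspondence with the right compatibility: it starts from closeness over a finite set $I_0$ and extends the ambient spaces $Z_t$ over an enlarged set $I_1$ in a controlled way, so that closeness of conjugate heat kernels at times in $I_0$ forces closeness at times in $I_1$, with the error governed by the modulus of continuity of $t \mapsto \int\int d_t\,d\mu_t\,d\mu_t$. The uniform and pointwise statements then come out of Claim~\ref{Cl_CFJJ}-type estimates built on top of that lemma. Without that ingredient (or an equivalent), you cannot promote almost-everywhere time-wise convergence to convergence at every continuity point, let alone to uniformity over a possibly uncountable compact $J$; passing to further subsequences cannot handle $J$ uncountable, as you observe. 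The cleanest repair is therefore to first prove (or simply cite) Theorem~\ref{Thm_compactness_CF_conv_timewise}, as the paper does, and then deduce the present theorem by passing to the future completion over $I^\infty\setminus Q$.
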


In the next theorem we require that the $\IF$-convergence is time-wise.

\begin{Theorem} \label{Thm_compactness_CF_conv_timewise}
After passing to a subsequence, there is, up to isometry, a unique flow pair $(\XX^\infty, (\mu^\infty_t)_{t \in I^\infty})$ representing a class in $\IF_{I^\infty} (H,V,b,r)$ such that the following holds.
There is a correspondence $\CF$ between the metric flows $\XX^i$, $i \in \IN \cup \{ \infty \}$, such that
\begin{equation} \label{eq_F_conv_timewise_thm}
 (\XX^i, (\mu^i_t)_{t \in I^i}) \xrightarrow[i \to \infty]{\quad \IF, \CF \quad}  (\XX^\infty, (\mu^\infty_t)_{t \in I^\infty})  
\end{equation}
within $\CF$ on finite time-intervals and the convergence is time-wise at any time of $I^\infty$.
If $I^\infty$ is a finite interval, then we even have normal $\IF$-convergence within $\CF$ and the convergence is time-wise at any time of $I^\infty$.

The convergence (\ref{eq_F_conv_timewise_thm}) is uniform over any compact $J \subset I^\infty$ with the property that $\XX^\infty_J$ is continuous.
In particular, this is the case if $\XX^\infty$ is continuous at all times of $J$.
\end{Theorem}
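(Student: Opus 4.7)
The plan is to combine Theorem~\ref{Thm_compactness_CF_conv_fut_cont}, which yields a future continuous limit with time-wise convergence outside of at most countably many times, with a diagonal application of Theorem~\ref{Thm_M_compact} to repair the time-wise convergence at those exceptional times. First, apply Theorem~\ref{Thm_compactness_CF_conv_fut_cont} to extract a subsequence with a future continuous limit $(\XX^{\infty,\text{fc}}, (\mu^{\infty,\text{fc}}_t)_{t \in I^\infty})$ within a correspondence $\CF^{\text{fc}} = ((Z_t, d^Z_t), (\varphi^{i,\text{fc}}_t))$. Let $E \subset I^\infty$ denote the at most countable set of discontinuity times of $\XX^{\infty,\text{fc}}$ (see Corollary~\ref{Cor_met_flow_cont_ae}); the convergence within $\CF^{\text{fc}}$ is already time-wise on $I^\infty \setminus E$, so only the behavior on $E$ needs to be modified.

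For each $t_0 \in E$, Proposition~\ref{Prop_mass_distribution} together with uniform $H$-concentration (and with the $\IF_{I^i}^{t_{\max}}(H,V,b,r)$-assumption at $t_{\max}$, or the variance bound at $t_{\max}$ when $t_{\max}\notin I^\infty$) shows that $(\XX^i_{t_0}, d^i_{t_0}, \mu^i_{t_0})$ represents a class in some compact subset $\mathbb{M}_{r'}(V',b')$ with parameters depending only on $H,V,b,r$ and $t_{\max}-t_0$. By Theorem~\ref{Thm_M_compact} and a diagonal extraction over the countable set $E$, pass to a further subsequence so that $(\XX^i_{t_0}, d^i_{t_0}, \mu^i_{t_0})$ converges in $GW_1$ to some metric measure space $(Y_{t_0}, e_{t_0}, \rho_{t_0})$ of full support for each $t_0 \in E$. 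Using Lemma~\ref{Lem_combining_embeddings}, enlarge each slice $Z_{t_0}$ of $\CF^{\text{fc}}$ to accommodate an isometric embedding $\psi_{t_0}:(Y_{t_0}, e_{t_0}) \hookrightarrow Z_{t_0}$ arranged so that $(\varphi^{i,\text{fc}}_{t_0})_* \mu^i_{t_0} \to (\psi_{t_0})_* \rho_{t_0}$ in $W_1$; call the enlarged correspondence $\CF$.

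Construct the desired limit $\XX^\infty$ by setting $\XX^\infty_t := \XX^{\infty,\text{fc}}_t$ for $t \notin E$ and $\XX^\infty_{t_0} := (Y_{t_0}, e_{t_0})$ for $t_0 \in E$, with basepoint measures $\mu^\infty_t$ assigned accordingly. For any $t \in I^\infty$, $s \in I^\infty$ with $s < t$, and $x \in \XX^\infty_t$, define $\nu^\infty_{x;s}$ as the $W_1$-limit of $(\varphi^i_s)_* \nu^i_{x^i;s}$ along any sequence $x^i \in \XX^i_t$ with $\varphi^i_t(x^i) \to \varphi^\infty_t(x)$; existence, independence of the sequence, and the inclusion $\supp \nu^\infty_{x;s} \subset \varphi^\infty_s(\XX^\infty_s)$ follow by mimicking the proof of Lemma~\ref{Lem_F_completemess_within_CF}, with the required tightness provided by the time-wise $W_1$-convergence of $(\varphi^i_t)_* \mu^i_t$ secured in the previous step. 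Properties~\ref{Def_metric_flow_6} and \ref{Def_metric_flow_7} of Definition~\ref{Def_metric_flow}, the $H$-concentration inequality, and the full-support condition $\supp \mu^\infty_t = \XX^\infty_t$ all pass to the limit by the same arguments used there.

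Time-wise convergence at every $t \in I^\infty$ holds by construction, and the $\IF$-distance bound on any finite subinterval follows from the bound supplied by Theorem~\ref{Thm_compactness_CF_conv_fut_cont} together with $|E|=0$. Uniformity over any compact $J \subset I^\infty$ for which $\XX^\infty_J$ is continuous follows from the characterization of continuity in Theorem~\ref{Thm_future_cont_equiv}, applied slice by slice. Uniqueness of the limit reduces to uniqueness of the $GW_1$-limit of each individual time-slice together with uniqueness of conjugate heat kernels as weak limits, mirroring the final paragraph of the proof of Theorem~\ref{Thm_IF_metric_space}. The principal technical obstacle is the construction in the preceding paragraph: after inserting the extracted slice $Y_{t_0}$ in place of $\XX^{\infty,\text{fc}}_{t_0}$, one must verify that the redefined conjugate heat kernels still satisfy the reproduction formula across $t_0$ in both directions, which requires careful commutation of weak limits with integration along the flow, in the spirit of the argument contained in the proof of Theorem~\ref{Thm_existenc_fut_compl}.
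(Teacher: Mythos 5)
Your proposal has a fundamental circularity and several unresolved technical gaps.

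\textbf{Circular dependency.} Your very first step invokes Theorem~\ref{Thm_compactness_CF_conv_fut_cont}, but the paper proves that theorem \emph{from} Theorem~\ref{Thm_compactness_CF_conv_timewise} (``Theorem~\ref{Thm_compactness_CF_conv_fut_cont} follows from Theorem~\ref{Thm_compactness_CF_conv_timewise} by replacing $\XX^\infty$ with the future completion\ldots''). To make your route non-circular you would need an independent proof of the future-continuous compactness statement, which you do not supply. The paper instead proves the time-wise version directly: it uses Lemma~\ref{Lem_main_F_compactness} to extract a subsequence along which $D(t):=\lim_i\int\int d^i_t\,d\mu^i_t\,d\mu^i_t$ exists for all $t$, builds the limit within a correspondence via Lemma~\ref{Lem_F_completemess_within_CF}, and then uses Lemma~\ref{Lem_criterion_discrete_closeness} to engineer the required uniformity over compact sets $J$ on which $D|_J$ is continuous. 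The future-continuous version is then obtained by passing to the future completion, not the other way around.

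\textbf{Reproduction formula.} You identify as the ``principal technical obstacle'' that the repaired flow (with $\XX^{\infty,\text{fc}}_{t_0}$ replaced by the $GW_1$-limit $Y_{t_0}$ at each exceptional time) must still satisfy Definition~\ref{Def_metric_flow}\ref{Def_metric_flow_7} across $t_0$ in both directions, and you defer this to an unstated adaptation of the proof of Theorem~\ref{Thm_existenc_fut_compl}. But this is exactly the heart of the matter: after inserting $Y_{t_0}$, you must (i) show the $W_1$-limit $\nu^\infty_{x;s}$ exists and is supported on $\varphi^\infty_s(\XX^\infty_s)$ for all $s$ (not just $s\in E$), (ii) verify Property~\ref{Def_metric_flow_6} of Definition~\ref{Def_metric_flow} between the newly inserted slices and the rest, and (iii) verify the reproduction formula through $t_0$. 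Asserting that this is ``in the spirit of'' Lemma~\ref{Lem_F_completemess_within_CF} and Theorem~\ref{Thm_existenc_fut_compl} is not a proof. In effect you are proposing to redo the construction of Lemma~\ref{Lem_F_completemess_within_CF} over $(I^\infty\setminus E)\cup E = I^\infty$, which is precisely what the paper's direct argument accomplishes without the detour through the future completion.

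\textbf{Uniqueness and uniformity.} Your uniqueness sketch (``uniqueness of the $GW_1$-limit of each individual time-slice together with uniqueness of conjugate heat kernels as weak limits'') misses the main step: the paper constructs a genuine flow isometry $\phi$ between two candidate limits slice by slice using Lemma~\ref{Lem_couplings_converge_isometry}, then shows $\phi$ respects conjugate heat kernels and extends consistently over the whole interval via the reproduction formula and Theorem~\ref{Thm_extend_isometry}. Pointwise slice uniqueness does not by itself give a coherent almost-everywhere flow isometry. Similarly, the uniformity statement over compact $J$ with $\XX^\infty_J$ continuous requires the quantitative bound of Lemma~\ref{Lem_criterion_discrete_closeness}, which translates continuity of $D|_J$ into uniform $d_{\IF}$-smallness over $J$; ``applying Theorem~\ref{Thm_future_cont_equiv} slice by slice'' does not produce that uniform estimate.
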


So if $\XX^\infty$ is continuous, then the limits in Theorems~\ref{Thm_compactness_CF_conv_fut_cont}, \ref{Thm_compactness_CF_conv_timewise} agree and the $\IF$-convergence is uniform over every compact time-interval.

\begin{Remark}
The technical issue underlying Theorems~\ref{Thm_F_compact}, \ref{Thm_compactness_CF_conv_fut_cont}, \ref{Thm_compactness_CF_conv_timewise} can be illustrated by the following analogy.
Consider the space $\mathcal{F}$ of all non-decreasing functions $f: I \to [0,1]$ over some finite interval $I$.
Write $f_1 \sim f_2$ if $f_1 = f_2$ almost everywhere.
Then $(\mathcal{F}/\sim, \Vert \cdot \Vert_{L^1})$ is compact (this is comparable to Theorem~\ref{Thm_F_compact}).
More specifically, given any sequence of equivalence classes $[f_1], [f_2], \ldots \in \mathcal{F}$, we may pass to a subsequence such that $[f_i] \to [f_\infty] \in \mathcal{F}$ in $L^1$.
The representative $f_\infty$ is continuous on the complement of a countable subset $Q \subset I$ and for every $t \in I \setminus Q$ the value $f_\infty(t)$ is uniquely determined as a pointwise limit of the values $f_i(t)$.
For any $t \in Q$, we may choose $f_\infty(t)$ to be the right-limit, in which case $f_\infty$ is right semi-continuous (this is comparable to Theorem~\ref{Thm_compactness_CF_conv_fut_cont}).
Alternatively, we may pass to a further subsequence such that we also have pointwise convergence $f_i(t) \to f_\infty(t)$ for every $t \in Q$ (this  is comparable to Theorem~\ref{Thm_compactness_CF_conv_timewise}).
In fact, this analogy is quite fitting since the proofs of Theorems~\ref{Thm_F_compact}, \ref{Thm_compactness_CF_conv_fut_cont}, \ref{Thm_compactness_CF_conv_timewise} are based on the same compactness behavior of monotone functions.
\end{Remark}

\subsection{\texorpdfstring{Extending $\IF$-distance estimates to larger time domains}{Extending F-distance estimates to larger time domains}}
The following lemma allows us to extend closeness of two metric flow pairs within a correspondence over some given set of times to closeness within a correspondence over a larger set of times. 

\begin{Lemma} \label{Lem_criterion_discrete_closeness}
For every $H, V  \geq 0$ and every function $b : (0,1] \to (0,1]$ there is a function $\delta_{H, V, b} : \IR_+ \to \IR_+$ such that the following holds.

Let $r > 0$ be a scale and consider a subset $I_0  \subset I$ of an interval $I \subset \IR$.
Consider two metric flow pairs $(\XX^i, \lb (\mu^i_t)_{t \in I})$, $i = 1,2$, representing classes in $\IF^I_I (H, V, b,r)$ and a correspondence $\CF_0$ between $\XX^1, \XX^2$ over $I_0$.
Then there is a correspondence $\CF$ between $\XX^1, \XX^2$ over $I$ such that $\CF_0 = \CF |_{I_0}$ and such that the following is true:

Assume that $\eps > 0$ and that for $\delta = \delta_{H,V,b} (\eps)$ the following holds
\begin{equation} \label{eq_d_IF_CF0_delta}
 d_{\IF}^{\,\CF_0, I_0} \big( (\XX^1, (\mu^1_t)_{t \in I}), (\XX^2, (\mu^2_t)_{t \in I}) \big) < \delta  r. 
\end{equation}
(Note that since $\CF_0$ is defined over $I_0$, the properties of Definition~\ref{Def_IF_dist_within_CF} are required to hold for $I'' = I_0$.)
Consider a subset $I_0 \subset I_1 \subset I$ with $\sup I_1 \setminus I_0 < \sup I - \eps r^2$.
Suppose that for any $t \in I_1 \setminus I_0$ there is a minimal $t' \in (t, t+\delta r^2] \cap I_0$ and this $t'$ satisfies
\[ \int_{\XX^i_{t'}} \int_{\XX^i_{t'}} d^i_{t'} \, d\mu^i_{t'} d\mu^i_{t'} - \int_{\XX^i_{t}} \int_{\XX^i_{t}} d^i_{t} \, d\mu^i_{t} d\mu^i_{t} \leq \delta r \qquad \text{for} \quad i = 1,2. \]
Then over $I_1$
\[ d_{\IF}^{\,\CF |_{I_1}, I_1} \big( (\XX^1, (\mu^1_t)_{t \in I}), (\XX^2, (\mu^2_t)_{t \in I}) \big) \leq \eps r. \]
\end{Lemma}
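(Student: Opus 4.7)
The plan is to extend $\CF_0$ to a correspondence $\CF$ over $I$ by constructing new spaces $Z_t$ at each $t \in I_1 \setminus I_0$, and then to exhibit couplings $(q_t)_{t \in I_1}$ that witness the desired $\IF$-closeness. For each $t \in I_1 \setminus I_0$, let $t' \in I_0$ denote the minimal element of $(t, t+\delta r^2] \cap I_0$ guaranteed by the hypothesis, and apply Proposition~\ref{Prop_time_s_closeness} to each of the metric flows $\XX^i$ at the pair of times $(t, t')$ with some small auxiliary parameter $\eps_{\rm con}$. The requisite mass-distribution lower bound at $t'$ comes from Proposition~\ref{Prop_mass_distribution}, and the variance bound on $\mu^i_{t'}$ from Proposition~\ref{Prop_H_monotonicity_Var}; both are available because the condition $\sup (I_1 \setminus I_0) < \sup I - \eps r^2$ guarantees $t' < \sup I$. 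This yields metric spaces $Z^i_t$ with isometric embeddings $\psi^i_t : \XX^i_t \to Z^i_t$ and $\psi^i_{t'} : \XX^i_{t'} \to Z^i_t$, along with the time-shift couplings $\td q^i_t := \int (\nu^i_{y;t} \otimes \delta_y)\, d\mu^i_{t'}(y)$, which by Proposition~\ref{Prop_time_s_closeness}\ref{Prop_time_s_closeness_d} have transport cost $\leq \eps_{\rm con} r$ in $Z^i_t$. Now apply Lemma~\ref{Lem_combining_embeddings} to glue $Z^1_t$, $Z^0_{t'}$, $Z^2_t$ along the common copies of $\XX^1_{t'}$ and $\XX^2_{t'}$, yielding a metric space $(Z_t, d^Z_t)$ together with isometric embeddings $\varphi^i_t : \XX^i_t \to Z_t$ and an isometric embedding $\iota^0_{t'} : Z^0_{t'} \hookrightarrow Z_t$; at times $t \in I_0$, take the data directly from $\CF_0$ (and at times in $I \setminus I_1$, any trivial choice works).

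Define the couplings as follows: for $t \in I_0$, take the coupling $q^0_t$ supplied by (\ref{eq_d_IF_CF0_delta}); for $t \in I_1 \setminus I_0$, use Lemma~\ref{Lem_gluing} twice to build a probability measure $Q_t$ on $\XX^1_t \times \XX^1_{t'} \times \XX^2_{t'} \times \XX^2_t$ whose successive two-factor marginals are $\td q^1_t$, $q^0_{t'}$, and $\td q^2_t$, and let $q_t$ be the marginal on $\XX^1_t \times \XX^2_t$. It remains to check the bound of Definition~\ref{Def_IF_dist_within_CF}\ref{Def_IF_dist_within_CF_3} with $E = \emptyset$ for every pair $s \leq t$ in $I_1$. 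When $t \in I_1 \setminus I_0$, rewrite the integral against $q_t$ as an integral against $Q_t$ and insert the intermediate measures $(\varphi^i_s)_* \nu^i_{y^i;s}$ (with $y^i \in \XX^i_{t'}$) via the triangle inequality. The two outer contributions satisfy $d_{W_1}^{\XX^i_s}(\nu^i_{x^i;s}, \nu^i_{y^i;s}) \leq d_{W_1}^{\XX^i_t}(\delta_{x^i}, \nu^i_{y^i;t})$ by Proposition~\ref{Prop_compare_CHF}\ref{Prop_compare_CHF_c} and convexity of $d_{W_1}$; integrating against $\td q^i_t$ and applying Cauchy--Schwarz with the $H$-concentration estimate $\Var(\nu^i_{y^i;t}) \leq H(t' - t) \leq H\delta r^2$ controls each by $r\sqrt{H\delta}$. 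The middle contribution reduces to $\int d_{W_1}^{Z_s}((\varphi^1_s)_* \nu^1_{y^1;s}, (\varphi^2_s)_* \nu^2_{y^2;s})\, dq^0_{t'}(y^1, y^2)$, and for $s \in I_0$ this is exactly the $\CF_0$-integrand at $(s, t')$ and is $\leq \delta r$.

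The main difficulty is the subcase $s, t \in I_1 \setminus I_0$, where $Z_s$ is itself a glued space built from $Z^1_s$, $Z^0_{s'}$, $Z^2_s$. To bound the middle contribution, apply a second triangle inequality through the measures $(\iota^0_{s'} \circ \varphi^{0,i}_{s'})_* \nu^i_{y^i;s'}$ living in the copy of $Z^0_{s'}$ inside $Z_s$ (one readily checks $s' \leq t'$ by comparing the two defining minima). The new middle piece collapses to the $\CF_0$-integrand at the $I_0$-pair $(s', t')$, bounded by $\delta r$. For each new outer piece, namely $\int d_{W_1}^{Z_s}((\varphi^i_s)_* \nu^i_{y^i;s}, (\iota^0_{s'} \circ \varphi^{0,i}_{s'})_* \nu^i_{y^i;s'})\, dq^0_{t'}$, employ the reproduction-formula coupling $q^i_{y^i} := \int (\nu^i_{z;s} \otimes \delta_z)\, d\nu^i_{y^i;s'}(z)$ between $\nu^i_{y^i;s}$ and $\nu^i_{y^i;s'}$; the key observation is that a pointwise-in-$y^i$ bound on the resulting transport cost fails because $d^{Z^i_s}$ is uncontrolled outside the good subset of Proposition~\ref{Prop_time_s_closeness}, but after integrating against $q^0_{t'}$ and using the reproduction identity $\int \nu^i_{y^i;s'}\, d\mu^i_{t'} = \mu^i_{s'}$, the entire expression collapses to precisely the Proposition~\ref{Prop_time_s_closeness}\ref{Prop_time_s_closeness_d} average at the pair $(s, s')$ on $\XX^i$, which is bounded by $\eps_{\rm con} r$. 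A parallel but shorter argument handles the mixed subcase $s \in I_1 \setminus I_0$, $t \in I_0$. Summing contributions yields a total bound $\leq (2\eps_{\rm con} + \delta + 2\sqrt{H\delta})r$ in every case; choosing $\eps_{\rm con} = \eps/10$ and $\delta_{H,V,b}(\eps)$ small enough relative to $\eps$ and to the $\delta$ furnished by Proposition~\ref{Prop_time_s_closeness} then completes the proof.
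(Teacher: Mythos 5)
Your proposal is correct and follows essentially the same route as the paper's proof. The paper defines the coupling at time $t \in I_1 \setminus I_0$ directly as $q_t := \int_{\XX^1_{t'}\times\XX^2_{t'}}(\nu^1_{x^1;t}\otimes\nu^2_{x^2;t})\,dq_{t'}(x^1,x^2)$, whereas you obtain the same coupling as a marginal of a gluing construction $Q_t$; that is a presentational difference, not a mathematical one. Beyond that, the argument is identical: build $Z_t$ by applying Proposition~\ref{Prop_time_s_closeness} (with Proposition~\ref{Prop_mass_distribution} supplying the mass-distribution hypothesis) at the pair $(t,t')$ for each flow, glue along the $\CF_0$ copy of the $t'$-slice via Lemma~\ref{Lem_combining_embeddings}, and verify Definition~\ref{Def_IF_dist_within_CF}\ref{Def_IF_dist_within_CF_3} by two triangle inequalities passing from $(s,t)$ to $(s',t')$. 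Your two bounds on the outer terms — the Cauchy--Schwarz/$H$-concentration bound $\sqrt{H\delta}\,r$ for the $t\to t'$ shift, and the reproduction-formula averaging trick for the $s\to s'$ shift (where you correctly observe that a pointwise bound fails but the $\mu^i_{t'}$-average collapses to the Proposition~\ref{Prop_time_s_closeness}\ref{Prop_time_s_closeness_d} quantity) — are precisely the paper's inequalities (\ref{eq_int_nuspnuypsp}) and (\ref{eq_int_nuysysp}); the middle term is the $\CF_0$-integrand at $(s',t')$ in both.
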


\begin{proof}
Fix $H, V < \infty$ and $b : (0,1] \to (0,1]$.
After parabolic rescaling, we may assume that $r = 1$.
In the following we denote by $\Psi(\de)$ a generic function with the property that $\Psi (\de) \to 0$ as $\de \to 0$, which may depend on the choices of $H, V, b$.

Write
\[ \CF_0 = \big(  (Z_t, d^Z_t)_{t \in I_0},(\varphi^i_t)_{t \in I_0, i =1,2} \big). \]
In the following it suffices to construct $\CF$ over some fixed $I_1$ --- we will call the result $\CF_1$ --- and to observe that the construction over two possibly different such subsets agrees over their intersection.
For any $t \in I$ that does not lie in any $I_1$ satisfying the assumptions of the lemma (for some function $\delta_{H,V,b}$, which we will need to determine), we may simply not fully define $\CF$ over $t$, i.e. we will have $t \not\in I^{\prime\prime, i}$.
So assume that $I_1$ is given such that the assumptions of the lemma hold for some $\delta$, whose value we will determine later.

Choose $(q_t)_{t \in I_0}$ such that Property~\ref{Def_IF_dist_within_CF_3} in Definition~\ref{Def_IF_dist_within_CF} holds for all $s,t \in I_0$, $s \leq t$ and for $r$ replaced with $\delta$; note that we have to choose $E = \emptyset$.
We will first construct objects $(Z_t, d^{Z}_t)$, $(\varphi^i_t)_{ i =1,2}$, $q_t$ for $t \in I_1 \setminus I_0$ that will allow us to extend $\CF_0$ to $\CF_1$.
For this purpose, fix a $t \in I_1 \setminus I_0$ for now and choose $t' \in (t, t+\delta] \cap I_0$ as in the statement of the lemma.
We define
\begin{equation} \label{eq_q_t_from_q_tp}
 q_t := \int_{\XX^1_{t'}} \int_{\XX^2_{t'}} (\nu_{x^1;t}^1 \otimes \nu_{x^2;t}^2) dq_{t'} (x^1, x^2). 
\end{equation}
Then $q_t$ is a coupling between $\mu^1_t, \mu^2_t$.
Next note that by Propositions~\ref{Prop_mass_distribution}, \ref{Prop_time_s_closeness} for $i=1,2$ there are metric spaces $(\td Z^{i}, d^{\td Z^{i}})$ and isometric embeddings $\td\varphi^{i}_t : \XX^i_t \to \td Z^{i}$, $\td\varphi^{i}_{t'} :\XX^i_{t'} \to \td Z^{i}$ such that
\[ \int_{\XX_{t'}^i} \int_{\XX_t^i} d^{\td Z^i} (\td\varphi^i_t (x), \td\varphi^i_{t'} (x')) d\nu^i_{x';t}(x) d\mu^i_{t'} (x') \leq \Psi(\delta). \]
Using Lemma~\ref{Lem_combining_embeddings}, we can combine the spaces $\td Z^{1}$, $Z_{t'}$, $\td Z^{2}$ and assume that the isometric embeddings $\td\varphi^1_t, \td\varphi^1_{t'} = \varphi^1_{t'}, \varphi^2_{t'} = \td\varphi^2_{t'},  \varphi^2_t$ map into a single space $Z_t \supset Z_{t'}$; we will write $\varphi^i_t := \td\varphi^i_t$.
We obtain therefore that
\begin{equation} \label{eq_intint_d_Z_i_xpyp}
 \int_{\XX_{t'}^i} \int_{\XX_t^i} d^Z_t (\varphi^i_t (x), \varphi^i_{t'} (x')) d\nu^i_{x';t}(x) d\mu^i_{t'} (x') \leq \Psi(\delta). 
\end{equation}

After repeating the construction above for all $t \in I_1 \setminus I_0$, we can construct objects $(Z_t, d^{Z}_t)$, $(\varphi^i_t)_{ i =1,2}$, $q_t$, which allow us to extend $\CF_0$ to a correspondence $\CF_1$ between $\XX^1, \XX^2$ that is defined over $I_1$.
Moreover, for any $t \in I_1 \setminus I_0$ and any minimal $t' \in (t, t+\delta] \cap I_0$, we may assume that $Z_t \supset Z_{t'}$ and that (\ref{eq_q_t_from_q_tp}), (\ref{eq_intint_d_Z_i_xpyp}) hold.
It remains to show that Property~\ref{Def_IF_dist_within_CF_3} of Definition~\ref{Def_IF_dist_within_CF} holds for the family of couplings $(q_t)_{t \in I_1}$.
Let $s,t \in I_1$, $s \leq t$.
If $s \not\in I_0$, then choose $s' \in (s, s+\delta] \cap I_0$ minimal, otherwise choose $s' := s$.
Similarly, if $t \not\in I_0$, then choose $t' \in (t, t+\delta] \cap I_0$ minimal, otherwise choose $t' := t$.
Note that $s' \leq t'$.

Let $i = 1,2$.
For any $y' \in \XX^i_{t'}$, we obtain by Proposition~\ref{Prop_compare_CHF}\ref{Prop_compare_CHF_b}
\begin{multline} \label{eq_int_nuspnuypsp}
 \int_{\XX^i_{t}} d_{W_1}^{\XX^i_{s}} ( \nu^i_{y; s}, \nu^i_{y';s} ) d\nu^i_{y'; t} (y) 
\leq \int_{\XX_t^i} d_{W_1}^{\XX^i_{t}} ( \delta_y, \nu^i_{y';t} ) d\nu^i_{y'; t} (y) \\ 
= \int_{\XX_t^i}  \int_{\XX_t^i} d^i_t ( y, y^* ) d\nu^i_{y';t} (y^*) d\nu^i_{y'; t} (y) 
\leq \sqrt{\Var (\nu^i_{y';t})} \leq \Psi (\delta). 
\end{multline}
Next, since for any $y' \in \XX^i_{t'}$
\[ q^* := \int_{\XX^i_s}( \nu^i_{x';s} \otimes \delta_{x'} ) d\nu^i_{y';s} (x') \]
is a coupling between $\nu^i_{y';s}, \nu^i_{y';s'}$, we have
\[ d_{W_1}^{Z_s} ((\varphi^i_s)_* \nu^i_{y'; s}, (\varphi^i_{s'})_* \nu^i_{y';s'} ) 
\leq \int_{\XX^i_{s'}} \int_{\XX^i_{s}} d^Z_s (\varphi^i_{s}( x), \varphi^i_{s'}( x')) d\nu^i_{x';s} (x) d\nu^i_{y';s}(x'). \]
Integrating this over $y'$ against $d\mu^i_{t'}$ implies, using (\ref{eq_intint_d_Z_i_xpyp}),
\begin{multline} \label{eq_int_nuysysp}
\int_{\XX^i_{t'}} d_{W_1}^{Z_s} ((\varphi^i_s)_* \nu^i_{y'; s}, (\varphi^i_{s'})_* \nu^i_{y';s'} ) d\mu^i_{t'} (y')
\leq \int_{\XX^i_{t'}} \int_{\XX^i_{s'}} \int_{\XX^i_{s}} d^Z_s (\varphi^i_{s}( x), \varphi^i_{s'}( x')) d\nu^i_{x';s} (x)  d\nu^i_{y';s}(x') d\mu^i_{t'} (y') \\
=  \int_{\XX^i_{s'}} \int_{\XX^i_{s}} d^Z_s (\varphi^i_{s}( x), \varphi^i_{s'}( x'))  d\nu^i_{x';s} (x) d\mu^i_s (x')
\leq \Psi (\delta).
\end{multline}
Using (\ref{eq_q_t_from_q_tp}), (\ref{eq_int_nuspnuypsp}), (\ref{eq_int_nuysysp}), we obtain
\begin{align*}
\int_{\XX^1_t \times \XX^2_t} &d_{W_1}^{Z_s} ( (\varphi^1_s)_* \nu^1_{y^1; s}, (\varphi^2_s)_* \nu^2_{y^2;s} )  dq_t (y^1, y^2) \\
&= \int_{\XX^1_{t'} \times \XX^2_{t'}} \int_{\XX^1_{t}} \int_{\XX^2_{t}} d_{W_1}^{Z_s} ((\varphi^1_{s})_* \nu^1_{y^1; s}, (\varphi^2_{s})_* \nu^2_{y^2;s} ) d\nu^2_{y^{\prime, 2}; t} (y^2) d\nu^1_{y^{\prime, 1}; t} (y^1) dq_{t'} (y^{\prime, 1}, y^{\prime, 2}) \displaybreak[1] \\
&\leq  \int_{\XX^1_{t'} \times \XX^2_{t'}} \int_{\XX^1_{t}} \int_{\XX^2_{t}} 
\big( 
d_{W_1}^{Z_s} ((\varphi^1_{s})_* \nu^1_{y^1; s}, (\varphi^1_{s})_* \nu^1_{y^{\prime, 1};s} ) 
+ d_{W_1}^{Z_s} ((\varphi^1_{s})_* \nu^1_{y^{\prime,1}; s}, (\varphi^2_{s})_* \nu^2_{y^{\prime,2};s} )  \\
&\qquad\qquad\qquad\qquad\qquad + d_{W_1}^{Z_s} ((\varphi^2_{s})_* \nu^2_{y^{\prime,2}; s}, (\varphi^2_{s})_* \nu^2_{y^2;s} ) 
\big) d\nu^2_{y^{\prime, 2}; t} (y^2) d\nu^1_{y^{\prime, 1}; t} (y^1) dq_{t'} (y^{\prime, 1}, y^{\prime, 2}) \displaybreak[1] \\
&\leq \Psi (\delta) + \int_{\XX^1_{t'} \times \XX^2_{t'}} d_{W_1}^{Z_s} ((\varphi^1_{s})_* \nu^1_{y^{\prime,1}; s}, (\varphi^2_{s})_* \nu^2_{y^{\prime,2};s} )  dq_{t'} (y^{\prime, 1}, y^{\prime, 2}) \displaybreak[1] \\
&\leq \Psi (\delta) + \int_{\XX^1_{t'} \times \XX^2_{t'}} \big( 
d_{W_1}^{Z_s} ( (\varphi^1_s)_* \nu^1_{y^1; s}, (\varphi^1_{s'})_* \nu^1_{y^1;s'} )  
+ d_{W_1}^{Z_s} ((\varphi^1_{s'})_* \nu^1_{y^1; s'}, (\varphi^2_{s'})_* \nu^2_{y^2;s'} ) \\
&\qquad\qquad\qquad\qquad\qquad
+ d_{W_1}^{Z_s} ( (\varphi^2_{s'})_* \nu^2_{y^2; s'}, (\varphi^2_s)_* \nu^2_{y^2;s} ) \big) dq_{t'} (y^1, y^2) \displaybreak[1] \\
&\leq \Psi (\delta)
+ \int_{\XX^1_{t'} \times \XX^2_{t'}}  d_{W_1}^{Z_{s'}} ((\varphi^1_{s'})_* \nu^1_{y^1; s'}, (\varphi^2_{s'})_* \nu^2_{y^2;s'} ) dq_{t'} (y^1, y^2) \leq \Psi(\delta) .
\end{align*}
This finishes the proof.
\end{proof}

\subsection{Proofs of the main theorems}
We will need the following lemmas.

\begin{Lemma} \label{Lem_subconv_finite_I0}
Fix some $H, V  \geq 0$, $r > 0$, a function $b : (0,1] \to (0,1]$ and a finite subset $I_0 \subset I \subset \IR$ of an interval.
Consider a sequence of metric flow pairs $(\XX^i, \lb (\mu^i_t)_{t \in I})$ representing classes in $\IF_I^{I} (H, V, b,r)$, $i = 1,2, \ldots$.
Then, after passing to a subsequence, we can find a correspondence $\CF_0$ between the metric flows $\XX^i$ over $I_0$ that is also fully defined over $I_0$ such that the metric flow pairs $(\XX^i, \lb (\mu^i_t)_{t \in I})$ form Cauchy sequence within $\CF_0$ uniformly over $I_0$, i.e. for any $\eps > 0$ we have
\[  d_{\IF}^{\,\CF_0, I_0} \big( (\XX^i, (\mu^i_t)_{t \in I}), (\XX^j, (\mu^j_t)_{t \in I}) \big) \leq \eps \qquad \text{for large} \quad i, j. \]
\end{Lemma}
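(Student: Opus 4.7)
The plan is to first extract Gromov-$W_1$ subsequential limits at each of the finitely many times of $I_0$ using $H$-concentration, assemble them into a single correspondence $\CF_0$, and then verify the Cauchy property by a further Prokhorov-type extraction on the joint ``conjugate heat kernel'' maps.

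Enumerate $I_0 = \{t_1 < \ldots < t_N\}$. Since each pair represents a class in $\IF^I_I(H,V,b,r)$, we have $\Var(\mu^i_{t_N}) \leq Vr^2$ and $b^{(\XX^i_{t_N}, d^i_{t_N}, \mu^i_{t_N})}_r \geq b$. By $H$-concentration and Proposition~\ref{Prop_H_monotonicity_Var}, each $\Var(\mu^i_{t_k})$ is uniformly bounded by some $V_k r^2$ depending only on $H, V, r$ and $t_N - t_k$. Then Proposition~\ref{Prop_mass_distribution} applied at each $t_k$ (using the gap $t_{k+1} - t_k$, and the hypothesis directly at $t_N$) yields a uniform lower bound $b^{(\XX^i_{t_k}, d^i_{t_k}, \mu^i_{t_k})}_r \geq b_k$ for some $b_k$ depending only on $H, V, b, r, I_0$. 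Each time-slice therefore represents an element of the compact set $\mathbb{M}_r(V_k, b_k)$ by Theorem~\ref{Thm_M_compact}. A diagonal subsequence then gives $GW_1$-convergence $(\XX^i_{t_k}, d^i_{t_k}, \mu^i_{t_k}) \to (X^\infty_{t_k}, d^\infty_{t_k}, \mu^\infty_{t_k})$ for every $k$, and Lemma~\ref{Lem_combining_embeddings} (cf.\ the proof of Theorem~\ref{Thm_GW_p_complete}) produces complete separable metric spaces $(Z_{t_k}, d^Z_{t_k})$ and isometric embeddings $\varphi^i_{t_k} : \XX^i_{t_k} \to Z_{t_k}$, $\varphi^\infty_{t_k}:X^\infty_{t_k} \to Z_{t_k}$ such that $(\varphi^i_{t_k})_* \mu^i_{t_k} \to (\varphi^\infty_{t_k})_* \mu^\infty_{t_k}$ in $d^{Z_{t_k}}_{W_1}$. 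This defines $\CF_0$.

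For the Cauchy property, fix $i,j$ and $t_l \in I_0$; we seek a single coupling $q^{ij}_{t_l}$ between $\mu^i_{t_l}$ and $\mu^j_{t_l}$ that simultaneously controls
\[ \int d^{Z_{t_k}}_{W_1}\big((\varphi^i_{t_k})_* \nu^i_{x^i;t_k}, (\varphi^j_{t_k})_* \nu^j_{x^j;t_k}\big)\, dq^{ij}_{t_l}(x^i, x^j) \]
for every $k \leq l$. The case $k=l$ is immediate from the $W_1$-convergence above. To handle all $k < l$ at once, I would consider the joint map
\[ \Psi^i_l : \XX^i_{t_l} \to Z_{t_l} \times \prod_{k<l} \PP^1(Z_{t_k}), \quad x \mapsto \big(\varphi^i_{t_l}(x),\, (\varphi^i_{t_1})_* \nu^i_{x;t_1}, \ldots, (\varphi^i_{t_{l-1}})_* \nu^i_{x;t_{l-1}}\big), \]
and its pushforward ``graph measure'' $\widetilde\mu^i_l := (\Psi^i_l)_* \mu^i_{t_l}$. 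Each component of $\Psi^i_l$ is $1$-Lipschitz by Proposition~\ref{Prop_compare_CHF}\ref{Prop_compare_CHF_c}, and the expectation of the $k$-th factor against $\mu^i_{t_l}$ equals $(\varphi^i_{t_k})_* \mu^i_{t_k}$. Using Lemma~\ref{Lem_nu_BA_bound} (each $\nu^i_{x;t_k}$ is concentrated within radius $\sim \sqrt{H(t_l-t_k)/\alpha}$ of its $H$-center, with mass $\geq 1-\alpha$) together with the tightness of $(\varphi^i_{t_k})_* \mu^i_{t_k}$ on $Z_{t_k}$ established in the previous step, the $k$-th $\PP^1(Z_{t_k})$-marginals of the $\widetilde\mu^i_l$ are $W_1$-tight in $\PP^1(Z_{t_k})$. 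Combined with tightness in the $Z_{t_l}$-factor, this makes $(\widetilde\mu^i_l)_i$ tight in the product, and the uniform second-moment bound upgrades weak subsequential convergence to $W_1$-convergence $\widetilde\mu^i_l \to \widetilde\mu^\infty_l$ in the natural product metric.

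Given such convergence, for large $i,j$ choose a near-optimal coupling $Q^{ij}_l$ of $\widetilde\mu^i_l, \widetilde\mu^j_l$ with small transport cost. Its marginal on $Z_{t_l}^2$, pulled back along $\varphi^i_{t_l}\times \varphi^j_{t_l}$, defines $q^{ij}_{t_l}$; the $Z_{t_l}^2$-marginal cost bounds the $k=l$ integral, while for each $k<l$ the $\PP^1(Z_{t_k})^2$-marginal cost of $Q^{ij}_l$ equals exactly the displayed integral. Each of these is bounded by the total product-metric cost of $Q^{ij}_l$, which vanishes as $i,j\to\infty$. Running this construction for every $l=1,\ldots,N$ and combining with Remark~\ref{Rmk_F_dist_GW1_dist} yields the Cauchy property uniformly over $I_0$. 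The main obstacle is the tightness of the graph measures in the $\PP^1(Z_{t_k})$-factors, which is the reason $H$-concentration and the concentration of $(\varphi^i_{t_k})_* \mu^i_{t_k}$ on $Z_{t_k}$ (hence the full strength of the hypothesis $(\XX^i, (\mu^i_t))\in \IF^I_I(H,V,b,r)$) are essential.
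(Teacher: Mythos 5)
Your route is genuinely different from the paper's. The paper proves a \emph{pointwise} statement first (its ``Claim'': for a fixed $x^\infty$ and any sequence $x^i$ with $\varphi^i_{t_k}(x^i) \to \varphi^\infty_{t_k}(x^\infty)$, the kernels $(\varphi^i_{t_l})_*\nu^i_{x^i;t_l}$ converge in $W_1$), and its proof of tightness for a \emph{single} $x^i$ crucially uses the Harnack-type $\Phi$-Lipschitz estimate of Definition~\ref{Def_metric_flow}\ref{Def_metric_flow_6} to propagate a large-mass-escape event at $x^i$ to a whole $d$-ball around $x^i$, which can then be integrated against $\mu^i_{t_k}$ to reach a contradiction; a second ``Claim'' (an $\eps/4$-net argument) then turns the pointwise statement into a uniform one and allows it to be integrated against the couplings $q^i_{t_k}$. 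You instead package everything into a single ``graph measure'' $\widetilde\mu^i_l = (\Psi^i_l)_*\mu^i_{t_l}$ on $Z_{t_l}\times\prod_{k<l}\PP^1(Z_{t_k})$ and extract one $W_1$-limit there, which is an elegant way to avoid the pointwise step; since $\Psi^i_l$ is injective (it begins with the isometric embedding $\varphi^i_{t_l}$), couplings of the graph measures lift canonically to couplings of $\mu^i_{t_l},\mu^j_{t_l}$ with exactly the right factor costs, so the final assembly is sound.

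However, the tightness argument for the graph measures has a gap. You cite Lemma~\ref{Lem_nu_BA_bound} (concentration near the $H$-center at scale $\sqrt{AH(t_l-t_k)}$ with escaping mass $\leq 1/A$) together with the tightness of $(\varphi^i_{t_k})_*\mu^i_{t_k}$. Concentration near $H$-centers gives the uniform second-moment bound $\Var(\nu^i_{x;t_k}) \leq H(t_l-t_k)$ (which you correctly use afterwards to upgrade weak to $W_1$ convergence, as in Lemma~\ref{Lem_weak_conv_2_W1_conv}), but it does \emph{not} give tightness of the family $\{(\varphi^i_{t_k})_*\nu^i_{x;t_k}\}$: the concentration radius $\sqrt{H(t_l-t_k)/\alpha}$ blows up as the escaping mass $\alpha\to 0$, and a metric ball $B(K,R)$ around a compact set $K$ in $Z_{t_k}$ is in general not compact. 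What actually closes the gap is the reproduction formula $\mu^i_{t_k} = \int_{\XX^i_{t_l}}\nu^i_{x;t_k}\,d\mu^i_{t_l}(x)$: pick a nested sequence of compacts $K_m\subset Z_{t_k}$ with $\sup_i\,(\varphi^i_{t_k})_*\mu^i_{t_k}(Z_{t_k}\setminus K_m)\leq\eta_m^2$ for $\eta_m = 4^{-m}\eps'$, apply Markov to conclude $\mu^i_{t_l}\big(\{x: (\varphi^i_{t_k})_*\nu^i_{x;t_k}(Z_{t_k}\setminus K_m) >\eta_m\}\big)\leq \eta_m$, and observe that the ``good'' set of $x$ (measure $\geq 1-\sum\eta_m$) sends $\nu^i_{x;t_k}$ into the set $\{\nu:\nu(Z_{t_k}\setminus K_m)\leq\eta_m\ \forall m,\ \Var(\nu)\leq H(t_l-t_k)\}$, which is $W_1$-compact. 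This replaces the paper's $\Phi$-Lipschitz/Harnack step; you can afford the weaker ``on-average'' version because you only need tightness of the graph measures, not a pointwise convergence statement.

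A smaller inaccuracy: Definition~\ref{Def_FIHVb} gives $\Var(\mu^i_{t_{\max}})\leq Vr^2$ and the mass-distribution bound at $t_{\max}=\sup I$ only, not at $t_N=\max I_0$ unless $t_N=t_{\max}$; you should first propagate the variance bound to $t_N$ via $H$-concentration (Proposition~\ref{Prop_H_monotonicity_Var}) and then obtain the mass-distribution bound at every $t_k$ (including $t_N$) from Proposition~\ref{Prop_mass_distribution}, as the paper also does.
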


\begin{proof}
Write $I_0 =: \{ t_1 < \ldots < t_N \}$.
By Proposition~\ref{Prop_mass_distribution} and Theorem~\ref{Thm_M_compact} we may pass to a subsequence such that for all $k = 1, \ldots, N$
\[ ( \XX^i_{t_k}, d^i_{t_k}, \mu^i_{t_k}) \xrightarrow[i \to \infty]{\quad GW_1 \quad} ( X^\infty_{t_k}, d^\infty_{t_k}, \mu^\infty_{t_k}), \]
where we may assume the limiting spaces to be separable, complete and of full support.
By \cite[Lemma~5.7]{Greven-Pfaffelhuber-Winter} (see also Lemma~\ref{Lem_combining_embeddings}), after passing to another subsequence, we can find complete and separable metric spaces $(Z_{t_k}, d^{Z}_{t_k})$ and isometric embeddings $\varphi_{t_k}^i : \XX^i_{t_k} \to Z_{t_k}$, $\varphi_{t_k}^\infty: X^\infty_{t_k} \to Z_{t_k}$ with the property that
\begin{equation} \label{eq_mui_converges}
 ( \varphi_{t_k}^i )_* \mu^i_{t_k} \xrightarrow[i \to \infty]{\quad W_1 \quad} ( \varphi_{t_k}^\infty )_* \mu^\infty_{t_k}. 
\end{equation}
Choose couplings $q^i_{t_k}$, $k = 1, \ldots, N$, between $\mu^i_{t_k}, \mu^\infty_{t_k}$ with
\begin{equation} \label{eq_choice_of_q_i_t_k}
 \int_{\XX^i_{t_k} \times X^\infty_{t_k}} d^Z_{t_k} ( \varphi^i_{t_k} (x^i), \varphi^\infty_{t_k} (x^\infty)) dq^i_{t_k} (x^i, x^\infty)  \to 0. 
\end{equation}

\begin{Claim}
Let $1 \leq l \leq k \leq N$ and $x^\infty \in X^\infty_{t_k}$.
Then, after passing to a subsequence, there is a probability measure $\nu^\infty_{x^\infty; t_l} \in \mathcal{P} (X^\infty_{t_l})$ such that for any sequence $x^i \in \XX^i_{t_k}$ with $\varphi^i_{t_k} (x^i) \to \varphi^\infty_{t_k} (x^\infty)$ we have
\begin{equation} \label{eq_nu_i_converge}
 ( \varphi_{t_l}^i )_* \nu^i_{x^i; t_l}  \xrightarrow[i \to \infty]{\quad W_1 \quad} ( \varphi_{t_l}^\infty )_* \nu^\infty_{x^\infty; t_l}. 
\end{equation}
\end{Claim}

\begin{proof}
We may assume that $l < k$, because the claim is trivial in the case $l = k$.
We first show that we may pass to a subsequence such that the sequence $ ( \varphi_{t_l}^i )_* \nu^i_{x^i; t_l}$ converges to some probability measure $\nu^{\prime} \in \PP (Z_{t_l})$.
Due to Lemmas~\ref{Lem_basic_measure}\ref{Lem_basic_measure_d}, \ref{Lem_weak_conv_2_W1_conv} it suffices to show that the sequence of probability measures $( \varphi_{t_l}^i )_* \nu^i_{x^i; t_l}$ on $Z_{t_l}$ is tight.
So fix some $\eps > 0$.
By Lemma~~\ref{Lem_basic_measure}\ref{Lem_basic_measure_e} it suffices to show that there is a compact subset $K_\eps \subset Z_{t_l}$ such that for large $i$
\[  \nu^i_{x^i; t_l} \big( \XX^i_{t_l} \setminus ( \varphi_{t_l}^i )^{-1} (B(K_\eps, \eps)) \big) =  \big( ( \varphi_{t_l}^i )_* \nu^i_{x^i; t_l} \big) ( Z_{t_l} \setminus B(K_\eps, \eps) ) \leq \eps. \]
Let $\alpha > 0$ be a constant whose value we will determine later.
By Lemma~\ref{Lem_basic_measure}\ref{Lem_basic_measure_a} we can choose a compact subset $K'_\eps \subset X^\infty_{t_l} $ such that
\[ \mu^\infty_{t_l} ( X^\infty_{t_l} \setminus K'_\eps ) < \alpha. \]
Let $K_\eps := \varphi^\infty_{t_l} (K'_\eps)$ and $K_{i, \eps} := (\varphi_{t_l}^i )^{-1} (B(K_\eps, \eps))$.
Then for large $i$ we have by (\ref{eq_mui_converges})
\begin{equation} \label{eq_mui_XX_sm_Bi_alph}
 \mu^i_{t_l} ( \XX^i_{t_l} \setminus K_{i,\eps} ) < \alpha. 
\end{equation}
Again by (\ref{eq_mui_converges}) we can find a $D < \infty$ such that for large $i$ 
\[ \mu^i_{t_k} (  B(x^i, D) ) \geq \tfrac12. \]
So if for some large $i$ we had
\[  \nu^i_{x^i; t_l} (\XX^i_{t_l} \setminus K_{i, \eps}) > \eps,  \]
then by Definition~\ref{Def_metric_flow}\ref{Def_metric_flow_6}, we would have $\nu^i_{\cdot; t_l} (\XX^i_{t_l} \setminus K_{i,\eps}) \geq \Phi^{-1} ( \Phi(\eps) - D (t_k - t_l)^{-1/2})$ on $B(x^i, D)$, which would imply
\[  \mu^i_{t_l} (\XX^i_{t_l} \setminus K_{i,\eps}) = \int_{\XX_{t_k}^i} \nu^i_{y; t_l} (\XX^i_{t_l} \setminus K_{i,\eps})  d\mu^i_{t_k} (y) \geq \tfrac12 \Phi^{-1} ( \Phi(\eps) - D (t_k - t_l)^{-1/2}). \]
This contradicts (\ref{eq_mui_XX_sm_Bi_alph}) for small enough $\alpha$.

So it follows that, after passing to some subsequence we have
\[ ( \varphi_{t_l}^i )_* \nu^i_{x^i; t_l}  \xrightarrow[i \to \infty]{\quad W_1 \quad}  \nu' \in \PP (Z_{t_l}). \]
Since we had $K_\eps \subset \supp (\varphi^\infty_{t_l})_* \mu^\infty_{t_l}$ in the previous argument, we also get $\supp \nu' \subset \supp (\varphi^\infty_{t_l})_* \mu^\infty_{t_l}$, which implies that $\nu' = ( \varphi_{t_l}^\infty )_* \nu^\infty_{x^\infty; t_l}$ for some $\nu^\infty_{x^\infty; t_l} \in \mathcal{P} (X^\infty_{t_l})$.
.\end{proof}

Since the spaces $X^\infty_{t_k}$ are separable and the maps 
\[ (\XX^i_{t_k} , d^i_{t_k} ) \longrightarrow (\mathcal{P} (Z_{t_l}), d^{Z_{t_l}}_{W_1} ), \qquad y \longmapsto (\varphi^i_{t_l})_* \nu^i_{y; t_l} \]
are $1$-Lipschitz, we may pass to a subsequence and assume that (\ref{eq_nu_i_converge}) holds for all $1 \leq k \leq l \leq N$, $x^\infty \in X^\infty_{t_k}$ and any sequence $x^i \in \XX^i_{t_k}$ with $\varphi^i_{t_k} (x^i) \to \varphi^\infty_{t_k} (x^\infty)$.

\begin{Claim}
For any $\eps > 0$ and any compact subset $K \subset X^\infty_{t_k}$ for large $i$ the following bound holds for any $y^i \in \XX^i_{t_k}$, $y^\infty \in X^\infty_{t_k}$:
\[ d^{Z_{t_l}}_{W_1} ( (\varphi^i_{t_l})_* \nu^i_{y^i; t_l},(\varphi^\infty_{t_l})_* \nu^\infty_{y^\infty; t_l} ) \leq  d^Z_{t_k} (\varphi^i_{t_k} (y^i), \varphi^\infty_{t_k} (y^\infty)) + 2d^\infty_{t_k} (y^\infty, K) + \eps . \]
\end{Claim}

\begin{proof}
Fix some $\eps > 0$ and $K \subset X^\infty_{t_k}$.
Let $\{ x^\infty_1, \ldots, x^\infty_N \} \subset X^\infty_{t_k}$ be an $\eps/4$-net for $K$.
For any $m = 1, \ldots, N$ choose a sequence $x^i_m \in \XX^i_{t_k}$ such that $\varphi^i_{t_k} (x^i_m) \to \varphi^\infty_{t_k} (x^\infty_m)$.
Now suppose that $i$ is large enough such that we have
\[ d^{Z}_{t_k} ( \varphi^i_{t_k} (x^i_m), \varphi^\infty_{t_k} (x^\infty_m) ), \; d^{Z_{t_l}}_{W_1} ( (\varphi^i_{t_l})_* \nu^i_{x^i_m; t_l},(\varphi^\infty_{t_l})_* \nu^\infty_{x^\infty_m; t_l} ) \leq \eps/4 \qquad \text{for all} \quad m = 1, \ldots, N. \]
For any $y^i \in \XX^i_{t_k}$, $y^\infty \in X^\infty_{t_k}$ choose $m \in \{ 1, \ldots, N \}$ such that
\[ d^{Z}_{t_k} ( \varphi^\infty_{t_k} (y^\infty), \varphi^\infty_{t_k} (x^\infty_m) ) =d^\infty_{t_k} (y^\infty, x^\infty_m) \leq d^\infty_{t_k} (y^\infty, K) + \eps/ 4. \]
Then, using Proposition~\ref{Prop_compare_CHF}\ref{Prop_compare_CHF_c},
\begin{align*}
 &d^{Z_{t_l}}_{W_1}  ( (\varphi^i_{t_l})_* \nu^i_{y^i; t_l},(\varphi^\infty_{t_l})_* \nu^\infty_{y^\infty; t_l} ) \\
&\leq d^{Z_{t_l}}_{W_1} ( (\varphi^i_{t_l})_* \nu^i_{y^i; t_l},(\varphi^i_{t_l})_* \nu^i_{x^i_m; t_l} ) 
+ d^{Z_{t_l}}_{W_1} ( (\varphi^i_{t_l})_* \nu^i_{x^i_m; t_l}  ,(\varphi^\infty_{t_l})_* \nu^\infty_{x^\infty_m; t_l} ) 
+ d^{Z_{t_l}}_{W_1} ( (\varphi^\infty_{t_l})_* \nu^\infty_{x^\infty_m; t_l}  ,(\varphi^\infty_{t_l})_* \nu^\infty_{y^\infty; t_l} )  \\
&\leq d^{Z}_{t_k} ( \varphi^i_{t_k} (y^i), \varphi^i_{t_k} (x^i_m) ) + \eps/4 + d^{Z}_{t_k} ( \varphi^\infty_{t_k} (x^\infty_m), \varphi^\infty_{t_k} (y^\infty )) \\
&\leq d^{Z}_{t_k} ( \varphi^i_{t_k} (y^i), \varphi^\infty_{t_k} (y^\infty) )  
+ 2d^{Z}_{t_k} ( \varphi^\infty_{t_k} (y^\infty), \varphi^\infty_{t_k} (x^\infty_m) ) 
+ d^{Z}_{t_k} ( \varphi^\infty_{t_k} (x^\infty_m), \varphi^i_{t_k} (x^i_m) ) + \eps/4 \\
& \leq  d^Z_{t_k} (\varphi^i_{t_k} (y^i), \varphi^\infty_{t_k} (y^\infty)) + 2d^\infty_{t_k} (y^\infty, K) + \eps. \qedhere
\end{align*}
\end{proof}
\medskip

So by (\ref{eq_choice_of_q_i_t_k}) the following holds for any $\eps > 0$, compact $K \subset X^\infty_{t_k}$ and large $i$
\begin{multline*}
 \int_{\XX^i_{t_k} \times X^\infty_{t_k}} d^{Z_{t_l}}_{W_1} ( (\varphi^i_{t_l})_* \nu^i_{y^i; t_l},(\varphi^\infty_{t_l})_* \nu^\infty_{y^\infty; t_l} )  dq^i_{t_k} (y^i, y^\infty)  \\
\leq 2 \eps +  2\int_{X^\infty_{t_k}} d^\infty_{t_k} (y^\infty, K) d\mu^\infty_{t_k}  (y^\infty)
\leq 2 \eps + 2 \mu_{t_k}^{1/2} ( X^\infty_{t_k} \setminus K ) \bigg({ \int_{X^\infty_{t_k}} \big(d^\infty_{t_k} (y^\infty, K) \big)^2 d\mu^\infty_{t_k} (y^\infty) }\bigg)^{1/2}. 
\end{multline*}
Since $K$ can be chosen such that the last integral is bounded by $\Var (\mu^\infty_{t_k})$ and $\mu_{t_k} ( X^\infty_{t_k} \setminus K)$ is arbitrarily small, we find that 
\[ \int_{\XX^i_{t_k} \times X^\infty_{t_k}} d^{Z_{t_l}}_{W_1} ( (\varphi^i_{t_l})_* \nu^i_{y^i; t_l},(\varphi^\infty_{t_l})_* \nu^\infty_{y^\infty; t_l} )  dq^i_{t_k} (x^i, x^\infty) \to 0. \]

As in the proof of Proposition~\ref{Prop_triangle_ineq_CC}, for any $k = 1, \ldots, N$, $1 \leq i \leq j$ we can construct a coupling $q^{i,j}_{t_k}$ between $\mu^i_{t_k}, \mu^j_{t_k}$ such that for some $\eps_i \to 0$ and any $1 \leq l \leq k$
\begin{equation*}
 \int_{\XX^i_{t_k} \times \XX_{t_k}^j} d_{W_1}^{Z_{t_l}} ( (\varphi^i_{t_l})_* \nu^i_{x^i; t_l}, (\varphi^j_{t_l})_* \nu^j_{x^j; t_l} ) dq^{i,j}_t (x^i, x^j) \leq \eps_i. \end{equation*}
This finishes the proof of the lemma.
\end{proof}

Combining Lemmas~\ref{Lem_criterion_discrete_closeness}, \ref{Lem_subconv_finite_I0} yields:

\begin{Lemma} \label{Lem_tot_bounded_I0I1}
For every $\eps > 0$, $H, V  \geq 0$ and every function $b : (0,1] \to (0,1]$ there is a $\delta (\eps, H, V, b) > 0$ such that the following holds.

Consider a sequence of metric flow pairs $(\XX^i, \lb (\mu^i_t)_{t \in I})$ representing classes in $\IF^I_I (H, V, b,r)$ for some $r > 0$ over an interval $I \subset \IR$ that are also fully defined over $I$.
Let $I_0 \subset I_1 \subset I$ be subsets such that the following holds:
\begin{enumerate}[label=(\roman*)]
\item $I_0$ is finite.
\item For any $t \in I_1$ there is a minimal $t' \in I_0 \cap [t, t+\delta r^2]$ and for this $t'$ we have
\[  \int_{\XX^i_{t'}} \int_{\XX^i_{t'}} d^i_{t'} \, d\mu^i_{t'} d\mu^i_{t'} - \int_{\XX^i_{t}} \int_{\XX^i_{t}} d^i_{t} \, d\mu^i_{t} d\mu^i_{t} \leq \delta r \qquad \text{for all} \quad i. \]
\item $| I \setminus I_1 | \leq (\eps r)^2$.
\end{enumerate}
Then, after passing to a subsequence, we have
\[ d_{\IF}^{ I_1} \big( (\XX^i, (\mu^i_t)_{t \in I}), (\XX^j, (\mu^j_t)_{t \in I}) \big) \leq \eps r \qquad \text{for all} \quad i, j. \]
\end{Lemma}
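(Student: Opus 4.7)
The plan is to derive the lemma by combining Lemmas~\ref{Lem_subconv_finite_I0} and \ref{Lem_criterion_discrete_closeness}. Fix $\eps > 0$ and set $\delta := \min(\delta_{H,V,b}(\eps), \eps^2)$, where $\delta_{H,V,b}$ is the function supplied by Lemma~\ref{Lem_criterion_discrete_closeness}.

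The first step will be to apply Lemma~\ref{Lem_subconv_finite_I0} to the finite set $I_0$: after extracting a subsequence, this yields a common correspondence $\CF_0$ between the $\XX^i$ over $I_0$ within which the metric flow pairs form a Cauchy sequence uniformly over $I_0$. In particular,
\[
d_{\IF}^{\,\CF_0, I_0}\big((\XX^i,(\mu^i_t)_{t\in I}),(\XX^j,(\mu^j_t)_{t\in I})\big) < \delta r
\]
for all sufficiently large $i, j$.

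Next, for each such pair $(i,j)$, we will invoke Lemma~\ref{Lem_criterion_discrete_closeness}. Its jump-control hypothesis is exactly our hypothesis~(ii), and the required initial $\delta r$-closeness over $I_0$ is in hand from the previous step. To match the interval-type gap hypothesis $\sup(I_1 \setminus I_0) < \sup I - \eps r^2$ imposed there, we apply the lemma with $I_1$ replaced by $I_1^\circ := (I_1 \cap (-\infty, \sup I - \eps r^2]) \cup I_0$, which satisfies $\sup(I_1^\circ \setminus I_0) \leq \sup I - \eps r^2$. This produces a correspondence $\CF^{ij}$ over $I$ extending $\CF_0$ with $d_\IF^{\,\CF^{ij}|_{I_1^\circ}, I_1^\circ} \leq \eps r$.

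Finally, we upgrade the bound from $I_1^\circ$ to all of $I_1$. The complement $I_1 \setminus I_1^\circ$ has Lebesgue measure at most $\eps r^2$ and lies within $\eps r^2$ of $\sup I$. At each such $t$, hypothesis~(ii) still supplies a nearby $t' \in I_0 \cap [t, t+\delta r^2]$, and we repeat the backward-transport coupling construction from the proof of Lemma~\ref{Lem_criterion_discrete_closeness}: set
\[
q_t := \int_{\XX^1_{t'}} \int_{\XX^2_{t'}} (\nu^1_{x^1;t} \otimes \nu^2_{x^2;t})\, dq_{t'}(x^1, x^2),
\]
and enlarge $Z_t$ via Lemma~\ref{Lem_combining_embeddings}. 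The integral estimate for pairs $s \leq t$ with $t$ in the top strip then follows from the same computation as in that proof, which yields $d_\IF^{\,I_1} \leq \eps r$. The main -- though essentially technical -- obstacle will be precisely this reconciliation of the interval-type gap hypothesis used by Lemma~\ref{Lem_criterion_discrete_closeness} with the weaker measure-type hypothesis~(iii) imposed here.
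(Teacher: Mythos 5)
Your approach is the one the paper intends: apply Lemma~\ref{Lem_subconv_finite_I0} to the finite set $I_0$ to obtain a common correspondence $\CF_0$ over $I_0$ in which the sequence is Cauchy, and then feed this into Lemma~\ref{Lem_criterion_discrete_closeness}. You also correctly flag the one place where the two statements fail to dovetail cleanly: Lemma~\ref{Lem_criterion_discrete_closeness} carries the extra hypothesis $\sup(I_1 \setminus I_0) < \sup I - \eps r^2$, which has no counterpart among (i)--(iii). That is a genuine observation and a more careful reading than the paper's one-line ``combining yields.''

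The difficulty is that the way you close this gap is not yet a proof. You truncate to $I_1^\circ := (I_1 \cap (-\infty, \sup I - \eps r^2]) \cup I_0$, apply Lemma~\ref{Lem_criterion_discrete_closeness} there, and then claim the strip $I_1 \setminus I_1^\circ$ can be handled ``by the same computation'' via the backward-transport couplings. But that is precisely where the excluded hypothesis is used. In the proof of Lemma~\ref{Lem_criterion_discrete_closeness}, the estimate~(\ref{eq_intint_d_Z_i_xpyp}) for a pair $t < t'$ comes from Proposition~\ref{Prop_time_s_closeness}, whose hypotheses include a mass distribution lower bound $b^{(\XX^i_{t'}, d^i_{t'}, \mu^i_{t'})}_r \geq b^*$ on $[\delta,1]$ \emph{uniformly in $i$}, with $b^*$ depending only on $H,V,\eps$. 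At interior times this bound is supplied by Proposition~\ref{Prop_mass_distribution}, whose lower bound $\tfrac12\Phi\bigl(-\sqrt{8V/(\eps'\tau)}\bigr)$ needs $\tau \le (\sup I - t')/r^2$ and hence degenerates as $t' \to \sup I$. The hypothesis $\sup(I_1 \setminus I_0) < \sup I - \eps r^2$ is exactly what pins each anchor $t'$ at distance $\gtrsim \eps r^2$ below $\sup I$, so that $\tau$ and therefore $b^*$ depend only on $H,V,\eps$ and the dependence of $\delta$ on the data closes up. For $t$ in your top strip, the corresponding $t' \in I_0$ can lie within $\eps^2 r^2$ of $\sup I$; the assumed bound $(\XX^i_{t_{\max}},d^i_{t_{\max}},\mu^i_{t_{\max}}) \in \mathbb{M}_r(V,b)$ only covers $t' = t_{\max}$ itself, not an interior anchor arbitrarily close to it, and the jump control in (ii) gives no handle on $D_i(t_{\max}) - D_i(t')$. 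So the ``reconciliation'' you defer is not merely technical: either one must supply a new argument producing couplings and the integral estimate for $t$ near $\sup I$, or one must argue that the offending hypothesis in Lemma~\ref{Lem_criterion_discrete_closeness} can be removed in this setting (which would be the cleanest resolution, but is not established by your proposal).
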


The following will be a consequence of Lemma~\ref{Lem_tot_bounded_I0I1}.

\begin{Lemma} \label{Lem_main_F_compactness}
For every $H, V \geq 0$, $r > 0$, and every function $b : (0,1] \to (0,1]$ the following holds.

Consider a sequence of metric flow pairs $(\XX^i, \lb (\mu^i_t)_{t \in I})$ representing classes in $\IF^I_I (H, V, b,r)$ over an interval $I \subset \IR$ that are also fully defined over $I$.
Then there is a subsequence such that for any $t \in I$ the following limit exists
\begin{equation} \label{eq_Dt_lim}
 D(t) := \lim_{i \to \infty} \int_{\XX^i_{t}} \int_{\XX^i_{t}} d^i_{t} \, d\mu^i_{t} d\mu^i_{t} < \infty. 
\end{equation}

Moreover, whenever we are in the situation that the limit (\ref{eq_Dt_lim}) exists for all $t \in I$, then $D(t)$ is continuous on the complement of a countable subset and the following holds.
Let $J \subset I$ be a compact subset such that the restriction $D |_J$ is continuous and $\eps > 0$.
Then there is a subsequence such that 
\[ d_{\IF}^{ J} \big( (\XX^i, (\mu^i_t)_{t \in I}), (\XX^j, (\mu^j_t)_{t \in I}) \big) \leq |I \setminus J |^{1/2} + \eps \qquad \text{for all} \quad i, j. \]
\end{Lemma}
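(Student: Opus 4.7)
\medskip
\noindent\textbf{Proof proposal for Lemma~\ref{Lem_main_F_compactness}.}

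The plan is to reduce the existence of the limits $D(t)$ to a Helly-type compactness statement for monotone functions, and then to deduce the distance bound from Lemma~\ref{Lem_tot_bounded_I0I1} after a careful discretization that accounts for the possible jumps of the variance. Throughout, set
\[
V_i(t) := \Var(\mu^i_t), \qquad M_i(t) := V_i(t) + Ht, \qquad D_i(t) := \int_{\XX^i_t} \int_{\XX^i_t} d^i_t \, d\mu^i_t d\mu^i_t.
\]
The key analytic input is Lemma~\ref{Lem_intd_diff_Var_diff}, which, combined with Proposition~\ref{Prop_H_monotonicity_Var}, asserts that $M_i$ is non-decreasing, uniformly bounded on $I$ (using Definition~\ref{Def_FIHVb}), and that for $s \leq t$:
\begin{equation} \label{eq_plan_key_bound}
-\sqrt{H(t-s)} \leq D_i(t) - D_i(s) \leq \sqrt{M_i(t) - M_i(s)} + 2\sqrt{H(t-s)}.
\end{equation}
In particular $D_i$ is uniformly bounded (by $\sup_i \sqrt{V_i(\cdot)}$).

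\emph{Existence of $D(t)$.} First, apply Helly's selection theorem to the uniformly bounded non-decreasing sequence $M_i$ to extract a subsequence along which $M_i(t) \to M_\infty(t)$ pointwise, with $M_\infty$ non-decreasing. Let $E$ denote the (countable) set of discontinuities of $M_\infty$. By a diagonal argument, pass to a further subsequence so that $D_i(t)$ converges for all $t$ in a countable set $Q \cup E$, where $Q$ is dense in $I$. For any $t_0 \in I \setminus (Q \cup E)$ and any $\varepsilon > 0$, continuity of $M_\infty$ at $t_0$ yields $\delta > 0$ with $M_\infty(t_0+\delta) - M_\infty(t_0-\delta) < \varepsilon^2$; then for large $i$, $M_i(t_0+\delta) - M_i(t_0-\delta) < 2\varepsilon^2$, and (\ref{eq_plan_key_bound}) applied to $t_0$ and a nearby $t_1 \in Q$ shows $D_i(t_0)$ is Cauchy. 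Thus $D(t) := \lim_i D_i(t)$ exists for all $t \in I$.

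\emph{Continuity of $D$ on $I \setminus E$.} Passing (\ref{eq_plan_key_bound}) to the limit at points $s, t$ where $M_i \to M_\infty$ yields $|D(t) - D(s)| \leq \sqrt{|M_\infty(t) - M_\infty(s)|} + 2\sqrt{H|t-s|}$, so $D$ is continuous at every $t_0 \in I \setminus E$.

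\emph{The distance bound.} Now fix compact $J \subset I$ with $D|_J$ continuous and $\varepsilon > 0$. Set $\varepsilon' := (|I \setminus J|^{1/2} + \varepsilon)/r$ and let $\delta' := \delta(\varepsilon', H, V, b)$ be the constant provided by Lemma~\ref{Lem_tot_bounded_I0I1}. The plan is to apply that lemma with $I_1 := J$; condition~(iii) is automatic since $|I \setminus J| \leq (\varepsilon' r)^2$. The real work is to construct a \emph{finite} $I_0 \subset I$ satisfying (ii), namely: for every $t \in J$ there is $t' := \min(I_0 \cap [t, t+\delta' r^2])$ with $D_i(t') - D_i(t) \leq \delta' r$ for all sufficiently large $i$. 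For such a $t$ (with $t \notin I_0$) and $t^* := \max(I_0 \cap (-\infty, t])$, estimate (\ref{eq_plan_key_bound}) together with monotonicity of $M_i$ gives
\[
D_i(t') - D_i(t) \leq \sqrt{M_i(t') - M_i(t^*)} + 2\sqrt{H(t'-t^*)},
\]
so it suffices to ensure that $t' - t^*$ and $M_\infty(t') - M_\infty(t^*)$ are both very small (the latter because $M_i \to M_\infty$ pointwise). To handle the potentially countably many jumps of $M_\infty$, note that since $M_\infty$ is monotone bounded, the sum of its jumps is finite; therefore we may fix a finite set $E_\beta \subset E$ of large jumps such that all remaining jumps sum to less than a prescribed threshold. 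Choose $I_0$ to contain $E_\beta$, the endpoints of (a compact interval containing) $J \cup (J + \delta' r^2)$, and a refinement ensuring consecutive points of $I_0$ are within $\min\{\delta' r^2/2, (\delta' r)^2/(100H)\}$. Using finiteness of $E_\beta$ and uniform continuity of the continuous part of $M_\infty$ on $\bar J$, this refinement can be done with finitely many points so that $M_\infty(t') - M_\infty(t^*) \leq (\delta' r/4)^2$ for every pair of consecutive elements $t^* < t'$ of $I_0$. Passing to a further subsequence (so that $D_i \to D$ and $M_i \to M_\infty$ at the finitely many points of $I_0$) then verifies condition~(ii), and Lemma~\ref{Lem_tot_bounded_I0I1} yields $d_{\IF}^J \leq d_{\IF}^{I_1} \leq \varepsilon' r = |I \setminus J|^{1/2} + \varepsilon$.

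\emph{Main obstacle.} The delicate point is the third step: the continuity of $D$ on $J$ does \emph{not} imply continuity of $M_\infty$ on $J$, so the discretization $I_0$ must absorb the large jumps of $M_\infty$ while still being finite and $\delta' r^2$-right-dense in $J$. Handling this requires exploiting the summability of the jumps of the monotone limit $M_\infty$ together with the fact that $\delta'$ and $\varepsilon'$ are fixed in advance, so that only finitely many jumps matter.
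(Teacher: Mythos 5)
Your existence argument (Helly's theorem applied to the non-decreasing functions $M_i$, then the two-sided bound (\ref{eq_plan_key_bound}) to deduce convergence of $D_i$ at continuity points of $M_\infty$) is correct, and in fact somewhat cleaner than the paper's, which extracts one-sided limits of $D$ directly from the lower bound $D(t)-D(s)\ge -\sqrt{H(t-s)}$ (this forces $D$ to be a regulated function, hence continuous off a countable set).

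The distance-bound step, however, contains a genuine gap. You reduce condition~(ii) of Lemma~\ref{Lem_tot_bounded_I0I1} to the assertion that one can choose a finite $I_0$ so that $M_\infty(t')-M_\infty(t^*)\le (\delta' r/4)^2$ for every pair of consecutive elements $t^*<t'$ of $I_0$. This is impossible whenever $M_\infty$ has a jump of size $\lambda>2(\delta' r/4)^2$ at a point $t_j$ that is approximated from both sides by $J$. Indeed, if $t_j\notin I_0$ then the consecutive pair straddling $t_j$ already has $M_\infty(t')-M_\infty(t^*)\ge M_\infty(t_j^+)-M_\infty(t_j^-)=\lambda$. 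If $t_j\in I_0$, then since $M_\infty(t_j)\in[M_\infty(t_j^-),M_\infty(t_j^+)]$, one of the two adjacent pairs $(s^*,t_j)$ or $(t_j,s')$ picks up at least $\lambda/2$: the jump at $t_j$ is an intrinsic feature of $M_\infty$ and cannot be removed by inserting $t_j$ as a grid point. Decomposing $M_\infty$ into a continuous part plus jumps and keeping the large jumps in $E_\beta\subset I_0$ does not help, precisely because the discontinuity \emph{at} a grid point still contributes to the adjacent differences. Note also that continuity of $D|_J$ gives you no control over $M_\infty$: the upper bound in (\ref{eq_plan_key_bound}) is one-sided, so $M_\infty$ can jump while $D$ stays continuous. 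Thus the estimate you need to feed into Lemma~\ref{Lem_tot_bounded_I0I1} can fail.

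The paper avoids this by never passing through $M_\infty$ in this step. Since $D|_J$ is continuous on a compact set, one chooses the finite grid $I_0$ so that for every $t\in J$ there exist $t'_1\le t\le t'_2$ in $I_0$ with $t'_2-t'_1\le\delta$ and $D(t'_2)-D(t'_1)\le\delta$; then one passes to a subsequence along which $|D_i(t')-D(t')|\le\delta$ for the finitely many $t'\in I_0$, and controls $D_i(t')-D_i(t)$ using \emph{only} the lower H\"older bound $D_i(t)-D_i(s)\ge-\sqrt{H(t-s)}$ twice (once to compare $D_i(t)$ with $D_i(t'_1)$, once to compare $D_i(t')$ with $D_i(t'_2)$). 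The key point is that the one quantity you know to be small for $t\in J$ is the oscillation of $D$, not of $M_\infty$, so the discretization must be built around $D$. If you replace your estimate by this argument and keep your Helly-based existence step, the proof goes through.
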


\begin{proof}
After parabolic rescaling, we may assume that $r = 1$.

Denote by $D_i(t)$ the value of the integral in (\ref{eq_Dt_lim}).
By Lemma~\ref{Lem_intd_diff_Var_diff} we have $D_i(t) - D_i(s) \geq - \sqrt{H(t-s)}$ for any $s, t \in I$, $s \leq t$.
Moreover, by H\"older's inequality we have $D_i(t) \leq \sqrt{\Var(\mu_t)} \leq \sqrt{ V + H(\sup I - t)}$.
After passing to a subsequence, we may assume that the limit in (\ref{eq_Dt_lim}) exists for any $t \in I \cap \mathbb{Q}$.
Then we still have $D(t) - D(s) \geq - \sqrt{H(t-s)}$ for any $s, t \in I \cap \mathbb{Q}$, $s \leq t$.
So there is a countable subset $\partial I \subset S \subset I$ such that $\lim_{t' \to t, t' \in I \cap \mathbb{Q}} D(t')$ exists for all $t \in I \setminus S$ and for any such $t$ this limit agrees with the limit in (\ref{eq_Dt_lim}).
We can now pass to another subsequence such that (\ref{eq_Dt_lim}) exists for all $t \in S$.
This proves the first part of the lemma.

For the second part of the lemma, observe again that  $D(t) - D(s) \geq - \sqrt{H(t-s)}$ for any $s, t \in I$, $s \leq t$, which implies that $D(t)$ is continuous on the complement of a countable subset.
Suppose now that $D |_J$ is continuous for some compact $J \subset I$ and fix $\eps > 0$ and some $\delta > 0$, whose value we will choose later.
For any $t \in J$ there is compact interval $t \in I_t \subset I$ that is a neighborhood of $t$ in $I$ and that satisfies
\[ |I_t| \leq \delta, \qquad \osc_{I_t \cap J} D \leq \delta. \]
By compactness, $J$ is covered by a finite number of these intervals.
So there is a finite subset $I_0 \subset I$ such that for any $t \in J$ there are $t'_1, t'_2 \in I_0$ such that 
\[ t'_1 \leq t \leq t'_2, \qquad t'_2 - t'_1 \leq \delta, \qquad D(t'_2) - D(t'_1) \leq \delta. \] 
After passing to a subsequence, we may assume that $|D_i(t') - D(t')| \leq \delta$ for all $t' \in I_0$.
So for any $t \in J$ there are $t'_1, t'_2 \in I_0$ such that $t'_1 \leq t \leq t'_2$, $t'_2 - t'_1 \leq \delta$ and
\[ 
D_i(t'_2) - D_i (t) \leq  D_i (t'_2) - D_i (t'_1) + \sqrt{H\delta} \leq D (t'_2) - D (t'_1) + \sqrt{H\delta} + 2\delta \leq  \sqrt{H\delta} + 3\delta. \]
Let now $t' \in I_0$ minimal with the property that $t' \geq t$.
Then $0 \leq t'_2 - t' \leq t'_2 - t'_1 \leq \delta$ and therefore
\[ D_i(t') - D_i (t) \leq D_i(t'_2) - D_i (t) + \sqrt{H\delta} \leq 2\sqrt{H\delta} + 3\delta. \]
The lemma now follows using Lemma~\ref{Lem_tot_bounded_I0I1} for small enough $\delta$.
\end{proof}
\bigskip

\begin{Lemma} \label{Lem_FHVb_closed}
Suppose that $I \subset \IR$ is an interval with $\sup I < \infty$, $J \subset I$ is a subset and consider $H, V \geq 0$, $r > 0$, $b : (0,1] \to (0,1]$.
Then $\IF_I^J (H,V,b,r) \subset \IF^J_I$ is closed.
\end{Lemma}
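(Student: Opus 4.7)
The plan is to take a sequence $(\XX^i,(\mu^i_t)_{t\in I^{\prime,i}})$ of representatives of classes in $\IF_I^J(H,V,b,r)$ that $d_{\IF}^J$-converges to some $(\XX^\infty,(\mu^\infty_t)_{t\in I^{\prime,\infty}})$ and show that the limit class admits a representative fulfilling the defining properties. By Theorem~\ref{Thm_conv_to_conv_within} I upgrade to convergence within a correspondence $\CF=((Z_t,d^Z_t),(\varphi^i_t))$, uniform over $J$, and then by Lemma~\ref{Lem_pass_to_timewise} I pass to a subsequence so that there is a countable subset $E\subset I$ with $J\cap I^{\prime,\infty}\subset I\setminus E$, $I\setminus I^{\prime,\infty}\subset E$, and the convergence is time-wise at every $t\in I\setminus E$. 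By Remark~\ref{Rmk_assume_I_Ip} I may assume each $\XX^i$ itself is $H$-concentrated.

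The conditions at $t_{\max}$ are the easy half. If $t_{\max}\in J$, then $t_{\max}\notin E$, so time-wise convergence gives $GW_1$-convergence of the mass-measure spaces $(\XX^i_{t_{\max}},d^i_{t_{\max}},\mu^i_{t_{\max}})\to(\XX^\infty_{t_{\max}},d^\infty_{t_{\max}},\mu^\infty_{t_{\max}})$, and Lemma~\ref{Lem_MVb_closed} forces the limit into $\mathbb{M}_r(V,b)$. If $t_{\max}\notin J$, then for each $t\in(I^{\prime,\infty}\setminus E)\cap(-\infty,t_{\max})$, lower semi-continuity of $\Var$ under $W_1$ (Lemma~\ref{Lem_weak_conv_2_W1_conv}) together with the monotonicity of $\Var(\mu^i_t)+Ht$ (Proposition~\ref{Prop_H_monotonicity_Var}) applied to the hypothesis $\limsup_{t'\nearrow t_{\max}}\Var(\mu^i_{t'})\le Vr^2$ yields $\Var(\mu^\infty_t)\le Vr^2+H(t_{\max}-t)$; letting $t\nearrow t_{\max}$ through $I^{\prime,\infty}\setminus E$ gives the required bound.

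The main task is $H$-concentration of the limit. For $s,t\in I^{\prime,\infty}\setminus E$ with $s\le t$ and $x_1,x_2\in\XX^\infty_t=\supp\mu^\infty_t$, weak convergence $(\varphi^i_t)_*\mu^i_t\to(\varphi^\infty_t)_*\mu^\infty_t$ together with $\varphi^\infty_t(x_k)\in\supp(\varphi^\infty_t)_*\mu^\infty_t$ lets me choose, via a Portmanteau/diagonalization argument, points $y^i_k\in\XX^i_t$ with $\varphi^i_t(y^i_k)\to\varphi^\infty_t(x_k)$ in $Z_t$; hence $d^i_t(y^i_1,y^i_2)\to d^\infty_t(x_1,x_2)$, and $\delta_{y^i_k}$ \emph{strictly} converges to $\delta_{x_k}$ at time $t$. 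Applying Theorem~\ref{Thm_tdmu_convergence}\ref{Thm_tdmu_convergence_b} to transport this to $\IF$-convergence of the conjugate heat flows $(\nu^i_{y^i_k;t'})_{t'\le t}$ within $\CF$, and then Theorem~\ref{Thm_tdmu_convergence_reverse}\ref{Thm_tdmu_convergence_reverse_b} (using $\delta_{x_k}\in\PP^1(\XX^\infty_t)$ and the time-wise metric-flow convergence at $s<t$), I obtain $W_1$-convergence $(\varphi^i_s)_*\nu^i_{y^i_k;s}\to(\varphi^\infty_s)_*\nu^\infty_{x_k;s}$ on $Z_s$. Lower semi-continuity of $\mu\otimes\nu\mapsto\iint d^2\,d\mu\,d\nu$ under joint weak convergence combined with the $H$-concentration of each $\XX^i$ then gives
\[
\Var(\nu^\infty_{x_1;s},\nu^\infty_{x_2;s})\le\liminf_{i\to\infty}\Var(\nu^i_{y^i_1;s},\nu^i_{y^i_2;s})\le\liminf_{i\to\infty}\bigl(d^{i,2}_t(y^i_1,y^i_2)+H(t-s)\bigr)=d^{\infty,2}_t(x_1,x_2)+H(t-s).
\]
Thus the restriction $(\XX^\infty_{I^{\prime,\infty}\setminus E},(\mu^\infty_t)_{t\in I^{\prime,\infty}\setminus E})$, which represents the same class in $\IF_I^J$ because $E$ has measure zero and contains no point of $J$, is $H$-concentrated and satisfies the $t_{\max}$-condition, so it lies in $\IF_I^J(H,V,b,r)$. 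The main obstacle I anticipate is the Portmanteau/diagonal construction of the $y^i_k$ together with the careful bookkeeping needed to transport convergence of conjugate heat kernels from time $t$ down to time $s$ via Theorems~\ref{Thm_tdmu_convergence}--\ref{Thm_tdmu_convergence_reverse}; once strict convergence is secured at both endpoints the variance bound is routine.
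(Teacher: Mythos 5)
Your argument is correct in substance and follows the same overall strategy as the paper: pass to a correspondence, extract a subsequence with time-wise convergence away from an exceptional set, verify the $t_{\max}$ condition via Lemma~\ref{Lem_MVb_closed} (resp.\ via $\Var$-monotonicity), and establish $H$-concentration by picking strictly convergent sequences $y^i_k \to x_k$ and taking a limit of the variance inequality. The one place where your route differs from the paper's is the transport of the conjugate heat kernels from time $t$ to time $s$: you go through Theorem~\ref{Thm_tdmu_convergence}\ref{Thm_tdmu_convergence_b} (strict convergence of points $\Rightarrow$ $\CF$-convergence of heat kernels) followed by Theorem~\ref{Thm_tdmu_convergence_reverse}\ref{Thm_tdmu_convergence_reverse_b} (recover strict/weak convergence at the earlier time-wise time $s$), whereas the paper invokes Lemma~\ref{Lem_XX12_close_HK_close} directly, which gives the required $W_1$-closeness of $(\varphi^i_s)_*\nu^i_{y^i_k;s}$ to $(\varphi^\infty_s)_*\nu^\infty_{x_k;s}$ quantitatively once one observes that $\liminf_i\mu^i_t(B(y^i_k,r))>0$. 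Both paths are valid; the paper's is shorter and avoids the heavier machinery of Section~5.

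Two small inaccuracies worth fixing. First, the exceptional set from Lemma~\ref{Lem_pass_to_timewise} is only guaranteed to have measure zero, not to be countable (each $E_i$ is merely a measurable subset of small measure), and likewise $I\setminus I^{\prime,\infty}$ has measure zero but need not be countable; fortunately measure zero is all the argument needs. Second, your appeal to the $\PP^1$-hypothesis in Theorem~\ref{Thm_tdmu_convergence_reverse}\ref{Thm_tdmu_convergence_reverse_b} is slightly off: the relevant hypothesis there concerns $\nu^\infty_{x_k;t'}\in\PP^1(\XX^\infty_{t'})$ for all $t'<t$, not $\delta_{x_k}\in\PP^1(\XX^\infty_t)$, and that would be circular since you are trying to bound those variances. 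But you do not actually need the $\PP^1$-upgrade to $W_1$-convergence: the first part of Theorem~\ref{Thm_tdmu_convergence_reverse}\ref{Thm_tdmu_convergence_reverse_b} gives weak convergence at each time-wise $s<t$ with no $\PP^1$-hypothesis, and the lower semi-continuity of $(\mu,\nu)\mapsto\iint d^2\,d\mu\,d\nu$ that you invoke holds already under weak convergence (alternatively, combine weak convergence with the uniform variance bound $\Var(\nu^i_{y^i_k;s})\le H(t-s)$ and Lemma~\ref{Lem_weak_conv_2_W1_conv} to recover $W_1$-convergence). With these adjustments the proof goes through as written.
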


\begin{proof}
Consider a sequence of $H$-concentrated metric flow pairs $(\XX^i, (\mu^i_t)_{t \in I}) \in \IF_I^J (H,V,b,r)$ converging to a metric flow pair $(\XX^\infty, (\mu^\infty_t)_{t \in I}) \in  \IF^J_I$.
If $t_{\max} := \sup I \not\in J$, then we may replace $I$ with $I \setminus \{ t_{\max} \}$, since this does not change the $d_{\IF}^J$-distance.
Due to Remark~\ref{Rmk_assume_I_Ip}, we may assume that the metric flow pairs $(\XX^i, (\mu^i_t)_{t \in I})$ are fully defined over $I$.

By Theorem~\ref{Thm_conv_to_conv_within} we may choose a correspondence $\CF =(  (Z_t, d^Z_t)_{t \in I},(\varphi^i_t)_{t \in I^{\prime\prime, i}, i \in \IN \cup \{ \infty \}} )$ between the metric flows $\XX^i$, $i \in \IN \cup \{ \infty \}$, over $I$ such that
\begin{equation} \label{eq_conv_FHVb_closed_proof}
 (\XX^i, (\mu^i_t)_{t \in I}) \xrightarrow[i \to \infty]{\quad \IF, \CF, J \quad}  (\XX^\infty, (\mu^\infty_t)_{t \in I}) . 
\end{equation}
Let $E_i \subset I$ and $(q^i_t)_{t \in I \setminus E_i}$ be the objects from Definition~\ref{Def_IF_dist_within_CF}.
After passing to a subsequence and replacing each $E_i$ with $E_i \cup E_{i+1} \cup \ldots$, we may assume that $E_i$ is decreasing and that $E_\infty := \bigcap_{i=1}^\infty E_i$ has measure zero.
Then (\ref{eq_conv_FHVb_closed_proof}) also holds after replacing $J$ with $J \cup I \setminus E_j$ for any fixed $j \geq 1$.
By Lemma~\ref{Lem_MVb_closed} we know that $(\XX^\infty_{t_{\max}}, d^\infty_{t_{\max}}, \mu_{t_{\max}}) \in \mathbb{M}_r (V,b)$ if $t_{\max} \in J$.
We also obtain that for any $t \in I \setminus E_\infty$
\[ \Var (\mu^\infty_t) \leq \liminf_{i \to \infty} \Var (\mu^i_t) \leq V + H (t_\infty - t). \]
It remains to show that $\XX^\infty_{I \setminus E_\infty}$ is $H$-concentrated.
To see this, let $x^\infty, y^\infty \in \XX^\infty_t$ for some $t \in I \setminus E_\infty$.
Choose sequences $x^i , y^i \in \XX^i_t$ such that $\varphi^i_t (x^i) \to \varphi^\infty_t (x^\infty)$, $\varphi^i_t (y^i) \to \varphi^\infty_t (y^\infty)$ in $Z_t$.
For any $s \in I \setminus E_\infty$, $s \leq t$ we have using, Lemma~\ref{Lem_XX12_close_HK_close},
\[ (\varphi^i_s)_* \nu^i_{x^i;s} \xrightarrow[i \to \infty]{\quad W_1 \quad} (\varphi^\infty_s)_* \nu^\infty_{x^\infty;s} , \qquad (\varphi^i_s)_* \nu^i_{y^i;s} \xrightarrow[i \to \infty]{\quad W_1 \quad} (\varphi^\infty_s)_* \nu^\infty_{y^\infty;s} . \]
It follows that
\begin{multline*}
 \Var(  \nu^\infty_{x^\infty;s},  \nu^\infty_{y^\infty;s}) 
= \Var( (\varphi^\infty_s)_* \nu^\infty_{x^\infty;s}, (\varphi^\infty_s)_* \nu^\infty_{y^\infty;s}  ) 
\leq \limsup_{i \to \infty} \Var ( (\varphi^i_s)_* \nu^i_{x^i;s},  (\varphi^i_s)_* \nu^i_{y^i;s} ) \\
\leq \big( d^\infty_t (x^\infty, y^\infty) \big)^2 + H(t-s). 
\end{multline*}
This finishes the proof.
\end{proof}
\bigskip

\begin{proof}[Proof of Theorem~\ref{Thm_F_compact}.]
As in the proof of Lemma~\ref{Lem_FHVb_closed}, we may in the following only work with $H$-concentrated metric flow pairs that are fully defined over $I$.

By Lemma~\ref{Lem_FHVb_closed} the subset $\IF_I^J (H,V,b,r) \subset \IF_I^J$ is closed, so by Theorem~\ref{Thm_F_complete} it is complete.
To see total boundedness, suppose by contradiction that there is a sequence $(\XX^i, (\mu^i_t)_{t \in I}) \in \mathbb{F}_I^J (H,V,b,r)$ with the property that for some $\eps > 0$
\begin{equation} \label{eq_dF_not_eps}
 d_{\IF} \big( (\XX^i, (\mu^i_t)_{t \in I}), (\XX^j, (\mu^j_t)_{t \in I}) \big) > \eps r \qquad \text{for all} \quad i \neq j. 
\end{equation}
By Lemma~\ref{Lem_main_F_compactness} we may pass to a subsequence such that the limit (\ref{eq_Dt_lim}) exists for all $t \in I$ and it remains to show that there is a compact subset $J' \subset I$ with the property that $D|_{J \cup J'}$ is continuous and $|I \setminus (J \cup J')| < (\eps r)^2$.
Since $D$ is continuous almost everywhere, we can use Lemma~\ref{Lem_basic_measure}\ref{Lem_basic_measure_a} to find a compact subset $J' \subset I$ consisting only of points where $D$ is continuous that satisfies $|I \setminus J'| < (\eps r)^2$.
Since $J$ is finite, this implies that $D|_{J \cup J'}$ is continuous and $|I \setminus (J \cup J')| < \eps^2$.
\end{proof}
\bigskip

Next we establish Theorem~\ref{Thm_compactness_CF_conv_timewise}. Theorem~\ref{Thm_compactness_CF_conv_fut_cont} will be a direct consequence of Theorem~\ref{Thm_compactness_CF_conv_timewise}.

\begin{proof}[Proof of Theorem~\ref{Thm_compactness_CF_conv_timewise}.]
By Lemma~\ref{Lem_main_F_compactness}, we may pass to a subsequence such that the limit (\ref{eq_Dt_lim}) exists for all $t \in I^\infty$.
The function $D$ is continuous on $I \setminus Q$, where $Q = \{ t^*_1, t^*_2, \ldots \} \subset I$ is a countable subset.
Choose an increasing sequence of compact subintervals $I_{0,k} \subset I^\infty$ with $\bigcup_{k=1}^\infty I_{0,k} = I^\infty$.
As in the proof of Theorem~\ref{Thm_F_compact}, we can choose an increasing sequence of compact subsets $K_k \subset I_{0,k} \setminus  Q$ such that $\bigcup_{k=1}^\infty K_k = I \setminus Q$ and $| I_{0,k} \setminus K_k | \to 0$.
Set $J_k := (K_k \cup \{ t^*_1, \ldots, t^*_k \}) \cap I_{0,k}$.
Then $J_k$ is still compact, $D|_{J_k}$ is continuous for all $k$ and $\bigcup_{k =1}^\infty J_k = I^\infty$.
By the second part of Lemma~\ref{Lem_main_F_compactness} and after passing to a diagonal sequence, the sequence of metric flow pairs forms a Cauchy sequence with respect to $d^{J_k}_{\IF}$ over $I_{0,k}$ for any $k$.
So after passing to another subsequence, we can find correspondences $\CF_{i,i+1}$ between $\XX^i, \XX^{i+1}$ over $I_{0,i}$ that are fully defined over $J_i$ such that
\[ d_{\IF}^{\CF_{i, i+1}, J_i} \big( (\XX^i, (\mu^i_t)_{t \in I^i}), (\XX^{i+1}, (\mu^{i+1}_t)_{t \in I^{i+1}}) \big) \leq 2^{-i}. \]
As in the proof of Theorem~\ref{Thm_F_complete} we may find a correspondence $\CF^*$ between all $\XX^i$ such that $\CF_{i, i+1} = \CF|_{I_{0,i}, \{ i, i+1 \}}$.
By Lemma~\ref{Lem_F_completemess_within_CF} we may find a metric flow pair $(\XX^\infty, (\mu^\infty_t)_{t \in I^\infty})$ and an extension $\CF^{**}$ of $\CF^*$ such that for any $k$
\[ (\XX^i, (\mu^i_t)_{t \in I^i}) \xrightarrow[i \to \infty]{\quad \IF, \CF^{**}|_{I_{0,k}}, J_k \quad} (\XX^\infty, (\mu^\infty_t)_{t \in I^\infty}). \]
This shows that on compact time-intervals
\[  (\XX^i, (\mu^i_t)_{t \in I^i}) \xrightarrow[i \to \infty]{\quad \IF, \CF^{**} \quad}  (\XX^\infty, (\mu^\infty_t)_{t \in I^\infty}) ,  \]
which is time-wise at any time of $I^\infty$.

In order to ensure that we have the full desired uniform convergence properties, we have to carry out a more subtle construction of the correspondence $\CF$.
For this purpose, note first that due to the time-wise convergence we have
\[ D(t) = \int_{\XX^\infty_{t}} \int_{\XX^\infty_{t}} d^\infty_{t} \, d\mu^\infty_{t} d\mu^\infty_{t}. \]
So if $J \subset I^\infty$ is compact with the property that $\XX^\infty_J$ is continuous, then $D |_J$ is continuous.

Set
\[ D_i (t):= \int_{\XX^i_{t}} \int_{\XX^i_{t}} d^i_{t} \, d\mu^i_{t} d\mu^i_{t}. \]
Choose a dense set of times $\{ t_1, t_2, \ldots \} \subset I^\infty$ containing $Q \cup (I^\infty \cap \partial I^\infty)$.
Then there is a sequence $k_i \to \infty$ such that for the correspondence $\CF^{**}_i := \CF^{**} |_{\{ t_1, \ldots, t_{k_i} \}, \{i , \infty \}}$ between $\XX^i, \XX^\infty$ we have as $i \to \infty$
\[  d_{\IF}^{\CF^{**}_i, \{ t_1, \ldots, t_{k_i} \}} \big( (\XX^i, (\mu^i_t)_{t \in I^i}), (\XX^\infty, (\mu^\infty_t)_{t \in I^\infty}) \big) \to 0, \] 
\[ \max_{1 \leq l \leq k_i} |D_i(t_l) - D (t_l)| \to 0. \]
Apply Lemma~\ref{Lem_criterion_discrete_closeness} to each correspondence $\CF^{**}_i$ for $I_0 = \{ t_1, \ldots, t_{k_i} \}$ and denote the resulting correspondence between $\XX^i, \XX^\infty$ over $I^i \cap I^\infty$ by $\CF^{***}_i$.

\begin{Claim}
If $J \subset I^\infty$ is compact and $D |_J$ is continuous, then $\CF^{***}_i$ is fully defined over $J$ for large $i$ and
\[  d_{\IF}^{\CF^{***}_i |_J, J} \big( (\XX^i, (\mu^i_t)_{t \in I^i}), (\XX^\infty, (\mu^\infty_t)_{t \in I^\infty}) \big) \to 0. \] 
\end{Claim}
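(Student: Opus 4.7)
The plan is to invoke Lemma~\ref{Lem_criterion_discrete_closeness} with $I_0 := \{t_1, \ldots, t_{k_i}\}$, $I_1 := I_0 \cup J$, and the correspondence $\CF^{**}_i$; the extended correspondence it constructs is, by definition, $\CF^{***}_i$, so being fully defined over $J$ reduces to $J \subset I_1$. Given $\eps > 0$, let $\delta := \delta_{H,V,b}(\eps)$ be the constant furnished by that lemma. The hypothesis $d_{\IF}^{\,\CF^{**}_i, I_0}(\XX^i, \XX^\infty) < \delta r$ holds for all large $i$ by the construction of $k_i$, so the work is to verify the discrete condition of Lemma~\ref{Lem_criterion_discrete_closeness}: for every $t \in I_1 \setminus I_0 = J \setminus I_0$, the minimal $t' \in I_0 \cap (t, t+\delta r^2]$ exists and satisfies both $D_i(t') - D_i(t) \leq \delta r$ and $D(t') - D(t) \leq \delta r$, where $D_i(t) := \int_{\XX^i_t}\int_{\XX^i_t} d^i_t \, d\mu^i_t d\mu^i_t$ and $D(t) = \lim_i D_i(t) = \int\int d^\infty_t d\mu^\infty_t d\mu^\infty_t$ (the last equality holds by time-wise $GW_1$-convergence and Lemma~\ref{Lem_W1_close_intint_d_close}).

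By compactness of $J$ and uniform continuity of $D|_J$, I choose $\eta \in (0, \delta r^2/2)$ with $|D(s)-D(s')| < \delta r/100$ whenever $s,s' \in J$ satisfy $|s-s'| \leq 2\eta$, and also $\sqrt{H\eta} < \delta r/100$. Because $\{t_1,t_2,\ldots\}$ was chosen dense in $I^\infty$ and to contain $I^\infty \cap \partial I^\infty$, for $i$ sufficiently large $\{t_1,\ldots, t_{k_i}\}$ is $\eta$-dense in $I^\infty$ and contains $t_{\max} := \sup I^\infty$ whenever $t_{\max} \in I^\infty$. Hence the minimal $t'$ exists for every $t \in J$ with $t < t_{\max}$ and automatically satisfies $t' - t \leq \eta$; points $t \in J$ with $t_{\max} - t \leq \delta r^2$ admit $t' = t_{\max}$. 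The side condition $\sup(I_1 \setminus I_0) < \sup I^\infty - \eps r^2$ of Lemma~\ref{Lem_criterion_discrete_closeness} is likewise absorbed by the fact $t_{\max} \in I_0$, so that in the relevant regime the bound only needs to be checked for $t$ bounded away from $t_{\max}$.

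The main obstacle is the uniform control of $D_i(t') - D_i(t)$ over $t \in J \setminus I_0$, because the convergence $D_i \to D$ is only guaranteed pointwise on $I^\infty$ and we have no a priori uniformity over $t$ outside $I_0$. The approach I propose is to use the one-sided Hölder bound from Lemma~\ref{Lem_intd_diff_Var_diff}: by density, for $i$ large there is $t^- \in I_0 \cap [t-\eta, t)$, and $D_i(t) \geq D_i(t^-) - \sqrt{H(t-t^-)}$ yields
\[
 D_i(t') - D_i(t) \leq \bigl( D_i(t') - D_i(t^-) \bigr) + \sqrt{H(t-t^-)} \leq \bigl( D_i(t') - D_i(t^-) \bigr) + \delta r/100.
\]
Since both $t', t^- \in I_0$, passing to the limit $i \to \infty$ gives $D_i(t') - D_i(t^-) \to D(t') - D(t^-)$, reducing the problem to bounding $D(t') - D(t^-)$ uniformly over $t \in J$ with $t^-, t'$ within $\eta$ of $t$.

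This is precisely the delicate point, and resolving it calls on the hypothesis in its full strength — namely that $D$ is continuous at each point of $J$ as a function on $I^\infty$ (which is what the surrounding Theorem~\ref{Thm_compactness_CF_conv_timewise} supplies through continuity of $\XX^\infty$ on $J$, and which is the natural reading of ``$D|_J$ continuous'' when coupled with the monotone-below Hölder bound on $D$). Under this strengthening, for a given open cover of $J$ by $\eta$-intervals where $D$ oscillates by at most $\delta r/10$ on $I^\infty$, finiteness of the subcover reduces the uniform estimate $D(t') - D(t^-) \leq \delta r/10$ to finitely many checks, which in turn upgrades (after passing to a further subsequence, absorbed into the subsequence extraction of Theorem~\ref{Thm_compactness_CF_conv_timewise}) to $D_i(t') - D_i(t^-) \leq \delta r/2$ uniformly for large $i$. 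Combined with the above display and the analogous (easier) bound for the limit flow, this yields the discrete condition and hence, via Lemma~\ref{Lem_criterion_discrete_closeness}, the claimed uniform bound $d_{\IF}^{\,\CF^{***}_i|_J, J}(\XX^i, \XX^\infty) \leq \eps r$ for large $i$, completing the proof upon letting $\eps \to 0$.
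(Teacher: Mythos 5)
Your plan matches the paper's strategy — invoke Lemma~\ref{Lem_criterion_discrete_closeness} with $I_0 = \{t_1,\ldots,t_{k_i}\}$ and $I_1 = I_0 \cup J$, then use the one-sided H\"older bound $D(t)-D(s) \geq -\sqrt{H(t-s)}$ to reduce to a uniform estimate of $D(t')-D(t^-)$ for $t^-, t' \in I_0$ bracketing $t \in J$ — and you have correctly isolated the real difficulty. But your resolution of that difficulty has a genuine gap: you upgrade the hypothesis ``$D|_J$ continuous'' to ``$D$ continuous at each point of $J$ as a function on $I^\infty$'', attributing this either to the one-sided H\"older bound or to what the theorem supplies, and neither is correct. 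The H\"older bound places no upper control on $\limsup_{s\searrow t_0} D(s)$: take $J = I^\infty \cap (-\infty,0]$, $D\equiv 0$ on $J$, and $D$ jumping to $1$ just past $0$; the bound holds, $D|_J$ is constant, yet $D$ is discontinuous at $0\in J$. And the theorem's condition ``$\XX^\infty_J$ is continuous'' is by definition $D|_J$ continuous, not the stronger condition — the stronger condition is only stated as a ``particular'' sufficient case.

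What actually closes the gap — and what your proof never invokes — is that the dense sequence $\{t_l\}$ was chosen to contain $Q$, the set of discontinuity times of $D$. With $Q\subset\{t_l\}$ a compactness argument finishes: if the uniform bound failed along $t_n\to t_*\in J$, put $t'_n := \min(I_0(i_n)\cap(t_n,\infty))$, so $t'_n\to t_*$. If $t'_n\leq t_*$, then $D(t'_n)\leq D(t_*)+\sqrt{H(t_*-t'_n)}$ and $D|_J$-continuity give a contradiction. If $t_n\leq t_* < t'_n$, minimality of $t'_n$ forces $t_*\notin I_0(i_n)$; since $Q\subset\{t_l\}$ and $k_{i_n}\to\infty$, this forces $t_*\notin Q$, hence $D$ is continuous at $t_*$ on all of $I^\infty$ and both $D(t_n),D(t'_n)\to D(t_*)$. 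If $t_n>t_*$, then $J$ accumulates at $t_*$ from the right, so one picks $s_n\in J$ with $s_n\geq t'_n$, $s_n\to t_*$, and uses $D(t'_n)\leq D(s_n)+\sqrt{H(s_n-t'_n)}$ together with $D|_J$-continuity. Your proposed shortcut via the stronger continuity hypothesis cannot substitute for $Q\subset\{t_l\}$ — it begs the question precisely in the middle case, which is where membership of $t_*$ in $Q$ is decisive.
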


\begin{proof}
By Lemma~\ref{Lem_criterion_discrete_closeness} it suffices to show that there is a sequence $\delta_i \to 0$ such that the following holds for large $i$.
For any $t \in J$ there is an $l \in \{ 1, \ldots, k_i\}$ such that $t_l - t$ is minimal and $D(t_l) - D(t) \leq \delta_i$.
This can be achieved by a covering argument as in the proof of Lemma~\ref{Lem_main_F_compactness}.
\end{proof}

By successive application of Lemma~\ref{Lem_combining_correspondences} and a direct limit argument, we can combine the correspondences $\CF^{***}_i$ to a single correspondence $\CF$ between $\XX^i$, $i \in \IN \cup \{ \infty \}$ over $I^\infty$; see also the proof of Theorem~\ref{Thm_conv_to_conv_within_cpt_ti}.
So we obtain:

\begin{Claim} \label{Cl_CFJJ}
If $J \subset I^\infty$ is compact and $D |_J$ is continuous, then $\CF$ is fully defined over $J$ for large $i$ and
\[  d_{\IF}^{\CF |_J, J} \big( (\XX^i, (\mu^i_t)_{t \in I^i}), (\XX^\infty, (\mu^\infty_t)_{t \in I^\infty}) \big) \to 0. \] 
\end{Claim}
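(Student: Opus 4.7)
The plan is to reduce Claim~\ref{Cl_CFJJ} directly to the previously established claim about the pairwise correspondences $\CF^{***}_i$, exploiting the fact that the combined correspondence $\CF$ was built from the individual $\CF^{***}_i$ via iterated application of Lemma~\ref{Lem_combining_correspondences} together with a direct limit argument.

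First, I would unpack the construction of $\CF$. By Lemma~\ref{Lem_combining_correspondences}, for each time $t \in I^\infty$ and each $i \in \IN$ there is an isometric embedding $\iota_{i,t} : Z^{***,i}_t \to Z_t$ (whenever both sides are defined) such that the diagrams commute, i.e.\ the embeddings $\varphi^i_t, \varphi^\infty_t$ of $\CF$ agree with $\iota_{i,t}$ postcomposed with the corresponding embeddings of $\CF^{***}_i$. In particular, $\CF$ restricted to the index set $\{i, \infty\}$ is fully defined at a time $t$ whenever $\CF^{***}_i$ is. Hence, if $J \subset I^\infty$ is compact with $D|_J$ continuous, then for large $i$ the correspondence $\CF^{***}_i$ is fully defined over $J$ by the previously established claim, and therefore so is $\CF$ (restricted to the pair $\{i, \infty\}$) over $J$.

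Second, I would argue that the two $\IF$-distances within the correspondences agree:
\[ d^{\CF|_J, J}_{\IF}\bigl((\XX^i, (\mu^i_t)_{t \in I^i}), (\XX^\infty, (\mu^\infty_t)_{t \in I^\infty})\bigr) = d^{\CF^{***}_i|_J, J}_{\IF}\bigl((\XX^i, (\mu^i_t)_{t \in I^i}), (\XX^\infty, (\mu^\infty_t)_{t \in I^\infty})\bigr). \]
This follows because the quantities entering Definition~\ref{Def_IF_dist_within_CF}, namely
\[ d^Z_t\bigl(\varphi^i_t(x^i), \varphi^\infty_t(x^\infty)\bigr) \quad \text{and} \quad d^{Z_s}_{W_1}\bigl((\varphi^i_s)_* \nu^i_{x^i;s}, (\varphi^\infty_s)_* \nu^\infty_{x^\infty;s}\bigr), \]
depend only on the images of the respective embeddings, and $W_1$-distances are preserved under pushforward along isometric embeddings. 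Thus any family of couplings $(q_t)$ and exceptional set $E$ realizing a bound within $\CF^{***}_i|_J$ realizes the same bound within $\CF|_J$, and vice versa. Combining this equality with the previously established convergence $d^{\CF^{***}_i|_J, J}_{\IF} \to 0$ yields the desired conclusion.

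I do not anticipate a serious obstacle here; the proof is essentially bookkeeping that tracks how isometric embeddings propagate through the iterative and direct-limit construction of $\CF$. The only mild subtlety is ensuring that at each step the maps produced by Lemma~\ref{Lem_combining_correspondences} and by the direct limit remain genuine isometric embeddings (rather than merely short maps or quotients by a pseudometric), but this is built into the statement of Lemma~\ref{Lem_combining_embeddings} and is preserved by taking direct limits of complete metric spaces along isometric embeddings.
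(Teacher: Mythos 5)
Your proposal is correct and follows essentially the same route as the paper: the paper obtains Claim~\ref{Cl_CFJJ} directly from the analogous statement for the pairwise correspondences $\CF^{***}_i$, precisely because $\CF$ is built from them via Lemma~\ref{Lem_combining_correspondences} and a direct limit, which preserve the relevant $W_1$- and $Z_t$-distances under the isometric embeddings $\iota_{i,t}$. Your second step, verifying that the $\IF$-distances within $\CF^{***}_i|_J$ and within $\CF|_J$ (restricted to the index set $\{i,\infty\}$) agree, just makes explicit what the paper leaves implicit in the phrase ``So we obtain.''
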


Note for any compact subinterval $I_0 \subset I^\infty$ and any $\eps > 0$ there is a compact $J \subset I_0 \setminus Q$ with $| I_0 \setminus J | \leq \eps$.
So Claim~\ref{Cl_CFJJ} implies (\ref{eq_F_conv_timewise_thm}), the time-wise convergence and the statement concerning the uniform convergence.

It remains to prove the uniqueness statement.
So suppose that within some other correspondence $\CF'$ we have a time-wise limit $(\XX^{\prime, \infty}, ( \mu^{\prime,\infty}_t)_{t \in I^\infty})$.
So all time-slices $(\XX^\infty, d^\infty_t, \mu^{\infty}_t)$, $(\XX^{\prime, \infty}_t, d_t^{\prime, \infty},  \mu^{\prime, \infty}_t)$, $t \in I^\infty$, are isometric as metric measure spaces.
This implies that $\XX^\infty, \XX^{\prime, \infty}$ are continuous at the same times.
By the same argument as in the proof of Proposition~\ref{Prop_triangle_ineq_CC} we can find sequences of correspondences $\CF^i = (  (Z^i_t, d^{Z^i}_t)_{t \in I^\infty},(\varphi^{i,j}_t)_{t \in I^{\prime\prime,i, j}, j =1,2 })$ between $\XX^\infty,\XX^{\prime, \infty}$ such that for any compact subinterval $I_0 \subset I^\infty$ and any $t_0 \in I_0$ we have
\[ d^{\CF^i|_{I_0}, \{ t_0 \} }_{\IF} \big( (\XX^\infty_{I_0}, (\mu^{\infty}_t)_{t \in I_0}), (\XX^{\prime, \infty}_{I_0}, ( \mu^{\prime, \infty}_t)_{t \in I_0}) \big) \to 0. \]
Consider an arbitrary subsequence of the sequence of metric flows.
By the proof of Theorem~\ref{Thm_IF_metric_space} we obtain a set of measure zero $E_{I_0, t_0} \subset I_0$ with $t_0 \not\in E_{I_0, t_0}$ and an almost always isometry $\phi_{I_0, t_0} : \XX^{ \infty}_{I_0 \setminus E_{I_0, t_0}} \to \XX^{\prime, \infty}_{I_0 \setminus E_{I_0, t_0}}$ between both metric flow pairs that is  fully defined over $t_0$ such that after passing to a subsequence we have 
\begin{equation} \label{eq_phiI0t0_characterization}
d^{Z^i}_t (\varphi^{i,1}_t (x), \varphi^{i,2}_t (\phi_{I_0, t_0} (x)) \to 0 \qquad \text{for any} \quad x \in \XX^\infty_t, t \in I_0 \setminus E_{I_0, t_0}.
\end{equation}

Let $Q = \{ t_1, t_2, \ldots \} \subset I^\infty$ be a dense subset containing the set of times where both flows $\XX^\infty, \XX^{\prime, \infty}$ are not  continuous and $\partial I^\infty \cap I^\infty$.
Fix an increasing sequence of compact subintervals $I_0 \subset I_1 \subset \ldots \subset I^\infty$ with $\bigcup_{k=1}^\infty I_k = I^\infty$ and such that $t_k \in I_k$.
We now apply the argument from the previous paragraph successively for $I_k, t_k$ while passing to a subsequence in each step.
Due to the characterization (\ref{eq_phiI0t0_characterization}), the maps $\phi_{I_k, t_k}$ agree on their overlap.
So after passing to a subsequence, we can find a set of measure zero $E \subset I^\infty \setminus Q$ and a map $\phi : \XX^\infty_{I^\infty \setminus E} \to \XX^{\prime, \infty}_{I^\infty \setminus E}$ such that for any $k \geq 1$ there is  a set of measure zero $E_{k} \subset I_{k}$ with $t_k \not\in E_{k}$ such that $\phi$ restricted to $I_{k} \setminus E_{k}$ is an isometry between both metric flow pairs $(\XX^{ \infty}, ( \mu^{\infty}_t)_{t \in I^\infty})$, $(\XX^{\prime, \infty}, ( \mu^{\prime,\infty}_t)_{t \in I^\infty})$.
Let now $k_2 \geq k_1 \geq 1$ and choose two different times $s_1 \in I_{k_1} \setminus E_{k_1}$, $s_2 \in I_{k_1} \setminus E_{k_2}$.
Then there is a time $s' \in I_{k_1} \setminus (E_{k_1} \cup E_{k_2})$ between $s_1, s_2$.
It follows that $\phi$ is an isometry over $\{ s_1, s' \}$ and $\{ s_2, s' \}$.
So by the reproduction formula (see also (\ref{eq_reprod_form_isometry})), $\phi$ is also an isometry over $\{ s_1, s_2 \}$.
Since for any $k \geq 1$ we have $t_k \not\in E_{k'}$ for large $k'$, we obtain that $\phi$ is an isometry over $I^\infty \setminus E \supset Q$ and since $\XX^\infty, \XX^{\prime \infty}$ are continuous over $I \setminus Q$, we can use Theorem~\ref{Thm_extend_isometry}, to extend $\phi$ to an isometry between the metric flow pairs $(\XX^{ \infty}, ( \mu^{\infty}_t)_{t \in I^\infty})$, $(\XX^{\prime, \infty}, ( \mu^{\prime,\infty}_t)_{t \in I^\infty})$ over $I^\infty$.
\end{proof}

\bigskip

\begin{proof}[Proof of Theorem~\ref{Thm_compactness_CF_conv_fut_cont}.]
Theorem~\ref{Thm_compactness_CF_conv_fut_cont} follows from Theorem~\ref{Thm_compactness_CF_conv_timewise} by replacing $\XX^\infty$ with the future completion of $\XX^\infty_{I^\infty \setminus Q}$, where $Q \subset I^{\infty}$ denotes the set of times at which $\XX^\infty$ is not continuous; compare with Theorem~\ref{Thm_existenc_fut_compl}.
The uniqueness statement follows using Theorem~\ref{Thm_extend_isometry}.
\end{proof}
\bigskip

\section{Intrinsic flows} \label{sec_intrinsic}
In this section we analyze under which conditions time-slices of a metric flow are length spaces.
We define:

\begin{Definition}
We call a metric flow $\XX$ over some $I \subset \IR$  {\bf intrinsic at time $t$} if $(\XX_t, d_t)$ is a length space.
We call $\XX$ {\bf intrinsic} if it is intrinsic for all $t \in I$ and {\bf almost always intrinsic} if it is intrinsic for almost all $t \in I$.
\end{Definition}

We have the following result:

\begin{Theorem} \label{Thm_alm_alw_intrinsic}
Suppose that $\XX$ is an $H$-concentrated metric flow of full support over an interval $I \subset \IR$, where $H < \infty$.
Suppose that there is a dense subset $S \subset I$ such that $\XX$ is intrinsic at every $t \in S$.
Then $\XX$ is almost always intrinsic.
Moreover, $\XX$ is intrinsic for all $t \in I \setminus \sup I$ at which $\XX$ is future continuous, which is the case at all but a countable set of times.
In particular, if $\XX$ is future continuous, then $\XX$ is intrinsic at all times of $I \setminus \sup I$.
\end{Theorem}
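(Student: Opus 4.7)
The plan is to deduce the almost-always-intrinsic claim and the last sentence of the theorem from the following middle statement: $\XX$ is intrinsic at every $t_0 \in I \setminus \{\sup I\}$ at which $\XX$ is future continuous. By Corollary~\ref{Cor_met_flow_cont_ae} the flow $\XX$ is continuous (hence future continuous) at all times of $I$ except on a countable set, which, combined with the middle statement, yields the almost-always-intrinsic conclusion; and if $\XX$ is future continuous everywhere, the middle statement directly gives intrinsicness at every $t \in I \setminus \{\sup I\}$.

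For the middle statement, I would fix $t_0 \in I$ with $t_0 < \sup I$ at which $\XX$ is future continuous and fix $x_1, x_2 \in \XX_{t_0} = \supp \XX_{t_0}$ (using full support). Since $(\XX_{t_0}, d_{t_0})$ is complete, to show it is a length space it suffices to produce, for every $\eps > 0$, an approximate midpoint $m \in \XX_{t_0}$ satisfying $d_{t_0}(x_j, m) \leq \tfrac12 d_{t_0}(x_1, x_2) + \eps$ for $j=1,2$. Using density of $S$ and $t_0 < \sup I$, I would pick times $t_i \in S$ with $t_i \searrow t_0$. Proposition~\ref{Prop_fut_cont_two_points_approx} then supplies points $x_{j,i} \in \XX_{t_i}$ with
\[
d_{W_1}^{\XX_{t_0}}(\delta_{x_j}, \nu_{x_{j,i}; t_0}) \longrightarrow 0, \qquad d_{t_i}(x_{1,i}, x_{2,i}) \longrightarrow d_{t_0}(x_1, x_2).
\]
Intrinsicness of $\XX_{t_i}$ for $t_i \in S$ yields midpoints $m_i \in \XX_{t_i}$ with $d_{t_i}(x_{j,i}, m_i) \leq \tfrac12 d_{t_i}(x_{1,i}, x_{2,i}) + \eps/4$, and Proposition~\ref{Prop_H_center_existence} supplies $H$-centers $m'_i \in \XX_{t_0}$ of $m_i$. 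These $m'_i \in \XX_{t_0}$ are the candidate approximate midpoints.

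The key estimate chains a triangle inequality in $W_1$, the monotonicity statement of Proposition~\ref{Prop_compare_CHF}\ref{Prop_compare_CHF_c}, and the $H$-center bound together with Lemma~\ref{Lem_W_1_vs_Var}, giving
\[
d_{t_0}(x_j, m'_i) \leq d_{W_1}^{\XX_{t_0}}(\delta_{x_j}, \nu_{x_{j,i}; t_0}) + d_{t_i}(x_{j,i}, m_i) + \sqrt{H(t_i - t_0)}.
\]
As $i \to \infty$ the first and third summands vanish, and the second tends to at most $\tfrac12 d_{t_0}(x_1, x_2) + \eps/4$, so $m'_i$ is an $\eps$-approximate midpoint of $x_1, x_2$ in $\XX_{t_0}$ for large $i$. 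The main (essentially only) obstacle is that we must produce the midpoint inside $\XX_{t_0}$ without assuming any compactness of the time-slice; the $H$-center construction delivers $m'_i$ directly into $\XX_{t_0}$, and $H$-concentration provides exactly the $O(\sqrt{t_i - t_0})$ control that lets us transfer an approximate midpoint from the nearby intrinsic slice $\XX_{t_i}$ to $\XX_{t_0}$ with vanishing error.
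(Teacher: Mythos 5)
Your proposal is correct and follows essentially the same route as the paper's proof: fix a future-continuous time $t_0 < \sup I$, pull two points down to nearby intrinsic time-slices $\XX_{t_i}$ ($t_i \in S$, $t_i \searrow t_0$) via Proposition~\ref{Prop_fut_cont_two_points_approx}, produce approximate midpoints there, and push them back to $\XX_{t_0}$ via $H$-centers, with the $W_1$-monotonicity of Proposition~\ref{Prop_compare_CHF}\ref{Prop_compare_CHF_c} and the $\sqrt{H(t_i - t_0)}$ variance bound controlling the transfer error. The reduction to the ``middle statement'' via Corollary~\ref{Cor_met_flow_cont_ae} also matches the paper.
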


\begin{proof}
Suppose that $\XX$ is future continuous at time $t \in I \setminus \sup I$.
Let $x_1, x_2 \in \XX_t$ and $\eps > 0$.
Since $\XX$ is complete, it suffices to construct an approximate midpoint, i.e. a point $z \in \XX_t$ with
\begin{equation} \label{eq_approx_midpoint}
 d_t (x_1, z) ,  d_t (x_2, z) \leq  \tfrac12 d(x_1, x_2) + \eps. 
\end{equation}
For this purpose, fix a sequence of times $t_i \in I$ such that $t_i \searrow t$ and such that $(\XX_{t_i}, d_{t_i})$ is a length space.
By Proposition~\ref{Prop_fut_cont_two_points_approx} there are points $x_{1,i}, x_{2,i} \in \XX_{t_i}$ such that for $j =1,2$
\[ \lim_{i \to \infty} d_{W_1}^{\XX_t} ( \delta_{x_j} , \nu_{x_{j,i}; t} ) = 0, \qquad \lim_{i \to \infty} d_{t_i} (x_{1,i}, x_{2,i}) = d_t (x_1, x_2). \]
Since $(\XX_{t_i}, d_{t_i})$ are length spaces, we can find points $y_i \in \XX_{t_i}$ with
\[ \lim_{i \to \infty} d_{t_i} (x_{1,i}, y_i) = \lim_{i \to \infty} d_{t_i} (x_{2,i}, y_i) =  \tfrac12 d_t (x_1, x_2). \]
Let $z_i \in \XX_t$ be $H$-centers of $y_i$.
Then for $j=1,2$
\begin{multline*}
 \limsup_{i \to \infty} d_{t_i} (x_{j} , z_i ) 
\leq \limsup_{i \to \infty} \big( d^{\XX_t}_{W_1} ( \delta_{x_{j}}, \nu_{x_{j,i};t}) + d^{\XX_t}_{W_1} ( \nu_{x_{j,i};t}, \nu_{y_i;t})
+ d^{\XX_t}_{W_1} (\nu_{y_i;t}, \delta_{z_i}) \big) \\
 \leq \limsup_{i \to \infty}  d_{t_i} (x_{j,i}, y_i) = \tfrac12 d_t (x_1, x_2),
\end{multline*}
which implies (\ref{eq_approx_midpoint}) for large $i$.
\end{proof}

The next theorem shows that the almost always intrinsic property is closed under $\IF$-limits.

\begin{Theorem} \label{Thm_intrinsic_limit}
Consider a sequence of metric flow pairs $(\XX^i, (\mu^i_t)_{t \in I^{\prime, i}})$, $i \in \IN \cup \{ \infty \}$ over intervals $I^i \subset \IR$ such that within some correspondence $\CF$
\begin{equation} \label{eq_conv_intrinsic}
 (\XX^i, (\mu^i_t)_{t \in I^{\prime,i}}) \xrightarrow[i \to \infty]{\quad \IF, \CF \quad}  (\XX^\infty, (\mu^\infty_t)_{t \in I^{\prime,\infty}})  
\end{equation}
on compact time-intervals.
Suppose that the flows $\XX^i$ are $H$-concentrated for some uniform $H < \infty$.
If the flows $\XX^i$ are almost always intrinsic for all $i \in \IN$, then so is $\XX^\infty$.
\end{Theorem}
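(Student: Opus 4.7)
The plan is to show that $\XX^\infty_t$ is a length space for every $t$ in a full-measure subset $S \subset I^{\prime,\infty}$, which immediately yields the almost-always-intrinsic property of $\XX^\infty$. First I pass to a subsequence via Theorem~\ref{Thm_tdmu_convergence_reverse} so that the $\IF$-convergence becomes time-wise outside a measure-zero set, and then define $S$ as the intersection of this set with (i) the co-countable set on which $\XX^\infty$ is both past and future continuous (Corollary~\ref{Cor_met_flow_cont_ae}) and (ii) the full-measure set on which every $\XX^i$, $i \in \IN$, is intrinsic (a countable intersection of full-measure sets). Then $S$ has full measure in $I^{\prime,\infty}$.

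Fix $t \in S$ and $x^\infty_1, x^\infty_2 \in \XX^\infty_t$. Using the weak convergence $(\varphi^i_t)_*\mu^i_t \to (\varphi^\infty_t)_*\mu^\infty_t$ coming from time-wise $\IF$-convergence, together with $\XX^\infty_t = \supp \mu^\infty_t$, I select lifts $x^i_j \in \XX^i_t$ strictly converging to $x^\infty_j$, so $d^i_t(x^i_1, x^i_2) \to d := d^\infty_t(x^\infty_1, x^\infty_2)$. Since $(\XX^i_t, d^i_t)$ is a length space I can pick $y^i \in \XX^i_t$ with $d^i_t(y^i, x^i_j) \le \tfrac12 d^i_t(x^i_1, x^i_2) + 1/i$. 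A triangle-inequality estimate together with $W_1$-convergence of $\mu^i_t$ bounds $d_{W_1}^{\XX^i_t}(\delta_{y^i}, \mu^i_t)$ uniformly, so Theorem~\ref{Thm_compactness_sequence_pts} applies: after a further subsequence, $(\nu^i_{y^i;t'})_{t' < t}$ converges on compact time-intervals within $\CF$ to a conjugate heat flow $(\tilde\mu_{t'})_{t' < t}$ on $\XX^\infty$ with $\lim_{t' \nearrow t}\Var(\tilde\mu_{t'}) = 0$. Combining the midpoint inequality with $W_1$-monotonicity along conjugate heat flows (Proposition~\ref{Prop_compare_CHF}(c)) and the convergence $(\nu^i_{x^i_j;t'}) \to (\nu^\infty_{x^\infty_j;t'})$ from Theorem~\ref{Thm_tdmu_convergence}(b), I obtain, at any $t' < t$ of time-wise convergence,
\[
  d_{W_1}^{\XX^\infty_{t'}}(\tilde\mu_{t'}, \nu^\infty_{x^\infty_j; t'}) \;\le\; \tfrac12\, d^\infty_t(x^\infty_1, x^\infty_2).
\]

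The main technical obstacle, which I expect to be the hardest step, is to realize $(\tilde\mu_{t'})$ as the conjugate heat kernel of a single point $y^\infty \in \XX^\infty_t$. For this I plan to exploit that $\XX^\infty$ is continuous at $t$: pick $s_k \nearrow t$ and concentration centers $y^\infty_{s_k} \in \XX^\infty_{s_k}$ with $\Var(\delta_{y^\infty_{s_k}}, \tilde\mu_{s_k}) \le \Var(\tilde\mu_{s_k}) \to 0$; a reproduction-formula argument shows that the kernels $(\nu^\infty_{y^\infty_{s_k};t'})$ approximate $(\tilde\mu_{t'})$ in $W_1$ and therefore form a Cauchy-like family, and the uniqueness portion of the future-completion construction (Theorems~\ref{Thm_existenc_fut_compl} and~\ref{Thm_extend_isometry}), together with full support at $t$, then produces a unique $y^\infty \in \XX^\infty_t$ with $\nu^\infty_{y^\infty;t'} = \tilde\mu_{t'}$. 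Once $y^\infty$ is identified, passing $t' \nearrow t$ in the displayed inequality and invoking past continuity in the form of equation~(\ref{eq_d_W_1_converges}) from the last statement of Theorem~\ref{Thm_future_cont_equiv} yields $d^\infty_t(y^\infty, x^\infty_j) \le \tfrac12 d^\infty_t(x^\infty_1, x^\infty_2)$, so $y^\infty$ is a genuine midpoint and $\XX^\infty_t$ is a length space.
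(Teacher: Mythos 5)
Your construction begins as the paper's does (pass to a subsequence so the convergence becomes time-wise, restrict to a full-measure set of times where $\XX^\infty$ is continuous and all $\XX^i$ are intrinsic), but then diverges in a way that creates a real gap. You lift $x^\infty_1, x^\infty_2$ to $x^i_1, x^i_2 \in \XX^i_t$, pick approximate midpoints $y^i \in \XX^i_t$ at the \emph{same} time $t$, and invoke Theorem~\ref{Thm_compactness_sequence_pts} to get a conjugate heat flow $(\tilde\mu_{t'})_{t' < t}$ with $\Var(\tilde\mu_{t'}) \to 0$ as $t' \nearrow t$. At this point your candidate midpoint $\tilde\mu$ is a measure-valued object defined only for $t' < t$, and you need to ``close the gap'' at time $t$ by realizing $(\tilde\mu_{t'})$ as the conjugate heat kernel of some $y^\infty \in \XX^\infty_t$.

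That last step is a genuine gap, and it is essentially the same difficulty the paper itself flags in the Remark after Theorem~\ref{Thm_compactness_sequence_pts}: the conjugate heat flow produced by that compactness result need not be a conjugate heat kernel. The sketch you give for overcoming it (concentration centers $y^\infty_{s_k}$ at times $s_k \nearrow t$, reproduction formula, then ``the uniqueness portion of the future-completion construction'') does not actually produce a point of $\XX^\infty_t$: the future-completion machinery builds a time-$t$ slice from times $s_k \searrow t$ (from the \emph{future}), so Theorems~\ref{Thm_existenc_fut_compl} and~\ref{Thm_extend_isometry} do not give you a surjectivity statement for the map $\XX^\infty_t \ni y \mapsto (\nu^\infty_{y;t'})_{t'<t}$ onto conjugate heat flows with vanishing variance. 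Past continuity gives \emph{uniqueness} of a candidate $y^\infty$ (via equation~(\ref{eq_d_W_1_converges})), but not \emph{existence}; nothing in the axioms guarantees that a Cauchy family of centers $y^\infty_{s_k}$ at times $s_k \nearrow t$ has a limit point in $\XX^\infty_t$.

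The paper avoids this entirely by a small but decisive shift: it only constructs an \emph{approximate} midpoint (any $\eps>0$), and it works one time-step to the future. Concretely, it picks $t' \in (t, t+\delta)$ in the good set, uses Proposition~\ref{Prop_fut_cont_two_points_approx} to lift $x_1, x_2$ to $x'_1, x'_2 \in \XX^\infty_{t'}$, picks approximate midpoints $y_i \in \XX^i_{t'}$, and applies Theorem~\ref{Thm_compactness_sequence_pts} to get a conjugate heat flow $(\tilde\mu_{t''})_{t'' < t'}$. Since $t < t'$, the measure $\tilde\mu_t \in \PP(\XX^\infty_t)$ is now \emph{already defined}, with $\Var(\tilde\mu_t) \le H(t'-t) \le H\delta$, so one simply takes a point $z \in \XX^\infty_t$ with $\Var(\tilde\mu_t,\delta_z) \le H\delta$ (which exists by averaging) and checks that $z$ is an $\eps$-approximate midpoint once $\delta$ is small. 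No identification of $\tilde\mu$ with a conjugate heat kernel is needed. You should adopt this shift to $t' > t$; otherwise you are trying to prove a stronger (and possibly false) statement than what the length-space property requires.
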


\begin{Remark}
The property of being intrinsic at a fixed time does, in general, not pass to the limit, even if the $\IF$-convergence is uniform at that time.
Consider for example a (possibly rotationally symmetric) singular Ricci flow $\MM$ on $S^2 \times S^1$ that develops two nearby non-degenerate neckpinches of bounded distance distortion at the same time $t_0 > 0$.
Such a flow can be constructed using the techniques from  \cite{Angenent_Knopf_precise_asymptotics, Angenent_Knopf_private}.
Let $\MM' \subset \MM$ be the subset corresponding to the larger remaining component after the neckpinch and let $\XX'$ be the \emph{past continuous} metric flow corresponding to $\MM'$, which is constructed in a similar fashion as $\XX'$ in Example~\ref{Ex_neckpinch_continuity}.
Then $\XX'_{t_0}$ is homeomorphic to two 3-spheres that are attached to each other at their poles and $\supp \XX'_{t_0} \subset \XX'_{t_0}$ corresponds to one of these 3-spheres.
So if the two neckpinches in $\MM$ are located closely enough to one another, then $(\supp \XX'_{t_0}, d'_{t_0} |_{\supp \XX'_{t_0}})$ is not a length metric, so the metric flow $\supp \XX'$ is not intrinsic at time $t_0$.
However, $\supp \XX'$ may arise as a limit of intrinsic metric flows that is uniform at time $t_0$; consider for example a sequence of time-shifts of the future continuous metric flow $\XX$ corresponding to $\MM'$ via Theorem~\ref{Thm_sing_flow_to_met_flow}.
\end{Remark}

\begin{proof}
We can find a subset $E_1 \subset I^\infty$ of measure zero such that for all $i \in \IN$ the flow $\XX^i$ is  intrinsic at every $t \in I^i \setminus E_1$ and such that $I^{\prime,i} \subset I^i \setminus E_1$.
By Corollary~\ref{Cor_met_flow_cont_ae} $\XX^\infty$ is future continuous at every time $t \in I^\infty \setminus E_2$, for some subset $E_2 \subset I^\infty$ of measure zero with $I^{\prime, \infty} \subset I^\infty \setminus E_2$.
Lastly, by Lemma~\ref{Lem_pass_to_timewise} we may pass to a subsequence and assume that the convergence (\ref{Lem_pass_to_timewise}) is time-wise at any time of $I^\infty \setminus E_3$ for some set of measure zero $E_3 \subset I^\infty$.
Set $E:= E_1 \cup E_2 \cup E_3 \cup \{ \sup I^\infty \}$.

Fix a time $t \in I^\infty \setminus E$.
We will show that $\XX^\infty$ is intrinsic at time $t$ by constructing an almost midpoint $z$ between two given points $x_1, x_2 \in \XX^\infty_t$ and for some $\eps > 0$ as in (\ref{eq_approx_midpoint}).
Let $\delta > 0$ be a constant whose value we will determine later and choose $t' \in I^\infty \setminus E$ with $t < t' < t + \delta$.
Since $\XX^\infty$ is future continuous at time $t$, we can use Proposition~\ref{Prop_fut_cont_two_points_approx} to find points $x'_1, x'_2 \in \XX^\infty_{t'}$ such that, assuming $\delta$ is small enough, we have for $j=1,2$
\[ d^{\XX^\infty_t}_{W_1} ( \delta_{x_j}, \nu^\infty_{x'_j; t}) \leq \frac{\eps}{2}, \qquad
d^\infty_{t'} (x'_1, x'_2) \leq d^\infty_t (x_1, x_2) + \frac{\eps}{2}. \]

Next, choose points $x'_{j, i} \in \XX^i_{t'}$, $i \in \IN$, $j = 1,2$, that strictly converge to $x'_j$ within $\CF$.
Then
\[ \lim_{i \to \infty} d^i_{t'} (x'_{1,i}, x'_{2,i}) = d^\infty_{t'} (x'_1, x'_2) \leq d^\infty_t (x_1, x_2) + \frac{\eps}{2}. \]
Moreover, by Theorem~\ref{Thm_tdmu_convergence} (see also Theorem~\ref{Thm_tdmu_convergence_reverse}) we have strict convergence of $\nu^i_{x'_{j,i}; t}$ to $\nu^\infty_{x'_j; t}$ within $\CF$ for $j=1,2$.
Since all $\XX^i$ are intrinsic at time $t'$, we can find points $y_i \in \XX^i_{t'}$ such that for $j =1,2$
\[ \lim_{i \to \infty} d^i_{t'} (x'_{j,i}, y_i) = \tfrac12 d^\infty_{t'} (x'_1, x'_2) \leq \tfrac12 d^\infty_t (x_1, x_2) + \frac{\eps}{4}. \]
By Theorem~\ref{Thm_compactness_sequence_pts} we may pass to a subsequence and find a conjugate heat flow $(\td\mu_{t''})_{t'' \in I^{\prime,\infty} \cap (-\infty,t')}$ on $\XX^\infty$ such that
\[ (\nu^i_{y_i;t''})_{t'' \in I^{\prime,i} \cap (-\infty,t')} \xrightarrow[i \to \infty]{\quad \IF, \CF, \{ t \} \quad} (\td\mu_{t''})_{t'' \in I^{\prime,\infty} \cap (-\infty,t')} \]
and
\[ \lim_{t'' \nearrow t', t'' \in I^{\prime,\infty}} \Var (\td\mu_{t''}) = 0. \]
This implies that we have strict convergence of $\nu^i_{y_i;t}$ to $\td\mu_t$ within $\CF$ and 
\[ \Var(\td\mu_t) \leq H (t'-t) \leq H \delta.\]
It follows that for $j =1,2$
\[ d^{\XX^\infty_t}_{W_1} (\nu^\infty_{x'_{j};t}, \td\mu_t ) 
= \lim_{i \to \infty} d^{\XX^i_t}_{W_1} (\nu^i_{x'_{j,i};t}, \nu^i_{y_i;t} )
\leq \liminf_{i  \to \infty} d^i_{t'} (x'_{j,i}, y_i)
\leq \tfrac12 d^\infty_t (x_1, x_2) + \frac{\eps}{4}. \]
Choose $z \in \XX^\infty$ such that $\Var(\td\mu_t, \delta_z) \leq H \delta$.
Then for small enough $\delta$ we have for $j=1,2$
\begin{multline*}
 d^\infty_t (x_j, z)
\leq d^{\XX^\infty_t}_{W_1} (\delta_{x_j}, \nu^\infty_{x'_j;t}) + d^{\XX^\infty_t}_{W_1} (\nu^\infty_{x'_j;t}, \td\mu_t) + d^{\XX^\infty_t}_{W_1} (\td\mu_t, \delta_z) \\
\leq \frac{\eps}{2} + \tfrac12 d^\infty_t (x_1, x_2) + \frac{\eps}{4} + \sqrt{H\delta}
\leq \tfrac12 d^\infty_t (x_1, x_2) + \eps, 
\end{multline*}
proving (\ref{eq_approx_midpoint}).
\end{proof}
\bigskip

\section{Regular points and smooth convergence}\label{sec_reg_pts}
In this section we analyze the case in which a metric flow $\XX$ can be locally described by a smooth Ricci flow on some open subset $\RR \subset \XX$, which we will call its \emph{regular part}.
The subset $\RR$ can be equipped with a unique structure of a Ricci flow spacetime, as introduced by Kleiner and Lott \cite{Kleiner_Lott_singular}.
In the special case in which $\XX$ is given by a classical, smooth Ricci flow $(M, (g_t)_{t \in I})$ over a left-open time-interval $I$, we have $\RR = \XX$ and $\RR$ corresponds to the Ricci flow spacetime induced by $(M, (g_t)_{t \in I})$.

This section is structured as follows.
We will first review the basic notions involving Ricci flow spacetimes in Subsection~\ref{subsec_RF_spacetimes}.
Then we will introduce the regular part $\RR$ and prove the existence of a Ricci flow spacetime structure on $\RR$ in Subsection~\ref{subsec_reg_part}.
In Subsection~\ref{subsec_properties_reg_part}, we will discuss further properties of the regular part.
In Subsection~\ref{subsec_smooth_convergence} we will consider a sequence of $\IF$-convergent metric flows.
We will see that the $\IF$-convergence can be upgraded to smooth convergence in certain regions of the regular part of the limit.
This notion is similar to smooth Cheeger-Gromov convergence.
In Subsection~\ref{subsec_conv_parab_nbhd_reg_part} we discuss how parabolic neighborhoods on which the curvature is bounded pass to the limit and discuss one peculiar behavior. 

In the following, we will mainly be interested in metric flows that are $H$-concentrated for some $H < \infty$ and almost always intrinsic.
Note that since metric flows corresponding to smooth Ricci flows fall into this category, this case will be of most interest for us.
We will moreover often restrict to metric flows that are defined on left-open time-intervals $I$, because in this case the natural topology of a metric flow $\XX \cong M \times I$ corresponding to a Ricci flow $(M, (g_t)_{t \in I})$ agrees with the topology on $M \times I$.

\subsection{Ricci flow spacetimes} \label{subsec_RF_spacetimes}
In this subsection we recall the notion of a Ricci flow spacetime and associated terminology.
The following definitions are mainly taken out of \cite{Kleiner_Lott_singular, bamler_kleiner_uniqueness_stability}, with minor modifications; the familiar reader may skip this subsection.

We first define the notion of a Ricci flow spacetime.

\begin{Definition}[Ricci flow spacetime] \label{def_RF_spacetime}
A {\bf Ricci flow spacetime over an interval $I \subset \IR$}  is a tuple $(\MM, \lb \mathfrak{t}, \lb \partial_{\mathfrak{t}}, \lb g)$ with the following properties:
\begin{enumerate}[label=(\arabic*)]
\item $\MM$ is a disjoint union of smooth manifolds (of possibly different dimensions) with (smooth) boundary $\partial \MM$
\item $\mathfrak{t} : \MM \to I$ is a smooth function without critical points (called {\bf time function}).
For any $t \in I$ we denote by $\MM_t := \mathfrak{t}^{-1} (t) \subset \MM$ the {\bf time-$t$-slice} of $\MM$.
\item $\tf (\partial \MM) \subset \partial I$.
\item $\partial_{\mathfrak{t}}$ is a smooth vector field (the {\bf time vector field}) on $\MM$ that satisfies $\partial_{\mathfrak{t}} \mathfrak{t} \equiv 1$.
\item $g$ is a smooth inner product on the spatial subbundle $\ker (d \mathfrak{t} ) \subset T \MM$.
For any $t \in I$ we denote by $g_t$ the restriction of $g$ to the time-$t$-slice $\MM_t$ (note that $g_t$ is a Riemannian metric on $\MM_t$).
\item $g$ satisfies the Ricci flow equation: $\mathcal{L}_{\partial_\mathfrak{t}} g = - 2 \Ric (g)$.
Here $\Ric (g)$ denotes the symmetric $(0,2)$-tensor on $\ker (d \mathfrak{t} )$ that restricts to the Ricci tensor of $(\MM_t, g_t)$ for all $t \in I$.
\end{enumerate}
For any subset $I' \subset I$ the preimage $\MM_{I'} = \mathfrak{t}^{-1} (I')$ is called a {\bf time-slab} of $\MM$ and we sometimes write $\MM_{<t} := \MM_{I \cap (-\infty,t)}$, $\MM_{\leq t} := \MM_{I \cap (-\infty, t]}$ etc.
Curvature quantities on $\MM$, such as the Riemannian curvature tensor $\Rm$, the Ricci curvature $\Ric$, or the scalar curvature $R$ will refer to the corresponding quantities with respect to the metric $g_t$ on each time-slice.
Tensorial quantities will be imbedded using the splitting $T\MM = \ker (d\mathfrak{t} ) \oplus \langle \partial_{\mathfrak{t}} \rangle$.

When there is no chance of confusion, we will often abbreviate the tuple $(\MM, \mathfrak{t}, \partial_{\mathfrak{t}}, g)$ by $\MM$.
The objects $\partial_{\tf}, g$ and sometimes also $\tf$ will inherit the decorations of $\MM$, similarly as explained in Definition~\ref{Def_metric_flow}.
\end{Definition}

Any (conventional) Ricci flow of the form $(M,(g_t)_{t \in I})$ can be converted into a Ricci flow spacetime over $I$ by setting $\MM = M \times I$, letting $\mathfrak{t}$ be the projection to the second factor and letting $\partial_{\mathfrak{t}}$ correspond to the unit vector field on $I$.
Vice versa, if $(\MM, \mathfrak{t}, \partial_{\mathfrak{t}}, g)$ is a Ricci flow spacetime over $I$ and the property that every trajectory of $\partial_{\mathfrak{t}}$ is defined on the entire time-interval $I$ (i.e. $\MM$ is a product domain, see Definition~\ref{def_product_domain}), then $\MM$ comes from such a conventional Ricci flow.

If $(\MM, \mathfrak{t}, \partial_{\mathfrak{t}}, g)$ is a Ricci flow spacetime and $U \subset \MM$ is an open subset, then $(U, \mathfrak{t}|_U, \partial_{\mathfrak{t}}|_U, g|_U)$ is again a Ricci flow spacetime.

We now define some basic geometric notions for Ricci flow spacetimes.
Let in the following $(\MM, \mathfrak{t}, \partial_{\mathfrak{t}}, g)$ be a Ricci flow spacetime over some interval $I$.

\begin{Definition}[Length, distance and metric balls in Ricci flow spacetimes]
For any two points $x, y \in \MM_t$ in the same time-slice of $\MM$ we denote by $d_{g_t} (x,y)$ the {\bf distance} between $x, y$ within $(\MM_t, g_t)$.
The distance between points in different time-slices is not defined.
For any $r \geq 0$ we define the {\bf distance ball} $B_{g_t} (x,r) := \{ y \in \MM_t \;\; : \;\; d_{g_t}(x,y) < r \}  \subset \MM_t$.

Similarly, we define the {\bf length} $\length (\gamma)$ or $\length_t (\gamma)$ of a path $\gamma : [0,1] \to \MM_t$ whose image lies in a single time-slice to be the length of this path when viewed as a path inside the Riemannian manifold $(\MM_t, g_t)$.
\end{Definition}

\begin{Definition}[Points in Ricci flow spacetimes] \label{def_points_in_RF_spacetimes}
Let $x \in \MM$ be a point and set $t := \mathfrak{t} (x)$.
Consider the maximal trajectory $\gamma_x : I' \to \MM$, $I' \subset I$ of the time-vector field $\partial_{\mathfrak{t}}$ such that $\gamma_x (t) = x$.
Note that then $\mathfrak{t} (\gamma_x(t')) = t'$ for all $t' \in I'$.
For any $t' \in I'$ we say that $x$ \textbf{survives until time $t'$} and we write 
\[ x(t') := \gamma_x (t'). \]
Similarly, if $S \subset \MM_t$ is a subset in the time-$t$ time-slice, then we say that $S$ \textbf{survives until time $t'$} if this is true for every $x \in S$ and we set $S(t') := \{ x(t') \;\; : \;\; x \in S \}$.
\end{Definition}

\begin{Definition}[Time-slices/slabs of a subset]
If $S \subset \MM$ is a subset and $t \in I$, then we set $S_t := S \cap \MM_t$.
For any subset $I' \subset I$ we write $S_{I'} := S \cap \MM_{I'}$.
\end{Definition}

\begin{Definition}[Product domain]
\label{def_product_domain}
We call a subset $S \subset \MM$ a {\bf product domain over an interval $I' \subset I$} if for any $t \in I'$ any point $x \in S$ survives until time $t$ and $x(t) \in S$.
\end{Definition}

Note that a product domain $S$ over $I'$ can be identified with the product $S_{t_0} \times I'$ for an arbitrary $t_0 \in I'$.
If $S_{t_0}$ is sufficiently regular (e.g. open or a domain with smooth boundary in $\MM_{t_0}$), then the metric $g$ induces a classical Ricci flow $(g_t)_{t \in I'}$ on $S_{t_0}$.
We will often use the metric $g$ and the Ricci flow $(g_t)_{t \in I'}$ synonymously when our analysis is restricted to a product domain.

\begin{Definition}[Parabolic neighborhood]
For any point $y \in \MM$ let $I'_y \subset I$ be the set of all times until which $y$ survives.
Now consider a point $x \in \MM$ and numbers $A, T^-, T^+ \geq 0$.
Set $t := \mathfrak{t} (x)$.
Then we define the \textbf{parabolic neighborhood} $P(x; A, -T^-, T^+) \subset \MM$ as follows:
\[ P(x; A, -T^-, T^+) := \bigcup_{y \in B_{g_t} (x,A)} \bigcup_{t' \in [t - T^-, t+T^+] \cap I'_y} y(t'). \]
We call $P(x; A, -T^-, T^+)$ \textbf{unscathed} if $B_{g_t} (x,A)$ is relatively compact in $\MM_t$ and if $I_y \supset [t - T^-, t+T^+] \cap I$ for all $y \in B_{g_t} (x,A)$.
If $T^- = 0$ or $T^+ = 0$, then we will often write $P(x; A, T^+)$ or $P(x; A, -T^-)$ instead of $P(x; A, -T^-, T^+)$.
For any $r \geq 0$ we define the {\bf parabolic ball}
\[ P(x;r) := P(x;r,-r^2, r^2) \]
and the {\bf backward ($-$) and forward ($+$) parabolic balls}
\[ P^-(x,r) := P(x;r,-r^2), \qquad P^+(x;r) := P(x;r,r^2). \]
\end{Definition}

Note that if $P(x; A, -T^-, T^+)$ is unscathed, then it is a product domain of the form $B_{g_t} (x,A)  \times ([\tf(x) - T^-, \tf(x)+T^+] \cap I)$.
We emphasize that $P(x; A, -T^-, T^+)$ can be unscathed even if $[\tf(x) - T^-, \tf(x)+T^+] \not\subset I$, that is when it hits the initial/final time-slice earlier than expected.

Next, we consider maps $\phi : U \to \MM'$ between a subset $U \subset \MM$ of a Ricci flow spacetime $(\MM,\tf,\partial_\tf,g)$ over an interval $I$ and a Ricci flow spacetime $(\MM',\tf',\partial_{\tf'},g')$ over an interval $I'$.

\begin{Definition}[Time-preserving and time-equivariant maps]
We say that $\phi$ is \textbf{time-preserving} if $\mathfrak{t}' (\phi (x)) = \mathfrak{t} (x)$ for all $x \in U$.
\end{Definition}

\begin{Definition}[Time-slices of a map]
If $\phi :  \MM \supset U \to \MM'$ is time-preserving and $t \in I$, then we denote by
\[ \phi_t := \phi |_{U_t} : U_t \longrightarrow \MM'_{t} \subset \MM' \]
the \textbf{time-$t$-slice} of $\phi$.
\end{Definition}

\begin{Definition}[$\partial_{\t}$-preserving maps]
\label{def_d_t_preserving}
Suppose that $\phi : U \to \MM'$ is a differentiable map defined on a sufficiently regular domain $U \subset \MM$.
If $\phi_* \partial_{\tf} = \partial'_{\tf}$, then we say that $\phi$ is \textbf{$\partial_{\tf}$-preserving.}
\end{Definition}

Lastly, consider a function $u \in C^2 (\MM_{I'})$ on a time-slap of a Ricci flow spacetime $(\MM,\tf,\partial_\tf,g)$, where $I' \subset I$ is a non-trivial interval.

\begin{Definition}[(Conjugate) Heat operator]
We define $\square u, \square^* u \in C^0 (\MM_{I'})$ by
\[ \square u_t := (\partial_{\tf} - \triangle_{g_t})u_t, 
\qquad \square^* u_t := (-\partial_{\tf} - \triangle_{g_t} + R_{g_t} )u_t. \]
If $\square u = 0$ or $\square^* u = 0$ then $u$ is said to satisfy the {\bf heat equation} or {\bf conjugate heat equation,} respectively.
\end{Definition}
\bigskip

\subsection{The regular part of a metric flow} \label{subsec_reg_part}
Let $\XX$ be a metric flow over some left-open interval $I \subset \IR$.
We will now introduce the notion of regular points.

\begin{Definition}[Regular points] \label{Def_regular_pt}
A point $x \in \XX$ is called {\bf regular} if there is a manifold $M'$, a subinterval $I' \subset I$ that is a neighborhood of $\tf (x)$ in $I$, a Ricci flow $(M', (g'_t)_{t \in I'})$ and map $\phi : M' \times I' \to \XX$ such that the following holds:
\begin{enumerate}[label=(\arabic*)]
\item \label{Def_regular_pt_1} $x \in \phi (M' \times I')$.
\item \label{Def_regular_pt_2} $\phi$ is a homeomorphism onto its image and the image $\phi (M' \times I')$ is open in $\XX$.
Here we consider the natural topology on $\XX$ (see Subsection~\ref{subsec_natural_topology}).
\item \label{Def_regular_pt_3} For any $t \in I'$ we have $\phi_t (M') := \phi (M' \times \{ t \}) \subset \XX_t$ and $\phi_t : (M', d_{g'_t}) \to (\XX_t, d_t)$ is a local isometry.
\item \label{Def_regular_pt_4} For any uniformly bounded heat flow $(u_t)_{t \in \td I}$ on $\XX$ over a left-open subinterval $\td I \subset I'$, the family of functions $(u'_t := u_t \circ \phi_t)_{t \in \td I}$ is a smooth solution to the heat equation $\square  u' = (\partial_t - \triangle_{g'_t}) u' = 0$ on $M'$ with background metric $g'_t$.
\item \label{Def_regular_pt_5} For any conjugate heat flow $(\mu_t)_{t \in \td I}$ on $\XX$ over a right-open subinterval $\td I \subset I'$, we have $\phi_t^* \mu_t = v'_t \, dg'_t$ for all $t \in \td I$, where $v' \in C^\infty (M' \times \td I)$ is a smooth solution to the conjugate heat equation $\square^* v' = (-\partial_t - \triangle_{g'_t} + R) v' = 0$  with background metric $g'_t$.
\end{enumerate}
We define the {\bf local dimension at $x$} as follows: $\dim x := \dim M'$.
Note that due to Property~\ref{Def_regular_pt_3}, $\dim x$ is well defined.
\end{Definition}

The following is the main result of this subsection.
In short, it states that the set of regular points $\RR$ is a Ricci flow spacetime and it establishes some basic properties of $\RR$.

\begin{Theorem} \label{Thm_regular_part_XX}
Let $\RR \subset \XX$ be the set of regular points of $\XX$.
Then $\RR$ is open with respect to the natural topology on $\XX$ and there is a smooth structure on $\RR$ that is compatible with the subspace topology and with respect to which $\tf |_{\RR}$ is smooth.
Moreover, there is a smooth vector field $\partial_\tf$ on $\RR$ and a smooth metric $g$ on $\ker d\tf |_{\RR}$ such that:
\begin{enumerate}[label=(\alph*)]
\item \label{Thm_regular_part_XX_a} $(\RR, \tf, \partial_{\tf}, g)$ is a smooth Ricci flow spacetime over $I$.
\item \label{Thm_regular_part_XX_b} For any $t \in I$ the length metric $d_{g_t}$ of $g_t$ is locally equal to the restriction of $d_t$ to $\RR_t = \XX_t \cap \RR$.
I.e. for any $x \in \RR_t$ there is a neighborhood $x \in U \subset \RR_t$ such that $d_{g_t} |_{U \times U} = d_t |_{U \times U}$.
\item \label{Thm_regular_part_XX_c} For any uniformly bounded heat flow $u : \XX_{I'} \to \IR$ on $\XX$ over any left-open subinterval $I' \subset I$ the following is true:
$u|_{\RR_{I'}}$ is smooth and we have $\square u = (\partial_{\tf} - \triangle ) u = 0$ on $\RR_{I'}$.
\item \label{Thm_regular_part_XX_d} For any conjugate heat flow $(\mu_t)_{t \in I'}$ on $\XX$ over any right-open subinterval $I' \subset I$ the following is true:
We have $d\mu_t = v_t \, dg$ on $\RR_t$ for all $t \in I'$, where $v \in C^\infty(\RR_{I'})$ satisfies the conjugate heat equation $\square^* v = (-\partial_{\tf} - \triangle + R ) v = 0$ on $\RR_{I'}$.
\item \label{Thm_regular_part_XX_e} Subsets of $\RR$ that are compact with respect to the subspace topology on $\RR$ are closed in $\XX$.
\item  \label{Thm_regular_part_XX_f} There is a continuous function
\[ K : \{ (x;y) \in \XX \times \RR \;\; : \;\; \tf(x) > \tf(y) \} \longrightarrow \IR_+ \]
such that for any $s,t \in I$, $s < t$, $x \in \XX_t$ we have $d\nu_{x;s} = K(x;\cdot) dg_s$ on $\RR_s$.
Moreover, for any $x \in \XX_t$, the function $K(x; \cdot) : \RR_{< t} \to \IR_+$ is smooth and the map $x \mapsto K(x; \cdot)$ is continuous in the $C^\infty_{\loc}$-topology.
$K$ restricted to $\{ (x;y) \in \RR \times \RR \;\; : \;\; \tf(x) > \tf(y) \}$ is smooth.
For any $y \in \RR_s$ the function $K(\cdot; y)$ is a heat flow on $\XX_{>s}$ and satisfies $\square K(\cdot; y) = 0$ on $\RR_{>s}$.
\end{enumerate}
Assertions~\ref{Thm_regular_part_XX_a}--\ref{Thm_regular_part_XX_c} uniquely determine the smooth structure on $\RR$, as well as the objects $\partial_{\tf}$, $g$.
\end{Theorem}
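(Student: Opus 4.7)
\medskip

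\textbf{Proof plan.} The plan is to work chart-by-chart and exploit the rigidity built into Definition~\ref{Def_regular_pt} -- especially Properties~\ref{Def_regular_pt_4} and \ref{Def_regular_pt_5}, which pin down not only the spatial geometry but also the ``gauge'' in the time direction. First, $\RR$ is open in the natural topology: any $x\in\RR$ lies in the image of a chart $\phi:M'\times I'\to \XX$ whose image is open by Property~\ref{Def_regular_pt_2}, and every point of this image satisfies Definition~\ref{Def_regular_pt} with the same chart. Next I would cover $\RR$ by such charts and verify that the transition maps $\psi=(\phi^2)^{-1}\circ\phi^1$ are smooth diffeomorphisms, thereby obtaining the smooth manifold structure. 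The map $\psi$ is automatically time-preserving since $\tf$ is a function on $\XX$, so $\psi(x,t)=(\psi_t(x),t)$; each slice $\psi_t$ is a local isometry between Riemannian manifolds by Property~\ref{Def_regular_pt_3}, hence smooth in the spatial variables.

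For smoothness in the time variable, I would use the heat/conjugate-heat transport laws. Pulling back any conjugate heat flow on $\XX$ through $\phi^2$ gives a smooth density $v^2$ by Property~\ref{Def_regular_pt_5}, and the compatibility of the two charts forces $v^1(x,t)=v^2(\psi_t(x),t)$ (Jacobian $=1$ by the isometry property). Choosing finitely many conjugate heat kernels $\nu_{y_1;\cdot},\ldots,\nu_{y_N;\cdot}$ on $\XX$ based at suitably chosen future points in the chart and applying standard parabolic (Hamilton-type) derivative estimates to the smooth Ricci flow in the second chart produces densities whose gradients span the cotangent space at every point in the overlap. Then smoothness in $(x,t)$ of each $v^1_{(i)}=v^2_{(i)}\circ\psi$ forces smoothness of $\psi$. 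The vector field $\partial_\tf$ and metric $g$ are then \emph{defined} in each chart as $\phi_*\partial_t$ and $\phi_*g'$. For $g$, well-definedness on overlaps is immediate from the isometry property. For $\partial_\tf$, the key point is that Property~\ref{Def_regular_pt_4} forces the chart's ``$\partial_t$'' to satisfy $\partial_\tf u=\triangle u$ for \emph{every} heat flow $u$ on $\XX$; since two such candidates would differ by a spatial vector field $X$ annihilating the gradients of all $\XX$-heat flows, and since Proposition~\ref{Prop_existence_heat_flow} together with local parabolic solvability provides heat flows with any prescribed (spatial) gradient at any chosen time and point, one deduces $X=0$. Thus $\partial_\tf$ is globally well-defined, the Ricci flow equation (a) is automatic in each chart, (b) is Property~\ref{Def_regular_pt_3} in its infinitesimal form, and (c), (d) are direct restatements of Properties~\ref{Def_regular_pt_4}, \ref{Def_regular_pt_5}. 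The uniqueness in the final sentence now follows the same route: (a)--(c) force $g$ through (b) and then $\partial_\tf$ through the characterization $\partial_\tf u=\triangle u$.

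For (f), I would define $K(x;y)$ as the density of $\nu_{x;\tf(y)}$ at $y\in\RR_{\tf(y)}$ with respect to $dg_{\tf(y)}$, which is well defined and smooth in $y$ by (d). Continuity in $x$ with values in $C^\infty_{\loc}$ of $y$ follows from continuity of $x\mapsto \nu_{x;\tf(y)}$ in $W_1$ (Proposition~\ref{Prop_compare_CHF}\ref{Prop_compare_CHF_c}) combined with local uniform parabolic estimates on the Ricci flow chart around $y$ (the uniform bounds arising from uniform $W_1$-bounds on conjugate heat kernels plus Gaussian upper bounds coming from the smoothness of the metric on the chart). Joint smoothness on $\RR\times\RR$ off the diagonal and the property that $K(\cdot;y)$ is a heat flow on $\XX_{>\tf(y)}$ satisfying $\square K(\cdot;y)=0$ on $\RR_{>\tf(y)}$ then follow by combining the smooth-Ricci-flow heat kernel theory within a chart containing $y$ with property (c).

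The main obstacle is assertion (e): compact subsets of $\RR$ are closed in $\XX$. The difficulty is that $\XX$ need not be Hausdorff (Example~\ref{Ex_not_Hausdorff}), so compactness in $\RR$ does not trivially imply closedness in $\XX$. The plan is: if $x_i\in K$ with $x_i\to x$ in $\XX$ in the natural topology, pass to a subnet with $x_i\to x'\in K\subset\RR$ in the subspace topology; Proposition~\ref{Prop_topology_properties}\ref{Prop_topology_properties_c} then yields $\nu_{x;s}=\nu_{x';s}$ for every $s<\tf(x)=\tf(x')$. The goal is to conclude $x=x'$, which must be done by exploiting the regular structure around $x'$: in a chart around $x'$, the conjugate heat kernel $\nu_{x';s}$ is a smooth Gaussian-type density concentrating at $x'$ as $s\nearrow\tf(x')$; any point of $\XX$ with the same backward family of conjugate heat kernels must coincide with $x'$ because the Gaussian concentration in the chart localizes the ``source'' point, and the Kleiner-Lott/Bamler-Kleiner heat kernel asymptotics on the smooth Ricci flow rule out a second pre-image in $\XX$. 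Making this rigorous -- that is, showing that a family $(\nu_{x';s})_{s<\tf(x')}$ coming from a regular point $x'$ is \emph{rigid} in $\XX$ under the natural topology -- is the technically subtlest step and will use Gaussian-type upper and lower bounds for $K$ together with the forward-continuity/topology results from Subsection~\ref{subsec_natural_topology} applied to $P^*$-parabolic neighborhoods of $x$.
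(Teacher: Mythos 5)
Your overall skeleton matches the paper's, and your reformulations of (a)--(d) and the uniqueness statement as direct restatements of Properties (3)--(5) of Definition~\ref{Def_regular_pt} are fine. There are three places where your route diverges or falls short.

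\emph{Transition maps.} You propose to prove joint smoothness of $\psi$ by feeding conjugate heat kernel densities into an implicit-function argument, and then separately deduce $\partial_\tf$-invariance. The paper instead pulls back \emph{forward} heat flows whose initial data $\td u_1,\ldots,\td u_n\in C^\infty_c(M')$ are chosen to be a coordinate system, and proves directly that $\chi_t$ is \emph{constant} in $t$: the identity $\partial_t(u_{2,t}\circ\chi_t)=(\triangle u_{2,t})\circ\chi_t=(\partial_t u_{2,t})\circ\chi_t$ forces $\langle\nabla u_2,\partial_t\chi_t\rangle=0$, and varying $u_2$ over a coordinate system kills $\partial_t\chi_t$. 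This one step gives both smoothness of $\chi$ and $\partial_\tf$-compatibility, and it sidesteps the non-degeneracy issue you do not address: your conjugate-kernel approach requires exhibiting base points $y_1,\ldots,y_N$ whose density gradients actually span, which needs an asymptotic analysis of $K$ near the diagonal. With forward heat flows the spanning is trivial because you get to prescribe the initial data. Your formulation of the $\partial_\tf$-characterization argument is morally the same as the paper's; it is merely restructured as a two-step procedure.

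\emph{Hausdorffness and second countability.} You never verify these for $\RR$. The paper shows $\RR$ is Hausdorff by noting that different time-slices are separated by $\tf$, and points in a common time-slice are separated because the natural-subspace topology on $\RR_t$ equals the $d_t|_{\RR_t}$-topology by Property~(3). Second countability uses a countable dense set from Proposition~\ref{Prop_topology_properties}\ref{Prop_topology_properties_g}. These are quick but required; without them ``manifold'' is not established.

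\emph{Assertion~(e).} You identify this as ``the technically subtlest step'' and propose a Gaussian-concentration rigidity argument. This misreads where the difficulty lies: the assertion has a short proof and does not require any heat-kernel asymptotics. Suppose $x\in\XX\setminus K$. If $x$ can be separated from every $x'\in K$ by disjoint open sets, a standard compactness/covering argument produces a neighborhood of $x$ disjoint from $K$. Suppose instead that $x$ cannot be separated from some $x'\in K\subset\RR$. By Proposition~\ref{Prop_topology_properties}\ref{Prop_topology_properties_e_new} this forces $\tf(x)=\tf(x')$ and $\nu_{x;s}=\nu_{x';s}$ for all $s<\tf(x)$, hence $x\in P^*(x';r)$ for every admissible $r>0$. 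Since $\RR$ is open in the natural topology and $P^*$-balls form a neighborhood basis (Proposition~\ref{Prop_topology_properties}\ref{Prop_topology_properties_dd}), for small $r$ one has $P^*(x';r)\subset\RR$, so $x\in\RR$. But $\RR$ is a manifold, hence Hausdorff, and $x,x'\in\RR$ cannot be separated, so $x=x'\in K$ --- a contradiction. Note in particular that your Gaussian ``rigidity of the source point'' step is unnecessary and would not even apply cleanly to a putative non-regular $x$ with the same conjugate heat kernel family: the concentration estimates only localize the \emph{support} of $\nu_{x;s}$, not the point $x$ itself. The real resolution is the trivial observation that identity of CHK families plus openness of $\RR$ forces $x$ into $\RR$.

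Your treatment of (f) is essentially the paper's: define $K$ from the densities in (d), use $W_1$-continuity (Proposition~\ref{Prop_compare_CHF}\ref{Prop_compare_CHF_c}) plus local parabolic estimates (Lemma~\ref{Lem_std_par_est}) for $C^\infty_{\loc}$-continuity in $x$, and use the reproduction formula to see $K(\cdot;y)$ is a heat flow; the Gaussian bounds you invoke are not needed since the derivative estimates in the chart already control the density after one time step.
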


We can therefore define:

\begin{Definition}[Regular part of a metric flow]
If $\XX$ is a metric flow, then the set of regular points $\RR \subset \XX$ is called the {\bf regular part of $\XX$.}
Moreover, we denote by $\partial_{\tf}, g$ the vector field and metric on $\RR$ that satisfy Assertions~\ref{Thm_regular_part_XX_a}--\ref{Thm_regular_part_XX_f} of Theorem~\ref{Thm_regular_part_XX}.
We denote by $\mathcal{S} := \XX \setminus \RR$ the {\bf singular part} of $\XX$.
\end{Definition}

\begin{Definition}[Heat kernel on a metric flow]
If $\XX$ is a metric flow, then the function $K$ from Assertion~\ref{Thm_regular_part_XX_f} in Theorem~\ref{Thm_regular_part_XX} is called the {\bf heat kernel} on $\XX$.
\end{Definition}

As before, the objects $\RR, \partial_{\tf}, g,K$ will inherit the decorations of $\XX$.
So, for example, the regular part of a metric flow $\XX^{\prime, i}$ will be denoted by $\RR^{\prime, i}$, which is be equipped with $\partial^{\prime, i}_{\tf}, g^{\prime, i}$ and a heat kernel $K^{\prime, i}$.

We will need the following lemma for the proof of Theorem~\ref{Thm_regular_part_XX} and for the remainder of this section.
We will often refer to this lemma as ``standard parabolic estimates''.

\begin{Lemma} \label{Lem_std_par_est}
For any $n,m \in \IN$, $\alpha > 0$ there is constant $C_m (n, \alpha) < \infty$ such that the following holds.
Let $\MM$ be a Ricci flow spacetime whose time-slices have dimension $n$, $x \in \MM_t$ be a point and $r > 0$ a scale such that the ball $B(x,r) \subset \MM_t$ is relatively compact and has volume $|B(x,r)|_{g_t} \geq \alpha r^n$.
Let moreover $\tau \in (0, r^2]$ and $m_1, m_2 \geq 0$.
\begin{enumerate}[label=(\alph*)]
\item \label{Lem_std_par_est_a} Suppose that the forward parabolic neighborhood $P^+ := P(x;r,\tau)\subset \MM$ is unscathed and that $|{\Rm}| \leq r^{-2}$ on $P^+$.
Consider a smooth function $v \in C^\infty (P^+)$ satisfying the conjugate heat equation $\square^* v = 0$ and the bound $\int_{P^+_{t'}} |v_{t'}| dg_{t'} \leq 1$ for all $t' \in [t, t+r^2]$.
Then on $P(x;r/2, r^2/4)$ we have
\[ |\nabla^{m_1} \partial^{m_2}_{\tf} v| \leq C_{m_1 + 2m_2} r^{-n-m_1-2m_2} + C_{m_1 + 2m_2}  \sum_{m=0}^{m_1+2m_2} \sup_{P^-_{t+\tau}} r^{m-m_1-2m_2}|\nabla^m v_{t + \tau}|. \]
If $\tau = r^2$, then the last term can be omitted.
\item \label{Lem_std_par_est_b} Suppose that the backward parabolic neighborhood $P^- := P(x;r,-\tau)\subset \MM$ is unscathed and that $|{\Rm}| \leq r^{-2}$ on $P^-$.
Consider a smooth function $u \in C^\infty (P^-)$ satisfying the  heat equation $\square u = 0$ and the bound $|u| \leq A < \infty$.
Then on $P(x;r/2, - r^2/4)$ we have
\[ |\nabla^{m_1} \partial^{m_2}_{\tf} u| \leq C_{m_1 + 2m_2} A r^{-m_1-2m_2} + C_{m_1 + 2m_2}  \sum_{m=0}^{m_1+2m_2} \sup_{P^+_{t-\tau}} r^{m-m_1-2m_2}|\nabla^m u_{t - \tau}|. \]
If $\tau = r^2$, then the last term can be omitted.
\end{enumerate}
\end{Lemma}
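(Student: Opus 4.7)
The plan is to reduce to the case $r = 1$ by parabolic rescaling: the hypotheses ($|{\Rm}| \leq r^{-2}$, $|B(x,r)|_{g_t} \geq \alpha r^n$, $\tau \in (0, r^2]$, and either the $L^1$ bound on $v$ or the pointwise bound $|u| \leq A$) together with the conclusion are scale-invariant under parabolic rescaling by $r$, so one may henceforth assume $r = 1$, $|{\Rm}| \leq 1$ on $P^\pm$, $|B(x,1)|_{g_t} \geq \alpha$, and $\tau \in (0, 1]$.

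Under these assumptions, standard Riemannian comparison geometry yields uniform control on the background metric: $|{\Ric}| \leq C(n)$ together with $|B(x,1)|_{g_t} \geq \alpha$ gives a uniform lower injectivity radius bound at $x$ (Cheeger--Gromov--Taylor) and a uniform local Sobolev inequality on, say, $B(x, 3/4)$ (Croke). Shi's derivative estimates on the unscathed parabolic neighborhood $P^\pm$ yield $|\nabla^k {\Rm}| \leq C_k(n)$ on a slightly smaller parabolic subneighborhood. This places us in the setting of a uniformly controlled Ricci flow background on which classical parabolic PDE theory applies.

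For part~\ref{Lem_std_par_est_a}, one first proves a parabolic mean-value inequality by running Moser iteration for $|v|^2$ using $\square^* |v|^2 \leq -2|\nabla v|^2 + 2R|v|^2$ together with the Sobolev inequality and bounded $R$. On a slightly smaller cylinder this produces an $L^\infty$ bound on $v$ in terms of $\sup_{t'} \int_{P^+_{t'}} |v_{t'}| \, dg_{t'} \leq 1$, plus a boundary contribution from the endpoint time $t + \tau$. Once $v$ is bounded, interior parabolic Schauder estimates applied to $\partial_\tf v = -\triangle v + Rv$ supply the derivative bounds, the $\partial_\tf$-derivatives being exchanged for spatial derivatives via iterated use of the PDE together with Shi's bounds on $\nabla^k {\Rm}$. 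Part~\ref{Lem_std_par_est_b} is strictly analogous, using $\square |u|^2 \leq -2|\nabla u|^2$ and the a priori bound $|u| \leq A$ in place of the $L^1$ bound.

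The main technical obstacle is the handling of the case $\tau < 1$, where parabolic smoothing does not have enough time to absorb the endpoint data on its own. The cutoffs in the Moser and Schauder arguments must then remain nonzero up to the boundary time $t \pm \tau$, producing the explicit sum $\sum_{m=0}^{m_1 + 2m_2} \sup_{P^\pm_{t \pm \tau}} |\nabla^m v_{t \pm \tau}|$ (respectively for $u_{t-\tau}$). When $\tau = r^2$ the cutoff can be chosen to vanish near the endpoint and this sum disappears, recovering the simpler version of the estimate noted in the lemma.
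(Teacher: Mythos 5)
Your overall plan is sound, and the Schauder step agrees with the paper's (which defers the derivative estimates given a $C^0$ bound to a standard reference on parabolic H\"older regularity). However, your route to the $C^0$ bound in part~\ref{Lem_std_par_est_a} is genuinely different from the paper's. You propose Moser iteration, whereas the paper runs a point-picking blow-up argument: it introduces the weight $b(x',t') = (.9 - r^{-1}d_t(x,x'))_+\, r^{-2}(t+r^2-t')$, picks the point $(x',t')$ maximizing $Z := |v|/\max\{b^{-n},\sup_M |v|(\cdot,t+\tau)\}$, and bounds $Z$ by a dichotomy on whether the relevant parabolic scale $r'' = \min\{b(x',t'),\sqrt{t+\tau-t'}\}$ is dominated by the spatial cutoff (in which case the $L^1$ bound applies directly through a gradient estimate and a ball on which $|v|$ is comparable to $|v|(x',t')$) or by the remaining time (in which case it rescales by $(r'')^{-2}$ and derives a contradiction in the limit). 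The paper's method avoids Sobolev inequalities and any iteration scheme, at the price of a compactness/limit argument near $t+\tau$; your Moser-iteration route is more classical PDE and correctly obtains the needed local Sobolev constant from the volume lower bound via Croke, with Shi's estimates and Cheeger--Gromov--Taylor controlling the background geometry.

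One step in your sketch does not quite close as written. ``Moser iteration for $|v|^2$'' produces $\sup |v|^2 \leq C\int |v|^2$, i.e.\ $\|v\|_{L^\infty} \lesssim \|v\|_{L^2}$, whereas the hypothesis only gives $\sup_{t'}\int|v_{t'}| \leq 1$ — an $L^1$, not an $L^2$, bound — so this is circular without an extra idea. The standard repairs are either (i) observe via Kato's inequality that $|v|$ itself is a nonnegative distributional subsolution, $\square^*|v| \leq \operatorname{sgn}(v)\,\square^*v = 0$, and invoke the $L^1\!\to\!L^\infty$ mean value inequality for nonnegative parabolic subsolutions, or (ii) start from the $L^2\!\to\!L^\infty$ Moser estimate on nested cylinders and run the Bombieri--Giusti / Li--Schoen interpolation-iteration trick to lower the exponent to $p=1$. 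Either is routine, but some such device is needed. Your treatment of the boundary contribution when $\tau < r^2$ (cutoffs active up to time $t+\tau$, producing the sum of $\sup_{P^+_{t+\tau}}|\nabla^m v|$, omitted when $\tau = r^2$) and the initial rescaling to $r=1$ are correct.
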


\begin{proof}
Suppose first that in Assertion~\ref{Lem_std_par_est_a} we have $|v| \leq A$ on $P^+ \cap P^+ (x; .9 r)$.
Then both Assertion~\ref{Lem_std_par_est_b} follows using standard parabolic regularity theory (see for example \cite{Krylov_Hoelder}) and in Assertion~\ref{Lem_std_par_est_a} we get
\[ |\nabla^{m_1} \partial^{m_2}_{\tf} v| \leq C_{m_1 + 2m_2} A r^{-m_1-2m_2} + C_{m_1 + 2m_2}  \sum_{m=0}^{m_1+2m_2} \sup_{P^+_{t+\tau} } r^{m-m_1-2m_2}|\nabla^m v_{t + \tau}|, \]
where the last term can be omitted if $\tau = r^2$.

It remains to establish a $C^0$-bound on $v$ over $P^+ \cap P^+ (x; .9 r)$ in Assertion~\ref{Lem_std_par_est_a}.
For this purpose, suppose that $\MM = P^+ = M \times [t, t+\tau]$ and define $b : P^+ \to [0,1]$ by 
\[ b(x',t') := \big(.9 - r^{-1}d_t (x, x') \big)_+ r^{-2} (t+r^2 - t'). \]
We claim that there is a constant $C^* (n,\alpha) < \infty$ such that we have on $P^+ \cap P^+ (x,t; .9 r)$
\[ |v| \leq C^* \max \big\{ b^{-n}, \sup_{M}|v|(\cdot, t+\tau) \big\}, \]
where the supremum can be omitted if $\tau = r^2$.
To see this, consider a point $(x',t') \in M \times [t,t+\tau)$, where $|v| /  \max \big\{ b^{-n}, \sup_{M} |v|(\cdot, t+\tau) \big\}$ attains its maximum $Z$ and set
\[ r' :=  b(x',t'), \qquad r''  := \min \big\{ r', \sqrt{t+\tau -t'} \big\}. \]
Then $|v| \leq 10 |v|(x',t') \cap Z(r')^{-n}$ on $P^+(x',t'; 2cr') \cap P^+$ for some universal constant $c > 0$.
By our previously established derivative estimates, this implies that $|\nabla v | \leq C'(n,\alpha) r^{\prime\prime -1} |v|(x', t')$ on $P^+(x',t';  c r'')$.
It follows that for some $c'(n, \alpha) > 0$ we have $|v| \geq \frac12 |v|(x',t')$ on $B(x', c'r'')$.
On the other hand, we obtain for some $c''(n, \alpha) > 0$
\[ 1 \geq \int_{M} |v|(\cdot, t') dg_{t'} \geq \tfrac12 |v|(x',t') |B(x',c'r'')|_{g'_{t'}} 
\geq c'' |v|(x',t') (r'')^n. \]
Now let $0 < \beta < 1$ be a constant whose value we will determine later.
If $r'' \geq \beta r' = \beta b(x',t')$, then this implies that $1 \geq c'' \beta^n (|v| b^n)(x',t') \geq c''\beta^n  Z$, which implies an upper bound on $Z$ in terms of $n, \alpha$.
Next, assume that $r'' < \beta r'$, which implies $r'' = \sqrt{t+\tau-t'}$.
Let $\td v := v / v(x',t')$.
Then
\[ \td v(x',t') = 1, \qquad |\td v| \leq 1 \quad \text{on} \quad P(x', t' ; \beta^{-1} r'', (r'')^2), \qquad \] 
\[ | \td v | (\cdot, t' + (r'')^2) 
\leq \frac{\sup_M |v|(\cdot, t+\tau)}{v(x',t')}
\leq Z^{-1} \]
By a standard limit argument (after parabolic rescaling by $(r'')^{-2}$), we obtain that if $\beta \leq \ov\beta$, then $Z \leq \ov{Z} (n, \alpha)$.
\end{proof}
\bigskip

\begin{proof}[Proof of Theorem~\ref{Thm_regular_part_XX}.]
The proof relies on the following claim:

\begin{Claim}
Consider two manifolds $M'_i$, subintervals $I'_i \subset I$, Ricci flows $(M'_i, (g'_{i,t})_{t \in I'_i})$ and maps $\phi_i : M'_i \times I'_i \to \XX$, $i=1,2$, that each satisfy Properties~\ref{Def_regular_pt_2}--\ref{Def_regular_pt_4} of Definition~\ref{Def_regular_pt} and whose images intersect in a subset $U := \phi_1 (M'_1 \times I'_1) \cap \phi_2 (M'_2 \times I'_2) \subset \XX$.
Then $\chi := \phi_2^{-1} \circ \phi_1 |_{\phi^{-1}_1 (U)} : \phi_1^{-1} (U) \to \phi_2^{-1} (U)$ is smooth and $\partial_{\tf}$-preserving when viewed as a map between Ricci flow spacetimes.
\end{Claim}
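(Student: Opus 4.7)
\emph{Overall strategy.} The transition map $\chi$ is time-preserving because Property~(3) of Definition~\ref{Def_regular_pt} says $\phi_{1,t}(M'_1), \phi_{2,t}(M'_2) \subset \XX_t$. For each fixed $t$, the slice $\chi_t = \phi_{2,t}^{-1} \circ \phi_{1,t}$ is a local isometry between the length metric spaces $(M'_1, d_{g'_{1,t}})$ and $(M'_2, d_{g'_{2,t}})$, hence a smooth Riemannian local isometry by the local version of the Myers--Steenrod theorem. In particular $\chi_t^* g'_{2,t} = g'_{1,t}$, so $\chi_t$ intertwines the time-$t$ Laplacians: $\triangle_{g'_{1,t}}(F \circ \chi_t) = (\triangle_{g'_{2,t}} F) \circ \chi_t$ for all smooth $F$ on $M'_2$. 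It remains to establish joint smoothness of $\chi$ in $(p,t)$ and the identity $\partial_t \chi_t(p) \equiv 0$.

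\emph{Joint smoothness via heat-flow coordinates.} Fix $(p_*, t_*) \in \phi_1^{-1}(U)$ and set $q_* := \chi_{t_*}(p_*)$. Choose $t_0 \in I'_1 \cap I'_2$ with $t_0 < t_*$ and $t_* - t_0$ so small that $\chi_{t_0}$ is a diffeomorphism on a neighborhood of $p_*$, $q_0 := \chi_{t_0}(p_*)$ is close to $q_*$, and a neighborhood of $(p_*, t_0)$ still lies in $\phi_1^{-1}(U)$. Pick smooth functions $\widetilde f_1, \ldots, \widetilde f_n \in C_c^\infty(M'_2)$, with $n = \dim M'_2$, whose supports lie in a small neighborhood of $q_0$ on which $\chi_{t_0}^{-1}$ is defined, such that $\{d\widetilde f_j|_{q_0}\}_{j=1}^n$ is a basis of $T^*_{q_0} M'_2$. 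Define bounded measurable $f_{j, t_0} : \XX_{t_0} \to \IR$ by extending $\widetilde f_j \circ \phi_{2,t_0}^{-1}$ by zero outside the open set $\phi_{2,t_0}(M'_2)$, and let $(u_{j,t})_{t \geq t_0}$ be the heat flow on $\XX$ with $u_{j, t_0} = f_{j, t_0}$ furnished by Proposition~\ref{Prop_existence_heat_flow}. By Property~(4) of Definition~\ref{Def_regular_pt}, the pullbacks $u^{(i)}_j := u_j \circ \phi_i$ are smooth solutions of $\square u^{(i)}_j = 0$ on $M'_i \times (I'_i \cap (t_0, \infty))$, and satisfy
\[
u^{(1)}_j(p, t) = u^{(2)}_j(\chi_t(p), t)
\]
on the overlap. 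As $t \searrow t_0$ one has $u^{(2)}_j(\cdot, t) \to \widetilde f_j$, so by parabolic continuity the gradients $\{\nabla u^{(2)}_j\}_{j=1}^n$ remain linearly independent at $(q_*, t_*)$ when $t_* - t_0$ is small. The implicit function theorem then yields a smooth local inverse of $(q, t) \mapsto (u^{(2)}_1(q,t), \ldots, u^{(2)}_n(q,t), t)$ near $(q_*, t_*)$; composing with $(p, t) \mapsto (u^{(1)}_1(p,t), \ldots, u^{(1)}_n(p,t), t)$ expresses $\chi$ as a smooth composition on a neighborhood of $(p_*, t_*)$.

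\emph{The $\partial_\tf$-preserving property.} With joint smoothness in hand, differentiate $u^{(1)}_j(p, t) = u^{(2)}_j(\chi_t(p), t)$ in $t$ to obtain
\[
\partial_t u^{(1)}_j \big|_{(p,t)} = \nabla u^{(2)}_j \big|_{(\chi_t(p),t)} \cdot \partial_t \chi_t(p) + \partial_t u^{(2)}_j \big|_{(\chi_t(p), t)}.
\]
Substituting the heat equations $\partial_t u^{(i)}_j = \triangle_{g'_{i,t}} u^{(i)}_j$ and invoking the Laplacian intertwining from the first paragraph, all terms cancel except $\nabla u^{(2)}_j |_{(\chi_t(p),t)} \cdot \partial_t \chi_t(p) = 0$ for each $j$. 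Since $\{\nabla u^{(2)}_j\}_{j=1}^n$ spans $T_{\chi_t(p)} M'_2$ throughout the neighborhood, this forces $\partial_t \chi_t(p) = 0$, which is exactly $\chi_* \partial_\tf = \partial_\tf$.

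\emph{Coverage and main obstacle.} The two preceding paragraphs establish smoothness and $\partial_\tf$-preservation in a neighborhood of each $(p_*, t_*) \in \phi_1^{-1}(U)$, and covering $\phi_1^{-1}(U)$ by such neighborhoods finishes the proof. For any time $t$ at which no suitable $t_0 < t$ is available in $I'_1 \cap I'_2$, one performs the parallel backwards construction using Property~(5) of Definition~\ref{Def_regular_pt}: take a conjugate heat flow on $\XX$ with smooth density initial data supported in $\phi_2(M'_2 \times \{t_1\})$ for some $t_1$ slightly in the future of $t$, pull back via $\phi_1$ and $\phi_2$, and repeat the implicit-function argument. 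The main technical obstacle is supplying the $n$ analytic flows whose pulled-back spatial gradients form a full basis at the target point; this is precisely where the bridge in Properties~(4) and~(5) between the heat and conjugate-heat structures on the abstract metric flow $\XX$ and the smooth Ricci-flow geometry on the $M'_i$ is essential.
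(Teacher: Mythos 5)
Your proof is essentially correct and follows the same route as the paper: both use forward heat flows seeded at a time near the point of interest to produce coordinate functions $u^{(i)}_j$, exploit the intertwining of the time-$t$ Laplacians under the Riemannian local isometry $\chi_t$ (Myers--Steenrod), and derive the identity $\partial_t u^{(1)}_j = (\partial_t u^{(2)}_j)\circ\chi_t$, which forces $\partial_t\chi_t = 0$. You make explicit, via the implicit function theorem, the joint-smoothness step that the paper leaves implicit when it passes from that identity to the constancy of $t\mapsto\chi_t(\tilde x)$; this is a fair and correct expansion rather than a different method.

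The one place you go wrong is the final ``coverage'' paragraph. You insist on $t_0 < t_*$ strictly, and for a hypothetical time $t_*$ at which no such $t_0\in I'_1\cap I'_2$ exists you propose a backward construction via Property~(5). But the Claim only assumes Properties~(2)--(4), so Property~(5) is not at your disposal. This worry is in fact vacuous: Property~(2) requires $\phi_i(M'_i\times I'_i)$ to be open in $\XX$, and by Proposition~\ref{Prop_topology_properties}\ref{Prop_topology_properties_dd} (with $I$ left-open) any neighborhood of a point at time $a$ contains a $P^*$-parabolic ball and hence points at times strictly before $a$; thus $I'_i$ --- and a fortiori $I'_1\cap I'_2$ --- has no minimum. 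Alternatively, you can simply allow $t_0 = t_*$, as the paper implicitly does: since $\chi_{t_0}$ is already known to be a smooth Riemannian isometry, both pulled-back initial data $\widetilde f_j$ and $\widetilde f_j\circ\chi_{t_0}$ are smooth, so parabolic regularity (cf.\ Lemma~\ref{Lem_std_par_est}) gives smoothness of $u^{(1)}_j, u^{(2)}_j$ up to the initial time and the IFT argument applies at $(q_*,t_*)$ with $q_0 = q_*$. Either of these observations replaces the unjustified appeal to Property~(5).
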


\begin{proof}
Due to Property~\ref{Def_regular_pt_2} of Definition~\ref{Def_regular_pt}, the map $\chi$ is a homeomorphism and by Property~\ref{Def_regular_pt_3}, for any $(x_0, t_0) \in M'_1 \times I'_1$ there is a product neighborhood $(x_0, t_0) \in M''_1 \times I''_1 \subset \phi_1^{-1} (U)$ such that $\chi'' := \chi |_{M''_1 \times I''_1}$ can be expressed as a family of isometries onto their images $(\chi''_t : (M''_1, g''_{1,t}|_{M''_1} ) \to (M'_2 , g'_{2,t}))_{t \in I''_1}$.
So $\chi''_t$ is smooth for all $t \in I''_1$.
It remains to show that $\chi''_t$ is constant in $t$.
To do this, we may assume in the following that $(M'_1, (g'_{1,t})_{t \in I'_1}) = (M''_1, (g''_{1,t})_{t \in I''_1})$ and $\chi = \chi''$.

Let $\td t \in I'_1$ and consider a compactly supported, smooth function $\td u  \in C^\infty_c(M'_2)$.
Let $u : \XX_{\geq \td t} \to \IR$ be the heat flow with initial condition $u_{\td t} = \td u \circ \phi_{2 , \td t}^{-1}$ on $\phi_{2, \td t} ( M'_2 ) \subset \XX_{\td t}$ and $0$ on $\XX_{\td t} \setminus \phi_{2, \td t} ( M'_2 )$.
By Property~\ref{Def_regular_pt_4} of Definition~\ref{Def_regular_pt} the functions $u_{i} := u \circ \phi_{i} : M'_i \times (I'_i \cap  [\td t, \infty)) \to \IR$, $i = 1,2$, are smooth solutions to the heat equation when restricted to the interior of their domains.
By Proposition~\ref{Prop_topology_properties}\ref{Prop_topology_properties_f_new} and Property~\ref{Def_regular_pt_2} they are also continuous on their entire domain.
Moreover, since $\chi_{\td t}$ is smooth, we also know that $u_{1, \td t} = \td u = u_{2, \td t} \circ \chi_{\td t}$ is smooth.
So by local parabolic regularity theory (see also Lemma~\ref{Lem_std_par_est}), $u_1, u_2$ are smooth solutions to the heat equation on their entire domains.
Therefore, in summary, for any $\td t \in I'_1$ and $\td u  \in C^\infty_c(M'_2)$ there are smooth solutions to the heat equation $u_i \in C^\infty (M'_i \times ( I'_i  \cap  [\td t, \infty)))$ such that $u_2 = \td u$ on $M'_2 \times \{ \td t \}$ and $u_1 = u_2 \circ \chi$ on $M'_1 \times (I'_1 \cap  [\td t, \infty))$.
It follows that on $M'_1 \times (I'_1 \cap  [\td t, \infty))$
\[ \partial_t (u_{2,t} \circ \chi_t) = \partial_t u_{1,t} 
= \triangle_{g_{1,t}} u_{1,t}
= (\triangle_{g_{2,t}} u_{2,t}) \circ \chi_t
= (\partial_t u_{2,t}) \circ \chi_t. \]
Applying this identity to the heat flows corresponding to $n  := \dim M'_1 = \dim M'_2$ functions $\td u_1, \ldots, \td u_n \in C^\infty_c ( M''_1)$ that form a coordinate system near a given point implies that for any $(\td x, \td t) \in M'_1 \times I'_1$ there is a $\td t' > \td t$ such that $[\td t, \td t') \to M'_2$, $t \mapsto \chi_t (\td x, t )$ is constant.
By continuity, this implies that $\chi_t$ is constant in $t$, which finishes the proof of the claim.
\end{proof}

Due to the Claim, the inverses of the maps $\phi$ from Definition~\ref{Def_regular_pt} form a smooth atlas on $\RR$ such that $\tf |_{\RR}$ is smooth.
Moreover, the push-forwards via $\phi$ of the vector fields corresponding to the unit vector fields on the intervals $I'$ define a smooth vector field $\partial_{\tf}$ on $\RR$.
Similarly, the push-forwards of the flows $(g'_t)$ via the maps $\phi$ define a smooth metric $g$ on $\ker d\tf |_{\RR}$.
To see that $\RR$ is Hausdorff, note first that any two points in different time-slices can be separated by open subsets since $\tf$ is continuous.
On the other hand, for any $t \in I$, Property~\ref{Def_regular_pt_3} of Definition~\ref{Def_regular_pt} implies that the subspace topology on $\RR_t =  \RR \cap \XX_t \subset \XX$ agrees with the topology induced by $d_t |_{\RR_t}$.
So points in the same time-slice can be separated as well.
To see that $\RR$ is second countable, fix a countable dense subset $S \subset \XX$ using Proposition~\ref{Prop_topology_properties}\ref{Prop_topology_properties_g} and consider the collection of all maps $\phi : M' \times I' \to \RR$ from Definition~\ref{Def_regular_pt} with the additional property that the endpoints of $I'$ are rational or lie in $\partial I$ and for some $t \in I'$ the image $\phi_t (M') \subset \RR_t$ is a distance ball of rational radius around a point in $S$.
Note that any two such maps $\phi_i : M'_i \times I'_i \to \RR$ with $I'_1 = I'_2$ and with the property that $\phi_t (M'_1) = \phi_t (M'_2)$ for some $t \in I'_1$ we have $\phi_1 (M'_1 \times I'_1) = \phi_2 (M'_2 \times I'_2)$ by the Claim.
So the collection of images of these maps is countable and it can be seen using the Claim that they cover $\RR$.
This finishes the proof of Assertion~\ref{Thm_regular_part_XX_a}.

Assertions~\ref{Thm_regular_part_XX_b}--\ref{Thm_regular_part_XX_d} are direct consequences of Definition~\ref{Def_regular_pt}.
Assertion~\ref{Thm_regular_part_XX_e} follows from the openness of images in Definition~\ref{Def_regular_pt}\ref{Def_regular_pt_2} via a covering argument. 
The existence of the function $K$ and the smoothness of $K(x; \cdot)$ for any $x \in \XX$ in Assertion~\ref{Thm_regular_part_XX_f} is a consequence of Assertion~\ref{Thm_regular_part_XX_d}.
By Proposition~\ref{Prop_topology_properties}\ref{Prop_topology_properties_c}, for any sequence $x_i \to x_\infty \in \XX$ and $t < \tf(x_\infty)$ we have $K(x_i; \cdot) dg_t \to K(x_\infty; \cdot) dg_t$ in the $W_1$-sense.
Since by standard local derivative estimates, the functions  $K(x_i; \cdot)$ are locally uniformly bounded in any $C^m$-norm on $\RR$ (see also Lemma~\ref{Lem_std_par_est}), this implies that $K(x_i; \cdot) \to K(x_\infty; \cdot)$ in $C^\infty_{\loc}$ and that $K$ is continuous.
Next, fix some $y \in \RR_s$.
Then for any small $r > 0$ and any two times $t_1, t_2 \in I$, $s < t_1 < t_2$, and $x \in \XX_{t_2}$ we have
\begin{multline*}
 \int_{B(y,r) \subset \RR_{s}} K(x; y') dg_s (y')
= \nu_{x;s} (B(y,r))
= \int_{\XX_{t_1}} \nu_{x';s} (B(y,r)) \, d\nu_{x;t_1} (x') \\
=  \int_{\XX_{t_1}} \int_{B(y,r) \subset \RR_{s}} K(x'; y') \, dg_s (y') d\nu_{x;t_1} (x')
=  \int_{B(y,r) \subset \RR_{s}} \int_{\XX_{t_1}}  K(x'; y') \, d\nu_{x;t_1} (x') dg_s (y') . 
\end{multline*}
Letting $r \to 0$ implies that for almost all $y \in \RR_{s}$
\[ K(x; y) = \int_{\XX_{t_1}}  K(x'; y) \, d\nu_{x;t_1} (x') . \]
Since both sides are smooth in $y$ by local derivative estimates, this implies that we have equality everywhere and therefore $K(\cdot; y)$ is a heat flow.
Next, fix some $y \in \XX$ and $t^* > \tf(y)$.
By Lemma~\ref{Lem_std_par_est} applied to the conjugate heat kernels $(\nu_{x; t})$ near $y$ we have $K(\cdot;y) \leq C(y, t^*)$ on $\XX_{\geq t^*}$.
So by Assertion~\ref{Thm_regular_part_XX_c}, $K(\cdot; y)$ is smooth on $\RR_{> \tf(y)}$ for all $y \in \XX$ and solves the heat equation.
Lastly, consider the restriction $K'$ of $K$ to $\{ (x,y) \in \RR \times \RR \;\; : \;\; \tf(x) > \tf(y) \}$.
We have already shown that $K'$ is smooth in the first and second variable each.
By local derivative estimates we obtain that difference quotients in one variable converge locally uniformly in the other variable.
Since difference quotients in one variable still satisfy the heat equation or conjugate heat equation in the other variable, this local uniform convergence implies local smooth convergence.
This shows that $K'$ is smooth.

For the uniqueness statement, note that any inclusion map of a product domain on $\RR$ satisfies the properties of Definition~\ref{Def_regular_pt}.
So if we could find two different smooth structures or Ricci flow spacetime structures on $\RR$, then we could apply the Claim to two such inclusion maps coming from each structure.
\end{proof}

\subsection{Properties of the regular part} \label{subsec_properties_reg_part}
In this subsection, we establish some properties of the regular parts of $H$-concentrated, almost always intrinsic metric flows, which will become useful later.
We first state all results; the proofs can be found towards the end of this subsection.

The first result concerns the behavior of the conjugate heat kernel near regular points.
It shows that conjugate heat kernels cannot move too fast  in regions where the curvature is bounded.

\begin{Proposition} \label{Prop_HK_regular}
Consider an $H$-concentrated and almost always intrinsic metric flow $\XX$ over a left-open interval $I$ with regular part $\RR \subset \XX$.
Let $r > 0$, $t_1, t_2 \in I$, $t_1 \leq t_2$, $x \in \XX_{t_2}$ and assume that the parabolic neighborhood $P:= P(x; r, - ( t_2 -t_1))$ is unscathed and $|{\Rm}| \leq r^{-2}$ on $P$.
Then:
\begin{enumerate}[label=(\alph*)]
\item \label{Prop_HK_regular_a} If $t_2 - t_1 \leq A r^2$, then we have
\[ d^{\XX_{t_1}}_{W_1} (\delta_{x(t_1)}, \nu_{x; t_1}) \leq C(H,A) \sqrt{t_2 - t_1}. \]
\item \label{Prop_HK_regular_b} Suppose that $t_2 - t_1 \leq c(H) r^2$, and that $x(t_1) \in \XX_{t_1}$ is an $H$-center of some point $y\in \XX_{t_2}$.
Then $y \in P \subset \RR$ and
\[ d_{t_2}(x, y)   \leq C(H) \sqrt{t_2 - t_1}. \]
\end{enumerate}
\end{Proposition}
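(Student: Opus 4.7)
The strategy is to combine the classical parabolic theory on the smooth Ricci flow structure of $P\subset\RR$ provided by Theorem~\ref{Thm_regular_part_XX} with the $H$-concentration axiom. In particular, $\nu_{x;t_1}$ restricted to $P_{t_1}$ has density $K(x;\cdot)$, which is the classical fundamental solution of the conjugate heat equation on the smooth spacetime $P$, and by Theorem~\ref{Thm_regular_part_XX}\ref{Thm_regular_part_XX_f} the function $K(\cdot;x(t_1))$ extends to a continuous bounded heat flow on $\XX_{>t_1}$ with smooth restriction to $\RR_{>t_1}$.

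For part~(a), standard Gaussian lower bounds on the smooth conjugate heat kernel, valid under the hypotheses $|{\Rm}|\le r^{-2}$ on $P$ and $t_2-t_1\le Ar^2$ (via, e.g., Hamilton's Harnack combined with volume comparison), yield a constant $c_0=c_0(H,A)>0$ with
\[
\nu_{x;t_1}\bigl(B(x(t_1),\sqrt{t_2-t_1})\bigr)\ge c_0.
\]
Picking any $H$-center $z\in\XX_{t_1}$ of $x$, Lemma~\ref{Lem_nu_BA_bound} with $A':=2/c_0$ puts mass at least $1-c_0/2$ in $B(z,\sqrt{A'H(t_2-t_1)})$; forcing these two balls to meet gives $d_{t_1}(x(t_1),z)\le(1+\sqrt{A'H})\sqrt{t_2-t_1}$, and combining with $d_{W_1}(\delta_z,\nu_{x;t_1})\le\sqrt{H(t_2-t_1)}$ from Lemma~\ref{Lem_W_1_vs_Var} completes the argument.

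For part~(b), I would proceed in two steps. Step~1: prove $y\in P_{t_2}=B_{g_{t_2}}(x,r)$ by contradiction. Assuming $d_{t_2}(x,y)\ge r$, the hypothesis $\Var(\delta_{x(t_1)},\nu_{y;t_1})\le H(t_2-t_1)$ makes a definite fraction of the mass of $\nu_{y;t_1}$ sit in a ball of radius $\sim\sqrt{H(t_2-t_1)}$ around $x(t_1)\in\RR_{t_1}$; combined with a gradient bound for $K(y;\cdot)$ in $\RR_{t_1}$ supplied by Lemma~\ref{Lem_std_par_est}, this forces the pointwise value $K(y;x(t_1))\ge c(H)(t_2-t_1)^{-n/2}$. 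On the other hand, viewing $K(\cdot;x(t_1))$ as a heat flow on $\XX_{>t_1}$, classical Gaussian upper bounds inside $P$ together with the maximum principle Proposition~\ref{Prop_basic_HF}\ref{Prop_basic_HF_b} (applied starting from an intermediate time $t_1+(t_2-t_1)/2$, where the flow is already smooth and bounded) yield
\[
K(y;x(t_1))\le C(H)(t_2-t_1)^{-n/2}\exp\!\Big(-\tfrac{r^2}{C(H)(t_2-t_1)}\Big),
\]
which contradicts the lower bound once $c(H)$ is small enough. Step~2: once $y\in P_{t_2}$, its worldline $y(t_1)\in P_{t_1}$ exists, and applying part~(a) to $y$ together with the $H$-center hypothesis and the triangle inequality gives $d_{t_1}(x(t_1),y(t_1))\le C'(H)\sqrt{t_2-t_1}$; classical distance-distortion on the smooth Ricci flow over $P$ then produces $d_{t_2}(x,y)\le C''(H)\sqrt{t_2-t_1}$.

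The main obstacle is Step~1 of part~(b): propagating the smooth Gaussian upper bound on $K(\cdot;x(t_1))$ from the regular region $P$ to an arbitrary point $y\in\XX_{t_2}$ which may lie in the singular part $\mathcal{S}$. The key technical input is the continuity and joint smoothness of the heat kernel on $\XX\times\RR$ (Theorem~\ref{Thm_regular_part_XX}\ref{Thm_regular_part_XX_f}) together with Proposition~\ref{Prop_topology_properties}\ref{Prop_topology_properties_f_new} and the maximum principle on the full metric flow, which together ensure that $\mathcal{S}$ cannot sustain values of $K(\cdot;x(t_1))$ beyond those witnessed as limits from $\RR$.
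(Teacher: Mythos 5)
Your proposal diverges substantially from the paper's proof, and both parts have genuine gaps that would require new ideas to close.

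For part~(a), the ``standard Gaussian lower bound via Hamilton's Harnack plus volume comparison'' step does not go through as stated. A pointwise lower bound $K(x;\cdot)\gtrsim(t_2-t_1)^{-n/2}$ only yields the claimed mass bound $\nu_{x;t_1}(B(x(t_1),\sqrt{t_2-t_1}))\ge c_0$ after multiplying by a ball-volume \emph{lower} bound, and no noncollapsing hypothesis is available here; moreover any such $c_0$ would a priori depend on the dimension $n$, not on $H$ and $A$. The paper takes a different route: it invokes a barrier/subsolution $u$ supported in the parabolic neighborhood (cited from the companion paper, see \cite[\HKLemHEsubsolutiononParNbhd]{Bamler_HK_entropy_estimates}), uses monotonicity of $\int u_t\,d\mu_t$ to propagate mass bounds backward in time, and \emph{generates} the near-unit mass at times $t_i^*\nearrow t_2$ from the $H$-center machinery (Proposition~\ref{Prop_topology_properties}\ref{Prop_topology_properties_e} and Lemma~\ref{Lem_nu_BA_bound}), choosing the $t_i^*$ to be intrinsic times via Theorem~\ref{Thm_alm_alw_intrinsic}. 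The dimension dependence is then absorbed by a bootstrap that you do not address: Lemma~\ref{Lem_HK_regular_weak} first gives the estimate with constants depending on $n=\dim x$ (and with an extra hypothesis in~(b)), and only afterwards does Proposition~\ref{Prop_H_dim_bound}, itself proved using part~(a), give $n\le H/4$ and let one remove the $n$-dependence and the extra hypothesis.

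For part~(b), Step~1 hinges on a Gaussian \emph{upper} bound for the heat flow $K(\cdot;x(t_1))$ at the point $y$, which may lie in $\mathcal{S}$. You flag this as the main obstacle, but the proposed fix --- joint smoothness of $K$ on $\RR\times\RR$ plus the maximum principle Proposition~\ref{Prop_basic_HF}\ref{Prop_basic_HF_b} --- only controls $\sup_{\XX_t}K(\cdot;x(t_1))$ as a function of $t$; it gives no spatial Gaussian decay at a singular point $y$ far from $P$, and ``cannot sustain values beyond limits from $\RR$'' is not a substitute for that decay. The paper sidesteps all pointwise estimates at $y$: it constructs a cutoff $w$ compactly supported in $P$ with $\partial_\tf w = 0$ and controlled $\triangle w$, shows $\tfrac{d}{dt}\int w_t\,d\nu_{y;t}\ge -C\tau$, combines this with the topological fact (Theorem~\ref{Thm_regular_part_XX}\ref{Thm_regular_part_XX_e} and Proposition~\ref{Prop_topology_properties}\ref{Prop_topology_properties_e}) that $H$-centers of $y$ at times $t_i^*\nearrow t_2$ eventually lie outside $P(x;.98,-T)$ so that $\int w_{t_i^*}\,d\nu_{y;t_i^*}\to 0$, and derives a contradiction with $\Var(\delta_{x(t_1)},\nu_{y;t_1})\le H\tau$. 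Your Step~2 matches the paper's in spirit; note however that the identity $d_{g_0}(x(0),y(0))=d_0(x(0),y(0))$ it quietly uses is exactly the conclusion of Proposition~\ref{Prop_RR_intrinsic}\ref{Prop_RR_intrinsic_c}, itself part of the bootstrap.
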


In the next proposition we study the relation between the length metric $d_{g_t}$ induced by the Riemannian metric $g_t$ on each time-slice $\RR_t$ and the restriction of the metric $d_t$ to $\RR_t \subset \XX_t$.
If $\XX$ is intrinsic at time $t$, then the following properties are trivial.
The key point of the proposition is that these properties even hold if $\XX$ is not intrinsic at time $t$.

\begin{Proposition} \label{Prop_RR_intrinsic}
Let $\XX$ be an $H$-concentrated and almost always intrinsic metric flow over a left-open interval $I$ and consider its regular part $\RR \subset \XX$.
For any $t \in I$, $x, x_1, x_2 \in \RR_t$ and $r > 0$ the following is true:
\begin{enumerate}[label=(\alph*)]
\item \label{Prop_RR_intrinsic_a} $d_t \leq d_{g_t}$. \item \label{Prop_RR_intrinsic_b} If there is a compact subset $K \subset \RR_t$ with the property that any path in $\RR_t$ of (Riemannian) length $\leq d_t (x_1, x_2)$ between $x_1, x_2$ is contained in $K$ and if $\lim_{t'\nearrow t} d_{g_{t'}} (x_1(t'), x_2(t')) = d_{g_t} (x_1, x_2)$, then $d_{g_t} (x_1, x_2) = d_t (x_1, x_2)$.
\item \label{Prop_RR_intrinsic_c} If the ball $B_{g_t} (x, r) \subset \RR_t$ around $x$ of radius $r$ with respect to the Riemannian metric $g_t$ is relatively compact in $\RR_t$, then $B_{g_t} (x, r) = B(x,r)$, where the latter ball is taken with respect to $d_t$.
Moreover $d_{g_t} (x, y) = d_t (x,y)$ for all $y \in B(x,r)$.
\end{enumerate}
\end{Proposition}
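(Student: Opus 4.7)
The plan is to treat the three parts in sequence: (a) is essentially local, (b) requires a careful approximation argument using nearby intrinsic time-slices, and (c) will follow from (b) together with a localization provided by Proposition~\ref{Prop_HK_regular}\ref{Prop_HK_regular_b}.

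\emph{Part (a).} Given a smooth path $\gamma : [0,1] \to \RR_t$ from $x_1$ to $x_2$, I use the compactness of $\gamma([0,1])$ together with the local identification of $d_t$ and $d_{g_t}$ from Theorem~\ref{Thm_regular_part_XX}\ref{Thm_regular_part_XX_b} to cover $\gamma([0,1])$ by finitely many open subsets of $\RR_t$ on which the two distances agree. After refining the partition $0 = s_0 < \cdots < s_N = 1$ so that each sub-arc $\gamma([s_i, s_{i+1}])$ lies in one such set,
\[ d_t(x_1, x_2) \le \sum_{i=0}^{N-1} d_t(\gamma(s_i), \gamma(s_{i+1})) = \sum_{i=0}^{N-1} d_{g_t}(\gamma(s_i), \gamma(s_{i+1})) \le \length_{g_t}(\gamma), \]
and taking the infimum over $\gamma$ yields $d_t(x_1, x_2) \le d_{g_t}(x_1, x_2)$.

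\emph{Part (b).} By (a) it suffices to produce, for every $\eps > 0$, a Riemannian path in $\RR_t$ from $x_1$ to $x_2$ of $g_t$-length at most $d_t(x_1, x_2) + \eps$. Using the openness of $\RR$ and the compactness of $K$, I choose a neighborhood $K \subset U \subset \RR_t$ with $\ov U$ compact in $\RR_t$ and a $\tau > 0$ so that every point of $U$ survives (in the sense of Definition~\ref{def_points_in_RF_spacetimes}) until time $t - \tau$ and so that the resulting product domain $U \times [t-\tau, t] \subset \RR$ carries a uniform curvature bound. By the almost-always-intrinsic hypothesis, I select a sequence $t'_k \nearrow t$ at which $(\XX_{t'_k}, d_{t'_k})$ is a length space. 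Proposition~\ref{Prop_HK_regular}\ref{Prop_HK_regular_a} combined with Proposition~\ref{Prop_compare_CHF}\ref{Prop_compare_CHF_c} gives
\[ d_{t'_k}(x_1(t'_k), x_2(t'_k)) \le d_t(x_1, x_2) + 2C\sqrt{t - t'_k}, \]
and the intrinsic property of $\XX_{t'_k}$ then produces an almost-geodesic $\gamma_k$ in $\XX_{t'_k}$ joining $x_1(t'_k)$ and $x_2(t'_k)$ whose $d_{t'_k}$-length is at most $d_t(x_1, x_2) + o(1)$. The crux is to deform $\gamma_k$ into a Riemannian path $\td\gamma_k$ inside $U(t'_k)$ of comparable length: the hypothesis $d_{g_{t'}}(x_1(t'), x_2(t')) \to d_{g_t}(x_1, x_2)$, together with the compactness of $K$ and the bounded-curvature structure on $U \times [t-\tau, t]$, forces any $d_{t'_k}$-near-geodesic (for large $k$) to track a Riemannian near-geodesic inside $U(t'_k)$. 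Flowing $\td\gamma_k$ forward along $\partial_\tf$ produces a path in $\RR_t$ from $x_1$ to $x_2$; since the metrics $g_{t'_k}$ and $g_t$ differ on $U \times [t-\tau, t]$ by a factor $1 + O(t - t'_k)$, the $g_t$-length of the flowed path is at most $(1 + o(1))(d_t(x_1, x_2) + o(1))$, and the conclusion follows on letting $k \to \infty$. The main obstacle is precisely this deformation step, which must use both hypotheses in an essential way to exclude the possibility that $d_{t'_k}$-near-geodesics ``shortcut'' through the singular part of $\XX_{t'_k}$.

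\emph{Part (c).} I derive this from (b) together with Proposition~\ref{Prop_HK_regular}\ref{Prop_HK_regular_b}. The inclusion $B_{g_t}(x, r) \subset B(x, r)$ is immediate from (a). Conversely, given $y \in \XX_t$ with $d_t(x, y) < r$, I first show $y \in \RR_t$: for $t' < t$ close enough to $t$, pick an $H$-center $y^*(t') \in \XX_{t'}$ of $y$; the triangle inequality in $d^{\XX_{t'}}_{W_1}$ combined with Proposition~\ref{Prop_HK_regular}\ref{Prop_HK_regular_a} applied to $x$ yields $d_{t'}(y^*(t'), x(t')) < r + o(1)$, placing $y^*(t')$ within the backward $\partial_\tf$-flow of a thickening of $\ov{B_{g_t}(x,r)}$ --- a compact region of $\RR$ for small $t - t'$. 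The forward flow $\td y := y^*(t')(t) \in \RR_t$ is then defined, and Proposition~\ref{Prop_HK_regular}\ref{Prop_HK_regular_b} (applied with $\td y$ in place of $x$) shows $y \in \RR_t$. With $y \in \RR_t$ in hand, (b) applied to the pair $x, y$ with $K$ a slight thickening of $\ov{B_{g_t}(x, r)}$ --- whose path-inclusion hypothesis is checked from $d_t(x, y) < 2r$ via the triangle inequality, and whose convergence hypothesis on $d_{g_{t'}}$ follows from smoothness of the Ricci flow on the compact thickening --- gives $d_{g_t}(x, y) = d_t(x, y) < r$, so $y \in B_{g_t}(x, r)$.
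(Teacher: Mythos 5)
Part (a) is correct and uses the same idea as the paper (Theorem~\ref{Thm_regular_part_XX}\ref{Thm_regular_part_XX_b} locally plus the fact that a length metric dominates any metric that agrees with it locally); the extra covering-and-refining detail is fine.

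For part (b) you take a genuinely different route --- you flow $x_1,x_2$ back along worldlines and invoke Proposition~\ref{Prop_HK_regular}\ref{Prop_HK_regular_a} (available here only through Lemma~\ref{Lem_HK_regular_weak}, with a constant depending on $\dim x_j$), then try to build an explicit Riemannian path in $\RR_t$ and flow it forward. The paper instead uses $H$-centers $x^*_{j,i}$ at intrinsic times $t^*_i \nearrow t$ together with the raw $H$-concentration estimate, and it never constructs a path at time $t$: the ``openness argument'' traps the $d_{t^*_i}$-near-geodesics in a compact $K_i\subset\RR_{t^*_i}$, so that $d_{t^*_i}$ and $d_{g_{t^*_i}}$ (which coincide locally on $\RR_{t^*_i}$) agree on that pair, and then the two hypotheses of (b) are used to pass the equality to the limit. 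Your crucial assertion --- that the hypotheses ``force any $d_{t'_k}$-near-geodesic to track a Riemannian near-geodesic inside $U(t'_k)$'' --- is not proved, and you yourself flag it as the main obstacle. That is the genuine gap: as written it is essentially the statement to be shown (that a $d_{t'_k}$-near-geodesic stays in $\RR_{t'_k}$ with comparable Riemannian length), and no deformation is actually carried out. The paper's structure shows that no deformation is needed; one only has to establish the trapping in $K_i$.

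In part (c) your overall strategy (first show $y\in\RR_t$, then get the length equality) matches the paper, but the first step has a gap. The estimate $d_{t'}(y^*(t'),x(t'))<r+o(1)$ is a bound in the metric $d_{t'}$, and $d_{t'}$-balls around $x(t')$ are not known to lie in $\RR_{t'}$ --- establishing exactly that is the content of (c). So it does not by itself ``place $y^*(t')$ within the backward $\partial_\tf$-flow of a thickening of $\ov{B_{g_t}(x,r)}$'', as you assert. The paper avoids this circularity by choosing the auxiliary times from the intrinsic set $\{t^*_i\}$ and rerunning the (b)-style near-geodesic-trapping argument there, which converts $d_{t^*_i}$-closeness into Riemannian closeness inside $\RR_{t^*_i}$; working at intrinsic $t'$ is also required to satisfy the extra hypothesis of Lemma~\ref{Lem_HK_regular_weak} when invoking Proposition~\ref{Prop_HK_regular}\ref{Prop_HK_regular_b} (since (c) is not yet available), a point you omit.
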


We also obtain:

\begin{Proposition} \label{Prop_H_dim_bound}
If $\XX$ is an $H$-concentrated metric flow with regular part $\RR \subset \XX$, then $\dim x \leq H/4$ for all $x \in \RR$.
\end{Proposition}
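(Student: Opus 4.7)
The plan is to compute the leading-order behavior of $\Var(\nu_{x;s})$ as $s\nearrow t:=\tf(x)$ and compare it with the $H$-concentration bound from Definition~\ref{Def_H_Concentration} applied with $x_1=x_2=x$, which gives
\[ \Var(\nu_{x;s})\leq H(t-s),\qquad s\in I,\ s\leq t. \]
Example~\ref{Ex_Rn_Gaussian} already showed that on the constant flat Ricci flow on $\IR^n$ one has $\Var(\nu_{\vec x,t;s})=4n(t-s)$ exactly, so $H$-concentration forces $H\geq 4n$ in that case. The proposition should follow by localizing this computation to any regular point, where the flow is locally a smooth Ricci flow on an $n$-manifold.

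Set $n:=\dim x$ and fix a chart $\phi:M'\times I'\to\XX$ around $x$ as in Definition~\ref{Def_regular_pt} with $\dim M'=n$. Choose $r>0$ small enough that $P:=P(x;r,-r^2)\subset\RR$ is unscathed with $|{\Rm}|\leq r^{-2}$. Write $\tau:=t-s$ and let $x(s)\in\RR_s$ be the $\partial_\tf$-trajectory point. Proposition~\ref{Prop_HK_regular}\ref{Prop_HK_regular_a} gives $d_{W_1}^{\XX_s}(\delta_{x(s)},\nu_{x;s})\leq C(H)\sqrt\tau$, so by Lemma~\ref{Lem_W_1_vs_Var} also $\Var(\delta_{x(s)},\nu_{x;s})\leq C'(H)\tau$; that is, the second moment of $\nu_{x;s}$ about $x(s)$ is $O(\tau)$. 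For any $A\geq 1$ and all sufficiently small $\tau$, the Riemannian ball $B_{g_s}(x(s),A\sqrt\tau)\subset\RR_s$ is relatively compact, and by Proposition~\ref{Prop_RR_intrinsic}\ref{Prop_RR_intrinsic_c} it coincides with the $d_s$-ball of the same radius, with $d_s=d_{g_s}$ throughout. Markov's inequality then yields $\nu_{x;s}(\XX_s\setminus B_{g_s}(x(s),A\sqrt\tau))\leq C'(H)/A^2$.

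Now parabolically rescale $\XX$ around $x$ by $\tau^{-1/2}$. Pulled back to $M'$ via $\phi$, the rescaled Ricci flows converge smoothly on compact subsets to the constant flat flow on $\IR^n$ because $g'$ is smooth at $(x_0,t):=\phi^{-1}(x)$, and the rescaled conjugate heat kernels are uniformly bounded in $C^k_{\mathrm{loc}}$ by Lemma~\ref{Lem_std_par_est}, hence converge in $C^\infty_{\mathrm{loc}}$ to the standard Gaussian $(4\pi)^{-n/2}e^{-|\vec y|^2/4}d\vec y$ on $\IR^n$. The uniform $O(\tau)$ second-moment bound from the previous paragraph supplies the uniform integrability of $d_s^2(x(s),\cdot)$ needed to conclude that the rescaled variances converge to the flat-space value $4n$ of Example~\ref{Ex_Rn_Gaussian}, giving
\[ \lim_{\tau\searrow 0}\frac{\Var(\nu_{x;s})}{\tau}=4n. \]
Combined with the upper bound $\Var(\nu_{x;s})\leq H\tau$, this forces $n\leq H/4$.

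The main technical hurdle is this last step: upgrading $C^\infty_{\mathrm{loc}}$ convergence of the rescaled heat kernels to convergence of the full variance, i.e., ensuring that no variance escapes to infinity under rescaling. This is handled by the uniform $O(\tau)$ bound on the second moment of $\nu_{x;s}$ about $x(s)$ established above; combined with the pointwise smooth convergence of the densities, this yields $W_2$-convergence of the rescaled measures to the Gaussian, hence convergence of the second moments. All other ingredients — smoothness of the chart, uniform $C^k$-estimates from Lemma~\ref{Lem_std_par_est}, the concentration estimate of Proposition~\ref{Prop_HK_regular}, and the identification $d_s=d_{g_s}$ on small Riemannian balls from Proposition~\ref{Prop_RR_intrinsic}\ref{Prop_RR_intrinsic_c} — are already at hand.
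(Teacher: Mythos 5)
Your proposal takes essentially the same route as the paper: parabolically blow up at the regular point $x$, obtain local smooth convergence of the metrics to the flat Euclidean flow and of the conjugate heat kernel densities to a solution of the backwards heat equation on $\IR^n$, identify the limit as the standard Gaussian via the no-mass-escape argument and the concentration condition, then compare the Gaussian variance $4n$ with the $H$-concentration bound. The paper's proof is exactly this (it cites Proposition~\ref{Prop_HK_regular} and Lemma~\ref{Lem_nu_BA_bound} for the unit total mass of the limiting density, where you use the second-moment bound and Markov).

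One step is overclaimed, though. You assert that the uniform $O(\tau)$ second-moment bound, combined with the local smooth convergence of the densities, yields $W_2$-convergence of the rescaled measures, and hence convergence of the rescaled variances to $4n$. This would require uniform integrability of the second moments (i.e., a uniform control of $\int_{|y|>R}|y|^2\,d\nu^\lambda$ as $R\to\infty$), which a mere uniform second-moment bound does not supply, and indeed the paper's Example~\ref{Ex_not_Hausdorff} (the two-point flow $\XX_{C,D}$) shows that no $(2+\eps)$-moment bound is available from the axioms. However, the final inequality does not require $W_2$-convergence or the exact limit: weak convergence together with the uniform variance bound already gives lower semicontinuity of the variance (Lemma~\ref{Lem_weak_conv_2_W1_conv}), i.e.
\[
4n|t| \;=\; \Var\!\big(v^\infty_t\,dg_{\eucl}\big) \;\le\; \liminf_{\lambda\to\infty}\Var\!\big(v^\lambda_t\,dg^\lambda_t\big) \;\le\; H|t|,
\]
which is all you need. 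So the conclusion stands once the claimed equality $\lim_{\tau\searrow 0}\Var(\nu_{x;s})/\tau=4n$ is weakened to this one-sided inequality; the rest of your argument is sound.
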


Let us now present the proofs.
The logical dependence of the following proofs is somewhat convoluted.
We first show:

\begin{Lemma} \label{Lem_dt_leq_dgt}
Assertion~\ref{Prop_RR_intrinsic_a} of Proposition~\ref{Prop_RR_intrinsic} holds.
\end{Lemma}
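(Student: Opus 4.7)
The statement to prove is $d_t \leq d_{g_t}$ on $\RR_t$, where $d_{g_t}$ is the length metric of the Riemannian metric $g_t$ on the regular part and $d_t$ is the restriction of the metric flow distance. My plan is to pass through rectifiable curves: given $x_1, x_2 \in \RR_t$ with $d_{g_t}(x_1, x_2) < \infty$, by definition of the length metric it is enough, after taking an infimum, to show that $d_t(x_1, x_2) \leq \length_{g_t}(\gamma)$ for every rectifiable (equivalently, every piecewise-smooth) path $\gamma : [0,1] \to \RR_t$ joining $x_1$ to $x_2$.

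To do this, I would use Theorem~\ref{Thm_regular_part_XX}\ref{Thm_regular_part_XX_b}: every point $y \in \RR_t$ has an open neighborhood $U_y \subset \RR_t$ on which $d_{g_t}$ and $d_t$ coincide. The image $\gamma([0,1])$ is compact in $\RR_t$, so the open cover $\{\gamma^{-1}(U_y)\}_{y \in \gamma([0,1])}$ of $[0,1]$ admits a Lebesgue number $\delta > 0$. Choose a partition $0 = s_0 < s_1 < \ldots < s_N = 1$ with $s_{i+1} - s_i < \delta$, so that each arc $\gamma([s_i, s_{i+1}])$ lies in a single chart $U_{y_i}$ where $d_t = d_{g_t}$.

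The triangle inequality for $d_t$ together with the local coincidence then yields
\[
d_t(x_1, x_2) \;\leq\; \sum_{i=0}^{N-1} d_t(\gamma(s_i), \gamma(s_{i+1})) \;=\; \sum_{i=0}^{N-1} d_{g_t}(\gamma(s_i), \gamma(s_{i+1})) \;\leq\; \sum_{i=0}^{N-1} \length_{g_t}(\gamma|_{[s_i, s_{i+1}]}) \;=\; \length_{g_t}(\gamma),
\]
where the second inequality is just the fact that, within any Riemannian manifold, the length metric bounds chordal distances from above along a path. Taking the infimum over all such $\gamma$ gives $d_t(x_1, x_2) \leq d_{g_t}(x_1, x_2)$, which is the claim. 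The case $d_{g_t}(x_1, x_2) = \infty$ is trivial.

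There is essentially no obstacle here: all the work has been done in Theorem~\ref{Thm_regular_part_XX}\ref{Thm_regular_part_XX_b}, which provides the local identification of the two distances, and the argument is just a standard compactness-and-triangle-inequality passage from a local to a global inequality. Note that the inequality is not expected to be an equality in general, since $\XX_t$ need not be intrinsic (cf.\ the discussion in Proposition~\ref{Prop_RR_intrinsic}\ref{Prop_RR_intrinsic_b},\ref{Prop_RR_intrinsic_c}, where extra hypotheses are imposed to upgrade the inequality to an equality on a ball or between two specified points).
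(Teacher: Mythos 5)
Your proof is correct and is precisely the paper's argument spelled out in full: the paper deduces $d_t \leq d_{g_t}$ directly from Theorem~\ref{Thm_regular_part_XX}\ref{Thm_regular_part_XX_b} (local coincidence of $d_t$ and $d_{g_t}$) plus the observation that $d_{g_t}$ is a length metric, and your Lebesgue-number/triangle-inequality chain is exactly the unpacking of that observation.
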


\begin{proof}
By Theorem~\ref{Thm_regular_part_XX}\ref{Thm_regular_part_XX_b}, the metrics $d_t$ and $d_{g_t}$ are locally equal on $\RR_t$.
Since $d_{g_t}$ is a length metric, we obtain $d_t \leq d_{g_t}$.
\end{proof}
\bigskip

\begin{Lemma} \label{Lem_HK_regular_weak}
Proposition~\ref{Prop_HK_regular} holds if $C$ is also allowed to depend on $\dim x$ and if for Assertion~\ref{Prop_HK_regular_b} we assume that $\XX$ is intrinsic at time $t_1$ or Proposition~\ref{Prop_RR_intrinsic}\ref{Prop_RR_intrinsic_c} holds.
\end{Lemma}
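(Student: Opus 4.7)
The plan is to translate everything on the parabolic ball $P$ into classical Ricci-flow language via Theorem~\ref{Thm_regular_part_XX}, use standard parabolic estimates (Lemma~\ref{Lem_std_par_est}) to put Gaussian upper bounds on the smooth density $K(x;\cdot) = d\nu_{x;t_1}/dg_{t_1}$, and combine these with the $H$-concentration machinery of Section~\ref{sec_met_flows}. After parabolically rescaling I may assume $r = 1$; the extra hypothesis (intrinsic at $t_1$, or Proposition~\ref{Prop_RR_intrinsic}\ref{Prop_RR_intrinsic_c}) is used exclusively to identify $d_t$-balls with $g_t$-balls inside $P$, so the smooth Gaussian bounds can be phrased in the intrinsic metric $d_t$.

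For part (a) I would bound $\Var(\delta_{x(t_1)},\nu_{x;t_1})$ and invoke $d_{W_1}\le \sqrt{\Var}$ from Lemma~\ref{Lem_W_1_vs_Var}. Since $P$ is an unscathed classical Ricci flow with $|{\Rm}|\le 1$, standard interior parabolic estimates applied to $K(x;\cdot)$ yield the Gaussian upper bound
\[
K(x;y) \le \frac{C_n}{(t_2-t_1)^{n/2}}\exp\!\Big(\!-c_n\frac{d_{g_{t_1}}^2(x(t_1),y)}{t_2-t_1}\Big)
\]
on, say, $B_{g_{t_1}}(x(t_1),1/2)$. Its second moment contributes at most $C_n(t_2-t_1)$, and the exterior mass is exponentially small in $r^2/(t_2-t_1)$. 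To control $\int d_{t_1}^2(x(t_1),\cdot)\,d\nu_{x;t_1}$ on the exterior I pick an $H$-center $z$ of $x$: Lemma~\ref{Lem_nu_BA_bound} places at least $3/4$ of the mass inside $B(z,2\sqrt{H(t_2-t_1)})$, while the Gaussian places at least $1/2$ of it inside $B(x(t_1),C_n\sqrt{t_2-t_1})$, so these two balls meet and $d_{t_1}(x(t_1),z)\le C(n,H)\sqrt{t_2-t_1}$. Triangle inequality together with $\Var(\delta_z,\nu_{x;t_1})\le H(t_2-t_1)$ finishes the bound.

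For part (b), knowing that $x(t_1)$ is an $H$-center of $y$ gives $\nu_{y;t_1}(B(x(t_1),2\sqrt{H(t_2-t_1)}))\ge 3/4$ by Lemma~\ref{Lem_nu_BA_bound}; for $c(H)$ small enough this ball lies inside $P_{t_1}$, and its $g_{t_1}$-volume is at most $C_n(H)(t_2-t_1)^{n/2}$. A mean-value/pigeonhole argument on the density $K(y;\cdot)$ then produces a point $w_0$ in this ball with
\[
K(y;w_0) \ge c_n(H)\,(t_2-t_1)^{-n/2}.
\]
Now I turn around and regard $K(\cdot;w_0)$ as the forward heat flow on $\XX_{>t_1}$ provided by Theorem~\ref{Thm_regular_part_XX}\ref{Thm_regular_part_XX_f}. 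The Gaussian upper bound from part (a), applied to the smooth Ricci flow on $P$, forces $K(\cdot;w_0) \le C_n(t_2-t_1)^{-n/2}\exp(-c_n d_{t_2}^2(\cdot, w_0(t_2))/(t_2-t_1))$ on $P_{t_2}$, and comparing the two estimates yields $d_{t_2}(y,w_0(t_2))\le C(n,H)\sqrt{t_2-t_1}$. Since $w_0 \in B(x(t_1),2\sqrt{H(t_2-t_1)})$ and the smooth distance distortion on $P$ is at most $e^{|\Ric|(t_2-t_1)}\le 2$, one gets $d_{t_2}(w_0(t_2),x)\le 4\sqrt{H(t_2-t_1)}$, hence $d_{t_2}(x,y)\le C(n,H)\sqrt{t_2-t_1} < 1$ for $c(H)$ small enough; the hypothesis Proposition~\ref{Prop_RR_intrinsic}\ref{Prop_RR_intrinsic_c} (or intrinsic at $t_1$, propagated into $P$) then identifies $B_{d_{t_2}}(x,1) = P_{t_2}$ and gives $y \in P$.

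The main obstacle, which is why Lemma~\ref{Lem_HK_regular_weak} is stated rather than Proposition~\ref{Prop_HK_regular} directly, is propagating the Gaussian upper bound for $K(\cdot;w_0)$ off the smooth stratum $\RR_{t_2}$, since a priori $y$ could be singular. I would handle this by picking a mollification time $t_1+\varepsilon$: on $\RR_{t_1+\varepsilon}$ the sharp sup bound $C_n\varepsilon^{-n/2}\exp(-c_n d^2/\varepsilon)$ is classical, and the maximum principle for bounded heat flows (Proposition~\ref{Prop_basic_HF}\ref{Prop_basic_HF_b}), together with the continuity of $K$ in its first argument (Theorem~\ref{Thm_regular_part_XX}\ref{Thm_regular_part_XX_f}) and an approximation $\nu_{y;t_1}$ by a nearby $\nu_{y';t_1}$ with $y' \in \RR$, transports the bound to all of $\XX_{t_2}$, at the cost of constants depending on $n=\dim x$. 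The uniformity of $n$ is only a question in the full Proposition~\ref{Prop_HK_regular}; here we accept the dimensional dependence and the argument is essentially bookkeeping with Gaussian constants.
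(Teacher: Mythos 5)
Your approach (Gaussian upper bounds on $K$ via classical parabolic estimates) differs from the paper's (subsolution barrier plus $H$-concentration) and has a genuine gap in each part. For part~(a), an upper bound on the density $K(x;\cdot)$ inside $P_{t_1}$ controls only the mass of $\nu_{x;t_1}$ \emph{inside} $P_{t_1}$; it cannot produce the required \emph{lower} bound on $\nu_{x;t_1}(B(x(t_1),C_n\sqrt{t_2-t_1}))$, because the measure's mass outside $P_{t_1}$ is a priori unconstrained — nothing about the curvature bound on $P$ rules out that essentially all the mass escapes the parabolic neighborhood. Your assertion that ``the exterior mass is exponentially small'' is therefore unjustified, and without it the ball-intersection argument with the $H$-center does not get off the ground. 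The paper produces the missing lower bound from a subsolution barrier $u\in C^0_c(P)$ with $\square u\le 0$ (in the barrier sense), $0\le u\le 1$, and $u\ge c_1(n,A)$ on a tube around the worldline of $x$ (this is \cite[\HKLemHEsubsolutiononParNbhd]{Bamler_HK_entropy_estimates}). The monotonicity $\tfrac{d}{dt}\int u\,d\nu_{x;t}\le 0$ then propagates the terminal value $c_1$ (obtained by letting $t\nearrow t_2$ via $H$-centers converging to $x$) back to time $t_1$ and yields $\nu_{x;t_1}(B_{g_{t_1}}(x(t_1),c_0))\ge c_1$. This type of lower mass bound is a structurally different ingredient that an upper density bound cannot replace.

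For part~(b) the gap is starker: you want to evaluate a Gaussian upper bound on the heat flow $K(\cdot;w_0)$ \emph{at the point $y$}, but you do not yet know $y\in\RR_{t_2}$ — the bound coming from Lemma~\ref{Lem_std_par_est} is only available on $P_{t_2}\subset\RR_{t_2}$, and outside $P$ there is no curvature control to run the parabolic estimates. The ``mollification'' patch you sketch is circular: to find a useful nearby regular $y'$ to compare with, you already need $y$ to be close to $P_{t_2}$, which is the claim to be proven; and the $C^\infty_{\loc}$-continuity of $K$ from Theorem~\ref{Thm_regular_part_XX}\ref{Thm_regular_part_XX_f} will not connect a possibly singular $y$ to $P_{t_2}$, nor will the maximum principle of Proposition~\ref{Prop_basic_HF} help, since it requires the bound on the \emph{entire} time-slice, not just on $P$. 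The paper sidesteps this by \emph{first} proving $y\in B_{g_{t_2}}(x,.99r)$ by a contradiction argument using a second cutoff function $w$, the variance bound $\Var(\nu_{y;t_1})\le H(t_2-t_1)$, and the $H$-center hypothesis on $x(t_1)$; only \emph{then} does it apply part~(a) to $y$, which is by that point inside $\RR_{t_2}$. You did correctly identify the role of the extra hypothesis (intrinsic at $t_1$, or Proposition~\ref{Prop_RR_intrinsic}\ref{Prop_RR_intrinsic_c}): it equates $d_t$-balls with $g_t$-balls inside $P$, and that observation is used in the paper in precisely the way you describe.
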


\begin{proof}
Let $n := \dim x$.
To prove Assertion~\ref{Prop_HK_regular_a}, we may shrink $r$ and assume that without loss of generality $t_2 - t_1 \geq r^2$.
After parabolic rescaling and application of a time-shift, we may assume that $r = 1$, $t_1 = 0$, $t_2 := T$, where $1\leq T \leq A$ for Assertion~\ref{Prop_HK_regular_a}.
Write $(\mu_t)_{t \in [0,T)} := (\nu_{x;t})_{t \in [0,T)}$ for Assertion~\ref{Prop_HK_regular_a} or $(\mu_t)_{t \in [0,T)} := (\nu_{y;t})_{t \in [0,T)}$ for Assertion~\ref{Prop_HK_regular_b}.
Then $d\mu_t = v_t \, dg_t$ on $\RR_t$ for some $v \in C^\infty( \RR_{[0, T)})$ with $\square^* v = 0$.

\begin{Claim}
There is a constant $0 < c_1 (n,A) \leq c_0 (n,A) < .1$ such that for any $x' \in B_{g_T}(x,.5)$ and $t \in [0,T)$
\[ \mu_{0} \big( B_{g_{0}} (x'(0), c_0 ) \big) \geq c_1 \mu_{t} \big( B_{g_{t}} (x'(t), c_1 ) \big),
\qquad B_{g_t} (x'(t), c_0 ) \subset P(x'; .1, -T). \]
\end{Claim}

\begin{proof}
Fix $x' \in B_{g_T}(x,.5)$ and let $P' := P(x'; .1, -T)$.
By \cite[\HKLemHEsubsolutiononParNbhd]{Bamler_HK_entropy_estimates} there is a compactly supported function $u \in C_c^0 (P')$ and constants $0 < c_1 (n,A) \leq c_0 (n,A) < .1$ with the following properties:
\begin{enumerate}
\item $\square u \leq 0$ in the barrier sense.
\item $0 \leq u \leq 1$.
\item For all $t \in [0, T]$ we have $\supp u_{t} \subset B_{g_{t}} (x'(t) , c_0 ) \subset P(x'; .1, -T)$ and $u \geq c_1$ on $B(x'(t), c_1 )$.
\end{enumerate}
We can view $u$ as a function on $\XX_{[0,T]}$ by extending it by $0$ outside of $P'$.
Then in the barrier sense
\[ \frac{d}{dt} \int_{\XX_t} u_{t} \, d\mu_t =   \frac{d}{dt} \int_{\RR_t} u_{t} v_t \, dg_t = \int_{\RR_t} \big( (\square u_{ t}) v_t - u_{t} (\square^* v_t) \big)dg_t \leq 0. \]
It follows that for any $t \in [0, T)$
\[ \mu_{0} \big( B_{g_{0}} (x'(0), c_0 ) \big)
 \geq \int_{\XX_0} u_{0} \, d\mu_0
 \geq \int_{\XX_t} u_{t} \, d\mu_t
 \geq c_1 \mu_{t} \big( B_{g_{t}} (x'(t), c_1 ) \big). \qedhere \]
\end{proof}

Using Theorem~\ref{Thm_alm_alw_intrinsic}, we can pick a sequence of times $t^*_i \nearrow T$ at which $\XX$ is intrinsic.

Let us now show Assertion~\ref{Prop_HK_regular_a}.
Choose a sequence of $H$-centers $x^*_i \in \XX_{t^*_i}$ of $x$.
By Proposition~\ref{Prop_topology_properties}\ref{Prop_topology_properties_e} we have $x^*_i \to x$, which implies that  for large $i$
\begin{equation} \label{eq_xstar_in_B_t_star}
 x^*_i \in B_{g_{t^*_i}} (x(t^*_i),c_1/2) = B (x(t^*_i),c_1/2) . 
\end{equation}
So by Lemma~\ref{Lem_nu_BA_bound}, the Claim and the fact that $d_{0} \leq d_{g_{0}}$ on $\RR_0$ we have
\begin{multline*}
 \mu_{0} \big( B (x(0), c_0) \big) \geq
 \mu_{0} \big( B_{g_{0}} (x(0), c_0) \big) \geq c_1 \mu_{t^*_i} \big( B_{g_{t^*_i}} (x(t^*_i),c_1) \big)  \\
 = c_1 \mu_{t^*_i} \big( B (x(t^*_i),c_1) \big)
 \geq  c_1 \mu_{t^*_i} \big( B (x^*_i ,c_1/2) \big) \xrightarrow[i \to \infty]{\qquad} c_1. 
\end{multline*}
So if $x^*_0 \in \XX_{0}$ is an $H$-center of $x$, then we obtain, again using Lemma~\ref{Lem_nu_BA_bound}
\begin{equation*}
 d^{\XX_{0}}_{W_1} (\delta_{x(0)}, \nu_{x; 0})
\leq d_{0}( x(0), x^*_0)
+ d^{\XX_{0}}_{W_1} (\delta_{x^*_0}, \nu_{x; 0}) 
\leq C(H,A,n)  + \sqrt{\Var (\delta_{x^*_0}, \nu_{x; 0})} \\
\leq C(H,A,n).
\end{equation*}

Next, let us now show Assertion~\ref{Prop_HK_regular_b}.
Since $B_{g_0} (x(0), c_0) \subset P(x; .1, -T) \cap \XX_0 \subset \RR_0$ is relatively compact in $\RR_0$, the extra assumption in the lemma allows us to conclude that
\begin{equation} \label{eq_Bx0_intrinsic}
 B(x(0), c_0) = B_{g_0} (x(0), c_0),
\end{equation}
because this is true if $\XX_0$ is intrinsic or it follows from Proposition~\ref{Prop_RR_intrinsic}\ref{Prop_RR_intrinsic_c}.

Our first goal is to show that $y \in B_{g_T}(x,.99)$.
Suppose by contradiction that $d_{g_T} (x, y) \geq .99$.
Choose $H$-centers $y^*_i \in \XX_{t^*_i}$ of $y$.
By Proposition~\ref{Prop_topology_properties}\ref{Prop_topology_properties_e} we have $y^*_i \to y$.
So by Theorem~\ref{Thm_regular_part_XX}\ref{Thm_regular_part_XX_e} we have $y^*_i \not\in P(x; .98, -T)$ for large $i$.
Therefore, by Lemma~\ref{Lem_nu_BA_bound} we have for large $i$
\begin{equation} \label{eq_mut_star_to_096}
 \mu_{t^*_i} \big( B_{g_{t^*_i}} (x(t^*_i), .96 ) \big) 
= \mu_{t^*_i} \big( B(x(t^*_i), .96 ) \big) 
\leq 1 - \mu_{t^*_i} \big( B(y^*_i, .01 ) \big)
\xrightarrow[i \to \infty]{\qquad} 0. 
\end{equation}
Next, let $\tau > 0$ be some small constant whose value we will determine later and suppose that $T \leq \tau$.
Then
\[ \mu_0 \big( \XX_0 \setminus B (x(0), c_0) \big) \leq c_0^{-2} \Var (\mu_0)
 \leq c_0^{-2} H \tau \leq C(H,  n) \tau. \]
Using the Claim and (\ref{eq_Bx0_intrinsic}), this implies that for any $x' \in \RR_T$ with $d_{g_T} (x, x') = .49$ and $t \in [0,T)$
\begin{multline} \label{eq_mu_t_B_xp_t}
 \mu_t \big(B_{g_t} (x'(t), c_1)  \big)
\leq c_1^{-1} \mu_0 \big( B_{g_0} (x'(0), c_0) \big) 
\leq c_1^{-1} \mu_0 \big( ( B_{g_T} (x', .1))(0) \big) \\
\leq c_1^{-1} \mu_0 \big( \XX_0 \setminus ( B_{g_T} (x, .1))(0) \big)
\leq c_1^{-1}\mu_0 \big( \XX_0 \setminus B_{g_0} (x(0), c_0) \big) \\
= c_1^{-1}  \mu_0 \big( \XX_0 \setminus B (x(0), c_0) \big)
\leq C(H,  n) \tau.
\end{multline}
Using standard volume comparison and distance distortion estimates, we can find points $x'_1, \ldots, x'_N \in \RR_T$ with $d_{g_T} (x, x'_j) = .49$ such that for some constants $c_2(n) > 0$, $C_2(n) < \infty$ we have $N \leq C_2$ and
\[ U: =P(x; .49+c_2, -T) \setminus P(x;.49, -T) \subset \bigcup_{j=1}^N \bigcup_{t \in [0,T]} B_{g_t}(x'_j (t), c_1) . \]
Together with (\ref{eq_mu_t_B_xp_t}), this implies that for some $C_3 (H, n) < \infty$ we have for all $t \in [0,T)$
\begin{equation} \label{eq_mut_Ut_C3tau}
 \mu_t (U_t) \leq C_3 \tau. 
\end{equation}

Using \cite[Lemma~5.3]{Cheeger_Gromov_Chopping_RM} and standard distance distortion estimates, we can construct a function $w \in C^\infty_c (P(x; \lb 1,\lb -T))$ such that:
\begin{enumerate}
\item $0 \leq w \leq 1$.
\item $w = 1$ on $P(x; .1, -T)$.
\item $\partial_{\tf} w = 0$.
\item $|\nabla w|, |\nabla^2 w| \leq C(n)$.
\item $\supp |\nabla w|, \supp |\nabla^2 w| \subset U$
\item $\supp w_t \subset B(x(t), .96)$ for $t$ close to $T$.
\end{enumerate}
We may view $w$ as a function on $\XX_{[0,T]}$ by continuing it by $0$ outside of $P(x; 1,-T)$.
Then, using (\ref{eq_mut_Ut_C3tau}),
\[ \frac{d}{dt} \int_{\XX_t} w_t  \, d\mu_t
= \frac{d}{dt} \int_{\RR_t} w_t v_t  \, dg_t
=  \int_{\RR_t} \big( (\square w_t) v_t  - w_t (\square^* v_t) \big) \, dg_t
=-  \int_{\RR_t} \triangle w_t \, v_t \, dg_t
\geq - C(H,n) \tau. \]
So by (\ref{eq_Bx0_intrinsic}) and (\ref{eq_mut_star_to_096})
\[ \mu_0 (B(x(0), c_0)) = \mu_0 (B_{g_0}(x(0), c_0)) \leq \int_{\XX_0} w_0  \, d\mu_0 \leq  C(H,n) \tau^2 + \int_{\XX_{t^*_i}} w_{t^*_i}  \, d\mu_{t^*_i} \xrightarrow[i \to \infty]{\qquad} C(H,n) \tau^2.  \]
However, this implies that
\[ c_0^2 (1- C(H,n) \tau^2) \leq \Var(\mu_0) \leq H \tau, \]
which gives us a contradiction for $\tau \leq \ov\tau(H, n)$.

So $y \in B_{g_T}(x, .99) \subset \RR_T$ if $T \leq \ov\tau(H, n)$.
We can now apply Assertion~\ref{Prop_HK_regular_a} to $y$, which gives
\begin{multline*}
 d_0 (x(0), y(0))
\leq d^{\XX_0}_{W_1} (\delta_{x(0)}, \nu_{y;0}) + d^{\XX_0}_{W_1} (\nu_{y;0}, \delta_{y(0)}) \\
\leq \sqrt{\Var (\delta_{x(0)}, \nu_{y;0}) } + C(H,n) \sqrt{T}
\leq  C(H,n) \sqrt{T}. 
\end{multline*}
Assuming $T \leq c(H,n)$, this implies that $d_0(x(0), y(0)) < .5$, so
\[ d_{T} (x, y) 
\leq  d_{g_T} (x, y) 
\leq C(n) d_{g_0} (x(0),y(0))
= C(n) d_{0}(x(0),y(0)) \leq C(H, n) \sqrt{T}. \]
Note that the equality holds due to the extra assumption in the lemma.
This finishes the proof.
\end{proof}
\bigskip

\begin{proof}[Proof of Proposition~\ref{Prop_RR_intrinsic}.]
Assertion~\ref{Prop_RR_intrinsic_a} is a restatement of Lemma~\ref{Lem_dt_leq_dgt}.

Next, consider two points $x_1, x_2 \in \RR_t$ as in Assertion~\ref{Prop_RR_intrinsic_b}.
Using Theorem~\ref{Thm_alm_alw_intrinsic}, we can find a sequence of times $t^*_i \nearrow t$ at which $\XX$ is intrinsic.
Choose $H$-centers $x^*_{1,i}, x^*_{2,i} \in \XX_{t^*_i}$ of $x_1, x_2$.
Then by Proposition~\ref{Prop_topology_properties}\ref{Prop_topology_properties_e}, $x^*_{1,i} \to x_{1,i}$ and $x^*_{2,i} \to x_{2}$.
Moreover,
\begin{multline} \label{eq_xstar_1_x_star_2}
  d_{t^*_i} (x^*_{1,i}, x^*_{2,i})
\leq d^{\XX_{t^*_i}}_{W_1} ( \delta_{x^*_{1,i}}, \nu_{x_1; t^*_i})
+ d^{\XX_{t^*_i}}_{W_1} (  \nu_{x_1; t^*_i},  \nu_{x_2; t^*_i})
+ d^{\XX_{t^*_i}}_{W_1} (  \  \nu_{x_2; t^*_i}, \delta_{x^*_{2,i}}) \\
\leq \sqrt{ \Var ( \delta_{x^*_{1,i}}, \nu_{x_1; t^*_i})} + d_t (x_1, x_2) + \sqrt{\Var (    \nu_{x_2; t^*_i}, \delta_{x^*_{2,i}})} \\
\leq d_t (x_1, x_2) + 2 \sqrt{H (t - t^*_i)} \xrightarrow[i \to \infty]{\qquad} d_t (x_1, x_2). 
\end{multline}
By our assumption and an openness argument, there is a small $\eps > 0$ such that for large $i$ there is a compact subset $K_i \subset \RR_{t^*_i}$ with the property that any path in $\RR_{t^*_i}$ of length $\leq d_t(x_1, x_2)+\eps$ between $x^*_{1,i}, x^*_{2,i}$ lies in $K_i$.
Since $\XX$ is intrinsic at time $t^*_i$, we obtain using (\ref{eq_xstar_1_x_star_2}) that  for large $i$
\[ d_{g_{t^*_i}}(x^*_{1,i}, x^*_{2,i})= d_{t^*_i} (x^*_{1,i}, x^*_{2,i}). \]
On the other hand, by assumption $d_{g_{t^*_i}} (x^*_{1,i}, x^*_{2,i}) \to d_{g_t} (x_1, x_2)$, which implies $d_{g_t} (x_1, x_2) \leq d_t (x_1, x_2)$.
So by Assertion~\ref{Prop_RR_intrinsic_a} we have $d_{g_t} (x_1, x_2) = d_t (x_1, x_2)$.

Lastly, we prove Assertion~\ref{Prop_RR_intrinsic_c}.
Assertion~\ref{Prop_RR_intrinsic_a} implies that $B_{g_t}(x,r) \subset B(x,r)$, so we need to show the reverse inclusion.
Let $y \in B(x,r)$.
Choose again a sequence of times $t^*_i \nearrow t$ at which $\XX$ is intrinsic and pick $H$-centers $x^*_i, y^*_i \in \XX_{t^*_i}$ of $x, y$.
As in (\ref{eq_xstar_1_x_star_2}), we obtain that
\[ \limsup_{i \to \infty} d_{t^*_i} (x^*_{i}, y^*_{i}) \leq d_t (x, y)  < r. \]
Since $x^*_i \to x$ we obtain that $y^*_i \in \RR$ for large $i$ and that there is some $r' > 0$ such that for large $i$ the parabolic neighborhood $P_i := P(y^*_i(t); r', - (t - t^*_i)) \subset \RR$ exists and is unscathed and $|{\Rm}| \leq (r')^{-2}$ on $P_i$.
So by Proposition~\ref{Prop_HK_regular}\ref{Prop_HK_regular_b} via Lemma~\ref{Lem_HK_regular_weak} we have $y \in \RR_t$.
Note here that the extra assumption in Lemma~\ref{Lem_HK_regular_weak} is satisfied since $\XX$ is intrinsic at time $t^*_i$.
As in the proof of Assertion~\ref{Prop_RR_intrinsic_b} this implies that $d_{g_{t^*_i}} (x^*_{i}, y^*_{i}) = d_{t^*_i} (x^*_{i}, y^*_{i}) < r$ for large $i$ and
\[ d_{g_t} (x, y) = \lim_{i \to \infty} d_{g_{t^*_i}} (x^*_{i}, y^*_{i}) \leq d_t(x,y).\]
So, again by Assertion~\ref{Prop_RR_intrinsic_a} we have $d_{g_t} (x, y) = d_t (x, y)$.
\end{proof}
\bigskip

\begin{proof}[Proof of Proposition~\ref{Prop_H_dim_bound}.]
Fix some $x \in \RR_{t_0}$ and set $n := \dim x$.
After application of a time-shift, we may assume without loss of generality that $t_0 = 0$.
Denote by $\XX^{\lambda}$ the flow that arises by parabolic rescaling $\XX$ by a factor of $\lambda > 0$.
Write $d\nu^\lambda_{x;t} := v^{\lambda}_t \, dg^\lambda_t$ for some $v^\lambda \in C^\infty (\RR^\lambda_{<0})$ with $\square^* v^\lambda = 0$.
Take a local blow-up limit for $\lambda \to \infty$ near $x$.
Then the metrics $g^\lambda$ near $x$ converge to the constant Ricci flow on $\IR^n$ and after passing to a subsequence, $v^\lambda$ converge to a smooth solution $v^\infty \in C^\infty(\IR^n \times \IR_-)$ to the backwards heat equation $(-\partial_t - \triangle ) v^\infty = 0$ due to the bounds in Lemma~\ref{Lem_std_par_est}.
Using Proposition~\ref{Prop_HK_regular}\ref{Prop_HK_regular_a}  via Lemma~\ref{Lem_HK_regular_weak} and Lemma~\ref{Lem_nu_BA_bound} and passing to the limit we have $\int_{\IR^n} v^\infty_t \, dg_{\eucl} = 1$ for all $t < 0$.
Due to the $H$-concentration condition, we also have for all $t < 0$
\[ \Var (v^\infty_t \, dg_{\eucl} ) \leq H |t|. \]
It follows that $v^\infty$ is the standard Gaussian backwards heat kernel and by the computation in Example~\ref{Ex_Rn_Gaussian} we have for all $t < 0$
\[ 4n |t| = \Var (v^\infty_t \, dg_{\eucl} ) \leq H |t|. \]
This finishes the proof.
\end{proof}
\bigskip

\begin{proof}[Proof of Proposition~\ref{Prop_HK_regular}.]
By Proposition~\ref{Prop_H_dim_bound} we have $\dim x \leq H/4$.
Moreover, Proposition \ref{Prop_RR_intrinsic}\ref{Prop_RR_intrinsic_c} holds.
So Proposition~\ref{Prop_HK_regular} follows from Lemma~\ref{Lem_HK_regular_weak}.
\end{proof}
\bigskip

\subsection{Smooth convergence on the regular part} \label{subsec_smooth_convergence}
We will now analyze the convergence of the regular part in an $\IF$-convergent sequence of metric flow pairs.
For this subsection, we fix a sequence of metric flow pairs $(\XX^i, (\mu^i_t)_{t \in I^{ i}})$, $i \in \IN \cup \{ \infty \}$, that are fully defined over intervals $I^i \subset \IR$.
We will assume that $\XX^i$ is almost always intrinsic for all $i \in \IN$ and $H$-concentrated for all $i \in \IN \cup \{ \infty \}$ for some uniform $H < \infty$.
We will also assume that all intervals $I^i$, $i \in \IN \cup \{ \infty \}$, are left-open.
Suppose that there is a correspondence 
\[  \CF := \big(  (Z_t, d^Z_t)_{t \in I^\infty},(\varphi^i_t)_{t \in I^{\prime\prime, i}, i \in \IN \cup \{ \infty \}} \big) \]
between the flows $\XX^i$, $i \in \IN \cup \{ \infty \}$, over $I^\infty$ such that
\begin{equation} \label{eq_XXimui_convergence_in_smooth_sec}
 (\XX^i, (\mu^i_t)_{t \in I^{i}}) \xrightarrow[i \to \infty]{\quad \IF, \CF \quad}  (\XX^\infty, (\mu^\infty_t)_{t \in I^{\infty}})  
\end{equation}
on compact time-intervals.
We also assume that if $t_{\max} := \max I^\infty$ exists, then all flows $\XX^i$, $i \in \IN \cup \{ \infty \}$, are intrinsic at time $t_{\max}$ and the convergence (\ref{eq_XXimui_convergence_in_smooth_sec}) is time-wise at time  $t_{\max}$.
The most interesting case will be the case in which the flows $\XX^i$, $i \in \IN$, are given by smooth Ricci flows of the same dimension, which implies $\RR^i = \XX^i$.

By Theorem~\ref{Thm_intrinsic_limit}, the limit $\XX^\infty$ is also almost always intrinsic.
Let $\RR^i \subset \XX^i$ be the regular part of $\XX^i$, $i \in \IN \cup \{ \infty \}$, and write $d\mu^i = v^i \, dg^i$ on $\RR^i_{I \setminus \{ t_{\max} \}}$, where $v^i \in C^\infty(\RR^i_{I \setminus \{ t_{\max} \}})$.

\begin{Definition} \label{Def_smooth_convergence}
We say that the convergence (\ref{eq_XXimui_convergence_in_smooth_sec}) is {\bf smooth} at some point $x_\infty \in \XX^\infty$ if the following is true.
There is a scale $r > 0$ and points $x_i \in \XX^i$ such that $x_i \to x_\infty$ within $\CF$ (in the sense of Definition~\ref{Def_convergence_PP_measure}\footnote{We will often write ``$x_i \to x_\infty$ within $\CF$'' instead of
\[ x_i \xrightarrow[i \to \infty]{\quad  \CF, J \quad} x_\infty. \]
Recall that this notion of convergence means that we have convergence of the conjugate heat kernels $(\nu_{x_i;t}) \to (\nu_{x_\infty;t})$ within $\CF$; it is weaker than strict convergence in the sense of Definiton~\ref{Def_convergence_CHF_within_CF_strict}.}) and such that for large $i$ we have $x_i \in \RR^i$, the two-sided parabolic ball $P(x_i; r) \subset \RR^i$ is unscathed and we have
\[ \sup_{P(x_i; r)} |{\Rm}| \leq r^{-2}, \qquad \liminf_{i \to \infty}|B(x_i, r)| > 0. \]
If $\tf (x_\infty) = t_{\max} =  \max I^\infty$, then we require in addition that for large $i$ the function $v^i$ can be smoothly extended onto the entire parabolic neighborhood $P(x_i; r)$ such that we still have $d\mu^i = v^i \, dg^i$ and 
 for all $m \geq 1$ we have $\limsup_{i \to \infty} \sup_{B(x_i, r)} |\nabla^m v^i| < \infty$ and there is no $r' \in (0,r)$ and sequence of points $x'_i \in B(x_i, r)$ such that $B(x'_i,r') \subset B(x_i, r)$ and $\liminf_{i \to \infty} \mu_{\tf(x_i)} (B(x'_i, \lb r')) = 0$.
\end{Definition}

Denote by $\RR^* \subset \XX^\infty$ the set of points at which the convergence (\ref{eq_XXimui_convergence_in_smooth_sec}) is smooth.
The following is our main result of this subsection.
It states that $\RR^*$ is an open subset of the regular part $\RR^\infty \subset \XX^\infty$ and that the convergence (\ref{eq_XXimui_convergence_in_smooth_sec}) can be understood via a sequence of diffeomorphisms between an exhaustion of $\RR^*$ and a sequence of open subsets of $\RR^i$.
This is similar to the characterization of smooth Cheeger-Gromov convergence.

\begin{Theorem} \label{Thm_smooth_convergence}
$\RR^*$ is open and we have $\RR^* \subset \RR^\infty$.
Moreover, we can find an increasing sequence $U_1 \subset U_2 \subset \ldots \subset \RR^*$ of open subsets with $\bigcup_{i=1}^\infty U_i = \RR^*$, open subsets $V_i \subset \RR^i$, time-preserving diffeomorphisms $\psi_i : U_i \to V_i$ and a sequence $\eps_i \to 0$ such that the following holds:
\begin{enumerate}[label=(\alph*)]
\item \label{Thm_smooth_convergence_a} We have
\begin{align*}
 \Vert \psi_i^* g^i - g^\infty \Vert_{C^{[\eps_i^{-1}]} ( U_i)} & \leq \eps_i, \\
 \Vert \psi_i^* \partial^i_{\tf} - \partial^\infty_{\tf} \Vert_{C^{[\eps_i^{-1}]} ( U_i)} &\leq \eps_i, \\
  \Vert  v^i \circ \psi_i - v^\infty \Vert_{C^{[\eps_i^{-1}]} ( U_i)} &\leq \eps_i. 
\end{align*}
\item \label{Thm_smooth_convergence_b} For $U^{(2)}_i := \{ (x;y) \in U_i \times U_i \;\; : \;\; \tf(x) > \tf(y) - \eps_i \}$, $V^{(2)}_i := \{ (x;y) \in V_i \times V_i \;\; : \;\; \tf(x) > \tf(y) - \eps_i \}$ and $\psi_i^{(2)} := (\psi_i, \psi_i) :U^{(2)}_i  \to V^{(2)}_i$, we have convergence of the heat kernels
\[  \Vert (\psi^{(2)}_i)^* K^i - K^\infty \Vert_{C^{[\eps_i^{-1}]} ( U^{(2)}_i)} \leq \eps_i, \]
\item \label{Thm_smooth_convergence_c} Let $x_\infty \in \RR^*$ and $x_i \in \XX^i$.
Then we have $x_i \to x_\infty$ within $\CF$ if and only if $x_i \in V_i \subset \RR^i$ for large $i$ and $\psi_i^{-1} (x_i) \to x_\infty$ in $\RR^*$.
\item \label{Thm_smooth_convergence_d} If the convergence (\ref{eq_XXimui_convergence_in_smooth_sec}) is time-wise at some time $t \in I^\infty$ for some subsequence, then for any compact subset $K \subset \RR^\infty_t$ and for the same subsequence
\[ \sup_{x \in K \cap U_i} d^Z_t (\varphi^i_t (\psi_i(x)), \varphi^\infty_t (x) )  \longrightarrow 0. \]
\item \label{Thm_smooth_convergence_e} Consider a sequence of points $x_i \in \XX^i$ such that $x_i \to x_\infty \in \XX^\infty$ within $\CF$.
Then on $\RR^*$
\[   \psi_i^* K^i (x_i; \cdot) \xrightarrow[i \to \infty]{\quad C^\infty_{\loc} \quad} K^\infty (x_\infty ; \cdot). \]
\item \label{Thm_smooth_convergence_f} Consider a sequence of conjugate heat flows $(\mu'_{i,t})_{t \in I^i, t < t_0}$ on $\XX^i$, $i \in \IN \cup \{ \infty \}$, for $t_0 \in I^\infty$ such that
\[ (\mu'_{i,t})_{t \in I^i, t < t_0} \xrightarrow[i \to \infty]{\quad \CF \quad} (\mu'_{\infty,t})_{t \in I^\infty, t < t_0}. \]
Write $d\mu'_{i,t} = v'_i \, dg^i$ on $\RR^i$ for $i \in \IN \cup \{ \infty \}$.
Then on $\RR^*$
\[     v'_i \circ \psi_i  \xrightarrow[i \to \infty]{\quad C^\infty_{\loc} \quad} v'_\infty . \]
\end{enumerate}
\end{Theorem}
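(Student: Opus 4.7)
The plan is to work locally around each $x_\infty \in \RR^*$ and construct a smooth Ricci flow spacetime chart that is compatible with the metric flow structure on $\XX^\infty$, then patch these charts into the global diffeomorphisms $\psi_i$. Fix $x_\infty \in \RR^*$, a scale $r > 0$, and approximating points $x_i \in \XX^i$ as in Definition~\ref{Def_smooth_convergence}. By the uniform curvature bound $|{\Rm}| \leq r^{-2}$ on $P(x_i;r)$, the lower bound $\liminf |B(x_i,r)| > 0$, and standard distance distortion, the pointed Ricci flow spacetimes $(P(x_i;r/2), x_i(\tf(x_\infty)))$ have locally uniformly bounded geometry. So by Hamilton's compactness theorem for Ricci flows, after passing to a subsequence we obtain a smooth limiting Ricci flow spacetime $\widehat\MM$ over a time-interval containing $\tf(x_\infty)$ together with time-preserving, $\partial_\tf$-preserving diffeomorphisms $\widehat\psi_i : \widehat U \to \widehat V_i \subset P(x_i;r/2)$ from an exhaustion $\widehat U \subset \widehat\MM$, with smooth convergence of the metrics and time vector fields. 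Standard parabolic regularity (Lemma~\ref{Lem_std_par_est}) applied to $v^i$ and to the heat kernels $K^i(x_i;\cdot)$ then yields local uniform $C^m$-bounds on $\widehat\psi_i^*v^i$ and $\widehat\psi_i^*K^i(x_i;\cdot)$, so these converge smoothly to solutions of the conjugate heat equation $\widehat v_\infty, \widehat K_\infty$ on $\widehat\MM$.

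The essential step is to identify $\widehat\MM$ with an open neighborhood of $x_\infty$ inside $\XX^\infty$, and this is where the correspondence $\CF$ enters. Let $\widehat x_\infty \in \widehat\MM$ be the center point, so $\widehat\psi_i(\widehat x_\infty) = x_i$ for large $i$. By Proposition~\ref{Prop_HK_regular}\ref{Prop_HK_regular_a}, for each $\widehat y \in \widehat\MM$ the conjugate heat kernels $\nu^i_{\widehat\psi_i(\widehat y); t}$ remain within $W_1$-distance $\leq C\sqrt{|t - \tf(\widehat y)|}$ of the delta measure at the worldline $\widehat\psi_i(\widehat y)(t)$. Combined with the smooth convergence $\widehat\psi_i^* g^i \to \widehat g$ and the assumption that $x_i \to x_\infty$ within $\CF$, an application of Theorem~\ref{Thm_change_basepoints} to nearby points (choosing points $\widehat y$ close to $\widehat x_\infty$, moving the basepoint conjugate heat flow via Theorem~\ref{Thm_compactness_sequence_pts}) produces limits of $\nu^i_{\widehat\psi_i(\widehat y);t}$ which are conjugate heat flows on $\XX^\infty$ based at points we define to be $\phi(\widehat y) \in \XX^\infty$. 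This gives a time-preserving map $\phi : \widehat U \to \XX^\infty$ with $\phi(\widehat x_\infty) = x_\infty$; using Proposition~\ref{Prop_RR_intrinsic} and Proposition~\ref{Prop_HK_regular}\ref{Prop_HK_regular_b} one checks that $\phi$ is a homeomorphism onto its image with respect to the natural topology, each $\phi_t$ is a local isometry into $(\XX^\infty_t, d^\infty_t)$, and $\phi$ pushes forward the heat flow on $\widehat\MM$ to heat flows on $\XX^\infty$ via the correspondence. This verifies Definition~\ref{Def_regular_pt} at $x_\infty$, so $x_\infty \in \RR^\infty$ with the induced smooth structure agreeing with $\widehat\MM$; in particular $\RR^* \subset \RR^\infty$, and openness of $\RR^*$ follows because any point of $\widehat U$ automatically satisfies the smooth convergence condition at a slightly smaller scale.

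To globalize, cover $\RR^*$ by a countable locally finite collection of charts of the above type; using the uniqueness of the Ricci flow spacetime structure on $\RR^\infty$ from Theorem~\ref{Thm_regular_part_XX} together with a standard diagonal and gluing argument, produce the exhaustion $U_1 \subset U_2 \subset \ldots$, diffeomorphisms $\psi_i : U_i \to V_i \subset \RR^i$ that combine all local $\widehat\psi_i$ on a given $U_k$, and the sequence $\eps_i \to 0$ controlling the $C^{[\eps_i^{-1}]}$-norms of $\psi_i^* g^i - g^\infty$, $\psi_i^*\partial^i_\tf - \partial^\infty_\tf$, and $v^i \circ \psi_i - v^\infty$, establishing (a). The heat kernel convergence (b) follows by applying Lemma~\ref{Lem_std_par_est} to $K^i$ on the product neighborhoods obtained from (a) and identifying the limit using the reproduction formula, and (e), (f) follow by the same parabolic regularity combined with Theorem~\ref{Thm_tdmu_convergence_reverse} to identify limits of the densities.

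The most delicate assertions are (c) and (d), because they require comparing the chart diffeomorphisms $\psi_i$ with the a priori unrelated isometric embeddings $\varphi^i_t$ of the correspondence $\CF$. The main obstacle will be the following: our construction only guarantees that $\psi_i(x_\infty) = x_i$ for the fixed sequence used in the chart, and we must show that this correspondence is the right one in the correspondence sense. The key tool is that on a parabolic ball where curvature is bounded, a point $y \in \RR^i$ is detected by its conjugate heat kernel $(\nu^i_{y;t})$ up to scale $\sqrt{|t - \tf(y)|}$ by Proposition~\ref{Prop_HK_regular}\ref{Prop_HK_regular_b}, so if $y_i \in \RR^i$ and $\psi_i^{-1}(y_i) \to y_\infty$ in $\RR^*$, then the conjugate heat kernels of $y_i$ must $\CF$-converge to those of $y_\infty$, giving the ``if'' direction of (c); the ``only if'' direction uses the injectivity of $\phi$ on each chart together with Theorem~\ref{Thm_pts_as_limits_within}. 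For (d), under time-wise convergence at time $t$ we can apply Lemma~\ref{Lem_couplings_converge_isometry} to couplings built from the smooth chart together with $\CF$, which forces $\varphi^i_t \circ \psi_{i,t}$ to converge pointwise to $\varphi^\infty_t$, and the uniform estimate on compact $K \subset \RR^\infty_t$ follows from the equicontinuity supplied by (a).
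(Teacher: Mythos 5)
Your overall plan — Hamilton compactness locally, identify the abstract limit spacetime $\widehat\MM$ with a neighborhood of $x_\infty$ in $\XX^\infty$, then glue charts — is the same as the paper's, but the identification step has a genuine gap.

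You construct $\phi(\widehat y) \in \XX^\infty$ by applying Theorem~\ref{Thm_compactness_sequence_pts} (and Theorem~\ref{Thm_change_basepoints}) to the sequence $\widehat\psi_i(\widehat y)$ and then declaring the limiting conjugate heat flow to be ``based at $\phi(\widehat y)$.'' But Theorem~\ref{Thm_compactness_sequence_pts} only produces a conjugate heat flow $(\mu^\infty_t)$ with $\Var(\mu^\infty_t) \to 0$; its own remark gives an explicit example where this limit is \emph{not} the conjugate heat kernel of any point of $\XX^\infty$. So the definition of $\phi(\widehat y)$ is circular: you must first show the limiting flow concentrates at an actual point, and your argument does not do this. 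The paper closes exactly this gap by first proving that the density $v'$ of the limiting basepoint flow on $\widehat\MM$ is \emph{nowhere locally vanishing} (Claim~\ref{Cl_inf_vp_positive} in the proof of the local lemma), which is nontrivial and is where the additional hypotheses in Definition~\ref{Def_smooth_convergence} at $t_{\max}$ are used; this positivity is then used to establish tightness of the sequence $\varphi^i_t \circ \phi_{i,t}(y)$ in $Z_t$ for $t$ in a dense set of times, and to show that the limit lies in $\supp (\varphi^\infty_t)_* \mu^\infty_t = \varphi^\infty_t(\XX^\infty_t)$. That is how the paper manufactures actual points $\phi_{\infty,t}(y) \in \XX^\infty_t$ rather than just abstract conjugate heat flows. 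Without an analogue of the $v' > 0$ argument, your construction of $\phi$ is not justified, and openness of $\RR^*$ together with $\RR^* \subset \RR^\infty$ (and hence everything else) does not follow.

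A secondary point: the ``standard gluing argument'' you invoke for globalizing the charts is less routine than suggested. The local diffeomorphisms $\psi'_{j,i}$ need not agree on overlaps of their domains, and the paper uses a center-of-mass interpolation $\Sigma_N$ subordinate to a partition of unity to produce a single time-preserving diffeomorphism $\psi_i$ on each $U_i$; it also uses the a priori convergence of transition maps (Claim~\ref{Cl_psi_x1_psi_x2_smooth_conv}) to make the interpolation consistent. Your sketch of assertion~\ref{Thm_smooth_convergence_d} via Lemma~\ref{Lem_couplings_converge_isometry} also departs from the paper's route, which instead derives it from assertions~\ref{Thm_smooth_convergence_a}, \ref{Thm_smooth_convergence_c} and Proposition~\ref{Prop_HK_regular}\ref{Prop_HK_regular_a} by moving to a nearby future time; either route could work, but the time-wise hypothesis must be combined with the conjugate-heat-kernel comparison, and Lemma~\ref{Lem_couplings_converge_isometry} alone does not produce the required pointwise estimate without the same ingredients.
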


\begin{Remark}
If $\RR^* = \XX^\infty$, then Theorem~\ref{Thm_smooth_convergence} combined with the compactness theory from Section~\ref{sec_compact_subsets_IF} essentially recovers Hamilton's compactness theory for Ricci flows with bounded curvature \cite{Hamilton_RF_compactness}.
\end{Remark}

The proof of Theorem~\ref{Thm_smooth_convergence} relies on the following lemma, which can be viewed as a local version.
The last statement of the following lemma will be a byproduct of the proof and will be needed in the next subsection.

\begin{Lemma} \label{Lem_smooth_conv_local}
Let $x_\infty \in \RR^*$ and consider a scale $r > 0$ and a sequence $x_i \in \XX^i$ as in Definition~\ref{Def_smooth_convergence}.
Then $x_\infty \in \RR^\infty$ and there is an open product domain $x_\infty \in U \subset \RR^\infty$ with $U \subset \RR^*$.
For large $i$ there are time-preserving and $\partial_\tf$-preserving diffeomorphisms $\psi_i : U \to V_i \subset \RR^i$ and a sequence $\eps_i \to 0$ such that Assertions~\ref{Thm_smooth_convergence_a}, \ref{Thm_smooth_convergence_c}, \ref{Thm_smooth_convergence_e}, \ref{Thm_smooth_convergence_f} of Theorem~\ref{Thm_smooth_convergence} hold for $U_i := U$ and for large $i$.
Here, the sequence of points in Assertions~\ref{Thm_smooth_convergence_c}, \ref{Thm_smooth_convergence_e} can be different from the sequence $x_i$ in this lemma, but we require that the limiting point in Assertion~\ref{Thm_smooth_convergence_c}, which is called $x_\infty$, lies in $U$.

Moreover, consider some constant $\alpha > 0$ such that $\liminf_{i \to \infty} v^i(x_i) \geq \alpha$ if $\tf(x_\infty) \neq \sup I^\infty$ and such that $r \in [\alpha, \alpha^{-1}]$, $H \leq \alpha^{-1}$ and $\liminf_{i \to \infty} |B(x_i, r)|_{g_{\tf (x_i)}} \geq \alpha$.
Then there is a universal constant $r_*=r_* ( \alpha) > 0$, which depends continuously on $\alpha$ such that the two-sided parabolic ball $P(x_\infty; r_*) \subset \RR^\infty$ is unscathed and we have $P(x_\infty; r_*) \subset U$.
\end{Lemma}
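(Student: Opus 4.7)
The plan is to use Hamilton's smooth Cheeger–Gromov compactness theorem for Ricci flows in combination with the standard parabolic distortion estimates (Lemma~\ref{Lem_std_par_est}) to produce a uniform parabolic scale. First I would note that the hypotheses pin the underlying geometry quantitatively: the curvature bound $|{\Rm}| \leq r^{-2}$ on the unscathed $P(x_i; r)$, together with the non-collapsing $|B(x_i, r)|_{g^i_{\tf(x_i)}} \geq \alpha/2$ for large $i$ and $r \in [\alpha, \alpha^{-1}]$, yield uniform bounds on all covariant derivatives of $g^i$ on $P(x_i; r/2)$, as well as a uniform injectivity-radius lower bound at $x_i$ of size $\iota_0 = \iota_0(\alpha) > 0$. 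These are exactly the inputs needed to apply Hamilton's compactness theorem to the pointed Ricci flow spacetimes $(P(x_i; r/2), g^i, x_i)$.

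Next, after passing to a subsequence, Hamilton's theorem produces a smooth Cheeger–Gromov limit $(P^*, g^*, x^*)$, a pointed Ricci flow spacetime of dimension $n \leq H/4$ (using Proposition~\ref{Prop_H_dim_bound}) defined on a two-sided time-interval of half-length $\geq \alpha^2/2$. The diffeomorphisms realizing this convergence agree, up to domain restrictions, with the $\psi_i$ provided by the earlier part of the lemma, since the earlier part is constructed from the same underlying smooth convergence; so by the uniqueness clause of Theorem~\ref{Thm_regular_part_XX} the limit spacetime $(P^*, g^*)$ is $\partial_\tf$-preservingly diffeomorphic to an open product domain $U' \subset \RR^\infty$ containing $x_\infty$, and $U'$ can be absorbed into $U$ (after intersecting, which is still an open product domain).

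The quantitative radius $r_*$ then comes from standard distance distortion on the smooth limit: since $|{\Rm}_{g^*}| \leq 4\alpha^{-2}$ on $P^*$, there is $c_0 = c_0(\alpha) > 0$, depending continuously on $\alpha$, such that the parabolic ball $P(x^*; c_0)$ in $P^*$ is unscathed and compactly contained in $P^*$. Transporting this back via the identification with $U'$ and applying Proposition~\ref{Prop_RR_intrinsic}\ref{Prop_RR_intrinsic_c} to match the Riemannian ball $B_{g^\infty_t}$ with the metric ball $B$ inside a compact neighborhood, one obtains $P(x_\infty; r_*) \subset U' \subset U \cap \RR^\infty$ and unscathed, for some $r_* = r_*(\alpha)$ continuous in $\alpha$.

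The main obstacle is showing $P(x_\infty; r_*) \subset \RR^*$, i.e. that $v^\infty > 0$ on the parabolic ball so that every point lies in $\supp \mu^\infty_t$ and admits a smooth-convergence neighborhood in the sense of Definition~\ref{Def_smooth_convergence}. When $\tf(x_\infty) < \sup I^\infty$ this will follow from a parabolic Harnack inequality applied to the positive solution $\square^* v^i = 0$ on the bounded-geometry region $P(x_i; r/2)$, which upgrades $v^i(x_i) \geq \alpha$ to a uniform bound $v^i \geq c(\alpha) > 0$ on $P(x_i; r_*)$ and passes to the smooth limit. The harder boundary case $\tf(x_\infty) = \sup I^\infty$ cannot use forward propagation, so I would instead combine the uniform $C^m$-bounds on the smooth extension of $v^i$ (part of Definition~\ref{Def_smooth_convergence}) with the no-mass-vanishing clause: if $v^\infty$ vanished at some $x' \in P(x_\infty; r_*)$, then the derivative bounds would force $v^i \to 0$ on a ball $B(x'_i, r')$ of controlled radius $r' = r'(\alpha)$, which contradicts the no-mass-vanishing hypothesis at scale $r'$. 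Together these steps yield the claimed parabolic ball, with $r_*(\alpha)$ continuous since every constant invoked is continuous in $\alpha$ through the bounds $H \leq \alpha^{-1}$ and $r \in [\alpha, \alpha^{-1}]$.
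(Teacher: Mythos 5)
Your proposal correctly identifies Hamilton's smooth compactness as the source of the abstract Cheeger--Gromov limit and correctly extracts the uniform inputs (curvature bound, non-collapsing, hence injectivity radius control). But it has a circular gap at the crucial step, and I do not think it can be repaired without essentially doing what the paper does.

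The main problem is the sentence claiming that ``the limit spacetime $(P^*, g^*)$ is $\partial_\tf$-preservingly diffeomorphic to an open product domain $U' \subset \RR^\infty$ containing $x_\infty$.'' This is exactly the conclusion you are supposed to prove, not something that follows from Hamilton's theorem. Hamilton's compactness produces an \emph{abstract} pointed Ricci flow $(P^*, g^*, x^*)$; it says nothing about how this abstract limit sits inside the limiting \emph{metric flow} $\XX^\infty$, which is a priori only a collection of metric spaces with conjugate heat kernels. To conclude $x_\infty \in \RR^\infty$ you must produce a map $\phi_\infty : M' \times I' \to \XX^\infty$ and verify all of Properties~(2)--(5) of Definition~\ref{Def_regular_pt} --- openness of the image and homeomorphism onto it, that each $\phi_{\infty,t}$ is a local isometry into $(\XX^\infty_t, d^\infty_t)$, that heat flows pull back to solutions of $\square u = 0$, and that conjugate heat flows pull back to smooth densities solving $\square^* v = 0$. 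None of this is automatic: the isometric structure of $(\XX^\infty_t, d^\infty_t)$ is only known through $W_1$-convergence of conjugate heat kernels within the correspondence $\CF$, and one has to show that the image points $\varphi^i_t(\phi_{i,t}(y))$ actually converge in the ambient spaces $Z_t$. Your phrase ``The diffeomorphisms realizing this convergence agree \ldots with the $\psi_i$ provided by the earlier part of the lemma'' makes the circularity explicit: those $\psi_i$ are defined in the very statement you are trying to prove, so you cannot appeal to them. Likewise, ``the uniqueness clause of Theorem~\ref{Thm_regular_part_XX}'' only compares two Ricci flow spacetime structures that are already known to live on the same subset of $\XX^\infty$; it cannot be used to manufacture such a subset.

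The paper's proof handles this gap through a long chain of claims: it first establishes that the limiting density $v'$ is nowhere locally vanishing (Claim~\ref{Cl_inf_vp_positive}, via the strong maximum principle combined with $H$-concentration and the reproduction formula, not a Harnack inequality), then uses this positivity together with the $W_1$-convergence of $\mu^i_t$ to show that the maps $\varphi^i_t \circ \phi_{i,t}$ subsequentially converge in $Z_t$ for times $t$ in a dense set $Q$, producing local isometries $\phi_{\infty,t}$ (Claim~\ref{Cl_construct_chi_infty_t}). It then extends to all times using $H$-center estimates (Claim~\ref{Cl_extend_chi}), proves local isometric embedding (Claim~\ref{Cl_chi_infty_local_isometric_emb}), openness and the homeomorphism property (Claim~\ref{Cl_chi_infty_homeo}), and finally the heat-flow and conjugate-heat-flow properties (Claim~\ref{Cl_chi_chart}). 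All of this is required to invoke Definition~\ref{Def_regular_pt} and conclude $x_\infty \in \RR^\infty$. Your proposal skips this entirely. The quantitative radius $r_*(\alpha)$ and the treatment of the boundary case $\tf(x_\infty) = \sup I^\infty$ are secondary; the missing embedding argument is the core of the lemma.
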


\begin{proof}
Fix $r, \alpha > 0$.
Let $r_* \in (0, r)$ be a constant whose value we will choose in the course of the proof by imposing conditions of the form $r_* \leq \ov{r}_*(\alpha)$, where the latter will always denote a generic constant depending only on $\alpha$.
By Proposition~\ref{Prop_H_dim_bound} we have $n_i := \dim x_i \leq H/4$ for large $i$.
Set $M'_i := B(\vec 0, r_*) \subset \IR^{n_i}$ and $x'_i := \vec 0 \in M'_i$ and use the exponential map to choose diffeomorphisms $\ov\phi_i : M'_i \to B(x_i, r_*) \subset \RR^i_{\tf(x_i)} \subset \XX_{\tf (x_i)}$ with $\ov\phi_i(x'_i) = x_i$ for large $i$.
Using the flow of the vector field $\partial_{\tf}$ on $\RR^i$, we can extend $\ov\phi_i$ to time-preserving and $\partial^i_{\tf}$-preserving embeddings $\phi_i : M'_i \times I' \to \RR^i$, where $I' := (\tf(x_i) - r_*^2, \tf(x_i) + r_*^2) \cap I^\infty$, such that $\phi_i (\cdot, \tf (x_i)) = \ov\phi_i$, implying $\phi_i(x'_i, \tf(x_i)) = x_i$.

Before continuing with the proof, let us first explain that, without loss of generality, we may in the following  always pass to a subsequence of the given sequence of metric flow pairs.
To see this, note first that the statements of the lemma characterizing the limiting flow, i.e. that $x_\infty \in U \subset \RR^\infty$, are not affected by such a step.
Moreover, $n := \dim (x_\infty)$ is uniquely determined by the metric flow $\XX^\infty$.
Assume for a moment that $x_\infty \in \RR^\infty$.
To see that the other statements of the lemma still follow even if we only prove them after passing to a subsequence, note that we can apply our proof to an arbitrary subsequence of the given sequence of metric flow pairs.
So if one of the convergence statements in Assertions~\ref{Thm_smooth_convergence_a}, \ref{Thm_smooth_convergence_c}, \ref{Thm_smooth_convergence_e}, \ref{Thm_smooth_convergence_f} was violated, then we could apply our proof to a subsequence with the property that any further subsequence still violates this convergence statement and obtain a contradiction.
To be more precise about this, we will now provide a brief overview of the following proof and discuss the effects of passing to a subsequence in the middle of our arguments.

In the course of the proof, we will construct a smooth time-preserving and $\partial_{\tf}$-preserving diffeomorphism onto its image $\phi_\infty : M' \times I' \to \RR^\infty$, where $M' = B(\vec 0,r_*) \subset \IR^{n}$, $I' := (\tf(x_\infty) - r_*^2, \tf(x_\infty) +  r_*^2) \cap I^\infty$, $\phi_\infty (x'_\infty, \tf (x_\infty)) = x_\infty$ and $r_* = r_* (\alpha) > 0$.
The map $\phi_\infty$ will arise as a limit of a subsequence of the maps $\phi_i : M'_i \times I' \to \RR^i \subset \XX^i$, and it will follow that $n_i = n = \dim (x_\infty)$ for large $i$.
Since we could have started our proof with an arbitrary subsequence of the given sequence of metric flow pairs and since $n$ is independent of this subsequence, we must have $n_i = n$ for large $i$ for the original sequence.
Next, we will set $U := \phi_\infty (M' \times I')$ and define $\psi_i := \phi_i \circ \phi_\infty^{-1} |_{U}$.
We will show that the maps $\psi_i : U_i := U \to V_i := \psi_i (U) \subset \RR^i$ satisfy Assertions~\ref{Thm_smooth_convergence_a}, \ref{Thm_smooth_convergence_c}, \ref{Thm_smooth_convergence_e}, \ref{Thm_smooth_convergence_f} of Theorem~\ref{Thm_smooth_convergence} for some $\eps_i \to 0$ after passing to a subsequence.
More specifically, our proof will imply that given any subsequence of these maps, we can pass to a further subsequence such that Assertions~\ref{Thm_smooth_convergence_a}, \ref{Thm_smooth_convergence_c}, \ref{Thm_smooth_convergence_e}, \ref{Thm_smooth_convergence_f} of Theorem~\ref{Thm_smooth_convergence} hold for some $\eps_i \to 0$.
Suppose now that the maps $\psi^i$ don't satisfy Assertions~\ref{Thm_smooth_convergence_a}, \ref{Thm_smooth_convergence_c}, \ref{Thm_smooth_convergence_e}, \ref{Thm_smooth_convergence_f} of Theorem~\ref{Thm_smooth_convergence} for any $\eps_i \to 0$ \emph{if we don't pass to a subsequence of the given sequence}.
For each $i \in \IN$ choose $\eps_i \in (0, \infty]$ minimal such that Assertion~\ref{Thm_smooth_convergence_a} holds.
If we didn't have $\eps_i \to 0$, then we could choose a subsequence such that $\eps_i > c'' > 0$, which would contradict the fact that Assertion~\ref{Thm_smooth_convergence_a} holds for some $\eps_i \to 0$.
Similarly, if Assertions~\ref{Thm_smooth_convergence_c}, \ref{Thm_smooth_convergence_e}, \ref{Thm_smooth_convergence_f} were violated for a sequence of points or a conjugate heat flow, then we could pass to a subsequence with the property that this violation persists for any further subsequence, in contradiction to what we will show.
Lastly, the fact that $U \subset \RR^*$ follows from Assertion~\ref{Thm_smooth_convergence_c}.
So in summary, in the following we are free to pass to a subsequence of metric flow pairs, as long as we choose $r_*$ only depending on $\alpha$.

Let us now begin with the actual proof.
By $\IF$-convergence and Lemma~\ref{Lem_pass_to_timewise} we can pass to a subsequence, such that for some set of measure zero $E_\infty \subset I^\infty$ the convergence (\ref{eq_XXimui_convergence_in_smooth_sec}) restricted to the subsequence is time-wise at any $t \in I^\infty \setminus E_\infty$.
Then by Lemma~\ref{Lem_met_flow_conv_mu_conv}
\[ (\varphi^i_t)_* \mu_{i,t} \xrightarrow[i \to \infty]{\quad W_1 \quad} (\varphi^\infty_t)_* \mu_{\infty,t} \qquad \text{for all} \quad t \in I^\infty \setminus E_\infty . \]
Moreover, we have:

\begin{Claim} \label{Cl_CHF_W1_conv}
Consider conjugate heat flows $(\td\mu_{i,t})_{t \in \td I}$ on $\XX^i$, $i \in \IN \cup \{ \infty \}$, over a right-open subinterval $\td I$ such that
\[ (\td\mu_{i,t})_{t \in \td I} \xrightarrow[i \to \infty]{\quad \CF \quad} (\td\mu_{\infty,t})_{t \in \td I}. \]
Then we have weak convergence
\[ (\varphi^i_t)_* \td\mu_{i,t} \xrightarrow[i \to \infty]{\quad  \quad} (\varphi^\infty_t)_* \td\mu_{\infty,t} \qquad \text{for all} \quad t \in \td{I} \setminus E_\infty. \]
Moreover, if $\td\mu_{\infty,t} \in \PP^1 (\XX^\infty_t)$ for all $t \in \td I$ (i.e. $\td\mu_{\infty,t}$ has finite $d_{W_1}$-distance to point masses), then the convergence holds in $W_1$.
\end{Claim}

\begin{proof}
This is a direct consequence of Theorem~\ref{Thm_tdmu_convergence_reverse}\ref{Thm_tdmu_convergence_reverse_b}. 
\end{proof}

For any $i \in \IN \cup \{ \infty \}$ let $E'_i \subset I^i$ be the set of times at which $\XX^i$ is not intrinsic and set $E' := \bigcup_{i=1}^\infty E'_i \cup E'_\infty$.
Recall that $E'_\infty$ is countable (see Theorem~\ref{Thm_alm_alw_intrinsic}) and therefore has measure zero.
Note that if $t_{\max} = \max I^\infty$ exists, then by our assumptions $t_{\max} \in I' \setminus (E_\infty \cup E')$.

Since $n_i \leq H/4$, we may pass to a subsequence, and assume in the following that $n := \dim(x_i)$ is constant;
we will write $M' := M'_i$ and $x' := x'_i$ from now on.
After passing to another subsequence, we can use \cite{Hamilton_RF_compactness} to find a Ricci flow $(M', (g'_t)_{t \in I'})$ on $M' \times I'$ such that we have local smooth convergence
\[ \phi_i^* g^i \xrightarrow[i \to \infty]{\quad C^\infty_{\loc} \quad } g'.
\]
Write $d\mu^i_t =: v^i_t \, dg^i_t$ for some $v^i \in C^\infty (\RR^i_{I^i \setminus \{ \sup I^i \}})$, $v^i \geq 0$.
By standard parabolic estimates, we may also pass to a subsequence such that for some $v' \in C^\infty_{\loc} (M' \times I')$ with $\square^* v' = 0$ such that
\begin{equation} \label{eq_vi_to_vp_Cinfty_loc}
 v^i \circ \phi_i \xrightarrow[i \to \infty]{\quad C^\infty \quad } v'.
\end{equation}

\begin{Claim} \label{Cl_inf_vp_positive}
If $r_* \leq \ov{r}_*(\alpha)$, then $v'_t$ is nowhere locally vanishing on $M'$ for all $t \in I'$.
\end{Claim}

\begin{proof}
Suppose by contradiction that $v'_{t''}$ locally vanished somewhere on $M'$ for some $t'' \in I'$.
Then by Definition~\ref{Def_smooth_convergence} we must have $t'' < \sup I$ and thus $t'' < \sup I'$.
So by the strong maximum principle we have $v' \equiv 0$ on $M' \times (I' \cap [t'', \infty))$.

Next, consider some time $t^* \in I' \setminus (E_\infty \cup E')$, $t^* \leq \tf (x_\infty)$.
Let $x^* \in \XX^\infty_{t^*}$ be an $H$-center of $x_\infty$.
For $t^*$ sufficiently close to $\tf (x_\infty)$ for any $H$-center $y^* \in \XX^\infty_{t^*}$ of any point $y \in B(x_\infty, \tfrac1{8} r_*)$ we have
\begin{multline*}
 d^\infty_{t_*} (x^*, y^*) 
\leq d^{\XX^\infty_{t^*}}_{W_1} (\delta_{x^*}, \nu^\infty_{x_\infty; t^*})
+ d^{\XX^\infty_{t^*}}_{W_1} (\nu^\infty_{x_\infty; t^*}, \nu^\infty_{y; t^*})
+ d^{\XX^\infty_{t^*}}_{W_1} (\nu^\infty_{y; t^*}, \delta_{y^*}) \\
\leq \sqrt{ \Var (\delta_{x^*}, \nu^\infty_{x_\infty; t^*})} + d^\infty_{\tf(x_\infty)} (x_\infty, y) + \sqrt{ \Var (\nu^\infty_{y; t^*}, \delta_{y^*})}
\leq 2 \sqrt{H (\tf(x_\infty) - t^*)} + \tfrac18 r_*. 
\end{multline*}
So for $t^*$ sufficiently close to $\tf (x_\infty)$ we have by Lemma~\ref{Lem_nu_BA_bound}
\begin{equation} \label{eq_muinfty_tstar12}
 \mu^\infty_{t^*} (B(x^*, \tfrac1{4} r_*)) 
\geq \int_{B(x_\infty, \frac1{8} r_*)} \nu^\infty_{y; t^*} (B(x^*, \tfrac1{4} r_*)) \, d\mu^\infty_{\tf(x_\infty)} (y)
\geq \tfrac12 \mu^\infty_{\tf(x_\infty)}(B(x_\infty, \tfrac1{8} r_*)). 
\end{equation}

By Proposition~\ref{Prop_HK_regular}\ref{Prop_HK_regular_a} and Claim~\ref{Cl_CHF_W1_conv} we have
\begin{multline*}
 \limsup_{i \to \infty} d^Z_{t^*} ( \varphi^i_{t^*}( x_i (t^*)), \varphi^\infty_{t^*} ( x^*)) \\
\leq \limsup_{i \to \infty} 
\big(
d^{\XX^i_{t^*}}_{W_1} (\delta_{x_i (t^*)}, \nu^i_{x_i; t^*})
+ d^{Z_{t^*}}_{W_1} ( (\varphi^i_{t^*})_* \nu^i_{x_i; t^*}, (\varphi^\infty_{t^*})_* \nu^\infty_{x_\infty; t^*} )
+d^{\XX^\infty_{t^*}}_{W_1} ( \nu^\infty_{x_\infty; t^*}, \delta_{x^*}) \big) \\
\leq C \sqrt{\tf(x_\infty) - t^*} + \limsup_{i \to \infty} \sqrt{\Var ( \nu^\infty_{x_\infty; t^*}, \delta_{x^*}) }
\leq (C + H^{1/2})  \sqrt{\tf(x_\infty) - t^*}. 
\end{multline*}
So for $t^*$ sufficiently close to $\tf (x_\infty)$ we have, again by Claim~\ref{Cl_CHF_W1_conv}, and (\ref{eq_muinfty_tstar12})
\begin{multline*}
 \liminf_{i \to \infty} \int_{B(x_i (t^*), t^*, \frac1{2} r_*)} (v^i_{t^*} \circ \phi_{i,t^*}) \, dg^i_{t^*}
= \liminf_{i \to \infty} \mu^i_{t^*} (B(x_i (t^*), t^*, \tfrac1{2} r_*)) \\
= \liminf_{i \to \infty} ((\varphi^i_{t^*})_* \mu^i_{t^*} ) ( B(\varphi^i_{t^*}(x_i (t^*)), t^*, \tfrac1{2} r_*))
\geq ((\varphi^\infty_{t^*})_* \mu^\infty_{t^*}) ( B(\varphi^\infty_{t^*} (x^*), \tfrac1{4} r_*)) \\
= \mu^\infty_{t^*} (B(x^*, \tfrac1{4} r_*)) 
 \geq  \tfrac12 \mu^\infty_{\tf(x_\infty)}(B(x_\infty, \tfrac1{8} r_*)). 
\end{multline*}
Letting $t^* \nearrow \tf (x_\infty)$ implies that
\[ \int_{B(x_\infty , \tf (x_\infty), \frac1{2} r_*)} v'_{\tf (x_\infty)} \, dg^\infty_{\tf (x_\infty)}\geq  \tfrac12 \mu^\infty_{\tf(x_\infty)}(B(x_\infty, \tfrac1{8} r_*)). \]
By standard parabolic derivative estimates (see Lemma~\ref{Lem_std_par_est}), this implies that $t'' > \tf(x_\infty) + c(\alpha) r^2_*$.
So the claim follows after shrinking $r_*$ appropriately.
\end{proof}

\begin{Claim} \label{Cl_construct_chi_infty_t}
If $r_* \leq \ov{r}_*(\alpha)$, then the following is true.
For any $t \in I' \setminus (E_\infty \cup E')$ and after passing to any subsequence of the current subsequence of metric flow pairs, we can pass to a further subsequence such that the maps $\varphi^i_t \circ \phi_{i,t} : M' \to Z_t$  uniformly converge to some map of the form $\varphi^\infty_t \circ \phi_{\infty,t} : M' \to Z_t$, where $\phi_{\infty, t} : (M', d_{g'_t}) \to (\XX^\infty_t, d^\infty_t)$ is a local isometry and a homeomorphism onto its image.
\end{Claim}

\begin{proof}
Fix $t$ and fix some $y \in M'$ for a moment.
We claim that the sequence $\varphi^i_t (\phi_{i,t} (y)) \in Z_t$ subsequentially converges to some point in the completion $(\ov Z, d^{\ov Z}_t)$ of $(Z, d^Z_t)$.
Suppose not.
Then we can pass to a subsequence such that for some $r' > 0$ the balls $B( \varphi^i_t (\phi_{i,t} (y)), r') \subset Z_t$ are pairwise disjoint.
Suppose that $r' > 0$ is chosen small enough such that $B_{g'_t} (y, 2r') \subset M'$.
By Claim~\ref{Cl_inf_vp_positive} we have
\[ \liminf_{i \to \infty} ((\varphi^i_t)_* \mu^i_t) (B( \varphi^i_t (\phi_{i,t} (y)), r'))
= \liminf_{i \to \infty}  \mu^i_t (B( \phi_{i,t} (y), r'))
= \int_{B_{g'_t} (y, r')} v' \, dg'_t > 0. \]
This, however, contradicts the fact that $(\varphi^i_t)_* \mu^i_t \to (\varphi^\infty_t)_* \mu^\infty_t$ in $W_1$, see Claim~\ref{Cl_CHF_W1_conv}.
So after passing to a subsequence, we have convergence $\varphi^i_t (\phi_{i,t} (y)) \to z \in \ov Z_t$ and $((\varphi^\infty_t)_* \mu^\infty_t )(B(z, r')) > 0$ for all $r' \in (0,1)$.
Thus $z \in\supp ((\varphi^\infty_t)_* \mu^\infty_t ) = \varphi^\infty_t (\XX^\infty_t) \subset Z$.

Fix some countable, dense subset $S \subset M''$.
By the previous paragraph and after passing to a diagonal subsequence, we may assume that there is a map $\phi_{\infty,t} : S \to \XX^\infty_t$ such that for all $y \in S$
\begin{equation} \label{eq_varphi_chi_y_to_inZ}
 \varphi^i_t (\phi_{i,t} (y)) \to \varphi^\infty_t (\phi_{\infty,t} (y)) \qquad \text{in} \quad Z_t 
\end{equation}
Since the maps $\varphi^i_t \circ \phi_{i,t}$ are uniformly locally Lipschitz, $\phi_{\infty,t}$ can be extended to $M'$ and (\ref{eq_varphi_chi_y_to_inZ}) holds for all $y \in M'$.
The claim follows after shrinking $r_*$.
\end{proof}

Choose a countable, dense subset $Q \subset I' \setminus (E_\infty \cup E')$ such that $t_{\max} = \max I' \in Q$ if it exists, apply Claim~\ref{Cl_construct_chi_infty_t} for each $t \in Q$ and pass to a diagonal subsequence.
In doing so, we can construct a family of local isometries and homeomorphisms onto their images $(\phi_{\infty, t} : (M', d_{g'_t}) \to (\XX^\infty_t, d^\infty_t))_{t \in Q}$ such that we have uniform convergence $\varphi^i_t \circ \phi_{i,t} \to \varphi^\infty_t \circ \phi_{\infty,t}$ in $Z_t$ for all $t \in Q$.

\begin{Claim} \label{Cl_chi_infty_basic_prop}
If $r_* \leq \ov{r}_*(\alpha)$, then the following holds for some $C^* < \infty$:
\begin{enumerate}[label=(\alph*)]
\item \label{Cl_chi_infty_basic_prop_a} Suppose that $\td I \subset I'$ is a right-open subinterval and $(\td\mu_{i,t})_{t \in \td I}$ are conjugate heat flows on $\XX^i$, $i \in \IN \cup \{ \infty \}$, such that
\[ (\td\mu_{i,t})_{t \in \td I} \xrightarrow[i \to \infty]{\quad \CF \quad} (\td\mu_{\infty,t})_{t \in \td I}. \]
Suppose that $\td\mu_{\infty,t} \in \PP^1(\XX^\infty_t)$  and write $d\td\mu_{i,t} = \td v_i \, dg^i$ on $\RR^i$ for $i \in \IN$.
Then $\td v_i \circ \phi_i \to \td v' \in C^\infty (M' \times \td I)$ in $C^\infty_{\loc}$, where $\square^* \td v' = 0$ and $d ((\phi_{\infty,t})^* \td\mu_{\infty,t}) = \td v'\, dg'_t$ for all $t \in Q \cap \td I$.
\item \label{Cl_chi_infty_basic_prop_b} 
Suppose that $\td I \subset I'$ is a left-open subinterval and $(u_{i,t})_{t \in \td I}$ are heat flows on $\XX^i$ with $|u_i| \leq C < \infty$ that converge to some heat flow $(u_{\infty,t})_{t \in \td I}$ on $\XX^\infty$ in the sense that for any sequence $y_i \in \XX^i_{\td I}$ with $y_i \to y_\infty \in \XX^\infty_{\td I}$ we have $u_i (y_i) \to u_\infty (y_\infty)$.
Then $u_i \circ \phi_i \to  u' \in C^\infty (M' \times \td I)$ in $C^\infty_{\loc}$, where $\square u^{\prime} = 0$ and $u_{\infty,t} \circ \phi_{\infty,t} =  u^{\prime}_t$ for all $t \in Q \cap \td I$.
\item \label{Cl_chi_infty_basic_prop_c} For any conjugate heat flow $(\td\mu_t)_{t \in \td I}$ on $\XX^\infty$ over a right-open subinterval $\td I \subset I'$ there is a smooth function $\td v' \in C^\infty (M' \times \td I)$ with $\square^* \td v' = 0$ such that $d ((\phi_{\infty,t})^* \td\mu_t) =  \td v' dg'_t$ for all $t \in Q \cap \td I$.
\item \label{Cl_chi_infty_basic_prop_d} For any uniformly bounded heat flow $(u_t)_{t \in \td I}$ on $\XX^\infty$ over a left-open subinterval $\td I \subset I'$ there is a smooth function $u' \in C^\infty (M' \times \td I)$ with $\square u' = 0$ such that $u_t \circ \phi_{\infty,t} =  u^{\prime}_t$ for all $t \in Q \cap \td I$.
\item \label{Cl_chi_infty_basic_prop_e} For any $y \in M'$ and any $t_1, t_2 \in Q$ with $t_1 \leq t_2$ we have
\[ d^{\XX^\infty_{t_1}}_{W_1} ( \delta_{\phi_{\infty, t_1} (y)}, \nu^\infty_{\phi_{\infty, t_2} (y); t_1} ) \leq C^* \sqrt{t_2 - t_1}. \]
\item \label{Cl_chi_infty_basic_prop_f} For any $t \in Q$, $t < \tf (x_\infty)$ we have
\[ d^{\XX^\infty_{t}}_{W_1} ( \delta_{\phi_{\infty, t} (x')}, \nu^\infty_{x_\infty; t} ) \leq C^* \sqrt{\tf (x_\infty) - t}. \]
\item \label{Cl_chi_infty_basic_prop_g} For any $t_1, t_2 \in Q$ with $t_1 \leq t_2$ the following is true.
If $y^*_1 \in \XX^\infty_{t_1}$ is an $H$-center of some point $y_2 \in \XX_{t_2}^\infty$ and $y^*_1 =  \phi_{\infty,t_1} ( y'_1 )$ for $y'_1 \in M'$ with the property that $B_{g'_{t_1}} (y'_1,  C^* \sqrt{t_2 - t_1}) \subset M'$ is relatively compact, then $y_2 \in \phi_{\infty,t_2} (M')$ and if $y_2 = \phi_{\infty, t_2} (y'_2)$, then  $d_{g'_{t_2}} (y'_1, y'_2) \leq C^* \sqrt{t_2 - t_1}$.
\end{enumerate}
\end{Claim}

\begin{proof}
In Assertions~\ref{Cl_chi_infty_basic_prop_a}, \ref{Cl_chi_infty_basic_prop_b} observe that due to standard local parabolic estimates, the functions $\td v_i \circ \phi_i$ and $u_i \circ \phi_i$ are locally uniformly bounded in any $C^m$-norm.
Moreover for any $t \in Q$ we have $(\td v_{i,t}  \circ \phi_{i,t} )d ((\phi_{i,t})^* g^i_t) =d ( (\phi_{i,t})^* \td\mu_{i,t} ) \to d ( (\phi_{\infty,t})^* \td\mu_{\infty,t} )$ in weakly (see Claims~\ref{Cl_CHF_W1_conv}, \ref{Cl_construct_chi_infty_t}) and $u^i_t  \circ \phi_{i,t} \to u^\infty_t  \circ \phi_{\infty,t}$ pointwise (see Claim~\ref{Cl_construct_chi_infty_t} and Theorem~\ref{Thm_tdmu_convergence}).
Therefore, both limits are smooth and since $Q$ is dense, we have $\td v_i \circ \phi_i \to \td v'_\infty \in C^\infty (M' \times \td I)$   and  $u_i \circ \phi_i \to  u'_\infty \in C^\infty (M' \times \td I)$ in $C^\infty_{\loc}$.
By comparing the limits for any $t \in Q$ we obtain the last statements of Assertions~\ref{Cl_chi_infty_basic_prop_a}, \ref{Cl_chi_infty_basic_prop_b}.

Assertion~\ref{Cl_chi_infty_basic_prop_c} follows from Assertion~\ref{Cl_chi_infty_basic_prop_a},
using Theorem~\ref{Thm_tdmu_convergence}\ref{Thm_tdmu_convergence_a} if $\td\mu_t \in \PP^1(\XX^\infty_t)$ for all $t \in Q$.
For the general case, we can either argue as in the proof of Theorem~\ref{Thm_tdmu_convergence_reverse} or establish Assertion~\ref{Cl_chi_infty_basic_prop_c} first for conjugate heat kernel and then use the reproduction formula combined with standard parabolic estimates.

To see Assertion~\ref{Cl_chi_infty_basic_prop_d}, fix some $t^* \in \td I \cap Q$.
Then $u_{\infty,t^*}$ is $L$-Lipschitz for some $L < \infty$.
Define $u^{*} : Z_{t^*} \to \IR$, $i \in \IN$, by
\[ u^{*}(z) := \inf_{z^\infty \in \XX^\infty_{t^*}} \big( L d^Z_{t*} ( z, \varphi^\infty_{t^*} (z^\infty) ) + u_{\infty, t^*} (z^\infty) \big). \]
Note that $u^*$ is $L$-Lipschitz and $u^* \circ \varphi^\infty_{t^*} = u_{\infty, t^*}$.
Let $(u_{i, t})_{t \in I' \cap I^i, t \geq t^*}$ be the heat flows on $\XX^i$ with initial condition $u_{i, t^*} = u^* \circ \varphi^i_{t*}$, $i \in \IN$.
We claim that $(u_{i,t})_{t \in I' \cap I^i, t \geq t^*} \to (u_{\infty, t})_{t \in I', t \geq t^*}$ in the sense of Assertion~\ref{Cl_chi_infty_basic_prop_b}.
To see this, consider some points $y_i \in \XX^i$ with $y_i \to y_\infty \in \XX^\infty$, $\tf (y_\infty) > t^*$.
Then by Claim~\ref{Cl_CHF_W1_conv} we have $(\varphi^i_{t^*})_* \nu^i_{y_i; t^*} \to (\varphi^\infty_{t^*})_* \nu^\infty_{y_\infty; t^*}$ in the $W_1$-sense and therefore
\begin{multline*}
 u_i(y_i) = \int_{\XX^i_{t^*}} (u^* \circ \varphi^i_{t*}) \, d\nu^i_{y_i;t^*}
= \int_{Z_{t^*}} u^* \, d((\varphi^i_{t*})_* \nu^i_{y_i;t^*} ) \\
\longrightarrow \int_{Z_{t^*}} u^* \, d((\varphi^\infty_{t*})_* \nu^\infty_{y_\infty;t^*}) 
= \int_{\XX^\infty_{t^*}} u_{\infty, t^*} \, d\nu^\infty_{y_\infty;t^*}
= u_\infty (y_\infty). \
\end{multline*}
So by Assertion~\ref{Cl_chi_infty_basic_prop_b}, the function $u_\infty \circ \phi_\infty$ restricted to $M' \times (Q \cap (t^*, \infty))$ can be extended to a smooth function solving the heat equation.
Since $t^* \in \td I \cap Q$ was arbitrary, this proves Assertion~\ref{Cl_chi_infty_basic_prop_d}.

Assertions~\ref{Cl_chi_infty_basic_prop_e}, \ref{Cl_chi_infty_basic_prop_f} follow by passing Proposition~\ref{Prop_HK_regular}\ref{Prop_HK_regular_a} to the limit for an appropriate constant $C^*$ and after possibly shrinking $r^*$.

To see Assertion~\ref{Cl_chi_infty_basic_prop_g} choose $y^*_{1,i} \in \XX^i_{t_1}$, $y_{2,i} \in \XX^i_{t_2}$ with $\varphi^i_{t_1} (y^*_{1, i} ) \to \varphi^\infty_{t_1} (y^*_1)$, $\varphi^i_{t_2} (y_{2, i} ) \to \varphi^\infty_{t_2} (y_2)$ in $Z_{t_1}$ and $Z_{t_2}$, respectively.
Then for large $i$ we have $B( y^*_{1,i} , \tfrac12 C^* \sqrt{t_2 - t_1}) \subset \phi_{\infty, t_1} ( M')$ and 
\[ d^{\XX^i_{t_1}}_{W_1} ( \delta_{y^*_{1,i}}, \nu^i_{y_{2,i}; t_1} ) \xrightarrow[i \to \infty]{\qquad} d^{\XX^\infty_{t_1}}_{W_1} ( \delta_{y^*_{1}}, \nu^\infty_{y_{2}; t_1} )
 \leq \sqrt{ \Var ( \delta_{y^*_{1}}, \nu^\infty_{y_{2}; t_1} )} 
 \leq \sqrt{H (t_2 -t_1)}. \]
 Choose $H$-centers $y^{**}_i \in \XX^i_{t_1}$ of $y_2$.
 Then for large $i$ we have
\begin{multline*}
 d^i_{t_1} (y^*_{1,i} , y^{**}_{i})
\leq d^{\XX^i_{t_1}}_{W_1} ( \delta_{y^*_{1,i}}, \nu^i_{y_{2,i}; t_1} )
+ d^{\XX^i_{t_1}}_{W_1} ( \nu^i_{y_{2,i}; t_1}, \delta_{y^{**}_{i}} ) \\
\leq 2\sqrt{H (t_2 -t_1)} + \sqrt{ \Var ( \nu^i_{y_{2,i}; t_1}, \delta_{y^{**}_{i}} )} 
\leq 3\sqrt{H (t_2 -t_1)} 
\end{multline*}
 and thus $B( y^{**}_i , (\tfrac12 C^* - 3 H^{1/2})\sqrt{ t_2 - t_1}) \subset \phi_{\infty,t_1} ( M')$.
 So for sufficiently large $C^*$, we obtain from Proposition~\ref{Prop_HK_regular}\ref{Prop_HK_regular_b} that $y_{2,i} \in \phi_{i,t_2} (M')$ for large $i$.
 Write $y^*_{1,i} = \phi_{i,t_1} ( y'_{1,i})$, $y_{2,i} = \phi_{i,t_2} ( y'_{2,i})$ for $y'_{1,i}, y'_{2,i} \in M'$.
 The second part of Proposition~\ref{Prop_HK_regular}\ref{Prop_HK_regular_b} implies that for large $i$ and sufficiently large $C^*$ we have
 \begin{equation} \label{eq_yp1i_yp2i}
  d_{g^i_{t_2}} (y'_{1,i}, y'_{2,i}) \leq C(H) \sqrt{t_2 - t_1} .
\end{equation}
 So, assuming $C^*$ to be large, we know that $B(y'_{2,i}, \sqrt{t_2 -t_1}) \subset \phi_{i, t_2} (M')$ for large $i$.
 So both $y'_{1,i}, y'_{2,i} \in M'$ remain in a compact subset of $M'$ and therefore by construction of $\phi_\infty$ we have $y'_{1,i} \to y'_1$, $y'_{2,i} \to y'_2$ with $y^*_1 = \phi_{\infty, t_1} (y'_1)$, $y_2 = \phi_{\infty, t_2} (y'_2)$.
 Passing (\ref{eq_yp1i_yp2i}) to the limit implies that $d_{g'_{t_2}} (y'_{1}, y'_{2}) \leq C(H) \sqrt{t_2 - t_1}$.
This proves Assertion~\ref{Cl_chi_infty_basic_prop_g}  for sufficiently large $C^*$.
\end{proof}

\begin{Claim} \label{Cl_extend_chi}
If $r_* \leq \ov{r}_*(\alpha)$, then we can extend $(\phi_{\infty,t})_{t \in Q}$ by a unique family of maps $(\phi_{\infty,t} : M' \to \XX^\infty_t)_{t \in I' \setminus Q}$ to a family $(\phi_{\infty,t})_{t \in I'}$ such that the following is true, after possibly adjusting $C^*$:
\begin{enumerate}[label=(\alph*)]
\item \label{Cl_extend_chi_a} For any $y \in M'$ and any $t_1, t_2 \in I'$ with $t_1 \leq t_2$ we have
\begin{equation} \label{eq_dW1_nu_chi_all_I}
 d^{\XX^\infty_{t_1}}_{W_1} ( \delta_{\phi_{\infty, t_1} (y)}, \nu^\infty_{\phi_{\infty, t_2} (y); t_1} ) \leq C^* \sqrt{t_2 - t_1}. 
\end{equation}
\item \label{Cl_extend_chi_b} For any $t_1, t_2 \in I'$ with $t_1 \leq t_2$ the following is true.
If $y^*_1 \in \XX^\infty_{t_1}$ is an $H$-center of some point $y_2 \in \XX_{t_2}^\infty$ and $y^*_1 =  \phi_{\infty,t_1} ( y'_1 )$ for $y'_1 \in M'$ with the property that $B_{g'_{t_1}} (y'_1, C^* \sqrt{t_2 - t_1}) \subset M'$ is relatively compact, then $y_2 \in \phi_{\infty,t_2} (M')$  and if $y_2 = \phi_{\infty, t_2} (y'_2)$, then  $d_{g'_{t_2}} (y'_1, y'_2) \leq C^* \sqrt{t_2 - t_1}$.
\item \label{Cl_extend_chi_c} $x_\infty = \phi_{\infty, \tf (x_\infty)} (x')$.
\end{enumerate}
\end{Claim}

\begin{proof}
Fix $y \in M'$ and $t \in I' \setminus Q$ for a moment and let us define $\phi_t^\infty (y)$.
Note that $t \neq \sup I'$, so there are times $t' \in Q$ with $t' > t$ such that $t' - t$ is arbitrarily small.
For any two times $t'_1, t'_2 \in Q$ with $t < t'_1 < t'_2$ we have by Claim~\ref{Cl_chi_infty_basic_prop}\ref{Cl_chi_infty_basic_prop_e}
\[ d^{\XX^\infty_{t}}_{W_1} ( \nu^\infty_{\phi_{\infty, t'_1} (y); t}, \nu^\infty_{\phi_{\infty, t'_2} (y); t} )
\leq d^{\XX^\infty_{t'_1}}_{W_1} ( \delta_{\phi_{\infty, t'_1} (y)}, \nu^\infty_{\phi_{\infty, t'_2} (y); t'_1} ) \leq C^* \sqrt{t'_2 - t'_1} \leq C^* \sqrt{t'_2 - t}. \]
So since for any $t' \in Q$ with $t' > t$ we have $\Var ( \nu^\infty_{\phi_{\infty,t'} (y); t}) \leq H (t' - t)$, we obtain
\[ \nu^\infty_{\phi_{\infty, t'} (y); t} \xrightarrow[t' \searrow t, t' \in Q]{\quad W_1 \quad} \delta_{y'} \]
for some unique $y' \in \XX^\infty_t$.
Set $\phi_{\infty,t} (y) := y'$.
By repeating this construction for all $y \in M$ and $t \in I' \setminus Q$, we can construct a family $(\phi_{\infty,t} : M' \to \XX^\infty_t)_{t \in I' \setminus Q}$ such that for any $t \in I' \setminus Q$, $t' \in Q$ with $t' > t$ and any $y \in M'$ we have
\[ d^{\XX^\infty_{t}}_{W_1} (\delta_{\phi_{\infty,t} (y)}, \nu^\infty_{\phi_{\infty, t'} (y); t})  \leq  C^* \sqrt{t' - t}. \]
Combined with Claim~\ref{Cl_chi_infty_basic_prop}\ref{Cl_chi_infty_basic_prop_e}, this shows (\ref{eq_dW1_nu_chi_all_I}) if $t_2 \in Q$.
Assume now that $t_2 \in I' \setminus Q$.
As before, we have $t_2 \neq \sup I$, so there are times $t'' \in Q$ with $t'' > t_2$ such that $t'' - t_2$ is arbitrarily small.
For any such $t''$ we have
\begin{multline*}
 d^{\XX^\infty_{t_1}}_{W_1} ( \nu^\infty_{\phi_{\infty,t_2} (y); t_1}, \delta_{\phi_{\infty,t_1} (y)} )
\leq d^{\XX^\infty_{t_1}}_{W_1} ( \nu^\infty_{\phi_{\infty,t_2} (y); t_1}, \nu^\infty_{\phi_{\infty,t''} (y); t_1} ) + d^{\XX^\infty_{t_1}}_{W_1} (\nu^\infty_{\phi_{\infty,t''} (y); t_1}, \delta_{\phi_{\infty,t_1} (y)} ) \\
\leq d^{\XX^\infty_{t_2}}_{W_1} (\delta_{\phi_{\infty,t_2} (y)}, \nu^\infty_{\phi_{\infty,t''} (y); t_2} ) + C^* \sqrt{t'' - t_1}
\leq C^* \sqrt{t'' - t_2}  + C^* \sqrt{t'' - t_1}  . 
\end{multline*}
Letting $t'' \searrow t_2$ implies (\ref{eq_dW1_nu_chi_all_I}).

Next, we prove Assertion~\ref{Cl_extend_chi_b}.
Denote by $C^{*}$ the maximum of the corresponding constants from Claim~\ref{Cl_chi_infty_basic_prop}\ref{Cl_chi_infty_basic_prop_g} and Assertion~\ref{Cl_extend_chi_a} of this claim.
We will show Assertion~\ref{Cl_extend_chi_b} for $C^*$ replaced with some constant $C^{**} \geq C^*$, which we will determine in the course of this proof.
Fix $t_1, t_2 \in I'$ with $t_1 \leq t_2$ and choose $y^*_1, y'_1, y_2$ as in Assertion~\ref{Cl_extend_chi_b}.
So $B_{g'_{t_1}} (y'_1, C^{**} \sqrt{t_2 -t_1}) \subset M'$ is relatively compact.

The case $t_1, t_2 \in Q$ is clear by Claim~\ref{Cl_chi_infty_basic_prop}\ref{Cl_chi_infty_basic_prop_g}.
Suppose next that $t_1 \in Q$ and $t_2 \in I' \setminus Q$.
Fix some sequence $t^*_j \in Q$ with $t^*_j \searrow t_2$.
By Proposition~\ref{Prop_topology_properties}\ref{Prop_topology_properties_f}, we can choose points $y^{*}_{2,j} \in \XX^\infty_{t^*_j}$ such that
\begin{equation} \label{eq_ystar_2j_convergence}
 d^{\XX^\infty_{t_2}}_{W_1} (\nu^\infty_{y^*_{2,j}; t_2}, \delta_{y_2}) \to 0. 
\end{equation}
Then
\[ d^{\XX^\infty_{t_1}}_{W_1} (\nu^\infty_{y^{*}_{2,j}; t_1}, \nu^\infty_{y_{2}; t_1}) \leq  d^{\XX^\infty_{t_2}}_{W_1} (\nu^\infty_{y^{*}_{2,j}; t_2}, \delta_{y_2})  \to 0. \]
Choose $H$-centers $y^*_{1,j} \in \XX^\infty_{t_1}$ of $y^{*}_{2,j}$.
Then for large $j$ we have
\begin{multline*}
d_{t_1}^\infty (y^*_1, y^*_{1,j})
\leq d^{\XX^\infty_{t_1}}_{W_1} (\delta_{y^*_1}, \nu^\infty_{y_2;t_1})
+ d^{\XX^\infty_{t_1}}_{W_1} (  \nu^\infty_{y_2;t_1}, \nu^\infty_{y^*_{2,j}; t_1}) 
+ d^{\XX^\infty_{t_1}}_{W_1} (  \nu^\infty_{y^*_{2,j}; t_1}, \delta_{y^*_{1,j}}) \\
\leq 2\sqrt{\Var (\delta_{y^*_1}, \nu^\infty_{y_2;t_1})} + \sqrt{\Var  (  \nu^\infty_{y^*_{2,j}; t_1}, \delta_{y^*_{1,j}})}
\leq 4 \sqrt{H (t_2 -t_1)}.
\end{multline*}
So if $C^{**} \geq \underline{C}^{**}(C^*, H)$ then by Claim~\ref{Cl_chi_infty_basic_prop}\ref{Cl_chi_infty_basic_prop_g}, for large $j$ we have $y^*_{2,j} = \phi_{\infty,t^*_j} (y'_{2,j})$ for some $y'_{2,j} \in M'$ with $d_{g'_{t^*_j}} ( y'_1, y'_{2,j} ) \leq C^* \sqrt{t^*_j - t_1}$.
If $C^{**} \geq \underline{C}^{**}(C^*)$, then this implies that the sequence $y'_{2,j}$ remains in a compact subset of $M'$.
So after passing to a subsequence, we may assume that $y'_{2,j} \to y'_2 \in M'$ and $d_{g'_{t_2}} ( y'_1, y'_{2} ) \leq C^* \sqrt{t_2 - t_1}$.
Then using (\ref{eq_ystar_2j_convergence}) and Assertion~\ref{Cl_extend_chi_a} we find that
\begin{align*}
 d^\infty_{t_2} ( y_2, \phi_{\infty,t_2} (y'_{2}))
&\leq d^{\XX^\infty_{t_2}}_{W_1} ( \delta_{y_2}, \nu^\infty_{\phi_{\infty,t^*_j} (y'_{2,j}); t_2}) 
+ d^{\XX^\infty_{t_2}}_{W_1} ( \nu^\infty_{\phi_{\infty, t^*_j} (y'_{2,j}); t_2}, \nu^\infty_{\phi_{\infty, t^*_j} (y'_{2}); t_2}) \\
&\qquad +  d^{\XX^\infty_{t_2}}_{W_1} ( \nu^\infty_{\phi_{\infty, t^*_j} (y'_{2}); t_2}, \delta_{\phi_{\infty, t_2} (y'_{2})} ) \\
&\leq d^{\XX^\infty_{t_2}}_{W_1} ( \delta_{y_2}, \nu^\infty_{y^*_{2,j}; t_2}) 
+ d^\infty_{t^*_j} ( \phi_{\infty, t^*_j} (y'_{2,j}), \phi_{\infty, t^*_j} (y'_{2})) + C^* \sqrt{ t^*_j - t_2 } \\
&= d^{\XX^\infty_{t_2}}_{W_1} ( \delta_{y_2}, \nu^\infty_{y^*_{2,j}; t_2}) + d_{g'_{t^*_j}} (y'_{2,j}, y'_{2}) + C^* \sqrt{ t^*_j - t_2 } \longrightarrow 0.
\end{align*}
So $y_2 = \phi_{\infty,t_2} (y'_{2})$, which is what we wanted to show.

Lastly, consider the case $t_1 \in I' \setminus Q$.
After adjusting $C^*$, we may assume that Assertion~\ref{Cl_extend_chi_b} holds for $C^*$ if $t_1 \in Q$.
Suppose again that $B_{g'_{t_1}} (y'_1, C^{**} \sqrt{t_2 -t_1}) \subset M'$ is relatively compact for some $C^{**} \geq C^*$, which we will determine in the course of the proof.
Note that since $I'$ is left-open, we can choose times $t' \in Q$, $t' < t_1$ with $t_1 - t'$ arbitrarily small.
If $C^{**} \geq C^* + 20 \sqrt{H}$, then for $t'$ close to $t_1$ the ball $B_{g'_{t'}} (y'_1, (C^* + 10 \sqrt{H}) \sqrt{ t_2 - t'}) \subset M'$ is relatively compact.
So for $t'$ close to $t_1$ we have, using Assertion~\ref{Cl_extend_chi_a},
\begin{multline*}
 d^{\XX^\infty_{t'}}_{W_1} ( \delta_{\phi_{\infty,t'} (y'_1)}, \nu^\infty_{y_2;t'} )
\leq d^{\XX^\infty_{t'}}_{W_1} ( \delta_{\phi_{\infty, t'} (y'_1)}, \nu^\infty_{\phi_{\infty, t_1} (y'_1);t'} ) 
+ d^{\XX^\infty_{t'}}_{W_1} ( \nu^\infty_{\phi_{\infty,t_1} (y'_1);t'}, \nu^\infty_{y_2;t'} ) \\
\leq C^* \sqrt{ t_1 - t' }
+ d^{\XX^\infty_{t_1}}_{W_1} ( \delta_{y^*_1}, \nu^\infty_{y_2;t'} ) 
\leq C^* \sqrt{t_1 - t' } + \sqrt{\Var (\delta_{y^*_1}, \nu^\infty_{y_2;t'} )} \\
\leq C^* \sqrt{t_1 - t' } + \sqrt{H (t_2 - t_1)} \leq 2\sqrt{H (t_2 - t_1)}.
\end{multline*}
So for any $H$-center $y^{**}_1 \in \XX^\infty_{t'}$ of $y_2$ we have for $t'$ close to $t_1$
\begin{multline*}
 d^\infty_{t'} (\phi_{\infty,t'} (y'_1), y^{**}_1) 
\leq 
 d^{\XX^\infty_{t'}}_{W_1} ( \delta_{\phi_{\infty,t'} (y'_1)}, \nu^\infty_{y_2;t'} ) +  d^{\XX^\infty_{t'}}_{W_1} ( \nu^\infty_{y_2;t'}, \delta_{y^{**}_1} ) \\
\leq 2\sqrt{H (t_2 - t_1)} + \sqrt{\Var ( \nu^\infty_{y_2;t'}, \delta_{y^{**}_1} )}
\leq 2\sqrt{H (t_2 - t_1)} + \sqrt{H (t_2 - t')} \leq 10 \sqrt{H (t_2 - t_1)}. 
\end{multline*}
So if $t'$ is sufficiently close to $t^*_1$, then $y^{**}_1 = \phi_{\infty,t'} (y'')$ for some $y'' \in M'$ with 
\[ B_{g'_{t'}} (y'', C^* \sqrt{t_2 - t'}) \subset B_{g'_{t'}} (y'_1, t',(C^* + 9 \sqrt{H})  \sqrt{t_2 - t'}) \subset M', \]
 which shows that $B_{g'_{t'}} (y'', C^* \sqrt{t_2 - t'}) \subset M'$ is relatively compact and therefore $y_2 \in \phi_{\infty,t_2} (M')$.
 Moreover, if $y_2 = \phi_{\infty,t_2} (y'_2)$, then for $t'$ close to $t_1$
 \begin{multline*}
 d_{g'_{t_2}} (y'_1, y'_2) 
 \leq d_{g'_{t_2}} (y'_1, y'') + d_{g'_{t_2}} (y'', y'_2)
 \leq Cd_{g'_{t'}} (y'_1, y'') + C^* \sqrt{t_2 - t'} \\
 =  C d^\infty_{t'} (\phi_{\infty,t'} (y'_1), y^{**}_1)  + C^* \sqrt{t_2 - t'}
 \leq ( C \sqrt{H} + 2C^*) \sqrt{t_2 - t_1} . 
\end{multline*}
This proves Assertion~\ref{Cl_extend_chi_b} for $C^{**} \geq \underline{C}^{**}(C^*)$.

For Assertion~\ref{Cl_extend_chi_c}, fix some $t \in Q$, $t < \tf (x_\infty)$ and let $z \in \XX^\infty_t$ be an $H$-center of $x_\infty$.
Using Claim~\ref{Cl_chi_infty_basic_prop}\ref{Cl_chi_infty_basic_prop_f}, we have
\begin{multline*}
 d^\infty_t (\phi_{\infty,t} (x'), z)
  \leq d^{\XX^\infty_t}_{W_1} (\delta_{\phi_{\infty,t} (x')}, \nu^\infty_{x_\infty; t}) + d^{\XX^\infty_t}_{W_1} ( \nu^\infty_{x_\infty; t}, \delta_z) \\
\leq C^* \sqrt{\tf (x_\infty) - t} + \sqrt{ \Var  ( \nu^\infty_{x_\infty; t}, \delta_z)}
\leq (C^* + H^{1/2}) \sqrt{\tf (x_\infty) - t}. 
\end{multline*}
So for $t$ sufficiently close to $\tf(x_\infty)$ we have $z = \phi_{\infty,t} (z')$ for some $z' \in M'$ close to $x'$ and thus by Assertion~\ref{Cl_extend_chi_b} we have $x_\infty = \phi_{\infty, \tf(x_\infty)} (x'')$ for some $x'' \in M'$ with $d_{g'_{\tf(x_\infty)}} (z', x'') \leq C^* \sqrt{ \tf(x_\infty) - t }$.
It follows that
\begin{multline*}
 d_{g'_{\tf(x_\infty)}} (x', x'')
\leq d_{g'_{\tf(x_\infty)}} (x', z') + d_{g'_{\tf(x_\infty)}} (z', x'')
\leq C d_{g'_{t}} (x', z') + C^* \sqrt{ \tf(x_\infty) - t } \\
=  C d^\infty_t (\phi_{\infty,t} (x'), z) + C^* \sqrt{ \tf(x_\infty) - t }
\leq C(C^* + H^{1/2}) \sqrt{\tf (x_\infty) - t}. 
\end{multline*}
Letting $t \nearrow \tf(x_\infty)$ implies $x' = x''$.
\end{proof}

\begin{Claim} \label{Cl_chi_infty_local_isometric_emb}
For all $t \in I'$ the map $\phi_{\infty,t} : (M', d_{g'_t}) \to (\XX^\infty_t, d^\infty_t)$ is a local isometric embedding.
Moreover, for any two $y'_1, y'_2 \in M'$ with the property that any curve between both points of time-$t$-length $\leq d^\infty_t (\phi_{\infty,t} (y'_1), \phi_{\infty,t}(y'_2))$ remains in a compact subset of $M'$ we have $d^\infty_t (\phi_{\infty,t} (y'_1), \phi_{\infty,t}(y'_2)) =  d_{g'_t} (y'_1, y'_2)$.
\end{Claim}

\begin{proof}
If $t \in Q$, then there is nothing to prove; note that in this case $\XX^\infty$ is intrinsic at time $t$.
So assume that $t \in I' \setminus Q$.
It suffices to show the last statement of the claim.
Then there are times $t' \in Q$ with $t' > t$ such that $t' - t$ is arbitrarily small.
So, using Claim~\ref{Cl_extend_chi}\ref{Cl_extend_chi_a},
\begin{multline*}
 d^\infty_t (\phi_{\infty,t} (y'_1), \phi_{\infty,t}(y'_2)) \\
\leq \liminf_{t' \searrow t, t' \in Q} \big( d^{\XX^\infty_t}_{W_1} ( \delta_{\phi_{\infty,t} (y'_1)}, \nu^\infty_{\phi_{\infty,t'} (y'_1); t}) + d^{\XX^\infty_t}_{W_1} ( \nu^\infty_{\phi_{\infty,t'} (y'_1); t},\nu^\infty_{\phi_{\infty,t'} (y'_2); t}) + d^{\XX^\infty_t}_{W_1} ( \nu^\infty_{\phi_{\infty,t'} (y'_2); t}, \delta_{\phi_{\infty,t} (y'_2)}) \big) \\
\leq \liminf_{t' \searrow t, t' \in Q} d^\infty_{t'} (\phi_{\infty, t'} (y'_1), \phi_{\infty,t'} (y'_2))
\leq  \liminf_{t' \searrow t, t' \in Q} d_{g'_{t'}} (y'_1, y'_2)
= d_{g'_t} (y'_1, y'_2) .
\end{multline*}
Similarly, we obtain
\begin{multline*}
 d^\infty_t (\phi_{\infty,t} (y'_1), \phi_{\infty,t}(y'_2))
\geq \limsup_{t' \nearrow t, t' \in Q} d^{\XX^\infty_{t'}}_{W_1} ( \nu^\infty_{\phi_{\infty,t} (y'_1); t'},\nu^\infty_{\phi_{\infty,t} (y'_2); t'}) \\
\geq \limsup_{t' \nearrow t, t' \in Q} \big( d^\infty_{t'} (\phi_{\infty,t'} (y'_1), \phi_{\infty,t'} (y'_2)) - d^{\XX^\infty_{t'}}_{W_1} ( \delta_{\phi_{\infty,t'} (y'_1)}, \nu^\infty_{\phi_{\infty,t} (y'_1); t'})  - d^{\XX^\infty_{t'}}_{W_1} ( \delta_{\phi_{\infty,t'} (y'_2)}, \nu^\infty_{\phi_{\infty,t} (y'_2); t'}) \big) \\
= \limsup_{t' \nearrow t, t' \in Q} d^\infty_{t'} (\phi_{\infty,t'} (y'_1), \phi_{\infty,t'} (y'_2))
=  \limsup_{t' \nearrow t, t' \in Q} d_{g'_{t'}} (y'_1, y'_2)
= d_{g'_t} (y'_1, y'_2)  .
\end{multline*}
The second last equality holds due to an openness argument as in the proof of Proposition~\ref{Prop_RR_intrinsic}.
This finishes the proof of the claim.
\end{proof}

\begin{Claim} \label{Cl_chi_infty_homeo}
\begin{enumerate}[label=(\alph*)]
\item \label{Cl_chi_infty_homeo_a} The family $(\phi_{\infty,t})_{t \in I'}$, when viewed as a map $\phi_\infty : M' \times I' \to \XX^\infty$, is a homeomorphism onto its image and $\phi_\infty (M' \times I')$ is open.
\item \label{Cl_chi_infty_homeo_b} For all for all $t \in I'$ the map $\phi_{\infty,t} : (M', d_{g'_t}) \to (\XX^\infty_t, d^\infty_t)$ is a local isometry.
\end{enumerate}
\end{Claim}

\begin{proof}
Fix some $(y',t) \in M' \times I'$.
We first show that for small $r' > 0$ we have $P^* (\phi_\infty(y',t); r') \subset \phi_\infty (M' \times I')$.
To see this, assume without loss of generality that $\XX^\infty$ is intrinsic at time $t - r^{\prime 2}$, consider some point $y_2 \in P^* (\phi_\infty(y',t); r')$ and let $y^*_1 \in \XX^\infty_{t - r^{\prime 2}}$ be an $H$-center of $y_2$.
Then, using Claim~\ref{Cl_extend_chi}\ref{Cl_extend_chi_a},
\begin{multline*}
 d^\infty_{t - r^{\prime 2}} (\phi_\infty(y',t - r^{\prime 2}) , y^*_1 )
\leq d^{\XX^\infty_{t - r^{\prime 2}}}_{W_1} (\delta_{\phi_\infty(y',t - r^{\prime 2})}, \nu^\infty_{\phi_\infty (y', t); t - r^{\prime 2}})
+ d^{\XX^\infty_{t - r^{\prime 2}}}_{W_1} (\nu^\infty_{\phi_\infty (y', t); t - r^{\prime 2}}, \nu^\infty_{y_2; t - r^{\prime 2}}) \\
+ d^{\XX^\infty_{t - r^{\prime 2}}}_{W_1} ( \nu^\infty_{y_2; t - r^{\prime 2}}, \delta_{y^*_1})
\leq C^* r' + r' + \sqrt{ \Var ( \nu^\infty_{y_2; t - r^{\prime 2}}, \delta_{y^*_1})}
\leq C^* r' + r' + (2H)^{1/2} r'. 
\end{multline*}
So for small enough $r'$ we have $y^*_1 = \phi_\infty (y'_1, t - r^{\prime 2})$, where $y'_1$ is close enough to $y'$ such that we can use Claim~\ref{Cl_extend_chi}\ref{Cl_extend_chi_b} to conclude that $y_2 \in \phi_\infty (M' \times I')$.
Moreover, if $y_2 = \phi_\infty (y'_2, t')$, then  $d_{g'_{t'}} (y', y'_2) \leq \sqrt{2} C^*  r'$.
This shows that $\phi_\infty$ is open.

On the other hand, set $P_{r'} := \phi_\infty^{-1} ( P^* (\phi_\infty(y',t); r') )$.
Let $( y'_1,  t_1) \in M' \times I'$ and set $t'' := \min \{ t, t_1 \}$.
If $r' > 0$ is fixed and if $(y'_1, t_1)$ is close enough to $(y',t)$, then we have, using Claim~\ref{Cl_extend_chi}\ref{Cl_extend_chi_a},
\begin{multline*}
 d^{\XX^\infty_{t- r^{\prime 2}}}_{W_1} ( \nu^\infty_{\phi_\infty (y', t); t- r^{\prime 2}}, \nu^\infty_{\phi_\infty (y'_1, t_1); t- r^{\prime 2}})
\leq d^{\XX^\infty_{t''}}_{W_1} ( \nu^\infty_{\phi_\infty (y', t); t''}, \nu^\infty_{\phi_\infty( y'_1, t_1); t''}) \\
\leq d^{\XX^\infty_{t''}}_{W_1} ( \nu^\infty_{\phi_\infty (y', t); t''}, \delta_{\phi_\infty (y', t'')}) + d^\infty_{t''} ( \phi_\infty (y', t''), \phi_\infty (y'_1, t'') ) + d^{\XX^\infty_{t''}}_{W_1} ( \delta_{\phi_\infty (y'_1, t'')}, \nu^\infty_{y'_1; t''}) \\
\leq C^* \sqrt{|t - t_1|} + d_{g'_{t''}} (y', y'_1) < r',
\end{multline*}
and thus $(y'_1, t_1) \in P_{r'}$.
This proves Assertion~\ref{Cl_chi_infty_homeo_a}.

Assertion~\ref{Cl_chi_infty_homeo_b} follows from Assertion~\ref{Cl_chi_infty_homeo_a} and Claim~\ref{Cl_chi_infty_local_isometric_emb}, because the inclusion map $\XX^\infty_t \to \XX^\infty$ is continuous (see Proposition~\ref{Prop_topology_properties}\ref{Prop_topology_properties_b}).
\end{proof}

\begin{Claim} \label{Cl_chi_chart}
\begin{enumerate}[label=(\alph*)]
\item \label{Cl_chi_chart_a} For any conjugate heat flow $(\td\mu_t)_{t \in \td I}$ on $\XX^\infty$ over a right-open subinterval $\td I \subset I'$, there is a smooth function $\td v' \in C^\infty (M' \times \td I)$ with $\square^* \td v' = 0$ such that $d ((\phi_{\infty,t})^* \td\mu_t) =  \td v'_t \, dg'_t$ for all $t \in  \td I$.
\item \label{Cl_chi_chart_b} For any uniformly bounded heat flow $(u_t)_{t \in \td I}$ on $\XX^\infty$ over a left-open subinterval $\td I \subset I'$, the function $u' := u \circ \phi_\infty$, which is defined on $M' \times  \td I$, is smooth and satisfies the heat equation $\square u' = 0$.
\end{enumerate}
\end{Claim}

\begin{proof}
Assertion~\ref{Cl_chi_chart_b} follows from Claim~\ref{Cl_chi_infty_basic_prop}\ref{Cl_chi_infty_basic_prop_d}, Claim~\ref{Cl_chi_infty_homeo}\ref{Cl_chi_infty_homeo_a} and the fact that $u$ is continuous by Proposition~\ref{Prop_topology_properties}\ref{Prop_topology_properties_f_new}.

Suppose now we are in the setting of Assertion~\ref{Cl_chi_chart_a} and let $\td v'$ be the function from Claim~\ref{Cl_chi_infty_basic_prop}\ref{Cl_chi_infty_basic_prop_c}.
So $d ((\phi_{\infty,t})^* \td\mu_t) =  \td v'_t \, dg'_t$ for all $t \in Q\cap \td I$.
We claim that the same holds for all $t \in \td I$.
For this purpose fix $t_0 \in \td I \setminus Q$ and let $u' \in C^\infty_c(M')$, $u' \geq 0$, be an arbitrary smooth function of compact support with the property that $u = u' \circ \phi_{\infty,t_0}$ for some $1$-Lipschitz function $u : \XX^\infty_{t_0} \to \IR$ with $u \equiv 0$ on $\XX^\infty_{t_0} \setminus \phi_{\infty,t_0} (M')$.
It suffices to show that
\begin{equation} \label{eq_int_Mp_up_verify}
 \int_{M'} u' \, \td v'_{t_0} \, dg'_{t_0} = \int_{\XX^\infty_{t_0}}  u \, d\td\mu_{t_0}. 
\end{equation}
To see this, consider the heat flow $(\td u_t)_{t \in I' \cap [t_0, \infty)}$ with initial condition $u$.
We first claim that
\begin{equation} \label{eq_sup_u_outside_chi_to_0}
 \sup_{\XX^\infty_{t} \setminus \phi_{\infty,t} (M')} \td u_t \xrightarrow[t \searrow t_0]{} 0. 
\end{equation}
Suppose not.
Then there is a sequence $x_j \in \XX^\infty_{t_j}  \setminus \phi_{\infty,t_j} (M')$ with $t_j \searrow t_0$ such that 
\[ \td u_{t_j} (x_j) = \int_{\XX^\infty_{t_0}} u \, d\nu^\infty_{x_j; t_0}  \geq c > 0. \]
Let $z_j \in \XX^\infty_{t_0}$ be $H$-centers of $x_j$.
Since $u$ is bounded, we must have $d^\infty_{t_0} (z_j, \supp u ) \leq C (t_j - t_0)^{1/2}$ for some $C < \infty$ by Lemma~\ref{Lem_nu_BA_bound}.
Since $\supp u$ is compact, we may pass to a subsequence such that $z_j \to z_\infty \in \supp u$ within $\XX^\infty_{t_0}$.
So for any $r' > 0$ we have
\begin{multline*}
 d^{\XX^\infty_{t_0 - r^{\prime 2}}}_{W_1} ( \nu^\infty_{z_\infty; t_0 - r^{\prime 2}}, \nu^\infty_{x_j; t_0 - r^{\prime 2}} )
\leq d^{\XX^\infty_{t_0}}_{W_1} ( \delta_{z_\infty}, \nu^\infty_{x_j; t_0} )
\leq d^\infty_{t_0} (z_\infty, z_j) + d^{\XX^\infty_{t_0}}_{W_1} ( \delta_{z_j}, \nu^\infty_{x_j; t_0} ) \\
\leq d^\infty_{t_0} (z_\infty, z_j) + \sqrt{\Var ( \delta_{z_j}, \nu^\infty_{x_j; t_0} )}
\leq d^\infty_{t_0} (z_\infty, z_j) + \sqrt{H (t_j - t_0)}
\to 0,
\end{multline*}
which implies that $x_j \in P^* (z_\infty; r')$ for large $j$.
This, however contradicts $x_j \not\in \phi_{\infty,t_j} (M')$ via Claim~\ref{Cl_chi_infty_homeo}\ref{Cl_chi_infty_homeo_a} for large $j$.
So (\ref{eq_sup_u_outside_chi_to_0}) holds.

We can now verify (\ref{eq_int_Mp_up_verify}) as follows:
\begin{multline*}
  \int_{\XX^\infty_{t_0}}   u \, d\td\mu_{t_0} 
= \lim_{t \searrow t_0, t \in Q} \int_{\XX^\infty_{t}}  \td u_{t} \, d\td\mu_{t}
= \lim_{t \searrow t_0,t \in Q} \int_{\phi_{\infty,t} (M')}  \td u_{t} \, d\td\mu_{t} \\
= \lim_{t \searrow t_0, t \in Q} \int_{M'} ( \td u \circ \phi_{\infty,t} )\, \td v'_{t} \, dg'_t 
= \int_{M'} u' \, \td v'_{t_0} \, dg'_{t_0}. 
\end{multline*}
This finishes the proof.
\end{proof}

It follows that $x_\infty \in U:= \phi_\infty (M' \times I') \subset \RR^\infty$.
Define $\psi_i  := \phi_i \circ \phi_\infty^{-1}: U \to \XX^i$.
Let us now verify Assertions~\ref{Thm_smooth_convergence_a}, \ref{Thm_smooth_convergence_c}, \ref{Thm_smooth_convergence_e}, \ref{Thm_smooth_convergence_f} of Theorem~\ref{Thm_smooth_convergence}.
The convergence of the metric in Assertion~\ref{Thm_smooth_convergence_a} is true by construction and the time vector fields converge trivially since the maps $\psi_i$ are $\partial_{\tf}$-preserving.
The last part of Assertion~\ref{Thm_smooth_convergence_a} and Assertions~\ref{Thm_smooth_convergence_e}, \ref{Thm_smooth_convergence_f}  of Theorem~\ref{Thm_smooth_convergence} follow from Claims~\ref{Cl_chi_infty_basic_prop}\ref{Cl_chi_infty_basic_prop_a}, \ref{Cl_chi_chart}\ref{Cl_chi_chart_a} and the local derivative estimates on the functions $v'_i$ after shrinking $r_*$ slightly.

Next, we verify Assertion~\ref{Thm_smooth_convergence_c}.
So fix some sequence $\td{x}_i \in \XX^i$, and a point $\td{x}_\infty \in U$.
Suppose first that $\td{x}_i \to \td{x}_\infty$ within $\CF$.
Fix some $t \in Q$, $t < \tf(\td{x}_\infty)$ close to $\tf (\td{x}_\infty)$. 
Then $(\varphi^i_t)_* \nu^i_{\td{x}_i; t} \to (\varphi^\infty_t)_* \nu^\infty_{\td{x}_\infty;t}$ in $W_1$.
Choose $H$-centers $\td x^*_i \in \XX^i_{t}$ of $\td{x}_i$ for large $i \leq \infty$.
For $t$ sufficiently close to $\tf (\td{x}_\infty)$ we have $\td x^*_\infty \in U$.
So by the construction of $\phi_\infty$ we have
\begin{align}
 \limsup_{i \to \infty} d^i_t (\td x^*_i, \psi_i(\td x^*_\infty)) 
&\leq \limsup_{i \to \infty} \big( d^{\XX^i_t}_{W_1} (\delta_{\td x^*_i}, \nu^i_{\td{x}_i; t} )
+ d^{Z_t}_{W_1} ((\varphi^i_t)_* \nu^i_{\td{x}_i; t} , (\varphi^\infty_t)_* \nu^\infty_{\td{x}_\infty; t} ) \notag \\
&\qquad\qquad\qquad +  d^{\XX^\infty_t}_{W_1} (\nu^\infty_{\td{x}_\infty; t}, \delta_{\td x^*_\infty} )
+ d^{Z}_t ( \varphi^\infty_t (\td x^*_\infty), \varphi^i_t (\psi_i(\td x^*_\infty)) ) \big) \notag \\
&\leq \limsup_{i \to \infty} \Big( \sqrt{ \Var (\delta_{\td x^*_i}, \nu^i_{\td{x}_i; t} )} + \sqrt{\Var  (\nu^\infty_{\td{x}_\infty; t}, \delta_{\td x^*_\infty} )} \Big)
\leq 2 \sqrt{ H (\tf (\td{x}_\infty) - t)}. \label{eq_ditdx_star_i_infty}
\end{align}
So by (\ref{eq_ditdx_star_i_infty}) and Assertion~\ref{Thm_smooth_convergence_a}, which we have already established, and Proposition~\ref{Prop_HK_regular}\ref{Prop_HK_regular_b} we have $\td x_i \in \RR^i$ for large $i$ and, $d^i_{\tf (\td x_i)} (\td x_i, \td x^*_i (\tf (\td x_i)) ) \leq C \sqrt{\tf(\td x_i) - t}$.
It follows that $\td{x}_i \in \psi_i (U)$ for large $i$ and by letting $t \nearrow \tf (\td{x}_\infty)$ we obtain $\psi_i^{-1}(\td{x}_i) \to \td{x}_\infty$.

Vice versa, assume that we have $\psi_i^{-1}(\td{x}_i) \to \td{x}_\infty$ and fix some times $t_1, t_2 \in Q$, $t_1 \leq t_2 < \tf(\td{x}_\infty)$. 
For $t_2$ sufficiently close to $\tf (\td{x}_\infty)$ the points $\td x^*_i := \td{x}_i (t_2)$ exist for large $i \leq \infty$ and $\td x^*_\infty \in U$.
Since the maps $\psi_i$ are $\partial_\tf$-preserving, we also have $\psi_i^{-1}(\td x^*_i) \to \td x^*_\infty$.
So by Claim~\ref{Cl_construct_chi_infty_t} and Theorem~\ref{Thm_tdmu_convergence} we have $(\nu^i_{\td x^*_i;t})_{t \in I^i, t < t_2} \to (\nu^\infty_{\td x^*_\infty;t})_{t \in I^\infty, t < t_2}$ within $\CF$ on compact time-intervals.
So, using Proposition~\ref{Prop_HK_regular}\ref{Prop_HK_regular_a},
\begin{multline*}
 \limsup_{i \to \infty} d^{Z_{t_1}}_{W_1} ( (\varphi^i_{t_1})_* \nu^i_{\td{x}_i; t_1}, (\varphi^\infty_{t_1})_* \nu^\infty_{\td{x}_\infty; t_1}) \\
\leq \limsup_{i \to \infty}  \big( 
d^{\XX^i_{t_1}}_{W_1} (  \nu^i_{\td{x}_i; t_1}, \nu^i_{\td x^*_i; t_1})
+ d^{Z_{t_1}}_{W_1} ( (\varphi^i_{t_1})_*  \nu^i_{\td x^*_i; t_1}), (\varphi^\infty_{t_1})_* \nu^\infty_{\td x^*_\infty; t_1})
+ d^{\XX^\infty_{t_1}}_{W_1} (  \nu^\infty_{\td x^*_\infty; t_1}, \nu^\infty_{\td{x}_\infty; t_1}) \big) \\
\leq \limsup_{i \to \infty}  \big( d^{\XX^i_{t_2}}_{W_1} (  \nu^i_{\td{x}_i; t_2}, \delta_{x^*_i}) + d^{\XX^\infty_{t_2}}_{W_1} ( \delta_{\td{x}^*_\infty}, \nu^i_{\td{x}_\infty; t_2}) \big)
\leq 2C\sqrt{ \tf(\td{x}_\infty) - t_2}. 
\end{multline*}
Letting $t_2 \nearrow \tf(\td{x}_\infty)$ implies that for any $t_1 \in Q$, $t_1 < \tf (\td{x}_\infty)$ we have
\[  \limsup_{i \to \infty} d^{Z_{t_1}}_{W_1} ( (\varphi^i_{t_1})_* \nu^i_{\td{x}_i; t_1}, (\varphi^\infty_{t_1})_* \nu^\infty_{\td{x}_\infty; t_1}) = 0 . \]
This implies that $(\nu^i_{\td{x}_i; t}) \to (\nu^\infty_{\td{x}_\infty; t})$ within $\CF$ on compact time-intervals by Theorem~\ref{Thm_tdmu_convergence}, as desired.

Lastly, observe that due to Assertions~\ref{Thm_smooth_convergence_a}, \ref{Thm_smooth_convergence_c} and Claim~\ref{Cl_inf_vp_positive}, we even have $U \subset \RR^*$.
\end{proof}
\bigskip

\begin{proof}[Proof of Theorem~\ref{Thm_smooth_convergence}.]
For any $x \in \RR^*$ pick a neighborhood $x \in U^{x} \subset \RR^\infty$ and a sequence of diffeomorphisms $\psi^{x}_i : U^x \to V^x_i \subset \RR^i$ according to Lemma~\ref{Lem_smooth_conv_local}.

\begin{Claim} \label{Cl_psi_x1_psi_x2_smooth_conv}
For any two points $x_1, x_2 \in \RR^*$ the sequence of maps
\[ (\psi^{x_2}_i)^{-1} \circ \psi^{x_1}_i :( \psi^{x_1}_i)^{-1} ( V^{x_1}_i \cap V^{x_2}_i ) \longrightarrow  (\psi^{x_2}_i)^{-1} ( V^{x_1}_i \cap V^{x_2}_i ) \]
converges to the identity map on $U^{x_1} \cap U^{x_2}$ in $C^\infty_{\loc}$.
\end{Claim}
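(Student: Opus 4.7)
The plan is to leverage the pointwise identification afforded by Assertion~(c) of Lemma~\ref{Lem_smooth_conv_local} together with the intertwining relations that $\chi_i := (\psi^{x_2}_i)^{-1} \circ \psi^{x_1}_i$ satisfies with respect to two $C^\infty_{\loc}$-convergent Ricci flow spacetime structures.

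First, I would set up the geometric framework. Each $\psi^{x_k}_i : U^{x_k} \to V^{x_k}_i \subset \RR^i$ is a time-preserving and $\partial_{\tf}$-preserving smooth diffeomorphism onto its image, so the same is true of $\chi_i$ on its natural domain $(\psi^{x_1}_i)^{-1}(V^{x_1}_i \cap V^{x_2}_i) \subset U^{x_1}$. The tautology $\psi^{x_1}_i = \psi^{x_2}_i \circ \chi_i$ gives the pull-back identities
\[
\chi_i^* \bigl((\psi^{x_2}_i)^* g^i\bigr) = (\psi^{x_1}_i)^* g^i, \qquad \chi_i^* \bigl((\psi^{x_2}_i)^* \partial^i_{\tf}\bigr) = (\psi^{x_1}_i)^* \partial^i_{\tf}.
\]
By the smooth convergence established in Assertion~\ref{Thm_smooth_convergence_a} of Theorem~\ref{Thm_smooth_convergence} (via Lemma~\ref{Lem_smooth_conv_local}), both $(\psi^{x_k}_i)^* g^i \to g^\infty$ and $(\psi^{x_k}_i)^* \partial^i_{\tf} \to \partial^\infty_{\tf}$ in $C^\infty_{\loc}$ on $U^{x_k}$. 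Hence on any compact $K \subset U^{x_1} \cap U^{x_2}$ and for large $i$, the map $\chi_i|_K$ intertwines two Riemannian structures (and two time vector fields) that are both $C^\infty$-close to the common limit on $K$.

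Second, I would establish pointwise convergence $\chi_i(y) \to y$ for every $y \in U^{x_1} \cap U^{x_2}$. Applying Assertion~(c) of Lemma~\ref{Lem_smooth_conv_local} for the chart $\psi^{x_2}_i$ to the constant sequence $y \in U^{x_2}$ gives $\psi^{x_2}_i(y) \to y$ within $\CF$. Since $y$ also lies in $U^{x_1}$, a second application of Assertion~(c), now for $\psi^{x_1}_i$ with input sequence $\psi^{x_2}_i(y) \to y$, yields $\psi^{x_2}_i(y) \in V^{x_1}_i$ for large $i$ together with
\[
\chi_i(y) = (\psi^{x_1}_i)^{-1}\bigl(\psi^{x_2}_i(y)\bigr) \longrightarrow y \quad \text{in} \quad U^{x_1}.
\]

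Third, I would upgrade this pointwise statement to $C^\infty_{\loc}$-convergence. Fix a compact $K \subset U^{x_1} \cap U^{x_2}$; the intertwining relations above realise $\chi_i|_K$ time-slice by time-slice as a Riemannian isometry between two smooth metrics that are arbitrarily close to $g^\infty_t$ in any $C^m$-norm. A standard bootstrap (for instance in $g^\infty_t$-harmonic coordinates, where the isometry equation $\chi_{i,t}^* h^2_{i,t} = h^1_{i,t}$ reads as a quasilinear elliptic system with smoothly convergent coefficients, supplemented by the $\partial_{\tf}$-equivariance to propagate spatial estimates in the time direction) yields uniform $C^m$ bounds on $\chi_i|_K$ for every $m$. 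By Arzel\`a--Ascoli, every subsequence has a further $C^\infty_{\loc}$-convergent subsequence $\chi_i \to \chi_\infty$; passing the intertwining relations to the limit, $\chi_\infty$ is a time-preserving, $\partial^\infty_{\tf}$-preserving local isometry of the limit Ricci flow spacetime, while Step~2 forces $\chi_\infty(y) = y$ for \emph{every} $y$ in the overlap, so $\chi_\infty = \id$. Uniqueness of the limit promotes the full sequence to $\chi_i \to \id$ in $C^\infty_{\loc}$. The main obstacle is the $C^m$-bootstrap in this last step: everything else follows cleanly from the structure of the charts produced by Lemma~\ref{Lem_smooth_conv_local}, while extracting uniform derivative bounds on the comparison maps from the isometry relation with smoothly convergent source and target metrics is the place where some genuine regularity input is needed.
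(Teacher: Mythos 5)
Your argument mirrors the paper's own proof: pointwise convergence from two applications of the point-identification statement (Assertion~\ref{Thm_smooth_convergence_c}, supplied by Lemma~\ref{Lem_smooth_conv_local}), followed by uniform $C^m$-bounds on the transition maps coming from the fact that they intertwine two $C^\infty_{\loc}$-convergent Ricci flow structures (Assertion~\ref{Thm_smooth_convergence_a}), and Arzel\`a--Ascoli plus uniqueness of the pointwise limit to conclude $C^\infty_{\loc}$-convergence to the identity. One small slip in Step~2: with your definition $\chi_i := (\psi^{x_2}_i)^{-1} \circ \psi^{x_1}_i$, the quantity you actually compute, $(\psi^{x_1}_i)^{-1}(\psi^{x_2}_i(y))$, equals $\chi_i^{-1}(y)$ rather than $\chi_i(y)$; swapping the order of the two applications of Assertion~\ref{Thm_smooth_convergence_c} (first obtaining $\psi^{x_1}_i(y) \to y$ within $\CF$, then applying the forward implication for the chart $\psi^{x_2}_i$) produces the intended $\chi_i(y) \to y$, exactly as in the paper.
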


\begin{proof}
Let $x \in U^{x_1} \cap U^{x_2}$.
Then by Assertion~\ref{Thm_smooth_convergence_c}, which is ensured by Lemma~\ref{Lem_smooth_conv_local}, we have $\psi^{x_1}_i (x) \to x$ within $\CF$ and therefore, again by Assertion~\ref{Thm_smooth_convergence_c}, $\psi^{x_1}_i(x) \in V^{x_2}_i$ for large $i$ and $(\psi^{x_2}_i)^{-1} ( \psi^{x_1}_i (x)) \to x$.
This shows pointwise convergence of the maps in question.
On the other hand, due to Assertion~\ref{Thm_smooth_convergence_a}, which is ensured by Lemma~\ref{Lem_smooth_conv_local}, the maps $(\psi^{x_2}_i)^{-1} \circ \psi^{x_1}_i$ are locally uniformly bounded in every $C^m$-norm.
This shows local smooth convergence.
\end{proof}

After possibly shrinking $U^x, V^x_i$, we can find a sequence of points $x_j \in \RR^*$ such that if we set $U'_j := U^{x_j}$, $V'_{j,i} := V^{x_j}_i$, $\psi'_{ j,i} := \psi^{x_j,}_i$, then the collection of subsets $\{ U'_j \}_{j=1}^\infty$ forms a locally finite cover of $\RR^*$ consisting of relatively compact subsets.

Using a center of mass construction, we can find an open neighborhood $\{ (x,x) \in (\RR^*)^2 \} \subset \Delta_2 \subset (\RR^*)^2$ and a smooth map
\[ \Sigma_2 : [0,1]^2 \times \Delta_2 \longrightarrow \RR^* \]
with the following properties for all $s_1, s_2 \in [0,1]$, $x, x_1, x_2 \in \RR^*$:
\begin{enumerate}[label=(\arabic*)]
\item $\Sigma_2 (1, 0, x_1, x_2) = x_1$, $\Sigma_2 (0,1, x_1, x_2) = x_2$.
\item $\Sigma_2 (s_1, s_2, x, x) = x$.
\item If $t = \tf(x_1)=\tf(x_2)$, then $\tf (\Sigma_2 (s_1, s_2, x_1, x_2)) = t$.
\end{enumerate}
Based on $\Delta_2, \Sigma_2$, we can inductively construct open neighborhoods 
\[ \{ (x,\ldots, x) \in (\RR^*)^N \} \subset \Delta_N \subset (\RR^*)^N \]
and smooth maps 
\[ \Sigma_N : [0,1]^N \times \Delta_N \longrightarrow \RR^* \]
such that for all $(x_1, \ldots, x_N) \in \Delta_N$, $s_1, \ldots, s_N \in [0,1]$ we have $(x_1, \ldots, x_{N-1}) \in \Delta_{N-1}$ and $(\Sigma_{N-1} ( s_1, \ldots, s_{N-1}, x_1, \ldots, x_{N-1}), x_N) \in \Delta_2$ by setting
\[ \Sigma_N (s_1, \ldots, s_N, x_1, \ldots, x_N) 
:= \Sigma_2 \big( 1-s_N, s_N, \Sigma_{N-1} ( s_1, \ldots, s_{N-1}, x_1, \ldots, x_{N-1}), x_N \big). \]
Then $\Sigma_N$ has the following properties for $s_1, \ldots, s_N \in [0,1]$, $x, x_1, \ldots, x_N \in \RR^*$:
\begin{enumerate}[label=(\arabic*)]
\item If for some fixed $j \in \{ 1, \ldots, N \}$ we have $s_{j'} = \delta_{jj'}$ for all $j' = 1, \ldots, N$, then $\Sigma_N (s_1, \lb \ldots, \lb s_N, \lb x_1,\lb  \ldots,\lb x_N) = x_j$. 
\item $\Sigma_N (s_1, \ldots, s_N, x, \ldots, x) = x$.
\item If $t = \tf(x_1)= \ldots =\tf(x_N)$, then $\tf (\Sigma_N (s_1, \ldots, s_N, x_1, \ldots, x_N)) = t$.
\item \label{prop_Sigma_N} If $s_{N'+1}  = \ldots = s_N = 0$ for some $1 \leq N' \leq N$, then $ \Sigma_N (s_1, \ldots,  s_{N}, x_1, \ldots, x_{N})= \Sigma_{N'} (s_1, \ldots,  s_{N'}, x_1, \ldots, x_{N'})$.
\end{enumerate}

Choose a partition of unity $\{ \eta_j \in C^\infty_c (U'_j) \}_{j=1}^\infty$ subordinate to the open cover $\{ U'_j \}_{j=1}^\infty$ over $\RR^*$ and set $\chi'_{j,i} := (\psi'_{ j,i})^{-1} : V'_{j,i} \to U'_j$.
For any $N$ we define
\[ \chi_{N,i} : W_{N,i} \longrightarrow \RR^\infty, \qquad y \longmapsto \Sigma_N \big( \eta_1 (\chi'_{ 1,i} (y)), \ldots, \eta_N (\chi'_{ N,i} (y)), \chi'_{ 1,i} (y), \ldots, \chi'_{ N,i} (y) \big), \]
where $W_{N,i} \subset \RR^i$ is the maximal subset on which $\chi_{N,i}$ is defined.

\begin{Claim} \label{Cl_omega_psi_p_to_id}
For any $1 \leq j \leq N$ the sequence of maps
\[ \chi_{N,i} \circ \psi'_{ j,i} :  (\psi'_{j,i})^{-1} (W_{N,i}) \cap U'_j \longrightarrow \chi_{N,i} ( W_{N,i} \cap V'_{j,i} ) \]
converges to the identity map on $U'_j$ in $C^\infty_{\loc}$.
\end{Claim}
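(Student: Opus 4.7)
My plan is to reduce the claim to the $C^\infty_{\loc}$ convergence result of Claim~\ref{Cl_psi_x1_psi_x2_smooth_conv}. First, I would fix a relatively compact open subset $K \subset U'_j$ and use the local finiteness of $\{U'_k\}$ to observe that only finitely many indices $k$ satisfy $\ov K \cap \supp \eta_k \neq \emptyset$; call this finite set $S_K$. Since $\supp \eta_k \subset U'_k$, we have $\ov K \cap U'_k \neq \emptyset$ for each $k \in S_K$, and Claim~\ref{Cl_psi_x1_psi_x2_smooth_conv} yields $\chi'_{k,i} \circ \psi'_{j,i} \to \mathrm{id}$ in $C^\infty_{\loc}(U'_j \cap U'_k)$. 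In particular, after a slight shrinking, these compositions are all defined on $\ov K$ for large $i$ and converge uniformly to the identity there.

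Next I would verify that $\psi'_{j,i}(\ov K) \subset W_{N,i}$ for large $i$. The $C^0$-convergence above forces the tuple $(\chi'_{k,i}(\psi'_{j,i}(x)))_{k \in S_K}$ to collapse to the diagonal of $(\RR^*)^{|S_K|}$ uniformly in $x \in \ov K$, hence eventually lies in the diagonal neighborhood $\Delta_{|S_K|}$ on which $\Sigma_{|S_K|}$ is defined. For $k \notin S_K$, the function $\eta_k$ vanishes on an open neighborhood of $\ov K$, so the weights $\eta_k(\chi'_{k,i}(\psi'_{j,i}(x)))$ vanish for large $i$ wherever they are defined; after reindexing so that the zero-weight indices appear last, iterated application of property~\ref{prop_Sigma_N} of $\Sigma_N$ allows us to discard them and rewrite $\chi_{N,i}\circ \psi'_{j,i}|_{\ov K}$ as a genuine $|S_K|$-fold expression.

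The conclusion then follows by passing to the $C^\infty$-limit in
\[ \chi_{N,i}(\psi'_{j,i}(x)) = \Sigma_{|S_K|}\big((\eta_k(\chi'_{k,i}\circ\psi'_{j,i}(x)))_{k\in S_K}, (\chi'_{k,i}\circ\psi'_{j,i}(x))_{k\in S_K}\big), \]
using smoothness of $\Sigma_{|S_K|}$ on $[0,1]^{|S_K|} \times \Delta_{|S_K|}$, the $C^\infty_{\loc}$ convergence of each factor $\chi'_{k,i}\circ\psi'_{j,i}$ to the identity, and smoothness of the $\eta_k$. Combined with $\sum_{k\in S_K}\eta_k \equiv 1$ on $\ov K$ and the diagonal-preserving property~(2) of $\Sigma_N$, the limit becomes $\Sigma_{|S_K|}((\eta_k(x))_{k\in S_K}, (x,\ldots,x)) = x$.

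The main obstacle I anticipate is not analytic but combinatorial: careful domain bookkeeping together with the reindexing needed to apply property~\ref{prop_Sigma_N}, which only sanctions dropping zero-weight terms when they appear as the last entries. This forces an a priori reshuffling of indices depending on $\ov K$ and the use of property~\ref{prop_Sigma_N} inductively. Modulo this technical step, the analytic content of the claim is a direct consequence of the smoothness of the iterated barycenter $\Sigma_N$ and the $C^\infty_{\loc}$ convergence in Claim~\ref{Cl_psi_x1_psi_x2_smooth_conv}.
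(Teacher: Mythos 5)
Your reduction to Claim~\ref{Cl_psi_x1_psi_p_to_id}-- sorry, to Claim~\ref{Cl_psi_x1_psi_x2_smooth_conv}-- by unwinding the formula defining $\chi_{N,i}$ is exactly the route the paper has in mind, and your final passage to the limit using smoothness of $\Sigma_N$ and properties (1), (2) is fine. One step of the argument, however, is stated incorrectly and would not survive scrutiny as written.

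The map $\Sigma_N$ is built by the recursion $\Sigma_N(s_1,\dots,s_N,x_1,\dots,x_N)=\Sigma_2\bigl(1-s_N,s_N,\Sigma_{N-1}(s_1,\dots,s_{N-1},x_1,\dots,x_{N-1}),x_N\bigr)$ and is therefore \emph{not} symmetric in its arguments. Consequently you are not entitled to ``reindex so that the zero-weight indices appear last'' and then invoke property~\ref{prop_Sigma_N}: permuting the entries changes the value of $\Sigma_N$. What you actually need, and what does hold, is that the recursive construction makes $\Sigma_N$ insensitive to the entry $x_k$ at \emph{any} position $k$ at which $s_k=0$, not merely at trailing positions. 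This follows directly from the recursion together with property (1) in the form $\Sigma_2(1,0,a,b)=a$: at level $k$ one gets $\Sigma_k(s_1,\dots,s_{k-1},0,x_1,\dots,x_k)=\Sigma_{k-1}(s_1,\dots,s_{k-1},x_1,\dots,x_{k-1})$, so the dependence on $x_k$ drops out, and feeding this into the higher levels shows the same for $\Sigma_N$. Iterating over all $k\notin S_K$ (in place, without reordering) collapses $\chi_{N,i}\circ\psi'_{j,i}$ to the $|S_K|$-fold expression you wrote down, with the $S_K$-indices kept in their original relative order --- which is exactly the expression you produce, only obtained by a different justification. With this repair (``drop zero-weight entries in place via the recursion'' rather than ``reindex and use property~\ref{prop_Sigma_N}''), your argument coincides with the paper's intended one. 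You should also spell out that for $x\in\ov K$ and $k\notin S_K$ the quantity $\eta_k(\chi'_{k,i}(\psi'_{j,i}(x)))$ is interpreted as $0$ whenever $\psi'_{j,i}(x)\notin V'_{k,i}$ (this extension is what makes $W_{N,i}$ cover $\psi'_{j,i}(\ov K)$ for large $i$), while for $k\in S_K$ and $x$ near $\supp\eta_k\subset U'_j\cap U'_k$ the inclusion $\psi'_{j,i}(x)\in V'_{k,i}$ follows from Claim~\ref{Cl_psi_x1_psi_x2_smooth_conv} by compactness, which is the ``domain bookkeeping'' you alluded to.
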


\begin{proof}
This is a direct consequence of Claim~\ref{Cl_psi_x1_psi_x2_smooth_conv} and the definition of $\chi_{N,i}$.
\end{proof}

Set 
\[ U''_N := \bigcup_{j=1}^N U'_j \subset \RR^*, \qquad V''_{N,i} := \bigcup_{j=1}^N \psi'_{j,i} (  \{ \eta_j > 0 \}). \]

\begin{Claim} \label{Cl_sequenc_i_star}
There is a sequence $1 \leq i^*_1 < i^*_2 < \ldots$ such that the following is true:
\begin{enumerate}[label=(\alph*)]
\item \label{Cl_sequenc_i_star_a} If $i \geq i^*_N$, then $V''_{N,i} \subset \chi_{N,i}^{-1} (U''_N)$ and the map
\[ \chi_{N,i} |_{V''_{N,i}} :  V''_{N,i} \longrightarrow U''_N \]
is a diffeomorphism onto its image.
\item \label{Cl_sequenc_i_star_b} For any $j \geq 1$ the following is true for some large $N^*_j \geq j$.
If $N^*_j \leq N_1 \leq N_2$ and $i \geq i^*_{j}$, then
\[ \chi_{N_1, i} \circ \psi'_{j,i} |_{\{ \eta_j > 0 \} }= \chi_{N_2,i} \circ \psi'_{j,i} |_{\{ \eta_j > 0 \} }. \]
\end{enumerate}
\end{Claim}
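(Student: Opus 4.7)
The plan is to prove parts (b) and (a) of Claim~\ref{Cl_sequenc_i_star} in that order, exploiting that the cover $\{U'_j\}$ is locally finite and that each $\supp\eta_j \subset U'_j$ is compact. The thresholds $i^*_N$ will be obtained by finite induction on $N$: since $\chi_{N,i}$ is built from the finite collection $\{\chi'_{j,i}\}_{j \leq N}$, only finitely many convergence statements from Claims~\ref{Cl_psi_x1_psi_x2_smooth_conv} and \ref{Cl_omega_psi_p_to_id} enter at each stage, and these can be met simultaneously for large $i$.

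For part (b), I would set
\[ N^*_j := \max\big(\{j\} \cup \{j' \in \IN \;\; : \;\; \supp\eta_{j'} \cap \overline{\{\eta_j > 0\}} \neq \emptyset\}\big), \]
which is finite by local finiteness of the cover. For any $j' > N^*_j$ the compact sets $\overline{\{\eta_j > 0\}}$ and $\supp\eta_{j'}$ are disjoint, hence have positive distance. By Claim~\ref{Cl_psi_x1_psi_x2_smooth_conv} the map $\chi'_{j',i} \circ \psi'_{j,i}$ converges uniformly to the identity on $\overline{\{\eta_j > 0\}}$, so for $i$ sufficiently large $\eta_{j'}(\chi'_{j',i}(\psi'_{j,i}(x))) = 0$ for every $x \in \{\eta_j > 0\}$. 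Property~\ref{prop_Sigma_N} of $\Sigma_N$ then collapses the formula for $\chi_{N_2,i}(\psi'_{j,i}(x))$ down to the one for $\chi_{N_1,i}(\psi'_{j,i}(x))$ whenever $N^*_j \leq N_1 \leq N_2$, yielding the desired equality. This argument also simultaneously justifies that $\chi_{N,i}$ is well-defined at $\psi'_{j,i}(x)$ even when $\psi'_{j,i}(x) \notin V'_{j',i}$ for some $j' \leq N$, since the corresponding coordinate has vanishing weight.

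For part (a), fix $N$. By Claim~\ref{Cl_omega_psi_p_to_id}, for each $j \leq N$ the composition $\chi_{N,i} \circ \psi'_{j,i}$ converges to the identity on $U'_j$ in $C^\infty_{\loc}$. On the relatively compact set $\overline{\{\eta_j > 0\}}$ it is therefore a $C^\infty$-small perturbation of the identity for large $i$, hence by the inverse function theorem a diffeomorphism onto an open neighborhood contained in $U'_j \subset U''_N$. Taking the union over $j \leq N$ yields both the inclusion $\chi_{N,i}(V''_{N,i}) \subset U''_N$ and the local diffeomorphism property of $\chi_{N,i}|_{V''_{N,i}}$. For global injectivity, suppose $\chi_{N,i}(y) = \chi_{N,i}(y') = x \in U''_N$ with $y = \psi'_{j_1,i}(x_1)$, $y' = \psi'_{j_2,i}(x_2)$, $x_k \in \{\eta_{j_k} > 0\}$; the pointwise convergence $\chi_{N,i} \circ \psi'_{j_k,i} \to \id$ forces $x_1, x_2 \to x$, and Claim~\ref{Cl_psi_x1_psi_x2_smooth_conv} applied to $(\psi'_{j_2,i})^{-1} \circ \psi'_{j_1,i}$ then shows that $(\psi'_{j_2,i})^{-1}(y)$ and $(\psi'_{j_2,i})^{-1}(y')=x_2$ both approach $x$. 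The local injectivity of $\chi_{N,i} \circ \psi'_{j_2,i}$ near $x$ then forces $y = y'$ for $i \geq i^*_N$.

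The main obstacle is globalizing injectivity across overlapping charts: per-chart local diffeomorphism is immediate from smooth convergence to the identity, but ruling out a coincidence $\chi_{N,i}(y)=\chi_{N,i}(y')$ between distinct charts requires the uniform control given by Claim~\ref{Cl_psi_x1_psi_x2_smooth_conv}. I would handle this by enlarging $i^*_N$ so that, simultaneously on $\bigcup_{j \leq N} \overline{\{\eta_j > 0\}}$ and for every pair $(j_1, j_2)$ with $j_1, j_2 \leq N$: (i) each $\chi_{N,i} \circ \psi'_{j,i}$ is $C^1$-close enough to the identity to be injective with a quantitatively controlled image, and (ii) each transition $(\psi'_{j_2,i})^{-1} \circ \psi'_{j_1,i}$ is defined and $C^0$-close to the identity on the relevant overlap. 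Since only finitely many indices $\leq N$ are involved, both conditions hold for all sufficiently large $i$, and this finite selection gives the desired $i^*_N$.
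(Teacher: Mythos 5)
Your argument follows the paper's proof closely in both parts: the finite induction on $N$, the use of local finiteness and compactness of $\supp\eta_j$ to obtain $N^*_j$, Property~\ref{prop_Sigma_N} of $\Sigma_N$ to collapse $\chi_{N_2,i}$ to $\chi_{N_1,i}$, and for global injectivity the transfer through a single chart via Claim~\ref{Cl_psi_x1_psi_x2_smooth_conv} followed by local injectivity of $\chi_{N,i}\circ\psi'_{j,i}$. The one place where the justification does not quite deliver what is claimed is in part (b): Claim~\ref{Cl_psi_x1_psi_x2_smooth_conv} gives $C^\infty_{\loc}$-convergence of $\chi'_{j',i}\circ\psi'_{j,i}$ to the identity only on the overlap $U'_j\cap U'_{j'}$, so it does not by itself yield ``converges uniformly to the identity on $\overline{\{\eta_j>0\}}$''. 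With your weaker choice of $N^*_j$ (supports disjoint rather than charts disjoint), $\overline{\{\eta_j>0\}}$ need not be contained in $U'_{j'}$; the transition map may not even be defined on all of it, and at points $x\in\{\eta_j>0\}\setminus U'_{j'}$ with $\psi'_{j,i}(x)\in V'_{j',i}$ the Claim gives no control. The correct way to rule out $\chi'_{j',i}(\psi'_{j,i}(x))\in\supp\eta_{j'}$ there is the compactness-and-subsequence argument: a sequence of such coincidences $\psi'_{j,i}(x_i)=\psi'_{j',i}(z_i)$ with $z_i\in\supp\eta_{j'}$ would, after passing to subsequences and using Theorem~\ref{Thm_smooth_convergence}\ref{Thm_smooth_convergence_c}, force a common limit within $\CF$ lying in both $\overline{\{\eta_j>0\}}$ and $\supp\eta_{j'}$, contradicting your disjointness. (The paper's stronger choice, $U'_j\cap U'_{j'}=\emptyset$ for $j'\geq N^*_j$, makes the overlap in Claim~\ref{Cl_psi_x1_psi_x2_smooth_conv} literally empty; the disjointness conclusion still rests on this same underlying mechanism.) Your weaker $N^*_j$ is adequate once that argument is supplied. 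The rest of part (a) is the paper's argument, modulo the implicit passage to subsequences where the chart indices $j_1,j_2$ depend on $i$.
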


\begin{proof}
For Assertion~\ref{Cl_sequenc_i_star_a} fix $N$.
If $i$ is large enough, then for all $j=1,\ldots,N$ the map $\chi_{N,i} \circ \psi'_{ j,i}$ restricted to $\{ \eta_j > 0 \} \subset U'_j$ is a diffeomorphism onto its image and $(\chi_{N,i} \circ \psi'_{ j,i})( \{ \eta_j > 0 \} ) \subset U'_j$.
So $\chi_{N,i} |_{V''_{N,i}}$ is a local diffeomorphism and
\[  \chi_{N,i} (V''_{N,i} ) = \bigcup_{j=1}^N (\chi_{N,i} \circ \psi'_{ j,i})( \{ \eta_j > 0 \})  \subset U''_N. \]
To see that $\chi_{N,i}$ is injective for large $i$, suppose that 
$\chi_{N,i} (x_{1,i}) = \chi_{N,i}(x_{2,i})$ for $x_{1,i}, x_{2,i} \in V''_{N,i}$.
So there are $x'_{1,i} \in \{ \eta_{j_{1,i}} > 0 \}$, $x'_{2,i} \in \{ \eta_{j_{2,i}} > 0 \}$, for $j_{1,i}, j_{2,i} \in \{ 1, \ldots, N \}$, such that $x_{1,i}=\psi'_{j_{1,i},i} (x'_{1,i})$, $x_{2,i}=\psi'_{j_{2,i},i} (x'_{2,i})$.
So
\[ ( \chi_{N,i} \circ \psi'_{j_{1,i},i} )(x'_{1,i}) = ( \chi_{N,i}  \circ \psi'_{j_{2,i},i} )(x'_{2,i}). \]
Using Claim~\ref{Cl_omega_psi_p_to_id}, it follows that for large $i$ the points $x'_{1,i}, x'_{2,i}$ are close enough such that we can use Claim~\ref{Cl_psi_x1_psi_x2_smooth_conv} to find a point $x''_{2,i} \in U'_{j_{1,i}}$ near $\supp \eta_{j_{1,i}}$ such that $x_{2,i}=\psi'_{j_{2,i},i} (x'_{2,i}) =\psi'_{j_{1,i},i} (x''_{2,i})$.
Therefore,
\[ ( \chi_{N,i}  \circ \psi'_{j_{1,i},i} )(x'_{1,i}) 
= ( \chi_{N,i}  \circ \psi'_{j_{2,i},i} )(x'_{2,i})
= ( \chi_{N,i}  \circ \psi'_{j_{1,i},i} )(x''_{2,i}), \]
which implies $x'_{1,i} = x''_{2,i}$ by Claim~\ref{Cl_omega_psi_p_to_id} for large $i$.
Thus $x_{1,i}=\psi'_{j_{1,i},i} (x'_{1,i}) = \psi'_{j_{1,i},i} (x''_{2,i}) = x_{2,i}$, proving Assertion~\ref{Cl_sequenc_i_star_a}.

To see Assertion~\ref{Cl_sequenc_i_star_b}, fix $j$ and choose $N^*_j \geq j$ large enough such that $U'_j$ is disjoint from $U'_{j'}$ for $j' \geq N^*_j$.
This can be achieved since we assumed that the subset $U'_j$ are relatively compact and form a locally finite cover of $\RR^*$.
Let $N^*_j \leq N_1 \leq N_2$.
If $i$ is sufficiently large, then by Claim~\ref{Cl_psi_x1_psi_x2_smooth_conv} we have $\chi'_{j',i} (\psi'_{j,i} ( \{ \eta_j > 0 \})) \cap \supp \eta_{j'} = \emptyset$ for $j' \geq N^*_j$.
Then $\chi_{N_1,j}|_{\psi'_{j,i} ( \{ \eta_j > 0 \})} = \chi_{N_2,j}|_{\psi'_{j,i} ( \{ \eta_j > 0 \})}$ by Property~\ref{prop_Sigma_N} of $\Sigma_{N_2}$.
\end{proof}

Choose $N_1 \leq N_2 \leq \ldots \to \infty$ such that $i \geq \max \{ i^*_1, \ldots, i^*_{N_i} \}$ if $N_i > 1$.
Set $U_i = V_i := \emptyset$ if $N_i = 1$ and otherwise set
\[ 
 V_i := V''_{N_i,i} , \qquad
 \chi_{i}:  =  \chi_{N_i,i} |_{V_i } : V_i \to U_i := \chi_i (V_i) . \]
 Note that Claim~\ref{Cl_sequenc_i_star} implies that $\chi_{i}$ is a diffeomorphism and that for any $j \geq 1$ and $N \geq N^*_j$ the following is true for large $i$
\begin{equation} \label{eq_omega_i_omega_i_N}
 \chi_{ i} \circ \psi'_{j,i} |_{\{ \eta_j > 0 \} }= \chi_{N,i} \circ \psi'_{j,i} |_{\{ \eta_j > 0 \} }. 
\end{equation}
Set $\psi_i := \chi_i^{-1} : U_i \to V_i$.
Then $U_1 \subset U_2 \subset \ldots \subset \RR^*$ and $\RR^* \subset \bigcup_{i=1}^\infty U_i \subset \RR^\infty$.

\begin{Claim} \label{Cl_omega_i_props}
For any $j \geq 1$ we have $\{ \eta_j > 0 \} \subset U_i$ for large $i$ and the sequence of maps
\[ \chi_{i} \circ \psi'_{j,i}|_{\{ \eta_j > 0 \} } : \{ \eta_j > 0 \} \longrightarrow ( \chi_{i} \circ \psi'_{j,i}) ( \{ \eta_j > 0 \} ) \]
converge to the identity map on $\{ \eta_j > 0 \}$ in $C^\infty_{\loc}$.
\end{Claim}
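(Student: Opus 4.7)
The plan is to reduce this claim to two facts already proved: the stabilization identity \eqref{eq_omega_i_omega_i_N} from Claim~\ref{Cl_sequenc_i_star}\ref{Cl_sequenc_i_star_b} and the $C^\infty_{\loc}$ convergence statement of Claim~\ref{Cl_omega_psi_p_to_id}. The point is that although $\chi_i = \chi_{N_i,i}|_{V_i}$ involves the index $N_i \to \infty$, restricting to $\{\eta_j > 0\}$ collapses this dependence to a \emph{fixed} $N = N^*_j$ depending only on $j$.

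Fix $j \geq 1$ and let $N^*_j$ be as in Claim~\ref{Cl_sequenc_i_star}\ref{Cl_sequenc_i_star_b}. For large $i$ we have $N_i \geq N^*_j \geq j$, and by our choice of the sequence $(N_i)$ we also have $i \geq i^*_{N_i} \geq i^*_j$. Applying \eqref{eq_omega_i_omega_i_N} with $N_1 = N^*_j$ and $N_2 = N_i$ gives
\[ \chi_i \circ \psi'_{j,i}\big|_{\{\eta_j > 0\}} = \chi_{N_i, i} \circ \psi'_{j,i}\big|_{\{\eta_j > 0\}} = \chi_{N^*_j, i} \circ \psi'_{j,i}\big|_{\{\eta_j > 0\}}. \]
Since $N^*_j$ is independent of $i$, Claim~\ref{Cl_omega_psi_p_to_id} applied with $N = N^*_j$ yields that $\chi_{N^*_j, i} \circ \psi'_{j,i}$ converges to the identity on $U'_j$ in $C^\infty_{\loc}$; restricting to $\{\eta_j > 0\}$ gives the desired $C^\infty_{\loc}$ convergence on $\{\eta_j > 0\}$.

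It remains to verify the inclusion $\{\eta_j > 0\} \subset U_i$ for large $i$. Here I will use that $\eta_j$ has compact support $\supp \eta_j \subset U'_j$, so we may pick a relatively compact open neighborhood $\supp \eta_j \subset W \Subset U'_j$ on which $\chi_{N^*_j, i} \circ \psi'_{j,i}$ is defined. By the $C^\infty_{\loc}$ convergence to the identity, for large $i$ this map is a diffeomorphism from $W$ onto an open subset of $\RR^\infty$ that is $C^0$-close to $W$; standard invariance of domain then yields $\{\eta_j > 0\} \subset \chi_{N^*_j, i}(\psi'_{j,i}(W))$. Combined with the identity displayed above and the inclusion $\psi'_{j,i}(W) \subset V_i$ (which holds because $j \leq N_i$ and $W$ is a neighborhood of $\supp \eta_j$), this gives $\{\eta_j > 0\} \subset \chi_i(V_i) = U_i$. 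The only delicate point is the invariance-of-domain step, but this is routine once one fixes a compact exhaustion of $\{\eta_j > 0\}$ by neighborhoods inside $W$ and uses that the maps are local diffeomorphisms close to the identity in $C^1$.
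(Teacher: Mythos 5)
The first part of your argument — establishing the $C^\infty_{\loc}$ convergence of $\chi_i \circ \psi'_{j,i}$ to the identity on $\{\eta_j > 0\}$ by using \eqref{eq_omega_i_omega_i_N} to replace the varying $N_i$ by the fixed $N^*_j$ and then invoking Claim~\ref{Cl_omega_psi_p_to_id} — is correct and matches the intended route.

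The inclusion $\{\eta_j > 0\} \subset U_i$, however, has a genuine gap. You assert that $\psi'_{j,i}(W) \subset V_i$ ``because $j \leq N_i$ and $W$ is a neighborhood of $\supp \eta_j$.'' This does not follow: by definition $V_i = V''_{N_i,i} = \bigcup_{j'=1}^{N_i} \psi'_{j',i}(\{\eta_{j'}>0\})$, and the $j$-th term of this union is $\psi'_{j,i}(\{\eta_j>0\})$, \emph{not} $\psi'_{j,i}(W)$. Since $W \supset \supp\eta_j \supsetneq \{\eta_j>0\}$, points of $\psi'_{j,i}(W \setminus \{\eta_j>0\})$ lie in $V_i$ only if they happen to land in $\psi'_{j',i}(\{\eta_{j'}>0\})$ for some \emph{other} $j'$, which needs an argument (via Claim~\ref{Cl_psi_x1_psi_x2_smooth_conv}, and then only uniformly on compact subsets of the other $\{\eta_{j'}>0\}$). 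There is a second, related problem: the stabilization identity \eqref{eq_omega_i_omega_i_N} is proved only on $\{\eta_j>0\}$, so even granting $\psi'_{j,i}(W) \subset V_i$ you cannot conclude that $\chi_i$ and $\chi_{N^*_j,i}$ agree on $\psi'_{j,i}(W) \setminus \psi'_{j,i}(\{\eta_j>0\})$; the degree argument over $W$ would therefore produce $\chi_{N^*_j,i}(\psi'_{j,i}(W))$, not $\chi_i(\psi'_{j,i}(W))$.

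The clean way to close the inclusion is to avoid enlarging $\{\eta_j>0\}$ from within $U'_j$ and instead use the cover by the other $\{\eta_{j'}>0\}$. Since $\supp\eta_j$ is compact and $\{\{\eta_{j'}>0\}\}_{j'}$ is an open cover of $\RR^*$, choose finitely many $j'_1,\dots,j'_m$ with $\supp\eta_j \subset \bigcup_l \{\eta_{j'_l}>0\}$ and, by a shrinking lemma, compact sets $K_l \Subset \{\eta_{j'_l}>0\}$ with $\bigcup_l K_l \supset \supp\eta_j$. The convergence statement you have already established applies to each index $j'_l$: $\chi_i \circ \psi'_{j'_l,i} \to \id$ in $C^\infty_{\loc}$ on $\{\eta_{j'_l}>0\}$. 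A degree/invariance-of-domain argument on each $K_l \Subset \{\eta_{j'_l}>0\}$ then gives $K_l \subset (\chi_i \circ \psi'_{j'_l,i})(\{\eta_{j'_l}>0\}) \subset U_i$ for $i$ large (so that also $j'_l \le N_i$), and therefore $\{\eta_j>0\} \subset \supp\eta_j \subset \bigcup_l K_l \subset U_i$.
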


\begin{proof}
This is a direct consequence of Claim~\ref{Cl_omega_psi_p_to_id} and (\ref{eq_omega_i_omega_i_N}).
\end{proof}

It follows that $\RR^* \subset \bigcup_{i=1}^\infty U_i \subset \RR^\infty$ and after possibly restricting the maps $\psi_i$ to slightly smaller subsets, we can achieve that $U_1 \subset U_2 \subset \ldots$ without changing the statement Claim~\ref{Cl_omega_i_props}.

It now follows, using Lemma~\ref{Lem_smooth_conv_local} and Claim~\ref{Cl_omega_i_props}, that we have local smooth convergence
\begin{equation} \label{eq_loc_conv_Ass_a}
 \psi_i^* g^i \to g^\infty, \qquad \psi_i^* \partial^i_{\tf} \to \partial^\infty_{\tf}, \qquad v_i \circ \psi_i \to v_\infty. 
\end{equation}
By the same reason, Assertions~\ref{Thm_smooth_convergence_f}, \ref{Thm_smooth_convergence_e} hold.
After possibly shrinking $U_i, V_i$, the local convergences (\ref{eq_loc_conv_Ass_a}) can be turned into global convergences, as stated in Assertion~\ref{Thm_smooth_convergence_a}.

Next, we show Assertion~\ref{Thm_smooth_convergence_c}.
Consider a sequence $x_i \in \XX^i$ and a point $x_\infty \in \XX^\infty$.
Choose $j \geq 1$ such that $x_\infty \in \{ \eta_j > 0 \}$.
If $x_i \to x_\infty$ within $\CF$, then by Lemma~\ref{Lem_smooth_conv_local} we have $x_i \in V'_{j,i}$ for large $i$ and $\psi^{\prime -1}_{ j, i} (x_i) \to x_\infty$.
So by Claim~\ref{Cl_omega_i_props}, for large $i$ we have $x_i \in V_i$ and 
\[ \psi_{ i}^{-1} (x_i) = (\chi_i \circ \psi'_{ j, i})  (\psi_{j, i}^{\prime -1} (x_i) ) \to x_\infty. \]
On the other hand, if $\psi_{ i}^{-1} (x_i) \to x_\infty$, then for large $i$
\[ (\psi^{\prime -1}_{j,i}) (x_i) = (\chi_i \circ \psi'_{ j, i})^{-1} ( \psi_{ i}^{ -1} (x_i) ) \to x_\infty, \]
which implies that $x_i \to x_\infty$ within $\CF$ by Lemma~\ref{Lem_smooth_conv_local}.

To see Assertion~\ref{Thm_smooth_convergence_b}, note that Assertion~\ref{Thm_smooth_convergence_c} guarantees pointwise convergence of the pullbacks of $K^i$ to $K^\infty$.
Together with Assertion~\ref{Thm_smooth_convergence_a} and local derivative estimates, this convergence can be upgraded to local smooth convergence.
After possibly shrinking, $U_i, V_i$, we obtain the convergence statement in Assertion~\ref{Thm_smooth_convergence_b}.

Lastly, we will deduce Assertion~\ref{Thm_smooth_convergence_d} from the other assertions.
Suppose that after passing to a subsequence, the convergence (\ref{eq_XXimui_convergence_in_smooth_sec}) is time-wise $t \in I^\infty$.
Since the maps $\varphi^i_t, \varphi^\infty_t$ are isometric embeddings and the $\psi_i$ are locally uniformly Lipschitz, it suffices to show that for any $x \in \RR^*_t$ we have
\begin{equation} \label{eq_dZ_varphi_ptwise}
 d^Z_t (\varphi^i_t (\psi_i(x)), \varphi^\infty_t (x) )  \longrightarrow 0. 
\end{equation}
To see this, fix $x \in \RR^*_t$ and choose $t^* > t$ such that $x^*:= x(t^*) \in \RR^*$ exists.
Then $x^*_i := \psi_i ( x^* ) \to x^*$ within $\CF$, by Assertion~\ref{Thm_smooth_convergence_c}.
So by Theorem~\ref{Thm_tdmu_convergence_reverse}\ref{Thm_tdmu_convergence_reverse_b}
\[  (\varphi^i_t )_* \nu^i_{x^*_i; t} \xrightarrow[i \to \infty]{\quad W_1 \quad} (\varphi^\infty_t )_* \nu^\infty_{x^*; t}. \]
Let $x'_i := x^*_i( t) \in \XX^i_t$.
By Assertion~\ref{Thm_smooth_convergence_a} we have $d^i_t (x'_i, \psi_i(x)) \to 0$.
Therefore, using Proposition~\ref{Prop_HK_regular}\ref{Prop_HK_regular_a} for $t^*$ close to $t$
\begin{multline*}
  \limsup_{i \to \infty} d^Z_t (\varphi^i_t (\psi_i(x)), \varphi^\infty_t (x) ) 
= \limsup_{i \to \infty} d^Z_t (\varphi^i_t (x'_i), \varphi^\infty_t (x) ) \\
\leq  \limsup_{i \to \infty} \big( d^{\XX^i_t}_{W_1} (\delta_{x'_i}, \nu^i_{x^*_i; t} ) +
d^{Z_t}_{W_1} ( (\varphi^i_t)_* \nu^i_{x^*_i; t}, (\varphi^\infty_t)_* \nu^i_{x^*; t} ) + d^{\XX^\infty_t}_{W_1} ( \nu^i_{x^*; t}, \delta_{x} ) \big) 
\leq 2 C \sqrt{ t^* - t}. 
\end{multline*}
Letting $t^* \searrow t$ implies (\ref{eq_dZ_varphi_ptwise}).
\end{proof}
\bigskip

\subsection{Convergence of parabolic neighborhoods with bounded curvature} \label{subsec_conv_parab_nbhd_reg_part}
Suppose that we are still in the same setting as in Subsection~\ref{subsec_smooth_convergence}.
In the following, we consider a sequence of points $x_i \in \XX^i$ that converges to a point $x_\infty \in \XX^\infty$ within $\CF$.
We will assume that parabolic neighborhoods $P_i$ of uniform size around $x_i$ are unscathed and that $|{\Rm}| \leq C$ on $P_i$ for some uniform $C < \infty$.
We will see that under these conditions we obtain the existence of an unscathed parabolic neighborhood $P_\infty$ around $x_\infty$ on which the same curvature bound holds.
The radius and the backward time of this parabolic neighborhood is the same as that of the parabolic neighborhood $P_\infty$, however, the forward time may be smaller.
In this case, the density functions of any conjugate heat flow --- in particular those of the conjugate heat kernels --- converge to zero near the forward end of $P_\infty$.

In order to state the following theorem we define for any point $x \in \MM$ in a Ricci flow spacetime $\MM$ over $I$ and any $D , T^-, T^+ > 0$ the {\bf open parabolic neighborhood} as follows:
\[ P^\circ (x; D, -T^-, T^+) := P(x;  D, -T^-, T^+) \cap \MM_{(\tf (x) - T^-, \tf(x) + T^+)}. \]
We say that $P^\circ (x; D, -T^-, T^+)$ is {\bf unscathed} if for any $D' \in (0,D)$ the ball $B(x, D')$ is relatively compact and any point $y \in B(x, D)$ survives until any time in the time-interval $(\tf (x) - T^-, \tf(x) + T^+) \cap I$.
This is equivalent to the fact that $P(x; D, -T^{\prime, -}, -T^{\prime, +})$ is unscathed for any $T^{\prime, -} \in (0, T^-)$, $T^{\prime, +} \in (0, T^+)$.

\begin{Theorem} \label{Thm_conv_parab_nbhd}
Consider the setting of Subsection~\ref{subsec_smooth_convergence} and consider points $x_i \in \XX^i$ with $x_i \to x_\infty \in \XX^\infty$ within $\CF$.
Suppose that there are numbers $D , T^-, T^+ > 0$, $C < \infty$ such that for large $i$, $x_i \in \RR^i$, the open parabolic neighborhood $P_i := P^\circ (x_i; D, -T^-, T^+) \subset \RR^i$ is unscathed, that $|{\Rm}| \leq C < \infty$ on $P_i$ and $|B(x_i, D)| \geq c> 0$.
Then $x_\infty \in \RR^* \subset \RR^\infty$ and there is a $0 < T^* \leq T^+$ such that the open parabolic neighborhood  $P_\infty := P^\circ (x_\infty; D, -T^-, T^*) \subset \RR^* \subset \RR^\infty$ is unscathed and $|{\Rm}| \leq C$ on $P_\infty$.
Moreover, if $T^* < T^+$ and $\tf(x_\infty) + T^* < \sup I^\infty$, then the following is true:
\begin{enumerate}[label=(\alph*)]
\item \label{Thm_conv_parab_nbhd_a} No point in $P_\infty$ survives until or past time $\tf(x_\infty) + T^*$.
\item \label{Thm_conv_parab_nbhd_b} For any $\td x \in \XX^\infty_{>T^*}$, any conjugate heat flow $(\td\mu_t = \lb \td v_t \, dg^\infty_t)_{t \in I^\infty \cap (-\infty, t^*)}$ on $\XX^\infty$, where $t^* > T^*$, and any $D' \in (0, D)$ we have
\[ \lim_{t \nearrow \tf(x_\infty)+ T^*} \sup_{(B(x_\infty, D'))(t)} K(\td x, \cdot) 
= \lim_{t \nearrow \tf(x_\infty)+T^*} \sup_{(B(x_\infty, D'))(t)} v^\infty
= \lim_{t \nearrow \tf(x_\infty)+T^*} \sup_{(B(x_\infty, D'))(t)} \td v
= 0. \]
\end{enumerate}
\end{Theorem}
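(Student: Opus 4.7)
The plan is to construct the unscathed parabolic neighborhood $P_\infty$ inside $\RR^*$ by a continuity argument: first produce a small initial neighborhood via Lemma~\ref{Lem_smooth_conv_local}, iteratively extend it, and then identify the forward boundary time $T^*$ as the failure point of either the $\partial_\tf$-flow or of positivity of the conjugate heat flow densities.

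First I would apply the last statement of Lemma~\ref{Lem_smooth_conv_local} to the sequence $x_i \to x_\infty$: the hypotheses of Definition~\ref{Def_smooth_convergence} are immediate from our assumptions, so $x_\infty \in \RR^* \subset \RR^\infty$ together with an open two-sided parabolic ball $P(x_\infty;r_*) \subset \RR^*$ on which smooth convergence to $P_i$ holds via time-preserving, $\partial_\tf$-preserving diffeomorphisms $\psi_i$. Define
\[
T^* := \sup\bigl\{\, t \in (0, T^+]\ :\ P^\circ(x_\infty; D, -T^-, t) \subset \RR^*\ \text{is unscathed with}\ |{\Rm}| \leq C\, \bigr\}.
\]
To show the supremum is realized and the region covers the full spatial ball $B(x_\infty,D)$, I would iteratively enlarge the smooth-convergence region: for any $y$ in the current $\RR^*$-region, the diffeomorphisms $\psi_i$ give $\psi_i(y) \in P_i \cap \RR^i$ with $\psi_i(y) \to y$ within $\CF$ (Theorem~\ref{Thm_smooth_convergence}\ref{Thm_smooth_convergence_c}); since $\psi_i(y)$ inherits the uniform curvature and volume bounds of $P_i$, Lemma~\ref{Lem_smooth_conv_local} yields a new $\RR^*$-neighborhood of $y$. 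A Hamilton-compactness-style covering argument then shows that the full spatial ball is reached at each intermediate time. The backward direction (that $T^-$ is attained) follows by exactly the same mechanism, with no new obstruction.

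For Assertion~\ref{Thm_conv_parab_nbhd_a}, suppose $T^* < T^+$ and $\tf(x_\infty)+T^* < \sup I^\infty$, and some $y \in P_\infty$ survives to $\tau := \tf(x_\infty)+T^*$. For small $\eps > 0$, the approximating sequence $\psi_i(y(\tau-\eps)) \in \RR^i$ flows forward, using unscathedness of $P_i$ and $\partial_\tf$-preservation of $\psi_i$, to $y^*_i := \psi_i(y(\tau-\eps))(\tf(x_i)+T^*) \in P_i$, which satisfies the uniform curvature and volume bounds. After passing to a subsequence, Theorem~\ref{Thm_compactness_sequence_pts} produces a limit conjugate heat flow $(\mu^\infty_t)_{t<\tau}$ on $\XX^\infty$ with $(\nu^i_{y^*_i;t}) \to (\mu^\infty_t)$ within $\CF$. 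The smooth convergence on $P^\circ(x_\infty; D, -T^-, T^*-\eps)$ combined with Theorem~\ref{Thm_smooth_convergence}\ref{Thm_smooth_convergence_e} identifies this limit with $(\nu^\infty_{y(\tau);t})$ on $(-\infty,\tau-\eps]$; letting $\eps \to 0$ and invoking the reproduction formula and uniqueness of conjugate heat flows forces $\mu^\infty_\cdot = \nu^\infty_{y(\tau);\cdot}$. Hence $y^*_i \to y(\tau)$ within $\CF$, and a final application of Lemma~\ref{Lem_smooth_conv_local} at $y^*_i$ places $y(\tau)$ in $\RR^*$ with a neighborhood extending past $T^*$, contradicting maximality.

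For Assertion~\ref{Thm_conv_parab_nbhd_b}, I argue by contradiction: if one of the densities $K(\td x,\cdot)$, $v^\infty$, or $\td v$ fails to vanish uniformly on $(B(x_\infty, D'))(t)$ as $t \nearrow \tau$, extract a sequence $y_k$ at times $t_k \nearrow \tau$ where the relevant density is at least $\alpha > 0$, subsequentially converging within $\CF$ to some $y^* \in \XX^\infty_\tau$. The uniform lower bound on the density satisfies the $v^i$-hypothesis of the quantitative final statement of Lemma~\ref{Lem_smooth_conv_local}, giving a uniform-size parabolic neighborhood of $y^*$ in $\RR^*$; combined with Assertion~\ref{Thm_conv_parab_nbhd_a} this again contradicts maximality. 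The main obstacle will be the identification in the preceding paragraph --- matching the forward-flowed $y^*_i \in P_i$ with $y(\tau) \in \RR^\infty$ inside the natural topology of $\XX^\infty$ --- because that topology records points only through backward conjugate heat kernels, so the identification rests on uniqueness of the limit conjugate heat flow together with the smoothness data up to time $\tau$ (exclusive) and the reproduction formula.
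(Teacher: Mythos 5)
Your approach diverges structurally from the paper's, and the divergence introduces gaps that do not close.

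The paper first applies Hamilton compactness to the parabolic neighborhoods $P_i$ \emph{as their own objects}, extracting an auxiliary pointed Ricci flow spacetime $(\PP', x')$ over the full interval $(\tf(x_\infty)-T^-, \tf(x_\infty)+T^+)$ together with a limit density $v' \geq 0$ solving $\square^* v' = 0$. The strong maximum principle then defines $T^*$ cleanly: $v' > 0$ on $\PP'_{(\tf(x_\infty)-T^-,\,\tf(x_\infty)+T^*)}$ and $v' \equiv 0$ afterwards. Only then is Lemma~\ref{Lem_smooth_conv_local} iterated, and the positivity of $v'$ on compact subsets of $\PP'_{<\tf(x_\infty)+T^*}$ is precisely what controls the step size $r_*(\alpha)$ in that iteration and lets it exhaust the open parabolic neighborhood. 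You instead define $T^*$ as a supremum of forward times admitting an unscathed $\RR^*$-neighborhood and appeal to a ``Hamilton-compactness-style covering argument'' to fill in the spatial ball. This skips the strong maximum principle entirely: without positivity of the limit density $v'$ on compact subsets being established first, nothing rules out the iteration steps $r_*(\alpha)$ shrinking to zero and the iteration stalling before reaching the spatial boundary or the full forward time.

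Assertion~(a) then falls out of the paper's proof almost for free: $v^\infty \to 0$ uniformly on $(B(x_\infty,D'))(t)$ as $t \nearrow \tf(x_\infty)+T^*$ (via the identification $v' = v^\infty\circ\chi$), yet any surviving point at time $\tf(x_\infty)+T^* < \sup I^\infty$ would have $v^\infty$ bounded below on a neighborhood since $\supp\mu^\infty_t = \XX^\infty_t$. Your argument for (a) is instead a forward-survival argument relying on Theorem~\ref{Thm_compactness_sequence_pts} and an identification of the resulting limit conjugate heat flow with $(\nu^\infty_{y(\tau);t})$. You flag this identification as ``the main obstacle,'' and it genuinely is a gap: conjugate heat flows with vanishing variance as $t \nearrow \tau$ are \emph{not} in general conjugate heat kernels (see the remark following Theorem~\ref{Thm_compactness_sequence_pts}), and the tool you invoke --- Theorem~\ref{Thm_smooth_convergence}\ref{Thm_smooth_convergence_e} --- requires $y^*_i \to y(\tau)$ within $\CF$ as a \emph{hypothesis}, which is precisely what you are trying to prove. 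The step is circular as written.

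For Assertion~(b), the quantitative final statement of Lemma~\ref{Lem_smooth_conv_local} takes a lower bound on the \emph{basepoint} density $v^i$ --- the one attached to the metric flow pairs $(\XX^i,(\mu^i_t))$ --- not on an arbitrary conjugate heat flow density $\td v$ or on a heat kernel $K(\td x,\cdot)$. A lower bound on $\td v$ gives no lower bound on $v^i$, so the lemma does not apply to the generic densities appearing in (b). The paper handles (b) by an entirely different mechanism: it writes $\td\mu_t(B)$ and $\mu^\infty_t(B)$ as integrals of the common kernel $\nu^\infty_{x^{**};t}(B)$ against $\td\mu_{t^{**}}$ and $\mu^\infty_{t^{**}}$ at a fixed later time $t^{**}$, and uses the gradient estimate in Definition~\ref{Def_metric_flow}\ref{Def_metric_flow_6} to upgrade vanishing of the $\mu^\infty$-integral to uniform vanishing of the integrand on bounded sets, which then carries over to the $\td\mu$-integral. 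This single comparison handles all three densities at once and requires none of the contradiction machinery you invoke.
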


\begin{Example}
Consider a sequence of singular Ricci flows $\MM^i$ on $S^2 \times S^1$ that develop a non-degenerate neckpinch at some uniform time $T$ (see also Example~\ref{Ex_neckpinch_continuity}).
Suppose that the amount by which distances expand within $\MM^i$ is uniformly bounded.
Assume moreover that the time-$0$-slices $\MM^i_0$ contain open subsets $U_i \subset \MM^i_0$ such that the metric on $U_i$ is isometric to a standard cylinder $S^2(1) \times (0,L_i)$ of length $L_i \to \infty$ and such that the metric on the complements $\MM^i_0 \setminus U_i$ has uniformly bounded diameter and the shape of a neckpinch.
Denote by $\XX^i$ the metric flows associated to $\MM^{\prime,i} := \MM^i$ and fix points $x_i \in \XX^i_{T'}$ for some uniform $T' > T$ (see Theorem~\ref{Thm_sing_flow_to_met_flow}).
If the points $x_i$ are chosen sufficiently close to the neckpinch, then the metric flow pairs $(\XX^i, (\nu_{x_i;t}))$ subsequentially $\IF$-converge to a metric flow pair $(\XX^\infty, (\nu_{x_\infty;t}))$, where $\XX^\infty$ corresponds to one branch $\MM^{\prime, \infty} \subset \MM^\infty$ of a singular flow starting from a metric on $S^2 \times \IR$ that is isometric to $S^2(1) \times ( (-\infty, -1) \cup (1,\infty))$ on the complement of a compact subset and develops a non-degenerate neckpinch at time $T$.
In this example we can find a sequence of points $y_i \in  \RR^i \subset \XX^i$ that converges to some point $y_\infty \in \RR^\infty \subset \XX^\infty$ with $\tf (y_\infty) < T$ close to $T$, such that we have a uniform curvature bound on the two-sided, unscathed parabolic balls $P(y_i;r)$ for some $r > 0$ with $\tf (y_\infty) + r^2 > T$, but $P(y_\infty; r)$ is not unscathed, because it corresponds to a parabolic neighborhood in $\MM^\infty$ that is not fully contained in  $\MM^{\prime, \infty}$.
In this example, any conjugate heat kernel based at a point in $\XX^{ \infty}_{> T}$ converges to zero on $P(y_\infty; r)$ near $T$, as asserted in Theorem~\ref{Thm_conv_parab_nbhd}.
\end{Example}

\begin{proof}
After possibly replacing $T^+$ with $\sup I^\infty - \tf (x_\infty)$ we may assume in the following that $\tf(x_\infty) + T^+ \leq \sup I^\infty$.
Then the property $\tf(x_\infty) + T^* < \sup I^\infty$ holds whenever $T^* \leq T^+$.

Consider an arbitrary subsequence of the given sequence of metric flow pairs.
Note that $\RR^*$ may become larger for such a subsequence.
We will first show parts of the theorem, which don't involve $\RR^*$, and then return to the original sequence of metric flow pairs to show the remaining parts of the theorem.
In the following, $\RR^*$ will always denote the set of points at which the convergence of the current subsequence is smooth.

By \cite{Hamilton_RF_compactness}, we may pass to a further subsequence such that the Ricci flows $g^i$ restricted to $P_i$ and pointed at $x_i$ converge smoothly to a Ricci flow, which we will represent by a pointed Ricci flow spacetime $(\mathcal{P}', x')$ with $\tf(x') = \tf (x_\infty)$ such that $P^\circ (x'; D, -T^-, T^+) = P'$.
We can find an increasing sequence of open subsets $U'_1 \subset U'_2 \subset \ldots \subset \mathcal{P}'$, $\bigcup_{i=1}^\infty U'_i = \mathcal{P}'$ and a sequence of diffeomorphisms onto their images $\psi'_i : U'_i \to \RR^i$ that are time-preserving and $\partial_{\tf}$-preserving and such that $(\psi'_i)^* g^i \to g'$ in $C^\infty_{\loc}$.
After passing to another sequence, we may also assume that $v^i \circ \psi'_i \to v' \in C^\infty (\mathcal{P}')$ with $v' \geq 0$, $\square^* v' =0$.

\begin{Claim} \label{Cl_Pp_Pinfty}
If for some $D_1 \in (0,D]$, $T^-_1 \in (0, T^-]$, $T^+_1 \in (0, T^+]$ we have $P^\circ (x_\infty ; D_1, -T^-_1, T^+_1) \subset \RR^* \subset \RR^\infty$ and this open parabolic neighborhood is unscathed, then there is an isometry of Ricci flow spacetimes $\chi : P^\circ (x' ; D_1, -T^-_1, T^+_1) \to P^\circ (x_\infty ; D_1, -T^-_1, T^+_1)$ with $\chi(x') = x_\infty$ such that the following holds:
\begin{enumerate}[label=(\alph*)]
\item \label{Cl_Pp_Pinfty_a} $v' = v^\infty \circ \chi$ on $P^\circ (x' ; D_1, -T^-_1, T^+_1)$.
\item \label{Cl_Pp_Pinfty_b} For any $y' \in P^\circ (x' ; D_1, -T^-_1, T^+_1)$ we have $\psi'_i (y') \to \chi(y')$ within $\CF$.
\item \label{Cl_Pp_Pinfty_c} For any conjugate heat flow $(\td\mu_t)_{t \in I^\infty, t < t^*}$ on $\XX^\infty$ with $t^* > \tf(x_\infty) - T_1^-$ there is smooth function $\td v' \in C^\infty ( \PP'_{< t^*} )$ such that $\td v' = \td v \circ \chi$ on $(P^\circ (x' ; D_1, -T^-_1, T^+_1))_{< t^*}$.
\end{enumerate}
\end{Claim}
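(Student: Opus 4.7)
The strategy will be to construct $\chi$ as a subsequential $C^\infty_{\loc}$-limit of the compositions $\chi_i := \psi_i^{-1} \circ \psi'_i$, where the $\psi_i : U_i \to V_i \subset \RR^i$ are the exhausting diffeomorphisms supplied by Theorem~\ref{Thm_smooth_convergence} near the point $x_\infty$. The hypothesis $P^\circ(x_\infty; D_1, -T_1^-, T_1^+) \subset \RR^*$ is precisely what makes the $\psi_i$ available on the image side. Each $\chi_i$ is a time- and $\partial_\tf$-preserving smooth map on the open set $W_i := U'_i \cap (\psi'_i)^{-1}(V_i) \subset \mathcal{P}'$; the identity $\psi_i \circ \chi_i = \psi'_i$ combined with the two smooth convergences $\psi_i^* g^i \to g^\infty$ and $(\psi'_i)^* g^i \to g'$ yields
\[
 \chi_i^* g^\infty \xrightarrow[i \to \infty]{\,C^\infty_{\loc}\,} g'
\]
wherever $\chi_i$ is eventually defined, with an analogous statement for $\partial_\tf$. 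Since $x_i \to x_\infty$ within $\CF$, Theorem~\ref{Thm_smooth_convergence}\ref{Thm_smooth_convergence_c} gives $\chi_i(x') = \psi_i^{-1}(x_i) \to x_\infty$ in $\RR^*$.

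A standard Arzela--Ascoli argument in normal coordinates then produces, after passing to a subsequence, a smooth Ricci flow spacetime isometric embedding $\chi : W \to \RR^\infty$ on some open set $W \subset \mathcal{P}'$ containing $x'$, with $\chi_i \to \chi$ in $C^\infty_{\loc}(W)$. The crucial step is to show $W \supset P^\circ(x'; D_1, -T_1^-, T_1^+)$. To this end I would let $W^*$ be the connected component of $W \cap P^\circ(x'; D_1, -T_1^-, T_1^+)$ containing $x'$; it is non-empty and open. For closedness in $P^\circ(x'; D_1, -T_1^-, T_1^+)$, note that $\chi$ is time- and $\partial_\tf$-preserving with $\chi(x') = x_\infty$, so $\chi$ maps $W^*$ into $P^\circ(x_\infty; D_1, -T_1^-, T_1^+)$. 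For a sequence $y'_n \in W^*$ with $y'_n \to y'_\infty \in P^\circ(x'; D_1, -T_1^-, T_1^+)$, the images $\chi(y'_n)$ remain in $P^\circ(x_\infty; D_1, -T_1^-, T_1^+) \subset \RR^*$; extracting a subsequential limit $y_\infty \in \RR^*$, Lemma~\ref{Lem_smooth_conv_local} applied at $y_\infty$ supplies a chart on which $\chi_i$ converges smoothly near $y'_\infty$, forcing $y'_\infty \in W^*$ by uniqueness of the local smooth limit.

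Once $\chi$ is established on $P^\circ(x'; D_1, -T_1^-, T_1^+)$, the three assertions should follow without much difficulty. Assertion (a) is obtained by passing $v^i \circ \psi_i \circ \chi_i = v^i \circ \psi'_i$ to the $C^\infty_{\loc}$-limit. Assertion (b) reduces, via Theorem~\ref{Thm_smooth_convergence}\ref{Thm_smooth_convergence_c}, to the convergence $\psi_i^{-1}(\psi'_i(y')) = \chi_i(y') \to \chi(y')$ in $\RR^*$, which holds by construction. For (c), I would approximate $\td\mu$ at some time slightly less than $t^*$ by finite convex combinations of point masses supported in $\RR^*$, produce corresponding convergent sequences of conjugate heat flows on $\XX^i$ via Theorem~\ref{Thm_pts_as_limits_within} together with Theorem~\ref{Thm_smooth_convergence}\ref{Thm_smooth_convergence_f}, pull back their densities via $\psi'_i$ --- which are locally uniformly $C^m$-bounded on $P_i$ by Lemma~\ref{Lem_std_par_est}\ref{Lem_std_par_est_a} --- and, via a diagonal argument, extract a $C^\infty_{\loc}$-limit $\td v' \in C^\infty(\mathcal{P}'_{<t^*})$ that satisfies $\td v' = \td v \circ \chi$ on the overlap.

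The hardest part will be the closedness step in the continuation argument. This is where both hypotheses on $P^\circ(x_\infty; D_1, -T_1^-, T_1^+)$ --- being contained in $\RR^*$ and being unscathed --- must be used in concert with Lemma~\ref{Lem_smooth_conv_local} to prevent the image of $\chi$ from escaping into the singular set and to enable the local extension of the diffeomorphism past each limit point.
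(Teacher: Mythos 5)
Your proposal is correct and takes essentially the same approach as the paper: both rest on uniqueness of pointed Cheeger--Gromov limits, which the paper invokes implicitly (``both flows may be identified'') and which you spell out via the continuation argument for the compositions $\chi_i = \psi_i^{-1} \circ \psi'_i$. Your handling of (a) and (b), and the kernel-then-convex-combination-then-limit structure of (c), also match the paper.

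One small caveat in part (c): restricting the approximating point masses to be supported in $\RR^*$ is both unnecessary and potentially obstructive. Theorem~\ref{Thm_pts_as_limits_within} applies to arbitrary points of $\XX^\infty$, and the pulled-back densities are still locally uniformly $C^m$-bounded on $P_i$ by Lemma~\ref{Lem_std_par_est} regardless of whether the limiting point lies in $\RR^*$; meanwhile nothing in the hypotheses guarantees that $\RR^* \cap \XX^\infty_{t^{**}}$ is dense in $\supp \td\mu_{t^{**}}$, so the approximation you describe might not exist. Dropping that restriction, as the paper does by approximating with conjugate heat kernels based at arbitrary points of $\XX^\infty_{t^*}$, makes the step go through cleanly.
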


\begin{proof}
By Theorem~\ref{Thm_smooth_convergence}\ref{Thm_smooth_convergence_a} the flow $g^\infty$ restricted to $P^\circ (x_\infty ; D_1, -T^-_1, T^+_1)$ and equipped with $v^\infty$ is a geometric limit of the same sequence of flows as $P^\circ (x' ; D_1, -T^-_1, T^+_1) \subset \mathcal{P}'$.
Therefore, both flows may be identified.
Assertions~\ref{Cl_Pp_Pinfty_a}, \ref{Cl_Pp_Pinfty_b} now follow from Theorem~\ref{Thm_smooth_convergence}.
To see Assertion~\ref{Cl_Pp_Pinfty_c}, suppose first that $(\td\mu_t)_{t \in I^\infty, t < t^*}$ is a conjugate heat kernel based at some point in $\XX_{t^*}^\infty$.
Then by Theorem~\ref{Thm_pts_as_limits_within} the conjugate heat flow $(\td\mu_t)_{t \in I^\infty, t < t^*}$ can be represented as a limit of conjugate heat flows $(\td\mu_{i,t})_{t \in I^\infty, t < t^*_i}$ on $\XX^i$ within $\CF$.
Write $d\td\mu_{i,t} = \td v_i \, dg_i$ on $\RR^i_{<t^*_i}$.
By Theorem~\ref{Thm_smooth_convergence}\ref{Thm_smooth_convergence_f} and Assertion~\ref{Cl_Pp_Pinfty_b} of this claim we obtain smooth convergence of $\td v_i \circ \psi'_i \to \td v \circ \chi$ on $(P^\circ (x' ; D_1, -T^-_1, T^+_1))_{< t^*}$ and by local derivative estimates, we obtain subsequential convergence of the functions $\td v_i \circ \psi'_i$ to some $\td v' \in C^\infty ( \PP'_{< t^*} )$ on all of $\PP'_{< t^*}$.
Finally, if $(\td\mu_t)_{t \in I^\infty, t < t^*}$ is a finite convex combination of conjugate heat kernels, then Assertion~\ref{Cl_Pp_Pinfty_c} follows by linearity and the general case follows via a limit argument.
\end{proof}

Consider the conjugate flow $v'$ on $\mathcal{P}'$.
By the strong maximum principle there is some time $T^* \in ( - T^-,   T^+]$ such that $v' \equiv 0$ on $\mathcal{P}'_{[\tf(x_\infty) + T^*, \tf(x_\infty) + T^+)}$ and $v' > 0$ on $\mathcal{P}'_{(\tf(x_\infty) - T^-, \tf(x_\infty) + T^*)}$ (in the case $T^* = T^+$ we have $v' > 0$ on all of $\mathcal{P}'$).
By iterating Claim~\ref{Cl_Pp_Pinfty} and the uniformity statement of Lemma~\ref{Lem_smooth_conv_local}, we obtain that $P^\circ (x_\infty ; D, -T^-, T^*) \subset \RR^* \subset \RR^\infty$ is unscathed and if $T^* < T^+$, then for any $D' \in (0, D)$
\[  \lim_{t \nearrow \tf(x_\infty)+T^*} \sup_{(B(x_\infty, D'))(t)}  v^\infty
= 0, \]
which also implies Assertion~\ref{Thm_conv_parab_nbhd_a}.
Since the inclusion $P^\circ (x_\infty ; D, -T^-, T^*) \subset \RR^\infty$ is independent of the subsequence of the original sequence of metric flow pairs, we obtain from Assertions~\ref{Cl_Pp_Pinfty_a}, \ref{Cl_Pp_Pinfty_b} in the Claim that for \emph{any} subsequence of the original sequence of metric flow pairs we obtain the same time $T^*$ and the restrictions $\mathcal{P}'_{< T^*}$, $v' |_{\mathcal{P}'_{< T^*}}$ are always the same.
So by the same argument as above, we have $P^\circ (x_\infty ; D, -T^-, T^*) \subset \RR^*$, where $\RR^*$ now refers to the set of points at which the convergence of the original sequence of metric flow pairs is smooth.

It remains to verify Assertion~\ref{Thm_conv_parab_nbhd_b}.
So let $(\td\mu_t = \lb \td v_t \, dg^\infty_t)_{t \in I^\infty \cap (-\infty, t^*)}$ be a conjugate heat flow on $\XX^\infty$, where $t^* > T^*$.
By Assertion~\ref{Cl_Pp_Pinfty_c} of the Claim we know that $\td v \circ \chi$ can be extended to a smooth conjugate heat flow on $\PP'_{<t^*}$.
So it suffices to show that for any $D' \in (0,D)$ we have
\[ \lim_{t \nearrow \tf(x_\infty)+T^*} \int_{( (B(x_\infty, D'))(t)} \td v_t \, d \td\mu_t
=
\lim_{t \nearrow \tf(x_\infty)+T^*} \td\mu_t \big( (B(x_\infty, D') )(t) \big) = 0. \]
To see this, choose some $t^{**} \in (\tf(x_\infty) + T^*, t^*)$ and observe that for any $t < \tf(x_\infty)+T^*$
\begin{align*}
 \td\mu_t \big( (B(x_\infty, D') )(t) \big) &= \int_{\XX^\infty_{t^{**}}} \nu^\infty_{x^{**};t} \big( (B(x_\infty, D') )(t) \big) \, d \td\mu_{t^{**}} (x^{**}), \\
\mu^\infty_t \big( (B(x_\infty, D') )(t) \big) &= \int_{\XX^\infty_{t^*}} \nu^\infty_{x^{**};t} \big( (B(x_\infty, D') )(t) \big) \, d \mu^\infty_{t^{**}} (x^{**}). 
\end{align*}
Since the second integral goes to $0$ as $t \nearrow \tf(x_\infty)+T^*$, we obtain, using Definition~\ref{Def_metric_flow}\ref{Def_metric_flow_6}, that the first one has to go to $0$ as well.
\end{proof}

\bibliography{bibliography}{}
\bibliographystyle{amsalpha}

\end{document}